\documentclass[a4paper,10pt]{article}
 
\usepackage{amsmath, amsthm, amsfonts, amssymb,setspace}
\usepackage{xcolor}
\usepackage{mathrsfs,mathtools}
\mathtoolsset{showonlyrefs}
\RequirePackage[colorlinks,citecolor=blue,urlcolor=blue]{hyperref}
\usepackage{dsfont}
\usepackage{comment}
\usepackage{enumerate}
 
\usepackage{scalerel}

\usepackage{tikz} 

\usetikzlibrary{positioning, fit, calc} 
\tikzset{block/.style={draw, thick, text width=2.6cm, minimum height=0.5cm, align=center}, 
line/.style={-latex}   
}

\renewcommand{\varpi}{\lambda_0}

\newcommand{\eps}{\ensuremath{\varepsilon}}
\newcommand{\sm}{s}
\newcommand{\oldL}{\xi}
\newcommand{\embed}{\hookrightarrow}

\newcommand\todown{\searrow}
\newcommand\toup{\nearrow}

\usepackage{todonotes}

\newcommand{\newL}{S}
\newcommand{\newq}{\tilde{\mu}}
\newcommand{\newp}{\mu}
\newcommand{\newS}{L}
\newcommand{\newH}{h}
\newcommand{\newh}{h}

\usepackage
{todonotes}

\mathtoolsset{showonlyrefs}
\usepackage{fullpage,color}

\newcommand{\divv}{\operatorname{div}}

\numberwithin{equation}{section}

\newtheorem{theorem}{Theorem}[section]
\newtheorem{corollary}[theorem]{Corollary}
\newtheorem{definition}[theorem]{Definition}
\newtheorem{remark}[theorem]{\it \rmfamily Remark}


\newtheorem{lemma}[theorem]{Lemma}
\newtheorem{notation}[theorem]{Notation}

\newtheorem{differences}[theorem]{Differences}
\newtheorem{hypothesis}{\it \rmfamily Hypothesis}

\newcounter{matriz}
\newenvironment{matriz}{\refstepcounter{matriz}\equation}{\tag{M\thematriz}\endequation}

\newtheorem{proposition}[theorem]{Proposition}

\newcommand{\beq}{\begin{equation}}
\newcommand{\eeq}{\end{equation}}
\newcommand{\bth}{\begin{theorem}}
\newcommand{\bpr}{\begin{proposition}}
\newcommand{\epr}{\end{proposition}}

\newcommand\Leb{\mathrm{Leb}}
\newcommand{\loc}{\mathrm{loc}}
\newcommand{\dist}{\mathrm{dist}}
\newcommand{\Skor}{\mathrm{Skor}}

\newcommand{\rc}{{\mathrm{c}}}

\allowdisplaybreaks[4]

\makeatletter
\def\mov@rlay#1#2{\leavevmode\vtop{%
   \baselineskip\z@skip \lineskiplimit-\maxdimen
   \ialign{\hfil$\m@th#1##$\hfil\cr#2\crcr}}}

\newcommand{\charfusion}[3][\mathord]{
    #1{\ifx#1\mathop\vphantom{#2}\fi
        \mathpalette\mov@rlay{#2\cr#3}
      }
    \ifx#1\mathop\expandafter\displaylimits\fi}

\def\Xint#1{\mathchoice
   {\XXint\displaystyle\textstyle{#1}}%
   {\XXint\textstyle\scriptstyle{#1}}%
   {\XXint\scriptstyle\scriptscriptstyle{#1}}%
   {\XXint\scriptscriptstyle\scriptscriptstyle{#1}}%
   \!\int}
\def\XXint#1#2#3{{\setbox0=\hbox{$#1{#2#3}{\int}$}
     \vcenter{\hbox{$#2#3$}}\kern-.5\wd0}}
\def\lint{\Xint\times}

\newcommand{\1}{\mathds{1}}



\newcommand\coma[1]{{\color{red} #1}}

\newcommand\dela[1]{}
\newcommand\delb[1]{} 

\newcommand\prm{\mu}
\newcommand{\id}{\mathrm{id}}
\newcommand\supp{\textrm{supp }}

\newcommand{\Nb}{\overline{\mathbb{N}}}
\newcommand{\R}{\mathbb R}
\newcommand\Q{{\mathbb Q}}
\newcommand\E{{\mathbb E }}

\newcommand\call[1]{{
\mathscr #1}}
\newcommand\gen[1]{{
\mathscr #1}}

\def\lb{\langle}
\def\rb{\rangle}
\newcommand{\nxi}{\phi}

\definecolor{darkblue}{rgb}{0.1,0.1,0.9}

\def\hh{{\vskip 2.5 mm \noindent }}
\def\vv{{\vskip 1mm}}

\setstretch{1.1}
\setlength{\abovedisplayskip}{6pt plus 2pt minus 2pt}
\setlength{\belowdisplayskip}{6pt plus 2pt minus 2pt}
\setlength{\abovedisplayshortskip}{2pt plus 1pt minus 1pt}
\setlength{\belowdisplayshortskip}{3pt plus 1pt minus 1pt}

\setlength {\marginparwidth }{2cm}
\begin{document}

\title{Stochastic transport equation with L\'evy noise}

\author{Zdzis{\l}aw Brze\'{z}niak$^{a}$, Enrico Priola$^{b}$,  Jianliang Zhai$^{c}$ and  Jiahui Zhu$^d$\\
{\small{$a.$   Department of Mathematics, University of York,  York, UK}}\\
{\small{$b.$ Dipartimento di Matematica, Universit\`a di Pavia, via Ferrata 5, Pavia, Italy }}\\
{\small{ $c.$ School of Mathematical Sciences, University of Science and Technology of China, Hefei, China}}\\
{\small{ $d.$    School of Mathematical Sciences, Zhejiang University of Technology, Hangzhou, China}}
}
\maketitle

\begin{abstract}  
We   study the  stochastic  transport equation with globally $\beta$-H\"older continuous and bounded vector field driven by a non-degenerate pure-jump L\'evy noise of $\alpha$-stable type. Whereas the deterministic transport equation may lack uniqueness, we prove the existence and pathwise uniqueness of a weak solution in the presence of a multiplicative pure jump noise, assuming $\frac{\alpha}{2}+\beta>1$. Notably, our results cover the entire range $\alpha \in (0,2)$, including the supercritical regime $\alpha\in(0,1)$ where the
driving noise exhibits notoriously weak regularization.  A key step of our strategy is the development of   a   \emph{sharp} $C^{1+\delta}$-diffeomorphism 
   and new regularity results for the Jacobian determinant  of the stochastic flow associated to its stochastic characteristic equation. These novel probabilistic results are of independent interest and constitute a substantial component of our work.  Our results are the first full generalization of the celebrated paper by Flandoli, Gubinelli, and Priola [Invent. Math. 2010] from the Brownian motion to the pure jump L\'evy noise. To the best of our knowledge, this appears to be the first example of a  partial differential equation of fluid dynamics  where well-posedness is restored by  the influence of a non-degenerate pure-jump noise.
   
  \par\vspace{1ex}
  \noindent
  \textbf{Keywords:} Stochastic singular transport equation, Multiplicative pure jump noise,  Regularization by noise, Pathwise uniqueness, Stochastic flow of diffeomorphisms.\\
 \textbf{Mathematics Subject Classification(2020)}: 60H17, 60H50, 60G51, 35R25, 35R05
\end{abstract}

\tableofcontents

\section{Introduction}\label{sec-introduction}
  
 In this article,   we study the following two main questions. Firstly, we study {\sl the well-posedness of  the following  singular  linear  transport equation with multiplicative gradient type noises of jump type}
 \begin{align}\label{eqn-transport-Markus}
 & \frac{\partial{u(t,x)}}{\partial t}+b(x)\cdot D u(t,x)\,+\sum_{i=1}^d D_i u(t-,x)\diamond \frac{d L_t^i}{dt}=0,~t\in[0,T], \;\;  x\in\mathbb{R}^d,
 \\
  & u(0,x)=u_0(x),\ \ x\in\mathbb{R}^d.
\end{align}
Here we assume that 
    $D_i$ is the weak partial derivative in the $i$-th direction, 
$b: \mathbb{R}^d \to \mathbb{R}^d$ is a vector field which is bounded and H\"older continuous with exponent  $\beta \in (0,1)$, i.e.,
\begin{equation}\label{eqn-g-beta}
 b \in C^\beta_{\mathrm{b}}(\mathbb{R}^d,\mathbb{R}^d);
\end{equation}
and 
$L=(L_t)_{t \geq 0}$ is a non-degenerate $\mathbb{R}^d$-valued 
pure-jump L\'evy  process defined on a probability space $(\Omega,\call{F},\mathbb{P})$ 
 of $\alpha$-stable type with  $\alpha \in (0,2)$, see also
  the monograph
 \cite{Sato_1999}.  Equation \eqref{eqn-transport-Markus} is written in the Markus form; for an equivalent formulation, see equation  \eqref{eqn-transport-strong-2} in Remark \ref{rem-defi}. In this work, we primarily deal with the so called   weak$^\ast$-$\mathrm{L}^{\infty}$-solutions, see Definition \ref{def-transport-weak-2}. Moreover, in Appendix  \ref{sec-no blow up}  we also consider the case of classical $C^1$-solutions.

Secondly,  we are interested in the {\sl regularity properties of the solution to the following stochastic ordinary differential equation (SDE),} with $x \in \R^d$ and $s\in [0,\infty)$, 
 driven by the process $L$: 
\begin{align}\label{eqn-SDE-intro} 
dX_t &= b(X_t)\,dt + dL_t,\; t>s;  \;\;\;
\\ 
X_s &=x,\nonumber
\end{align}
which is the stochastic   characteristics equation for equation \eqref{eqn-transport-Markus}.

  Note that when $L=0$, the corresponding transport equation may have infinitely many solutions, see the discussion before equation \eqref{ci2}. In contrast, we prove in Theorems \ref{thm-uniqueness-1} and \ref{thm-uniqueness} that  equation \eqref{eqn-transport-Markus}  becomes well-posed  provided the L\'evy noise $L$ is 
  (in suitable sense) non-degenerate. 
For clarity and ease of presentation, we present below  Theorems \ref{thm-uniqueness-1-intro} and \ref{thm-uniqueness-intro} which are simplified versions of the well posedness Theorems \ref{thm-uniqueness-1} and \ref{thm-uniqueness} to equation \eqref{eqn-transport-Markus}. 

  One of the reason we consider the transport equation \eqref{eqn-transport-Markus} in the Markus canonical form is that it has a solution represented in terms of the stochastic flow  associated with equation \eqref{eqn-SDE-intro}
$$
u(t,x,\omega):=u_0\big(\phi_{0,t}^{-1}(\omega)(x)\big), \quad t\in [0,T], x \in\mathbb{R}^d,
$$
where $\phi_{0,t}^{-1}$ is the inverse flow of equation \eqref{eqn-SDE-intro} with $s=0$.
Consequently, a thorough understanding of the \textit{fine properties} of this flow is essential. 
Let us point out that to the best of our knowledge, there is no  work that has  explored
the \emph{sharp} $C^{1+\delta}$-diffeomorphism property of the flows, leaving a significant gap between the existing results and the more refined outcomes we target. This motivates our second main objective and constitutes another central contribution of the present paper,  which is also of independent interest.

Our results concerning the first question demonstrate the so-called \emph{regularization by noise} phenomenon in the context of L\'evy processes. This represents, to the best of our knowledge,  the first example of a PDE of fluid dynamics that becomes well-posed under the influence of a pure-jump  L\'evy noise.

The analysis of the pure-jump case also leads us to develop new analytical and probabilistic techniques, which we believe will be useful beyond the present framework.

\paragraph{Motivation for  the study of stochastic transport equations.}
 Let us spend some time discussing the motivation for this problem.

  A transport equation is a basic partial differential equation (PDE) in fluid dynamics.
A vast literature has been devoted to the well-posedness of the deterministic transport equations
 with singular  time-dependent vector fields $b:[0,T]\times \mathbb{R}^{d}\rightarrow \mathbb{R}%
^{d}$. 
A remarkable example is a result  obtained by Di Perna and  Lions in \cite
{DiPernaLions},  who proved that, if the time dependent vector field  $b\in
L^{1}(0,T;W_{loc}^{1,1}(\mathbb{R}^{d},\mathbb{R}^{d}))$ satisfies the   linear growth condition with respect to the spatial variable  and ${\mathrm{div}\,}b\in L^{1}(0,T;L^{\infty }(\mathbb{R}%
^{d}))$, then for every given $u_{0}\in
L^{\infty }(\mathbb{R}^{d})$, there exists a unique $L^{\infty }([0,T]\times \mathbb{R}^{d})$
weak-$\ast $ solution that is continuous in time. Moreover, a generalized notion of a flow was  introduced and its existence and uniqueness were  proved.
For more information, see a partial review  \cite{AmbrCrippa}, the recent paper
\cite{Ciampa+Crippa+Spirito_2020} and the references therein.

On the other hand, it is well-known that when $L =0$, i.e., without  noise,  there exist  counterexamples to the uniqueness of weak-$\ast $ solutions even in dimension $d=1$. For instance, see Section 6.1 in \cite{FGP_2010-Inventiones}, for an arbitrary $\gamma \in \left( 0,1\right)$ and $R>0$, one can consider the following vector field $b$ on $\mathbb{R}$:
\begin{equation} \label{ci2}
b(x)=\frac{1}{1-\gamma }\,\mathrm{sign}\,(x)\left( |x|\wedge R\right)
^{\gamma }, \;\; x \in \mathbb{R}.
\end{equation}
These singular coefficients can lead to non-uniqueness of solutions to the corresponding deterministic transport equations.

In contrast, by introducing suitable stochastic noise, the problem becomes well-posed, i.e., both the existence and the uniqueness of solutions hold, a phenomenon known as \emph{regularization by  noise}. 
This effect has attracted considerable interest from  both physicists and experts in stochastic analysis.

For the specific transport equation considered in this work, the foundational study can be traced to the pioneering work  \cite{FGP_2010-Inventiones}, by Flandoli, Gubinelli and the second named author,  which established the well-posedness results in the case of Brownian Motion. Subsequent research has significantly expanded this framework, investigating the Brownian noise scenario under various assumptions on the singularity of the drift coefficient $b$ and the initial data $u_0$; see, for example, \cite{BeckFlandoliGubinelliMaurelli, FedrizziOlivera, Galeati_2020,Koley25} and \cite{Zhang10}. Parallel developments have also been made for equations driven by fractional Brownian Motion, as documented in \cite{Amine,Catellier16},  and the references therein. It is important to note that all the aforementioned works are confined to the continuous noise case. In Remark \ref{zhang}, we will discuss a  random transport equation involving  L\'evy noise
 considered in a recent paper \cite{Hao+Zhang_2020}. It turns out that the  equation studied in that paper  is   different from our equation 
 \eqref{eqn-transport-Markus}.

Regarding the general phenomenon of regularization by noise, we refer the reader to the following literature: for results on Brownian motion-driven ordinary differential equations (ODEs), see \cite{Krylov-Rockner-2005,Zhang-2011,Zhang-2012}; for the stochastic partial differential equations (SPDEs) driven by Brownian motion, see \cite{Ref_SPDE_BM,BGM 2025,Flandoli-Luo 2021, HLL 202407}; and for ODEs driven by jump noise, see \cite{ABM 2020, Chen-Zhang-Zhao 2021, Pr12,Pr15,Pr18}; and \cite{Catellier-Duboscq2025} for other driving noises.

The present paper presents the first study for SPDEs driven by jump noise in this direction. Compared with the existing literature, our analysis of the pure-jump case necessitates the development of new analytical techniques, which will be introduced and discussed in detail later.

 We next  discuss the motivation for the use of a jump noise. Although  continuous type
noises have been regarded as a natural choice for modeling
random noise in continuous-time systems, numerous examples and evidence of jump type
noises have been discovered and documented in many real world complex systems. The  monograph
\cite{Birnir} presents  a phenomenological study of fully developed turbulence and intermittency;
 it proposes that  experimental observations  of these physical
characteristics of fluid dynamics can be modeled by stochastic PDEs  with L{\'e}vy noise.
We should mention here  two  recent papers about  the fluid dynamics driven by L\'evy processes, i.e., \cite{Brz+Peng+Zhai_2023} and \cite{Brz+Mot+Kosm+Razaf_2025}, where a fair attempt has been made to describe earlier contributions in that field.
The theory and several applications of
SPDEs with L\'evy noise have been considered in  the monograph \cite{Peszat+Zabczyk_2007}. 

These empirical and theoretical advances motivate the incorporation of pure jump
L\'evy noise into the transport problem. Such a formulation allows us to capture
anomalous transport and fluctuation effects that are beyond the scope of classical
diffusion models.

We also point out that the statistics of the L\'evy type noise turned out to be ubiquitous phenomena empirically observed in various areas including: physics (anomalous
diffusion, turbulent flows, nonlinear Hamiltonian dynamics \cite{[34 2025],[29-2025]}), biology (e.g., heartbeats \cite{[41]} and  firing of neural networks \cite{[42]}), seismology (e.g., recordings of seismic activity \cite{[43]}), electrical engineering (e.g., signal
processing \cite{[44],[46],[45]}), mathematical finance (\cite{Cont+Tankov_2004} and \cite{Barndorff-Nielsen_2001}), and economics  (e.g., financial time series \cite{[48],[49],[47]}).
Accordingly, an important problem is to understand the dynamics of
stochastic systems when the driving noises are L\'evy type. The present work represents a step toward this goal.

Let us now concentrate on  the motivation for working with equations in   the Marcus canonical form. It is well-known that the It\^o integral, despite its many remarkable features, fails to satisfy the classical chain rule and therefore It\^o SDEs are  not invariant under changes of coordinates. As a consequence, from a physical point of view, it does not provide a  natural geometric framework. In the Brownian motion driven systems, these drawbacks can often be overcome by using the Stratonovich integral instead of the It\^o one. Marcus canonical form is specifically designed as a natural extension of the Stratonovich calculus to the jump processes. Moreover,  stochastic differential equations written in the Marcus canonical form possess the geometric invariance under the change of variables, which yields a structural property that is crucial for our purposes. In particular, we prove that  under some regularity assumptions,  the unique weak$^\ast$-$\mathrm{L}^{\infty}$-solution to \eqref{eqn-transport-Markus} is given by a formula \eqref{eqn-def-u-intro0}, which involves the stochastic flow  generated by the equation \eqref{eqn-SDE-intro} (and associated with \eqref{eqn-transport-Markus}), see also  the discussion surrounding that formula.  This flow representation plays a central role in our uniqueness argument for the stochastic transport equation. On the other hand, the Marcus canonical integral can be interpreted as the limit of Wong-Zakai type approximations, where the jump noise is replaced by a sequence of smoother (piecewise smooth) processes. From physical and modeling  points of view, this perspective is also useful  for  numerical method approximating solutions of SDEs, see for instance \cite{KurtzPardouxProtter} and  \cite{PavlyukevichThipyarat25}, where SDEs with jumps with regular coefficients  are discussed. For studies on stochastic (partial) differential equations involving the Marcus integral and their background, we also direct readers to the seminal work \cite{Marcus 1978} and to the recent developments found in \cite{BLZ_2021,CP 2014, FPR 2025,HartmannPavlyukevich23,LT 2025}.

   \paragraph{Our first main results: The well-posedness results for \eqref{eqn-transport-Markus}.}
   
    In order to simplify the presentation of  this Introduction, here we only state our  results in  the case where the L\'evy process $L$ is a rotationally invariant  $\alpha$-stable process whose generator is the fractional Laplacian $(-\triangle)^{\frac{\alpha}{2}}$, i.e., when  $L$ satisfies our Hypothesis \ref{hyp-nondeg3}.  This is a special but  yet interesting case. 
  
More general L\'evy  processes  are considered in Section \ref{hy2}, where    we introduce three different assumptions on  $L$, see Hypotheses \ref{hyp-nondeg1}, \ref{hyp-nondeg2} and 
\ref{hyp-nondeg3}. These hypotheses   are  crucial for our main results. Roughly speaking, in the supercritical case  $\alpha \in (0,1)$, we have to assume that   $L$ is   rotationally invariant  but 
when $\alpha \in [1,2)$ we can consider more general non-degenerate $\alpha$-stable-type processes $L$.

 Let us first discuss the existence of solutions to \eqref{eqn-transport-Markus}, see   Theorem \ref{thm-transport equation} for more details, which holds under    more general assumptions on $L$.

  \begin{theorem}\label{thm-transport equation-rough} 
Assume  that  $\alpha \in (0,2)$ and 
\begin{equation}\label{eqn-alphabeta}
 \frac{\alpha}{2} +\beta  > 1.
\end{equation}
Assume that $L$ is  a rotationally invariant  $\alpha$-stable   L\'evy  process   whose 
 symbol $\psi$   is of the following form,   
\begin{gather} \label{stim4-2}
 \psi (u) = C_{\alpha} |u|^{\alpha},  \;\; u \in \R^d,
\end{gather}
 for some    constant  $C_{\alpha}>0$, and  a vector field $b: \mathbb{R}^d \to \mathbb{R}^d$ satisfies conditions \eqref{eqn-g-beta} and 
  \begin{equation}\label{eqn-div b in L^1_loc} 
  \divv b \in L^1_{\loc}(\mathbb{R}^d). 
      \end{equation}
   Then for every    Borel measurable and essentially bounded function $u_0:\mathbb{R}^d \to \mathbb{R}$, the  stochastic process $u$ defined by the following formula 
\begin{equation}\label{eqn-def-u-intro0}
 u(t,x,\omega):=u_0\left(\phi_{0,t}^{-1}(\omega)(x)\right), \;\; t\in [0,T], x \in\mathbb{R}^d, \; \omega \in \Omega, 
\end{equation} %
where $(\phi_{0,t})_{t \geq 0}$ is the stochastic flow of diffeomorphisms associated to the SDE \eqref{eqn-SDE-intro} with $s=0$ and $\phi_{0,t}^{-1}$ is its inverse flow, 
is a weak$^\ast$-$\mathrm{L}^{\infty}$-solution to the problem \eqref{eqn-transport-Markus} in the sense of Definition \ref{def-transport-weak-2}. 
\end{theorem}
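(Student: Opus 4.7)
My plan is to verify directly that the explicit formula \eqref{eqn-def-u-intro0} satisfies the weak$^\ast$-$L^\infty$-formulation of Definition \ref{def-transport-weak-2}. \textbf{Set-up.} First I would observe that since $\phi_{0,t}^{-1}(\omega)$ is a bijective measurable map, $\|u(t,\cdot)\|_{L^\infty(\R^d)}\le \|u_0\|_{L^\infty(\R^d)}$, and joint measurability in $(t,x,\omega)$ together with the weak$^\ast$-$\cadlag$ regularity in $t$ all follow from the $C^{1+\delta}$-diffeomorphism property of $\phi_{0,t}$ established earlier in the paper. Thus $u$ lies in the correct functional class.

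\textbf{Core identity via change of variables and Marcus chain rule.} For every test function $\varphi\in C_c^\infty(\R^d)$ I would use the Lagrangian change of variables $x=\phi_{0,t}(y)$ to write
$$\int_{\R^d} u(t,x)\,\varphi(x)\,dx=\int_{\R^d} u_0(y)\,\varphi(\phi_{0,t}(y))\,K_t(y)\,dy,$$
where $K_t(y):=\det J\phi_{0,t}(y)$. Because the noise enters the characteristic SDE \eqref{eqn-SDE-intro} additively, the Liouville-type identity delivers the explicit formula $K_t(y)=\exp\!\big(\int_0^t\divv b(\phi_{0,s}(y))\,ds\big)$, which is meaningful thanks to $\divv b\in L^1_{\loc}$ combined with the diffeomorphism property. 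Applying the Marcus chain rule to $\varphi(\phi_{0,t}(y))K_t(y)$ for each fixed $y$ yields
$$d\big[\varphi(\phi_{0,t}(y))\,K_t(y)\big]=K_t(y)\,\divv\!\big(b\varphi\big)(\phi_{0,t}(y))\,dt+K_t(y)\,\nabla\varphi(\phi_{0,t}(y))\diamond dL_t.$$
Multiplying by $u_0(y)$, integrating in $y$ against Lebesgue, exchanging the order of integration via Fubini, and changing variables back through the inverse flow should recover precisely the weak$^\ast$-$L^\infty$ form of \eqref{eqn-transport-Markus}.

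\textbf{Regularization.} Since $b$ is only $C^\beta_{\mathrm{b}}$ and $\divv b$ is only $L^1_{\loc}$, the chain-rule step above is not immediately justified. I would therefore mollify $b$ to obtain smooth drifts $b_n$, verify the weak identity for the classical $C^1$-solutions $u_n(t,x)=u_0(\phi^n_{0,t}{}^{-1}(x))$ via unambiguous Marcus calculus, and then pass to the limit $n\to\infty$. For the drift and divergence terms, dominated convergence together with the uniform bounds on $\phi^n_{0,t}$ and $K^n_t$ inherited from the earlier parts of the paper is sufficient.

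\textbf{Main obstacle.} The critical step is the convergence of the Marcus stochastic integral $\int_0^t K^n_s(y)\,\nabla\varphi(\phi^n_{0,s}(y))\diamond dL_s$ as $n\to\infty$: the jump structure of $L$ makes this delicate, particularly in the supercritical regime $\alpha\in(0,1)$, and plain convergence in probability of the flows is not enough to control the integrand across jumps. This is precisely where the assumption $\frac{\alpha}{2}+\beta>1$ enters, through the sharp $C^{1+\delta}$-diffeomorphism estimates on $\phi^n_{0,t}$ and the new regularity properties of the Jacobian $K^n_t$ that the paper advertises as its second main contribution. Once these are available, the Marcus integral can be controlled uniformly in $n$, the limit coincides with $\int_0^t K_s(y)\,\nabla\varphi(\phi_{0,s}(y))\diamond dL_s$, and the weak formulation is verified.
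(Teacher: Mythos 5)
Your outline coincides, step for step, with the paper's own proof of Theorem \ref{thm-transport equation}: the Lagrangian change of variables with the Jacobian, the Liouville identity for the mollified drift (Lemma \ref{lem-A1}, valid pathwise because the noise is additive), the It\^o/Marcus formula applied to $\theta(\phi^{\eps}_t(y))J^{\eps}_t(y)$ (Lemma \ref{lem-A11}), multiplication by $u_0(y)$ and integration in $y$, and finally a passage to the limit $\eps\to0$; also your preliminary verification of condition (o) and of the adapted c\`adl\`ag property matches Section \ref{sec-existence}. (One caveat: the exponential Liouville formula for the \emph{unmollified} flow is delicate when $\divv b$ is only $L^1_{\loc}$, but since you only use it for the smooth $b_n$ this does not affect your plan.)

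The one place where your reasoning misfires is the mechanism you invoke for the crucial limit of the jump terms. The fractional-Sobolev regularity of the Jacobian (Theorems \ref{Reg-est-Jaco} and \ref{Reg-est-Jaco-2}) plays no role in the existence proof; it is used only for the uniqueness theorems \ref{thm-uniqueness-1} and \ref{thm-uniqueness}, and it is not even available under the bare hypothesis $\divv b\in L^1_{\loc}(\mathbb{R}^d)$ of the present statement. What actually closes the argument is the stability part of the flow results: Theorem \ref{thm-stability} and Corollary \ref{cor-civuole} give, along a subsequence, $\mathbb{P}$-a.s.\ convergence of $\phi^{\eps}_{0,\cdot}$ and $D\phi^{\eps}_{0,\cdot}$ (hence of $J^{\eps}$) uniformly in $t$ on compact sets, which yields $u_0\circ(\phi^{\eps}_s)^{-1}\to u_0\circ\phi_s^{-1}$ weak$^\ast$ in $L^{\infty}$, uniformly in $s\in[0,T]$ (Remark \ref{rem-ciao}). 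After changing variables inside the spatial integral, the integrand of the compensated small-jump integral becomes $\int u_0((\phi^{\eps}_{s-})^{-1}(x))\,[\theta(x+z)-\theta(x)]\,dx$, which is bounded by $C|z|$ (and the compensator term by $C|z|^2$), so the Burkholder inequality together with dominated convergence and $\lint_B|z|^2\nu(dz)<\infty$ gives the limit, while the large-jump integral is a finite random sum and converges termwise (Lemmata \ref{lem-A5}--\ref{lem-A7}). In particular no uniform-in-$n$ control of a genuine Marcus integral is required: in the weak formulation of Definition \ref{def-transport-weak-2} the Marcus integral is already unfolded into compensated Poisson integrals plus a $\nu$-compensator, and the limit passage is elementary once the almost sure uniform flow stability is in hand. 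With this substitution of ingredients, your plan is exactly the paper's proof.
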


As we clearly see, the above result is not self-contained as  the stochastic flow $(\phi_{0,t})_{t \geq 0}$ appears in formula \eqref{eqn-def-u-intro0}. As stated before, it is related to the second main question of the paper, and it  will be addressed in Theorem \ref{ww1-1-intro}, which presents a refined version of the result.

  Now we state  two main uniqueness results. The first uniqueness result holds under the condition \eqref{eqn-alphabeta} but 
 it assumes that $\alpha \in [1,2)$. It can be proved  for more general L\'evy processes, and the complete statement can be found in Theorem \ref{thm-uniqueness-1}.
 The critical exponent $p\in(\alpha,2]$ is also included in this theorem. In contrast to the uniqueness result  for Brownian motion, in \cite[Theorem 6]{FGP-2012}, our critical range is independent of the spatial dimension $d$.

 \begin{theorem}\label{thm-uniqueness-1-intro}

Let us assume that $\alpha\in [1,2)$  and $\beta  \in (0, 1)  $ are such that \eqref{eqn-alphabeta} holds.
    
 Assume also that one of the following conditions holds. 
\begin{trivlist} \item[(1)] If    $d=1$, then $Db\in L^1_{loc}(\mathbb{R})$; if $d\geq 2$, then  $\divv b \in L^{p}( \mathbb{R}^{d})$ for some $p\geq2$.
 \item[(2)] If    $d=1$, then $Db\in L^1_{loc}(\mathbb{R})$; if $d\geq 2$, then $\beta >1-\alpha+\frac{\alpha}p$ and $\divv b \in L^{p}( \mathbb{R}^{d})$ for some $p\in(\alpha,2]$.
\end{trivlist}
Then, for every $u_{0} \in L^{\infty}(\mathbb{R}^{d})$, there exists a  unique weak$^\ast$-$\mathrm{L}^{\infty}$-solution to the 
transport equation. It is 
 of the form \eqref{eqn-def-u-intro0}.
\end{theorem}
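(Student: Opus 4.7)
The plan is to prove uniqueness by a mollification-commutator argument of DiPerna--Lions type, adapted to the Marcus canonical formulation. By linearity, it suffices to show that every weak$^\ast$-$\mathrm{L}^{\infty}$-solution $u$ coincides with the flow formula \eqref{eqn-def-u-intro0} already provided by the existence Theorem \ref{thm-transport equation-rough}; equivalently, once composed with the stochastic flow of \eqref{eqn-SDE-intro}, the process $(t,x)\mapsto u(t,\phi_{0,t}(x))$ must agree with $u_0(x)$ when tested against smooth compactly supported functions.

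First I would regularize in the spatial variable by convolution with a standard mollifier $\rho_{\eps}$, setting $u^{\eps}(t,\cdot):=u(t,\cdot)\ast\rho_{\eps}$. Since $u$ satisfies \eqref{eqn-transport-Markus} in the weak$^\ast$-$\mathrm{L}^{\infty}$ sense, the mollified $u^{\eps}$ solves the same equation strongly, up to a remainder:
\begin{equation}
\partial_{t}u^{\eps}+b\cdot Du^{\eps}+\sum_{i=1}^{d}D_{i}u^{\eps}\diamond dL_{t}^{i}=r^{\eps},
\end{equation}
where $r^{\eps}:=b\cdot D(u\ast\rho_{\eps})-(b\cdot Du)\ast\rho_{\eps}$ is the classical DiPerna--Lions commutator. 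Under the assumption $\divv b\in L^{p}(\mathbb{R}^{d})$ for $d\geq 2$ (or $Db\in L^{1}_{\loc}$ in $d=1$) combined with the H\"older bound on $b$, the commutator lemma yields $r^{\eps}\to 0$ in $L^{1}_{\loc}([0,T]\times \mathbb{R}^{d})$ and $\mathbb{P}$-a.s.

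The decisive geometric feature now comes into play: the Marcus canonical integral obeys the classical chain rule. Applying this chain rule to the composition $v^{\eps}(t,x):=u^{\eps}(t,\phi_{0,t}(x))$, and using \eqref{eqn-SDE-intro} to cancel the $b\cdot Du^{\eps}$-term together with the $\diamond dL^{i}$-terms, one obtains
\begin{equation}
u^{\eps}(t,\phi_{0,t}(x))-u^{\eps}(0,x)=\int_{0}^{t}r^{\eps}(s,\phi_{0,s}(x))\,ds.
\end{equation}
Passing to the limit $\eps\to 0$ on the right-hand side requires the change of variables $y=\phi_{0,s}(x)$, and it is precisely here that the \emph{sharp} $C^{1+\delta}$-diffeomorphism property together with the $L^{q}$-integrability of $\det D\phi_{0,t}^{-1}$ (the paper's second main probabilistic result) enter. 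With $1/p+1/q=1$, a duality estimate of the form $\|r^{\eps}\circ\phi_{0,\cdot}\|_{L^{1}}\lesssim \|r^{\eps}\|_{L^{p}}\,\|\det D\phi_{0,\cdot}^{-1}\|_{L^{q}}$ forces the right-hand side to vanish, while the left-hand side, by local uniform integrability of $u^{\eps}$ and the continuity of $\phi_{0,t}$, converges to $u(t,\phi_{0,t}(x))-u_{0}(x)$ in the sense of distributions. Re-expressing the result via the inverse flow yields the representation $u(t,x)=u_{0}(\phi_{0,t}^{-1}(x))$, proving uniqueness.

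The hard part will be item (2), the critical regime $p\in(\alpha,2]$. Here the standard commutator estimate loses its usual room and the interpolation condition $\beta>1-\alpha+\alpha/p$ is exactly what balances the H\"older regularity of $b$, the integrability of $\divv b$, and the available $L^{q}$-control on the Jacobian of the inverse flow; the bounds degenerate simultaneously as $p\downarrow\alpha$, and one must choose the mollification scale and the Kunita-type integrability exponent of $\det D\phi_{0,t}^{-1}$ carefully so that both sides of the displayed identity can be passed to the limit at the same rate. Once this is balanced, the conclusion follows uniformly in $t$ and $\mathbb{P}$-a.s., giving both uniqueness and the claimed flow representation \eqref{eqn-def-u-intro0}.
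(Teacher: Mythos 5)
Your outline shares the paper's skeleton (mollify, compose with the stochastic flow, reduce to a commutator, exploit flow regularity), but it breaks at the decisive analytic step. You assert that the DiPerna--Lions commutator $r^{\eps}=b\cdot D(u\ast\rho_{\eps})-(b\cdot Du)\ast\rho_{\eps}$ tends to $0$ in $L^{1}_{\loc}$ under the standing hypotheses ($b$ merely $\beta$-H\"older, $\divv b\in L^{p}$). This is false: the classical commutator lemma needs $b\in W^{1,1}_{\loc}$, and if $r^{\eps}\to 0$ in $L^{1}_{\loc}$ held here one would obtain uniqueness for the \emph{deterministic} transport equation with the vector field \eqref{ci2}, contradicting the known non-uniqueness examples. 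For the same reason your closing duality bound $\|r^{\eps}\circ\phi_{0,\cdot}\|_{L^{1}}\lesssim\|r^{\eps}\|_{L^{p}}\|\det D\phi_{0,\cdot}^{-1}\|_{L^{q}}$ cannot work: $b\cdot Du$ is only a distribution, so $r^{\eps}$ is neither bounded nor small in any Lebesgue norm uniformly in $\eps$. The regularization by noise enters precisely where you place only a change of variables: in the paper the quantity that vanishes is the \emph{tested} commutator $\int \mathcal{R}_{\eps}[b,u_{s}](\phi_{s}(x))\rho(x)\,dx$, and its convergence (Proposition \ref{comu-prop-1}) requires the transported test function $\rho(\phi_{s}^{-1}(\cdot))\,J\phi_{s}^{-1}(\cdot)$ to lie in a fractional Sobolev space $W^{1-\beta,p}_{\loc}$. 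Establishing $J\phi^{-1}\in L^{2}(0,T;W^{1-\beta,p}_{r})$ (resp.\ $L^{p}$) is the content of Theorems \ref{Reg-est-Jaco} and \ref{Reg-est-Jaco-2}, proved via the Kolmogorov-equation estimates of Theorem \ref{TH-Lp-est} and stochastic-integral bounds in martingale type $2$ (resp.\ type $p$) spaces; mere $L^{q}$-integrability of $\det D\phi^{-1}$, which is all you invoke, is far too weak. In particular the threshold $\beta>1-\alpha+\alpha/p$ in case (2) is not a tuning of mollification scales: it is exactly the condition $1-\beta<\frac{p-1}{p}\alpha$ needed so that Theorem \ref{Reg-est-Jaco-2} applies with $\delta=1-\beta$.

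A second, smaller but still genuine gap is the composition step. You justify $u^{\eps}(t,\phi_{0,t}(x))-u^{\eps}(0,x)=\int_{0}^{t}r^{\eps}(s,\phi_{0,s}(x))\,ds$ by "the Marcus integral obeys the chain rule". That heuristic applies to a fixed smooth function composed with a Marcus SDE solution; here $u^{\eps}(t,y)$ is itself a semimartingale random field driven by the same Poisson random measure, defined for each $y$ only up to a $y$-dependent null set. Making the composition rigorous is exactly what the jump-type It\^o--Wentzell formula does (Theorem \ref{thm-IW-applied}), after verifying its hypotheses for $u^{\eps}$ and $\phi_{0,t}$ (Lemma \ref{lem-u^eps satisfies assumptions ITWF}): one needs $u^{\eps}\in D([0,T];C^{2})$ pathwise on a universal full set, control of sums such as $\sum_{s\le t}\bigl(\Delta u^{\eps}(s,\phi_{s})-\Delta u^{\eps}(s,\phi_{s-})\bigr)$, and the cancellation of all jump terms, none of which follows from the Marcus chain-rule slogan. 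So the identity you display is indeed the one the paper derives, but as written your argument neither justifies it nor supplies the mechanism by which the commutator term vanishes.
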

The next uniqueness  result is quite unexpected because it also holds in the  \textit{supercritical case}  $\alpha \in (0,1)$. 
Recall that in this case the generator of the L\'evy process $L$ is the  fractional Laplacian      of order $\frac{\alpha}{2} <\frac{1}{2}$.  This   result shows regularization by noise even in the case of $\alpha \in (0,1)$. The complete statement of this result can be found in Theorem \ref{thm-uniqueness}.
\begin{theorem}\label{thm-uniqueness-intro}
 Assume that  $\alpha \in (0,2)$ and  $\beta  \in (0, 1)  $ are  such that 
\begin{align}\label{eqn-alphaover4+beta>1-intro}
    \frac{\alpha}4+\beta>1.
    \end{align} 
   Assume  also that condition \eqref{eqn-div b in L^1_loc} holds. 
Then, for every $u_{0} \in L^{\infty}(\mathbb{R}^{d})$, there exists a unique weak$^\ast$-$\mathrm{L}^{\infty}$-solution to the transport equation.
 It is 
 of the form \eqref{eqn-def-u-intro0}.
\end{theorem}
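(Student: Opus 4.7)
Existence and the flow representation are already provided by Theorem \ref{thm-transport equation-rough}, so I only need to establish uniqueness. By linearity it suffices to prove that any weak$^\ast$-$\mathrm{L}^\infty$-solution $u$ with $u_0 = 0$ vanishes identically. The plan is to exploit the Marcus chain rule together with the flow $\phi_{0,t}$: informally, if $u$ solves the forward transport equation in Marcus form, then $t \mapsto u(t,\phi_{0,t}(x))$ should be constant in $t$, hence equal to $u_0(x) = 0$. Since $u \in L^\infty$ is not smooth, this identity has to be obtained in a limiting sense via mollification, and the heart of the argument is a commutator estimate that is made feasible precisely by the sharp $C^{1+\delta}$-diffeomorphism property of $\phi_{0,t}$ supplied by Theorem \ref{ww1-1-intro}.

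\textbf{Mollification and chain rule.} I would fix a standard spatial mollifier $\rho_\epsilon$ and set $u^\epsilon(t,\cdot) := u(t,\cdot)\ast\rho_\epsilon$. Testing the weak$^\ast$ formulation against translates of $\rho_\epsilon$ shows that $u^\epsilon$ satisfies, in the classical sense,
\begin{equation*}
    du^\epsilon(t,x) + b(x)\cdot\nabla u^\epsilon(t,x)\,dt + \nabla u^\epsilon(t-,x)\diamond dL_t = R^\epsilon(t,x)\,dt,
\end{equation*}
with DiPerna--Lions commutator $R^\epsilon(t,x) = b(x)\cdot\nabla u^\epsilon(t,x) - ((b\cdot\nabla u)\ast\rho_\epsilon)(t,x)$, where $b\cdot\nabla u$ is rewritten as $\operatorname{div}(bu)-(\operatorname{div} b)u$ so that \eqref{eqn-div b in L^1_loc} ensures everything is well defined. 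The key advantage of working in Marcus form is that the chain rule for $\phi_{0,t}$ is exactly the classical one; applying it to $u^\epsilon(t,\phi_{0,t}(x))$ produces a clean cancellation between the transport drift, the jump term of $u^\epsilon$, and the corresponding terms in $d\phi_{0,t}$, leaving
\begin{equation*}
    u^\epsilon(t,\phi_{0,t}(x)) = u_0^\epsilon(x) + \int_0^t R^\epsilon(s,\phi_{0,s}(x))\,ds.
\end{equation*}
With $u_0 = 0$, pairing with an arbitrary test function $\psi \in C_c^\infty(\mathbb{R}^d)$ and applying the change of variables $y = \phi_{0,t}(x)$ (legitimate because $\phi_{0,t}$ is a $C^{1+\delta}$-diffeomorphism with an integrable Jacobian determinant by Theorem \ref{ww1-1-intro}) yields
\begin{equation*}
    \int_{\mathbb{R}^d} u^\epsilon(t,y)\psi(y)\,dy = \int_0^t \int_{\mathbb{R}^d} R^\epsilon(s,\phi_{0,s}(x))\,\psi(\phi_{0,t}(x))\,|\det D\phi_{0,t}(x)|\,dx\,ds.
\end{equation*}
As $\epsilon\to 0$, the left side converges to $\langle u(t,\cdot),\psi\rangle$ by the weak$^\ast$ property, so it remains to show the right side vanishes almost surely.

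\textbf{Commutator bound and closing the argument.} The naive estimate $|R^\epsilon|\lesssim\|u\|_\infty\,\epsilon^{\beta-1}$ blows up since $\beta<1$, and this is where the sharp flow regularity enters. Writing $R^\epsilon(s,y) = \int(b(y)-b(y-z))\cdot\nabla_z\rho_\epsilon(z)\,u(s,y-z)\,dz + (\operatorname{div} b\cdot u)\ast\rho_\epsilon - (\operatorname{div} b)\,u^\epsilon$ and then substituting $y=\phi_{0,s}(x)$, I would perform an additional change of variables $z = \phi_{0,s}(x)-\phi_{0,s}(x')$ inside the convolution. The $\delta$-Hölder continuity of $D\phi_{0,s}$ and of $D\phi_{0,s}^{-1}$, combined with $b\in C^\beta_{\mathrm b}$, improves the singular kernel by a factor of order $\epsilon^\delta$, yielding an estimate of the form
\begin{equation*}
    \Big\|\int_0^t\!\!\int R^\epsilon(s,\phi_{0,s}(\cdot))\,\psi(\phi_{0,t}(\cdot))\,|\det D\phi_{0,t}(\cdot)|\,dx\,ds\Big\| \;\lesssim\; \epsilon^{\beta+\delta-1} + o(1),
\end{equation*}
with an additional term controlled by $\|\operatorname{div} b\|_{L^1_{\mathrm{loc}}}$ that vanishes by standard mollifier convergence. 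The admissible Hölder exponent for $D\phi_{0,t}$ that Theorem \ref{ww1-1-intro} delivers is, up to an arbitrarily small loss, any $\delta < \alpha/4 \wedge \beta$ (this is precisely the ``sharp'' point of the flow result), so the hypothesis $\frac{\alpha}{4}+\beta>1$ in \eqref{eqn-alphaover4+beta>1-intro} is exactly what makes $\beta+\delta-1>0$ achievable. Consequently the right-hand side of the identity above tends to $0$, whence $\langle u(t,\cdot),\psi\rangle = 0$ for every $\psi\in C_c^\infty$, and thus $u\equiv 0$, giving uniqueness.

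\textbf{Main obstacle.} The crux of the proof lies not in the mollification machinery, which is by now classical, but in the quantitative flow regularity in the supercritical regime $\alpha\in(0,1)$: an $\alpha$-stable noise with $\alpha<1$ provides very weak regularization, and the standard Zvonkin/Krylov transformation is unavailable because the associated resolvent gains less than one derivative. Closing the commutator therefore hinges on Theorem \ref{ww1-1-intro}, which pushes the Hölder exponent of $D\phi_{0,t}$ up to $\alpha/4\wedge\beta$; without this sharpness, only a condition like $\frac{\alpha}{2}+\beta>1$ in the subcritical range $\alpha\geq 1$ (Theorem \ref{thm-uniqueness-1-intro}) could be achieved. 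A secondary subtlety is justifying the Marcus chain rule for $u^\epsilon(t,\phi_{0,t}(x))$ in the presence of infinite jump activity when $\alpha<1$; this is handled by truncating small jumps, applying the formula to the truncated SDE, and passing to the limit using the integrability of the Jacobian determinant supplied by Theorem \ref{ww1-1-intro}.
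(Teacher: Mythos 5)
Your proposal follows essentially the same route as the paper's proof of Theorem \ref{thm-uniqueness}: mollify, compose $u^\eps$ with the flow, reduce uniqueness to a DiPerna--Lions commutator $\mathcal{R}_\eps[b,u_s]$ evaluated along $\phi_s(x)$, and kill it using the H\"older regularity of the inverse flow; the identity you aim for, $u^\eps(t,\phi_t(x))=-\int_0^t\mathcal{R}_\eps[b,u_s](\phi_s(x))\,ds$, and the subsequent change of variables and passage $\eps\to0$ via the commutator lemma (the paper's Proposition \ref{commu-pro}) are exactly the paper's Steps 1--3. Two points, however, need correction. First, you misquote the flow result: Theorem \ref{ww1-1-intro} (Theorem \ref{ww1}) gives local $\delta$-H\"older continuity of $D\phi_{s,t}$ and $D\phi_{s,t}^{-1}$ for every $\delta<\tfrac{\alpha}{2}+\beta-1$, not for $\delta<\tfrac{\alpha}{4}\wedge\beta$; the true role of \eqref{eqn-alphaover4+beta>1-intro} is that it is equivalent to $1-\beta<\tfrac{\alpha}{2}+\beta-1$, so that $D\phi_s^{-1}$ and $J\phi_s^{-1}$ are locally $(1-\beta)$-H\"older uniformly in $s$, which is precisely the regularity Proposition \ref{commu-pro} demands of the pulled-back test function. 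Your arithmetic survives only by accident, because under the hypothesis $\tfrac{\alpha}{4}\le\tfrac{\alpha}{2}+\beta-1$, but the ``sharpness'' claim as you state it is wrong.

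The more serious gap is the chain-rule step, which you treat as classical and propose to settle by truncating small jumps of $L$. That does not address the actual difficulty: $u^\eps(\cdot,y)$ is itself a c\`adl\`ag semimartingale field driven by the same Poisson random measure as $\phi$ (its equation \eqref{eqn-transport-Marcus-2} holds, for each fixed $y$, only on a $y$-dependent null set), so the jumps of $u^\eps$ and of $\phi_t(x)$ occur simultaneously and the ``clean cancellation'' you invoke must be proved term by term via an It\^o--Wentzell formula for jump random fields (Theorem \ref{thm-Ito+Wentzel-LM}, applied in Theorem \ref{thm-IW-applied}). This requires, among other things, a universal $\mathbb{P}$-full event on which $y\mapsto u^\eps(t,y)$ lies in $D([0,T];C^2(\mathbb{R}^d))$, predictability of $h^\eps_s(y,z)=u^\eps(s-,y-z)-u^\eps(s-,y)$ jointly in $(s,y,z)$, and summability of the cross-jump terms $\sum_{s\le t}\bigl(\Delta u^\eps(s,\phi_s(x))-\Delta u^\eps(s,\phi_{s-}(x))\bigr)$ and $\sum_{s\le T}[\Delta D u^\eps(s,\cdot)]_{1;K}\,|\Delta\phi_s|$ — this is the content of Lemma \ref{lem-u^eps satisfies assumptions ITWF} and Appendix \ref{app}, and it is independent of whether $\alpha<1$. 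Truncating small jumps and passing to the limit neither produces these verifications nor simplifies them, so as written this step of your argument is not justified; the rest of the proposal, once the flow-regularity statement is corrected, matches the paper's argument.
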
 

\begin{remark} \label{rem-fgp}

In a similar way,  we can treat the more general case of the time-dependent drift vector field $b(t,x)$. But this would substantially enlarge the paper and thus  we have decided
to postpone the treatment of time-dependent drift to a future publication.
\end{remark}

\begin{remark}
Theorems \ref{thm-uniqueness-1-intro} and \ref{thm-uniqueness-intro} differ primarily in two aspects. First, the function space for $\divv b$ differs: one requires
$L^p_{loc}(\R^d)$ with $p>1$, while the other assumes $L^1_{loc}(\R^d)$. Second, the relationship between the parameters $\alpha$ and $\beta$ is given by equations \eqref{eqn-alphabeta} and \eqref{eqn-alphaover4+beta>1-intro}, respectively. This distinction is natural: since Theorem \ref{thm-uniqueness-intro} imposes a weaker condition on $\divv b$, it correspondingly requires stronger assumptions elsewhere.
\end{remark}

In order to prove the above theorems, we need a couple of new  results of the flow associated with \eqref{eqn-SDE-intro}. 

\paragraph{Our second main result:  Existence and stability of stochastic  flow of diffeomorphisms.}
We obtain the following \textit{fine properties} of the flow associated with SDE \eqref{eqn-SDE-intro}, see  Theorems \ref{ww1} and  \ref{thm-stability} for more details, which  is not only essential for formulating the first problem  but also of great importance in itself.

\begin{theorem}\label{ww1-1-intro} 
  Let us assume that $T>0$,   $\alpha \in (0,2)$ and  $\beta \in (0,1) $ satisfy condition \eqref{eqn-alphabeta}. 
Then there exists a $\mathbb{P}$-full event $\Omega^{\prime\prime} \subset \Omega$ and a map 
\[
\phi:[0,T]\times [0,T] \times \mathbb{R}^d\times \Omega \to \mathbb{R}^d, \;\; 
\]
here $\Omega^{\prime\prime}$ is independent of $(s,t,x)\in[0,T]\times[0,T]\times\R^d$, such that,  for every $(s,x)\in [0,T] \times \R^d$, the process 
\[
\mbox{$[s,T] \times  \Omega \ni (t,\omega) \mapsto \phi_{s,t}(x, \omega) \in \mathbb{R}^d$, }
\]%
is the unique solution to  SDE
\eqref{eqn-SDE-intro}   and for all  $\omega \in \Omega^{\prime\prime}$,  $0\le s \le  t \le T$,
the mapping 
\begin{align}\label{flow-intro-1117}
\R^d \ni x \mapsto \phi_{s,t}(x) =\phi_{s,t}(x, \omega) \in \R^d
\end{align}
is  a surjective diffeomorphism of class $C^{1+\delta}$ for any $\delta\in [0,\frac\alpha2+\beta-1)$. More precisely, both $\phi_{s,t}(x)$ and its inverse $\phi_{s,t}^{-1}(x)$ satisfy the sharp c\`adl\`ag property  in $(s,t,x)$ separately:
\begin{trivlist}
\item[(scp)] for every $\omega \in \Omega''$,             
            the maps  
\[ (s,t,x)\mapsto \phi_{s,t}(x, \omega) \in \mathbb{R}^d,\quad\quad  (s,t,x)\mapsto \phi^{-1}_{s,t}(x, \omega) \in \mathbb{R}^d
\]
are c{\`a}dl{\`a}g w.r.t. $s$,  for all $(t,x)$  fixed, c{\`a}dl{\`a}g  w.r.t. $t$, for all  $(s,x)$ fixed,  and  continuous w.r.t. $x$,  for all $(s,t)$ fixed; 
\end{trivlist}
moreover, for every  $\omega \in \Omega^{\prime\prime}$ and $\delta \in (0,\frac{\alpha}2+\beta-1)$,
the derivative functions
\begin{align*}
&\mathbb{R}^d \ni x \mapsto D \phi_{s,t}(x)   \in \mathscr{L}(\R^d,\R^d) \\
& \mathbb{R}^d \ni x \mapsto  D (\phi_{s,t}^{-1} )(x) \in \mathscr{L}(\R^d,\R^d) 
\end{align*}
  are     locally $\delta$-H\"older continuous  uniformly in $(s,t) \in [0,T] \times [0,T]$;
  and 
   $ D \phi_{s,t}(x),  D (\phi_{s,t}^{-1} )(x) $
exhibit the \textit{sharp c\`adl\`ag property} in $(s,t,x)$ separately. 

Let $(b^{n})_{n=1}^\infty\subset C_{\mathrm{b}}^{\beta}(\mathbb{R}^d,\mathbb{R}^d)$
be a sequence of vector fields and $\phi^{n}$ the corresponding
stochastic flows. 
 Let $b^{n}\to b$ in $C_{\mathrm{b}}^{\beta}(\mathbb{R}^d,\mathbb{R}^d)$.
Then, for every $p\geq 1$ and any compact set $K \subset \R^d$, we have:
\begin{align}
&\lim_{n\to\infty}\sup_{x\in{\mathbb{R}}^{d}} \sup_{s\in [0,T]}
\mathbb{E}[ \sup_{t \in [s,T]} |\phi_{s,t}^{n}(x)-\phi_{s,t}(x)\vert^{p}]=0;\label{stability1-intro}\\
&\sup_{n\in\mathbb{N}}\sup_{x\in{\mathbb{R}}^{d}}\sup_{s\in [0,T]} \mathbb{E}[  \sup_{t \in [s,T]} \Vert D\phi_{s,t}^{n}(x)\Vert^{p}]
+
\sup_{x\in{\mathbb{R}}^{d}}\sup_{s\in [0,T]} \mathbb{E}[  \sup_{t \in [s,T]} \Vert D\phi_{s,t}(x)\Vert^{p}]
<\infty;\label{stability2-intro}\\
&   \,  \lim_{n\to\infty}\sup_{x\in K}\sup_{s\in [0,T]} \mathbb{E}[  \sup_{t \in [s,T]} \Vert D\phi_{s,t}^{n}(x)-D\phi_{s,t}(x)\Vert^{p}]=0.\label{stability23-intro}
\end{align}
and
\begin{align}
&\lim_{n\to\infty}\sup_{x\in{\mathbb{R}}^{d}} \sup_{t\in [0,T]}
\mathbb{E}[ \sup_{s \in [0,t]} |(\phi_{s,t}^{n})^{-1}(x)-\phi_{s,t}^{-1}(x)\vert^{p}]=0,\\
&\sup_{n\in\mathbb{N}}\sup_{x\in{\mathbb{R}}^{d}}\sup_{t\in [0,T]} \mathbb{E}[  \sup_{s \in [0,t]}\Vert D(\phi_{s,t}^{n})^{-1}(x)\Vert^{p}]
+
\sup_{x\in{\mathbb{R}}^{d}}\sup_{t\in [0,T]} \mathbb{E}[  \sup_{s \in [0,t]}\Vert D\phi_{s,t}^{-1}(x)\Vert^{p}]
<\infty,\\
&  \, \lim_{n\to\infty}\sup_{x\in K } \sup_{t\in [0,T]} \mathbb{E}[  \sup_{s \in [0,t]} \Vert D(\phi_{s,t}^{n})^{-1}(x)-D\phi_{s,t}^{-1}(x)\Vert^{p}]=0.\label{stability23-inverse-intro}
\end{align}

\end{theorem}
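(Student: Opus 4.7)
The proof plan is to reduce the problem to an SDE with smoother coefficients via a Zvonkin-type transformation, and then import classical flow theory for Lévy-driven SDEs with regular coefficients. First, I would use the sharp Schauder estimates for the resolvent of the generator $\mathcal{L}^L$ (a constant multiple of $-(-\Delta)^{\alpha/2}$ under Hypothesis \ref{hyp-nondeg3}): given $b\in C^{\beta}_{\mathrm{b}}(\mathbb{R}^d,\mathbb{R}^d)$, the equation $\lambda u - \mathcal{L}^L u - b\cdot Du = b$ admits, for every sufficiently large $\lambda>0$, a unique solution $u = u_\lambda \in C^{\alpha+\beta}_{\mathrm{b}}(\mathbb{R}^d,\mathbb{R}^d)$. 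By choosing $\lambda$ large enough, $\|u\|_\infty + \|Du\|_\infty \le 1/2$, so that $\Phi(x) := x + u(x)$ is a global $C^{\alpha+\beta}$-diffeomorphism of $\mathbb{R}^d$ with $\Phi^{-1}$ in the same class. Since \eqref{eqn-alphabeta} gives $\alpha+\beta > 1+\alpha/2$, both $\Phi$ and $\Phi^{-1}$ are $C^{1+\delta}$ for every $\delta \in [0,\alpha/2+\beta-1)$.

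Next, setting $Y_t := \Phi(X_t)$ and applying the It\^o formula for pure-jump semimartingales (which, for the additive-noise SDE \eqref{eqn-SDE-intro}, coincides with its Marcus form), $Y$ solves a Lévy-driven SDE whose drift coefficient is the bounded continuous function $\lambda u\circ\Phi^{-1}$ and whose jump coefficient $F(y,z):=\Phi(\Phi^{-1}(y)+z)-y$ inherits, uniformly in $z$ in a controlled way, the $C^{1+\delta}$-regularity of $\Phi$ in the spatial variable $y$. For such SDEs, Kunita-type flow theory yields a stochastic flow of $C^1$-diffeomorphisms $\psi_{s,t}$ with the sharp càdlàg properties in $(s,t,y)$ separately, uniform-in-$(s,t)$ local $\delta$-Hölder regularity of $D\psi_{s,t}$, and uniform $L^p$-bounds on $\psi_{s,t}$, $D\psi_{s,t}$, $\psi_{s,t}^{-1}$, $D\psi_{s,t}^{-1}$.

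Then I would transfer back by defining $\phi_{s,t}(x,\omega) := \Phi^{-1}\bigl(\psi_{s,t}(\Phi(x),\omega)\bigr)$. The flow identity, surjectivity, and diffeomorphism property follow from composition, uniqueness for \eqref{eqn-SDE-intro} descends from that for $\psi$, and the chain rule
\begin{equation*}
D\phi_{s,t}(x) = \bigl[D\Phi(\phi_{s,t}(x))\bigr]^{-1}\, D\psi_{s,t}(\Phi(x))\, D\Phi(x),
\end{equation*}
together with its analogue for $\phi_{s,t}^{-1}$, transfers the sharp $C^{1+\delta}$-Hölder regularity and the separate càdlàg property in $(s,t,x)$ to $\phi$ and $\phi^{-1}$; the exponent $\delta < \alpha/2+\beta-1$ is precisely the one afforded by $D\Phi$. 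The stability statements \eqref{stability1-intro}--\eqref{stability23-inverse-intro} follow because the Schauder estimates are quantitative: $b\mapsto u_\lambda^b$ is Lipschitz from $C^\beta_{\mathrm{b}}$ to $C^{\alpha+\beta}_{\mathrm{b}}$, so $\Phi^n \to \Phi$ and $(\Phi^n)^{-1}\to \Phi^{-1}$ in $C^{1+\delta'}$ for any $\delta'<\alpha/2+\beta-1$; the associated transformed SDEs have coefficients converging in a topology strong enough for a Grönwall–BDG argument tailored to the $\alpha$-stable integrator to deliver $L^p$-convergence of $\psi^n$ and $D\psi^n$, which composes back through $\Phi^n$ to the statements for $\phi^n$.

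The hardest part is the supercritical regime $\alpha\in(0,1)$: here the drift $b$ is not dominated by the noise in any naive pathwise sense and $\alpha+\beta$ may be less than one for naive interpolation, so one must genuinely exploit rotational invariance (Hypothesis \ref{hyp-nondeg3}) to obtain Schauder estimates of the form $u\in C^{\alpha+\beta}_{\mathrm b}$ that are strong enough to give a $C^{1+\delta}$ change of variables. A second delicate point is the \emph{sharp} càdlàg property in $s$ with $t,x$ fixed, which is not standard in the Lévy flow literature: it requires the two-parameter flow identity $\phi_{s,t} = \phi_{u,t}\circ\phi_{s,u}$ together with quantitative continuity in the starting data, applied carefully across the jump times of $L$ to rule out a double-jump ambiguity in the backward time variable.
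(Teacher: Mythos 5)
Your overall strategy (solve $\lambda u-\gen{A}u-b\cdot Du=b$, set $\Phi=\mathrm{id}+u$, transform the SDE, and pull regularity back through the chain rule) is indeed the same It\^o--Tanaka/Zvonkin route the paper follows. The genuine gap is the step where you invoke ``Kunita-type flow theory'' for the transformed SDE to obtain a flow of $C^1$-diffeomorphisms $\psi_{s,t}$ with the sharp c\`adl\`ag property in $(s,t,y)$ separately, uniform-in-$(s,t)$ local H\"older bounds on $D\psi_{s,t}$, and all of this on a single $\mathbb{P}$-full set independent of $(s,t,y)$, together with the same for $\psi_{s,t}^{-1}$ and $D\psi_{s,t}^{-1}$. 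No such package exists in the literature, even for Lipschitz coefficients: this is precisely the point the paper makes (see Remark \ref{rem-kunita} and Remark \ref{se4}) --- Kunita's results give differentiability and homeomorphism properties only on $t$-dependent exceptional sets and do not prove the scp for the inverse flow or for the derivatives. The obstruction is structural: the inverse flow solves a backward equation \eqref{de} in which the drift involves $\phi_{r,t}^{-1}$ with the terminal time $t$ fixed, so a direct Kolmogorov-test argument in $(s,t)$ jointly does not apply. The paper has to build this machinery itself: first a Kolmogorov test on the difference quotients $N_t(x,\lambda)$ viewed as $C([0,T];\R^d)$-valued random fields to get differentiability on a $t$-independent full set (Theorem \ref{thm-ww}); then the introduction of the auxiliary matrix-valued process $K_t(x)$ solving \eqref{eq-1126-2-K}, the algebraic identity $K_t(x)\,(D\psi)(\phi_t(x))\,D\phi_t(x)=I$, a strengthened Kolmogorov--Totoki--Chentsov theorem for $D_{\Skor}([0,T];\mathscr{L}(\R^d,\R^d))$-valued fields (Theorem \ref{thm-KTC-ZB}) to produce a universal modification $K'$, and the inverse mapping theorem giving $D\phi_t^{-1}(x)=K'_t(\phi_t^{-1}(x))D\psi(x)$, from which the H\"older regularity and scp of $D\phi_{s,t}^{-1}$ follow. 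Your closing paragraph correctly flags that the scp in $s$ is nonstandard, but the flow identity plus ``quantitative continuity in the starting data'' is not a mechanism that produces it; without the $K$-process (or an equivalent device) the inverse-flow derivative statements in the theorem are unproved.

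A secondary error concerns stability: you assert that $b\mapsto u_\lambda$ is Lipschitz from $C^\beta_{\mathrm{b}}$ to $C^{\alpha+\beta}_{\mathrm{b}}$, which fails in the supercritical regime $\alpha\in(0,1)$, because in the difference equation the term $(b^n-b)\cdot Du_\lambda$ lies only in $C^{\alpha+\beta-1}_{\mathrm{b}}$, not $C^{\beta}_{\mathrm{b}}$; one only gets $\Vert u^n_\lambda-u_\lambda\Vert_0\to0$ together with local convergence in $C^{\alpha+\beta'}$ on compact sets (Theorem \ref{thm-approximation} and Remark \ref{cdd}). This is exactly why \eqref{stability23-intro} and \eqref{stability23-inverse-intro} are stated with $\sup_{x\in K}$ over compacta rather than over $\R^d$; your argument, taken at face value, would claim global stability of the derivatives for all $\alpha\in(0,2)$, which the paper obtains only for $\alpha\in[1,2)$.
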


\paragraph{New contributions of the paper.}
In what follows,   we highlight \textit{our main new contributions} of the current  paper.   Items (i) and (ii) are related to  the second main result of the paper.
\begin{itemize}
     \item[(i)] The proof of  regularization property of the stochastic flow. We show that the flow associate to the characteristic equation \eqref{eqn-SDE-intro} is a  $C^{1+\delta}$-diffeomorphism for H\"older continuous coefficients, see Theorem \ref{ww1-1-intro}. The requirement that the $\mathbb{P}$-full event $\Omega^{\prime\prime}$ is independent of $(s,t,x)\in[0,T]\times[0,T]\times\R^d$ is critical, introducing inherent technical challenges.
    
    \item[(ii)] New regularity of the Jacobian determinant  of the solution to \eqref{eqn-SDE-intro} with respect to $x$.  This result requires delicate results on the Kolmogorov equations,  the use of stochastic integrals in Banach spaces of martingale type $2$, and sophisticated estimates in fractional Sobolev spaces (cf., Theorems \ref{Reg-est-Jaco} and \ref{Reg-est-Jaco-2}). 
    \item[(iii)]  
    The  linking between (i), (ii) and the above uniqueness theorems on the transport equation  is achieved via 
the crucial It\^o-Wentzell formula in the jump case. To the best of our knowledge, there is only one paper that rigorously applies   the It\^o-Wentzell formula in the jump case, see \cite{Leahy+Mik_2016}, which is  based on the seminal paper \cite{Mik_1983}. In our setting with singular coefficient $b$, a rigorous application and verification of the It\^o-Wentzell formula require a meticulous and careful calculation.

\item[(iv)] The pathwise uniqueness of weak$^\ast$-$\mathrm{L}^{\infty}$-solutions to the linear transport  equation \eqref{eqn-transport-Markus}, 
 with the initial data  $u_0$ being an \textsl{essentially bounded} Borel measurable function, are established for $\beta$-H\"older continuous drifts, $\beta\in(0,1)$, and $\alpha$-stable type processes, $\alpha\in(0,2)$, including the supercritical case, i.e., $\alpha\in(0,1)$. 
\end{itemize}

We now describe these main contributions in more detail.

Firstly, the method of constructing solutions analogous to the transport equation \eqref{eqn-transport-Markus} is referred to as the method of characteristics. This requires starting from the   stochastic characteristics equation  \eqref{eqn-SDE-intro} for the transport equation \eqref{eqn-transport-Markus}. In the stochastic case, it is essential to study the fine properties of the stochastic characteristic equation, i.e., the flow of equation \eqref{eqn-SDE-intro}, as detailed in Theorem \ref{ww1-1-intro}, which is foundational for our analysis. The established theory of stochastic flows for SDEs driven by jump processes, even for regular Lipschitz coefficients, is insufficient for our purposes. The latest existing results on this topic are summarized/discussed below:
\begin{itemize}
\item The \textit{sharp c\`adl\`ag property} (scp) for the flow and its inverse, which is crucial for defining our solutions via the method of characteristics, remains unaddressed in all prior works. For instance, Kunita in his recent monograph on jump diffusions \cite{Kunita_2019} considers possibly  degenerate SDEs with coefficients 
satisfying the Lipschitz conditions but does not prove,  or even mention the scp for the flow and its inverse.  
Meanwhile, \cite{Pr20} (corresponding to Theorem \ref{d32} in this paper) examines the case of singular drift  $b$ with  non-degenerate stable noise  but only addresses the scp of the flow itself, neglecting  the scp for the inverse flow. 
This scp is discussed in more detail in a recent paper  \cite{Bondi+Priola_2023} by Bondi and the second named author, and they proved the scp for the flow  by independent methods for  possibly degenerate SDEs with  Lipschitz coefficients, but they also do not study  the scp for the inverse flow, see  Remark \ref{rem-kunita}  and the comment at the end of page 7 in \cite{Bondi+Priola_2023} for more details.

\item When the vector $b$ is singular, the authors of \cite{CSZ18}, \cite{Pr12} and \cite{Pr15}  investigate the homeomorphism property and differentiability with respect to the spatial variable for solutions of equation \eqref{eqn-SDE-intro} for $\alpha \in [1, 2
)$ and $\alpha \in (0, 2)$, respectively, see Theorems \ref{uno} and \ref{uno1} in this paper. However, the existence of a $\mathbb{P}$-full set independent of $s,t$ and the $C^1$ and $C^{1+\delta}$-diffeomorphism property of the flow \eqref{flow-intro-1117} have not been studied. The requirement of a $s,t$-independent $\mathbb{P}$-full set is essential for resolving our problems.
\end{itemize}

\indent In contrast to these existing results, our result described in Theorem \ref{ww1-1-intro},  not only establishes the scp for the flow and its inverse but also demonstrates that their spatial derivatives exhibit the scp. A key contribution is  that we prove the existence of  a full-$\mathbb{P}$ event, independent of $s,t,x$, on which the stochastic flow $\phi_{s,t}(x)$ associated with our SDE \eqref{eqn-SDE-intro},  is a $C^1$-diffeomorphism. Furthermore, we establish that its derivatives $D\phi_{s,t}(x)$ and $D\phi^{-1}_{s,t}(x)$ are locally $\delta$-H\"{o}lder continuous in $x$ and exhibit sharp c\`adl\`ag properties in the time parameters separately.   
As highlighted earlier, there is a significant gap between the existing results and what is required for our purposes. Obtaining these precise and reliable results poses considerable challenges in our specific setting.  The principal obstacles/challenges are summarized as follows:
\begin{itemize}
\item 
 The $\rm It\hat{o}$-Tanaka approach, which is similar
to the Zvonkin transformation, plays a central role in the study of the  \textit{fine properties}  of the flow associated with \eqref{eqn-SDE-intro}. Heuristically, this approach transfers the regularization of the law of the diffusion associated with a corresponding parabolic/Kolmogorov equation to its sample paths. 
In our setting, the associated  parabolic (Kolmogorov) equation takes the form
 \begin{align}\label{eq-1125-2-intro}
 \lambda v -  \gen{A} v  - b \cdot Dv = f,\;\; \lambda>0,
 \end{align}
where $\gen{A}$ is the generator of the pure jump L\'evy process $L$. In this case $\gen{A}$ is a nonlocal operator. For the simplicity of presentation, we assume as before that $L$ is a rotationally invariant  $\alpha$-stable process whose generator is the fractional Laplacian $(-\triangle)^{\frac{\alpha}{2}}$. When $\alpha\in(1,2)$, the leading term in \eqref{eq-1125-2-intro} is $\gen{A} v$. Here, by leading term we mean the term that dominates the regularization effect in \eqref{eq-1125-2-intro}. When $\alpha=1$,  $\gen{A}v$ and $D v$ have the same order and both serve as leading terms, and it is necessary to take into account the interaction between these two terms. In contrast, when $\alpha\in(0,1)$, the leading term comes from $Dv$, while $\gen{A}v$, capturing the driving noise's regularization effect, plays a subordinate role. In this supercritical case, observing the regularization by noise phenomenon becomes extremely difficult. 

For comparison, when the noise is Brownian motion, $\gen{A}$ is the Laplacian (corresponding to $\alpha=2$). Clearly, the jump noise in our setting produces weaker regularization than the Brownian motion case. Additionally, the Brownian motion case involves a local operator, whereas our framework deals with a nonlocal operator. As a result, the analysis of all of the main results in our setting becomes considerably more challenging, especially for $\alpha\in(0,1)$. This explains why  obtaining the uniqueness result in the supercritical case is somewhat unexpected. Indeed,  the regularization by noise phenomenon in supercritical case still contains several open problems, see Remark \ref{nounique} for related discussions. Finally, we note that, due to these analytical difficulties, for $\alpha\in(0,1)$ our results yield only the local stability of the flow, see \eqref{stability23-intro}, \eqref{stability23-inverse-intro}, \eqref{eqn-conv-u_n to u 0}, Remarks \ref{cdd} and \ref{remark 2025-11-226}. Nevertheless, this is still sufficient to establish the uniqueness of the solutions to the transport equation \eqref{eqn-transport-Markus}. 

\item The proof of this diffeomorphism property is substantially more involved than its Brownian motion counterpart and requires new analytical tools. In order to establish the  \emph{sharp} $C^{1+\delta}$-diffeomorphism property of the stochastic flow $\phi_{s,t}$, the core of our analysis lies in a careful investigation of the properties of $\phi_{0,t}(x)$, its inverse mapping $\phi_{0,t}^{-1}(x)$ and their spatial derivatives. The flow and its inverse satisfy, respectively, the following stochastic differential equations:
\begin{align} 
&\nxi_{0,t}(x) =  x + \int_0^t b(\nxi_{0,r}(x)) dr +L_t,\quad t\geq 0;\label{eq-intr-1126-1}\\
&\nxi_{0,t}^{-1}(x) =  x - \int_0^t b(\nxi_{r,t}^{-1}(x)) dr -L_t, \;\;   t\geq 0.\label{eq-intr-1126-2}
\end{align}
Even though equations \eqref{eq-intr-1126-1} and \eqref{eq-intr-1126-2} appear structurally similar, a closer examination reveals that they rely on fundamentally different analytical structures. In equation \eqref{eq-intr-1126-1}, the drift term $ b(\nxi_{0,r}(x))$ depends on the forward flow $\nxi_{0,r}(x)$, which is the central subject of our study. This structure makes it possible to adapt  classical arguments; however, their application is nontrivial due to the singularity of 
$b$ and the limited regularization effect of the driving noise $L$. In contrast, equation \eqref{eq-intr-1126-2} is an SDE in the backward time direction, where $b(\nxi_{r,t}^{-1}(x))$ involves the inverse flow $\nxi_{r,t}^{-1}$, $r\in[0,t]$, not $\nxi_{0,r}^{-1}$. Such a structural difference makes the analysis of $\nxi_{0,t}^{-1}$ and, in particular, its derivative substantially more delicate. While one may formally write $b(\nxi_{r,t}^{-1}(x))$ as $b(\nxi_{0,r}\circ \nxi_{0,t}^{-1}(x))$, such a representation does not simplify the analysis, as $\phi_{0,r}(y), y\in \R^d$ is a family of stochastic processes. To overcome this difficulty, we introduce a new auxiliary stochastic process $K$ that characterizes the derivative of the inverse flow through a representation identity, see \eqref{eq-1126-2-K} and \eqref{inverse-identity-eq-101}. Importantly, the process $K$ possesses better regularity properties, making the analysis of $D\phi_{0,t}^{-1}$ considerably more tractable. This construction  is highly non-trivial, as it relies on a combination of the It\^o-Tanaka trick, the inverse mapping theorem and a strengthened version of the Kolmogorov test. By means of these analytical ingredients, we derive  a path-wise formulation for the derivative of the inverse flow $D\phi_{0,t}^{-1}$, see \eqref{eq 202412007 03}. This approach not only yields a rigorous representation of $D\phi_{0,t}^{-1}$ but also provides the key tool for proving the local H\"older continuity and sharp  c\`adl\`ag property of the derivative of the inverse stochastic flow i.e.,  the \textit{sharp c\`adl\`ag property} of $D\phi_{s,t}^{-1}(x)$ in $(s,t,x)$ separately.

\item Let us emphasize again  that the above properties hold on a full-$\mathbb{P}$ event  $\Omega''$ which is independent of $s,t$ and $x$.  This aspect gives rise to the third major challenge and highlights another key distinction between  our result and the existing results. The independence of $\Omega''$ from $s,t,x$ is crucial for our analysis and is repeatedly used throughout the entire proof. Establishing such $\mathbb{P}$-full set independent of $s,t$ is considerably more involved than dealing with fixed values of $s$ and $t$. It requires delicate analytical arguments and the development of new technical ingredients, see for instance, the proofs of Lemma \ref{ef}, Theorem \ref{thm-ww} and Theorem \ref{ww1}.

\end{itemize}

Secondly, with the non-regular vector fields $b$, our uniqueness result for  equation \eqref{eqn-transport-Markus}  is proved by means of the concept of renormalized solutions and commutator lemmas. This approach requires a detailed analysis of the Jacobian determinant of the stochastic flow, denoted by $J\phi_{0,t}$, which can be written as  
\begin{equation}\label{eq-intr-1126-10}
\log J \phi_{0,t}(x)=\int_{0}^{t} \divv b\left(\phi_{0,s}(x)\right) d s.
\end{equation}
Since the drift $b$ is only $\beta$-H\"older continuous with some  $\beta\in(0,1)$, we must explore the improved regularity of the integral term of the right-hand side in order to establish the desired properties of the Jacobian. We again employ the It\^o-Tanaka trick and  the regularizing effect of the associated nonlocal parabolic equation \eqref{eq-1125-2-intro}, see in particular Theorem \ref{TH-Lp-est}, which is based on results  by \cite{Zhang_2013}. In comparison with the Brownian motion case, see  \cite[Theorem 10]{FGP_2010-Inventiones}, where the $L^p$-parabolic estimates $\|v\|_{W^{2,p}(\mathbb{R}^d)}\leq C\|f\|_{L^p(\mathbb{R}^d)}$ is used, our nonlocal parabolic equation \eqref{eq-1125-2-intro} has poor regularity:  $
 \Vert v\Vert_{H^{\alpha,p}(\mathbb{R}^d)}
  \le C \Vert f\Vert_{L^p(\mathbb{R}^d)}$, with only  $\alpha\in[1,2)$. Here $v$ is  the unique solution  of equation \eqref{eq-1125-2-intro} with RHS $f$. Specifically, when $\alpha\in(1,2)$, our analysis is confined to fractional Sobolev spaces $H^{\alpha,p}(\R^d)$. Since $\alpha$ is not an integer, several classical embeddings fail. For instance, even for a bounded domain $\mathcal{O}\subset \mathbb{R}^d$, the embedding $W^{\delta, p}(\mathcal{O}) \subset W^{\delta, 1}(\mathcal{O})$, $p>1$ no longer holds when  $\delta$ is not an integer, see \cite{Mir+Sic}. Because of weaker regularizing effect of our nonlocal parabolic equation and the failure of certain Sobolev embeddings, we can only prove that  $J\phi_{0,t}$ belongs to  a local fractional-order Sobolev space $ W_r^{\delta,p}$ for some $\delta\in(0,1)$ and $p>1$. See Theorems \ref{Reg-est-Jaco} and \ref{Reg-est-Jaco-2} for  a complete description.   The analysis within our framework poses some substantial technical challenges.  Indeed, we develop new regularity results, i.e., Theorems \ref{Reg-est-Jaco} and \ref{Reg-est-Jaco-2}, concerning the fractional Sobolev‑type regularity of the Jacobian determinant. The proofs of these theorems rely on delicate analyses of Kolmogorov equations. Moreover, we employ a tool different from \cite{FGP_2010-Inventiones}, namely  estimates for stochastic integrals with respect to compensated Poisson random measures in Banach spaces of martingale type 2;  see \cite{Zhu+Brz+Liu-2019} and the references therein. Our argument also relies on sophisticated  estimates in fractional Sobolev spaces, see e.g., Lemma \ref{lem-Sob-space-comp}, \eqref{eq 20251108 01}, \eqref{eq 20251108 02},  \eqref{eq 20251108 03} and \eqref{eq-20251108-1}. These techniques do not arise in the Brownian motion case, as the Jacobian's behavior in jump settings introduces additional complexities that are absent in the Brownian Motion  case.

Thirdly, we bridge the existence of the  regular flow with the well-posedness of the transport equation via the It\^{o}-Wentzell formula in the jump case. While prior work \cite{Leahy+Mik_2016} provides a rigorous foundation, its application to our SPDEs context is non-trivial. Verifying its assumptions and reformulating it into a form suitable for our specific needs, as detailed in the appendices, constitutes a further contribution. To be more specific, we briefly recall the underlying problem and illustrate the formula in our framework. Let
 $$
 F(t,y):=u^\eps(t,y)= \int_{\mathbb{R}^d} \vartheta_\eps(y-x) u(t,x)\,dx,\quad\quad \xi_t=\phi_{0,t}(x),
  $$
where $\eps>0$ and  $\vartheta_\eps$ is a  standard mollifier, $u$ is a weak$^\ast$-$\mathrm{L}^{\infty}$-solution to the transport equation and $\phi_{0,t}$ is the associated stochastic flow. For each fixed $y \in \mathbb{R}^d$, the process $F(t, y)$ satisfies SDE \eqref{eqn-transport-Marcus-2}, and  $\xi_t$ satisfies \eqref{eqn-SDE-intro} with $s=0$. The It\^o-Wentzell formula investigates the conditions under which the composition process $F(t,\xi_t)$ is well defined and yields the explicit stochastic dynamics governing its evolution. A major difficulty lies in the random process $F(t,y)$, which satisfies the jump-driven SDE \eqref{eqn-transport-Marcus-2} for each fixed $y\in\mathbb{R}^d$, only on $y$-dependent $\mathbb{P}$-full set. To rigorously define the composition process $F(t,\xi_t)$, we must establish a universal $\mathbb{P}$-full set independent of $y$. This necessitates a stronger form of regularity of the random field  $F(t, y)$, $y\in\R^d$. Specifically, we must establish the existence of a $\mathbb{P}$-full event, on which the trajectory of $F(t, y)$, $y\in\R^d$ belongs to $D([0,T];C^2(\mathbb{R}^d;\mathbb{R}))$. In other words, on this $\mathbb{P}$-full event, for every $t\in[0,T]$, $F(t, y)$, $y\in\R^d$ is twice continuously differentiable in space and is a $C^2(\mathbb{R}^d;\mathbb{R})$-valued c\`adl\`ag function w.r.t. $ t \in [0, T ]$; see Lemma \ref{lem-u^eps satisfies assumptions ITWF} for the proof. In addition, one must carefully check the relation between jumps of the processes $[0,T] \ni t \mapsto F(t,y)$, $y \in \mathbb{R}^d$,  and those for $\xi_t$ such as establishing 
\begin{align}
	\sum_{s\leq T}\;\sup_{y\in \mathbb{K}}|D_yF(s,y)-D_yF(s-,y)||\xi_{s}-\xi_{s-}|<\infty,\quad \mathbb{K}=[-K,K]^d,
	\end{align}
    and analyzing sums of the form
    $$\sum_{s\leq t}\Big(\Delta F(s,\oldL_{s})-\Delta F(s,\oldL_{s-})\Big),\quad\quad here\ \Delta F(s,y):=F(s,y)-F(s-,y).$$
We refer to Appendix \ref{app}, in particular \eqref{eq-1029-1} and \eqref{s33}, for the details. 
Handling these interactions requires a careful analysis of the jump structures of c\`adl\`ag paths.
 Consequently, such a formula  is  a very  delicate issue. To clarify this point,   we first include  Appendix \ref{sec-IWF} in which we provide a simplified reformulation of Proposition 4.1 from the very nice paper \cite{Leahy+Mik_2016} by Leahy and Mikulevicius, as well as  correct some small its inaccuracies.  We would like to emphasize   that even the  simplified  form of  the It\^o-Wentzell formula 
given in  Appendix \ref{sec-IWF} 
   is    quite difficult to apply in concrete 
examples related to SPDEs. Indeed, as mentioned above, it is a challenging  task to verify  whether  the several  assumptions of the It\^o-Wentzell formula in the jump case  
 hold  and also to simplify such a formula in a convenient way. In a second step, in Appendix \ref{app}, we verify that  the assumptions of our result  from Appendix \ref{sec-IWF} are indeed satisfied in the case of 
 $u^{\eps}(t,y), y\in\R^d$ and $\phi_{0,t}(x)$.  This procedure plays a crucial role in establishing the uniqueness of the stochastic transport equation, see  the proofs of Theorems  \ref{thm-uniqueness-1} and \ref{thm-uniqueness}. Overall,  the  situation appears to be very different with respect to the one in  the well-known It\^o-Wentzell formula in  the continuous case, see e.g., the monograph \cite{Kunita_1990} and the proof of Theorem 20 in \cite{FGP_2010-Inventiones}.

 It is worth mentioning that even the definition of weak$^\ast$-$\mathrm{L}^{\infty}$-solution requires special attention in the present jump case, see in particular Section \ref{subsection-def-solution}, Remark \ref{d88} and Lemma \ref{lem-transport-weak-2-(i)}. As mentioned earlier,  we consider the so-called weak$^\ast$-$\mathrm{L}^{\infty}$-solutions  of the linear transport type equation \eqref{eqn-transport-Markus}, 
 with the initial data  $u_0$ being an \textsl{essentially bounded} Borel measurable function and  the stochastic integration is understood in the Marcus form, see e.g., \cite{Applebaum_2009} and \cite[page 82]{Protter_2004}. 
 The weak form of this equation is obtained by (formally)  integrating against test functions $\theta \in C_c^{\infty}(\R^d)$ assuming some integrability conditions on $\divv b$ as in \cite{FGP_2010-Inventiones}; see  Definition  \ref{def-transport-weak-1} in Section \ref{sec-existence}. We stress that our  definition of a weak solution already  requires overcoming some technical difficulties, which are not present in the Brownian Motion case considered  in \cite{FGP_2010-Inventiones}, see in particular the discussion at the beginning of Section \ref{sec-existence} and Differences \ref{differences-01}.  In particular, the Marcus form of equation \eqref{eqn-transport-Markus} can be rewritten using the It\^o  stochastic integrals with respect to compensated Poisson random measures, see Definition \ref{def-transport-weak-2} and compare this to the Stratonovich equation in  the Brownian Motion case, see  \cite{Brz+Elw_2000}.

Finally, after overcoming the above difficulties, the pathwise uniqueness of weak$^{\ast}$-$\mathrm{L}^{\infty}$-solution  is established under the condition $b\in C^\beta_{\mathrm{b}}(\mathbb{R}^d,\mathbb{R}^d)$ for some  $\beta\in(0,1)$ and $\divv b \in L^{p}_{loc}( \mathbb{R}^{d})$ for $p=1$ with $\alpha\in(0,2)$ and some $p\in (\alpha, 2)\cup [2,\infty)$ with $\alpha\in[1,2)$, including the critical interval $p\in(\alpha,2]$. 
 Our pathwise uniqueness results reveal a regularization by noise phenomenon in the setting of L\'evy-driven SPDEs, as without noise there are examples of non-uniqueness under the above weak conditions on $b$. To the best of our knowledge, this is the first rigorous demonstration of such an effect within the L\'evy-driven SPDEs framework.

Importantly, we also prove in Appendix \ref{perturb} that the definition of a weak$^{\ast}$-$\mathrm{L}^{\infty}$-solution can be formulated equivalently without using the Marcus canonical integral. The proof in our jump case is delicate. We believe that this result is of independent interest. It  is in the spirit of rough path theory. For recent developments of rough path theory for SDEs driven by  L\'evy processes, see \cite{KrempPerk} and the references therein. 
 In addition, we   establish  a no blow-up  result of $C^1$ solutions in Appendix \ref{sec-no blow up}. Its proof again requires the use of the It\^o-Wentzell formula in the jump case.
This result is  related to the one proved in \cite{FGP-2012} in the case of  Brownian motion, see also  \cite{fedrizzi}.

 It is an open problem  to extend the  two Wong-Zakai approximation results
proved in Appendix C of \cite{FGP_2010-Inventiones} to our jump case. 
 We plan to address this problem in future work.


\paragraph{Plan of the paper and additional comments.} 

The paper is organized as follows. Section \ref{sec-preliminaries} is dedicated to preliminaries on L\'evy processes, function spaces, and our main assumptions. Section \ref{sec-regular flow} is devoted to the study of the stochastic flow, where the regularity results are established. Section \ref{sec-stability} presents stability estimates essential for subsequent analysis, with particular emphasis on the case $0<\alpha<1$. Section \ref{sec-existence} proves the main existence theorem for the transport equation,  with a rough version of that result  already formulated in Theorem \ref{thm-transport equation-rough}. Two main uniqueness results for the transport equation, see Theorems \ref{thm-uniqueness-1} and \ref{thm-uniqueness}, are proved in Section \ref{uni1}. To establish these uniqueness results,  we also derive  two new fractional Sobolev type regularity results concerning the Jacobian determinant of the stochastic flow, namely Theorems \ref{Reg-est-Jaco} and \ref{Reg-est-Jaco-2}.

The It\^o-Wentzell formula in the jump case   is presented in Appendix \ref{sec-IWF}. In Appendix \ref{app}, we verify  several  assumptions of the It\^o-Wentzell formula for $u^\varepsilon$ and $\phi_{0,t}$ and apply it to our problem.  We  prove  a result of no blow-up of $C^1$ solutions in Appendix \ref{sec-no blow up}. Appendix \ref{perturb} shows that the definition of weak$^\ast$-$\mathrm{L}^{\infty}$-solution can be formulated  equivalently without using the Marcus integration form.  Finally, Section \ref{sec-proofs of 3 lemmata}  contains the proof of some technical lemmata, and Section \ref{App-sec-Kolm-Th} reviews  the  Kolmogorov-Totoki-Cencov-type theorem
   in a  form we need in the paper.

\begin{remark}\label{zhang} 
We mention that  \cite{Hao+Zhang_2020} investigates the well-posedness of classical solutions to the following  random transport equation:
 \begin{align}\label{eqn-transport-strong3-Zhang}
 & \frac{\partial{u(t,x,\omega)}}{\partial t}+(b(t,x)+L_t(\omega))\cdot Du(t,x,\omega)\,=0,~t>0;\\
  & u(0,x)=u_0(x),\ \ x\in\mathbb{R}^d.
\end{align}
The authors assume that  $u_0 \in C^{1}_{\rm b}(\R^d)$, $L= (L_t)_{t\geq0}$ is an $\alpha$-stable type process with  $\alpha >1$, and 
$b: [0,T] \times \R^d$ is bounded
and $\beta$-H\"older continuous in $x$ uniformly in $t$ with $\beta \in  (\frac{2+ \alpha}{2(1+ \alpha)}, 1)$; this condition  implies $\frac{\alpha}{2}  + \beta >1$.  The previous equation is  different from our equation. 
   Similarly, in the case of Brownian motion $(W_t)_{t\geq0}$,  equation \eqref{eqn-transport-strong3-Zhang} with $L_t$ replaced by $W_t$
 is not the    equation considered in \cite{FGP_2010-Inventiones}, where
 equation  is formally like  
 \[\frac{\partial{u}(t,x,\omega)}{\partial t}+ (b(t,x)+  \frac{dW_t}{dt})\cdot Du(t,x,\omega)=0.
 \]
 \end{remark}

\begin{remark}\label{rem-kunita} 

Let us consider the following  possibly degenerate SDE with regular (Lipschitz)  time-dependent coefficients 
\begin{align}\label{eqn-SDE-regular}
\xi_{s,t}(x) &= x+ \int_s^t b(r, \xi_{s,r}(x))\,dr +   \int_s^t \int_{\mathbb{R}^d} g(r, \xi_{s,r-}(x))\tilde{N}(dr,dz), \;\;\;   t\geq s;  
\end{align}
see equation (3.4) and the proof of Theorem 3.1 in \cite{Kunita_2004}.  Here $\tilde{N}$ is a compensated Poisson random measure on $[0,\infty) \times \mathbb{R}^d$. 

 According to  our discussion after \eqref{eqn-SDE-intro},  the existence of a solution flow associated with this equation is of utmost importance. For instance, 
a natural question is whether there exists a function 
\[
\phi:[0,T]\times [0,T] \times \mathbb{R}^d\times \Omega^\prime \to \mathbb{R}^d
\]
where $\Omega'$ is  a $\mathbb{P}$-full event, such that 
 for every $(s,x)\in [0,T] \times \R^d$, the process   $\big ( \phi(s,t,x , \cdot ): t \in [s,T] \big)$
is a strong solution of Problem \eqref{eqn-SDE-regular} and the  sharp  c{\`a}dl{\`a}g
property (scp) is satisfied.
 Property  (scp) has not been stated  in the recent monograph \cite{Kunita_2019} by Kunita.		
 On the other hand, it has been  stated in the paper \cite{Kunita_2004} but    
 the proof in   \cite{Kunita_2004}  is not complete.
 
	 Indeed, on  \cite[page 353$_{4-3}$]{Kunita_2004},  it is stated, \textsl{but not proved}, that,  
  using  notation $\xi_t:=\xi_{0,t}$, the inverse flow process  $\xi_t^{-1}$ is a well-defined $C(\mathbb{R}^d , \mathbb{R}^d )$-valued c\`adl\`ag process.    If this assertion were true, then we would be able to define the process satisfying the sharp 
  c\`adl\`ag property as follows: 
  $\phi(s,t,x) =x$,    if  $t \le  s$ and     $\phi(s,t, x)=  [\xi_t \circ  \xi_s^{-1}](x)$,   if  $t \ge  s$.

In this way, the claims at the beginning of \cite[Page 354]{Kunita_2004} would be completely justified. 
However,  we have not 
found a proof of such a  statement in either \cite{Kunita_2004} or \cite{Kunita_2019}. On the other hand, 
the  sharp c\`adl\`ag property has been recently proved by independent methods in  a paper  \cite{Bondi+Priola_2023} 
by the second named author and Bondi for Lipschitz coefficients.  

\end{remark}

\section{Preliminaries}\label{sec-preliminaries}



\subsection{Notation} By $\mathbb{N}$ we will denote the set of all natural numbers starting from $0$. By $\mathbb{N}^\ast$ we will denote the set of those natural numbers  which are $\geq 1$.
By $\mathbb{R}$ we will denote the set of all real numbers. By $\mathbb{R}^\ast$ we will denote the set of those real numbers which are different from $0$  and
by $\mathbb{R}^+$ we will denote the set of those real numbers which are larger than $0$. By a countable set we will understand an infinite countable set, i.e., a set which is bijective with the set $\mathbb{N}$. Let $\mathbb{Q}$ be the set of all rational numbers in $\mathbb{R}$.

If $(X,\mathscr{X},\mu)$ is a measure space, we call a subset $B$ of $X$, a $\mu$-full set if and only if there exists a set $A \in \mathscr{X}$ such that $A \subset B$ and $\mu(X\setminus A)=0$.

Suppose that $(Y,\vert \cdot\vert_{Y})$ is a normed vector space, e.g., $Y=\mathbb{R}^d$. If $Y=\mathbb{R}$, then we will omit the symbol $Y$, e.g., we will write $C_{\mathrm{b}}(\R^d)$ instead of $C_{\mathrm{b}}(\R^d,\R)$. A similar convention is also used for other function spaces.

Assume that $\beta\in (0,1)$. If $K \subset \R^d$, then  we define  semi-norms  $\Vert  \cdot \Vert_{0,K}$,  $[ \cdot ]_{\beta,K}$ and $\Vert  \cdot \Vert_{\beta,K}$, by, for  a function $f: K \to Y$, 
\begin{equation}\label{eqn-seminorm-0}
\Vert  f \Vert_{0,K}:= \sup \bigl\{  \vert f(x) \vert_{Y}:\; x \in K \bigr\},
\end{equation}
\begin{equation}\label{eqn-seminorm-beta}
[f]_{\beta,K}:= \sup \Bigl\{  \frac{\vert f(x)-f(y) \vert_{Y} }{\vert x - y \vert^\beta}: x,y  \in K\text{ with } x\not=y\Bigr\},
\end{equation}
and
 \begin{equation}\label{eqn-norm-beta}
\Vert  f \Vert_{\beta,K}:=
\Vert  f \Vert_{0,K}  +  [f]_{\beta,K}.\\
\end{equation}

 By $C_{\mathrm{b}}(K,Y)$,  we will denote the Banach space of all continuous and bounded functions $
f : K \to Y $ endowed with the supremum norm  \eqref{eqn-seminorm-0}.  
By  $C_{\mathrm{b}}^{0}(K,Y)$ we will denote the closed sub-space $C_{\mathrm{b}}(K,Y)$ consisting of all
  uniformly  continuous functions. 
If {$K$ is compact and }$Y$ is separable,  then also the spaces $C_{\mathrm{b}}(K,Y)$ and $C_{\mathrm{b}}^0(K,Y)$. If in addition $Y$ is complete,  then 
these two spaces are  separable  Banach space as well.

By $C_{\mathrm{b}}^{\beta}(K,Y)$ we will denote the non-separable  Banach space of all
  bounded and $\beta$-H\"older continuous functions $
f : K \to Y $ endowed with the
 norm   \eqref{eqn-norm-beta}.
Let us point out that the above definition makes sense also for  $\beta=1$ what  corresponds to the Lipschitz condition. However, 
the seminorms and normes in this case are defined differently, see below. 

Let us also observe that
\eqref{eqn-seminorm-beta} makes sense for $\beta=0$ and that
\[
[f]_{0,K} \leq 2 \Vert  f \Vert_{0,K}.
\]
If $K =\R^d$, then we will omit the subscript $K$ from the notation introduced above. 
 By $C_c^\infty(\R^d,Y)$  we will denote  the vector space of all infinitely differentiable functions from $\mathbb{R}^d$ to $Y$ with compact support. 
 Note that if $\beta\in (0,1)$ {and $Y$ is separable and complete}, then  the closure in $C_{\mathrm{b}}^{\beta}(\R^d,Y)$ of the space
$C_{c}^{\infty}(\R^d,Y)$, denoted by $c^\beta_{\mathrm{b}}(\R^d,Y)$, is  a separable Banach space.

  Similarly,  if   $n \geq 1$ is a  natural number, then for a suitable function $f: K \to Y $ we put 
\begin{equation}\label{eqn-norm-n}
\Vert  f \Vert_{n,K}:= \sum_{k \in \mathbb{N}^d: \vert k \vert \leq n}   \sup \,\bigl\{   \Vert D^k f(x) \Vert: \;x \in K\bigr\}
\end{equation}
and 
 \begin{equation}\label{eqn-norm-n+beta}
 \Vert f\Vert_{n+\beta,K}:=\Vert f\Vert_{n,K}
+ \sum_{k \in \mathbb{N}^d: \vert k \vert = n}
 [D^k f]_{\beta,K}.
\end{equation}
The last term on the RHS of \eqref{eqn-norm-n+beta} we will also denote by $\left[D^n f\right]_{\beta,K}$. As before, we omit the subscript $K$ if $K=\R^d.$ 

By  $C_{\mathrm{b}}^n(\R^d,Y)$  we will denote the separable  Banach space of all
  $C^n$-class  functions $ f : \R^{d} \to Y $  whose derivatives up to order $n$ are bounded and uniformly continuous,
    endowed with the
 norm  \eqref{eqn-norm-n}. 
 We also define \[C_{\mathrm{b}}^{\infty}(\mathbb{R}^d,Y)=\bigcap_{k =1}^\infty C_{\mathrm{b}}^k(\mathbb{R}^d,Y).\]
If   $n \geq 1$  is a  natural number  and  $ \beta \in(0,1)$, then we say that a function $f: \mathbb{R}^d \to  Y$ belongs to $C_{\mathrm{b}}^{n+\beta}(\mathbb{R}^d,Y)$ if $f \in C_{\mathrm{b}}^n(\mathbb{R}^d,Y)$ and,
for every $k \in \mathbb{N}^d$ such that $ \vert k \vert  = n$
 the partial derivatives $D^k f \in C_{\mathrm{b}}^\beta(\mathbb{R}^d ,Y)$. Note that $ C_{\mathrm{b}}^{n+\beta}(\mathbb{R}^d,Y)$ is a Banach space endowed with the norm 
 $\Vert \cdot \Vert_{n+\beta}$ defined  by \eqref{eqn-norm-n+beta}.

In our paper we will also need the following fundamental results, see \cite[Lemma 4.1]{Pr12} and inequality (4.16) therein. Our formulation of the first result is a bit more precise and a bit stronger. By $(\tau_x)_{x \in \mathbb{R}^d}$ we denote the group of translation operators on the space $C_{\mathrm{b}}^{0}(\R^d,Y)$ defined by
\begin{equation}
\label{rqn-tau_x}
(\tau_x f)(y):= f(x+y), \;\; y\in \R^d.
\end{equation}

\begin{lemma} \label{lem-Lemma 4.1}
Assume that $Y$ is a normed vector space and $\gamma \in (0, 1]$.
If   $f \in C^{1+ \gamma}_\mathrm{b}(\R^d,Y)$ and  $x, y, z \in \R^d$, then
 \begin{equation}\label{eqn-Lemma 4.1}
 | f(x+y) - f(y) - f(x+z) + f(z) | \le
 [ Df ]_{\gamma}
 \, \vert y- z| \, |x|^{\gamma}.
\end{equation}
If $\delta \in (0,\gamma]$,   $f \in C^{\gamma}_\mathrm{b}(\R^d,Y)$ and  $x \in \R^d$, then
 \begin{equation}\label{eqn-4.16-b}
 [ \tau_x f - f ]_{\delta} \leq  2 \vert x \vert^{\gamma-\delta} [f]_{\gamma},
 \end{equation}
i.e., for all $x, y, z \in \R^d$,
 \begin{equation}\label{eqn-4.16}
 \vert  f(x+y)- f(y)
 -  f(x+z)+ f(z)
 \vert \leq  2 \vert x \vert^{\gamma-\delta} [f]_{\gamma} \vert y-z\vert^{\delta}.
 \end{equation}
\end{lemma}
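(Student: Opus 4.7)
The plan is to prove both inequalities by elementary manipulations: the first via a one-parameter interpolation combined with the fundamental theorem of calculus, and the second through a dichotomy based on the relative size of $|x|$ and $|y-z|$.

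For \eqref{eqn-Lemma 4.1}, I would introduce the auxiliary function $h:[0,1]\to Y$ defined by
$$h(t) := f(x+y+t(z-y)) - f(y+t(z-y)),$$
so that $h(0)=f(x+y)-f(y)$ and $h(1)=f(x+z)-f(z)$, and the left-hand side of \eqref{eqn-Lemma 4.1} equals $|h(1)-h(0)|$. By the fundamental theorem of calculus applied to the $C^1$ map $h$,
$$h(1)-h(0) = \int_0^1 \bigl[Df(x+y+t(z-y)) - Df(y+t(z-y))\bigr](z-y)\,dt.$$
Since for every $t \in [0,1]$ the two arguments of $Df$ differ by exactly the vector $x$, the $\gamma$-H\"older seminorm of $Df$ gives the pointwise bound $[Df]_\gamma |x|^\gamma |z-y|$, and integrating over $t$ yields \eqref{eqn-Lemma 4.1}.

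For the second part, I would first observe that \eqref{eqn-4.16-b} follows from \eqref{eqn-4.16} by dividing by $|y-z|^\delta$ and taking the supremum over $y\ne z$, so it suffices to prove \eqref{eqn-4.16}. Here I would split into two cases. In the regime $|y-z|\le |x|$, I would rewrite the quantity as $[f(x+y)-f(x+z)] - [f(y)-f(z)]$, bound each bracket by $[f]_\gamma |y-z|^\gamma$, and then use the factorization $|y-z|^\gamma = |y-z|^\delta |y-z|^{\gamma-\delta} \le |y-z|^\delta |x|^{\gamma-\delta}$. In the opposite regime $|y-z|\ge |x|$, I would regroup instead as $[f(x+y)-f(y)] - [f(x+z)-f(z)]$, bound each bracket by $[f]_\gamma |x|^\gamma$, and use $|x|^\gamma = |x|^{\gamma-\delta}|x|^\delta \le |x|^{\gamma-\delta}|y-z|^\delta$. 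Both regimes yield the uniform bound $2[f]_\gamma |x|^{\gamma-\delta}|y-z|^\delta$, which is \eqref{eqn-4.16}.

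I do not anticipate any serious obstacle: both arguments are elementary and rely only on the definitions of the relevant H\"older seminorms. The one point requiring a little care is choosing, in the second part, the correct regrouping in each regime so that the available H\"older estimate on $f$ is applied to the pair of points whose separation is \emph{smaller}; this is what allows the exponents on $|x|$ and $|y-z|$ to combine into the required product $|x|^{\gamma-\delta}|y-z|^\delta$. The role of the hypothesis $\delta \le \gamma$ appears exactly at this step, guaranteeing that the exponent $\gamma-\delta$ is nonnegative so that the elementary inequalities $|y-z|^{\gamma-\delta}\le |x|^{\gamma-\delta}$ and $|x|^{\delta}\le |y-z|^{\delta}$ hold in the respective cases.
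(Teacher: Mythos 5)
Your proof is correct. For \eqref{eqn-Lemma 4.1} your argument is essentially the paper's: the paper invokes the Mean Value Theorem, and your auxiliary curve $h(t)=f(x+y+t(z-y))-f(y+t(z-y))$ together with the fundamental theorem of calculus is the same idea (if one wants to avoid integrating a $Y$-valued function in a possibly incomplete normed space, one can replace the integral by the mean value inequality $|h(1)-h(0)|\le\sup_{t\in[0,1]}|h'(t)|$, but this is cosmetic). For the second part you take a genuinely different route: the paper derives \eqref{eqn-4.16-b} abstractly, by applying the interpolation inequality \eqref{eqn-interpolation inequality} of Lemma \ref{lem interpolatory ineq} to $g=\tau_x f-f$, using that translation is a contraction both in the sup norm and in the $[\cdot]_\gamma$ seminorm together with $[\tau_x f-f]_0\le 2\|\tau_x f-f\|_0\le 2|x|^\gamma[f]_\gamma$, whereas you prove \eqref{eqn-4.16} directly by the dichotomy $|y-z|\le|x|$ versus $|y-z|\ge|x|$, regrouping the four terms so that the $\gamma$-H\"older bound is applied to the pair with the smaller separation. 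The two arguments produce the same constant $2$ and are close in spirit (your case splitting is essentially the standard proof of the interpolation inequality unwound in this special case); the paper's version isolates the interpolation step as a reusable lemma, while yours is self-contained and makes the role of the hypothesis $\delta\le\gamma$ explicit at the exact point where $|y-z|^{\gamma-\delta}\le|x|^{\gamma-\delta}$ or $|x|^{\delta}\le|y-z|^{\delta}$ is used.
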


\begin{proof} The proof of the first part proceeds by  applying the Mean Value Theorem. 
The proof of the second part is based on two observations: (i) $\tau_x$ is a linear contraction in the space $C_{\mathrm{b}}^{0}(\R^d,Y)$; (ii) $\tau_x$ is a linear contraction in the space $ C^{\gamma}_\mathrm{b}(\R^d,Y)$ with respect to the seminorm $[\cdot]_{\gamma}$.  Then \eqref{eqn-4.16-b} follows from a simple interpolation inequality \eqref{eqn-interpolation inequality} stated in Lemma \ref{lem interpolatory ineq} below and inequality
\begin{align*}
[ \tau_xf-f]_{0}&\leq 2 \Vert \tau_xf-f \Vert_{0} \leq   2 \vert x \vert^{\gamma} [f ]_{\gamma}.
\end{align*}
\end{proof}
\begin{lemma}\label{lem interpolatory ineq}
Assume that $0 \leq \alpha_0 < \alpha_1 \leq 1$ and $\theta \in [0,1]$.  Then, for every function $f: \mathbb{R}^d \to Y$,
\begin{equation}\label{eqn-interpolation inequality}
[f]_{(1-\theta)\alpha_0+\theta \alpha_1}\leq [f]_{\alpha_0}^{1-\theta}[f]_{\alpha_1}^{\theta}.
\end{equation}
\end{lemma}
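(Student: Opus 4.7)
The plan is a direct pointwise factorization argument based on the very definition of the H\"older seminorm. Fix $x, y \in \mathbb{R}^d$ with $x \neq y$. Since the exponent $(1-\theta)\alpha_0 + \theta\alpha_1$ is an affine combination of $\alpha_0$ and $\alpha_1$, I would decompose
\[
|x-y|^{(1-\theta)\alpha_0 + \theta\alpha_1} = \bigl(|x-y|^{\alpha_0}\bigr)^{1-\theta}\bigl(|x-y|^{\alpha_1}\bigr)^{\theta},
\]
and analogously split the numerator as $|f(x)-f(y)|_Y = |f(x)-f(y)|_Y^{1-\theta} \cdot |f(x)-f(y)|_Y^{\theta}$. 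This yields the pointwise identity
\[
\frac{|f(x)-f(y)|_Y}{|x-y|^{(1-\theta)\alpha_0 + \theta\alpha_1}} = \left(\frac{|f(x)-f(y)|_Y}{|x-y|^{\alpha_0}}\right)^{\!1-\theta}\!\left(\frac{|f(x)-f(y)|_Y}{|x-y|^{\alpha_1}}\right)^{\!\theta}.
\]

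Each of the two bracketed quotients is bounded, by definition \eqref{eqn-seminorm-beta}, by $[f]_{\alpha_0}$ and $[f]_{\alpha_1}$ respectively, so the product on the right is at most $[f]_{\alpha_0}^{1-\theta}[f]_{\alpha_1}^{\theta}$. Taking the supremum over all pairs $x \neq y$ in $\mathbb{R}^d$ on the left-hand side gives precisely the interpolation inequality \eqref{eqn-interpolation inequality}.

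The boundary cases $\theta \in \{0,1\}$ are tautological, and if either seminorm $[f]_{\alpha_0}$ or $[f]_{\alpha_1}$ is infinite the inequality holds trivially (with the convention that the right-hand side equals $+\infty$ as soon as the corresponding base factor is nonzero); otherwise the factorization above is well defined. There is no genuine obstacle in this argument: the whole proof is a one-line consequence of the multiplicative splitting of the H\"older quotient, with no appeal to H\"older's or Young's inequality required since the identity is exact rather than a genuine inequality at the pointwise level.
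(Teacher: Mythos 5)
Your argument is correct: the pointwise multiplicative splitting of the H\"older quotient, bounding each factor by $[f]_{\alpha_0}$ and $[f]_{\alpha_1}$ respectively and then taking the supremum, is exactly the standard one-line proof, and your handling of the trivial cases $\theta\in\{0,1\}$ and infinite seminorms is fine. The paper omits its proof as ``simple,'' and this is precisely the intended argument.
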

\begin{proof}[Proof of Lemma is simple and hence omitted.]

  \end{proof}

\subsection{L\'evy Process}\label{subsec-Levy process}

In this subsection, we will recall the probabilistic framework from the paper \cite{Pr18}, see also   \cite{Pr15}. The assumptions on the noise therein are weaker, i.e., more general,  than those  in the paper  \cite{Pr12}.

  Assume that $(\Omega,\call{F},\mathbb{P})$ is a probability space with a filtration $\mathbb{F}=\bigl(\call{F}_t\bigr)_{t\geq 0}$ satisfying the usual conditions.
  Let  $\call{P}$ be the  predictable $\sigma$-field
 on $\Omega \times [0,\infty)$, i.e., the   $\sigma$-field generated by all  left-continuous $\mathbb{F}$-adapted processes.

Let us recall that  an $\R^d$-valued  process  $L = (L_t)_{\,t\geq 0}$  is called a L\'evy process  if it has stationary and independent increments, $L_0=0$, $\mathbb{P}$-a.s. and all its sample paths are c\`adl\`ag. The last condition is often replaced by continuity in probability, see e.g. \cite[Definition 4.1 and Theorem 4.3]{Peszat+Zabczyk_2007}. Often it is only required that the sample paths are c\`adl\`ag  almost surely.  By the L\'evy-Khinchin formula, see \cite[Theorem 4.27]{Peszat+Zabczyk_2007} and \cite[Section 11]{Sato_1999}, a L\'evy process can be characterized by the formula
\begin{equation}\label{eqn-psi-LK decomposition}
\psi(\xi)=-{\rm{i}} \langle a, \xi\rangle +\frac12 \langle \xi, Q\xi\rangle  -\lint_{\mathbb{R}^d} (e^{{\rm{i}}\langle \xi,z\rangle}-1-{\rm{i}}\langle \xi,z\rangle \1_{B}(z))\,\nu(dz),\ \ \xi\in\mathbb{R}^d,
\end{equation}
where $\rm i=\sqrt{-1}$, $B$ is the unit ball in $\mathbb{R}^d$, i.e.,
\begin{equation}\label{eqn-B_1}
B=\{ z \in \mathbb{R}^d: |z|\leq 1\},
\end{equation}
$(a,Q,\nu)$ is a L\'evy triplet of $L$
 and $\psi$ is the symbol (called also exponent)  of $L$, i.e.,
 \begin{equation}\label{eqn-psi}
e^{-t\psi(\xi)}:=\mathbb{E}(e^{{\rm i}\langle \xi,L_t\rangle}), \;\; t\geq 0, \; \xi \in \mathbb{R}^d.
\end{equation}
A L\'evy triplet $(a,Q,\nu)$  consists of a vector  $a\in \mathbb{R}^d$,  called the drift, a symmetric non-negative matrix $Q \in \mathbb{R}^{d\times d}$ and a L\'evy measure $\nu$, i.e., a non-negative Borel measure on $\mathbb{R}^d \setminus \{0\}$ such that
\begin{equation}\label{eqn-Levy measure moment}
\lint_{\mathbb{R}^d} \min \{ 1,\vert z \vert^2\}\, \nu(dz) <\infty.
\end{equation}

In the above and in what follows, we use the notation we have learned about from  the  paper  \cite{Kuhn+Schilling_2019}:
\[
\lint_{C} f(z) \,\nu(dz):=\int_{C\setminus\{0\}} f(z) \, \nu(dz), \;\; C \in \mathscr{B}(\mathbb{R}^d).
\]

 Let us  recall the
  L\'evy-It\^o decomposition of the process $L $ with L\'evy triplet $(a,Q,\nu)$, see \cite[Theorem 2.4.16]{Applebaum_2009} or
 \cite[Theorem 2.7]{Kunita_2004}, according to which, there exists $a\in\R^d$, a Brownian motion $W_{Q}$ with covariance matrix $Q$ and an independent Poisson random measure $\mu$ on $\mathbb{R}^{+} \times(\mathbb{R}^d\setminus\{0\})$ such that, for each $t\geq 0$,
 \begin{equation} \label{eqn-Levy-Ito}
 L_t = a \, t + W_{Q}(t)+
 \int_0^t \lint_{B} z \tilde \mu(ds, d z) +
 \int_0^t \int_{B^{\mathrm{c}}} z  \mu(ds, d z), \;\; t \ge 0,
 \end{equation}
  where we adopt the notation as before $ \int_0^t \lint_{B} z \tilde \mu(ds, d z) = \int_0^t \int_{B \setminus\{0\}} z \tilde \mu(ds, d z) $.
Note that $\tilde{\mu} $ is  the corresponding compensated Poisson random measure, i.e.,
\begin{equation}\label{eqn-mu-tilde}
\tilde{\mu}:=\mu-\Leb \otimes \nu.
\end{equation}

In this paper, we will consider, as in \cite{Pr15},    a  pure jump  $\R^d$-valued  L\'evy process    $L =  (L_t)_{\,t\geq 0}$ without the drift term, i.e., with L\'evy triplet $(0,0,\nu)$. In other words, the L\'evy  process $L$ can be represented in the following form
\begin{align}\label{eqn-Levy-Ito-2}
 L_t =
 L(t)=\int_0^t \lint_{B} z \tilde{\mu}(ds,dz)+ \int_0^t \int_{B^\rc} z \mu(ds,dz), \;\; t\geq 0.
\end{align}

\begin{definition}\label{def-Markov semigroup}
The  Markov semigroup $(\call{R}_t)_{t\geq0}$ acting on $C_{\mathrm{b}}(\R^d)$ and associated to the process  $L$ is defined by
\begin{equation}\label{eq Markov semigroup}
\call{R}_tf(x)=\mathbb{E}(f(x+L_t)),\ t \geq 0,\,f\in C_{\mathrm{b}}(\R^d),\,x\in\R^d.
\end{equation}
The pre-generator $\gen{A}$ of the semigroup $(\call{R}_t)_{t\geq0}$ is defined by
\begin{equation}\label{eqn-generator L}
\begin{aligned}
\gen{A}g(x)&=\lint_{\R^d}(g(x+z)-g(x)-\1_{B}(z)\langle z,D g(x)\rangle)\,\nu(dz), \;\; g \in  \mathcal{D}(\gen{A}),
\\
\mathcal{D}(\gen{A})&=C_c^\infty(\R^d),
\end{aligned}
\end{equation}
where by   $D g(x)$ we denote the gradient of the function  $g$ at $x\in\R^d$, i.e., $D g(x)\in \R^d$ is the unique vector such that
\begin{equation}\label{eqn-gradient} 
  (D g(x),y)=(d_xg)(y), \;\; y \in \R^d,
\end{equation}
where $d_xg \in \gen{L}(\R^d,\R^d)$ is the Fr{\'e}chet derivative of $g$ at $x$.

\end{definition}

Obviously  $\call{R}_0=I$ and the words ``Markov semigroup" can be justified by showing that the family $\call{R}=\bigl(\call{R}_t)_{t\geq 0}$ is indeed a
positivity preserving strongly continuous contraction semigroup  on the Banach space $C_0(\R^d)$, which by definition is the (closed) subspace of  $C_{\mathrm{b}}(\R^d)$ consisting of those functions belonging to the latter space which vanish at $\infty$.
Note that we have not characterised the domain of the infinitesimal generator of this semigroup.

\begin{remark}\label{rem-H2}
 One can prove, see  the discussion at  the beginning of Section 4 of \cite{Pr15} or bottom of page 427 of \cite{Pr12}, that
 the RHS of formula \eqref{eqn-generator L}
 makes sense   for $g\in C_{\mathrm{b}}^{\alpha^\prime}(\mathbb{R}^d)$, if  $\alpha^\prime>\alpha$. Moreover, the resulting function belongs to $C_{\mathrm{b}}(\mathbb{R}^d)$ and it will also be denoted  by $\gen{A}g$.
\end{remark}

\subsection{Hypotheses on the noise in the transport equation}\label{hy2}

 We consider an $\R^d$-valued purely jump L\'evy process $L$ with a L\'evy measure $\nu$. Let us define the Blumenthal-Getoor index of the process $L$, see \cite{Blumenthal}, by
\begin{equation}\label{eqn-alpha}
\alpha:=\inf \Big\{\sigma>0: \lint_{B}|y|^\sigma \nu(d y)<\infty\Big\}.
\end{equation}
 We assume that $\alpha \in (0,2)$.

 We denote by  $e_{1},...,e_{d}$ the canonical basis of ${\mathbb{R}}^{d}$ and denote, cf. \cite{FGP_2010-Inventiones},
\begin{equation}\label{eqn-L}
L_{t}=(L_{t}^{1},...,L_{t}^{d}), \;\; t\geq 0.
\end{equation}
We consider   a vector field
\begin{equation}\label{eqn-b}
b : \R^{d} \to \R^d
\end{equation}
which is $\beta$-H\"older continuous and bounded, $\beta\in(0,1)$, i.e.,
\begin{equation}\label{eqn-b-C^beta_b}
 b \in C^\beta_{\mathrm{b}}(\mathbb{R}^d,\mathbb{R}^d).
\end{equation}
We assume  as in \cite{Pr12,Pr15,Pr18} and \cite{CSZ18}  that
\begin{equation}\label{eqn-beta+alpha half}
\frac{\alpha}{2} +\beta  > 1.
\end{equation}
In some parts of this paper we will make an additional assumption that
the  distributional divergence of $b$ exists and is locally integrable, i.e.,
\begin{equation}\label{eqn-b-divergence}
\divv b \in L^1_{\mathrm{loc}}(\mathbb{R}^d).
\end{equation}
In this paper we will
study  two objects. Firstly, we are interested in the following stochastic differential equation (SDE) driven by process $L$
\begin{align} \label{eqn-SDE-0}
dX_t &= b(X_t)\,dt + dL_t,\; t>0.\;\;\;
\end{align}
Secondly, we are interested in  the following  stochastic partial differential equation of transport type
\begin{align}\label{eqn-transport-strong}
 & \frac{\partial{u(t,x)}}{\partial t}+b(x)\cdot D u(t,x)\, +\sum_{i=1}^d D_i u(t-,x)\diamond \frac{d L_t^i}{dt}=0,~t\in[0,T];\\
  & u(0,x)=u_0(x),\ \ x\in\mathbb{R}^d, 
\end{align}
 where $D_iu=e_i\cdot  D u$ is the derivative in the $i$-th coordinate direction and the initial data  $u_0$ is a bounded Borel function and  the stochastic integration is understood in the Marcus form, see e.g.,  \cite{Applebaum_2009} and \cite[page 82]{Protter_2004}. Let us point out that equation \eqref{eqn-SDE-0} is the   characteristics equation  for the transport equation 
\eqref{eqn-transport-Markus} or \eqref{eqn-transport-strong}.

Following  \cite{Pr18} , see  also  \cite{Pr15}), we assume

\begin{hypothesis}  \label{hyp-nondeg1}
{\em
\begin{trivlist}
\item[\textbf{(H)}] The L\'evy process $(L_t)_{t\geq 0}$ has a Blumenthal-Getoor index $\alpha \in (0,2)$.
\item[\textbf{(R)}] The  Markov semigroup $(\call{R}_t)_{t\geq0}$ satisfies $\call{R}_t (C_{\mathrm{b}}(\R^d)) \subset
C^1_{\mathrm{b}}(\R^d)$, for every $t>0$ and there exists  $c= c_{\alpha} >0$  such that, for every $f \in C_{\mathrm{b}}(\R^d)$,
 \begin{align} \label{eqn-grad}
 \sup_{x \in \R^d}| D (\call{R}_t f)(x)| \le \frac{c}{t^{1/\alpha}} \; \sup_{x \in \R^d}| f(x)|,\;\;\; t \in (0,1] .
 \end{align}
 \item[\textbf{(T)}] There exists $\theta>0$ such that
\begin{align}\label{eqn-big jumps}
\int_{\{|z|>1\}}|z|^\theta \nu(d z)<\infty.
\end{align}
\end{trivlist}
}
\end{hypothesis}

 Hypothesis \ref{hyp-nondeg1} is the most general assumption we impose on  the noise, compared with Hypotheses \ref{hyp-nondeg2} and \ref{hyp-nondeg3} below. Note that we always have $\alpha\in[0,2]$.
  In particular, assumptions \textbf{(H)} and \textbf{(R)} are equivalent to the assumptions in  \cite{Pr15}  when $\alpha \in (0,2)$.
  Assumption \textbf{(T)} will be used in the whole paper starting from  Theorem \ref{d32}. For examples of L\'evy processes that satisfy Hypothesis \ref{hyp-nondeg1}, please refer to \cite{Pr18}.

 In the paper we will consider Hypothesis \ref{hyp-nondeg1} in the case of   $ \alpha \in [1,2)$. Indeed it is even not known if pathwise uniqueness holds for the corresponding SDE
  under Hypothesis \ref{hyp-nondeg1} when $\alpha \in (0,1)$ and condition \eqref{eqn-beta+alpha half} holds;  cf.  Remarks  \ref{esempi} and \ref{nounique}.

 Hypothesis \ref{hyp-nondeg1} in particular will imply existence and uniqueness of weak solution to the transport equation when $\alpha \ge 1$, see Theorem \ref{thm-uniqueness}, which only requires $b\in C_{\mathrm{b}}^{\beta}(\mathbb{R}^d,\mathbb{R}^d)$,  provided that $1>\beta>1-\frac{\alpha}4>\frac12$  and $\divv b \in L_{\mathrm{loc}}^{1}( \mathbb{R}^{d})$ when $d\geq 1$.

\indent For the first   crucial  uniqueness  results, see Theorem  \ref{thm-uniqueness-1} in
Section \ref{uni1},   we will need the following  stronger assumption on the noise. In Theorem  \ref{thm-uniqueness-1}, we assume weaker H\"older regularity condition $b\in C_{\mathrm{b}}^{\beta}(\mathbb{R}^d,\mathbb{R}^d)$ with $1-\frac{\alpha}2<\beta\leq \frac12$, in addition an $L^p$ integrability condition on the divergence $\divv b \in L^{p}( \mathbb{R}^{d})$ for some $p\geq2$ when $d\geq 1$.

\begin{hypothesis} \label{hyp-nondeg2}
The L\'evy  process {\em  $(L_t)_{t\geq 0}$ is  a symmetric $\alpha$-stable  process, which is non-degenerate, i.e.,
\beq \label{stim}
\mbox{ there exists 
  $C_{\alpha}>0$: }\;\;  \psi (u) \ge C_{\alpha} |u|^{\alpha}, \;\; u \in \R^d.
\eeq
}
\end{hypothesis}
Again, we always consider Hypothesis \ref{hyp-nondeg2} with $\alpha \in [1,2)$. We will use this hypothesis as  needed to apply  $L^p$-estimates,  see in particular
Theorem \ref{TH-Lp-est}.  Hypothesis \ref{hyp-nondeg2} is a special case of the assumptions used in \cite{Zhang_2013}. Indeed   we do not know if the results from  \cite{Zhang_2013} hold when  $0< \alpha <1$.

\vskip 2mm We have a final assumption which is the  strongest one.  We will apply it in Theorem \ref{thm-uniqueness} to show the existence (and even the uniqueness)  of a weak solution when  $0<\alpha <1$.

\begin{hypothesis} \label{hyp-nondeg3}
The L\'evy  process {\em  $(L_t)_{t\geq 0}$ is  a symmetric $\alpha$-stable process, which is  rotationally invariant, i.e., there exists a positive
 constant
  $C_{\alpha}$ such that, for any $u \in \R^d$,
\beq \label{stim4}
 \psi (u) = C_{\alpha} |u|^{\alpha}.
\eeq
}
\end{hypothesis}

\begin{remark}\label{esempi} 
 In paper \cite{Pr15},  the second named author  proved that
 if   $\alpha \in [1,2)$,  $L =  (L_t)_{t\geq 0}$
  verifies  \textbf{(H)} and \textbf{(R)} in  Hypothesis \ref{hyp-nondeg1}
  and
 $b \in C_{\mathrm{b}}^{\beta}(\R^d,\R^d)$ with $\beta$ satisfying condition  \eqref{eqn-beta+alpha half},
  then the existence and the strong  uniqueness  holds for the following  SDE with $x \in \R^d$ and $s\in [0,\infty)$,
\begin{align} \label{eqn-SDE}
dX_t &= b(X_t)\,dt + dL_t,\; t>s;\;\;\;
\\
\label{eqn-SDE-IC}
X_s &=x.
\end{align}

 In  the earlier paper
 \cite{Pr12}, he proved similar statements under an   additional assumptions that
  $L = (L_t)_{t\geq 0}$ verifies Hypothesis \ref{hyp-nondeg2} with $\alpha \in [1,2)$.
 Indeed all the results from   \cite{Pr12} have been generalized in \cite{Pr15}
 by introducing new weaker assumptions  \textbf{(H)} and \textbf{(R)} of Hypothesis \ref{hyp-nondeg1}.

 A sufficient condition in order that condition  \textbf{(R)} in Hypothesis \ref{hyp-nondeg1} holds  is the following one, see also \cite{SSW}.
\begin{trivlist}
\item[\textbf{Condition (R')}]
There exists a  L\'evy measure $\nu_1$  on $\mathbb{R}^d$ and there exist   $C_1,C_2,M>0$ such that   symbol $\psi_1$ corresponding to $\nu_1$, i.e.,  defined by
\[
\psi_1(\xi)= -\lint_{\mathbb{R}^d} (e^{{\rm i}\langle \xi,z\rangle}-1-{\rm i}\langle \xi,z\rangle \1_{B}(z))\,\nu_1(dz), \;\; \xi \in  \mathbb{R}^d,
\]
 satisfies the following inequality
\begin{equation} \label{df44}
C_1 |\xi\vert^{\alpha} \le Re\, \psi_1 (\xi) \le C_2 |\xi\vert^{\alpha},\quad  |\xi|>M,
\end{equation}
and such that
\[
\nu(A)\geq \nu_1(A), \;\;A\in\call{B}(\mathbb{R}^d).\]

\end{trivlist}
Note that in paper \cite{Pr18} a stronger result is proven but under an additional  assumption \textbf{(T)},  see  Theorem \ref{d32} below.
\end{remark}


 Before we state the next important  inequality, let us add some comments regarding notations we use in our paper.

\begin{proposition}\label{prop-Ito integral}
 Let $C \in \call{B}(\R^d \setminus \{ 0\})$ and consider    a $\call{P} \otimes \call{B}(C)/\call{B}(\R^d)$-measurable mapping
 \begin{equation}\label{eqn-function F}
 F :
 [0, \infty) \times \Omega\times C  \to \R^d.
 \end{equation}
  If
  \begin{equation}\label{eqn-assumption function F}
  \mathbb{E} \int_0^T \int_C |F(s,z )\vert^2 \nu (dz ) ds< \infty, \mbox{ for every }T>0,
  \end{equation}
    then one can define
  the stochastic integral
 \begin{equation}\label{eqn-integral of  F}
 Z_t =  \int_0^t \int_{C} F(s,z)  \tilde{\prm}(ds, dz), \;\; t \in [0,\infty),
 \end{equation}
and  the process $Z = (Z_t)_{t\geq0}$ is an $L^2$-martingale with a c\`adl\`ag modification.
\end{proposition}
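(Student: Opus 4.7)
The plan is to follow the classical It\^o-type construction of stochastic integrals with respect to compensated Poisson random measures, now for $\R^d$-valued and $\call{P}\otimes \call{B}(C)$-measurable integrands. I would work in the Hilbert space
$$\call{H}_T := L^2\bigl(\Omega \times [0,T] \times C,\, \call{P} \otimes \call{B}(C),\, d\mathbb{P} \otimes ds \otimes d\nu;\, \R^d\bigr),$$
and start with the class $\mathcal{S}$ of simple predictable integrands of the form
$$F(s,\omega,z) = \sum_{k=1}^n \xi_k(\omega)\, \1_{(s_k, t_k]}(s)\, \1_{A_k}(z),$$
where $0 \le s_k < t_k$, $\xi_k$ is bounded and $\call{F}_{s_k}$-measurable, and $A_k \in \call{B}(C)$ satisfies $\nu(A_k) < \infty$. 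For such $F$, the integral is defined explicitly as
$$Z_t^F := \sum_{k=1}^n \xi_k\,\tilde{\mu}\bigl((s_k \wedge t,\, t_k \wedge t] \times A_k\bigr),$$
which is manifestly c\`adl\`ag in $t$. A direct computation using the independence of the increments of $\tilde{\mu}$ over disjoint predictable rectangles and the identity $\mathbb{E}[\tilde{\mu}((u,v]\times A)^2] = (v-u)\nu(A)$ yields the martingale property of $Z^F$ and the isometry
$$\mathbb{E} |Z_t^F|^2 = \mathbb{E}\int_0^t \int_C |F(s,z)|^2 \, \nu(dz)\, ds.$$

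The next step is to establish the density of $\mathcal{S}$ in $\call{H}_T$. This is carried out by a monotone class argument: one first approximates $F$ by its truncations $F\,\1_{C_n}$, where $C_n \uparrow C$ is a sequence of Borel sets of finite $\nu$-measure, available since $\nu$ is $\sigma$-finite on $\R^d\setminus\{0\}$; one then approximates $F\,\1_{C_n}$ by predictable step functions on $[0,T]\times C_n$ via the functional monotone class theorem. Granted this density, for general $F$ satisfying (\ref{eqn-assumption function F}) we pick, for each $T>0$, a sequence $(F^{(n)}) \subset \mathcal{S}$ converging to $F$ in $\call{H}_T$. The isometry makes $(Z_T^{F^{(n)}})$ Cauchy in $L^2(\Omega)$, and Doob's $L^2$-maximal inequality applied to the c\`adl\`ag martingale $Z^{F^{(n)}} - Z^{F^{(m)}}$ gives
$$\mathbb{E}\sup_{t \le T}\bigl|Z_t^{F^{(n)}} - Z_t^{F^{(m)}}\bigr|^2 \le 4\,\mathbb{E}\int_0^T \int_C \bigl|F^{(n)} - F^{(m)}\bigr|^2\,\nu(dz)\,ds \longrightarrow 0.$$

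Hence $(Z^{F^{(n)}})$ is Cauchy in the Banach space of c\`adl\`ag adapted $L^2$-martingales on $[0,T]$ endowed with the supremum-$L^2$ norm, and its limit $Z^F$ is c\`adl\`ag and is an $L^2$-martingale (the martingale identity passes through the $L^2$-limit by $L^2$-continuity of conditional expectation). A diagonal procedure across $T = 1, 2, \ldots$ patches these versions into a single c\`adl\`ag modification of $Z$ on $[0,\infty)$, defined uniquely up to indistinguishability; the isometry on $\call{H}_T$ shows that this modification does not depend on the approximating sequence. The main technical point is the density of simple integrands in $\call{H}_T$, which requires a careful exhaustion of $C$ by sets of finite $\nu$-measure since $\nu$ may be infinite near the origin; once this is secured, the rest of the construction follows standard lines.
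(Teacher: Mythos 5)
Your construction is correct and coincides with the classical route the paper itself relies on: the paper offers no detailed proof of this proposition, only the remark that the integral is built first for (simple) predictable integrands and then extended, which is exactly what you carry out via the isometry, the density of simple integrands in $L^2(\mathbb{P}\otimes ds\otimes\nu)$, Doob's $L^2$-maximal inequality, and a diagonal argument over $T$. No gap to report; the only points deserving a word of care are choosing the simple integrands with pairwise disjoint rectangles so the cross terms vanish in the isometry, and using $C_n=C\cap\{|z|>1/n\}$ (finite $\nu$-measure since $\nu$ is a L\'evy measure) in the truncation step, both of which are standard.
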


The construction of
the It\^o stochastic integral can  also be done by defining the stochastic integral first for predictable integrands, and then extending
the stochastic integral  to progressively measurable integrands using the fact that the dual predictable
projection of a Poisson random measure,  is absolutely continuous with respect
to the Lebesgue measure (with respect to the time variable).

We need  the following fundamental  $L^p$-inequality \eqref{eqn-Burkholder inequality} for stochastic integrals, see \cite[Theorem  2.11]{Kunita_2004} for more details. This inequality is usually called the Burkholder inequality.

\begin{proposition}\label{prop-Burkholder inequality}
For every $p \ge 2$,
 there exists $c(p) >0$ such that for every process $F$ as in Proposition \ref{prop-Ito integral} and for every $T\geq 0$, the following inequality holds
  \begin{equation}  \label{eqn-Burkholder inequality}
  \begin{aligned}
\mathbb{E} \Big[\sup_{0 \le s \le T} |Z_s\vert^p\Big]
     \le  c(p)
   \mathbb{E} \left[ \Bigl(\int_0^T \int_{C}|F(s,z)\vert^2  \nu(d z)ds \Bigr)^{p/2}
 +
 \,  \int_0^T \int_{C}|F(s,z)\vert^p  \nu(dz)ds\right].
 \end{aligned}
 \end{equation}
\end{proposition}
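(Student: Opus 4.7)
The plan is to combine Doob's maximal inequality with an It\^o formula argument applied to $x\mapsto |x|^p$. Since $Z$ is a c\`adl\`ag $L^2$-martingale by Proposition \ref{prop-Ito integral} (and in fact $L^p$ under \eqref{eqn-assumption function F} plus the analogous $L^p$-integrability condition, which one imposes via localization), Doob's inequality for $p>1$ gives
$$
\mathbb{E}\Big[\sup_{0\le s\le T}|Z_s|^p\Big]\le \Big(\tfrac{p}{p-1}\Big)^p \mathbb{E}|Z_T|^p,
$$
so it suffices to bound $\mathbb{E}|Z_T|^p$ by the RHS of \eqref{eqn-Burkholder inequality}.

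Next I would apply the It\^o formula for pure-jump semimartingales to the $C^2$-function $\phi(x)=|x|^p$ (which is $C^2$ precisely because $p\ge 2$). Writing the jumps of $Z$ as $\Delta Z_s=F(s,z)$ integrated against the underlying Poisson random measure $\mu$, and using the compensation $\mu=\tilde{\mu}+\mathrm{Leb}\otimes\nu$, this yields
$$
|Z_t|^p \;=\; M_t \;+\; \int_0^t\!\!\int_C \big[\,|Z_{s-}+F(s,z)|^p-|Z_{s-}|^p - p\,|Z_{s-}|^{p-2}\langle Z_{s-},F(s,z)\rangle \,\big]\,\nu(dz)\,ds,
$$
where $M_t$ is a local martingale (the integrand being integrated against $\tilde{\mu}$). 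A routine localization argument allows one to take expectations and remove $M_t$. I would then invoke the elementary Taylor-type pointwise inequality
$$
\big|\,|a+b|^p-|a|^p-p|a|^{p-2}\langle a,b\rangle\,\big|\;\le\; C_p\big(|a|^{p-2}|b|^2+|b|^p\big),\qquad a,b\in\mathbb{R}^d,\;p\ge2,
$$
which is verified by splitting into the cases $|b|\le |a|$ (Taylor's theorem) and $|b|>|a|$ (direct estimation). Applying it with $a=Z_{s-}$, $b=F(s,z)$ gives
$$
\mathbb{E}|Z_T|^p\;\le\;C_p\,\mathbb{E}\!\int_0^T\!\!\int_C \big(|Z_{s-}|^{p-2}|F(s,z)|^2 + |F(s,z)|^p\big)\,\nu(dz)\,ds.
$$

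To close the estimate, I would pull the factor $|Z_{s-}|^{p-2}$ out of the time--$z$ integral via the bound $|Z_{s-}|^{p-2}\le \sup_{r\le T}|Z_r|^{p-2}$, then apply Young's inequality with conjugate exponents $p/(p-2)$ and $p/2$:
$$
\sup_{r\le T}|Z_r|^{p-2}\cdot\!\int_0^T\!\!\int_C|F|^2\nu(dz)ds \;\le\; \varepsilon\sup_{r\le T}|Z_r|^p + C_\varepsilon \Big(\int_0^T\!\!\int_C|F|^2\nu(dz)ds\Big)^{p/2}.
$$
Taking expectations, combining with the Doob estimate, and choosing $\varepsilon$ small enough to absorb the $\sup$-term into the LHS yields \eqref{eqn-Burkholder inequality}.

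The main obstacle, in my view, is not the algebra but the technical step of justifying that the stochastic integral term in the It\^o formula is a true (not merely local) martingale so that it drops under expectation; this requires a careful localization by stopping times $\tau_n\uparrow\infty$ on which both $Z$ and the integrand $|Z_{s-}+F|^p-|Z_{s-}|^p$ have the required integrability, followed by a monotone/Fatou argument to pass to the limit using \eqref{eqn-assumption function F} together with the $L^p$-version of \eqref{eqn-assumption function F}. The verification of the pointwise inequality above, while elementary, is also an essential (and sometimes delicate) ingredient. Both steps are carried out in detail in \cite{Kunita_2004}, from which the inequality is invoked.
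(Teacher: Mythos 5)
The paper does not prove this proposition at all: it is quoted verbatim from \cite[Theorem 2.11]{Kunita_2004}, so there is no internal argument to compare against. Your sketch is a correct rendition of the standard proof of that result (and is close in spirit to Kunita's own): Doob's $L^p$ maximal inequality reduces the problem to $\mathbb{E}|Z_T|^p$; the It\^o formula for the purely discontinuous martingale $Z$ applied to $x\mapsto|x|^p$ (valid since $p\ge 2$) gives the compensator term you display; the pointwise Taylor-type bound $\bigl||a+b|^p-|a|^p-p|a|^{p-2}\langle a,b\rangle\bigr|\le C_p(|a|^{p-2}|b|^2+|b|^p)$ is correct and elementary; and Young's inequality with exponents $p/(p-2)$ and $p/2$ lets you absorb the supremum. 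The two points you flag are indeed where the real work lies: (i) the absorption step needs $\mathbb{E}\sup_{s\le T\wedge\tau_n}|Z_s|^p<\infty$ at the stopped level before you can subtract, and a stopping time of the form $\tau_n=\inf\{t:|Z_t|>n\}$ still allows an overshoot jump at $\tau_n$, which must be controlled by the $\mathbb{E}\int_0^T\int_C|F|^p\,\nu(dz)\,ds$ term on the right-hand side (finite without loss of generality, since otherwise the claimed bound is vacuous); (ii) passing to the limit $n\to\infty$ by Fatou/monotone convergence. With those details supplied, your argument is complete and self-contained, which is arguably more informative than the paper's bare citation; what the citation buys the paper is brevity and the precise constant-tracking already done in \cite{Kunita_2004}.
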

Of course, the above result is interesting only if the RHS of inequality \eqref{eqn-Burkholder inequality} is finite.

\vskip 0.3cm

\section{A regular stochastic flow}\label{sec-regular flow}

In this section, we consider a  vector field $b$ as in \eqref{eqn-b}  which only satisfies  the condition \eqref{eqn-b-C^beta_b}.
We will not use the assumption \eqref{eqn-b-divergence} about the divergence of $b$.

Let us first recall the following result from  \cite[Theorem 1.1]{Pr15}, see also \cite{Pr12}. Note that these  results hold also in the  case when $b: \R^d \to \R^d$ is Lipschitz and bounded, i.e., when $\beta=1$. 

We comment on the difference between the next theorem and 
Theorem \ref{ww1-1-intro} in Remark \ref{se4}.  
  \begin{theorem}
\label{uno}
Assume that  $\alpha \in [1,2)$ and $\beta \in (0,1)$  satisfy condition
\eqref{eqn-beta+alpha half}. Assume that $b\in C_{\mathrm{b}}^{\beta}(\mathbb{R}^d,\mathbb{R}^d)$.
Assume that   $L = (L_t)_{t\geq 0}$ is a L\'evy  process  satisfying \textbf{(H)} and \textbf{(R)} in Hypothesis \ref{hyp-nondeg1}.
Then the global existence and the pathwise uniqueness hold for Problem      \eqref{eqn-SDE}-\eqref{eqn-SDE-IC}. \\
 Moreover, if $X^x = (X_t^x)_{t\geq 0}$,  $x \in\mathbb{R}^{d}$, denotes
   the unique solution of Problem \eqref{eqn-SDE}-\eqref{eqn-SDE-IC} with $s=0$, then the following conditions are satisfied:

\hh (i) for all  $T>0$ and $p \ge 1$, there exists
  a constant $C(p,T)>0$ (depending also on $\alpha, \beta, d$,  the L\'evy measure $\nu$ and  the norm  $\Vert b \Vert_{\beta}$)  such that
 \begin{equation} \label{ciao7}
\mathbb{E}\Big[\, \sup_{0 \le s \le T} |X_s^x - X_s^y\vert^p\, \Big] \, \le C(p,T) \,\,
 |x-y\vert^p,\;\;\; x,\, y \in \R^d;
  \end{equation}
(ii) for every $t \ge 0$, the mapping
\begin{equation}\label{eqn-1.6}
\R^d \ni x \mapsto X_t^x \in  \R^d
\end{equation}
 is a surjective homeomorphism, $\mathbb{P}$-a.s.;

\hh(iii) for every $t \ge 0$, the mapping
\[ \R^d \ni  x \mapsto X_t^x \in \R^d\]
 is  a $C^1$-function 
   $\mathbb{P}$-a.s..
\end{theorem}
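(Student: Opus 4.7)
\textbf{Proof proposal for Theorem \ref{uno}.}

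The plan is to adapt the classical It\^o--Tanaka (Zvonkin) transformation to the non-local pure-jump setting, following the strategy developed in \cite{Pr12,Pr15}. The starting point is the vector-valued resolvent equation
\[
\lambda U - \gen{A} U - b \cdot DU = b \qquad \text{on } \R^d,
\]
where $\gen{A}$ is the pre-generator of $L$ from \eqref{eqn-generator L}. Using the gradient estimate \eqref{eqn-grad} from assumption \textbf{(R)} together with interpolation in H\"older scales (Lemma \ref{lem interpolatory ineq}), I would first establish a Schauder-type bound $\|U\|_{\alpha+\beta} \leq C\|b\|_\beta$ and, critically, a gradient-smallness estimate $\|DU\|_0 \to 0$ as $\lambda \to \infty$. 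Since $\tfrac{\alpha}{2}+\beta>1$ forces $\alpha+\beta>1$, the function $U$ lies in $C^{1+\gamma}_{\mathrm{b}}(\R^d,\R^d)$ for some $\gamma>0$, so for $\lambda$ sufficiently large the map $\Phi(x):=x+U(x)$ is a globally bi-Lipschitz $C^{1+\gamma}$-diffeomorphism of $\R^d$.

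Applying an It\^o formula for c\`adl\`ag semimartingales to $Y_t:=\Phi(X_t)$ and using the resolvent equation to cancel the singular term $b\cdot DU$ against $\gen{A}U$, one obtains a transformed SDE for $Y_t$ whose drift $\lambda\, U \circ \Phi^{-1}$ is Lipschitz and whose jump integrand $z\mapsto\Phi(\Phi^{-1}(y)+z)-y$ is Lipschitz in $y$ (both thanks to $\Phi, \Phi^{-1}\in C^{1+\gamma}$). Global existence and pathwise uniqueness for $Y$, and hence by pull-back $X_t^x=\Phi^{-1}(Y_t^{\Phi(x)})$ for the original SDE, then follow from the standard Picard theory for Lipschitz jump SDEs. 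The $L^p$ estimate \eqref{ciao7} is obtained by applying the Burkholder inequality \eqref{eqn-Burkholder inequality} to $Y_t^{\Phi(x)}-Y_t^{\Phi(y)}$, closing by Gronwall, and composing with the Lipschitz inverse $\Phi^{-1}$.

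For (ii), the homeomorphism property of $y\mapsto Y_t^y$ is classical for Lipschitz jump SDEs: injectivity is a direct consequence of pathwise uniqueness, and surjectivity follows from a properness/continuity argument based on a uniform moment bound on $|Y_t^y-y|$. Composing with $\Phi$ and $\Phi^{-1}$ transfers the property to $x\mapsto X_t^x$. For (iii), one formally differentiates the transformed SDE to obtain a linear SDE for $DY_t^y$ with bounded coefficients; applying the Kolmogorov--Chentsov continuity theorem to the difference-quotient process $\bigl(Y_t^{y+h}-Y_t^y\bigr)/|h|$, combined with the higher-moment version of \eqref{ciao7}, produces a continuous modification of $y\mapsto DY_t^y$ on a full-probability event. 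Composition with $\Phi^{-1}\in C^{1+\gamma}$ then yields the $C^1$-property of $x\mapsto X_t^x$.

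The main obstacle is Step 1: the resolvent $(\lambda-\gen{A})^{-1}$ gains only $\alpha\in[1,2)$ derivatives in the H\"older scale, so the drift perturbation $b\cdot DU$ is of the same order as $\gen{A}U$ at the critical value $\alpha=1$; the precise interplay between $\beta$ and this limited regularization is governed sharply by the hypothesis $\tfrac{\alpha}{2}+\beta>1$ and must be exploited through careful interpolation. A secondary delicate point is the rigorous justification of the It\^o formula for the $C^{1+\gamma}$ (rather than $C^2$) transformation $\Phi$ applied to the jump process $X$: one must establish absolute convergence of the compensator integral $\int_B [\Phi(x+z)-\Phi(x)-D\Phi(x)z]\,\nu(dz)$, which holds precisely because $1+\gamma>\alpha$ under \eqref{eqn-beta+alpha half}, together with an approximation argument by convolution of $\Phi$ with a mollifier.
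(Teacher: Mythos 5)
The paper does not actually prove Theorem \ref{uno}: it is recalled from \cite{Pr15} (see also \cite{Pr12}), and your sketch follows exactly the It\^o--Tanaka/Zvonkin route of those works, which the paper itself re-deploys later (the Schauder estimates of Theorem \ref{reg}, the It\^o formula of Lemma \ref{lem-Ito formula Tanaka trick}, and the transformation $\psi=\mathrm{id}+u$ with the Lipschitz SDE \eqref{eq Yy-1} in the proofs of Theorems \ref{thm-ww} and \ref{ww1}), so in substance your approach is the intended one. One point needs sharpening: for infinite-activity noise ($\nu(B)=\infty$) it is not enough that the transformed jump coefficient $h(y,z)=U(\Phi^{-1}(y)+z)-U(\Phi^{-1}(y))$ be Lipschitz in $y$ uniformly in $z$, since the Picard/Gronwall scheme for the small-jump part requires $\int_{B}\sup_{y}\Vert D_y h(\cdot,z)\Vert^{2}\,\nu(dz)<\infty$; this comes from the $C^{1+\gamma}$ bound through Lemma \ref{lem-Lemma 4.1}, which gives $\Vert D_y h(\cdot,z)\Vert_{0}\le c\,\Vert U\Vert_{1+\gamma}\,|z|^{\gamma}$ for $z\in B$ with $\gamma=\alpha+\beta-1$ (cf. \eqref{eq SSSS 01}, \eqref{eq 4.44}). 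It is precisely here, via $2\gamma>\alpha$ so that $\lint_{B}|z|^{2\gamma}\nu(dz)<\infty$, that the hypothesis $\frac{\alpha}{2}+\beta>1$ enters — not in the Schauder step, which only needs $\alpha+\beta>1$ — and the same $|z|^{\gamma}$-weighted estimate, together with its H\"older refinement \eqref{eqn-4.16}, is what closes the difference-quotient/Kolmogorov argument you propose for assertion (iii).
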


When $\alpha \in (0,1)$ a related result follows from Theorem 1.2 and Corollary 1.4 in  \cite{CSZ18}. Indeed, we have:

\begin{theorem} \label{uno1}
  Assume that  $\alpha \in (0,1)$ and $\beta \in (0,1)$ satisfy condition
\eqref{eqn-beta+alpha half}.
Assume that  $b\in C_{\mathrm{b}}^{\beta}(\mathbb{R}^d,\mathbb{R}^d)$.
Assume that   $L = (L_t)_{t\geq 0}$ is a L\'evy  process  satisfying Hypothesis \ref{hyp-nondeg3}.
Then assertions (i), (ii) and (iii) of Theorem \ref{uno} continue to hold.
\end{theorem}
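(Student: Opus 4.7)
The plan is to run the same It\^o--Tanaka (Zvonkin) programme that underlies Theorem \ref{uno}, but now exploiting the extra structure afforded by Hypothesis \ref{hyp-nondeg3}: the generator $\gen{A}$ is a constant multiple of $-(-\triangle)^{\alpha/2}$, so sharp heat-kernel and Schauder-type estimates for the fractional Laplacian are available. First, for $\lambda>0$ large enough I would solve componentwise the resolvent equation
\[
\lambda u - \gen{A} u - b\cdot Du = b
\]
and show that the unique bounded solution satisfies $u\in C_{\mathrm{b}}^{1+\delta}(\R^d,\R^d)$ for some $\delta\in(0,\frac{\alpha}{2}+\beta-1)$, with $\Vert Du\Vert_{0}\le 1/2$. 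Condition \eqref{eqn-beta+alpha half} is exactly what allows the fixed-point argument to close in H\"older scale, despite the fact that in the supercritical range $\alpha\in(0,1)$ the term $b\cdot Du$ dominates over $\gen{A}u$.

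Next, I would set $\Phi(x):=x+u(x)$. Because $\Vert Du\Vert_{0}\le 1/2$, the inverse function theorem gives that $\Phi:\R^d\to\R^d$ is a global $C^{1}$-diffeomorphism, with both $\Phi$ and $\Phi^{-1}$ bi-Lipschitz and in $C^{1+\delta}_\mathrm{loc}$. Applying the It\^o--Marcus change of variables to $Y_t:=\Phi(X_t^{x})$ transforms Problem \eqref{eqn-SDE}--\eqref{eqn-SDE-IC} into
\[
dY_t = \tilde{b}(Y_t)\,dt + d\tilde{L}_t(Y_{t-}),\qquad Y_0=\Phi(x),
\]
where $\tilde{b}(y):=\lambda u(\Phi^{-1}(y))$ is bounded and Lipschitz, and the jump coefficient $(y,z)\mapsto \Phi(\Phi^{-1}(y)+z)-y$ is Lipschitz in $y$ uniformly in $z$ with the right integrability against $\nu$. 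For such a regular jump SDE the classical theory of \cite{Kunita_2004} applies and yields pathwise uniqueness, the $L^p$-bound
\[
\mathbb{E}\Bigl[\sup_{0\le s\le T}|Y_s^{\Phi(x)}-Y_s^{\Phi(y)}|^{p}\Bigr]\le C(p,T)\,|\Phi(x)-\Phi(y)|^{p},
\]
and a stochastic flow of $C^{1}$-diffeomorphisms for $Y$. Transporting these properties back through the bi-Lipschitz $C^1$-diffeomorphism $\Phi$ gives simultaneously assertion (i) (using $|X_t^{x}-X_t^{y}|\le 2|Y_t^{\Phi(x)}-Y_t^{\Phi(y)}|$ and $|\Phi(x)-\Phi(y)|\le 2|x-y|$), assertion (ii) (composition of homeomorphisms), and assertion (iii) (composition and the chain rule, since $\Phi,\Phi^{-1}\in C^{1}$).

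The main obstacle is precisely the analytic step for $\alpha\in(0,1)$. When $\alpha\ge 1$, the fractional operator $\gen{A}$ is the leading term in the resolvent equation, and standard Schauder estimates $\Vert u\Vert_{C^{\alpha+\beta}}\le C\Vert f\Vert_{C^{\beta}}$ quickly produce the needed $u\in C^{1+\delta}$. For $\alpha\in(0,1)$ this leading-term argument fails; instead one must treat $b\cdot Du$ together with $\gen{A}u$ as a coupled system and exploit the precise rotationally invariant fractional heat-kernel bounds, together with the sharp gap $\frac{\alpha}{2}+\beta>1$, to regain one full derivative and a bit of H\"older regularity on $u$. This is exactly what is carried out in Theorem 1.2 and Corollary 1.4 of \cite{CSZ18}, so once those analytic inputs are invoked, the probabilistic conclusions (i), (ii), (iii) follow from the Zvonkin transfer described above.
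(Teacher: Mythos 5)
Your proposal is correct and follows essentially the same route as the paper: the paper proves Theorem \ref{uno1} simply by invoking Theorem 1.2 and Corollary 1.4 of \cite{CSZ18}, whose argument is exactly the It\^o--Tanaka/Zvonkin transform you sketch (solve $\lambda u-\gen{A}u-b\cdot Du=b$ with small $\Vert Du\Vert_0$, set $\Phi=I+u$, transfer to a jump SDE with $C^{1+\gamma}$ coefficients, apply \cite{Kunita_2004}, and pull back through the diffeomorphism $\Phi$). The same machinery is redeveloped later in the paper (Theorem \ref{reg}, Lemma \ref{lem-Ito formula Tanaka trick}, and the proofs of Theorems \ref{thm-ww} and \ref{ww1}), so your reconstruction matches the intended proof.
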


\begin{remark} \label{nounique}    Note that the assumptions of  Theorem 1.2 in \cite{CSZ18} are more general than
those of Theorem \ref{uno1}.  On the other hand, it remains  an \textbf{Open Problem} if the pathwise uniqueness holds for Problem \eqref{eqn-SDE}-\eqref{eqn-SDE-IC} with $\alpha \in (0,1)$ and $\beta \in (0,1)$  satisfying  condition
\eqref{eqn-beta+alpha half} and    $L = (L_t)_{t\geq 0}$ is a L\'evy  process  satisfying     \textbf{(H)} and \textbf{(R)} in the general Hypothesis \ref{hyp-nondeg1} (cf. Remark \ref{esempi}). On this respect we mention \cite{ButkovskyDereGer}  which also contains 
    pathwise uniqueness results  when $0<\alpha <1$ under additional hypotheses. 
\end{remark}

\begin{remark} \label{se4}   The proofs of Theorem  \ref{uno}   given in \cite{Pr15} and  Theorem  \ref{uno1}   given in \cite{CSZ18}
show  that the assertions (ii) and (iii) of Theorems  \ref{uno} and \ref{uno1}  hold on a $\mathbb{P}$-full set which can {\it possibly depend}  on $t \ge 0$.
We could rewrite  these conditions (ii) and (iii) as follows. \\
 For any $t \ge 0$, there exists a $\mathbb{P}$-full  event
\begin{equation}\label{eqn-full set-2}
\Omega_t^\prime \in \call{F}:\;\;\; \mathbb{P}(\Omega_t^\prime)=1,
\end{equation}
such that, for every $\omega
\in \Omega_t^\prime$,  the map
\begin{equation}\label{eqn-1.6-omega}
\R^d \ni x \mapsto X_t^x(\omega) \in  \R^d
\end{equation}
is a surjective homeomorphism
 of   $C^1$-class. \\
  However, the $C^1$-class diffeomorphism property of  the map from \eqref{eqn-1.6}  has not been investigated.

According to the  assertions in Theorem \ref{ww1-1-intro}  relevant  goals 
of the present Section \ref{sec-regular flow}  are  to show that $\Omega_t^\prime$ can be chosen  independent of $t$, see  Theorem \ref{thm-homeomorphism},  and to
 demonstrate the  $C^1$-class diffeomorphism property of the map starting from \eqref{eqn-1.6}, see Theorem \ref{ww1}.
\end{remark}

\subsection{The homeomorphism property}\label{subsec-homeomorphism property}

\smallskip
We start with    \cite[Theorem 6.8]{Pr18}
 and  \cite[Theorem 6.10]{Pr18} which are consolidated in the next theorem (a related result which covers degenerate SDEs is \cite[Theorem 3.5]{Pr20}).

Note that in \cite{Pr18} the vector field $b$ was time-dependent and the time $t$ was restricted to a bounded interval $[0,T]$. Since we consider the vector field $b$ here to be  time-independent, we can work on the whole unbounded interval $[0,\infty)$. We only have to pay attention to estimates which are uniform only on bounded subsets of the set $[0,\infty) \times \mathbb{R}^d$.

In the next result we need all the assumptions of Hypothesis \ref{hyp-nondeg1} (including \textbf{(T)}).

\begin{theorem} \label{d32} Assume that $\beta \in (0,1)$  satisfies  condition
\eqref{eqn-beta+alpha half} with  $b\in C_{\mathrm{b}}^{\beta}(\mathbb{R}^d,\mathbb{R}^d)$. 
Moreover,  if   $\alpha \in [1,2)$, then  we assume   that   $L = (L_t)_{t\geq 0}$ is a L\'evy  process  satisfying  Hypothesis \ref{hyp-nondeg1}, i.e.,
\textbf{(H)}, \textbf{(R)} and \textbf{(T)}. If $\alpha \in (0,1)$, then  we assume that $L$ satisfies
Hypothesis \ref{hyp-nondeg3}. \\
  Then, there  exists  a  $\call{B}([0,\infty) \times [0,\infty) \times {\mathbb R}^d) \otimes   \call{F}/\call{B}({\mathbb R}^d) $-measurable function
\begin{equation} \label{nuova}
\phi : [0,\infty) \times [0,\infty) \times {\mathbb R}^d \times \Omega \to {\mathbb R}^d,
\end{equation}
such that for every $(s,x)\in [0,\infty)\times \R^d$, the process   $\big ( \phi(s,t,x , \cdot ) \big)_{t \in [s,\infty)}$
is a strong solution of Problem \eqref{eqn-SDE}-\eqref{eqn-SDE-IC}.

Moreover, there exists an almost sure event $\Omega^\prime$
 such that
{\sl the following assertions hold for every $\omega \in \Omega^\prime$.}

\hh
(i) For any $x \in {\mathbb R}^d$,  the mapping
 \[ [0,\infty) \ni s \mapsto   \phi (s,t,x, \omega) \in \R^d \] is c\`adl\`ag locally uniformly in $t$ and $x$, i.e.,
  if  $s \in (0,\infty)$ and   $(s_k)$ is  a sequence such that   $s_k \toup s$,
   then, for all $T,M>0$, the following hold
   \begin{align} \label{eqn-sg}
\lim_{k \to \infty} \sup_{|x| \le M}\sup_{t \in [0,T]} |\phi (s_k,t,x, \omega) - \phi (s-,t,x, \omega) | =0,
\end{align}
and if $s \in [0,\infty)$  and
 $(s^n) $  is sequence such that    $s^n \todown s$, then, for all $T,M>0$, the following holds
\begin{align} \label{sd}
 \lim_{n \to \infty} \sup_{|x| \le M}\sup_{t \in [0,T]} |\phi (s^n,t,x, \omega) - \phi (s,t,x, \omega) | =0.
\end{align}
\hh (ii) For all  $x \in {\mathbb R}^d$ and $s \in [0,\infty)$,
\begin{equation}
  \phi (s,t,x, \omega) =x,\quad  \mbox{ if } \, 0 \le t \le s,
  \end{equation}
   and
\begin{equation} \label{d566}
 \phi (s,t,x, \omega) = x + \int_{s}^{t}b\left( \phi (s,r,x, \omega) \right) dr + L_t(\omega) - L_s(\omega),\;\;\; t \in [s,\infty).
\end{equation}

\hh (iii) For any $s \in [0,\infty)$, the function \[\R^d \ni x \mapsto \phi (s,t,x, \omega) \in \R^d\]
is continuous in $x$,  locally uniformly with respect to  $t$.\\
 Moreover,
for every  integer $n > 2d$ and every  $T>0$, there exists  a $\call{B}([0,T])\otimes \call{F}/\call{B}([0,\infty])$-measurable function  $V_n(\cdot,\cdot;T) : [0,T] \times \Omega  \to [0,\infty]$ such that
\begin{equation} \label{eqn-V_n-L^1}
\int_0^T V_n(s , \omega;T)\, ds < \infty
\end{equation}
 and
\begin{equation}
\label{toro}
\begin{aligned}
\hspace{-6truecm}
\sup_{t \in [0,T]} &|\phi (s,t,x, \omega)  - \phi (s,t,y, \omega)|
\\
&\le V_n(s, \omega;T) \,  |x-y\vert^{\frac{n - 2d}{n}} [(|x| \vee \vert y|)^{\frac{2d + 1}{n}} \vee 1],\;\;\; x,y
\in {\mathbb R}^d,  \; s \in [0,T].
\end{aligned}
\end{equation}
(iv) For any $0 \le s <  r \le t < \infty$, $x \in {\mathbb R}^d$, we have
\begin{equation} \label{fl2}
 \phi (s ,t,x, \omega) = \phi (r ,t, \phi (s ,r,x, \omega), \omega),
\end{equation}
i.e.,
\begin{equation} \label{eqn-semiflow}
 \phi_{s,t}  = \phi_{r,t} \circ \phi_{s,r},
\end{equation}
where
\begin{equation} \label{eqn-semiflow-def}
  \phi_{s,t} (x, \omega):= \phi(s ,t,x, \omega).
  \end{equation}
  (v)
 If for some  $s_0 \in [0,\infty)$, $\tau = \tau(\omega) \in (s_0, \infty)$  and  $x \in {\mathbb R}^d$,   a c\`adl\`ag  function
 $g:  [s_0,\tau ) \to  {\mathbb R}^d$ solves the integral equation
\begin{equation} \label{integral}
g(t) = x + \int_{s_0}^{t}b\left( g(r)\right)  dr
 +  L_{t} (\omega) - L_{s_0}(\omega),\;\;\; t \in [s_0,\tau),
\end{equation}
then we have $g(r) = \phi(s_0,r,x, \omega)$, for $r \in [s_0,\tau)$.
\end{theorem}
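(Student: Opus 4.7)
My plan is to build on the pointwise pathwise existence and uniqueness supplied by Theorems \ref{uno} and \ref{uno1}, and upgrade these to a jointly measurable stochastic flow with the sharp c\`adl\`ag property via a Kolmogorov-Totoki-Cencov type argument combined with the It\^o-Tanaka transformation. For every fixed $(s,x)$, those theorems provide a strong solution of \eqref{eqn-SDE}-\eqref{eqn-SDE-IC} on a $\mathbb{P}$-full event $\Omega^{s,x}$; the task is to produce the measurable map \eqref{nuova} and to transfer the pathwise assertions (ii)--(v) together with the sharp c\`adl\`ag property (i) to a common $\mathbb{P}$-full event $\Omega'$ that is independent of $(s,t,x)$.

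The first substantive step strengthens \eqref{ciao7} into a two-parameter moment bound: for $p$ large, $T>0$ and bounded $K\subset\R^d$,
\[
\mathbb{E}\bigl[\sup_{t\in[0,T]}|\phi(s,t,x,\cdot)-\phi(s',t,y,\cdot)|^{p}\bigr]\le C_{p,T,K}\bigl(|x-y|^{p}+|s-s'|^{p\beta/\alpha}\bigr),\qquad x,y\in K.
\]
This estimate is obtained by the It\^o-Tanaka change of variables $y\mapsto y+U_\lambda(y)$, where $U_\lambda$ is a $C^{1+\delta}$ solution of the resolvent equation \eqref{eq-1125-2-intro} with right-hand side $-b$ that is a small perturbation of the identity for $\lambda$ large; the existence and smoothness of $U_\lambda$ use Hypothesis \ref{hyp-nondeg1}\,\textbf{(R)} (or Hypothesis \ref{hyp-nondeg3} when $\alpha\in(0,1)$) together with \eqref{eqn-beta+alpha half}, while \textbf{(T)} controls the large-jump contributions through the Burkholder inequality \eqref{eqn-Burkholder inequality}. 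The transformed equation has a Lipschitz drift and a pure-jump noise with finite $p$-th moments, so a standard Gronwall argument closes the loop; the dependence on $|s-s'|$ follows from the boundedness of $b$ and an $L^p$-bound for $|L_{s\vee s'}-L_{s\wedge s'}|$. Applying the Kolmogorov-Totoki-Cencov theorem from Section \ref{App-sec-Kolm-Th} with $p>2d$ yields a jointly $\call{B}([0,\infty)^2\times\R^d)\otimes\call{F}$-measurable modification $\phi$, defined on a $\mathbb{P}$-full event $\Omega_1$, on which $(t,x)\mapsto\phi(s,t,x,\omega)$ is continuous in $x$ locally uniformly in $t$; a Garsia-Rodemich-Rumsey chaining produces the quantitative H\"older bound \eqref{toro}, and hence (iii). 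Assertion (ii) holds first on $\Omega_1\cap\bigl(\bigcap_{(s,x)\in D}\Omega^{s,x}\bigr)$ for a countable dense $D\subset[0,\infty)\times\R^d$ and extends to every $(s,x)$ by pathwise continuity in $x$ of both sides of \eqref{d566}.

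The principal difficulty is the sharp c\`adl\`ag property (i): a naive countable union does not yield a single $\mathbb{P}$-full event on which \eqref{eqn-sg} and \eqref{sd} hold for every $s$ and every compact in $(t,x)$. For the right limit \eqref{sd}, the semiflow identity \eqref{fl2} gives $\phi(s,t,x,\omega)=\phi(s^n,t,\phi(s,s^n,x,\omega),\omega)$ whenever $s\le s^n\le t$, so the spatial continuity of $\phi(s^n,t,\cdot)$ (uniform in $s^n$ near $s$ by the preceding moment estimate) reduces matters to $\phi(s,s^n,x,\omega)\to x$ locally uniformly in $x$, which follows from \eqref{d566} and right-continuity of $L$. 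For the left limit \eqref{eqn-sg} the analogous identity $\phi(s_k,t,x,\omega)=\phi(s,t,\phi(s_k,s,x,\omega),\omega)$ reduces convergence to the existence of a common limit for $\phi(s_k,s,x,\omega)$; by \eqref{d566} this limit equals $x+\Delta L_s(\omega)$ and, together with the spatial continuity of $\phi(s,t,\cdot)$, is controlled by the uniform-in-$s$ Kolmogorov tail estimate
\[
\mathbb{E}\Bigl[\sup_{x\in K}\sup_{r,r'\in[s-\varepsilon,s]}|\phi(r,r',x,\cdot)-x|^{p}\Bigr]\le C_{K}\varepsilon^{p\beta/\alpha},
\]
summed along a geometric sequence of $\varepsilon$'s via Borel-Cantelli. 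This step I expect to be the main obstacle, because the moment bound must be robust under joint variation of $s$ and of the compact sets in $(t,x)$, and the construction of $\Omega'$ must survive the interplay between the jumps of $L$ (which manifest in the forward variable $t$) and the backward parameter $s$.

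Finally, the semiflow identity (iv) follows on $\Omega'$ from pathwise uniqueness: both sides of \eqref{fl2} are c\`adl\`ag functions of $t\ge r$ that solve \eqref{d566} with initial condition $\phi(s,r,x,\omega)$ at time $r$, hence coincide. The uniqueness assertion (v) is an immediate application of the pathwise uniqueness of Theorems \ref{uno}/\ref{uno1} to the c\`adl\`ag function $g$ solving \eqref{integral}. Throughout, the linchpin is the two-parameter moment estimate of the second paragraph: it is what makes the Kolmogorov-Totoki-Cencov machinery deliver a single $\mathbb{P}$-full event $\Omega'$ independent of $(s,t,x)$, rather than merely a family of events depending on the parameters.
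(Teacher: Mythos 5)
There is no in-paper proof to compare against: the paper does not prove Theorem \ref{d32} but quotes it as a consolidation of Theorems 6.8 and 6.10 of \cite{Pr18} (see also \cite{Pr20} and \cite{Bondi+Priola_2023}), and only later extends it. So your proposal has to stand on its own, and its linchpin fails. The two-parameter moment bound
$\mathbb{E}\bigl[\sup_{t\le T}|\phi(s,t,x)-\phi(s',t,y)|^{p}\bigr]\le C\bigl(|x-y|^{p}+|s-s'|^{p\beta/\alpha}\bigr)$
for large $p$ is not obtainable in this setting. The spatial estimate \eqref{ciao7} works precisely because the additive noise cancels in $X^x_t-X^y_t$; by contrast $\phi(s,t,x)-\phi(s',t,x)$ contains the raw increment $L_{s\vee s'}-L_{s\wedge s'}$, whose $p$-th moments are infinite for $p\ge\alpha$ under Hypotheses \ref{hyp-nondeg2}/\ref{hyp-nondeg3}, and under Hypothesis \ref{hyp-nondeg1} condition \textbf{(T)} only guarantees moments of some small order $\theta$. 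Even discarding the large jumps, Burkholder gives only an increment of order $|s-s'|$ (exponent $1$), which is below the threshold $1+d$ needed for a Kolmogorov--Totoki--Cencov argument in the $(1+d)$-dimensional parameter $(s,x)$. More structurally, any KTC-type argument in the $s$ variable would deliver a modification \emph{continuous} in $s$, which is impossible: at a jump time $s$ of $L$ the map $s\mapsto\phi(s,t,x)$ genuinely jumps (your own computation identifies the left limit as $\phi(s,t,x+\Delta L_s)$). So the step that was supposed to make the full event $\Omega'$ independent of $(s,t,x)$ cannot be carried out this way; in \cite{Pr18} the universal event is instead built by constructing the flow for a countable dense set of initial times and extending to all $s$ pathwise via the flow identity and the right-continuity of $L$, with the estimate \eqref{toro} (with its integrable, possibly infinite-valued $V_n(s,\omega;T)$) coming from an $L^p$-in-$x$/Fubini-type argument rather than a sup-in-$s$ chaining.

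Two further cautions on the parts of your argument that lean on the missing estimate: the right-limit reduction for \eqref{sd} uses spatial equicontinuity of $\phi(s^n,t,\cdot)$ ``uniform in $s^n$ near $s$'', which \eqref{toro} does not provide (since $V_n(\cdot,\omega;T)$ is only integrable, it may blow up along your sequence $s^n$), so this needs a separate pathwise argument; and the semiflow identity (iv) must be established for \emph{all} triples $s<r\le t$ on one full event before you may invoke it in the c\`adl\`ag argument, which again requires the countable-dense-times construction rather than a per-triple uniqueness statement. Your identification of the left limit, and the derivation of (v) and of (iv)-for-fixed-triples from the pathwise uniqueness of Theorems \ref{uno}/\ref{uno1}, are fine in spirit.
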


In the present paper  we prove  new results which extend the results from    Theorem \ref{d32}, see e.g. Lemma \ref{ef} and Theorems  \ref{thm-homeomorphism},
\ref{thm-ww} and \ref{ww1}.

\vskip 1mm  In the sequel $\Omega^\prime$ (or $\bar{\Omega}^\prime$) will denote a $\mathbb{P}$-full event that can be obtained starting from the set of full measure given in Theorem \ref{d32}. For each $\omega \in \Omega^\prime$ and $x\in\mathbb{R}^d$, see Theorem \ref{d32}, we   set
\begin{equation}\label{eqn-inverse}
\nxi_{s,t}(x)(\omega) :=\nxi_{s,t}(x,\omega) = \phi(s,t,x, \omega),\;\;\;  s , t \in [0,\infty).
\end{equation}
We can extend the random variables $\nxi_{s,t}(x)$ from $\Omega^\prime$ to the whole of $\Omega$, by defining them to be equal to  $\id_{\mathbb{R}^d}(x):=x$  when $\omega \in \Omega \setminus \Omega^\prime$.

 By  Theorems \ref{uno} and \ref{uno1} we know that, for every $t \in [0,\infty)$, there exists a $\mathbb{P}$-full event possibly depending on $t$ such that  on this event
 we have that $\nxi_{0,t}(x)$ is a homeomorphism.
We extend this property
   showing  that
\begin{lemma} \label{ef}    Under the assumption of Theorem \ref{d32},
there exists an almost sure event $\bar{\Omega}^\prime$ such that
for every  $\omega \in \bar{\Omega}^\prime$ and every   $ t \in [0,\infty)$,
the mapping

\[ \R^d \ni x \mapsto \nxi_{0,t}(x)(\omega)\in \R^d\]
 is a surjective homeomorphism.

Moreover,  on $\bar{\Omega}^\prime$, for every $T\ge 0$,   we have:
\begin{equation}
 \nxi_{0,t}^{-1}(x) =  \nxi_{0,T}^{-1} ( \nxi_{t,T}(x)),\;\;   \, x \in \R^d,\ t\in[0,T].
\end{equation}
Here and in  the remainder of this paper, we denote by $\nxi_{0,t}^{-1}$  the inverse of $\nxi_{0,t}$. Finally, on $\bar{\Omega}^\prime$, for every $x \in \R^d$, the process
\begin{equation} \label{cadd}
 [0,\infty) \ni  t  \mapsto \nxi_{0,t}^{-1}(x) \in \R^d
\end{equation}
is c\`adl\`ag.
 \end{lemma}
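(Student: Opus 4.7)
My plan is to build $\bar{\Omega}'$ as a countable intersection over rational times and then to promote the homeomorphism property from rational $t$ to all $t$ using a purely topological argument. More precisely, set
\[
\bar{\Omega}' := \Omega' \cap \bigcap_{q \in \mathbb{Q} \cap [0,\infty)} \Omega'_q,
\]
where $\Omega'$ is the almost sure event delivered by Theorem \ref{d32} and, for each nonnegative rational $q$, $\Omega'_q$ is the almost sure event on which $x \mapsto \phi_{0,q}(x)$ is a surjective homeomorphism, furnished by Theorem \ref{uno} when $\alpha \in [1,2)$ and by Theorem \ref{uno1} when $\alpha \in (0,1)$. Being a countable intersection of full events, $\bar{\Omega}'$ again has probability one. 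From this point on, everything is deterministic: fix $\omega \in \bar{\Omega}'$ and an arbitrary $t \geq 0$.

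To prove that $\phi_{0,t}(\cdot,\omega)$ is a surjective homeomorphism I would proceed in three short steps. First, pick any rational $T > t$ and invoke the flow identity \eqref{fl2}, which gives $\phi_{0,T} = \phi_{t,T} \circ \phi_{0,t}$; since $\phi_{0,T}$ is a bijection, $\phi_{0,t}$ must be injective. Second, $\phi_{0,t}(\cdot,\omega)$ is continuous in $x$ by Theorem \ref{d32}(iii), and the integral equation \eqref{d566} together with boundedness of $b$ gives $|\phi_{0,t}(x,\omega)| \geq |x| - t\|b\|_{0} - |L_t(\omega)|$, so $\phi_{0,t}(\cdot,\omega)$ is proper. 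Third, by Brouwer's invariance of domain the image $\phi_{0,t}(\mathbb{R}^d,\omega)$ is open, and by properness it is also closed; connectedness of $\mathbb{R}^d$ forces the image to be the whole $\mathbb{R}^d$. A proper continuous bijection between locally compact Hausdorff spaces is automatically a homeomorphism, which yields the first assertion of the lemma.

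The identity $\phi_{0,t}^{-1}(x) = \phi_{0,T}^{-1}(\phi_{t,T}(x))$ for $t \in [0,T]$ is then immediate: applying $\phi_{0,T}^{-1}$ on the left and $\phi_{0,t}^{-1}$ on the right to the flow relation $\phi_{0,T} = \phi_{t,T} \circ \phi_{0,t}$ yields precisely this formula. For the final assertion, I would read the formula with some fixed rational $T \geq t$ and exploit the sharp càdlàg property in the first time variable given in Theorem \ref{d32}(i): by \eqref{eqn-sg} and \eqref{sd}, $s \mapsto \phi_{s,T}(x,\omega)$ is càdlàg at every $s = t \in [0,T]$ (locally uniformly in $x$), and composing with the continuous map $\phi_{0,T}^{-1}(\cdot,\omega)$ transfers right-continuity and existence of left limits to the process $t \mapsto \phi_{0,t}^{-1}(x,\omega)$.

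The main obstacle is the surjectivity at non-rational $t$, because the flow identity alone gives only injectivity of $\phi_{0,t}$; indeed, it is precisely this step which distinguishes the present lemma from the results of \cite{Pr15,CSZ18}, where the null set is allowed to depend on $t$. The invariance of domain plus properness argument is what bypasses this dependence and is, I believe, the cleanest way to obtain a $t$-independent exceptional set. A secondary point to check carefully is that the flow identity \eqref{fl2}, the continuity in $x$ from Theorem \ref{d32}(iii), and the representation \eqref{d566} all hold simultaneously on the single event $\Omega'$ — but this is already ensured by Theorem \ref{d32}, so the construction above is self-consistent.
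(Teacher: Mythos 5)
Your proposal is correct, and its overall skeleton (countable intersection of the rational-time full events with $\Omega'$, the inverse formula $\phi_{0,t}^{-1}=\phi_{0,T}^{-1}\circ\phi_{t,T}$ obtained from the flow relation \eqref{fl2}, and the c\`adl\`ag property of $t\mapsto\phi_{0,t}^{-1}(x)$ via assertion (i) of Theorem \ref{d32} composed with the continuous map $\phi_{0,T}^{-1}$) coincides with the paper's. The genuine difference is the surjectivity step for irrational $t$. The paper argues analytically: it fixes $t\in[0,T)$, takes rationals $t_n\downarrow t$, uses the surjectivity of $\phi_{0,t_n}$ (from Theorems \ref{uno}, \ref{uno1}) to pick $x_n$ with $\phi_{0,t_n}(x_n)=y$, bounds the $x_n$ via the boundedness of $b$ and the c\`adl\`ag paths of $L$, extracts a convergent subsequence $x_n\to z$, and passes to the limit using assertions (ii) and (iii) of Theorem \ref{d32} to get $\phi_{0,t}(z)=y$. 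You instead prove injectivity of $\phi_{0,t}$ from \eqref{fl2} together with injectivity of $\phi_{0,T}$ at a rational $T>t$ (the paper obtains the same injectivity through its left inverse $\eta_{0,t}$), and then obtain surjectivity topologically: the bound $|\phi_{0,t}(x)|\ge|x|-t\|b\|_0-|L_t(\omega)|$ from \eqref{d566} gives properness, Brouwer's invariance of domain gives an open image, properness gives a closed image, and connectedness of $\R^d$ forces the image to be everything; an open (or proper) continuous bijection is then a homeomorphism. Your route only needs injectivity, not surjectivity, at the rational times and avoids the subsequence extraction, at the price of invoking invariance of domain, a less elementary tool than the paper's soft limiting argument; note also that your properness estimate is exactly the bound the paper uses to control the sequence $x_n$, so both proofs exploit the same structural input from Theorem \ref{d32} on a single $t$-independent event. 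Two small points to make explicit if you write this up: the identity \eqref{fl2} is stated for $0\le s<r\le t$, so your composition argument needs $t>0$ (the case $t=0$ being trivial since $\phi_{0,0}=\mathrm{id}$), and the asserted formula $\phi_{0,t}^{-1}=\phi_{0,T}^{-1}\circ\phi_{t,T}$ for \emph{every} real $T\ge0$ follows for all such $T$ only after the homeomorphism property for all times has been established, which your order of argument does respect.
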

\begin{proof}
The definition of $\nxi_{0,t}$, $t\geq0$ implies that the process $X=(X_t:=\nxi_{0,t})_{t\geq0}$ satisfies conditions (i), (ii), and (iii) in Theorem \ref{uno}.
In order to go further  we
fix $T \in \mathbb{N}^*$.

According to assertions (ii) and (iii) of Theorem \ref{uno}, the mapping $\mathbb{R}^d\ni x\rightarrow \nxi_{0,T}(x)\in\mathbb{R}^d$ is a surjective homeomorphism and a $C^1$-function on a $\mathbb{P}$-full event $\Omega^T$ which is only dependent on $T$. Meanwhile, by assertion (iii) of  Theorem \ref{d32}, for any $t\geq 0$, the function $\nxi_{t,T}(x)$ is continuous w.r.t. $x$ on some $\mathbb{P}$-full event $\Omega'$, which is introduced in Theorem \ref{d32} and  independent of $t\in[0,\infty)$. Hence, we have that on  $\mathbb{P}$-full event  $\Omega_0^T=\Omega^T\cap \Omega'$, for every $t \in [0,T]$, $x \in \R^d,$ the
 random variable
\[ \eta_{0,t}(x) :=  \nxi_{0,T}^{-1} (\nxi_{t,T}(x)),
\]
 is well defined, depends continuously on $ x \in \R^d$. Furthermore, by using assertion (i) of Theorem \ref{d32} together with the continuity of $\R^d\ni x \mapsto \nxi_{0,T}^{-1}(x)\in \R^d $,  we infer that the process
\begin{equation}\label{cadd 1}
 [0,T] \ni  t  \mapsto \eta_{0,t}(x) \in \R^d
\end{equation}
is c\`adl\`ag.

We also have, using \eqref{fl2},
\begin{equation}
\eta_{0,t}( \nxi_{0,t}(x)) =  \nxi_{0,T}^{-1} ( \nxi_{t,T} ( \nxi_{0,t}(x))) =x.
\end{equation}
 This shows that $\nxi_{0,t} : \R^d \to \R^d$ is {\it one to one} and that $\nxi_{0,t}^{-1}=\eta_{0,t}$.

 Let us prove the {\it surjectivity} of $\nxi_{0,t}$.  Let us choose and fix $y \in \R^d$ and $t\in [0,T)$. By Theorem \ref{uno}, we  infer that for every  $q \in  [0,T] \cap \Q$ there exists a $\mathbb{P}$-full event $\Omega_q$ and $x_q $ such that
\begin{equation} \label{222}
\nxi_{0,q}( x_q) =y,\quad \text{on }\Omega_q.
\end{equation}
Set
 \begin{equation} \bar{\Omega}'_T=\Omega^T_0\cap(\mathop{\cap}\limits_{q \in  [0,T] \cap \Q}\Omega_q).\end{equation}
Clearly $\mathbb{P}(\bar{\Omega}'_T)=1$.
Let us choose a sequence $(t_n) \subset [0,T] \cap \Q$ such that $t_n \todown t $  {\it from the right}
and write $x_{t_n} = x_n$. Note that on $\bar{\Omega}'_T$ we have, see (ii) in Theorem \ref{d32},
\begin{equation}
\nxi_{0,  t_n} (x_n) = x_{n} + \int_0^{t_n} b(\nxi_{0,s} (x_{n}))\, ds + L_{t_n}.
\end{equation}
By the boundedness of $b$, it follows that, for every ${n} \ge 1$, $|x_{n}| \le M_T(\omega) + \vert y|$, since $\nxi_{0,t_n}(x_n)=y$ and $L$ is c\`{a}dl\`{a}g.

Hence, by possibly passing to a subsequence we may assume that there exists $z\in\R^d$ such that $x_{n} \to z$. Passing to the limit in \eqref{222}, we obtain using also assertions (ii) and (iii) in Theorem \ref{d32},
\begin{equation}
|\nxi_{0,{t_n}} (x_{n}) - \nxi_{0,{t}}(z)| \le
|\nxi_{0,{t_n}} (x_{n}) - \nxi_{0,{t_n}}(z)|
 +
|\nxi_{0,{t_n}}( z) - \nxi_{0,{t}}(z)|
\to 0 \;\; \text{as} \; {n} \to \infty.
\end{equation}
Hence
 $\nxi_{0, t}( z) = y.$
This shows that $\nxi_{0,t}$ is onto for $t\in[0,T)$.

Now set $\bar{\Omega}'=\cap_{T\in\mathbb{N}^*}\bar{\Omega}'_T$. {Since
 by the previous arguments, for each $T\in \mathbb{N}^*$, $\nxi_{0,t}^{-1}$ is well define for all $t\in[0,T]$ on the $\mathbb{P}$-full event $\bar{\Omega}_T'$, it follows that for every $t\geq 0$ (which lies in some interval $[0,T]$, $T\in\mathbb{N}^*$), $\nxi_{0,t}^{-1}$ is well defined on   $\bar{\Omega}'$.} Now let us fix an arbitrary $T>0$, and define {on $\bar{\Omega}'$} a new process $Z = (Z_t)_{t \in [0,T]}$ by the following formula
\begin{equation*}
Z_t = \nxi_{0,T}^{-1} \circ\nxi_{t,T},\;\; t \in [0,T].
\end{equation*}
{Note that on the $\mathbb{P}$-full event $\bar{\Omega}'\cap \Omega_0^T$, $Z_t=\eta_{0,t}$, for every $t\in [0,T]$.} 
Arguing as the first part, one can prove that $Z_t$ is the inverse of $\nxi_{0,t}$ on $\bar{\Omega}'$, for every $t \in [0,T]$, and it is c\`adl\`ag. 

Finally,  assertion (iii) of  Theorem \ref{d32} implies that
for every  $\omega \in \bar{\Omega}^\prime$ and every   $ t \in [0,\infty)$,
the mapping

\[ \R^d \ni x \mapsto \nxi_{0,t}(x)(\omega)\in \R^d\]
 is continuous.

The proof of this lemma is complete.
 \end{proof}

\begin{remark} \label{ci9}{
In view of the applications  to the stochastic transport equation, we note that, by the previous result,  the function:
\begin{equation}
u(t,x, \omega) = u_0(\nxi_{0,t}^{-1}(x)(\omega))
\end{equation}
is well defined for every $t \in [0,\infty)$, $x \in \R^d$ and $\omega \in \bar{\Omega}^\prime$.
}
\end{remark}

In the sequel, we also set
\begin{equation}
\nxi_{0,t}(x) = \nxi_{t}x = \nxi_t(x),\;\;\; t \in [0,\infty),\; x \in \R^d.
\end{equation}

\begin{theorem}\label{thm-homeomorphism}
Under  the assumptions of Theorem \ref{d32},  there exists an $\mathbb{P}$-full event $\bar{\Omega}^\prime$ such that,
for each $\omega \in \bar{\Omega}^\prime$,  $s,  t\ge 0$,
the mapping: \[\R^d \ni  x \mapsto \nxi_{s,t}(x)(\omega) \in \R^d\] is a
surjective homeomorphism.

Moreover, on  $\bar{\Omega}^\prime$,  for every $y \in \R^d$,  we have
\begin{equation} \label{de}
\nxi_{s,t}^{-1}(y) =  y - \int_s^t b(\nxi_{r,t}^{-1}(y)) dr - (L_t - L_s), \;\;  0\le s \le t,
\end{equation}
and
$$
\nxi_{s,t}^{-1}(y) =  y, \;\;  0\le t \le s.
$$
Finally, for every $x \in \R^d,$ $t \in [0,\infty)$, the mapping: $s  \mapsto \nxi_{s,t}^{-1}(x)$  is c\`adl\`ag on $[0,\infty)$ and,
 for every $s \in [0,\infty)$, the mapping: $t   \mapsto \nxi_{s,t}^{-1}(x)$  is c\`adl\`ag on $[0,\infty)$.
\end{theorem}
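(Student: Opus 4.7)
\textbf{Proof plan for Theorem \ref{thm-homeomorphism}.} The natural strategy is to reduce everything to the case $s=0$ already handled by Lemma \ref{ef} via the cocycle identity provided by assertion (iv) of Theorem \ref{d32}. Let $\bar{\Omega}'$ be the $\mathbb{P}$-full event from Lemma \ref{ef}; by construction $\bar{\Omega}'\subset \Omega'$, so both the semiflow identity \eqref{fl2} and all the pathwise statements of Theorem \ref{d32} hold on it. For $\omega\in\bar\Omega'$ and $0\le s\le r\le t$ we have $\nxi_{0,t}=\nxi_{r,t}\circ\nxi_{0,r}$. Since $\nxi_{0,r}$ is a surjective homeomorphism of $\R^d$ by Lemma \ref{ef}, this yields the representation
\begin{equation}\label{eqn-repr-plan}
\nxi_{s,t}=\nxi_{0,t}\circ\nxi_{0,s}^{-1},\qquad 0\le s\le t,
\end{equation}
on $\bar\Omega'$. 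As a composition of two surjective homeomorphisms, $\nxi_{s,t}$ is a surjective homeomorphism, with inverse
\[
\nxi_{s,t}^{-1}=\nxi_{0,s}\circ\nxi_{0,t}^{-1}.
\]
For $0\le t\le s$ the conclusion is trivial since $\nxi_{s,t}=\id_{\R^d}$ by (ii) of Theorem \ref{d32}.

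\emph{Backward integral equation.} Fix $\omega\in\bar\Omega'$, $y\in\R^d$, $0\le s\le t$, and set $z:=\nxi_{0,t}^{-1}(y)$. Then $\nxi_{0,t}(z)=y$, so by \eqref{d566} applied at times $s$ and $t$,
\[
y=z+\int_0^t b(\nxi_{0,r}(z))\,dr+L_t,\qquad \nxi_{0,s}(z)=z+\int_0^s b(\nxi_{0,r}(z))\,dr+L_s.
\]
Subtracting and using $\nxi_{s,t}^{-1}(y)=\nxi_{0,s}(z)$ gives
\[
\nxi_{s,t}^{-1}(y)=y-\int_s^t b(\nxi_{0,r}(z))\,dr-(L_t-L_s).
\]
Applying \eqref{eqn-repr-plan} once more with the pair $(r,t)$ in place of $(s,t)$ (valid since $0\le r\le t$), we obtain $\nxi_{r,t}^{-1}=\nxi_{0,r}\circ\nxi_{0,t}^{-1}$, so $\nxi_{0,r}(z)=\nxi_{0,r}(\nxi_{0,t}^{-1}(y))=\nxi_{r,t}^{-1}(y)$, which upgrades the drift integrand and yields exactly \eqref{de}.

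\emph{C\`adl\`ag dependence on the time parameters.} For fixed $(x,t)$ on $\bar\Omega'$, set $z:=\nxi_{0,t}^{-1}(x)$, which does not depend on $s$; the map $s\mapsto \nxi_{s,t}^{-1}(x)=\nxi_{0,s}(z)$ is c\`adl\`ag on $[0,t]$ by assertion (i) of Theorem \ref{d32} applied at the fixed spatial point $z$ (for $s>t$ the value is $x$ by definition, and right-continuity at $s=t$ follows from \eqref{sd} since $\nxi_{0,t}(z)=x$). For fixed $(x,s)$, write $\nxi_{s,t}^{-1}(x)=\nxi_{0,s}(\nxi_{0,t}^{-1}(x))$. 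By Lemma \ref{ef} the inner map $t\mapsto\nxi_{0,t}^{-1}(x)$ is c\`adl\`ag on $[0,\infty)$, while by assertion (iii) of Theorem \ref{d32} the outer map $\nxi_{0,s}:\R^d\to\R^d$ is continuous; hence the composition is c\`adl\`ag in $t$.

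\emph{Main obstacle.} The only delicate point is to ensure that the full-measure event $\bar\Omega'$ works simultaneously for \emph{all} $(s,t,x)$, in particular that the backward integral identity \eqref{de} and the c\`adl\`ag statements hold pathwise rather than $\mathbb{P}$-a.s. for each parameter value. This is already built into Lemma \ref{ef} and into the joint measurability and pathwise regularity of the map $\phi$ given by Theorem \ref{d32}; because our argument only uses the flow identity \eqref{fl2}, the SDE \eqref{d566}, the spatial continuity from (iii), and the one-sided c\`adl\`ag estimates \eqref{eqn-sg}--\eqref{sd}, all of which hold on a common $(s,t,x)$-independent set of full probability, no further exceptional set must be removed.
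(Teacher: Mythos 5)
Your proposal is correct and takes essentially the same route as the paper: reduce to the time-$0$ flow via Lemma \ref{ef} and the semiflow identity, write $\nxi_{s,t}=\nxi_{0,t}\circ\nxi_{0,s}^{-1}$ and $\nxi_{s,t}^{-1}=\nxi_{0,s}\circ\nxi_{0,t}^{-1}$ on the fixed full event $\bar{\Omega}^\prime$, derive \eqref{de} by substitution into the forward integral equation, and obtain the c\`adl\`ag statements from these compositions. Only a minor citation slip: the c\`adl\`ag property of the terminal-time map $s\mapsto\nxi_{0,s}(z)$ follows from the integral identity \eqref{d566} together with the c\`adl\`ag paths of $L$ (assertion (i) of Theorem \ref{d32} concerns the initial-time variable), and this does not affect the argument.
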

Before we embark on the proof of this result, let us formulate a simple but useful corollary of it.
\begin{corollary}\label{cor-homeomorphism}
Under the assumptions of Theorem \ref{d32},   there exists an almost sure event $\Omega^{\prime\prime}\subset \bar{\Omega}^\prime$ such that,
for each $\omega \in \Omega^{\prime\prime}$ and $T>0$  there exists $C(\omega)>0$ (may dependent on $T$) such that  the following inequality holds
\begin{equation} \label{ineq-inverse}
\vert \nxi_{s,t}^{-1}(y) -  y \vert \leq   \Vert  b \Vert_{0} \vert t- s\vert  + C(\omega), \;\; y \in \R^d, \; 0\le s \le t \le T.
\end{equation}
\begin{proof} It is enough to observe that since almost surely the trajectory $L: [0,T]\to \R^d$ is c\`adl\`ag,
there exists a $\mathbb{P}$-full event $\Omega^{\prime\prime} \subset \bar{\Omega}^\prime$ such that for $\omega\in \Omega^{\prime\prime}$,
$C(\omega):=\sup\{ \vert L_t(\omega) - L_s(\omega) \vert: 0\le s \le t \le T \}$  is finite.
\end{proof}

\end{corollary}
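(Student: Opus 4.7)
The plan is to read off the bound directly from the backward integral equation \eqref{de} furnished by Theorem \ref{thm-homeomorphism} and to control the noise term by a single $\omega$-dependent constant that is finite almost surely because $L$ has c\`adl\`ag paths.

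First, I would start with the $\mathbb{P}$-full event $\bar{\Omega}'$ given by Theorem \ref{thm-homeomorphism}, on which \eqref{de} holds for all $0\le s\le t$ and all $y\in\mathbb{R}^d$. Fix such an $\omega\in\bar{\Omega}'$ and any $T>0$. From \eqref{de} we have
\[
\nxi_{s,t}^{-1}(y) - y = -\int_s^t b\bigl(\nxi_{r,t}^{-1}(y)\bigr)\,dr \;-\;\bigl(L_t(\omega)-L_s(\omega)\bigr),
\]
for all $0\le s\le t\le T$ and $y\in\mathbb{R}^d$. Using the triangle inequality and the boundedness of $b$, this yields the pointwise estimate
\[
\bigl|\nxi_{s,t}^{-1}(y)-y\bigr| \;\le\; \|b\|_{0}\,|t-s|\;+\;\bigl|L_t(\omega)-L_s(\omega)\bigr|.
\]

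Second, I need to absorb the second term into an $\omega$-dependent constant. Since, almost surely, the sample path $[0,T]\ni t\mapsto L_t(\omega)$ is c\`adl\`ag, it is bounded on $[0,T]$. Let
\[
\Omega''\;:=\;\bar{\Omega}'\cap\bigl\{\omega\in\Omega:\ L_\cdot(\omega)\ \text{is c\`adl\`ag on } [0,\infty)\bigr\},
\]
which is a $\mathbb{P}$-full event and is contained in $\bar{\Omega}'$. For each $\omega\in\Omega''$ and each $T>0$ the quantity
\[
C(\omega)\;:=\;\sup_{0\le s\le t\le T}\bigl|L_t(\omega)-L_s(\omega)\bigr|\;\le\;2\sup_{0\le u\le T}|L_u(\omega)|
\]
is finite (note that $C$ depends on $T$, as the statement allows). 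Substituting the inequality $|L_t-L_s|\le C(\omega)$ into the preceding bound yields \eqref{ineq-inverse} uniformly in $y\in\mathbb{R}^d$ and $0\le s\le t\le T$, completing the proof.

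The argument is essentially routine; the only subtlety worth flagging is that one should choose $\Omega''$ once and for all (independent of $T$) as the intersection of $\bar{\Omega}'$ with the set where $L_\cdot$ is c\`adl\`ag on the whole half-line, and then for each $T$ define the constant $C(\omega)$ by the supremum above, so that the structural statement (a single $\mathbb{P}$-full event $\Omega''$) is respected while $C(\omega)$ is permitted to depend on $T$.
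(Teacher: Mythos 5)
Your proposal is correct and follows essentially the same route as the paper: both read the estimate off the backward integral equation \eqref{de}, bound the drift term by $\Vert b\Vert_{0}\,|t-s|$, and absorb the increment $|L_t(\omega)-L_s(\omega)|$ into a finite constant $C(\omega)$ using the almost sure c\`adl\`ag (hence locally bounded) property of the paths of $L$. Your additional remark about fixing $\Omega''$ once (independently of $T$) while letting $C(\omega)$ depend on $T$ is a fair clarification of what the paper states tersely, but it is not a new idea.
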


\begin{proof}[Proof of Theorem \ref{thm-homeomorphism}]
It is easy to see that the case with $0\le t\le s$ holds. In the following we prove the case with $0\le s\le t$.

 By Lemma \ref{ef},  $\nxi_{s}^{-1}, s\ge 0$ are well defined random variables on the same $\mathbb{P}$-full event  $\bar{\Omega}^\prime$ and 
on $\bar{\Omega}^\prime$, for every $s \in [0,t]$, we have
\begin{equation} \label{dd}
\nxi_{s}^{-1}=\nxi_{0,t}^{-1}\circ
\nxi_{s,t}=
 \nxi_{t}^{-1}\circ \nxi_{s,t}.
\end{equation}
It follows that  $\nxi_{s,t} : \R^d \to \R^d$ is also a
surjective homeomorphism and
\begin{equation} \label{xi}
\nxi_{s,t} = \nxi_{t}\circ \nxi_{s}^{-1},
\;\;\;\;
\nxi_{s,t}^{-1} = \nxi_{s}\circ \nxi_{t}^{-1}.
\end{equation}
Finally, since by Theorem \ref{d32}  on $\Omega^\prime(\supset \bar{\Omega}^\prime)$ we have:
\begin{equation}
\nxi_{s,t}(x)=x+ \int_{s}^{t}b(\nxi_{s,r}( x) )\,dr+ (L_{t}-L_{s}),
\end{equation}
setting $x = \nxi_{s,t}^{-1}(y)$ we find
\begin{equation}
y =  \nxi_{s,t}^{-1}(y) + \int_{s}^{t}b(\nxi_{s,r}\circ \nxi_{s,t}^{-1}(y) )\,dr+ (L_{t}-L_{s}),
\end{equation}
 and using that
\begin{equation}
\nxi_{s,r}\circ \nxi_{s,t}^{-1} = [\nxi_{r}\circ \nxi_{s}^{-1}]
 \circ[\nxi_{s} \circ  \nxi_{t}^{-1}]
 = \nxi_{r} \circ  \nxi_{t}^{-1} = \nxi_{r,t}^{-1},
\end{equation}
 we get \eqref{de}. The regularity properties of $\nxi_{s,t}^{-1}(x)$ can be deduced by \eqref{xi}.
\end{proof}

\subsection {The differentiability property}\label{subsection-diff}

Here we prove the existence of a $\mathbb{P}$-full event $ \Omega^\prime$ (independent of $t$) related to the differentiability of the solutions with respect to $x$, see part  (iii) of  Theorem \ref{uno} and  Theorem \ref{uno1}.

In below we use the following notation. If $u=(u^k)_{k=1}^d$, then we put
 \begin{equation}\label{eqn-}
 b \cdot Du:=(\sum_{j=1}^d b_jD_ju^k)_{k=1}^d \mbox{ and } \gen{A} u:= ( \gen{A} u^k)_{k=1}^d.
 \end{equation}
Recall that $\gen{A}$ is introduced in Definition \ref{def-Markov semigroup}.

In what follows we  will  use the  following  result, which is a special case of \cite[Corollary 2.54]{Bahouri+Chemin+Danchin_2011}, see also \cite[Corollary 2.2]{Brz+Millet_2014}.
\begin{lemma} If $\theta\in[0,1]$, then the following inequality (with constant $1$ on the RHS) holds
\begin{align}\label{eq zhai prio 1}
\Vert fg\Vert_\theta\leq \Vert f\Vert_0\Vert g\Vert_\theta+\Vert g\Vert_0[f]_\theta,\ \ \forall f,g\in C_{\mathrm{b}}^\theta(\mathbb{R}^d).
\end{align}
\end{lemma}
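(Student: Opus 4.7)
The plan is a direct elementary computation, splitting the full norm as
\[
\Vert fg\Vert_\theta=\Vert fg\Vert_0+[fg]_\theta
\]
and estimating each piece separately. The sup-norm piece is trivial: $\Vert fg\Vert_0\le \Vert f\Vert_0\Vert g\Vert_0$. For the seminorm piece, I would use the standard telescoping identity
\[
f(x)g(x)-f(y)g(y)=f(x)\bigl(g(x)-g(y)\bigr)+g(y)\bigl(f(x)-f(y)\bigr),
\]
bound $|f(x)|\le\Vert f\Vert_0$ and $|g(y)|\le\Vert g\Vert_0$, and then divide by $|x-y|^\theta$ (taking the supremum over $x\ne y$). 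This yields
\[
[fg]_\theta\;\le\;\Vert f\Vert_0\,[g]_\theta+\Vert g\Vert_0\,[f]_\theta.
\]

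Adding the two bounds gives
\[
\Vert fg\Vert_\theta\;\le\;\Vert f\Vert_0\Vert g\Vert_0+\Vert f\Vert_0[g]_\theta+\Vert g\Vert_0[f]_\theta
\;=\;\Vert f\Vert_0\bigl(\Vert g\Vert_0+[g]_\theta\bigr)+\Vert g\Vert_0[f]_\theta
\;=\;\Vert f\Vert_0\Vert g\Vert_\theta+\Vert g\Vert_0[f]_\theta,
\]
which is exactly the claimed inequality with constant $1$. The asymmetric grouping (absorbing the $\Vert f\Vert_0\Vert g\Vert_0$ term into the factor $\Vert f\Vert_0\Vert g\Vert_\theta$ rather than into $\Vert g\Vert_0[f]_\theta$) is the only subtle choice; it is forced by the form of the RHS and is what makes the constant come out equal to $1$.

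The argument is uniform in $\theta\in[0,1]$: for $\theta\in(0,1)$ one uses the Hölder seminorm $[\,\cdot\,]_\theta$ as in \eqref{eqn-seminorm-beta}, for $\theta=1$ the Lipschitz semi-norm, and for $\theta=0$ one uses the convention $[f]_0=\sup_{x\ne y}|f(x)-f(y)|$ recorded in the preliminaries (together with $[f]_0\le 2\Vert f\Vert_0$, which keeps the estimate finite). There is no genuine obstacle in this proof; the only thing to watch is that one must not symmetrize the telescoping (e.g.\ averaging $f(x)$ with $f(y)$), since this would produce an extra factor and spoil the constant $1$.
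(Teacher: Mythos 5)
Your proof is correct: the splitting $\Vert fg\Vert_\theta=\Vert fg\Vert_0+[fg]_\theta$, the telescoping identity, and the asymmetric regrouping that absorbs $\Vert f\Vert_0\Vert g\Vert_0$ into $\Vert f\Vert_0\Vert g\Vert_\theta$ do give the inequality with constant exactly $1$, and the endpoint cases $\theta=0$ (with $[f]_0=\sup_{x\neq y}|f(x)-f(y)|$) and $\theta=1$ (Lipschitz seminorm, or equivalently $\Vert Df\Vert_0$ for $C^1_{\mathrm{b}}$ functions on $\mathbb{R}^d$) go through verbatim. Note, however, that the paper does not prove this lemma at all: it simply invokes it as a special case of Corollary 2.54 in Bahouri--Chemin--Danchin (see also Corollary 2.2 of Brze\'zniak--Millet), where the product estimate is obtained in the Besov/H\"older--Zygmund framework via the Bony paraproduct decomposition. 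So your route is genuinely different and, for this particular statement, arguably preferable: it is self-contained, works uniformly in $\theta\in[0,1]$ without any Littlewood--Paley machinery, and makes transparent why the constant is $1$ — which is precisely the feature the lemma's parenthetical remark emphasizes and which a citation to a general Besov product estimate (where constants are typically not tracked to be $1$) does not immediately deliver. The only point worth stating explicitly in a written-up version is the one you flag yourself: the single telescoping $f(x)g(x)-f(y)g(y)=f(x)(g(x)-g(y))+g(y)(f(x)-f(y))$ must be used as is, since a symmetrized version would double the cross terms and destroy the sharp constant.
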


Theorem \ref{reg} below holds under weaker assumptions than Theorem \ref{uno}. Indeed, if condition  \eqref{eqn-beta+alpha half} is satisfied, then there exists $\beta^\prime\in (1-\frac{\alpha}{2},\beta]$ such that condition \eqref{eqn-alpha+beta} with $\beta$ replaced by $\beta^\prime$ is satisfied as well.
 This result is a slight reformulation of  \cite[Theorem 4.3]{Pr15} and \cite[Theorem 6.7 and  6.11]{Pr18}. Recall that the proof of \cite[Theorem 6.11]{Pr18} is based on \cite{Si} (an extension of \cite{Si} is given in \cite{Chaudru+Menozzi+Priola_2020}). Assertion \eqref{eq zhai v0 control} follows from  Theorem 4.3 in \cite{Pr15},  whose  proof is the same as of   \cite[Proposition 3.2 and Theorem 3.4]{Pr12}.

\vv

Please note that  our assertion about the constant $c(\varpi,r) $ is new which can be proven by carefully examining the proof and checking the constants given in \cite[Theorem 6.7 and  6.11]{Pr18}.

\begin{theorem} \label{reg}
 Assume that  $\alpha \in  [1,2)$ or $\alpha \in (0,1)$. Assume that  $L = (L_t)_{t\geq 0}$ is a L\'evy  process  satisfying  \textbf{(H)} and \textbf{(R)} in   Hypothesis \ref{hyp-nondeg1} in the former case or Hypothesis \ref{hyp-nondeg3} in the latter case.  Assume also that  $\beta\in(0,1]$ is such that
 \begin{equation}\label{eqn-alpha+beta}
1< \alpha + \beta <2 .
 \end{equation}
 Assume that   $b\in C_{\mathrm{b}}^{\beta}(\mathbb{R}^d,\mathbb{R}^d)$.

 Then the following assertions hold.

\begin{itemize}
\item[(I)]
For every
 $\lambda\geq 1$ and  $f \in C^{\beta}_{\mathrm{b}} (\R^d,\R^d)$,
   there exists a unique
  $u= u_{\lambda}(f) \in C^{\alpha + \beta}_{\mathrm{b}}
  (\R^d,\R^d)$ which is a   solution to the following uncoupled system of linear equations
  \begin{equation} \label{eqn-wee}
 \lambda u -  \gen{A} u  - b \cdot Du = f \;\;  \mbox{ on }\R^d.
 \end{equation}
These solutions have the following two properties
\begin{align}
  \label{eq zhai v0 control}
\lambda\Vert u_\lambda\Vert_0\leq \Vert f\Vert_0,
\end{align}
and
\begin{align}
  \label{eqn-limit lambda infty}
  &\lim_{ \lambda \to \infty} \Vert Du_{\lambda} \Vert_{0} =0.
    \end{align}
\item[(II)]\ For every $\varpi \geq 1$,
  there exists $r_0>0$ and a continuous   function
 \begin{matriz}
(0,\infty) \ni r \mapsto c(\varpi,r ) \in (0,\infty)
 \end{matriz}
which is constant  on $(0,r_0]$ and increasing on $[r_0,\infty)$
    such that  for every   $\lambda \ge \varpi$, every $r>0$ and every
    $b \in C^{\beta}_{\mathrm{b}} (\R^d,\R^d)$: $\Vert b \Vert_\beta \leq r $,    the unique element $u_{\lambda}$  introduced in Part I above satisfies
     \begin{equation} \label{eqn-sch4}
 \lambda \Vert u_{\lambda}(f)\Vert_0 + [Du_{\lambda}(f)]_{\alpha + \beta - 1}
  \le c(\varpi,r ) \Vert f\Vert_{\beta}, \;\; f \in C^{\beta}_{\mathrm{b}} (\R^d,\R^d).
   \end{equation}

\item[(III)]

For every $\eps>0$ and every $M>0$ there exist constants $\varpi{}_{,M}\geq1$ and  $C_M>0$ such that if  $\lambda \geq \varpi{}_{,M}$ and  a vector field  $b\in C^{\beta}_{\mathrm{b}} (\R^d,\R^d) $   satisfies  $\Vert b\Vert_{\beta}\leq M$,
 the unique solution   $ u_\lambda \in C_{\mathrm{b}}^{ \alpha+\beta}(\R^d, \R^d) $ of the equation  \eqref{eqn-wee}
satisfies the following two conditions
\begin{align}\label{eqn-D less 13}
&\| Du_\lambda\|_0 \leq \eps \Vert f\Vert_{\beta},
\\
\label{eqn ub sup norm}
&\| u_\lambda\|_{\alpha+\beta} \leq C_M \Vert f\Vert_{\beta}.
\end{align}

\end{itemize}
  \end{theorem}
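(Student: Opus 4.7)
Since the operator $\mathscr{A}$ and the drift $b\cdot D$ act componentwise on $u$, the system \eqref{eqn-wee} decouples into $d$ scalar equations of identical structure, and I will argue for scalar $u$ and $f$. The plan is to cast \eqref{eqn-wee} as the fixed-point problem
$$u = R_\lambda f + R_\lambda(b\cdot Du), \qquad R_\lambda := \int_0^\infty e^{-\lambda t}\,\mathscr{R}_t\,dt,$$
where $R_\lambda$ is the free resolvent inverting $\lambda - \mathscr{A}$. The decisive preliminary step is to establish three Schauder-type bounds for $R_\lambda$ on $C_\mathrm{b}^\beta(\mathbb{R}^d)$: for every $\lambda\geq 1$ and every $g\in C_\mathrm{b}^\beta$,
$$\lambda\|R_\lambda g\|_0 \leq \|g\|_0, \quad \|DR_\lambda g\|_0 \leq c_1\lambda^{-(\alpha+\beta-1)/\alpha}[g]_\beta, \quad [DR_\lambda g]_{\alpha+\beta-1} \leq c_0\|g\|_\beta,$$
where $c_0$ is \emph{independent} of $\lambda$. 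The first inequality follows from contractivity of $\mathscr{R}_t$. The second uses the cancellation identity $D\mathscr{R}_t g(x) = \int(g(x+y)-g(x))\nabla p_t(y)\,dy$ (valid thanks to $(\mathbf{R})$) and the scaling bound $\int|y|^\beta|\nabla p_t(y)|\,dy \leq Ct^{(\beta-1)/\alpha}$ implied by $(\mathbf{R})$ and $(\mathbf{T})$; the integrand $e^{-\lambda t}t^{(\beta-1)/\alpha}$ is integrable at $0$ precisely because of \eqref{eqn-alpha+beta}. The third bound is obtained by the translation-commutator method of Lemma \ref{lem-Lemma 4.1}: one splits the time integral at $t=|h|^\alpha$, applies \eqref{eqn-4.16} on the short-time piece and the gradient decay on the long-time piece.

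Granted these bounds, define $T_\lambda u := R_\lambda f + R_\lambda(b\cdot Du)$ on the subspace of $C_\mathrm{b}^{\alpha+\beta}(\mathbb{R}^d)$ endowed with the seminorm $\|Du\|_0+[Du]_{\alpha+\beta-1}$. The product estimate \eqref{eq zhai prio 1} applied to $b\in C_\mathrm{b}^\beta$ and $Dw$, combined with the interpolation \eqref{eqn-interpolation inequality}---treated separately for $\alpha\geq 1$ (where $\beta\leq\alpha+\beta-1$) and $\alpha\in(0,1)$ (where the reverse ordering forces a finer decomposition)---yields $\|b\cdot Dw\|_\beta \leq C\|b\|_\beta(\|Dw\|_0+[Dw]_{\alpha+\beta-1})$. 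Together with the Schauder bounds this gives
$$\|DT_\lambda u\|_0 + [DT_\lambda u]_{\alpha+\beta-1} \leq c_0\|f\|_\beta + C_2(\|b\|_\beta)\,\lambda^{-(\alpha+\beta-1)/\alpha}\bigl(\|Du\|_0+[Du]_{\alpha+\beta-1}\bigr),$$
so $T_\lambda$ is a strict contraction once $\lambda \geq \Lambda(\|b\|_\beta)$ for a continuous non-decreasing function $\Lambda$. Its unique fixed point is $u_\lambda$, proving Part I; the inequality $\lambda\|u_\lambda\|_0\leq\|f\|_0$ comes from the positive maximum principle applied to $\mathscr{A}$ at extrema of $u_\lambda$, where $Du_\lambda=0$. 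Absorbing the $u$-dependent term in the displayed inequality produces \eqref{eqn-sch4} with constant of the form $c(\varpi,\|b\|_\beta)$. The stated shape of $r\mapsto c(\varpi,r)$ reflects a dichotomy: setting $r_0 := \sup\{r>0 : \Lambda(r) \leq \varpi\}$, for $r\leq r_0$ the contraction coefficient is at most $1/2$ at $\lambda=\varpi$ uniformly in $r$, so the absorbed constant is bounded by $2c_0$ independently of $r$, giving a constant behaviour on $(0,r_0]$; for $r>r_0$ the absorption requires $\lambda\geq\Lambda(r)$, and $c(\varpi,r)$ inherits the continuous monotone growth of $\Lambda$.

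Part III is then essentially automatic. From Part II, $[Du_\lambda]_{\alpha+\beta-1} \leq c(\varpi,M)\|f\|_\beta$ whenever $\|b\|_\beta\leq M$ and $\lambda\geq\varpi$, while $\lambda\|u_\lambda\|_0 \leq \|f\|_0\leq\|f\|_\beta$. Applying the Gagliardo--Nirenberg-type interpolation $\|Du\|_0 \leq C\|u\|_0^{(\alpha+\beta-1)/(\alpha+\beta)}[Du]_{\alpha+\beta-1}^{1/(\alpha+\beta)}$ gives $\|Du_\lambda\|_0 \leq C'_M\,\lambda^{-(\alpha+\beta-1)/(\alpha+\beta)}\|f\|_\beta$; choosing the threshold in Part III large enough that this factor is at most $\eps$ yields \eqref{eqn-D less 13}, and \eqref{eqn ub sup norm} is just Part II restated at $r=M$.

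The principal obstacle throughout is the Schauder bound in the supercritical regime $\alpha\in(0,1)$: the gain of regularity $\alpha$ afforded by $R_\lambda$ is smaller than the one-derivative loss in $b\cdot Du$, and the integrability of $t^{(\beta-1)/\alpha}$ at $0$ is saturated by the hypothesis \eqref{eqn-alpha+beta}, so the interpolation bounding $\|b\cdot Dw\|_\beta$ must be arranged so that no uncontrolled $[Dw]_\beta$ term survives on the right-hand side. A secondary but structural subtlety is the uniformity in $\lambda\geq 1$ of the Hölder constant $c_0$ in the third Schauder bound: this uniformity is exactly what lets the contraction threshold $\Lambda(r)$ depend only on $r=\|b\|_\beta$, and it is the mechanism behind the constant-then-increasing structure of $c(\varpi,r)$ asserted in Part II.
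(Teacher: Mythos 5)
Your argument breaks down exactly in the supercritical regime $\alpha\in(0,1)$, which is the hard (and, for this paper, the essential) case. The pivot of your contraction scheme is the product estimate $\Vert b\cdot Dw\Vert_{\beta}\leq C\Vert b\Vert_{\beta}\bigl(\Vert Dw\Vert_{0}+[Dw]_{\alpha+\beta-1}\bigr)$, which you claim to obtain from \eqref{eq zhai prio 1} and the interpolation \eqref{eqn-interpolation inequality} after a ``finer decomposition''. When $\alpha<1$ we have $\alpha+\beta-1<\beta$, so the seminorm $[b\cdot Dw]_{\beta}$ genuinely requires $[Dw]_{\beta}$, and no interpolation between the exponents $0$ and $\alpha+\beta-1$ can reach the exponent $\beta$ lying strictly above $\alpha+\beta-1$: already for a constant vector field $b$ the function $b\cdot Dw$ has exactly the regularity of $Dw$, i.e.\ $C^{\alpha+\beta-1}_{\mathrm{b}}$ but in general not $C^{\beta}_{\mathrm{b}}$, so the left-hand side can be infinite while your right-hand side is finite. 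This is precisely the obstruction the paper itself records (see Remark \ref{cdd} and the proof of Theorem \ref{thm-approximation}, where it is stressed that $(b^{n}-b^{0})\cdot Du^{0}_{\lambda}$ lies only in $C^{\alpha+\beta-1}_{\mathrm{b}}$ when $\alpha<1$). Consequently your map $T_{\lambda}u=R_{\lambda}f+R_{\lambda}(b\cdot Du)$ does not improve regularity enough to stabilize $C^{\alpha+\beta}_{\mathrm{b}}$: the free resolvent gains only $\alpha<1$ derivatives, whereas the drift term loses a full derivative, so the perturbative fixed point off $R_{\lambda}$ cannot close. This is also why the theorem assumes the rotationally invariant Hypothesis \ref{hyp-nondeg3} when $\alpha\in(0,1)$, an assumption your argument never uses: the known proofs in that regime (Theorem 6.11 of \cite{Pr18}, which the paper invokes, building on Silvestre \cite{Si} and \cite{Chaudru+Menozzi+Priola_2020}) are non-perturbative, treating the drift as the leading term rather than as a resolvent perturbation.

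For comparison: the paper does not reprove the result but cites \cite{Pr12,Pr15,Pr18}, and for $\alpha\in[1,2)$ your scheme is essentially the route of \cite{Pr12} (there your product estimate is true, since $\alpha+\beta-1\geq\beta$, and the Schauder bounds for $R_{\lambda}$ you list are the ones obtained from \textbf{(R)}). Two secondary points would still need attention even in that range: the contraction only gives solvability for $\lambda\geq\Lambda(\Vert b\Vert_{\beta})$, whereas Part I asserts it for all $\lambda\geq1$, so you need an a priori estimate plus continuation (or resolvent-identity) argument to cover $1\leq\lambda<\Lambda(\Vert b\Vert_{\beta})$; and the precise constant structure $c(\varpi,r)$ of Part II, which the paper flags as the genuinely new assertion obtained by tracking constants in \cite{Pr18}, is only sketched heuristically in your write-up and in any case cannot be justified for $\alpha<1$ without first repairing the main gap.
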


\begin{remark}\label{rem-generator domain}
If $b=0$, the previous result implies that for every
$f \in C^{\beta}_{\mathrm{b}} (\R^d,\R^d)$,
the solution    $u_\lambda$  of equation \eqref{eqn-wee} belongs to the space $ C^{\alpha + \beta}_{\mathrm{b}}
  (\R^d,\R^d)$.  This means that the resolvent operator $(\lambda I -\gen{A})^{-1}$ maps the space $C^{\beta}_{\mathrm{b}} (\R^d,\R^d)$ into
  the space $ C^{\alpha + \beta}_{\mathrm{b}}
  (\R^d,\R^d)$ (and as such is bounded). But we do not claim that this operator is surjective. In fact, we do not know whether this is true.

\end{remark}

\begin{remark}\label{rem-regularity of b and f}  Note that the Schauder estimates of the previous theorem require that $f$ and $b$ have the same regularity.  
We do not  know  whether the above results can be formulated in terms of Besov spaces,  see the recent paper \cite{Chaudru+Menozzi_2022} for a related result when $\alpha>1$. 
  \end{remark}

\begin{remark}\label{rem-alpha+beta=2}
 One may wonder why  do we assume that $\alpha+\beta<2$ and exclude the case when   $\alpha+\beta \leq 2$? Indeed, the statement of the theorem  would still make sense.
However, in that case we would need to consider the class $ C^{2}(\R^d)$ in the Zygmund sense, i.e.,  there exists $c>0$ such that  for every $\gamma \in \mathbb{N}^d$ with $\vert \gamma \vert=2$,
\begin{equation}\label{eqn-Zygmund class}
  \vert D^\gamma g(x + h) + D^\gamma g(x- h)  - 2D^\gamma g(x)   \vert  \leq  c\vert h \vert^2, \;\; x,h\in \mathbb{R}^d.
\end{equation}
 This case requires further analysis. For our purposes, the statement as above is completely sufficient.
  \end{remark}

The following result is taken  from  \cite[Lemma 5.2]{Pr15}.
\begin{lemma} \label{lem-Ito formula Tanaka trick} 
 {Under the assumptions of Theorem \ref{reg},
if $\lambda \ge 1$ } and
\[
u = u_{\lambda}\in C_{\mathrm{b}}^{ \alpha +\beta}(\R^d, \R^d) \]
 is the unique solution
 of equation \eqref{eqn-wee} with the RHS $f=b$, then
the following It\^o formula holds for all $t \in [0,\infty)$, $x \in \R^d$, $\mathbb{P}$-a.s.,
\begin{equation}
\label{eqn-Ito-iii}
\begin{aligned}
 u (\phi_{t}(x)) -  u (x ) =&   L_t -\bigl(  \phi_{t}(x)-x \bigr)   + \lambda  \int_0^t  u(\phi_{s}(x))\, ds
\\
& +  \int_0^{t} \lint_{{\mathbb R}^d} [  u(\phi_{s-}(x) + z) -  u(\phi_{s-}(x))]
   \tilde{\prm}(ds, dz),
   \end{aligned}
\end{equation}
where  $\phi_t(x)$, $t\geq 0$,  is  the solution of \eqref{eqn-SDE} with $s=0$, i.e., with vector field $b$,  starting at  $x \in \R^d$.
\end{lemma}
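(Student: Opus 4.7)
The plan is to carry out the It\^o--Tanaka trick: apply the standard pure-jump It\^o formula to $u(\phi_t(x))$ in its generator form, and then use the resolvent equation \eqref{eqn-wee} with right-hand side $f=b$ to replace the awkward drift term $b\cdot Du$ by $\lambda u - \gen{A}u - b$, after which the SDE for $\phi$ produces the explicit expressions $L_t$ and $\phi_t(x)-x$.

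\textbf{Step 1 (generator form).} For sufficiently smooth $u$ the standard It\^o formula for pure-jump semimartingales, together with the observation that the continuous bounded-variation part of $\phi_{\cdot}(x)$ is $\int_0^{\cdot} b(\phi_s(x))\,ds$, the continuous martingale part vanishes, and the jumps of $\phi_{\cdot}(x)$ coincide with those of $L$, yields
\begin{align*}
u(\phi_{t}(x)) - u(x) &= \int_0^t Du(\phi_{s}(x))\cdot b(\phi_{s}(x))\,ds + \int_0^t \gen{A}u(\phi_{s}(x))\,ds \\
&\quad + \int_0^{t}\!\lint_{\mathbb{R}^{d}} \bigl[u(\phi_{s-}(x)+z) - u(\phi_{s-}(x))\bigr]\,\tilde{\prm}(ds,dz),
\end{align*}
where the ``compensator bookkeeping'' uses $\nu(B^{\mathrm{c}})<\infty$ to rewrite the large-jump $\mu$-integral via $\tilde{\prm}$, and the small-jump contribution is obtained by cancelling the $Du\cdot z$ term from the It\^o correction against the drift produced by compensating $\int_B Du\cdot z\,\mu$. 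Under our hypotheses $u\in C^{\alpha+\beta}_{\mathrm{b}}(\mathbb{R}^d,\mathbb{R}^d)$ with $\alpha+\beta>1$, so $Du$ is bounded and by Remark~\ref{rem-H2} the function $\gen{A}u$ is well-defined and bounded.

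\textbf{Step 2 (substitution).} Equation \eqref{eqn-wee} with $f=b$ reads $\gen{A}u(y) + b(y)\cdot Du(y) = \lambda u(y) - b(y)$, which collapses the two Lebesgue integrals in Step 1 to
\[
\lambda\int_0^t u(\phi_{s}(x))\,ds - \int_0^t b(\phi_{s}(x))\,ds.
\]
The SDE \eqref{eqn-SDE} with $s=0$ gives $-\int_0^t b(\phi_{s}(x))\,ds = L_t - (\phi_t(x)-x)$, and the identity \eqref{eqn-Ito-iii} is obtained.

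\textbf{Main obstacle.} The classical It\^o formula invoked in Step~1 requires $u\in C^{2}_{\mathrm{b}}$, whereas we only have $u\in C^{\alpha+\beta}_{\mathrm{b}}$ with $\alpha+\beta<2$. I would handle this by mollification: set $u^{\varepsilon}=u\ast\rho_{\varepsilon}$ with a standard mollifier, apply the classical formula to the smooth $u^{\varepsilon}$, and pass to the limit $\varepsilon\to 0^+$. Since $u\in C^{1+\delta}_{\mathrm{b}}$ with $\delta=\alpha+\beta-1\in(0,1)$, we get locally uniform convergence $u^{\varepsilon}\to u$ and $Du^{\varepsilon}\to Du$, hence convergence of the first Lebesgue integral. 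Pointwise convergence $\gen{A}u^{\varepsilon}(y)\to \gen{A}u(y)$ with the uniform dominating integrand $\min\!\bigl(C|z|^{\alpha+\beta},\,2\|u\|_{0}\bigr)$ (on $B$, via Taylor with the H\"older bound on $Du$; on $B^{\mathrm{c}}$, by boundedness) yields $L^{1}$-convergence of the second integral by dominated convergence, using \eqref{eqn-Levy measure moment} and $\nu(B^{\mathrm{c}})<\infty$. The delicate term is the stochastic integral: the integrand $H^{\varepsilon}(s,z):=u^{\varepsilon}(\phi_{s-}(x)+z)-u^{\varepsilon}(\phi_{s-}(x))$ is dominated uniformly in $\varepsilon$ by $\min(\|Du\|_{0}|z|,\,2\|u\|_{0})$, which lies in $L^{2}(ds\otimes\nu)$ on $[0,t]\times\mathbb{R}^{d}$ since $\int_{B}|z|^{2}\,\nu(dz)<\infty$ and $\nu(B^{\mathrm{c}})<\infty$. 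Pointwise convergence plus this uniform square-integrable bound then yields convergence in $L^{2}(\Omega)$ of the stochastic integrals through the It\^o isometry of Proposition~\ref{prop-Ito integral}, completing the limit. (When $\alpha\in(0,1)$, $L$ has finite variation and a simpler pathwise change-of-variables argument works directly for $u\in C^{1}_{\mathrm{b}}$ without mollification.)
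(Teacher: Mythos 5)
Your proof is correct and follows essentially the same route as the source the paper relies on: the paper does not reprove this lemma but quotes it from \cite[Lemma 5.2]{Pr15}, whose argument is precisely this It\^o--Tanaka computation (classical It\^o formula in generator form applied to a mollified $u^{\varepsilon}$, passage to the limit using $u\in C^{1+\delta}_{\mathrm{b}}$ with $\delta=\alpha+\beta-1$, then substitution of the resolvent equation \eqref{eqn-wee} with $f=b$ and of the SDE to produce $L_t-(\phi_t(x)-x)$). The only imprecision is cosmetic: the domination of the compensated small-jump part of $\gen{A}u^{\varepsilon}$ by $C|z|^{\alpha+\beta}$ requires $\lint_{B}|z|^{\alpha+\beta}\,\nu(dz)<\infty$, which comes from the Blumenthal--Getoor condition \textbf{(H)} (cf. Remark \ref{rem-H2}), not from \eqref{eqn-Levy measure moment} alone.
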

The It\^o formula  can also be written in the following form, with $I:\mathbb{R}^d \to \mathbb{R}^d$ being the identity map,
\begin{equation}
\label{eqn-Ito-iv}
\begin{aligned}
 (u+I) (\phi_{t}(x)) -  (u+I) (x ) =&   L_t   + \lambda  \int_0^t  u(\phi_{s}(x))\, ds
\\
& +  \int_0^{t} \lint_{{\mathbb R}^d} [  u(\phi_{s-}(x) + z) -  u(\phi_{s-}(x))]
   \tilde{\prm}(ds, dz).
   \end{aligned}
\end{equation}

\begin{theorem} \label{thm-approximation}
 Suppose that  the assumptions of Theorem \ref{reg}  are satisfied. Assume that $(b^{n})_{n=0}^\infty$ and $(f^{n})_{n=0}^\infty$ are two  $ C_{\mathrm{b}}^{\beta}(\R^d,\R^d)$-valued
sequences of vector fields such that
\begin{equation}\label{eqn-b^n to b^0, f^n to f^0}
b^{n}\to b^0  \mbox{ and } f^{n}\to f^0 \mbox{ in } C_{\mathrm{b}}^{\beta}(\R^d,\R^d).
\end{equation}
Let, for all  $n\geq 0$ and $\lambda\geq 1$,   $ u^n_{\lambda}\in C_{\mathrm{b}}^{\alpha+\beta}(\R^d,\R^d) $  be  the unique solution to equation \eqref{eqn-wee} with the vector field $b$ replaced by the vector field  $b^n$
and $f$ replaced by $f^n$, i.e.,
\begin{align}\label{eqn-wee-n}
\lambda u^n_{\lambda} -  \gen{A} u^n_{\lambda}  - b^n \cdot Du^n_{\lambda} = f^n \;\;  \mbox{ on }\R^d.
\end{align}
 Then, there exists $\lambda_0\geq1$ and $R>0$  such that
\begin{equation}\label{eqn ub sup norm-sup}
\sup_{\lambda\geq \lambda_0}\sup_{n\geq0}\Vert u^n_{\lambda}\Vert_{\alpha+\beta} \leq R \quad \mbox{ and }\quad \sup_{\lambda\geq \lambda_0} \sup_{n\geq0}\Vert Du^n_{\lambda}\Vert_0\leq \frac{1}{3},
\end{equation}
\begin{equation}\label{eqn-conv-u_n to u 0 20251007}
 \lim_{n\to \infty}\Vert u^n_{\lambda}-u^0_{\lambda}\Vert_{0}=0,
\end{equation}
and the following results hold: 
for any compact set $K \subset \R^d$ and $0< \beta' <\beta$,
\begin{equation}\label{eqn-conv-u_n to u 0}
 \lim_{n\to \infty}\Vert u^n_{\lambda}-u^0_{\lambda}\Vert_{\alpha+\beta',K}=0.
\end{equation}
\end{theorem}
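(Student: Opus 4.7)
The plan proceeds in three steps: first extract the uniform bounds \eqref{eqn ub sup norm-sup} from Theorem \ref{reg}(III); next obtain the sup‑norm convergence \eqref{eqn-conv-u_n to u 0 20251007} by applying the $L^{\infty}$‑bound \eqref{eq zhai v0 control} to the equation satisfied by the difference $w^{n}:=u^{n}_{\lambda}-u^{0}_{\lambda}$; finally upgrade this to local $C^{\alpha+\beta'}$‑convergence via an interpolation against the uniform $C^{\alpha+\beta}$‑bound. Since $b^{n}\to b^{0}$ and $f^{n}\to f^{0}$ in $C_{\mathrm{b}}^{\beta}(\mathbb{R}^{d},\mathbb{R}^{d})$, the quantities $M:=\sup_{n\geq 0}\Vert b^{n}\Vert_{\beta}$ and $F:=\sup_{n\geq 0}\Vert f^{n}\Vert_{\beta}$ are finite. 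Apply Theorem \ref{reg}(III) with this $M$ and $\varepsilon:=\frac{1}{3F}$ to obtain $\lambda_{0}:=\varpi{}_{,M}\geq 1$ and $C_{M}>0$; set $R:=C_{M}F$. Since $\Vert b^{n}\Vert_{\beta}\leq M$ and $\Vert f^{n}\Vert_{\beta}\leq F$ for every $n\geq 0$, the theorem applies to each $u^{n}_{\lambda}$ and yields, for all $\lambda\geq\lambda_{0}$,
\begin{align*}
\Vert Du^{n}_{\lambda}\Vert_{0}\leq\varepsilon\Vert f^{n}\Vert_{\beta}\leq\frac{1}{3},\qquad \Vert u^{n}_{\lambda}\Vert_{\alpha+\beta}\leq C_{M}\Vert f^{n}\Vert_{\beta}\leq R,
\end{align*}
which gives \eqref{eqn ub sup norm-sup}.

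For the sup‑norm convergence, subtracting the equation \eqref{eqn-wee-n} for index $0$ from the one for index $n$ shows that $w^{n}\in C_{\mathrm{b}}^{\alpha+\beta}$ is a classical solution of
\begin{align*}
\lambda w^{n}-\gen{A} w^{n}-b^{n}\cdot Dw^{n}=g^{n},\qquad g^{n}:=(f^{n}-f^{0})+(b^{n}-b^{0})\cdot Du^{0}_{\lambda}.
\end{align*}
Applying the $L^{\infty}$‑bound in the spirit of \eqref{eq zhai v0 control}, which I intend to justify through the probabilistic representation
\[
w^{n}(x)=\mathbb{E}\int_{0}^{\infty}e^{-\lambda t}\,g^{n}\bigl(\phi^{n}_{0,t}(x)\bigr)\,dt
\]
obtained from Lemma \ref{lem-Ito formula Tanaka trick} applied to the flow $\phi^{n}_{0,\cdot}$ of the SDE \eqref{eqn-SDE} with drift $b^{n}$ (whose existence and regularity are guaranteed by Theorems \ref{uno} and \ref{uno1}), I deduce
\begin{align*}
\lambda\Vert w^{n}\Vert_{0}\leq\Vert g^{n}\Vert_{0}\leq\Vert f^{n}-f^{0}\Vert_{0}+\Vert b^{n}-b^{0}\Vert_{0}\,\Vert Du^{0}_{\lambda}\Vert_{0}\longrightarrow 0,
\end{align*}
since $Du^{0}_{\lambda}$ is fixed bounded. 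This proves \eqref{eqn-conv-u_n to u 0 20251007}.

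For \eqref{eqn-conv-u_n to u 0}, fix $\beta'\in(0,\beta)$ and any compact $K\subset\mathbb{R}^{d}$. Starting from $\Vert w^{n}\Vert_{\alpha+\beta}\leq 2R$ and $\Vert w^{n}\Vert_{0}\to 0$, the standard Hölder interpolation inequality between the exponents $0$ and $\alpha+\beta$ (assembled by iterating Lemma \ref{lem interpolatory ineq} together with Landau‑type inequalities for the first derivative) yields, with $\theta=(\beta-\beta')/(\alpha+\beta)\in(0,1)$,
\begin{align*}
\Vert w^{n}\Vert_{\alpha+\beta',K}\leq C_{K}\,\Vert w^{n}\Vert_{0}^{\theta}\,\Vert w^{n}\Vert_{\alpha+\beta}^{1-\theta}\leq C_{K}(2R)^{1-\theta}\Vert w^{n}\Vert_{0}^{\theta}\longrightarrow 0,
\end{align*}
establishing \eqref{eqn-conv-u_n to u 0}. (Equivalently, the uniform $C^{\alpha+\beta}$‑bound combined with Arzel\`a–Ascoli makes $(w^{n})$ relatively compact in $C^{\alpha+\beta'}_{\mathrm{loc}}$, and the sup‑norm convergence forces the unique limit to be $0$.)

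The main obstacle is the justification of the $L^{\infty}$‑bound for $w^{n}$ when $\alpha\in(0,1)$: in that supercritical regime, $Du^{0}_{\lambda}\in C_{\mathrm{b}}^{\alpha+\beta-1}$ with $\alpha+\beta-1<\beta$, so the product $(b^{n}-b^{0})\cdot Du^{0}_{\lambda}$ does not sit in $C_{\mathrm{b}}^{\beta}$, and Theorem \ref{reg}(I) cannot be invoked directly on the perturbed equation to certify that \eqref{eq zhai v0 control} is available. Because the final bound depends only on $\Vert g^{n}\Vert_{0}$, this can be handled by mollifying $b^{n}-b^{0}$, deriving the estimate for the regularized right‑hand side where Theorem \ref{reg}(I) applies, and passing to the limit through the Feynman–Kac formula above by dominated convergence; the needed non‑explosion and continuous dependence of the flows $\phi^{n}_{0,t}$ come from Theorems \ref{uno1} and \ref{d32}.
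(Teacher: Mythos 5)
Your proposal is correct, and steps one and two follow the paper's route: the uniform bounds come from Theorem \ref{reg}(III) with $M=\sup_n\Vert b^n\Vert_\beta$, and the sup-norm convergence comes from applying the maximum principle \eqref{eq zhai v0 control} to the difference equation $(\lambda-\gen{A})w^n-b^n\cdot Dw^n=(f^n-f^0)+(b^n-b^0)\cdot Du^0_\lambda$. The paper simply invokes \eqref{eq zhai v0 control} at this point, treating it as an a priori bound for the classical solution $w^n\in C^{\alpha+\beta}_{\mathrm{b}}$ even though the right-hand side is only $C^{\alpha+\beta-1}_{\mathrm{b}}$ when $\alpha<1$; your Feynman--Kac justification, obtained by applying the It\^o formula (as in Lemma \ref{lem-Ito formula Tanaka trick} or \cite[(4.6)]{Pr12}) to $e^{-\lambda t}w^n(\phi^n_t(x))$, is a legitimate and in fact more careful way to certify exactly this point, and it needs nothing beyond $w^n\in C^{\alpha+\beta}_{\mathrm{b}}$ and $g^n$ bounded. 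Where you genuinely diverge is the last step: the paper proves \eqref{eqn-conv-u_n to u 0} by compactness, using that bounded sets of $C^{\alpha+\beta}(K,\R^d)$ are precompact in $C^{\alpha+\beta'}(K,\R^d)$ and identifying the limit of any subsequence with $u^0_\lambda$ via \eqref{max}, whereas you interpolate: $\Vert w^n\Vert_{\alpha+\beta'}\lesssim\Vert w^n\Vert_0^{\theta}\Vert w^n\Vert_{\alpha+\beta}^{1-\theta}$ with $\theta=(\beta-\beta')/(\alpha+\beta)$, assembled from Lemma \ref{lem interpolatory ineq} and a Landau-type bound $\Vert Dw\Vert_0\lesssim\Vert w\Vert_0^{\gamma/(1+\gamma)}[Dw]_\gamma^{1/(1+\gamma)}$ with $\gamma=\alpha+\beta-1$. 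This works (including the borderline case $\alpha+\beta'=1$, handled by the Landau inequality) and actually buys more than the paper's statement: it gives convergence in $C^{\alpha+\beta'}_{\mathrm{b}}(\R^d,\R^d)$ globally, not merely on compacts, which sits between \eqref{eqn-conv-u_n to u 0} and the full-exponent global convergence of Remark \ref{cdd} available for $\alpha\in[1,2)$.

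One small correction to your closing paragraph: mollifying $b^n-b^0$ does not place the perturbation term in $C^{\beta}_{\mathrm{b}}$, since the obstruction is the factor $Du^0_\lambda\in C^{\alpha+\beta-1}_{\mathrm{b}}$, so the detour through Theorem \ref{reg}(I) as you describe it would not apply; but this detour is unnecessary, because the direct It\^o/Feynman--Kac representation for $w^n$ that you state first already yields $\lambda\Vert w^n\Vert_0\le\Vert g^n\Vert_0$ without any regularization.
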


Note that \eqref{eqn-conv-u_n to u 0} implies that, for any compact set $K \subset \R^d$,
\begin{equation}\label{grad}
 \lim_{n\to \infty} \sup_{x \in K} | Du^n_{\lambda}(x)-Du^0_{\lambda}(x) |=0.
\end{equation}
This convergence will be useful in the sequel.

 \begin{theorem} \label{thm-approximation-Ito}
   Assume the framework of Theorem \ref{thm-approximation}.
    If $f^n=b^n$ for some $n \in \mathbb{N}$,  then
 $u^n_\lambda$ satisfies the following  It\^o formula.
 For all  $t \in [0,\infty)$, $x \in \R^d$, $\mathbb{P}$-a.s., we have,
\begin{equation}
\label{eqn-Ito-iii-n}
\begin{aligned}
 u^n_{\lambda} (\phi^{n}_{t}(x)) -  u^n_{\lambda} (x ) =&  x + L_t - \phi^{n}_{t}(x)   + \lambda  \int_0^t  u^n_{\lambda}(\phi^{n}_{s}(x))\, ds
\\
& +  \int_0^{t} \lint_{{\mathbb R}^d} [  u^n_{\lambda}(\phi^{n}_{s-}(x) + z) -  u^n_{\lambda}(\phi^{n}_{s-}(x))]
   \tilde{\prm}(ds, dz),
   \end{aligned}
\end{equation}
where  $\phi^n_t(x)$, $t\geq 0$  is  the solution of \eqref{eqn-SDE}  with vector field $b^n$,  starting at  $x \in \R^d$ and $s=0$.
\end{theorem}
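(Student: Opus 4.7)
The plan is to observe that Theorem \ref{thm-approximation-Ito} is essentially a direct consequence of the It\^o-Tanaka formula from Lemma \ref{lem-Ito formula Tanaka trick} applied individually to each vector field $b^n$. The role of the present statement is not to prove a new formula but to confirm that nothing in the hypotheses of Lemma \ref{lem-Ito formula Tanaka trick} degenerates as we vary the drift within the approximating family.

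First, I would verify the standing hypotheses for each fixed $n \in \mathbb{N}$. Since by assumption $b^{n} \in C_{\mathrm{b}}^{\beta}(\R^d,\R^d)$ and we are under the assumptions of Theorem \ref{reg} (the global H\"older norm bound $\sup_{n}\Vert b^n\Vert_\beta < \infty$ coming from the convergence \eqref{eqn-b^n to b^0, f^n to f^0}), Theorem \ref{reg}(I) guarantees that for every $\lambda \geq 1$ there exists a unique solution $u^n_\lambda \in C^{\alpha+\beta}_{\mathrm{b}}(\R^d,\R^d)$ to the uncoupled system \eqref{eqn-wee-n}, which in the special case $f^n=b^n$ reduces to the Tanaka equation needed by Lemma \ref{lem-Ito formula Tanaka trick}. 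Moreover, since $b^n \in C^\beta_{\mathrm{b}}(\R^d,\R^d)$, Theorem \ref{d32} applies to $b^n$ and provides the associated stochastic flow $\phi^n$ solving the SDE \eqref{eqn-SDE} with drift $b^n$ starting from $x$ at $s=0$.

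The second step is the actual application: with $b$ replaced by $b^n$ and $u=u^n_\lambda$ playing the role of the solution associated with the RHS $b^n$, Lemma \ref{lem-Ito formula Tanaka trick} directly yields, for every $t \in [0,\infty)$ and $x \in \R^d$, $\mathbb{P}$-a.s.,
\begin{equation*}
u^n_\lambda(\phi^n_t(x)) - u^n_\lambda(x) = L_t - (\phi^n_t(x) - x) + \lambda\int_0^t u^n_\lambda(\phi^n_s(x))\,ds + \int_0^t \lint_{\R^d}\bigl[u^n_\lambda(\phi^n_{s-}(x)+z) - u^n_\lambda(\phi^n_{s-}(x))\bigr]\tilde{\prm}(ds,dz),
\end{equation*}
which is exactly \eqref{eqn-Ito-iii-n} once one rewrites $L_t - (\phi^n_t(x)-x) = x + L_t - \phi^n_t(x)$. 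The stochastic integral on the right is well-defined as an $L^2$-martingale by Proposition \ref{prop-Ito integral}: indeed, $u^n_\lambda \in C^{\alpha+\beta}_{\mathrm{b}}$ with $\alpha+\beta > 1$ controls the integrand $|u^n_\lambda(y+z)-u^n_\lambda(y)|$ by $C(\Vert Du^n_\lambda\Vert_0 |z| \wedge \Vert u^n_\lambda\Vert_0)$, so square-integrability against $\nu$ follows from the L\'evy integrability condition \eqref{eqn-Levy measure moment}.

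I do not expect any serious obstacle here: the statement is essentially a parameter-wise repetition of Lemma \ref{lem-Ito formula Tanaka trick}, and the delicacies (verifying that the compensated Poisson integral is well-posed and that the pointwise $\mathbb{P}$-a.s.\ identity holds for each fixed $(t,x)$) are inherited from the proof of Lemma \ref{lem-Ito formula Tanaka trick} in \cite{Pr15}. The only point worth flagging is that the $\mathbb{P}$-full event on which \eqref{eqn-Ito-iii-n} holds may a priori depend on $n$, but this is irrelevant for the present statement, which is phrased pointwise in $n$. If a uniform-in-$n$ version were ever required, one would combine \eqref{eqn-Ito-iii-n} with the uniform bounds \eqref{eqn ub sup norm-sup} and the convergences \eqref{eqn-conv-u_n to u 0 20251007}--\eqref{eqn-conv-u_n to u 0}, but that refinement is not part of the present assertion.
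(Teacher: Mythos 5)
Your proposal is correct and matches the paper's (implicit) reasoning: the paper offers no separate proof of this theorem precisely because it is just Lemma \ref{lem-Ito formula Tanaka trick} (i.e.\ \cite[Lemma 5.2]{Pr15}) applied with $b$ replaced by $b^n$ and $u=u^n_\lambda$, the hypotheses being supplied by Theorem \ref{reg} exactly as you verify. The only cosmetic remark is that existence of the flow $\phi^n$ already follows from Theorems \ref{uno}/\ref{uno1} under the stated framework, so invoking Theorem \ref{d32} (which additionally assumes \textbf{(T)}) is unnecessary, though harmless.
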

\begin{proof}[Proof of Theorem \ref{thm-approximation}] 
 We only need to prove claims \eqref{eqn ub sup norm-sup}, \eqref{eqn-conv-u_n to u 0 20251007} and \eqref{eqn-conv-u_n to u 0}.
 Since by assumption we can find $M>0$ such that
\begin{equation}\label{eqn-b^n-bound}
\sup_{n\in \mathbb{N}}\Vert b^n\Vert_{\beta}+\sup_{n\in \mathbb{N}}\Vert f^n\Vert_{\beta}\leq M.
\end{equation}
Hence  assertion \eqref{eqn ub sup norm-sup} follows from Theorem \ref{reg}.  Thus it remains  to prove \eqref{eqn-conv-u_n to u 0 20251007} and \eqref{eqn-conv-u_n to u 0}.

Let us fix  $\lambda_0\geq 1$ and $R>0$ from \eqref{eqn ub sup norm-sup}.  Taking the difference between the equation \eqref{eqn-wee-n} for $u^n_\lambda$  and $u^0_\lambda$ we get, for every $n \in \mathbb{N}$,
\begin{align}\label{eqn-wee-difference}
(\lambda I-\gen{A}) ( u^n_{\lambda}-u^0_{\lambda})
-b^n \cdot D ( u^n_{\lambda}-u^0_{\lambda})  = f^n-f^0+ (b^n-b^0) \cdot D u^0_{\lambda}.
\end{align}
 By the maximum principle \eqref{eq zhai v0 control} we know that
\begin{equation} \label{max}
\lambda \| u^n_{\lambda}-u^0_{\lambda} \|_{0} \le \|  f^n-f^0+ (b^n-b^0) \cdot D u^0_{\lambda} \|_{0} \to 0
\end{equation}
as $n \to \infty$. Then we have \eqref{eqn-conv-u_n to u 0 20251007}.

Note that when $\alpha <1$ we cannot deduce  that the RHS of \eqref{eqn-wee-difference} converges to $0$
in the space  $C_{\mathrm{b}}^{\beta}(\R^d,\R^d)$. This is because in general the term  $(b^n-b^0) \cdot D u^0_{\lambda}$  only belongs to $C_{\mathrm{b}}^{\alpha + \beta -1}(\R^d,\R^d)$;  in general this term is not in  $C_{\mathrm{b}}^{\beta}(\R^d,\R^d)$ , see  also Remark \ref{cdd}.

 Now we use a useful  property of H\"older spaces , see  page 37 in \cite{KrHolder} and page 431 in \cite{Pr12}) together with  \eqref{eqn ub sup norm-sup} and \eqref{max}.  
Let us fix $\lambda \ge \lambda_0$ and a compact set $K \subset \R^d$.  Let us consider the H\"older space
$C^{\alpha + \beta'} (K,\R^d)$ for any    $0< \beta' <\beta$. 
 Let us consider any subsequence $u^k_{\lambda} $
 of $u^n_{\lambda}$. By  \eqref{eqn ub sup norm-sup}, we have
 $$
 \sup_{\lambda\geq \lambda_0}\sup_{n\geq0}\Vert u^n_{\lambda}\Vert_{\alpha+\beta} \leq R. 
 $$ 
Hence, 
 there exists a function $ v_{\lambda}\in C^{\alpha + \beta}_b (\R^d,\R^d)$  and 
 a further subsequence of  $u^k_{\lambda}$ still denoted by $u^k_{\lambda}$, possibly depending on $\lambda$ and $K$, such that 
 $u^k_{\lambda}$ converges to  $v_{\lambda}$
  in $C^{\alpha + \beta'} (K,\R^d)$. This function $v_{\lambda} $ coincides with  $u^0_{\lambda}$ by \eqref{max}. This implies that the whole sequence $u^n_{\lambda}$ converges to $u^0_{\lambda}$ in $C^{\alpha + \beta'} (K,\R^d)$. 
This proves \eqref{eqn-conv-u_n to u 0}. The proof  of the theorem  is complete. 
 \end{proof}
 
\begin{remark} \label{cdd} Note that
if $\alpha\in[1,2)$, then we have the following stronger result:
\begin{equation}\label{eqn-conv-u_n to u1}
 \lim_{n\to \infty}\Vert u^n_{\lambda}-u^0_{\lambda}\Vert_{\alpha+\beta}=0.
\end{equation}
The proof follows  using \eqref{eqn-wee-difference}  since in this case $(b^n-b^0) \cdot D u^0_{\lambda}$   belongs to   $C_{\mathrm{b}}^{\beta}(\R^d)$.
  By assumptions \eqref{eqn-b^n to b^0, f^n to f^0} and inequality \eqref{eq zhai prio 1}, we deduce that the RHS of \eqref{eqn-wee-difference} converges to $0$
in the space  $C_{\mathrm{b}}^{\beta}(\R^d)$. In view of the  bound  \eqref{eqn-b^n-bound},  by  part (III) of Theorem
 \ref{reg} we infer  \eqref{eqn-conv-u_n to u1}. 
\end{remark}


In the next result, we will  prove a stronger assertion than assertions  (iii) in Theorem \ref{uno}  and (iii) in  Theorem \ref{uno1}, see also Remark \ref{se4}.
This  result seems to be new even for Lipschitz $b$, see  \cite{Kunita_2004}. Let us emphasize that the results in Kunita's paper \cite{Kunita_2004}  are  not sufficient for our purposes because he didn't prove the existence of a ``universal" $\mathbb{P}$-full set  $\Omega^\prime$ as we did. See also Remark \ref{rem-kunita}.

\begin{theorem}\label{thm-ww}
  {Under the assumptions of Theorem \ref{d32},}
 there exists a $\mathbb{P}$-full event $\Omega^\prime$ such that,
for  every $\omega \in \Omega^\prime$
\begin{trivlist}
\item[ (i)] for every $t\in [0,\infty)$,
 the mapping \eqref{eqn-1.6} , i.e.,
\begin{equation}\label{eqn-flow}
\R^d \ni
x \mapsto \phi_{t}(x)
\in \R^d
\end{equation}
 is differentiable;
\item[ (ii)]
the derivative  function
\[[0,\infty) \times \R^d \ni (t,x) \mapsto
D \phi_t (x) \in \mathscr{L}(\R^d,\R^d) \]
 is continuous;

\item[ (iii)]
for any $0<\delta<\frac{\alpha}2+\beta-1$ ,
   the derivative  function
\begin{equation}\label{eqn-derivative flow}
\R^d \ni
x \mapsto      D \phi_t (x)  \in \mathscr{L}(\R^d,\R^d)
\end{equation}
is locally $\delta$-H\"older continuous, locally  uniformly in $t\in [0,\infty)$.
\end{trivlist}
\end{theorem}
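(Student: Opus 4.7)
My approach is the It\^o--Tanaka (Zvonkin) transformation combined with a smooth approximation and a c\`adl\`ag Kolmogorov--Totoki--Cencov-type criterion. Since intersecting countably many $\mathbb{P}$-full events yields one, it suffices to establish (i)--(iii) on each $[0,T]$, $T\in\mathbb{N}^\ast$. Fix $\lambda_0$ large enough (Theorem \ref{reg}(III)) so that $u=u_{\lambda_0}$, solving $\lambda u - \gen{A} u - b\cdot Du = b$, satisfies $\|Du\|_0\le 1/3$, and set $\Psi=I+u$: this is a global bi-Lipschitz $C^{1+(\alpha+\beta-1)}$-diffeomorphism. Simultaneously I mollify $b^n\to b$ in $C^\beta_{\mathrm{b}}$ with corresponding $u^n$, $\Psi^n$ and smooth flows $\phi^n$; by Theorem \ref{thm-approximation}, in particular \eqref{eqn-conv-u_n to u 0} and \eqref{grad}, $D\Psi^n\to D\Psi$ uniformly on compacts with $\|D\Psi^n-I\|_0\le 1/3$ for $n$ large.

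For smooth $b^n$, $\phi^n$ is a classical $C^\infty$-flow and $D\phi^n_t(x)$ is a continuous-in-$t$ matrix-valued process, being the solution of the pathwise linear ODE obtained by differentiating \eqref{eqn-SDE}. To obtain uniform-in-$n$ estimates that survive $n\to\infty$, I look at the transformed process $Y^n_t(x)=\Psi^n(\phi^n_t(x))$; by the It\^o formula \eqref{eqn-Ito-iv} it satisfies an SDE whose drift and Poisson integrand (after composition with $(\Psi^n)^{-1}$) inherit $C^{\alpha+\beta-1}$ spatial regularity, uniformly in $n$ by the Schauder bound \eqref{eqn ub sup norm-sup}. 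Differentiating formally gives a linear equation for $DY^n_t(x)$ with similarly uniform coefficients. Applying Burkholder's inequality \eqref{eqn-Burkholder inequality}, the H\"older-difference identities \eqref{eqn-Lemma 4.1}--\eqref{eqn-4.16}, and a Gronwall argument, I aim to prove that, for every $p\ge 2$ large enough, every $\delta\in(0,\tfrac{\alpha}{2}+\beta-1)$ and every compact $K\subset\R^d$,
\begin{align}
\sup_{n}\mathbb{E}\sup_{t\in[0,T]}|\phi^n_t(x)-\phi^n_t(y)|^p &\le C_{T,p}|x-y|^p,\\
\sup_{n}\mathbb{E}\sup_{t\in[0,T]}\|D\phi^n_t(x)-D\phi^n_t(y)\|^p &\le C_{T,p,\delta,K}|x-y|^{p\delta}\quad(x,y\in K),
\end{align}
the second estimate being transferred to $\phi^n$ from the corresponding one for $Y^n$ via the uniformly bi-Lipschitz $\Psi^n$.

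Next, I apply the Kolmogorov--Totoki--Cencov criterion recalled in Section \ref{App-sec-Kolm-Th} to the two-parameter random fields $(t,x)\mapsto\phi^n_t(x)$ and $(t,x)\mapsto D\phi^n_t(x)$, viewed as c\`adl\`ag-in-$t$, H\"older-in-$x$ processes (with $D\phi^n$ actually continuous in $t$). Together with the uniform moment bounds above, this produces, along a subsequence, almost-sure convergence on a \emph{single} $\mathbb{P}$-full event $\Omega'$ (independent of $t,x$) to limit fields $\phi^{\infty}$ and $J$; the limit $J$ inherits local $\delta$-H\"older regularity in $x$ \emph{uniformly in $t\in[0,T]$} and, crucially, joint $(t,x)$-continuity (since each $D\phi^n$ is $t$-continuous with $n$-uniform modulus). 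Uniqueness in Theorem \ref{d32}(v) forces $\phi^{\infty}=\phi$, and passing to the limit in the identity $\phi^n_t(y)-\phi^n_t(x)=\int_0^1 D\phi^n_t(x+\sigma(y-x))(y-x)\,d\sigma$ yields Fr\'echet differentiability of $\phi_t$ on $\Omega'$ with $D\phi_t=J_t$, establishing (i)--(iii) simultaneously.

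\textbf{Main obstacle.} The central difficulty is the \emph{universality} of $\Omega'$---constructing a single null set outside of which differentiability, joint continuity in $(t,x)$ and the local H\"older bound hold for \emph{all} $t\in[0,T]$ simultaneously. This forces the Kolmogorov argument to be run on the c\`adl\`ag-in-$t$, H\"older-in-$x$ two-parameter random fields $\{D\phi^n\}$ rather than pointwise in $t$, and requires both the $n$-uniform Schauder estimate from Theorem \ref{reg}(III)/Theorem \ref{thm-approximation} and the strengthened Kolmogorov criterion of Section \ref{App-sec-Kolm-Th}. A secondary technicality is the use of the variational equation itself: for $b$ only $\beta$-H\"older it does not directly make sense, so all bounds must first be derived for smooth $b^n$ and then stabilized through the It\^o--Tanaka transformation, which is precisely what guarantees the $n$-uniform spatial regularity of the coefficients after passage to $Y^n$.
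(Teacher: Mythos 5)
Your route is genuinely different from the paper's, and as written it has a gap at its central step. The paper does not mollify $b$ at all in this proof: it works directly with the difference-quotient field $N_t(x,\lambda)=\lambda^{-1}(\phi_t(x+\lambda e_i)-\phi_t(x))$ of the singular-drift flow (see \eqref{eqn-N-nardom field}), whose paths are continuous in $t$ because the noise cancels, proves the moment bound of Proposition \ref{prop-moment estimates} and the increment estimate of Proposition \ref{prop-moment estimates-difference} in the joint parameter $(x,\lambda)\in\mathscr{O}$ via the It\^o formula \eqref{eqn-Ito-iii} (It\^o--Tanaka), Burkholder, Lemma \ref{lem-Lemma 4.1} and Gronwall, and then applies the Kolmogorov test to the $C([0,T],\R^d)$-valued field $(x,\lambda)\mapsto N_\cdot(x,\lambda)$ to extend it continuously to $\lambda=0$. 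Differentiability, joint $(t,x)$-continuity, the local $\delta$-H\"older bound uniform in $t$, and the universal $\mathbb{P}$-full set all come out of that single Kolmogorov modification; no approximation of $b$ and no convergence of derivative flows is needed.

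The gap in your scheme is the sentence claiming that the $n$-uniform moment bounds together with the Kolmogorov--Totoki--Cencov criterion ``produce, along a subsequence, almost-sure convergence on a single $\mathbb{P}$-full event to limit fields $\phi^\infty$ and $J$''. Uniform-in-$n$ moment estimates (and even uniform moment bounds on H\"older seminorms coming from Kolmogorov's theorem) give tightness, not convergence: they do not produce a limit of $D\phi^n_t(x)$, nor a single null set off which convergence holds. To make your argument work you would need either a Cauchy-in-$n$ estimate of the type $\sup_{x\in K}\mathbb{E}\sup_{t\le T}\|D\phi^n_t(x)-D\phi^m_t(x)\|^p\to 0$, which is essentially the stability assertion \eqref{stability2-1} of Theorem \ref{thm-stability} — a result the paper proves only \emph{after} Theorem \ref{thm-ww}, with substantial extra work (the decomposition into $J_1,\dots,J_6$, the local convergence $Du^n\to Du$ from \eqref{grad}, etc.), and formulated there as convergence to the already-constructed $D\phi$, so in your setting it must be recast as a Cauchy estimate to avoid circularity — or a pathwise compactness-plus-identification argument, which in turn needs almost-sure locally uniform convergence $\phi^n\to\phi$ on a single full set (essentially \eqref{stability101} plus Corollary \ref{cor-civuole}) and pathwise equicontinuity of $\{D\phi^n_t(\cdot)\}_{n,t}$ on that same set; neither follows from the moment bounds you state. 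The final identification via the mean-value identity is fine once these convergences are in hand, but they are the crux, and they cost at least as much as the paper's direct difference-quotient argument. A secondary point: your plan only asserts spatial H\"older increments of $D\phi^n$, whereas the paper's Proposition \ref{prop-moment estimates-difference} controls increments jointly in $(x,\lambda)$, which is exactly what lets differentiability be read off as continuity of the extended field at $\lambda=0$ — that mechanism has no counterpart in your outline and is replaced by the unproven convergence step.
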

\begin{proof}[Proof of Theorem \ref{thm-ww}]

Since  Theorem \ref{d32} is given,  see  also Remark \ref{rem-kunita},  we
 argue, with important modifications,  as in Section 3 of Kunita  \cite{Kunita_2004}.
  Indeed, the It\^o formula
  \eqref{eqn-Ito-iii} is similar to  a classical   SDE with regular coefficients    considered
in    \cite[Section 3.3]{Kunita_2004}. However, applying directly the results from \cite{Kunita_2004}, we can only assert that
the map \eqref{eqn-flow} is  differentiable on $\R^d$ on some almost sure event $\Omega_t^\prime$ which may depend on $t$.

 Let us first define an auxiliary set $\mathscr{O}$ by
\begin{equation}
\label{eqn-set O}
\mathscr{O}:= \R^d \times ([-1,0) \cup (0,1]).
\end{equation}
Let us next define an auxiliary random field $N$
\begin{align}\label{eqn-N-nardom field}
N_t (x, \lambda) &= \frac{\phi_t(x + \lambda e_i) - \phi_t(x) }{\lambda}, \quad (t,x, \lambda) \in [0,\infty) \times \mathscr{O}.
 \end{align}
  By looking at our initial SDE \eqref{eqn-SDE}, see  also \eqref{d566}, we deduce that
 \begin{equation}
\begin{aligned}
\phi_t(x + \lambda e_i) - \phi_t(x) &= \lambda e_i + \int_0^t [b(\phi_s(x + \lambda e_i)) -b (\phi_s(x ) )] ds + L_t - L_t
\\
&= \lambda e_i + \int_0^t [b(\phi_s(x + \lambda e_i)) -b (\phi_s(x ) )] ds ,
\end{aligned}
\end{equation}
from which we deduce that the paths on $\Omega'$ (from Theorem \ref{d32})
\begin{equation} \label{serve}
[0,\infty) \ni t \mapsto N_{t} (x, \lambda) \;\; \text{are  continuous}.
\end{equation}
This fact will be  important in the sequel.  Here the situation is different from the more general case considered in \cite{Kunita_2004} in which  the process $N_{\cdot} (x, \lambda)$   had only a.s. c\`adl\`ag paths.

Thus  we  can view  $N(x,\lambda)$ as a random variable with values in the separable Fr\'echet space $C([0,\infty), \R^d)$ whereas the author  of \cite{Kunita_2004} was unable to  view  $N(x,\lambda)$ as a random variable with values in the  Fr\'echet space of all c\`adl\`ag paths endowed with the local supremum norm, because such  space is not separable.

\smallskip

 We choose and fix  numbers $T>0$.
Let us define an auxiliary parameter
\begin{equation}\label{eqn-gamma}
\gamma = \alpha + \beta -1.
\end{equation}
Let us observe that due to assumtion \eqref{eqn-beta+alpha half}, there exists $\delta$ satisfying the condition below.
\begin{equation}\label{eqn-delta}
0<\delta<\frac{\alpha}2+\beta-1<\gamma.
\end{equation}
In what follows we choose and fix such  parameter $\delta$.
Moreover,   condition  \eqref{eqn-beta+alpha half}  implies that  $ \gamma > \frac{\alpha}2$ and the number
\begin{equation}\label{eqn-gamma'}\gamma^\prime  :=\gamma-\delta  \in (\frac{\alpha}2, \gamma).
\end{equation}
Therefore, by  Hypothesis  \textbf{(H)}  and the definition of the Blumenthal-Getoor index  we infer that
\begin{align}\label{eqn-finitness of integral}
 \lint_{B} \vert z\vert^{p\gamma^\prime} \nu (dz) < \infty,\quad \mbox{ for } p \ge 2.
\end{align}
Note that in particular inequality \eqref{eqn-finitness of integral} holds for $p=2$  and we use this fact.

To continue with the proof of Theorem \ref{thm-ww}, following the approach used in proof of  \cite[Theorem 3.3]{Kunita_2004}, let us first formulate two important results that are essential to the proof.
\begin{proposition}
\label{prop-moment estimates}
Assume that  $N$ is the random field defined in
\eqref{eqn-N-nardom field}.
 For every $p \ge 2$, there exists  $C_p= C(T, \alpha, \beta, \nu, p, d)>0$  such that
\begin{align} \label{eqn-do}
\mathbb{E}[\sup_{s \in [0,T]} \vert N_s (x, \lambda)\vert^p] &\leq  C_p, \;\;\; \mbox{for all $(x,\lambda) \in \mathscr{O}$}.
\end{align}
\end{proposition}

\begin{proposition}
\label{prop-moment estimates-difference}
Assume that  $N$ is the random field defined in
\eqref{eqn-N-nardom field} and  $\delta$ is as in \eqref{eqn-delta}. Then, for every $p \ge 2$, there exists  $C_p= C(T, \alpha, \beta, \nu, p, d)>0$  such that
for all $(x,\lambda),(x^\prime,\lambda^\prime) \in \mathscr{O}$
\begin{align}
\label{do1}
\mathbb{E}[\sup_{s \in [0,T]} |N_s (x, \lambda) - N_s (x^\prime, \lambda^\prime) \vert^p] & \leq C_p (
|x -x^\prime\vert^{\delta p}  + |\lambda - \lambda^\prime\vert^{\delta p}).
\end{align}
\end{proposition}

Assuming for the time being validity of Proposition \ref{prop-moment estimates-difference}, 
we deduce that
\begin{equation} \label{ew}
\E[ \Vert N_{\cdot} (x, \lambda) - N_{\cdot} (x^\prime, \lambda^\prime) \Vert^p_{C([0,T],\R^d)}] \leq C_p (
|x -x^\prime\vert^{\delta p}  + |\lambda - \lambda^\prime\vert^{\delta p}),\quad (x,\lambda), (x^\prime,\lambda^\prime) \in \mathscr{O}.
\end{equation}
Choosing $p$ large enough and then applying  the Kolmogorov Test, see e.g.  \cite[Appendix]{Kunita_2004},  we deduce
that there exists an $C([0,T],\R^d)$-valued continuous random field $\tilde{N}$ defined on the closure $\bar{\mathscr{O}}=\R^d \times [-1,1]$ of the set $\mathscr{O}$,
such that for every $(x, \lambda) \in \mathscr{O}$, $N (x, \lambda)=\tilde{N} (x, \lambda)$, $\mathbb{P}$-almost surely. Hence we obtain
the differentiability with respect to  $x$ of the process $\phi_t(x)$ uniformly in $t\in [0,T]$   with $D_i \phi_t (x)=\tilde{N}_t(x,0) $. Moreover,
we deduce that the process    $D_i \phi_t (x) $ is continuous w.r.t. $(t,x) \in [0,T] \times \R^d$. More precisely,
 we get that, $\mathbb{P}$-a.s., for every $x \in \R^d$, $i=1, \ldots, d,$
\begin{equation} \label{22}
 \lim_{\lambda \to 0}
 \big \Vert\frac{ \phi_\cdot (x + \lambda e_i) - \phi_\cdot(x)}{\lambda}  - D_i \phi_\cdot (x) \big \Vert_{C([0,T]; \R^d)} =0.
\end{equation}
Moreover, since $\delta$ can be any element in $(0,\frac\alpha2+\beta-1)$,  $D_i \phi_\cdot (x)$ is locally $\delta$-H\"older continuous w.r.t. $x$ with values in $C([0,T]; \R^d)$, for any $0<\delta =\gamma-\gamma^\prime<\frac\alpha2+\beta-1$.

To finish the proof of Theorem \ref{thm-ww}, it remains to prove Propositions \ref{prop-moment estimates} and \ref{prop-moment estimates-difference}.

In the following proofs the constants $C$ or $c$  can change values from line to line. They may only depend on $\alpha$, $\beta$, $\nu$, $p,$ $d$ and $T$.
We  also choose and fix $\varpi>\varpi{}_{,M}$ and $\eps=\frac{1}{3M}$, where $M=\Vert b \Vert_{\beta}$ and $\varpi{}_{,M}$ is introduced in (III) of Theorem \ref{reg}. Set $u=u_{\lambda_0}$ be the unique solution
 of equation \eqref{eqn-wee} with the RHS $f=b$ and $\lambda=\lambda_0$. Hence, (III) of Theorem \ref{reg} implies that
\begin{equation}\label{eq P29 star}
    \|Du\|_0\leq \frac{1}{3}\text{ and } \|u\|_{\alpha+\beta}\leq C_MM<\infty.
\end{equation}

\begin{proof}[Proof of Proposition \ref{prop-moment estimates}] 
Let us choose and fix $p \in [2,\infty)$ and  $(x,\lambda) \in \mathscr{O}$ and $i \in \{1,\cdots, d\}$. Set
\begin{equation}\label{eqn-Y}
Y (t;x,\lambda) = N_t(x, \lambda)=\frac{\phi_t(x + \lambda e_i) - \phi_t(x) }{\lambda} .
\end{equation}
For simplicity of presentation, we write $Y(t)$ for $Y (t;x,\lambda)$.
Let us recall that by  \eqref{eqn-Ito-iii},  with $u=u_{\varpi}$  we have
\begin{align}\label{xi-u-eq-1-1}
\phi_{t}(x+ \lambda e_i)
=&
 x+ \lambda e_i+ L_t - [ u (\phi_{t}(x+ \lambda e_i)) -  u (x+ \lambda e_i )]
 + \varpi  \int_0^t  u(\phi_{s}(x+ \lambda e_i))\, ds\nonumber\\
 &+  \int_0^{t} \lint_{{\mathbb R}^d} [  u(\phi_{s-}(x+ \lambda e_i) + z) -  u(\phi_{s-}(x+ \lambda e_i))]
   \tilde{\prm}(ds, dz),
\end{align}
and
\begin{align}\label{xi-u-eq-1-2}
\phi_{t}(x)
=&
 x+ L_t - [ u (\phi_{t}(x)) -  u (x )]
 + \varpi  \int_0^t  u(\phi_{s}(x))\, ds\nonumber\\\
 &+  \int_0^{t} \lint_{{\mathbb R}^d}  [  u(\phi_{s-}(x) + z) -  u(\phi_{s-}(x))]
   \tilde{\prm}(ds, dz).
\end{align}
Thus  we have
\begin{align}
Y(t;x,\lambda)
= &e_i  - \frac{1}{\lambda}[ u (\phi_{t}(x + \lambda e_i)) -  u (x + \lambda e_i) - u (\phi_{t}(x )) +  u (x )]
\\
 &+ \frac{\varpi}{\lambda}  \int_0^t  [u(\phi_{s}(x + \lambda e_i)) - u(\phi_{s}(x)) ]  ds
 +  \int_0^{t} \lint_{{\mathbb R}^d}
  Z(s,x,z,\lambda) \,\tilde{\prm}(ds, dz),
\end{align}
where, for $  s\geq 0$  and  $(x,\lambda) \in \mathscr{O}$,
\begin{align}\label{eqn-Z}
Z(s,x,z,\lambda):=\frac{1}{\lambda}\Big[u(\phi_{s-}(x + \lambda e_i) \!+\! z) \!-\!  u(\phi_{s-}(x \!+\! \lambda e_i))
\!-\! u(\phi_{s-}(x) \!+\! z) \!+\!  u(\phi_{s-}(x))\Big].
\end{align}
Since by \eqref{eq P29 star}  $\Vert Du \Vert_0 \le \frac13$,  i.e., the Lipschitz constant of $u$ is $\leq \frac13$, we infer that
\begin{align}
& \Bigl\vert \frac{1}{\lambda}[ u (\phi_{t}(x + \lambda e_i)) -  u (x + \lambda e_i) - u (\phi_{t}(x )) +  u (x )] \Bigr\vert
\\
& \leq
\Bigl\vert \frac{1}{\lambda}[ u (\phi_{t}(x + \lambda e_i)) -  u (\phi_{t}(x ))]\Bigr\vert  +   \Bigl\vert  \frac{1}{\lambda}[ u (x + \lambda e_i) -   u (x )] \Bigr\vert\le \frac{1}{3} (|Y(t)| + 1).
\end{align}
 It follows, since $p\geq 2$,  that for every  $t\in[0,T]$,
\begin{align}
 & |Y(t)\vert^p
 \leq c_p + c_p \int_0^t  |Y(r)\vert^p dr+ c_p \Bigl\vert \int_0^{t} \lint_{{\mathbb R}^d}
    Z(s,x,z,\lambda)    \tilde{\prm}(ds, dz) \Big\vert^p.
\end{align}
Now, we estimate the stochastic integral  as in the proof of Theorem 4.3 in \cite{Pr12}. Using \eqref{eqn-Burkholder inequality} and treating separately the small jumps part and the large jumps part, we obtain
\begin{align*}
   & \ \mathbb{E}\Big[ \sup_{0\leq s\leq t}  \Bigl\vert \int_0^{s} \int_{B^c}
    Z(r,x,z,\lambda)
   \tilde{\prm}(dr, dz) \Big\vert^p     \Big]\\
   &\leq C_p \mathbb{E} \Big[ \Big(   \int_0^t\int_{B^c}|Z(s,x,z,\lambda) |^2 \nu(dz)d s\Big)^{\frac{p}2}         \Big]
   +C_p \mathbb{E} \Big[   \int_0^t\int_{B^c}|Z(s,x,z,\lambda) |^p \nu(dz)d s       \Big],
\end{align*}
and
\begin{align*}
   &  \mathbb{E}\Big[ \sup_{0\leq s\leq t}  \Bigl\vert \int_0^{s} \lint_{B}
    Z(r,x,z,\lambda)
   \tilde{\prm}(dr, dz) \Big\vert^p     \Big]\\
   &\leq C_p \mathbb{E} \Big[ \Big(   \int_0^t\lint_{B}|Z(s,x,z,\lambda) |^2 \nu(dz)d s\Big)^{\frac{p}2}         \Big]
   +C_p \mathbb{E} \Big[   \int_0^t\lint_{B}|Z(s,x,z,\lambda) |^p \nu(dz)d s       \Big].
\end{align*}
In view of the fact that $|Z(s,x,z,\lambda)|\leq 2\|Du\|_0|Y(s-)| \leq \frac23|Y(s-)|$, we have
\begin{align*}
   & \mathbb{E}\Big[ \sup_{0\leq s\leq t}  \Bigl\vert \int_0^{s} \int_{B^c}
    Z(r,x,z,\lambda)
   \tilde{\prm}(dr, dz) \Big\vert^p     \Big]\\
   &\leq C_p(1+t^{\frac{p}2-1})\Big[\int_{B^c} \nu(dz)+ \Big(  \int_{B^c} \nu(dz)     \Big)^{\frac{p}2}\Big]\int_0^t\mathbb{E}\sup_{0\leq s\leq r}|Y(s)|^pdr,
\end{align*}
where we also used the H\"older inequality.

By applying Lemma \ref{lem-Lemma 4.1}, we infer $|Z(s,x,z,\lambda)|\leq \|u\|_{1+\gamma}|Y(s-)||z|^{\gamma}$ with $\gamma=\alpha+\beta-1$, see \eqref{eqn-gamma},  and hence we get
\begin{align*}
   & \mathbb{E}\Big[ \sup_{0\leq s\leq t}  \Bigl\vert \int_0^{s} \lint_{B}
    Z(r,x,z,\lambda)
   \tilde{\prm}(dr, dz) \Big\vert^p     \Big]\\
   &\leq C_p \|u\|_{1+\gamma}^p \Big[ \mathbb{E} \Big(   \int_0^t\lint_{B  }|Y_s |^2|z|^{2\gamma} \nu(dz)d s\Big)^{\frac{p}2}         +\mathbb{E}  \int_0^t\lint_{B  }|Y_s |^p|z|^{\gamma p} \nu(dz)d s       \Big]\\
   &\leq C_p (1+t^{\frac{p}2-1})\|u\|_{1+\gamma}^p\Big[ \Big(   \lint_{B  }|z|^{2\gamma} \nu(dz)\Big)^{\frac{p}2}  +  \lint_{B }|z|^{p\gamma} \nu(dz)      \Big] \int_0^t \mathbb{E}\sup_{0\leq s\leq r}|Y(s)|^pdr,
\end{align*}
where $\lint_{B}|z|^{2 \gamma} \nu(d z)+\lint_{B}|z|^{p \gamma} \nu(d z)<+\infty$, since $2\gamma>\alpha$ and $p\gamma>\alpha$.
Combining the above estimates, finally, we obtain
\begin{equation}\label{dd-2}
\mathbb{E} [\sup_{0 \le s \le t}|Y(s)\vert^p] \le c_p + c_p(1+t^{\frac{p}2-1}) \int_0^t  \mathbb{E} [\sup_{0 \le r \le s} |Y(r)\vert^p] ds,\;\;\; \; t \in [0,T],
\end{equation}
for some constant $c_p>0$ independent of $x$ and $\lambda$. Applying the Gronwall lemma we obtain  \eqref{eqn-do}.
Hence the proof of Proposition \ref{prop-moment estimates} is complete.
\end{proof}

\begin{proof}[Proof of Proposition \ref{prop-moment estimates-difference}] 
Let us choose and fix $ p \in [2,\infty)$ and $(x,\lambda),(x^\prime,\lambda^\prime)\in \mathscr{O}$.
 We  consider
 a continuous process $\hat X$ defined by
\begin{equation}\label{eqn-X-hat}
\hat X(t)=\hat X(t;x, \lambda,x^\prime, \lambda^\prime) = N_t(x, \lambda) - N_t (x^\prime, \lambda^\prime),\;\;\; t \geq 0,
\end{equation}
where $N$ is the random field defined in \eqref{eqn-N-nardom field}.\\
Then, by  \eqref{xi-u-eq-1-1} and \eqref{xi-u-eq-1-2},  we have
\begin{align*}
\hat X(t)=J_1(t) + J_2(t) + J_{31}(t)+J_{32}(t),\;\; t\geq 0,
\end{align*}
where
\begin{align*}
J_1(t)&= \! -   \frac{1}{\lambda}[ u (\phi_{t}(x + \lambda e_i)) -  u (x + \lambda e_i) - u (\phi_{t}(x )) +  u (x )]
\\
&\quad\quad\quad+\frac{1}{\lambda^\prime}[ u (\phi_{t}(x^\prime + \lambda^\prime e_i)) -  u (x^\prime + \lambda^\prime e_i) - u (\phi_{t}(x^\prime )) +  u (x^\prime )],
\\
J_2(t)&=   \varpi  \int_0^t \Bigl[
 \frac{u(\phi_{s}(x + \lambda e_i)) - u(\phi_{s}(x)) }{\lambda}
 -
\frac{u(\phi_{s}(x^\prime + \lambda^\prime e_i)) - u(\phi_{s}(x^\prime)) }{\lambda^\prime}
\Big ]ds,
\\
J_{31}(t)&=   \int_0^{t} \int_{B^{\rc}} H(s-,z,x,x^\prime, \lambda, \lambda^\prime)
   \tilde{\prm}(ds, dz),
\\
J_{32}(t)&=   \int_0^{t} \lint_{B} H(s-,z,x,x^\prime, \lambda, \lambda^\prime)
   \tilde{\prm}(ds, dz),
\end{align*}
and where
\begin{align*}
&H(s,z,x,x^\prime, \lambda, \lambda^\prime)\\
&=
\frac{1}{\lambda} [  u(\phi_{\sm}(x + \lambda e_i) + z) -  u(\phi_{s}(x + \lambda e_i))
- u(\phi_{\sm}(x) + z) +  u(\phi_{\sm}(x))]
 \\ &\quad-  \frac{1}{\lambda^\prime}
[  u(\phi_{\sm}(x^\prime
+ \lambda^\prime e_i) + z) -  u(\phi_{\sm}(x^\prime + \lambda^\prime e_i))- u(\phi_{\sm}(x^\prime) + z) +  u(\phi_{\sm}(x^\prime))]
\\
&= \frac{1}{\lambda} \bigl[ u(\phi_{\sm}(x + \lambda e_i)+z)- u(\phi_{\sm}(x)+z)    \bigr] - \frac{1}{\lambda} \bigl[ u(\phi_{\sm}(x + \lambda e_i))- u(\phi_{\sm}(x))  \bigr]\\
&\quad - \frac{1}{\lambda^\prime} \bigl[ u(\phi_{\sm}(x^\prime + \lambda^\prime e_i) + z) - u(\phi_{\sm}(x^\prime) + z) \bigr]+ \frac{1}{\lambda^\prime} \bigl[ u(\phi_{\sm}(x^\prime + \lambda^\prime e_i)) -u(\phi_{\sm}(x^\prime)) \bigr].
\end{align*}
With $r_1=\phi_{\sm}(x + \lambda e_i)+z$ and $r_2=\phi_{\sm}(x)+z$, so that   by  \eqref{eqn-N-nardom field}
\begin{align}
r_1-r_2&=\phi_{\sm}(x + \lambda e_i) -\phi_{\sm}(x)=\lambda N_{\sm}(x,\lambda),\\
r_2+\theta (r_1-r_2)&= \phi_{\sm}(x)+z+ \theta \lambda N_{\sm}(x,\lambda).
\end{align}
Hence by the  mean value theorem  we have
\begin{align*}
&\frac{1}{\lambda} \bigl[ u(\phi_{\sm}(x + \lambda e_i)+z)- u(\phi_{\sm}(x)+z)    \bigr]
=
\frac{1}{\lambda} \bigl[u(r_1)-u(r_2)]
\\
&=\frac{1}{\lambda} \int_0^1 Du(r_2+\theta (r_1-r_2))(r_1-r_2)\, d\theta
=
\int_0^1 Du(\phi_{\sm}(x)+z+ \theta \lambda N_{\sm}(x,\lambda))(N_{\sm}(x,\lambda))\, d\theta.
\end{align*}

Analogously we have
\begin{align*}
\frac{1}{\lambda^\prime} \bigl[ u(\phi_{\sm}(x^\prime + \lambda^\prime e_i)+z)- u(\phi_{\sm}(x^\prime)+z)    \bigr]
=
\int_0^1 Du(\phi_{\sm}(x^\prime)+z+ \theta \lambda^\prime N_{\sm}(x^\prime,\lambda^\prime))(N_{\sm}(x^\prime,\lambda^\prime))\, d\theta,
\end{align*}
and 
\begin{align}
\frac{1}{\lambda} \bigl[ u(\phi_{\sm}(x + \lambda e_i))- u(\phi_{\sm}(x))    \bigr]
&=
\int_0^1 Du(\phi_{\sm}(x)+ \theta \lambda N_{\sm}(x,\lambda))(N_{\sm}(x,\lambda))\, d\theta,\label{eq P32 eq 1}\\
\frac{1}{\lambda^\prime} \bigl[ u(\phi_{\sm}(x^\prime + \lambda^\prime e_i))- u(\phi_{\sm}(x^\prime))    \bigr]
&=
\int_0^1 Du(\phi_{\sm}(x^\prime)+ \theta \lambda^\prime N_{\sm}(x^\prime,\lambda^\prime))(N_{\sm}(x^\prime,\lambda^\prime))\, d\theta.\label{eq P32 eq 2}
\end{align}
Therefore,
\begin{align}
&H(s,z,x,x^\prime, \lambda, \lambda^\prime)\nonumber\\
&=
\int_0^1 Du(\phi_{\sm}(x)+z+ \theta \lambda N_{\sm}(x,\lambda))(N_{\sm}(x,\lambda))\, d\theta
-\int_0^1 Du(\phi_{\sm}(x)+ \theta \lambda N_{\sm}(x,\lambda))(N_{\sm}(x,\lambda))\, d\theta\nonumber\\
&\quad -\int_0^1 Du(\phi_{\sm}(x^\prime)+z+ \theta \lambda^\prime N_{\sm}(x^\prime,\lambda^\prime))(N_{\sm}(x^\prime,\lambda^\prime))\, d\theta
\nonumber\\
&\quad +
\int_0^1 Du(\phi_{\sm}(x^\prime)+ \theta \lambda^\prime N_{\sm}(x^\prime,\lambda^\prime))(N_{\sm}(x^\prime,\lambda^\prime))\, d\theta
\nonumber\\
&=
\int_0^1 \Bigl[ Du(\phi_{\sm}(x)+z+ \theta \lambda N_{\sm}(x,\lambda))- Du(\phi_{\sm}(x)+ \theta \lambda N_{\sm}(x,\lambda)) \Bigr]
(N_{\sm}(x,\lambda)-N_{\sm}(x^\prime,\lambda^\prime))\, d\theta
\nonumber\\
&\quad +
\int_0^1 \Bigl[ Du(\phi_{\sm}(x)+z+ \theta \lambda N_{\sm}(x,\lambda))-
Du(\phi_{\sm}(x)+ \theta \lambda N_{\sm}(x,\lambda))
\label{eqn-H} \\
&
\quad\quad\quad\quad- Du(\phi_{\sm}(x^\prime)+z+ \theta \lambda^\prime N_{\sm}(x^\prime,\lambda^\prime))
+Du(\phi_{\sm}(x^\prime)+ \theta \lambda^\prime N_{\sm}(x^\prime,\lambda^\prime))
\Bigr] (N_{\sm}(x^\prime,\lambda^\prime))\, d\theta.\nonumber
\end{align}

In order to find estimates for $J_{31}(t)$ and $J_{32}(t)$, we will proceed as in the proof of   \cite[Theorem 4.3]{Pr12} and  use the Burkholder inequality \eqref{eqn-Burkholder inequality} from Proposition \ref{prop-Burkholder inequality}.

We will first   deal with the term $J_{32}$. Let us choose and fix an arbitrary  $z \in B$.
Using the fact that $Du$ is $\gamma$-H\"older continuous  and the notation \eqref{eqn-X-hat},  we infer that
   the 1st term on the RHS of \eqref{eqn-H} is bounded by
\[
C \vert z\vert^{\gamma} |\hat X(s)|, \;\; s \geq 0.
\]

In order to estimate the 2nd term  on the RHS of \eqref{eqn-H} we will argue as on   \cite[pages 445 and 446]{Pr12}.
By   inequality \eqref{eqn-4.16} in Lemma \ref{lem-Lemma 4.1} and \eqref{eq P29 star}, we deduce that  there exists a constant $c$ depending on $Du$, $\gamma$ and $\gamma^\prime$ such that
for all  $y,y' \in \R^d$,
\begin{equation}\label{rr4}
 |D  u (y+ z)  -  D  u (y) - D  u (y' + z) + D  u (y') | \le c \vert z\vert^{\gamma^\prime} \, |y-y'\vert^{\gamma - \gamma^\prime}.
\end{equation}
Applying \eqref{rr4} we find that the 2nd term on the RHS of \eqref{eqn-H} is bounded by
 \begin{align*}
 & C| N_s(x^\prime, \lambda^\prime )| \, \vert z\vert^{\gamma^\prime} \, \int_0^1  |
 \phi_{s}(x) +  \theta \lambda N_s(x, \lambda)
 -
 \phi_{s}(x^\prime) -  \theta \lambda^\prime N_s(x^\prime, \lambda^\prime) \vert^{\gamma - \gamma^\prime} d \theta
\\
&\leq   C \vert z\vert^{\gamma^\prime}  | N_s(x^\prime, \lambda^\prime )| \,  \, \Big[  |
 \phi_{s}(x)   -  \phi_{s}(x^\prime)\vert^{\gamma - \gamma^\prime}
 +
|\phi_s (x + \lambda e_i) - \phi_s(x^\prime + \lambda^\prime e_i)\vert^{\gamma - \gamma^\prime}
 \Bigr].
\end{align*}
Therefore, we deduce that
\begin{align*}
&|H(s,z,x,x^\prime, \lambda, \lambda^\prime)|\\
&\leq
C \vert z\vert^{\gamma}   |\hat X(s)| + C \vert z\vert^{\gamma^\prime}  | N_s(x^\prime, \lambda^\prime )|
 \Big[  |
 \phi_{s}(x)   -  \phi_{s}(x^\prime)\vert^{\gamma - \gamma^\prime}
 +
|\phi_s (x + \lambda e_i) - \phi_s(x^\prime + \lambda^\prime e_i)\vert^{\gamma - \gamma^\prime}
 \Bigr],
\end{align*}
and so,
 we get
\begin{align*}
&|H(s,z,x,x^\prime, \lambda, \lambda^\prime)\vert^p
\leq C_p \vert z\vert^{p\gamma^\prime} |\hat X(s)\vert^p\\
&+
  C_p | N_s(x^\prime, \lambda^\prime )\vert^p \, \vert z\vert^{p\gamma^\prime} \Big[  |
 \phi_{s}(x)   -  \phi_{s}(x^\prime)\vert^{p(\gamma - \gamma^\prime)}
+
|\phi_s (x + \lambda e_i) - \phi_s(x^\prime + \lambda^\prime e_i)\vert^{p(\gamma - \gamma^\prime)}
\Bigr],
 \end{align*}
which is a useful estimate.  Thus, since $\delta = \gamma - \gamma^\prime$,  we infer that
\begin{align} \label{wq}
&\mathbb{E}\Big[\sup_{s\in[0,t]}\vert J_{32}(s)\vert^p \Big] \leq C_p \mathbb{E}\Big[\int_0^t  |\hat X(s)\vert^p ds\Big]
\\   &+ C_p  \mathbb{E}\Big[\int_0^t \!\! | N_s(x^\prime, \lambda^\prime )\vert^p \,   \Big[  |
 \phi_{s}(x)   -  \phi_{s}(x^\prime)\vert^{p \delta}
 +
|\phi_s (x + \lambda e_i) - \phi_s(x^\prime + \lambda^\prime e_i)\vert^{p \delta }
  \Bigr]
 ds\Big].\nonumber
\end{align}
We have  used \eqref{eqn-Burkholder inequality} and \eqref{eqn-finitness of integral} to obtain the above estimate.

Let us treat $J_{31}(t)$. Since $Du$ is $\gamma$-H\"older continuous and \eqref{eq P29 star},  by
\eqref{eqn-interpolation inequality}, we have
\begin{equation} \label{2ww}
 |D  u (y )  - D  u (y')  | \le c  |y-y'\vert^{\gamma - \gamma^\prime} =
  c  |y-y'\vert^{\delta}, \;\; y,y' \in \R^d.
\end{equation}
We obtain by \eqref{eq P29 star}, \eqref{eqn-H} and \eqref{2ww}, when $\vert z| >1$,
\begin{align*}
\hspace{-0.8truecm}| H(s,z,x,x^\prime, \lambda, \lambda^\prime)|
\le c |\hat X(s)|
 \!+\! c |N_s(x^\prime, \lambda^\prime)|   [
 |\phi_s (x) \!-\! \phi_s(x^\prime)\vert^{\delta} \!+\! |\phi_s (x + \lambda e_i) \!-\! \phi_s(x^\prime + \lambda^\prime e_i)\vert^{\delta} ],
\end{align*}
and  so we arrive at, for $p \ge 2,$
\begin{align} \label{wq1}
 \mathbb{E}\Big[\sup_{s\in[0,t]}|J_{31} (s)|^p\Big]
 \leq&  C_p \mathbb{E}\Big[\int_0^t  |\hat X(s)\vert^p \,ds
\\ & + \int_0^t |N_s(x^\prime, \lambda^\prime)\vert^p \,     ( |\phi_s (x) - \phi_s(x^\prime)\vert^{p\delta} + |\phi_s (x + \lambda e_i) - \phi_s(x^\prime + \lambda^\prime e_i)\vert^{p\delta } )\, ds\Big].\nonumber
\end{align}

 Now  we consider  $J_1$. By \eqref{eqn-X-hat}, \eqref{eq P32 eq 1} and \eqref{eq P32 eq 2}, $J_1$ can be expressed  as
\begin{align*}
J_1(t)=& - \int_0^1 D  u (\phi_{t}(x) + \theta \lambda N_t(x, \lambda))\,[ \hat X(t)]
  d \theta+  \int_0^1 [D  u (x + \theta \lambda e_i)-D  u (x^\prime + \theta \lambda^\prime e_i)] e_i d \theta
\\
& +  \int_0^1\!\! [ D  u (\phi_{t}(x^\prime) \!+\! \theta [\phi_t(x^\prime \!+\! \lambda^\prime e_i) \!-\! \phi_t(x^\prime)]) \!-\! D  u (\phi_{t}(x) \!+\! \theta
[\phi(x \!+\! \lambda e_i) \!-\! \phi(x)] )]
 N_t(x^\prime, \lambda^\prime) d \theta.
\end{align*}
Recalling \eqref{eq P29 star} and \eqref{2ww}, it follows that
\begin{align*}
 |J_1(t)|
 \leq&\,
  \frac{1}{3}  |\hat X(t)| + C |\lambda - \lambda^\prime\vert^{\delta} + C |x - x^\prime\vert^{\delta}
 \\
&+
   C  |N_t(x^\prime, \lambda^\prime)| \,  [
 |\phi_t (x) - \phi_t(x^\prime)\vert^{\delta} + |\phi_t (x + \lambda e_i) - \phi_t(x^\prime + \lambda^\prime e_i)\vert^{\delta} ].
  \end{align*}
 Arguing as above, we get
\begin{align*}
J_2(t)=& \varpi  \int_0^t  \int_0^1 D  u (\phi_{s}(x) + \theta \lambda N_s(x, \lambda))
  N_s(x, \lambda) d\theta\,  ds
\\
&- \varpi \int_0^t  \int_0^1 D  u (\phi_{s}(x^\prime) + \theta \lambda^\prime N_s(x^\prime, \lambda^\prime))
  N_s(x^\prime, \lambda^\prime) d\theta\, ds
\\
=&
\varpi  \int_0^t  \int_0^1 D  u (\phi_{s}(x) + \theta \lambda N_s(x, \lambda))
 [ \hat X(s) ]d \theta  \,ds
\\
&+ \varpi \int_0^t \int_0^1 [D  u (\phi_{s}(x) + \theta \lambda N_s(x, \lambda)) -  D  u (\phi_{s}(x^\prime) + \theta \lambda^\prime N_s(x^\prime, \lambda^\prime))]
  N_s(x^\prime, \lambda^\prime) d \theta\, ds.
   \end{align*}
It follows that
\begin{equation}
|J_2(t)|
 \le c \int_0^t  |\hat X(s)| ds + c \int_0^t |N_s(x^\prime, \lambda^\prime)| \,  [
 |\phi_s (x) - \phi_s(x^\prime)\vert^{\delta} + |\phi_s (x + \lambda e_i) - \phi_s(x^\prime + \lambda^\prime e_i)\vert^{\delta} ]\,  ds.
\end{equation}

  Note that the following inequality is
 useful estimates. We can use \eqref{ciao7} and \eqref{eqn-do} to obtain, for $p \ge 2,$  by the H\"older inequality
\begin{align} \label{e44}
&\mathbb{E} \Big[\sup_{0 \le t \le T} \Big(|N_t(x^\prime, \lambda^\prime)\vert^p  \, \big[
 |\phi_t (x) - \phi_t(x^\prime)\vert^{p \delta} + |\phi_t (x + \lambda e_i) - \phi_t(x^\prime + \lambda^\prime e_i)\vert^{p\delta} \big] \Big)\Big]\nonumber\\
&\le C_p [|\lambda - \lambda^\prime\vert^{p\delta} + |x - x^\prime\vert^{p\delta}],
\end{align}
for some constant $C_p >0$ independent of $x,x^\prime, \lambda, \lambda^\prime$.

  By  the previous  estimates, 
 we easily obtain
 \begin{equation} \label{1ww}
\mathbb{E} [\sup_{0 \le s \le t}|\hat X(s)\vert^p] \le  C \int_0^t  \mathbb{E} [\sup_{0 \le r \le s} |\hat X(r)\vert^p]\, ds +   C |\lambda - \lambda^\prime\vert^{\delta p} +
 C|x - x^\prime\vert^{\delta p},
\end{equation}
where $C$ depends on $\alpha$, $\beta$, $\nu$, $p$, $d$ and $T$. From  \eqref{1ww} we deduce easily \eqref{do1} using the Gronwall Lemma. This completes the proof of Proposition \ref{prop-moment estimates-difference}.
 \end{proof}

The proof of Theorem \ref{thm-ww} is now complete.
\end{proof}

As a byproduct of  the previous proof we also get the following result.

\begin{corollary}\label{cor-s11}  For every $p \ge 2$ and $T>0$ there exists $C_p=C_p(T) >0$ such that for every    $x \in \R^d,$
\begin{equation} \label{s11}
 \mathbb{E} [\sup_{t \in [0,T]} |D \phi_t(x)\vert^p ] \le C_p,
\end{equation}
\end{corollary}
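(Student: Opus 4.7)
The plan is to obtain this bound as a direct byproduct of Proposition \ref{prop-moment estimates}, by passing to the limit in the finite-difference quotients $N_t(x, \lambda) = \lambda^{-1}(\phi_t(x+\lambda e_i) - \phi_t(x))$ introduced in \eqref{eqn-N-nardom field}. Fix $p \geq 2$, $T>0$, $x \in \mathbb{R}^d$, and $i \in \{1,\dots, d\}$. Proposition \ref{prop-moment estimates} already provides a constant $C_p = C(T,\alpha,\beta,\nu,p,d)$, \emph{independent of both $x$ and $\lambda$}, such that
\[
\mathbb{E}\Big[\sup_{t \in [0,T]} |N_t(x,\lambda)|^p\Big] \leq C_p, \qquad \lambda \in [-1,0)\cup(0,1].
\]

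Next, I would invoke the convergence statement \eqref{22} established inside the proof of Theorem \ref{thm-ww}, which asserts that on a $\mathbb{P}$-full event,
\[
\lim_{\lambda \to 0}\Big\|N_\cdot(x,\lambda) - D_i\phi_\cdot(x)\Big\|_{C([0,T];\mathbb{R}^d)} = 0.
\]
In particular, for any sequence $\lambda_n \to 0$, one has $\sup_{t\in[0,T]} |N_t(x,\lambda_n)|^p \to \sup_{t\in[0,T]} |D_i\phi_t(x)|^p$ almost surely.

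Applying Fatou's lemma to this almost sure convergence together with the uniform-in-$\lambda$ bound above yields
\[
\mathbb{E}\Big[\sup_{t \in [0,T]} |D_i\phi_t(x)|^p\Big] \;\leq\; \liminf_{n\to\infty} \mathbb{E}\Big[\sup_{t \in [0,T]} |N_t(x,\lambda_n)|^p\Big] \;\leq\; C_p.
\]
Summing (or taking a maximum) over $i = 1,\dots,d$ and using the equivalence of norms on $\mathscr{L}(\mathbb{R}^d,\mathbb{R}^d)$, we obtain the claim with a possibly enlarged constant, still denoted $C_p$, depending on $T,\alpha,\beta,\nu,p,d$ but not on $x$.

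There is essentially no obstacle here: the nontrivial analytic work, namely the It\^o-Tanaka decomposition, the Burkholder inequality applied to small and large jumps separately, and the use of $\|Du\|_0 \leq 1/3$ together with the Gronwall argument, has already been carried out in the proof of Proposition \ref{prop-moment estimates}. The corollary is purely a matter of lower-semicontinuity of $L^p$-norms under the almost sure convergence \eqref{22}, and the fact that the constant $C_p$ in Proposition \ref{prop-moment estimates} was shown to be independent of $(x,\lambda) \in \mathscr{O}$.
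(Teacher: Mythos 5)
Your proposal is correct and follows exactly the route the paper intends: Corollary \ref{cor-s11} is stated as a byproduct of the proof of Theorem \ref{thm-ww}, namely the $(x,\lambda)$-uniform bound of Proposition \ref{prop-moment estimates} combined with the almost sure uniform-in-$t$ convergence \eqref{22} of the difference quotients to $D_i\phi_\cdot(x)$ and Fatou's lemma. Nothing is missing; your write-up simply makes explicit the short limiting argument the paper leaves implicit.
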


\subsection{The diffeomorphism property }

  To prove Theorem \ref{thm-ww}, we have applied the standard Kolmogorov test in order to  obtain
nice regularity properties for $D\phi_t(x)$. However, to establish the diffeomorphism  property of  
$\phi_{s,t}(x)$, it seems that we cannot use directly such  Kolmogorov test because of the presence of both  $s$ and $t$. More precisely, when analyzing the inverse flow $\phi^{-1}_{s,t}(x)$, see \eqref{de}, the presence of a backward term requires a different approach and a stronger version of the Kolmogorov test is necessary (cf. Theorem \ref{thm-KTC-ZB}). This is why in the next Theorem \ref{ww1} we will work differently to study    $D\phi_{s,t}(x)$.

\begin{theorem}
\label{ww1} {Under the assumptions of Theorem \ref{d32},}
there exists a $\mathbb{P}$-full event $\Omega^{\prime\prime}$ such that,
for all  $\omega \in \Omega^{\prime\prime}$ and $T>0$,  $0\le s \le  t \le T$,
the mapping: \[\R^d \ni x \mapsto \phi_{s,t}(x) \in \R^d\] is  a surjective diffeomorphism of $C^1$-class and,
for any $\delta \in (0,\frac{\alpha}2+\beta-1)$,
the function
\[
\mathbb{R}^d \ni x \mapsto D \phi_{s,t}(x)   \in \mathscr{L}(\R^d,\R^d) \]
is locally $\delta$-H\"older continuous  uniformly in $(s,t) \in [0,T] \times [0,T]$.

Moreover,  for every  $\omega \in \Omega^{\prime\prime}$ and  for any $\delta \in (0,\frac{\alpha}2+\beta-1)$,
the  function
\[ \mathbb{R}^d \ni x \mapsto  D (\phi_{s,t}^{-1} )(x) \in \mathscr{L}(\R^d,\R^d) \]
  is     locally $\delta$-H\"older continuous  uniformly in $(s,t) \in [0,T] \times [0,T]$.

  Finally, for every  $\omega \in \Omega^{\prime\prime}$  and for every $x \in \R^d, $  the functions
  \begin{align}
  & [0,T]\times [0,T] \ni (s,t) \mapsto D \phi_{s,t}(x)
   \\&
    [0,T]\times [0,T] \ni (s,t) \mapsto  D (\phi_{s,t}^{-1} )(x)
  \end{align}
 are separately c\`adl\`ag, i.e., for each $s \in [0,T]$,
  \[ [0,T] \ni t \mapsto  D \phi_{s,t}(x) \in \R^d \mbox{ is c\`adl\`ag}\] and  for each $t \in [0,T]$, the map
  \[ [0,T] \ni s \mapsto  D \phi_{s,t}(x)\in \R^d  \mbox{ is c\`adl\`ag}.\]
  The same holds for $ D (\phi_{s,t}^{-1} )(x)$.
\end{theorem}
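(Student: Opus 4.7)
The plan is to leverage Theorem \ref{thm-ww}, which already supplies, on a single $\mathbb{P}$-full event $\Omega'$ independent of $t$, the $C^1$-regularity and local $\delta$-Hölder continuity (uniformly in $t \in [0,T]$) of the forward map $x \mapsto \phi_t(x) = \phi_{0,t}(x)$, together with the joint continuity of $(t,x)\mapsto D\phi_t(x)$. In view of the composition identities \eqref{xi}, namely $\phi_{s,t} = \phi_t \circ \phi_s^{-1}$ and $\phi_{s,t}^{-1} = \phi_s \circ \phi_t^{-1}$, the entire statement of Theorem \ref{ww1} reduces to proving that, on a $(t,x)$-independent $\mathbb{P}$-full event $\Omega'' \subset \Omega'$, the Jacobian $D\phi_t(x)$ is invertible for every $(t,x)\in[0,T]\times\R^d$; once this is available, the inverse function theorem and the chain rule will transfer all properties from $\phi_t$ and $\phi_t^{-1}$ to arbitrary $\phi_{s,t}$ and $\phi_{s,t}^{-1}$.

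For the invertibility I would use the Itô-Tanaka transformation $\Phi := u + I$ with $u = u_\lambda$ from Theorem \ref{reg} applied to $f = b$ and $\lambda$ chosen large enough that $\|Du\|_0 \le 1/3$, so that $\Phi$ is a global $C^1$-diffeomorphism of $\R^d$. Formally differentiating the Itô formula \eqref{eqn-Ito-iv} in $x$ yields the linear matrix-valued SDE
\begin{equation}
D\Phi(\phi_t(x))\,M_t = D\Phi(x) + \lambda \int_0^t Du(\phi_s(x))\,M_s\,ds + \int_0^t \lint_{\R^d} \bigl[Du(\phi_{s-}(x)+z)-Du(\phi_{s-}(x))\bigr] M_{s-}\,\tilde\mu(ds,dz)
\end{equation}
for $M_t := D\phi_t(x)$, whose coefficients are bounded and sufficiently regular because $u \in C_{\mathrm{b}}^{\alpha+\beta}$; a rigorous justification that $M_t$ solves this SDE will rest on the continuity statements of Theorem \ref{thm-ww} together with the moment bound of Corollary \ref{cor-s11}. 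I would then construct, by solving the adjoint resolvent equation, an auxiliary matrix-valued process $K_t = K_t(x)$ and verify by an Itô product rule for jump processes that $K_t M_t \equiv I$. Combined with a strengthened Kolmogorov-type continuity result yielding joint $(t,x)$-control (cf.~Theorem \ref{thm-KTC-ZB}), this identity is upgraded to hold simultaneously for all $(t,x) \in [0,T]\times\R^d$ on a single $\mathbb{P}$-full event $\Omega''$, so that $\phi_t$ is a $C^1$-diffeomorphism on $\Omega''$.

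Once invertibility of $D\phi_t$ is available, the inverse function theorem gives $\phi_t^{-1}\in C^1$ with $D\phi_t^{-1}(y)=[D\phi_t(\phi_t^{-1}(y))]^{-1}$, and its local $\delta$-Hölder regularity in $y$, uniformly in $t$, follows from that of $D\phi_t$, the smoothness of matrix inversion restricted to compact sets of invertible matrices, and the càdlàg control on $\phi_t^{-1}$ provided by Lemma \ref{ef} and Theorem \ref{thm-homeomorphism}. For $0 \le s \le t \le T$, the chain rule applied to $\phi_{s,t} = \phi_t\circ\phi_s^{-1}$ and $\phi_{s,t}^{-1} = \phi_s\circ\phi_t^{-1}$ yields
\begin{equation}
D\phi_{s,t}(x) = D\phi_t(\phi_s^{-1}(x))\,D\phi_s^{-1}(x), \qquad D\phi_{s,t}^{-1}(x) = D\phi_s(\phi_t^{-1}(x))\,D\phi_t^{-1}(x),
\end{equation}
so that both quantities inherit the surjective $C^1$-diffeomorphism property and the local $\delta$-Hölder dependence on $x$ uniformly in $(s,t) \in [0,T]^2$; the separate càdlàg property in $s$ and in $t$ follows via these two identities from the joint continuity of $D\phi_t(x)$ in $(t,x)$ (Theorem \ref{thm-ww}) and the càdlàg property of $s \mapsto \phi_s^{-1}(x)$ established in Theorem \ref{thm-homeomorphism}.

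The hardest part will be securing the $(t,x)$-universal $\mathbb{P}$-full event on which $K_t M_t \equiv I$. Invertibility of $M_t$ at a fixed $(t,x)$ is standard linear SDE theory, but promoting it uniformly in $(t,x)$ forces one to derive pointwise difference estimates for the matrix pair $(M,K)$ in the spirit of Propositions \ref{prop-moment estimates}--\ref{prop-moment estimates-difference}, and then to invoke a strengthened Kolmogorov test. This step becomes especially delicate in the supercritical regime $\alpha\in(0,1)$, where the noise provides only weak regularization and one is forced to work throughout in the sharp range $\delta\in(0,\tfrac{\alpha}{2}+\beta-1)$ with no slack to spare.
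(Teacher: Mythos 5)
Your proposal is correct and follows essentially the same route as the paper: the It\^o--Tanaka transform $\psi=I+u$, the derived matrix SDE for $D\psi(\phi_t(x))D\phi_t(x)$, an auxiliary left-inverse process $K_t(x)$ verified via the It\^o product rule, moment/difference estimates plus the strengthened Kolmogorov test (Theorem \ref{thm-KTC-ZB}) to obtain a $(t,x)$-universal $\mathbb{P}$-full event, the inverse mapping theorem, and finally the composition identities \eqref{xi} with the chain rule to transfer the $C^{1}$-diffeomorphism, uniform local $\delta$-H\"older, and separately c\`adl\`ag properties to $\phi_{s,t}$ and $\phi_{s,t}^{-1}$. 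The only cosmetic deviation is that the identity actually established is $K_t(x)\,D\psi(\phi_t(x))\,D\phi_t(x)=I$ rather than $K_tM_t=I$, which is equivalent for invertibility since $D\psi$ is uniformly invertible.
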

{
\begin{remark}\label{rem-ext-infinite-time}
 By applying an argument analogous to that used in Lemma \ref{ef}, one can extend the above result to the infinite time interval and construct a universal $\mathbb{P}$-full set (independent of $T$) on which the flow $\phi_{s,t}(x), s,t \in[0, \infty)$, is a $C^1$-diffeomorphism whose derivatives are locally $\delta$-H\"older continuous.
\end{remark}
}

\begin{proof}
 Consider a  function
 \[
 \psi: \mathbb{R}^d \ni x \mapsto x+u(x) \in \mathbb{R}^d.
 \]
 Here $u=u_{\lambda_0}$ and $\lambda_0$ appear above \eqref{eq P29 star}.
According to \eqref{eqn-Ito-iv}, we have
 \begin{align}\label{eq phi-y-1}
 \psi(\phi_{t}(x))=Y_{t}(\psi(x)),\quad x\in \R^d,\; t\geq0,
 \end{align}
where
\begin{align}\label{eq Yy-1}
     Y_{t}(y)=&y+\int_{0}^{t}\varpi{}_{} u(\psi^{-1}(Y_{s}(y))) \,d s\\
     &+\int_{0}^{t}\lint_{\R^{d}}u(\psi^{-1}(Y_{s-}(y))+z)-u(\psi^{-1}(Y_{s-}(y)))\tilde{\mu}(d s,dz)+L_{t}.
\end{align}
Some simple calculations lead to
\begin{align}\label{eq-Dphi-nonsingular}
[(D\psi)(\phi_t(x)) ] D\phi_t(x)=& D\psi(x)+\int_0^t \lambda_{0} Du(\phi_s(x)) D\phi_s(x) ds\\
&+\int_0^t \lint_{\R^{d}}\big[ Du(\phi_{s-}(x)+z)-Du(\phi_{s-}(x))\big] D\phi_{s-}(x)\tilde{\mu}(ds,dz),
\end{align}
which is comparable with equality (21) in \cite{FGP_2010-Inventiones}.

Before proceeding with the proof, we present several results and facts, derived from Theorem \ref{thm-approximation}, collected from \eqref{eq P29 star}, and supplemented by some simple calculations, that will be  used in the subsequent analysis:
\begin{itemize}
    \item[(F1)]  $D\psi(y)=I+Du(y) $, $\forall y\in \mathbb{R}^d$.

    \item[(F2)] $\Vert D u\Vert_0\leq\frac13$.

    \item[(F3)] $\Vert u\Vert_{\alpha+\beta}<\infty$.

    \item[(F4)] $\frac23 \leq \| D\psi\|_0=\|I+Du \|_0\leq \frac43$.

    \item[(F5)] $\frac34\leq  \| (D\psi)^{-1}\|_0=  \| (I+Du)^{-1}\|_0 \leq \frac{3}2$.

    \item[(F6)] By the definition of $\Vert Du\Vert_{\gamma}$ and (F3), for any $s\geq0$ and $x\in\mathbb{R}^d$,
\begin{align}
\Vert D u(\phi_s(x)  +  z)  - D u(\phi_s(x))\Vert\leq c(|z\vert^{\gamma}\1_{B}(z)+\1_{B^c}(z)),\ \ \ \forall z\in \R^{d}\setminus\{0\}.
\end{align}
Here and in the following $\gamma=\alpha+\beta-1$. Keep in mind that \eqref{eqn-beta+alpha half} implies that $2\gamma>\alpha$.

\item[(F7)] For any $s\geq0$ and $x,y\in\mathbb{R}^d$,
\begin{equation}
    \phi_s(x) -\phi_s(y)=\int_0^1 D\phi_s(y+\theta (x-y))(x-y)d\theta.
\end{equation}

\item[(F8)] For any $d\times d$ real invertible matrices $A$ and $B$,
\begin{equation}\label{eq matrix 20241130}
A^{-1}-B^{-1}=A^{-1}(B-A)B^{-1}.
\end{equation}

\item[(F9)] For any $p>\alpha$,  $\lint_B|z|^{p}\nu(dz)+\nu(B^c)<\infty$.

\item[(F10)] By (F1) (F3) (F5) and (F8), for any $x,y\in\mathbb{R}^d$,
\begin{equation}
    \|(D\psi(x))^{-1}-(D\psi(y))^{-1}\|
    \leq
    \frac{9}{4}\|u\|_{\alpha+\beta}|x-y|^{\gamma}
    \leq
    c|x-y|^{\gamma}.
\end{equation}

\end{itemize}
Consider the $\mathscr{L}(\mathbb{R}^d, \mathbb{R}^d)$-valued solution to the following  SDE, for every fixed $x \in \mathbb{R}^d$,
\begin{align}
  & K_t(x)=(D\psi(x))^{-1} -\lambda_{0}\int_0^t K_s(x)Du(\phi_s(x))((D\psi)(\phi_s(x)) )^{-1} \,d s\label{eq-1126-2-K}\\
   &-\int_0^t\lint_{\R^d} K_{s-}(x)  \big[ Du(\phi_{s-}(x)+z)-Du(\phi_{s-}(x))\big] ((D\psi)(\phi_{s-}(x)) )^{-1} \tilde{\mu}(ds,dz)\nonumber\ \\
   &+\int_0^t\lint_{\R^d} K_{s-}(x) \big[ Du(\phi_{s-}(x)+z)-Du(\phi_{s-}(x))\big]\nonumber\\
   &\quad\quad\quad\times((D\psi)(\phi_{s-}(x)) )^{-1} \big[ Du(\phi_{s-}(x)+z)-Du(\phi_{s-}(x))\big] ((D\psi)(\phi_{s-}(x)+z) )^{-1} \mu(ds,dz).\nonumber
\end{align}
By (F2), (F5), (F6) and (F9), classical results imply that there exist unique solutions to the above SDEs. Using the It\^o   product formula, c.f. \cite[Theorem 4.4.13]{Applebaum_2009}, we have
\begin{align*}
   &  d\big(K_t(x)\big( (D\psi)(\phi_t(x))  D\phi_t(x)\big) \big)\\
    &=K_{t-}(x)  d\big((D\psi)(\phi_t(x))  D\phi_t(x) \big)+(dK_t(x)) (D\psi)(\phi_{t-}(x))  D\phi_{t-}(x) 
     + d \big[K_\cdot(x),  (D\psi)(\phi_\cdot(x))  D\phi_\cdot(x)\big)\big]_t\\
    & = \lambda_{0}  K_{t}(x)  Du(\phi_t(x)) D\phi_t(x) dt\\
&\quad+\lint_{\R^{d}}K_{t-}(x)\big[ Du(\phi_{t-}(x)+z)-Du(\phi_{t-}(x))\big] D\phi_{t-}(x)\tilde{\mu}(dt,dz)\\
&-\lambda_{0}K_{t}(x)Du(\phi_t(x))((D\psi)(\phi_t(x)) )^{-1}(D\psi)(\phi_t(x))  D\phi_t(x) \,d t\\
   &\quad-\lint_{\R^d} K_{t-}(x)  \big[ Du(\phi_{t-}(x)+z)-Du(\phi_{t-}(x))\big] ((D\psi)(\phi_{t-}(x)) )^{-1}(D\psi)(\phi_{t-}(x))  D\phi_{t-}(x) \tilde{\mu}(dt,dz) \\
   &\quad+\lint_{\R^d} K_{t-}(x) \big[ Du(\phi_{t-}(x)+z)-Du(\phi_{t-}(x))\big]((D\psi)(\phi_{t-}(x)) )^{-1} \big[ Du(\phi_{t-}(x)+z)-Du(\phi_{t-}(x))\big] \\
   &\quad\quad\quad ((D\psi)(\phi_{t-}(x)+z) )^{-1}(D\psi)(\phi_{t-}(x))  D\phi_{t-}(x)  \; \mu(dt,dz)\\
   &\quad-\lint_{\R^d} K_{t-}(x)  \big[ Du(\phi_{t-}(x)+z)-Du(\phi_{t-}(x))\big] ((D\psi)(\phi_{t-}(x)) )^{-1}\big[ Du(\phi_{t-}(x)+z)-Du(\phi_{t-}(x))\big]\\
   &\quad\quad\quad \ D\phi_{t-}(x)\; \mu(dt,dz)\\
   &\quad+\lint_{\R^{d}} K_{t-}(x) \big[ Du(\phi_{t-}(x)+z)-Du(\phi_{t-}(x))\big]((D\psi)(\phi_{t-}(x)) )^{-1} \big[ Du(\phi_{t-}(x)+z)-Du(\phi_{t-}(x))\big] \\
   &\quad\quad\quad ((D\psi)(\phi_{t-}(x)+z) )^{-1} \big[ Du(\phi_{t-}(x)+z)-Du(\phi_{t-}(x))\big] D\phi_{t-}(x)\; \mu(dt,dz)\\
   &=0,
\end{align*}
where we applied (F1) to cancel out the last three terms.
This proves that for any $x\in\R^d$, $\mathbb{P}$-a.s.,
\begin{align}
    K_t(x) [(D\psi)(\phi_t(x)) ] D\phi_t(x)=I,\quad \forall t\in[0,T].\label{inverse-identity-eq-101}
\end{align}
Now we shall prove that $x\mapsto K_t(x)$ is continuous uniformly in $t\in[0,T]$, $\mathbb{P}$-a.s.
 Prior to that, we need to prepare some preliminary results. In particular, there exists some $M_{p,T}>0$ such that  
\begin{align}
    \sup_{x\in\R^d}\E\sup_{t\in[0,T]}\|K_t(x)\|^p \leq M_{p,T}.\label{eq-bounded-kt-1}
\end{align}

By using (F2), (F5), (F6) and applying the Burkholder inequality twice (c.f. \cite[Theorem 2.11]{Kunita_2004}), we can derive that for any $x\in\R^d$, $p\geq 2$ and $T>0$,
\begin{align*}
 & \E\sup_{t\in[0,T]}\|K_t(x)\|^p \\
 &\leq C_p\big\| (D\psi)^{-1}\big\|_0^p+
  C_p  \E\sup_{t\in[0,T]}\Big\| \int_0^t \lambda_{0} K_s(x)Du(\phi_s(x))[(D\psi)(\phi_s(x)) ]^{-1} \,d s\Big\|^p\\
  &\quad + C_p\E\sup_{t\in[0,T]}\Big\| \int_0^t \lint_{\R^d} K_{s-}(x)  \big[ Du(\phi_{s-}(x)+z)-Du(\phi_{s-}(x))\big] [(D\psi)(\phi_{s-}(x)) ]^{-1} \tilde{\mu}(ds,dz) \Big\|^p\\
 &\quad + C_p\E\sup_{t\in[0,T]}\Big\|\int_0^t\lint_{\R^d} K_{s-}(x) \big[ Du(\phi_{s-}(x)+z)-Du(\phi_{s-}(x))\big][(D\psi)(\phi_{s-}(x)) ]^{-1}\\
 &\quad\quad\quad\quad\quad\quad\times\big[ Du(\phi_{s-}(x)+z)-Du(\phi_{s-}(x))\big][(D\psi)(\phi_{s-}(x)+z) ]^{-1} \mu(ds,dz)  \Big\|^p\\
  &\leq C_p\big\| (D\psi)^{-1}\big\|_0^p+  C_p\lambda_{0}^p T^{p-1} \| Du\|_0^p\big\| (D\psi)^{-1}\big\|_0^p \int_0^T  \E\sup_{s\in[0,t]}\|K_s(x)\|^p \,d t \\
 &\quad +C_p\E \Big(\int_0^T \lint_{\R^d}\Big\| K_s(x)  \big[ Du(\phi_{s}(x)+z)-Du(\phi_{s}(x))\big] [(D\psi)(\phi_s(x)) ]^{-1}\Big\|^2 \nu(dz)ds\Big)^{\frac{p}2}\\
 &\quad+C_p\E \Big(\int_0^T \lint_{\R^d}\Big\| K_s(x)  \big[ Du(\phi_{s}(x)+z)-Du(\phi_{s}(x))\big] [(D\psi)(\phi_s(x)) ]^{-1}\Big\|^p \nu(dz)ds\Big)\\
 &\quad+C_p\E\Big(\int_0^T  \lint_{\R^d} \|K_{s-}(x)\|  \| u\|^2_{\alpha+\beta}(\1_{B}(z)|z\vert^{2\gamma}+\1_{B^c}(z)) \big\| (D\psi)^{-1}\big\|_0^2\, \mu(ds,dz)\Big)^p\\
  &\leq C_p\big\| (D\psi)^{-1}\big\|_0^p+ C_p \lambda_{0}^p T^{p-1} \| Du\|_0^p\big\|(D\psi)^{-1}\big\|_0^p \int_0^T  \E\sup_{s\in[0,t]}\|K_s(x)\|^p \,d t \\
 &\quad+ C_p\|u\|^p_{\alpha+\beta}\big\| (D\psi)^{-1}\big\|_0^pT^{\frac{p-2}{2}}   \int_0^T \E\sup_{0\leq s\leq t}\| K_s(x)\|^p d t\Big(  \lint_B|z\vert^{2\gamma}\nu(dz) +\nu(B^c) \Big)^{\frac{p}2}\\
  &\quad+ C_p\|u\|^p_{\alpha+\beta}\big\| (D\psi)^{-1}\big\|_0^p  \int_0^T \E\sup_{0\leq s\leq t}\| K_s(x)\|^p d t\Big(  \lint_B|z\vert^{p\gamma}\nu(dz) +\nu(B^c) \Big)\\
  &\quad+C_p \| u\|^{2p}_{\alpha+\beta} \big\| (D\psi)^{-1}\big\|_0^{2p}\;\E\Big| \int_0^T  \lint_{\R^d} \|K_{s-}(x)\| (\1_{B}(z)|z\vert^{2\gamma}+\1_{B^c}(z)) \tilde{\mu}(ds,dz)\Big|^{p}\\
  &\quad+C_p \| u\|^{2p}_{\alpha+\beta} \big\| (D\psi)^{-1}\big\|_0^{2p}\;\E\Big|\int_0^T  \lint_{\R^d} \|K_{s}(x)\| (\1_{B}(z)|z\vert^{2\gamma}+\1_{B^c}(z))\, \nu(dz)ds\Big|^p\\
  & \leq C_p\big\| (D\psi)^{-1}\big\|_0^p+ C_p \lambda_{0}^p T^{p-1} \| Du\|_0^p\big\|(D\psi)^{-1}\big\|_0^p \int_0^T  \E\sup_{s\in[0,t]}\|K_s(x)\|^p \,d t \\
& \quad+ C_p\|u\|^p_{\alpha+\beta}\big\| (D\psi)^{-1}\big\|_0^pT^{\frac{p-2}{2}}   \int_0^T \E\sup_{0\leq s\leq t}\| K_s(x)\|^p d t\Big(  \lint_B|z\vert^{2\gamma}\nu(dz) +\nu(B^c) \Big)^{\frac{p}2}\\
  & \quad+ C_p\|u\|^p_{\alpha+\beta}\big\| (D\psi)^{-1}\big\|_0^p  \int_0^T \E\sup_{0\leq s\leq t}\| K_s(x)\|^p d t\Big(  \lint_B|z\vert^{p\gamma}\nu(dz) +\nu(B^c) \Big)\\
 & \quad+ C_p\|u\|^{2p}_{\alpha+\beta}\| \big\| (D\psi))^{-1}\big\|_0^{2p} M_T  \int_0^T \E\sup_{0\leq s\leq t}\| K_s(x)\|^p d t,
\end{align*}
where $M_T:=T^{\frac{p-2}2}\Big(  \lint_B|z\vert^{4\gamma}\nu(dz) +\nu(B^c) \Big)^\frac{p}2+\Big(  \lint_B|z\vert^{2p\gamma}\nu(dz) +\nu(B^c) \Big)+T^{p-1}\Big(  \lint_B|z\vert^{2\gamma}\nu(dz) +\nu(B^c) \Big)^p<\infty$ by (F6) and (F9).
By the facts listed above and using the Gronwall inequality, we obtain \eqref{eq-bounded-kt-1}. 

Applying \eqref{eq-bounded-kt-1}, Corollary \ref{cor-s11}, (F7), and the H\"older inequality,
for any $q,r,p>0$ with $p\geq1$ and $rp\geq1$, there exists some $C_{q,r,p,T}>0$ such that for any $x,y\in\R^d$
\begin{align}\label{eq 2024 1205}
    \E\Big(\sup_{t\in[0,T]}\|K_t(x)\|^q\Big(\int_0^T|\phi_s(x)-\phi_s(y)|^rds\Big)^p\Big)
    \leq
    C_{q,r,p,T}|x-y|^{rp}.
\end{align}

Now it is in a position to  prove that $x\mapsto K_t(x)$ is continuous uniformly in $t\in[0,T]$, $\mathbb{P}$-a.s. 
Let $x,y\in\R^d$. Consider
\begin{align}\label{eq 20241223 06}
   K_t(x)-K_t(y)=&(D\psi(x))^{-1}-(D\psi(y))^{-1}-\lambda_{0}\int_0^t \big(K_s(x)-K_s(y)\big)Du(\phi_s(x))((D\psi)(\phi_s(x)) )^{-1} ds\nonumber\\
   &-\lambda_{0}\int_0^tK_s(y)\big(Du(\phi_s(x))((D\psi)(\phi_s(x)) )^{-1}-Du(\phi_s(y))((D\psi)(\phi_s(y)) )^{-1}\big) ds\nonumber\\
   &-\int_0^t\lint_{\R^d}\big( K_{s-}(x) -K_{s-}(y)\big)G(s-,x,z)  \tilde{\mu}(ds,dz) \\
     &-\int_0^t\lint_{\R^d} K_{s-}(y)\big(G(s-,x,z)-G(s-,y,z)\big)  \tilde{\mu}(ds,dz) \\
   &+\int_0^t\lint_{\R^d} \big( K_{s-}(x) -K_{s-}(y)\big)  H(s-,x,z) \mu(ds,dz)\\
   &+\int_0^t\lint_{\R^d} K_{s-}(y)\big(H(s-,x,z)-H(s-,y,z)\big)\mu(ds,dz)\\
   :=&I^0(x,y)+I^1_t(x,y)+I^2_t(x,y)+I^3_t(x,y)+I^4_t(x,y)+I^5_t(x,y)+I^6_t(x,y),
\end{align}
where
$$
I^0(x,y):=(D\psi(x))^{-1}-(D\psi(y))^{-1},
$$
$$G(s,x,z):=\big[ Du(\phi_{s}(x)+z)-Du(\phi_{s}(x))\big] ((D\psi)(\phi_s(x)) )^{-1},$$ and
\begin{align*}
H(s,x,z):=&\big[ Du(\phi_{s}(x)+z)-Du(\phi_{s}(x))\big]((D\psi)(\phi_{s}(x)) )^{-1} \big[ Du(\phi_{s}(x)+z)-Du(\phi_{s}(x))\big] \\
&((D\psi)(\phi_{s}(x)+z) )^{-1}.
\end{align*}
By using (F2) and (F5), we infer, for any $p\geq2$,
\begin{align}\label{eq 20241223 I1}
        \mathbb{E}\sup_{0\leq t\leq T}\|I^1_t(x,y)\|^p\leq C_pT^{p-1}\lambda_{0}^p \int_0^T \mathbb{E} \sup_{0\leq t\leq s} \|K_t(x)-K_t(y)\|^p ds.
\end{align}
By the definition of $[Du]_{\gamma}$, (F1), (F3), and (F8),
\begin{align}
      \|Du(\phi_s(x))-Du(\phi_s(y))\|\leq c|\phi_s(x) -\phi_s(y)|^{\gamma},\label{inver-theo-holder-eq-101}
\end{align}
and
\begin{align}
       &\| ((D\psi)(\phi_s(x) ))^{-1} -((D\psi)(\phi_s(y)) )^{-1}   \|\\
       &= \| ((D\psi)(\phi_s(x)) )^{-1} \big((D\psi)(\phi_s(y)) -(D\psi)(\phi_s(x)) \big) [(D\psi)(\phi_s(y)) ]^{-1}\nonumber \|\\
       &\leq \frac94 \|Du(\phi_s(x)) - Du(\phi_s(y)) \|\nonumber \\
       &\leq c|\phi_s(x) -\phi_s(y)|^{\gamma}.\label{inver-theo-holder-eq-102}
\end{align}

By \eqref{eqn-4.16} and (F3), we can deduce that for any $ z\in \R^{d}\setminus\{0\}$
\begin{align}
      &\|Du(\phi_s(x)+z)-Du(\phi_s(x)) - Du(\phi_s(y)+z)+Du(\phi_s(y))\|\nonumber\\
      &\leq c[Du]_{\gamma} |\phi_s(x)-\phi_s(y)|^{\delta}|z|^{\gamma^\prime}\1_{B}(z)+c[Du]_{\gamma}|\phi_s(x)-\phi_s(y)|^{\gamma}\1_{B^c}(z)\nonumber\\
      &\leq c |\phi_s(x)-\phi_s(y)|^{\delta}|z|^{\gamma^\prime}\1_{B}(z)+c|\phi_s(x)-\phi_s(y)|^{\gamma}\1_{B^c}(z),\label{inver-theo-holder-eq-103}
      \end{align}
with $\delta\in(0,\frac{\alpha}2+\beta-1)$ and $\gamma^\prime:=\gamma-\delta\in (\frac{\alpha}2, \gamma)$.
It follows from (F5),  (F6), \eqref{inver-theo-holder-eq-102}, and \eqref{inver-theo-holder-eq-103}  that  for any $ z\in \R^{d}\setminus\{0\}$
\begin{align}\label{eq 20241222 01}
\|G(s,x,z)\|\leq  c\Big(|z|^{\gamma}\1_{B}(z)+\1_{B^c}(z)\Big),
\end{align}
and
\begin{align}
&\|G(s,x,z)-G(s,y,z)\|\nonumber\\
&\leq \|Du(\phi_s(x)+z)-Du(\phi_s(x)) - Du(\phi_s(y)+z)+Du(\phi_s(y))\|\|((D\psi)(\phi_s(x)))^{-1}\|\nonumber\\
&\quad + \| Du(\phi_s(y)+z)-Du(\phi_s(y))\|\|[(D\psi)(\phi_s(x))]^{-1}-((D\psi)(\phi_s(y)))^{-1}\|\label{Dphi-inverse-eq-201}\\
&\leq c |\phi_s(x)-\phi_s(y)|^{\delta}|z|^{\gamma^\prime}\1_{B}(z)+c|\phi_s(x)-\phi_s(y)|^{\gamma}\1_{B^c}(z)
+c |\phi_s(x)-\phi_s(y)|^{\gamma}|z|^{\gamma}\1_{B}(z).\nonumber
\end{align}
Similarly, for any $ z\in \R^{d}\setminus\{0\}$, we have
\begin{align}\label{eq 20241223 01}
\|H(s,x,z)\|
\leq c|z|^{2\gamma} \1_{B}(z)+c\1_{B^c}(z),
\end{align}
and
\begin{align}\label{eq 20241223 02}
&\|H(s,x,z)-H(s,y,z)\|\\
&\leq c\|Du(\phi_s(x)+z)-Du(\phi_s(x)) - Du(\phi_s(y)+z)+Du(\phi_s(y))\|\|Du(\phi_s(x)+z)-Du(\phi_s(x))\|\\
&+c\|Du(\phi_s(y)+z)-Du(\phi_s(y))\|\| ((D\psi)(\phi_s(x) )^{-1} -((D\psi)(\phi_s(y)) )^{-1}   \|\|Du(\phi_s(x)+z)-Du(\phi_s(x))\|\\
&+c\|Du(\phi_s(y)+z)-Du(\phi_s(y))\|\|Du(\phi_s(x)+z)-Du(\phi_s(x)) - Du(\phi_s(y)+z)+Du(\phi_s(y))\|\\
&+c\|Du(\phi_s(y)+z)-Du(\phi_s(y))\|^2\| [((D\psi)(\phi_s(x)+z ))^{-1} -((D\psi)(\phi_s(y)+z) )^{-1}   \|\\
&\leq c |\phi_s(x)-\phi_s(y)|^{\delta}|z|^{\gamma^\prime+\gamma}\1_{B}(z)+c|\phi_s(x)-\phi_s(y)|^{\gamma}\1_{B^c}(z)
+c |\phi_s(x)-\phi_s(y)|^{\gamma}|z|^{2\gamma}\1_{B}(z).
\end{align}

Applying \eqref{inver-theo-holder-eq-101}, \eqref{inver-theo-holder-eq-102} and \eqref{eq 2024 1205}, we have, for any $p\geq \frac{1}{\gamma}\vee 2$,
\begin{align}\label{eq 20241223 I2}
        &\mathbb{E}\sup_{0\leq t\leq T}\|I^2_t(x,y)\|^p\\
       & \leq C_p \mathbb{E}\Big[\!\sup_{0\leq s\leq T} \!\!\|K_s(y)\|^p 
        \Big(\!\int_0^T\!\!\!   \big\| Du(\phi_s(x))-Du(\phi_s(y)) \big\|\!+\!\big\|((D\psi)(\phi_s(x)) )^{-1}\!-\!((D\psi)(\phi_s(y)) )^{-1}\big\| ds\Big)^p\Big]\\
        &\leq  C_p \mathbb{E}\Big(\sup_{0\leq s\leq T}\|K_s(y)\|^p\Big(\int_0^T |\phi_s(x)-\phi_s(y)|^\gamma ds\Big)^p\Big)
        \leq C_{p,\gamma,T}|x-y|^{\gamma p}
        .
\end{align}
Using the $\rm It\hat{o}$ formula and \eqref{eq 20241222 01}, we infer
\begin{align}\label{eq 20241223 I3}
        \mathbb{E}\sup_{0\leq t\leq T}\|I^3_t(x,y)\|^p\leq& C_p\E \Big( \int_0^T\lint_{\R^d}\big\| K_s(x) -K_s(y)\big\|^2 \|G(s,x,z)\|^2  \nu(dz)ds\Big)^{\frac{p}{2}}\\
        &+
        C_p\E \Big( \int_0^T\lint_{\R^d}\big\| K_s(x) -K_s(y)\big\|^p \|G(s,x,z)\|^p  \nu(dz)ds\Big)\\
        \leq& C_{p,T} \mathbb{E} \int_0^T\sup_{0\leq s\leq t} \|K_s(x)-K_s(y)\|^p  dt,
\end{align}
where $C_{p,T}= C_pT^{\frac{p-2}{2}}\big(\lint_{B}|z|^{2\gamma} \nu(dz)+\nu(B^c)\big)^{\frac{p}{2}}+C_p\big(\lint_{B}|z|^{p\gamma} \nu(dz)+\nu(B^c)\big)<\infty$ by (F6) and (F9).

For the fourth term $I^4$, applying (F9), \eqref{Dphi-inverse-eq-201} and \eqref{eq 2024 1205}, we have, for any $p\geq\frac{1}{\delta}\vee\frac{1}{\gamma}\vee 2$,
\begin{align}\label{eq 20241223 I4}
        \mathbb{E}\sup_{0\leq t\leq T}\|I^4_t(x,y)\|^p
        &\leq C_p \E \Big( \int_0^T\lint_{\R^d}\big\| K_s(y)\big\|^2 \|G(s,x,z)-G(s,y,z)\|^2  \nu(dz)ds\Big)^{\frac{p}{2}}\nonumber \\
        &\quad +
        C_p \E \Big( \int_0^T\lint_{\R^d}\big\| K_s(y)\big\|^p \|G(s,x,z)-G(s,y,z)\|^p  \nu(dz)ds\Big)\nonumber \\
        &\leq C_p\Big(\lint_B|z|^{2\gamma^\prime}+|z|^{2\gamma}\nu(dz)+\nu(B^c)\Big)^{\frac{p}{2}}
        \\
   &\hspace{1truecm}   \cdot  \mathbb{E}\Big(\sup_{0\leq s\leq T}\|K_s(y)\|^p\Big(\int_0^T |\phi_s(x)-\phi_s(y)|^{2\delta}+|\phi_s(x)-\phi_s(y)|^{2\gamma} ds\Big)^{\frac{p}{2}}\Big)\nonumber \\
        &\quad+
C_p\Big(\lint_B|z|^{p\gamma^\prime}+|z|^{p\gamma}\nu(dz)+\nu(B^c)\Big)
\\&
\hspace{1truecm}\cdot \mathbb{E}\Big(\sup_{0\leq s\leq T}\|K_s(y)\|^p\Big(\int_0^T |\phi_s(x)-\phi_s(y)|^{p\delta}+|\phi_s(x)-\phi_s(y)|^{p\gamma} ds\Big)\Big)\nonumber \\
        &\leq C_{p,\delta,T,\gamma,\gamma^\prime}\Big(
        |x-y|^{\delta p}+|x-y|^{\gamma p}
        \Big).
\end{align}

Using  the $\rm It\hat{o}$ formula and \eqref{eq 20241223 01},   we infer
\begin{align}\label{eq 20241223 I5}
        \mathbb{E}\sup_{0\leq t\leq T}\|I^5_t(x,y)\|^p\leq& C_{p,\gamma}\E \Big( \int_0^T\lint_{\R^d}\big\| K_s(x) -K_s(y)\big\|^2 \|H(s,x,z)\|^2  \nu(dz)ds\Big)^{\frac{p}{2}}\\
        &+C_{p,\gamma}\E \Big( \int_0^T\lint_{\R^d}\big\| K_s(x) -K_s(y)\big\|^p \|H(s,x,z)\|^p  \nu(dz)ds\Big)\\
        &+C_{p,\gamma}\E \Big( \int_0^T\lint_{\R^d}\big\| K_s(x) -K_s(y)\big\| \|H(s,x,z)\|  \nu(dz)ds\Big)^{p}\\
        \leq& C_{p,\gamma,T} \mathbb{E}\int_0^T \sup_{0\leq s\leq t} \|K_s(x)-K_s(y)\|^p  dt.
\end{align}
where \[
\begin{aligned}
C_{p,\gamma,T}=&C_p\Big[\Big(\lint_{B}|z|^{4\gamma}\nu(dz)+\nu(B^c)\Big)^{\frac{p}{2}}T^{\frac{p-2}{2}}
\\
&+\lint_{B}|z|^{2p\gamma}\nu(dz)+\nu(B^c)+\Big(\lint_{B}|z|^{2\gamma}\nu(dz)+\nu(B^c)\Big)^{p}T^{p-1}\Big]<\infty.
\end{aligned}
\]
By using  (F9), \eqref{eq 20241223 02}, and condition  $\gamma^\prime+\gamma>\alpha$, see \eqref{eqn-beta+alpha half}, we infer that, for any $p\geq \frac{1}{\delta}\vee\frac{1}{\gamma}\vee2$,  
\begin{align}\label{eq 20241223 I6}
        \mathbb{E}\sup_{0\leq t\leq T}\|I^6_t(x,y)\|^p
        \leq& C_p\E\Big( \int_0^T\lint_{\R^d}\big\|K_s(y)\big\|^2 \|H(s,x,z)-H(s,y,z)\|^2  \nu(dz)ds\Big)^{\frac{p}{2}}\\
        &+C_p\E \Big( \int_0^T\lint_{\R^d}\big\|K_s(y)\big\|^p \|H(s,x,z)-H(s,y,z)\|^p  \nu(dz)ds\Big)\\
        &+C_p\E \Big( \int_0^T\lint_{\R^d}\big\|K_s(y)\big\| \|H(s,x,z)-H(s,y,z)\|  \nu(dz)ds\Big)^{p}\\
        \leq& C_{p}\E\Big(
            \sup_{s\in[0,T]}\big\|K_s(y)\big\|^p
            \Big(
              \int_0^T |\phi_s(x)-\phi_s(y)|^{2\delta}+|\phi_s(x)-\phi_s(y)|^{2\gamma} ds
            \Big)^{\frac{p}{2}}
        \Big)\\
        &+
        C_{p}\E\Big(
            \sup_{s\in[0,T]}\big\|K_s(y)\big\|^p
            \Big(
              \int_0^T |\phi_s(x)-\phi_s(y)|^{p\delta}+|\phi_s(x)-\phi_s(y)|^{p\gamma} ds
            \Big)
        \Big)\\
        &+
        C_{p}\E\Big(
            \sup_{s\in[0,T]}\big\|K_s(y)\big\|^p
            \Big(
              \int_0^T |\phi_s(x)-\phi_s(y)|^{\delta}+|\phi_s(x)-\phi_s(y)|^{\gamma} ds
            \Big)^p
        \Big)\nonumber \\
        \leq& C_{p,\delta,T,\gamma,\gamma^\prime}\Big(
        |x-y|^{\delta p}+|x-y|^{\gamma p}
        \Big).
\end{align}
Here
\begin{align}
&C_{p,\delta,T,\gamma,\gamma^\prime}\\
=&C_p\Big[\Big(\lint_{B}|z|^{4\gamma}\nu(dz)+\lint_{B}|z|^{2(\gamma^\prime+\gamma)}\nu(dz)+\nu(B^c)\Big)^{\frac{p}{2}}T^{\frac{p-2}{2}}
\\
&+\lint_{B}|z|^{2p\gamma}\nu(dz)+\lint_{B}|z|^{p(\gamma^\prime+\gamma)}\nu(dz)+\nu(B^c)+\Big(\lint_{B}|z|^{2\gamma}\nu(dz)+\lint_{B}|z|^{\gamma^\prime+\gamma}\nu(dz)+\nu(B^c)\Big)^{p}T^{p-1}\Big]
\\<&\infty.
\end{align}

Combining the above estimates ((F10), \eqref{eq 20241223 06}, \eqref{eq 20241223 I1}, \eqref{eq 20241223 I2}, \eqref{eq 20241223 I3}, \eqref{eq 20241223 I4}, \eqref{eq 20241223 I5}, \eqref{eq 20241223 I6}) and applying the Gronwall inequality, we obtain, for any $p\geq2\vee\frac{1}{\delta}\vee\frac{1}{\gamma}$,
\begin{align*}
    \E\sup_{t\in[0,T]}\|K_t(x)-K_t(y)\|^p \leq &C_{p,\delta,T,\gamma,\gamma^\prime}\Big(
        |x-y|^{\delta p}+|x-y|^{\gamma p}, \;\; x ,y \in \mathbb{R}^d
        \Big).
\end{align*}
Note that $\delta<\gamma$ by \eqref{eqn-delta}. This fact plays an important r\^ole below.

In order to continue, let us recall useful notation. By $D_{\Skor}([0,T];\mathscr{L}(\mathbb{R}^d,\mathbb{R}^d))$  we denote the space $D([0,T];\mathscr{L}(\mathbb{R}^d,\mathbb{R}^d))$ of $\mathscr{L}(\mathbb{R}^d,\mathbb{R}^d))$-valued c\`adl\`ag functions endowed with  the Skorokhod metric while by
$ D_{\mathrm{sup}}([0,T];\mathscr{L}(\mathbb{R}^d,\mathbb{R}^d))$  we denote the same space endowed with the sup norm.

Next we apply the Kolmogorov criterion in the form of Theorem \ref{thm-KTC-ZB} and deduce that there exists a family
$\{K^\prime(x),  x\in\mathbb{R}^d\}$ of $D_{\Skor}([0,T];\mathscr{L}(\mathbb{R}^d,\mathbb{R}^d))$-valued random variables such that
\begin{trivlist}
\item[(o)] $K^\prime$ is a modification of $K$, i.e., for any $x\in\mathbb{R}^d$, $\mathbb{P}(K(x)\neq K^\prime(x))=0$,

\item[(i)] for every $\omega \in \Omega$, the function
\begin{equation}\label{eqn-cont-K'}
  \mathbb{R}^d \ni x \mapsto  K^\prime(x) \in D_{\mathrm{sup}}([0,T];\mathscr{L}(\mathbb{R}^d,\mathbb{R}^d))
\end{equation}
is continuous,
\item[(ii)] if
 $\delta^\prime < \delta-\frac{d}{p}$, then the function  $K^\prime$ from \eqref{eqn-cont-K'} is locally
 $\delta'$-H\"older continuous;
\item[(iii)] in particular, for every $\omega\in \Omega$ and every $R>0$,  there exists a constant $J_R(\omega)$, such that
\begin{equation}\label{eq 202412007 02}
 \sup_{|x|\leq R}\sup_{t\in[0,T]}\|K^\prime_t(x,\omega)\|
    \leq
    J_{R}(\omega).
\end{equation}
\end{trivlist}

Since $\delta$ can be any element in the open interval $(0,\frac{\alpha}{2}+\beta-1)$ and $p$ can be any sufficiently large constant, (ii) implies that 
\begin{trivlist}
\item[(ii')]
for any 
 $\delta\in (0,\frac{\alpha}{2}+\beta-1)$,  the function  $K^\prime$ from \eqref{eqn-cont-K'} is locally
 $\delta$-H\"older continuous. That is, for every $\omega\in \Omega$, $\delta\in (0,\frac{\alpha}{2}+\beta-1)$ and every $R>0$,  there exists a constant $J_{R,\delta}(\omega)$, such that
\begin{equation}\label{eq 202412007 02-2025}
 \sup_{|x|\vee |y|\leq R}\sup_{t\in[0,T]}\|K^\prime_t(x,\omega)-K^\prime_t(y,\omega)\|
    \leq
    J_{R,\delta}(\omega)|x-y|^\delta.
\end{equation}
\end{trivlist}
Therefore, in view of \eqref{inverse-identity-eq-101}, (iii) of Theorem \ref{d32} and (iii) of Theorem \ref{thm-ww}, there exists a $\mathbb{P}$-full measure set $\Omega^0$ independent of $x,t$ such that
\begin{equation} \label{che}
K^\prime_t(x) [(D\psi)(\phi_t(x)) ] D\phi_t(x)=I  \mbox{ for every } x\in\R^d, \;\;t\in[0,T] \mbox{ on } \Omega^0.
\end{equation}


By using the Inverse Mapping Theorem, see e.g. \cite[Theorem 6.2.8]{Kran+Park}, \cite[Appendix A]{Robert+Vos},  we infer that  on $\Omega^0$, the function $\phi^{-1}_t$ is also a $C^1$-class map and
\begin{equation}\label{eq 202412007 03}
    D\phi^{-1}_t(x)=((D\phi_t)(\phi_t^{-1}(x)))^{-1}=K_t^\prime(\phi^{-1}_t(x))[D\psi(x)],\ \forall (x,t)\in\mathbb{R}^d\times[0,T].
\end{equation}
 By Theorems \ref{d32}, \ref{thm-homeomorphism} and \ref{thm-ww},  on some $\mathbb{P}$-full subset of $\Omega'$, for any $x \in {\mathbb R}^d$,
 \begin{itemize}
     \item the mapping $[0,T]\ni t \mapsto   \phi_t(x) \in \R^d$ is c\`adl\`ag,

     \item the mapping $[0,T]\ni t \mapsto \nxi_{t}^{-1}(x)\in \R^d$  is c\`adl\`ag,

     \item the mapping
\[[0,T] \times \R^d \ni (t,x) \mapsto
D \phi_t (x) \in \mathscr{L}(\R^d,\R^d) \]
 is continuous.
 \end{itemize}
    Let $\Omega''=\Omega'\cap\Omega^0$.
Hence, we infer that on $\Omega''$ for every $x\in\R^d$, $D\phi^{-1}_t(x)$ is c\`adl\`ag on $[0,T]$.

Now we prove that,   for every  $\omega \in \Omega''$ and  any $\delta \in (0,\frac{\alpha}2+\beta-1)$,
the  function
\[ \mathbb{R}^d \ni x \mapsto  D \phi_{t}^{-1} (x) \in \mathscr{L}(\R^d,\R^d) \]
  is     locally $\delta$-H\"older continuous  uniformly in $t \in [0,T]$.

Fix $\omega \in \Omega''$ and $N\in\mathbb{N}$. By Corollary \ref{cor-homeomorphism},  there exists a constant $M:=C_{\omega,N,T}$ such that, for any $|y|\leq N$,
$$
\sup_{t\in[0,T]}|\phi^{-1}_t(y)|\leq M.
$$

 Hence, by (ii'), \eqref{eq 202412007 02}, \eqref{eq 202412007 03}, and (F1)(F3)(F4), for any $t\in[0,T]$ and $x,y\in\R^d$ with
 $|x|\vee |y|\leq N$,
 \begin{align}\label{eq 20241223 holder 01}
     & \|D\phi_t^{-1}(x)-D\phi_t^{-1}(y)\|\\
     &=
     \|K_t^\prime(\phi^{-1}_t(x))[D\psi(x)]-K_t^\prime(\phi^{-1}_t(y))[D\psi(y)]\|\\
     &\leq
     C\|K_t^\prime(\phi^{-1}_t(x))-K_t^\prime(\phi^{-1}_t(y))\|
     +
     \|K_t^\prime(\phi^{-1}_t(y))\|\|[D\psi(x)]-[D\psi(y)]\|\\
     &\leq
    CJ_{M,\delta}|\phi^{-1}_t(x)-\phi^{-1}_t(y)|^\delta
     +
     J_M\|u\|_{\alpha+\beta}|x-y|^\gamma\\
     &\leq
CJ_{M,\delta}\Big(\int_0^1\|D\phi^{-1}_t(y+\theta(x-y))\|d\theta\Big)^\delta|x-y|^\delta
     +
     J_M\|u\|_{\alpha+\beta}|x-y|^\gamma\\
     &\leq
CJ_{M,\delta}\Big(\int_0^1\|K_t^\prime(\phi^{-1}_t(y+\theta(x-y)))[D\psi(y+\theta(x-y))]\|d\theta\Big)^\delta|x-y|^\delta
     +
     J_M\|u\|_{\alpha+\beta}|x-y|^\gamma\\
     &\leq
CJ_{M,\delta}J_M^\delta|x-y|^\delta
     +
     J_M\|u\|_{\alpha+\beta}|x-y|^\gamma,
 \end{align}
which proves that $D\phi_t^{-1}$ is  locally $\delta$-H\"older continuous uniformly in $t\in[0,T]$ on $\Omega''$.

Recall that from \eqref{xi} we have
 \begin{equation} \label{xi1-1}
   \phi_{s,t} = \phi_{t}\circ \phi_{s}^{-1},
\;\;\;\;
\phi_{s,t}^{-1} = \phi_{s}\circ \phi_{t}^{-1},\;\;\; 0 \le s \le t \le T.
 \end{equation}
By \eqref{xi1-1} we can differentiate:
 \begin{align}
 D \phi_{s,t}(x) &= D\phi_{t}\, (\phi_{s}^{-1}(x)) D\phi_{s}^{-1}(x),\;\;\; 0 \le s \le t \le T, \, x \in \R^d\label{eq 1-Dphi-st},\\
 D \phi_{s,t}^{-1}(x) &= D\phi_{s}\, (\phi_{t}^{-1}(x)) D\phi_{t}^{-1}(x),\;\;\; 0 \le s \le t \le T, \, x \in \R^d.\label{eq 2-Dphi-st}
\end{align}
By (iii) in Theorem \ref{thm-ww} and the above results on $D\phi_t^{-1}$, using similar arguments as proving \eqref{eq 20241223 holder 01}, $D \phi_{s,t}(x)$ and $D \phi_{s,t}^{-1}(x)$ are also locally $\delta$-H\"older continuous uniformly in $(s,t) \in [0,T] \times [0,T]$ on some $\mathbb{P}$-full event. From \eqref{eq 1-Dphi-st} and \eqref{eq 2-Dphi-st}, it is easy to conclude that for every $x \in \R^d, $  the functions
  \begin{align}
  & [0,T]\times [0,T] \ni (s,t) \mapsto D \phi_{s,t}(x)
   \\&
    [0,T]\times [0,T] \ni (s,t) \mapsto  D \phi_{s,t}^{-1} (x)
  \end{align}
 are separately c\`adl\`ag.
\end{proof}

\section{Stability}\label{sec-stability}
In this section, we use all the notations and assumptions introduced in the previous Section \ref{sec-regular flow}, and let  $T>0$ be any fixed constant.

Below we denote by  $\Vert\cdot\Vert$  the Hilbert-Schmidt norm on $d\times d$ matrices.  In the next result we  recall  the assumptions of Theorem \ref{d32}.

\begin{theorem}\label{thm-stability}
Assume that  $\alpha \in  [1,2)$ or $\alpha \in (0,1)$. Assume that $L = (L_t)_{t\geq 0}$ is a L\'evy  process  satisfying  Hypotheses \ref{hyp-nondeg1} in the former case or Hypothesis \ref{hyp-nondeg3} in the latter case.  Assume that $\beta\in(0,1)$ satisfies condition \eqref{eqn-beta+alpha half} and $b\in C_{\mathrm{b}}^{\beta}(\mathbb{R}^d,\mathbb{R}^d)$.
  Let $(b^{n})_{n=1}^\infty\subset C_{\mathrm{b}}^{\beta}(\mathbb{R}^d,\mathbb{R}^d)$
be a sequence of vector fields and $\phi^{n}$ the corresponding
stochastic flows. 
 Let $b^{n}\to b$ in $C_{\mathrm{b}}^{\beta}(\mathbb{R}^d,\mathbb{R}^d)$.

Then, for every $p\geq 1$ and any compact set $K \subset \R^d$, we have:
\begin{align}
\label{stability1}
&\lim_{n\to\infty}\sup_{x\in{\mathbb{R}}^{d}} \sup_{s\in [0,T]}
\mathbb{E}[ \sup_{t \in [s,T]} |\phi_{s,t}^{n}(x)-\phi_{s,t}(x)\vert^{p}]=0.\\
\label{stability2-uniform}
&\sup_{n\in\mathbb{N}}\sup_{x\in{\mathbb{R}}^{d}}\sup_{s\in [0,T]} \mathbb{E}[  \sup_{t \in [s,T]} \Vert D\phi_{s,t}^{n}(x)\Vert^{p}]
+
\sup_{x\in{\mathbb{R}}^{d}}\sup_{s\in [0,T]} \mathbb{E}[  \sup_{t \in [s,T]} \Vert D\phi_{s,t}(x)\Vert^{p}]
<\infty.\\
\label{stability2}
&   \,  \lim_{n\to\infty}\sup_{x\in K}\sup_{s\in [0,T]} \mathbb{E}[  \sup_{t \in [s,T]} \Vert D\phi_{s,t}^{n}(x)-D\phi_{s,t}(x)\Vert^{p}]=0.
\end{align}
\end{theorem}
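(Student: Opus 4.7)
The overall strategy is to transfer the stability of the singular SDEs to the stability of the nonsingular, integrated objects via the It\^o--Tanaka trick, then exploit the sharp Schauder regularity of the auxiliary Kolmogorov equations provided by Theorems \ref{reg} and \ref{thm-approximation}. Fix $p\geq 2$ (the case $p\in [1,2)$ follows by Jensen). Since $b^n\to b$ in $C^\beta_{\mathrm b}(\mathbb R^d,\mathbb R^d)$, we have $M:=\sup_n\Vert b^n\Vert_\beta<\infty$. Choose $\lambda\ge\lambda_0$ large enough, as in Theorem \ref{thm-approximation}, so that the unique solutions $u^n_\lambda$ of \eqref{eqn-wee-n} with right-hand side $f^n=b^n$ (and analogously $u_\lambda$ for $b$) satisfy $\Vert Du^n_\lambda\Vert_0\le 1/3$ and $\sup_n\Vert u^n_\lambda\Vert_{\alpha+\beta}\le R<\infty$; moreover $\Vert u^n_\lambda-u_\lambda\Vert_0\to 0$ (by \eqref{eqn-conv-u_n to u 0 20251007}) and $\Vert u^n_\lambda-u_\lambda\Vert_{\alpha+\beta',K}\to 0$ for any compact $K$ and any $\beta'\in(0,\beta)$ (by \eqref{eqn-conv-u_n to u 0}). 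By the flow property \eqref{eqn-semiflow} it is enough to prove all three assertions for $s=0$; the estimate for general $s$ then follows by the stationary increments of $L$ and measurability.

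For \eqref{stability1}, I subtract the It\^o formula \eqref{eqn-Ito-iii-n} applied to $\phi^n_t$ from the one for $\phi_t$. Setting $E^n_t(x):=\phi^n_t(x)-\phi_t(x)$, the $L_t$-term cancels and, rearranging using $I+u^n_\lambda-(I+u_\lambda)=u^n_\lambda-u_\lambda$, one obtains
\begin{align}
E^n_t(x)&= -\bigl[u^n_\lambda(\phi^n_t(x))-u_\lambda(\phi_t(x))\bigr]+\bigl[u^n_\lambda(x)-u_\lambda(x)\bigr]+\lambda\int_0^t\!\!\bigl[u^n_\lambda(\phi^n_s(x))-u_\lambda(\phi_s(x))\bigr]ds\nonumber\\
&\quad+\int_0^t\!\lint_{\mathbb R^d}\!\bigl[\Delta u^n_\lambda(\phi^n_{s-}(x),z)-\Delta u_\lambda(\phi_{s-}(x),z)\bigr]\tilde\mu(ds,dz),
\end{align}
where $\Delta v(y,z):=v(y+z)-v(y)$. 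Each difference decomposes as $v^n(y')-v(y)=[v^n(y')-v(y')]+[v(y')-v(y)]$; the first piece is uniformly controlled by $\Vert u^n_\lambda-u_\lambda\Vert_0\to 0$ (and, for the jump integrand, by the jointly H\"older estimate \eqref{eqn-4.16} applied to $u^n_\lambda-u_\lambda$ using the uniform bound on $\Vert u^n_\lambda-u_\lambda\Vert_{\alpha+\beta}\le 2R$), and the second piece is absorbed via $\Vert Du_\lambda\Vert_0\le 1/3$ and the H\"older seminorm of $Du_\lambda$. Applying Proposition \ref{prop-Burkholder inequality} exactly as in the derivation of \eqref{dd-2}, and using $\Vert Du_\lambda\Vert_0\le 1/3$ to absorb $|E^n_t(x)|^p$ into the left-hand side, one obtains
\begin{equation}
\mathbb E\bigl[\sup_{r\le t}|E^n_r(x)|^p\bigr]\le c\,\varepsilon_n+c\int_0^t\mathbb E\bigl[\sup_{r\le s}|E^n_r(x)|^p\bigr]ds,
\end{equation}
with $\varepsilon_n\to 0$ uniformly in $x\in\mathbb R^d$, and Gronwall yields \eqref{stability1}.

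For \eqref{stability2-uniform}, I repeat verbatim the argument of Proposition \ref{prop-moment estimates}, applied to the incremental process $N^n_t(x,\lambda)=(\phi^n_t(x+\lambda e_i)-\phi^n_t(x))/\lambda$ associated with $b^n$. Every constant appearing in the estimates of that proof depends only on $\Vert u^n_\lambda\Vert_{\alpha+\beta}$, $\Vert Du^n_\lambda\Vert_0$, $\gamma=\alpha+\beta-1$, $\nu$, $p$, $d$ and $T$; since $\sup_n\Vert u^n_\lambda\Vert_{\alpha+\beta}\le R$ and $\Vert Du^n_\lambda\Vert_0\le 1/3$, the resulting Gronwall bound is uniform in $n$ and in $(x,\lambda)$. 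Passing to the limit $\lambda\to 0$ via the Kolmogorov argument of Theorem \ref{thm-ww} yields the uniform moment bound on $D\phi^n_t(x)$, and hence on $D\phi^n_{s,t}(x)$ by \eqref{eq 1-Dphi-st} and Corollary \ref{cor-homeomorphism}.

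The main obstacle is \eqref{stability2}. The key identity is the nonsingular equation \eqref{eq-Dphi-nonsingular} for $\Psi^n_t(x):=[D\psi^n(\phi^n_t(x))]D\phi^n_t(x)$ and its counterpart $\Psi_t(x)$ for $b$. Subtracting them, the difference $\Psi^n_t(x)-\Psi_t(x)$ satisfies a linear SDE whose forcing terms split into two groups: (a) differences of the frozen coefficients $Du^n_\lambda(\phi^n_s(x))-Du_\lambda(\phi_s(x))$ and the analogous jump increments $\Delta Du^n_\lambda(\phi^n_{s-}(x),z)-\Delta Du_\lambda(\phi_{s-}(x),z)$, applied to $\Psi_s(x)$, which I further decompose into the ``coefficient part'' $Du^n_\lambda-Du_\lambda$ evaluated at $\phi^n_s(x)$ and the ``trajectory part'' $Du_\lambda(\phi^n_s(x))-Du_\lambda(\phi_s(x))$; and (b) the same coefficients $Du^n_\lambda$ times the difference $\Psi^n_s(x)-\Psi_s(x)$, which will be absorbed via Gronwall. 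For $x\in K$ with $K\subset\mathbb R^d$ compact, the bounded variation of $L$ on $[0,T]$ (in expectation) together with $\sup_n\Vert b^n\Vert_0\le M$ confines $\phi^n_s(x)$ and $\phi_s(x)$, uniformly in $n$ and $s\in[0,T]$, to a compact random set $\tilde K(\omega)\Subset\mathbb R^d$ whose diameter has finite $p$-th moment; on this set the local convergence \eqref{eqn-conv-u_n to u 0} gives $\sup_{y\in\tilde K}\Vert Du^n_\lambda(y)-Du_\lambda(y)\Vert\to 0$ in $L^p(\Omega)$, handling the coefficient part, while the trajectory part is controlled using the uniform H\"older seminorm $[Du_\lambda]_\gamma\le R$ together with \eqref{stability1}. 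The small-jump integrand is again bounded by the H\"older-sharp estimate \eqref{eqn-4.16} applied to $u^n_\lambda-u_\lambda$, yielding a $|z|^{\gamma'}$ bound that is $\nu$-integrable on $B$ because $\gamma'>\alpha/2$; the large-jump integrand is controlled by \textbf{(T)} in Hypothesis \ref{hyp-nondeg1} (respectively by Hypothesis \ref{hyp-nondeg3}). Applying Burkholder as in the proof of Proposition \ref{prop-moment estimates-difference} and then Gronwall closes the argument and gives
\begin{equation}
\sup_{x\in K}\mathbb E\bigl[\sup_{t\in[0,T]}\Vert\Psi^n_t(x)-\Psi_t(x)\Vert^p\bigr]\xrightarrow{n\to\infty}0.
\end{equation}
Finally, since $D\psi^n(y)=I+Du^n_\lambda(y)$ is uniformly invertible with $\Vert(D\psi^n)^{-1}\Vert_0\le 3/2$, and $(D\psi^n)^{-1}(\phi^n_t(x))\to (D\psi)^{-1}(\phi_t(x))$ in $L^p(\Omega)$ uniformly in $x\in K$ by the same local argument, we recover convergence of $D\phi^n_t(x)$ and then of $D\phi^n_{s,t}(x)$ via \eqref{eq 1-Dphi-st}. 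The delicate point, and the reason the convergence is only local, is precisely that in the supercritical regime $\alpha\in(0,1)$ one loses global $C^{\alpha+\beta}$-convergence of $u^n_\lambda$ (cf.\ Remark \ref{cdd}), so the compactness of $K$ is essential to localise \eqref{eqn-conv-u_n to u 0}.
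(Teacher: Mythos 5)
Your proof of \eqref{stability1} coincides with the paper's: subtract the It\^o--Tanaka representations for $\phi^n_t$ and $\phi_t$, estimate the jump integrals by the Burkholder inequality and the $\frac13$-Lipschitz bound on $u^n_\lambda$, isolate a remainder controlled by $\Vert u^n_\lambda-u_\lambda\Vert_0$ via dominated convergence, and close with Gronwall. For the derivative statements you take a genuinely different, though closely related, route: the paper conjugates the flow through $\psi^n=\mathrm{id}+u^n_\lambda$, works with the Lipschitz-coefficient process $Y^n_t(y)$ of \eqref{eq Yny} and its derivative $DY^n$, proves the auxiliary estimates \eqref{eq Part 01 Z}--\eqref{eq DYy DYy' 03}, and transfers them back through \eqref{eq-Dphi-n}; you instead run the stability argument directly on the variational equation \eqref{eq-Dphi-nonsingular} for $\Psi^n_t(x)=[D\psi^n(\phi^n_t(x))]\,D\phi^n_t(x)$ (which the paper only uses in the proof of Theorem \ref{ww1}) and recover $D\phi^n_t$ via the uniformly bounded inverse $(D\psi^n)^{-1}$. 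Both routes use the same toolkit --- uniform Schauder bounds, Burkholder, the local convergence \eqref{eqn-conv-u_n to u 0} of $Du^n_\lambda$, Gronwall --- and both must localise because of Remark \ref{cdd}; your version avoids the intermediate process $Y^n$ and its coefficient estimates at the price of carrying $(D\psi^n(\phi^n))^{-1}\Psi^n$ through every decomposition, and it does work. For \eqref{stability2-uniform}, observing that the constants in Proposition \ref{prop-moment estimates} and Corollary \ref{cor-s11} depend only on quantities that are uniform in $n$ is also a legitimate shortcut relative to the paper's proof via \eqref{eq zhai sup 01}.

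Two of your justifications need repair, though neither is fatal. First, $L$ is not of bounded variation when $\alpha\ge 1$, and, more importantly, $\sup_{t\le T}|L_t|$ need not have a finite $p$-th moment (for a symmetric $\alpha$-stable process this already fails when $p\ge\alpha$, and Hypothesis \ref{hyp-nondeg1} \textbf{(T)} only provides a $\theta$-moment of the large jumps), so your random compact set $\tilde K(\omega)$ does not have a diameter with finite $p$-th moment. You do not need it: the coefficient differences $Du^n_\lambda-Du_\lambda$ and the corresponding jump increments are uniformly bounded, the confinement of $\phi^n_s(x)$, $x\in K$, $s\in[0,T]$, to $\tilde K(\omega)$ gives a.s.\ convergence of their suprema, and dominated convergence (or the cut-off over an event $A_J$, exactly as in \eqref{eq 20251008-0001 001} and around \eqref{ma13}) yields the $L^p(\Omega)$-convergence; this is precisely how the paper localises. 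Second, the reduction to $s=0$ should not go through the flow property \eqref{eqn-semiflow} or \eqref{eq 1-Dphi-st}: writing $\phi_{s,t}=\phi_{0,t}\circ\phi_{0,s}^{-1}$ drags in differences of inverse flows and products of dependent random matrices. The correct reduction --- and the one implicit in the paper --- is that all the Gronwall estimates are run from an arbitrary initial time $s$ with constants depending only on $T$ (equivalently, by time-homogeneity of the driving noise the quantities for initial time $s$ have the same law as those for initial time $0$ on $[0,T-s]$), which gives the uniformity in $s\in[0,T]$ directly.
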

\begin{remark} \label{okk} The previous result remains valid with the same proof under the following weaker assumption. \\
    Assume that numbers  $0 <\beta^\prime \le \beta$  satisfy 
\begin{equation}
\frac{\alpha}{2} + \beta^\prime >1.    
\end{equation}
  Assume also that   $(b^{n})_{n=1}^\infty$ is 
 a sequence of vector fields from the set  $C_{\mathrm{b}}^{\beta^\prime}(\mathbb{R}^d,\mathbb{R}^d)$
such that $b^{n}\to b$ in $C_{\mathrm{b}}^{\beta^\prime}(\mathbb{R}^d,\mathbb{R}^d)$. 
 \end{remark}

Before we embark on the proof of the above important result, let us  first formulate the following
essential corollary in which we use notation $B_M=\{y \in \R^d: \vert y|\leq M\}$.

\begin{corollary}\label{cor-civuole}Under the same assumptions as in Theorem \ref{thm-stability}.
There exists a subsequence of the sequence $\phi^{n}$,   still denoted by $\phi^{n}$, and a  $\mathbb{P}$-full set $\Omega' \subset \Omega$    such that for all $\omega \in \Omega'$, $p\geq1$ and $M>0$,
\begin{equation}
\label{stability-final-000}
\lim_{n\to\infty} \int_{B_M}\sup_{t \in [0,T] } \, \Bigl[ |\phi_{0,t}^{n}(x)-\phi_{0,t}(x)\vert^p +  \Vert D\phi_{0,t}^{n}(x)-D\phi_{0,t}(x) \Vert^p \Bigr]dx=0,
\end{equation}
and, for any $n\in\mathbb{N}$,
\begin{equation}
\label{stability-final-0001}
 \int_{B_M}\sup_{t \in [0,T] } \, \Bigl[\Vert D\phi_{0,t}^{n}(x) \Vert^p+\Vert  D\phi_{0,t}(x) \Vert^p \Bigr]dx<\infty.
\end{equation}

There exists a subsequence of the sequence $\phi^{n}$,   still denoted by $\phi^{n}$, and a  $\Leb\otimes \mathbb{P}$-full set $A \subset \R^d\times \Omega$    such that for all $(x,\omega) \in A$,
\begin{equation}
\label{stability-final}
\lim_{n\to\infty} \sup_{t \in [0,T] } \, \Bigl[ |\phi_{0,t}^{n}(x)-\phi_{0,t}(x)| +  \Vert D\phi_{0,t}^{n}(x)-D\phi_{0,t}(x) \Vert \Bigr]=0.
\end{equation}

\end{corollary}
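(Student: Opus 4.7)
The plan is to deduce both claims from Theorem \ref{thm-stability} by combining Fubini's theorem with the classical fact that $L^1$-convergence admits subsequences converging almost everywhere, and then handling the universality in $M$ and $p$ by a diagonal argument.

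First, for each fixed pair $(M,p)\in\mathbb{N}^*\times\mathbb{N}^*$, I would note that by Fubini and the uniform-in-$x$ estimates \eqref{stability1} and \eqref{stability2} (applied with $K=B_M$ and $s=0$),
\begin{equation*}
\mathbb{E}\int_{B_M}\sup_{t\in[0,T]}\Bigl[|\phi_{0,t}^n(x)-\phi_{0,t}(x)|^p+\Vert D\phi_{0,t}^n(x)-D\phi_{0,t}(x)\Vert^p\Bigr]dx\le |B_M|\,\varepsilon_n(M,p),
\end{equation*}
with $\varepsilon_n(M,p)\to 0$. Hence the nonnegative random variables $A^n_{M,p}(\omega)$ defined by the integrand on the left converge to $0$ in $L^1(\mathbb{P})$, and a subsequence $A^{n_k}_{M,p}\to 0$ almost surely. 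A standard diagonal extraction over the countable index set $(M,p)\in(\mathbb{N}^*)^2$ yields a single subsequence, still denoted $\phi^n$, such that $A^n_{M,p}\to 0$ almost surely for every $(M,p)\in(\mathbb{N}^*)^2$. For arbitrary $p\ge 1$ and $M>0$, choosing integers $p'\ge p$ and $M'\ge M$, Hölder's inequality gives $A^n_{M,p}\le C(|B_M|,p,p')\,(A^n_{M',p'})^{p/p'}$, so convergence at integer parameters upgrades to convergence for all $p\ge 1$ and $M>0$ on the same full set $\Omega_1'$.

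For the uniform-in-$n$ bound \eqref{stability-final-0001}, the bound \eqref{stability2-uniform} from Theorem \ref{thm-stability} gives
\begin{equation*}
\mathbb{E}\int_{B_M}\sup_{t\in[0,T]}\bigl[\Vert D\phi_{0,t}^n(x)\Vert^p+\Vert D\phi_{0,t}(x)\Vert^p\bigr]\,dx\le 2|B_M|\,C_{p,T}<\infty
\end{equation*}
uniformly in $n$, so each of these nonnegative random variables is almost surely finite. Intersecting the resulting full sets over $(n,M,p)\in(\mathbb{N}^*)^3$ with $\Omega_1'$ produces the desired $\mathbb{P}$-full event $\Omega'$, and once again monotonicity in $M$ together with Hölder extends the bound to all real $p\ge 1$ and $M>0$.

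Finally, for the $\mathrm{Leb}\otimes\mathbb{P}$-a.e.\ statement, I would reinterpret the above $L^1$-convergence as saying
\begin{equation*}
\sup_{t\in[0,T]}\bigl[|\phi_{0,t}^n(x,\omega)-\phi_{0,t}(x,\omega)|+\Vert D\phi_{0,t}^n(x,\omega)-D\phi_{0,t}(x,\omega)\Vert\bigr]\longrightarrow 0
\end{equation*}
in $L^1(B_M\times\Omega,\mathrm{Leb}\otimes\mathbb{P})$ for each $M\in\mathbb{N}^*$. For each fixed $M$ a further subsequence converges pointwise $\mathrm{Leb}\otimes\mathbb{P}$-a.e.\ on $B_M\times\Omega$, and a final diagonal extraction covering $\mathbb{R}^d=\bigcup_M B_M$ produces a single subsequence along which the limit holds for $\mathrm{Leb}\otimes\mathbb{P}$-a.e.\ $(x,\omega)\in\mathbb{R}^d\times\Omega$, which is precisely \eqref{stability-final}. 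The only genuine point of care — and the main obstacle — is bookkeeping: one has to extract a \emph{single} subsequence that works simultaneously for all compact sets and all integrability exponents in the first claim, and then perform one more extraction (still a subsequence of the first one) to obtain the pointwise convergence of the second claim, which is why the corollary phrases both statements along the same subsequence.
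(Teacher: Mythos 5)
Your argument is correct and is essentially the paper's own proof, which simply invokes the Fubini theorem together with Theorem \ref{thm-stability}; you have merely written out the routine $L^1$-to-a.s. subsequence extraction, the diagonal argument over countably many $(M,p)$, and the H\"older upgrade to all real $p\ge 1$ and $M>0$ that the paper leaves implicit. No changes needed.
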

\begin{proof}[Proof of Corollary \ref{cor-civuole}] This follows immediately from the Fubini Theorem and Theorem \ref{thm-stability}.

\end{proof}

\begin{remark}\label{rem Sec 5 000}
According to Theorem \ref{thm-homeomorphism}, the inverse flow $(\phi_{s,t})^{-1}$ is the unique solution of the backward stochastic equation \eqref{de}, analogous to  the original equation \eqref{eqn-SDE}, with the only difference being that the drift has an opposite sign.
Therefore, based on Theorem \ref{ww1}, Theorem \ref{thm-stability} and Corollary \ref{cor-civuole} remain valid
when $\phi_{s,t}^{n}(x)$ and $\phi_{s,t}(x)$ are replaced respectively by $(\phi_{s,t}^{n})^{-1}(x)$ and $(\phi_{s,t})^{-1}(x)$ , with slight adjustments to the proofs. However, it is important to note that the backward equation governs paths in reverse time $(\phi_{s,t})^{-1},s\in[0,t]$, unlike the forward equation $\phi_{s,t},t\in[s,T]$. Consequently, the formulation of the results differs accordingly:
\begin{align}
\label{stability1-001}
&\lim_{n\to\infty}\sup_{x\in{\mathbb{R}}^{d}} \sup_{t\in [0,T]}
\mathbb{E}[ \sup_{s \in [0,t]} |(\phi_{s,t}^{n})^{-1}(x)-\phi_{s,t}^{-1}(x)\vert^{p}]=0,\\
\label{stability2-uniform-001}
&\sup_{n\in\mathbb{N}}\sup_{x\in{\mathbb{R}}^{d}}\sup_{t\in [0,T]} \mathbb{E}[  \sup_{s \in [0,t]}\Vert D(\phi_{s,t}^{n})^{-1}(x)\Vert^{p}]
+
\sup_{x\in{\mathbb{R}}^{d}}\sup_{t\in [0,T]} \mathbb{E}[  \sup_{s \in [0,t]}\Vert D\phi_{s,t}^{-1}(x)\Vert^{p}]
<\infty,\\
\label{stability2-001}
&  \, \lim_{n\to\infty}\sup_{x\in K } \sup_{t\in [0,T]} \mathbb{E}[  \sup_{s \in [0,t]} \Vert D(\phi_{s,t}^{n})^{-1}(x)-D\phi_{s,t}^{-1}(x)\Vert^{p}]=0,
\end{align}
where $K \subset \R^d$ is any compact set. 
\end{remark}

\begin{remark}\label{remark 2025-11-226} By Remark \ref{cdd},
if $\alpha\in[1,2)$, we could obtain the following global stability of the flow:
\begin{align}
\label{stability1-alpha in 1-2}
&  \lim_{n\to\infty}\sup_{x\in \R^d}\sup_{s\in [0,T]} \mathbb{E}[  \sup_{t \in [s,T]} \Vert D\phi_{s,t}^{n}(x)-D\phi_{s,t}(x)\Vert^{p}]=0,\\
&  \lim_{n\to\infty}\sup_{x\in \R^d } \sup_{t\in [0,T]} \mathbb{E}[  \sup_{s \in [0,t]} \Vert D(\phi_{s,t}^{n})^{-1}(x)-D\phi_{s,t}^{-1}(x)\Vert^{p}]=0.
\end{align}

\end{remark}

\begin{proof}[Proof of Theorem \ref{thm-stability} ]

Let us fix $p \ge 2.$ In view of Theorem \ref{d32}, 
it is enough to verify that 
\eqref{stability1}, \eqref{stability2-uniform} and \eqref{stability2} of Theorem \ref{thm-stability} hold for $\phi_{0,t}^n=\phi_{t}^n$ and $\phi_{0,t}=\phi_{t}$, i.e.,
for any compact set $K \subset \R^d$, 
\begin{equation}
\label{stability101}
\lim_{n\to\infty} \sup_{x\in{\mathbb{R}}^{d}} \mathbb{E}[ \sup_{s \in [0,T]} |\phi_{s}^{n}(x)-\phi_{s}(x)\vert^{p}]=0,
\end{equation}%
\begin{equation}
\label{stability2-uniform-1}
\sup_{n\in\mathbb{N}}\sup_{x\in{\mathbb{R}}^{d}}  \mathbb{E}[  \sup_{s \in [0,T]} \Vert D\phi_{s}^{n}(x)\Vert^{p}]
+
\sup_{x\in{\mathbb{R}}^{d}}   \mathbb{E}[  \sup_{s \in [0,T]} \Vert D\phi_{s}(x)\Vert^{p}]
<\infty,
\end{equation}
\begin{equation}
\label{stability2-1} 
\lim_{n\to\infty}\sup_{x\in K}  \mathbb{E}[  \sup_{s \in [0,T]} \Vert D\phi_{s}^{n}(x)-D\phi_{s}(x)\Vert^{p}]=0.
\end{equation}
 We will prove the assertions stated above in several installments.

\begin{proof}[\textbf{Proof of assertions \eqref{stability101}.}]
According to Lemma \ref{lem-Ito formula Tanaka trick} and  Theorem \ref{thm-approximation}, there exists $\varpi{}_{,M}\geq 1$ such that, $\mathbb{P}$-a.s.,
\begin{equation} \label{itok}
\begin{aligned}
 u(\phi_{t}(x)) - u(x)
=&  x- \phi_{t}(x) + L_t  + \varpi{}_{,M} \int_0^t u(\phi_{s}(x))\, ds
 \\
 &+ \int_0^t \lint_{\R^d  } [ u(\phi_{s-}(x) + z) - u(\phi_{s-}(x))]
   \tilde \mu( ds,dz),
   \end{aligned}
\end{equation}
and
\begin{equation} \label{itok-n}
\begin{aligned}
 u^n(\phi_{t}^n(x)) - u^n(x)
=&  x- \phi_{t}^n(x) + L_t  + \varpi{}_{,M} \int_0^t u^n(\phi_{s}^n(x))\, ds
  \\&+ \int_0^t \lint_{\R^d  } [ u^n(\phi^n_{s-}(x) + z) - u^n(\phi^n_{s-}(x))]
   \tilde \mu(ds,dz).
   \end{aligned}
\end{equation}
Recall that by  Theorem \ref{thm-approximation} and \eqref{grad} we have, for any compact set $K \subset \R^d$,
\begin{align}
&\sup_{n\geq 0}\Vert u^n\Vert_{\alpha+\beta}<\infty \; \text{which implies} \; \sup_{n\geq 0}[Du^n]_{\alpha+\beta-1}<\infty,\label{eq tianjian 000-1}\\
 & \sup_{n\geq 0}\Vert Du^n\Vert_0\leq \frac{1}{3},
\quad \lim_{n\to \infty}\Vert u^n-u^0\Vert_{0}= 
 \lim_{n\to \infty} \sup_{x \in \R^d} |u^n(x) - u^0(x) |=0,
 \label{eq tianjian 000} 
\\
 \text{and } \quad &\lim_{n\to \infty} \sup_{x \in K} \Vert Du^n(x) - Du^0(x)\Vert=0. \label{eq tianjian 000 20251008} 
\end{align}
Here $u^n$ is the solution to \eqref{eqn-wee-n} with $\lambda=\varpi{}_{,M}$ and $f^n=b^n$, 
and $u=u^0$ is the solution to  equation \eqref{eqn-wee} with the RHS $f=b$ and $\lambda=\varpi{}_{,M}$.\\
Then, we have
$\mathbb{P}$-a.s.,
 \begin{align} \label{ser}
 \phi_t(x) - \phi^n_t(x) =&   [u(x) - u^n(x)] + [u^n( \phi^n_t(x))- u( \phi_t(x))] + \varpi{}_{,M}   \int_0^t   [u(\phi_s(x))- u^n(\phi^n_s(x))] ds
\\ &+
 \int_0^t \lint_{\R^d } [ u( \phi_{s-}(x) + z) - u( \phi_{s-}(x))
  - u^n( \phi^n_{s-}(x) + z) + u^n( \phi^n_{s-}(x))]
   \tilde \mu( ds,dz).\nonumber
\end{align}
The above equality can be rewritten as, $\mathbb{P}$-a.s.,
 \begin{align} \label{ser-02}
  \phi_t(x) - \phi^n_t(x) =&   [u^n(\phi^n_t(x))- u^n(\phi_t(x))] + \varpi{}_{,M}   \int_0^t   [u^n(\phi_s(x))- u^n(\phi^n_s(x))]\, ds
\\ &+
 \int_0^t \lint_{\R^d } [ u^n(\phi_{s-}(x) + z) - u^n(\phi_{s-}(x))
- u^n(\phi^n_{s-}(x) + z) + u^n(\phi^n_{s-}(x))]
   \tilde \mu(ds,dz)\nonumber
 \\
 &
+  [u(x) - u^n(x)]  + [u^n(\phi_t(x))- u(\phi_t(x))]+\varpi{}_{,M}   \int_0^t   [u(\phi_s(x))- u^n(\phi_s(x))] ds
\nonumber\\
&+
 \int_0^t \lint_{\R^d } [ u(\phi_{s-}(x) + z) - u(\phi_{s-}(x))
  - u^n(\phi_{s-}(x) + z) + u^n(\phi_{s-}(x))]
   \tilde \mu(ds,dz).
\nonumber
\end{align}
Since
 $\Vert Du^n\Vert_0 \le \frac13 $, we have
 \begin{align}\label{se-est01}
 |u^n(\phi_t(x))- u^n(\phi^n_t(x))| \le \frac{1}{3} |\phi_t(x)  - \phi_t^n(x)|.
 \end{align}
Combining \eqref{ser-02} and \eqref{se-est01} leads us to the estimate
 \[
 | \phi_t(x) - \phi^n_t(x)| \le \frac{3}{2} \Lambda^n_1(t) +
 \frac{3}{2}\Lambda^n_2(t) + \frac{3}{2}\Lambda^n_3(t)
 + \frac{3}{2}\Lambda^n_4(t),
 \]
  where
\begin{align*}
 \Lambda^n_1(t) &=  \Bigl\vert
 \int_0^t \int_{ B^c } [ u^n(\phi_{s-}(x) + z) - u^n(\phi_{s-}(x))
  - u^n(\phi^n_{s-}(x) + z) + u^n(\phi^n_{s-}(x))]
   \tilde \mu(ds,dz) \Bigr\vert,
\\
\Lambda^n_2(t) &=  \varpi{}_{,M}   \int_0^t   |u^n(\phi_s(x))- u^n(\phi^n_s(x))| ds,
\\
\Lambda^n_3(t) &=  \Bigl\vert \int_0^t \lint_{ B} [ u^n(\phi_{s-}(x) +
z) - u^n(\phi_{s-}(x))
 - u^n(\phi^n_{s-}(x) + z) + u^n(\phi^n_{s-}(x)) ]
   \tilde \mu( ds,dz) \Big|,
\\
 \Lambda^n_4(t) &= \Big|[u(x) - u^n(x)]  + [u^n(\phi_t(x))- u(\phi_t(x))]+ \varpi{}_{,M}   \int_0^t   [u(\phi_s(x))- u^n(\phi_s(x))] ds\Bigr\vert
\\
&\quad+
 \Bigl\vert\int_0^t \lint_{\R^d } [ u(\phi_{s-}(x) + z) - u(\phi_{s-}(x))
  - u^n(\phi_{s-}(x) + z) + u^n(\phi_{s-}(x))]
   \tilde \mu( ds,dz)\Bigr\vert.
\end{align*}
Now set $p\geq 2$. Note 
that, $\mathbb{P}$-a.s.,
\begin{equation*}
\sup_{0 \le s \le t } |\phi_s(x) - \phi^n_s(x)\vert^p \le
 C_p\sum_{j=1}^4 \sup_{0 \le s \le t } \, \Lambda^n_j(s)^p.
\end{equation*}
The main difficulty is to estimate $\sup_{0 \le s \le t } \, \Lambda^n_4(s)^p$. Let us first
consider the other terms. By \eqref{se-est01} and the H\"older inequality, we see
\begin{equation}
\sup_{0 \le s \le t } \Lambda^n_2(s)^p \le c_1 (p) \, t^{p-1} \int_0^t
\sup_{0 \le s \le r }  |\phi_s(x) - \phi^n_s(x)\vert^p \, dr.
\end{equation}
By \eqref{eqn-Burkholder inequality} with $C =B^c$
 we find
\begin{align*}
& \mathbb{E}[\sup_{0 \le s \le t } \Lambda^n_1(s)^p]
\\ \le&  c(p)
 \mathbb{E} \Bigl[ \Big ( \int_0^t   \int_{ B^c } | u^n(\phi_{s}(x) + z)
  - u^n(\phi^n_{s}(x) + z) + u^n(\phi^n_{s}(x))- u^n(\phi_{s}(x)) \vert^2 \nu(dz)  ds\Big)^{p/2}
  \Big ]
\\
&+ \, c(p)
 \mathbb{E}  \int_0^t  \int_{B^c } | u^n(\phi_{s}(x) + z)
  - u^n(\phi^n_{s}(x) + z) + u^n(\phi^n_{s}(x))- u^n(\phi_{s}(x))\vert^p \nu(dz) ds.
\end{align*}
Using the following inequality, by \eqref{eq tianjian 000},
\begin{align*} | u^n(\phi_{s}(x) + z)
  - u^n(\phi_{s}^n(x) + z) + u^n(\phi_{s}^n(x))- u^n(\phi_{s}(x)) |
  \le \frac{2}{3} |\phi_{s}(x)
  -  \phi_{s}^n(x)|,
  \end{align*}
   and the H\"older inequality, we get
\begin{align}\label{eq Gamma 01}
\mathbb{E}[\sup_{0 \le s \le t } \Lambda_1(s)^p]
\le & C_1(p) \, (1+ t^{p/2
-1})
 \Bigl(
 \int_{B^c } \nu(dz) \,  +  \big(\int_{B^c }
\nu(dz) \big)^{p/2} \Bigr) \nonumber\\
 &\cdot \, \int_0^t \mathbb{E}[
\sup_{0 \le s \le r } |\phi_{s}(x)
  -  \phi_{s}^n(x)\vert^p ] dr.
\end{align}
Let us treat  $\Lambda^n_3(t)$. Set $\gamma=\alpha+\beta-1$. Note that \eqref{eqn-beta+alpha half} implies that $2 \gamma >
\alpha$. By using \eqref{eqn-Burkholder inequality} with
 $C = B\setminus\{0\}$ and Lemma \ref{lem-Lemma 4.1}, we get
\begin{align*}
\mathbb{E}[\sup_{0 \le s \le t } \Lambda^n_3(s)^p]
 \le c(p) \Vert u^n\Vert_{1+ \gamma}^p
 \, \mathbb{E} \Bigl[
 \Big ( \int_0^t \lint_{B } |\phi_{s}(x) - \phi^n_s(x)\vert^2
 |z\vert^{2\gamma}
 \nu(dz)ds \Big )^{p/2} \Big ]
\\
 + \, c(p) \Vert u^n\Vert_{1+ \gamma}^p
 \, \mathbb{E}  \int_0^t \lint_{B } |\phi_{s}(x) - \phi^n_s(x)\vert^p
 |z\vert^{\gamma p}
 \nu(dz) ds .
\end{align*}
We obtain
\begin{align}\label{eq Gamma 03}
\mathbb{E}[\sup_{0 \le s \le t } \Lambda^n_3(s)^p]\le&
C_2 (p) \, (1+ t^{p/2
-1}) \,
 \Vert u^n\Vert^p_{1+ \gamma} \, \cdot \Bigl(
 \big( \lint_{B } |z\vert^{2 \gamma} \nu(dz) \big)^{p/2} +
  \lint_{B } |z\vert^{\gamma p}
\nu(dz) \Bigr)\\
  &\;\times \int_0^t \mathbb{E}[\sup_{0 \le s \le r }
|\phi_{s}(x) - \phi^n_s(x)\vert^p] \, dr,
\end{align}
where $\lint_{B } |z\vert^{2 \gamma} \nu(dz)+\lint_{B }  \, |z\vert^{p \gamma}
   \nu(dz) < +\infty  $,
 since $p \ge 2$ and $2 \gamma > \alpha$. By \eqref{eq tianjian 000-1},
 \begin{align}
 \sup_{n\geq1}\Vert u^n\Vert_{\gamma+1}=\sup_{n\geq1}\Vert u^n\Vert_{\alpha+\beta}\leq C<\infty.
 \end{align}
 Collecting the previous estimates, we arrive at
\begin{align*}
&\quad\ \mathbb{E}[ \sup_{0 \le s \le t} |\phi_s(x) - \phi^n_s(x)\vert^p] \\
&\le C_p \,
 \mathbb{E}[  \sup_{0 \le s \le t } \Lambda^n_4(s)^p]\,+\, C_3 (p) \, (1+t^{p/2-1}+t^{p -1}) \cdot\int_0^t \mathbb{E}[\sup_{0 \le
s \le r }  |\phi_s(x) - \phi^n_s(x)\vert^p] \, dr.
\end{align*}
Applying the   Gronwall Lemma we obtain, for any $t\geq0$,
  \begin{equation} \label{ciao211}
\mathbb{E}[ \sup_{0 \le s \le t} |\phi_s(x) - \phi^n_s(x)\vert^p] \le C(t) \, C_p  \mathbb{E}[  \sup_{0 \le s \le t } \Lambda^n_4(s)^p],
  \end{equation}
  with $C(t) = \exp \big( C_3 (p) \, (t+ t^{p/2}+t^{p })\big)$.
It remains to prove that
  \begin{equation} \label{eqn-remains}
\lim_{n\to \infty}\sup_{x\in\mathbb{R}^d}  \mathbb{E}[ \sup_{0 \le s \le T }\Lambda^n_4(s)^p] = 0.
\end{equation}
By applying the Burholder inequality, see  \eqref{eqn-Burkholder inequality} and the H\"older inequality, we get
\begin{align}
&\hspace{-2.6cm}\E\sup_{0\leq s\leq T} |\Lambda^n_4(s)\vert^{p} \\
&\hspace{-2.6cm}\leq C_{4}(p)|u(x) - u^n(x)\vert^{p}  +C_{4}(p)\E \sup_{0\leq s\leq T}|u^n(\phi_s(x))- u(\phi_s(x))\vert^{p}
\nonumber
\\
\label{eq control 0}
&\hspace{-2.6cm}+ \varpi{}_{,M} C_4(p)  T^{p-1} \E \int_{0}^{T}\sup_{0\leq s\leq r} |u(\phi_s(x))- u^n(\phi_s(x))\vert^{p} dr
\\
\nonumber
&\hspace{-2.6cm}\lefteqn{+ C_{4}(p)\E \left(
 \int_0^T \lint_{\R^d} |u(\phi_{s}(x) + z) - u(\phi_{s}(x))
  - u^n(\phi_{s}(x) + z) + u^n(\phi_{s}(x))\vert^{2}
   \nu(d z)\,d s\right)^{\frac p2}}
 \\
 \nonumber
 &\hspace{-2.6cm}\lefteqn{+ C_{4}(p)\E
 \int_0^T \lint_{\R^d } |u(\phi_{s}(x) + z) - u(\phi_{s}(x))
  - u^n(\phi_{s}(x) + z) + u^n(\phi_{s}(x))\vert^{p}
   \nu(d z)\, ds .}
\end{align}
By using \eqref{eq tianjian 000} we deduce that
\begin{align}
\nonumber
\lim_{n\to \infty}\sup_{x\in\mathbb{R}^d}&\Big(
   C_{4}(p)|u(x) - u^n(x)\vert^{p}  +C_{4}(p)\E \sup_{0\leq s\leq T}|u^n(\phi_s(x))- u(\phi_s(x))\vert^{p}\nonumber\\
   &\ \ \ \ \ \ \ \ \ \ \ \ \ \ \ \ \ \ \ \ \ +
   \varpi{}_{,M}   C_{4}(p)  T^{p-1} \E \int_{0}^{T}\sup_{0\leq s\leq r} |u(\phi_s(x))- u^n(\phi_s(x))\vert^{p} dr\Big)\nonumber\\
   \label{eq ctrong  000}
   &\leq
   \lim_{n\to \infty}C_p(1+T^p)\Vert u^n-u\Vert_0^p =0.
   \end{align}
By applying again \eqref{eq tianjian 000-1} and \eqref{eq tianjian 000}, there exists a constant $C$ such that, for any $s\in[0,T]$ and $|z|>1$,
\begin{align}\label{eq control 4}
\sup_{x\in\mathbb{R}^d}|u(\phi_s(x) + z) - u(\phi_s(x)) - u^n(\phi_s(x) + z) + u^n(\phi_s(x))\vert^{2}\leq C<\infty,
\end{align}
and,  for any $s\in[0,T]$ and $|z|\leq1$,
\begin{align}\label{eq control 5}
&\sup_{x\in\mathbb{R}^d}|u(\phi_s(x)  +  z)  -  u(\phi_s(x))  -  u^n(\phi_s(x)  +  z)  +  u^n(\phi_s(x))\vert^{2}\\
& \leq  C(\Vert Du\Vert_0^2 + \Vert Du^n\Vert_0^2)|z\vert^2\leq  C|z\vert^2.\nonumber
\end{align}
We also have
\begin{align}\label{eq control 6}
\hspace{-1cm}\lim_{n\to \infty}\sup_{x\in\mathbb{R}^d}|u(\phi_s(x) + z) - u(\phi_s(x)) - u^n(\phi_s(x) + z) + u^n(\phi_s(x))|\leq
\lim_{n\to \infty}2\Vert u-u^n\Vert_0=0.
\end{align}
In view of \eqref{eq control 4}, \eqref{eq control 5}, \eqref{eq control 6} and the Dominated Convergence Theorem, we infer
\begin{align}\label{eq control 7}
&\quad\ \lim_{n\to \infty}\sup_{x\in\mathbb{R}^d}
\E \Big(
 \int_0^T \lint_{\R^d } |u(\phi_s(x)  + z) - u(\phi_s(x) )
  - u^n(\phi_s(x)  + z) + u^n(\phi_s(x) )\vert^{2}
   \nu(d z)\,d s\Big)^{\frac p2}\nonumber\\
&\leq
\lim_{n\to \infty}\E \Big(
 \int_0^T \lint_{\R^d } \sup_{x\in\mathbb{R}^d}|u(\phi_s(x) + z) - u(\phi_s(x))
  - u^n(\phi_s(x) + z) + u^n(\phi_s(x))\vert^{2}
   \nu(d z)\,d s\Big)^{\frac p2}\nonumber\\
   &=0.
\end{align}
Using the similar argument as \eqref{eq control 7}, we have
\begin{align}\label{eq control 8}
&\lim_{n\to \infty}\!\sup_{x\in\mathbb{R}^d}
\!\E
 \int_0^T \lint_{\R^d } |u(\phi_s(x) + z) - u(\phi_s(x))
  - u^n(\phi_s(x) + z) + u^n(\phi_s(x))\vert^{p}
   \nu(d z)\, ds\nonumber\\
 &  =0.
\end{align}
By \eqref{eq control 0}, \eqref{eq ctrong  000}, \eqref{eq control 7}, and \eqref{eq control 8}, we infer \eqref{eqn-remains}. Hence the proof of \eqref{stability101} is complete.
\end{proof}

\vskip 0.2cm

\begin{proof}[\textbf{Proof of assertion \eqref{stability2-uniform-1}  and \eqref{stability2-1}.}]

Define functions $\psi$ and $\psi^n$ by
\[\psi(x)=x+u(x)\quad \mbox{ and }\quad \psi^n(x)=x+u^n(x)\quad \mbox{ for } x\in\mathbb{R}^{d}.\]
Since $\Vert D u\Vert_0\leq\frac13$ and $\Vert D u^n\Vert_0\leq\frac13$, the classical
 Hadamard theorem, see \cite[p.330]{Protter_2004}, implies that $\psi$ and $\psi^n$ are $C^1$-diffeomorphism from $\mathbb{R}^d$ onto $\mathbb{R}^d$ and
 \begin{align}\label{eq stab 0}
  \Vert D\psi\Vert_0\leq4/3,\ \Vert D(\psi^n)\Vert_0\leq4/3.
 \end{align}
Noting that for any $x\in\mathbb{R}^d$,
\begin{align}
D\psi^{-1}(x)&=\big(I+Du(\psi^{-1}(x))\big)^{-1}=\sum_{k\geq0}(-Du(\psi^{-1}(x)) )^k,\label{eq-D-psi-1}\\
 D(\psi^n)^{-1}(x)& =\big(I+Du^n((\psi^n)^{-1}(x))\big)^{-1}=\sum_{k\geq0}(-Du^n((\psi^n)^{-1}(x)) )^k,\label{eq-D-psi-2}
\end{align}
we have
  \begin{align}\label{eq stab 1}
  \Vert D\psi^{-1}\Vert_0\leq3/2\text{, and }\Vert D(\psi^n)^{-1}\Vert_0\leq3/2.
 \end{align}
 It follows that for any $x\in\mathbb{R}^d$,
\begin{align*}
  &\quad\ |(\psi^n)^{-1}(x)-\psi^{-1}(x)|\\
&=
  |(\psi^n)^{-1}\circ \psi\circ \psi^{-1}(x)-(\psi^n)^{-1}\circ \psi^n\circ \psi^{-1}(x)|\\
&\leq
  \Vert D (\psi^n)^{-1}\Vert_0|\psi\circ \psi^{-1}(x)-\psi^n\circ \psi^{-1}(x)|\leq
  \Vert D (\psi^n)^{-1}\Vert_0\Vert\psi-\psi^n\Vert_0\leq 3/2\Vert u-u^n\Vert_0,
\end{align*}
and hence
\begin{align}\label{lim psi n 01}
  \lim_{n\to \infty}\Vert(\psi^n)^{-1}-\psi^{-1}\Vert_0=0.
\end{align}

 Let us fix any compact set $K \subset \R^d$. By using \eqref{eq-D-psi-1}, \eqref{eq-D-psi-2} and the fact that for any $d\times d$ real matrices $A$ and $B$ with $(I+A)$ and $(I+B)$ being  invertible,
\begin{align}\label{invers-ind}
(I+A)^{-1}-(I+B)^{-1}=(I+A)^{-1}(B-A)(I+B)^{-1},
\end{align}
we deduce that, for any $x\in K$,  
\begin{align}
   &\quad\ \Vert D\psi^{-1}(x)-D(\psi^n)^{-1}(x)\Vert \\
   &=\big\Vert\big(I+Du(\psi^{-1}(x))\big)^{-1}\big(Du^n((\psi^n)^{-1}(x))-Du(\psi^{-1}(x))\big) \big(I+Du^n((\psi^n)^{-1}(x))\big)^{-1}\big\Vert\\
&\leq\Vert D\psi^{-1}\Vert_0\Vert Du(\psi^{-1}(x))-Du^n((\psi^n)^{-1}(x))\Vert \Vert D(\psi^n)^{-1}\Vert_0\\
&\leq
\frac{9}{4}\Big(\Vert Du(\psi^{-1}(x))-Du((\psi^n)^{-1}(x))\Vert+\Vert Du((\psi^n)^{-1}(x))-Du^n((\psi^n)^{-1}(x))\Vert\Big)\\
&\leq
\frac{9}{4}\Big([Du]_\gamma\Vert\psi^{-1}-(\psi^n)^{-1}\Vert_0^\gamma+ \Vert Du((\psi^n)^{-1}(x))-Du^n((\psi^n)^{-1}(x) \Vert\Big), 
\end{align}
where $[Du]_\gamma<\infty$ according to \eqref{eq tianjian 000-1}. 
We have to study the term $(\psi^n)^{-1}(x)$. Note that $K \subset B_M$ for some $M>0$, where $B_M$ is the closed ball as before. We write, for any $n \ge 1$,
\begin{gather} \label{ciao2}
|(\psi^n)^{-1}(x)| \le |(\psi^n)^{-1}(x)-(\psi^n)^{-1}(0)  |
+ |(\psi^n)^{-1}(0)| \le 3M/2  +|(\psi^n)^{-1}(0)|,
\end{gather}
by \eqref{eq stab 1}.  Let $y_n = (\psi^n)^{-1}(0)$. We have
$$
\psi^n(y_n) = y_n + u^n(y_n) =0. 
$$
Hence $|y_n|  \le  |u^n(y_n)| \le \sup_{n \ge 1}\| u_n \|_0
= C < \infty$ by \eqref{eq tianjian 000-1}. It follows that
$$
|(\psi^n)^{-1}(x)| \le  3M/2  +C =:M', \;\; n \ge 1,\, x \in K.
$$
By \eqref{eq tianjian 000 20251008} and \eqref{lim psi n 01}, we obtain
$$
 \sup_{x \in K}  \Vert D\psi^{-1}(x)-D(\psi^n)^{-1}(x)\Vert
 \le \frac{9}{4}\Big([Du]_\gamma\Vert\psi^{-1}-(\psi^n)^{-1}\Vert_0^\gamma + \sup_{y \in B_{M'} }\Vert Du(y)-Du^n(y)\Vert \Big)\to 0
$$
as $n \to \infty$.  Therefore, we get
\begin{align}\label{lim psi n 02}
   \lim_{n\to \infty} \sup_{x \in K}\Vert D\psi^{-1}(x)-D(\psi^n)^{-1}(x)\Vert=0.
\end{align}
Using again \eqref{invers-ind}, we also have the following a priori estimates:
\begin{align}
\Vert D\psi^{-1}(x)-D\psi^{-1}(x')\Vert 
&\leq
    \Vert D\psi^{-1}\Vert_0\Vert Du(\psi^{-1}(x))-Du(\psi^{-1}(x'))\Vert \Vert D\psi^{-1}\Vert_0\\
&\leq
    9/4[Du]_\gamma\Vert D\psi\Vert_0^\gamma|x-x'\vert^\gamma,\quad\ \forall x,x'\in\mathbb{R}^d,
\end{align}
which gives
\begin{align}\label{eq Dpsi gamma}
[D\psi^{-1}]_\gamma
\leq
    \frac{9}{4}(\frac{4}{3})^\gamma[Du]_\gamma.
\end{align}
Similarly,
\begin{align}\label{eq Dpsin gamma}
[D(\psi^n)^{-1}]_\gamma
\leq
    \frac{9}{4}(\frac{4}{3})^\gamma[Du^n]_\gamma.
\end{align}
 We know from \cite[p. 442 and (4.14)]{Pr12} that
 \begin{align}\label{eq phi-y}
 \psi(\phi_{t}(x))=Y_{t}(\psi(x)),
 \end{align}
where
\begin{align}\label{eq Yy}
     Y_{t}(y)=y+\int_{0}^{t}\tilde{b}(Y_{s}(y))\,d s+\int_{0}^{t}\lint_{\R^{d}}h(Y_{s-}(y),z)\tilde{\mu}(d s,dz)+L_{t},
\end{align}
with $\tilde{b}(x)=\varpi{}_{,M} u(\psi^{-1}(x))$ and $h(x,z)=u(\psi^{-1}(x)+z)-u(\psi^{-1}(x))$;
and
\begin{align}\label{eq phi-yn}
\psi^n(\phi^n_{t}(x))=Y^n_{t}(\psi^n(x)),
\end{align}
where
\begin{align}\label{eq Yny}
     Y^n_{t}(y)=y+\int_{0}^{t}\tilde{b}^n(Y^n_{s}(y))\,d s+\int_{0}^{t}\lint_{\R^{d}}h^n(Y^n_{s-}(y),z)\tilde{\mu}(d s,dz)+L_{t},
\end{align}
with $\tilde{b}^n(x)=\varpi{}_{,M} u^n((\psi^n)^{-1}(x))$ and $h^n(x,z)=u^n((\psi^n)^{-1}(x)+z)-u^n((\psi^n)^{-1}(x))$.\\
\indent Observe that by \eqref{eq phi-y} and \eqref{eq phi-yn}, we have
\begin{align}\label{eq-Dphi}
D\phi_{t}(x)=(D\psi^{-1})(Y_{t}(\psi(x)) DY_{t}(\psi(x)) D\psi(x),
\end{align}
and
\begin{align}\label{eq-Dphi-n}
D\phi^n_{t}(x)=(D(\psi^n)^{-1})(Y^n_{t}(\psi^n(x))) DY^n_{t}(\psi^n(x)) D\psi^n(x).
\end{align}
Thanks to the above observations, to establish \eqref{stability2-uniform-1} and \eqref{stability2-1}, it is sufficient to prove the following formulae (recall that  $K \subset \R^d$ can be any compact set)
\begin{align}
      &\lim_{n\to\infty}\sup_{y\in\R^{d}}\E\sup_{0\leq t\leq T}|Y^{n}_{t}(y)-Y_{t}(y)\vert^{p}=0,\label{eq Part 01 Z}\\
&\mathbb{E}\Big(
    \sup_{t\in[0,T]}|Y_{t}(y)-Y_{t}(y')\vert^p\Big)
\leq
C_T \vert y-y'\vert^p,\;\;\forall\; y,y'\in\mathbb{R}^d,\label{eq Yy Yy' 03}\\
  & \sup_{n\geq1}\sup_{y\in\R^{d}}\E\sup_{0\leq t\leq T}\Vert DY^{n}_{t}(y)\Vert^{p}+\sup_{y\in\R^{d}}\E\sup_{0\leq t\leq T}\Vert DY_{t}(y)\Vert^{p}<\infty,\label{eq zhai sup 01}\\
  &
  \lim_{n\to\infty}  \, \sup_{y\in K}\E\sup_{0\leq t\leq T}\Vert DY^{n}_{t}(y)-DY_{t}(y)\Vert^{p}=0,\label{eq DY DYn lim}
  \\
&\E\sup_{t\in[0,T]}\Vert DY_{t}(y)-DY_{t}(y')\Vert^p
\leq
  C\Big(\vert y-y'\vert^{p\gamma}+\vert y-y'\vert^{\delta p}\Big),\;\ \forall\; y,y'\in\mathbb{R}^d.\label{eq DYy DYy' 03}
\end{align}
Here $\delta>0$ is a constant, which we will determine later.
Indeed,  \eqref{eq-Dphi}-\eqref{eq-Dphi-n} combined with \eqref{eq stab 0}, \eqref{eq stab 1} and \eqref{eq zhai sup 01} implies
\begin{align*}
& \quad\ \sup_{x\in\R^{d}}\E\sup_{t\in [0,T]}\Vert D\phi_{t}(x)\Vert^{p}+\sup_{n\geq1}\sup_{x\in\R^{d}}\E\sup_{t\in [0,T]}\Vert D\phi^{n}_{t}(x)\Vert^{p}\\
 & \leq  \Vert(D\psi^{-1})\Vert^p_0\Vert D\psi\Vert_0^p \sup_{y\in\R^{d}}\E\sup_{t\in [0,T]} \Vert DY_{t}(y)\Vert^p+\sup_{n\geq 1}\Vert(D(\psi^n)^{-1})\Vert^p_0\Vert D\psi^n\Vert_0^p \sup_{y\in\R^{d}}\E\sup_{t\in [0,T]} \Vert DY_{t}^n(y)\Vert^p \\
 &<\infty,
\end{align*}
which establisheds  \eqref{stability2-uniform-1}.\\
\indent Next, we will show that the above five inequalities imply assertion \eqref{stability2-1}.
Owing to \eqref{eq-Dphi}-\eqref{eq-Dphi-n} and \eqref{eq Dpsi gamma}, we deduce  for $x \in K$,
\begin{align}
\E&\sup_{t\in [0,T]}\Vert D\phi_{t}(x)-D\phi^{n}_{t}(x)\Vert^{p}\\
\leq&
C_p\,\E\sup_{t\in [0,T]}\big\Vert \big((D\psi^{-1})(Y_{t}(\psi(x)))
                       -
                       (D\psi^{-1})(Y_{t}(\psi^n(x)))\big)\cdot DY_{t}(\psi(x))\cdot D\psi(x)\big\Vert^p\\
&+
C_p\,\E\sup_{t\in [0,T]}\big\Vert\big(D\psi^{-1})(Y_{t}(\psi^n(x)))
                       -
                       (D\psi^{-1})(Y^n_{t}(\psi^n(x)))\big)\cdot DY_{t}(\psi(x))\cdot D\psi(x)\big\Vert^p\\
&+
C_p\,\E\sup_{t\in [0,T]}\big\Vert\big((D\psi^{-1})(Y^n_{t}(\psi^n(x)))
                       -
                       (D(\psi^n)^{-1})(Y^n_{t}(\psi^n(x)))\big)\cdot DY_{t}(\psi(x))\cdot D\psi(x)\big\Vert^p\\
&+
C_p\,\E\sup_{t\in [0,T]}\big\Vert(D(\psi^n)^{-1})(Y^n_{t}(\psi^n(x)))\cdot\big( DY_{t}(\psi(x))                       -
                        DY_{t}(\psi^n(x))\big)\cdot D\psi(x)\Vert^p\\
&+
C_p\,\E\sup_{t\in [0,T]}\big\Vert \big((D(\psi^n)^{-1})(Y^n_{t}(\psi^n(x)))\cdot \big(DY_{t}(\psi^n(x))
                       -
                        DY^n_{t}(\psi^n(x))\big)\cdot D\psi(x)\big\Vert^p\\
&+
C_p\,\E\sup_{t\in [0,T]}\big\Vert(D(\psi^n)^{-1})(Y^n_{t}(\psi^n(x)))\cdot DY^n_{t}(\psi^n(x))\cdot \big(D\psi(x)
                       -
                       D\psi^n(x)\big)\big\Vert^p\\
                       \leq&
C_p[Du]_\gamma^p\Vert D\psi\Vert_0^p\,\E\sup_{t\in [0,T]}\Big(|Y_{t}(\psi(x))
                       -
                       Y_{t}(\psi^n(x))\vert^{\gamma p}\Vert DY_{t}(\psi(x))\Vert^p\Big)\\
&+
C_p[Du]_\gamma^p\Vert D\psi\Vert_0^p\,\E\sup_{t\in [0,T]}\Big(|Y_{t}(\psi^n(x))
                       -
                       Y^n_{t}(\psi^n(x))\vert^{\gamma p}\Vert DY_{t}(\psi(x))\Vert^p\Big)\\
&+  
C_p\Vert D\psi\Vert_0^p\,\,\E\sup_{t\in [0,T]}\Big(\big\Vert(D\psi^{-1})(Y^n_{t}(\psi^n(x)))
                       -
                       (D(\psi^n)^{-1})(Y^n_{t}(\psi^n(x)))\big\Vert^p
                       \big\Vert DY_{t}(\psi(x))\big\Vert^p\Big)\\
&+ 
C_p\Vert D(\psi^n)^{-1}\Vert_0^p\Vert D\psi\Vert_0^p\,\E\sup_{t\in [0,T]}\Big(\Vert DY_{t}(\psi(x))-DY_{t}(\psi^n(x))\Vert^p\Big)\\
&+
C_p\Vert D(\psi^n)^{-1}\Vert_0^p\Vert D\psi\Vert_0^p\,\E\sup_{t\in [0,T]}\Big(\Vert DY_{t}(\psi^n(x))-DY^n_{t}(\psi^n(x))\Vert^p\Big)\\
&+ \,
C_p\Vert D(\psi^n)^{-1}\Vert_0^p\Vert D\psi (x)-D\psi^n (x)\Vert^p\,\E\sup_{t\in [0,T]}\Big(\Vert DY^n_{t}(\psi^n(x))\Vert^p\Big).
\end{align}
Note that  by the definition of $\psi^n$ and \eqref{eq tianjian 000-1}  we have
\begin{equation} \label{ciao3}
\sup_{n \ge 1}\sup_{x \in K} |\psi^n(x)| =: M_K < \infty. 
\end{equation}
By choosing $q>1$ large enough such that $\gamma pq>2$, and setting $\frac{1}{q}+\frac{1}{q'}=1$, we have
\begin{align*}
& \, \sup_{x\in K}\E\sup_{t\in [0,T]}\Vert D\phi_{t}(x)-D\phi^{n}_{t}(x)\Vert^{p}\\
&\leq C_p\,
[Du]_\gamma^p\Vert D\psi\Vert_0^p\Big(\sup_{y\in\mathbb{R}^d}\Big(\E\sup_{t\in [0,T]}\Vert DY_{t}(y)\Vert^{pq'}\Big)\Big)^{\frac{1}{q'}}
                       \Big(
                        \sup_{x\in\mathbb{R}^d}\E\Big(\sup_{t\in [0,T]}|Y_{t}(\psi(x))
                       -
                       Y_{t}(\psi^n(x))\vert^{\gamma pq}\Big)\Big)^{\frac{1}{q}}\\
&\quad+C_p\,
[Du]_\gamma^p\Vert D\psi\Vert_0^p
\Big(\sup_{y\in\mathbb{R}^d}\Big(\E\sup_{t\in [0,T]}\Vert DY_{t}(y)\Vert^{pq'}\Big)\Big)^{\frac{1}{q'}}
                       \Big(
                        \sup_{y\in\mathbb{R}^d}\E\Big(\sup_{t\in [0,T]}|Y_{t}(y)
                       -
                       Y^n_{t}(y)\vert^{\gamma pq}\Big)\Big)^{\frac{1}{q}}\\
&\quad+C_p\Vert D\psi\Vert_0^p\,\,\Big(\sup_{y\in\mathbb{R}^d}\Big(\E\sup_{t\in [0,T]}\Vert DY_{t}(y)\Vert^{2p}\Big)\Big)^{\frac{1}{2}}\\
                       &\quad\quad\quad\cdot\Big(\sup_{x\in K}\E\sup_{t\in [0,T]}\Big(\big\Vert(D\psi^{-1})(Y^n_{t}(\psi^n(x)))
                       -
                       (D(\psi^n)^{-1})(Y^n_{t}(\psi^n(x)))\big\Vert^{2p}
                       \Big)\Big)^{\frac{1}{2}}\\
&\quad  \, +C_p\,
\Vert D(\psi^n)^{-1}\Vert_0^p\Vert D\psi\Vert_0^p\sup_{x\in \R^d}\E\sup_{t\in [0,T]}\Big(\Vert DY_{t}(\psi(x))-DY_{t}(\psi^n(x))\Vert^p\Big)\\
& \quad+C_p\, 
\Vert D(\psi^n)^{-1}\Vert_0^p\Vert D\psi\Vert_0^p\sup_{y\in B_{ M_K}}  \E\sup_{t\in [0,T]}\Big(\Vert DY_{t}(y)-DY^n_{t}(y)\Vert^p\Big)\\
&\quad+C_p\, 
\Vert D(\psi^n)^{-1}\Vert_0^p \sup_{x \in K}\Vert Du(x)-Du^n(x)\Vert^p\sup_{y\in\mathbb{R}^d}\E\sup_{t\in [0,T]}\Big(\Vert DY^n_{t}(y)\Vert^p\Big).
\end{align*}
Since by \eqref{eq tianjian 000} $\lim_{n\to \infty}\Vert\psi^n-\psi\Vert_0=0$,  \eqref{eq Yy Yy' 03} and \eqref{eq DYy DYy' 03} imply that 
\begin{align*}
&\lim_{n\to \infty}\sup_{x\in\mathbb{R}^d}\E\Big(\sup_{t\in [0,T]}|Y_{t}(\psi(x))
                       -
                       Y_{t}(\psi^n(x))\vert^{\gamma pq}\Big)=0,\\
&\lim_{n\to \infty}
\sup_{x\in\mathbb{R}^d}\E\Big(\sup_{t\in [0,T]}\Vert DY_{t}(\psi(x))-DY_{t}(\psi^n(x))\Vert^p\Big)=0.
\end{align*}
And once we prove that
\begin{align}\label{eq 20251008-0001}
    \lim_{n\rightarrow\infty}\sup_{x\in K}\E\sup_{t\in [0,T]}\Big(\big\Vert(D\psi^{-1})(Y^n_{t}(\psi^n(x)))
                       -
                       (D(\psi^n)^{-1})(Y^n_{t}(\psi^n(x)))\big\Vert^{2p}
                       \Big)=0,
\end{align}
then,  using \eqref{eq tianjian 000-1}, \eqref{eq tianjian 000 20251008}, \eqref{eq stab 0}, \eqref{eq stab 1},  \eqref{eq zhai sup 01}, and
\eqref{eq DY DYn lim}, we conclude that
\begin{align*} 
\lim_{n\to \infty}\sup_{x \in K}\E\sup_{t\in [0,T]}\Vert D\phi_{t}(x)-D\phi^{n}_{t}(x)\Vert^{p}=0.
\end{align*}
which implies  \eqref{stability2-1}. 

Now it remains to prove \eqref{eq 20251008-0001}. By \eqref{eq phi-yn},
\begin{align}\label{eq phi-yn-20251008}
Y^n_{t}(\psi^n(x))=\psi^n(\phi^n_{t}(x))=\phi^n_{t}(x)+u^n(\phi^n_{t}(x)).
\end{align}
Combining \eqref{eq tianjian 000-1} and (ii) in Theorem \ref{d32},
\begin{align}
    \sup_{n\geq1}\sup_{x\in K}\sup_{t\in[0,T]}|Y^n_{t}(\psi^n(x))(\omega)|
    &\leq
   \sup_{x\in K}|x|+\sup_{n\geq1}\|b^n\|_0T+ \sup_{t\in[0,T]}|L_t(\omega)|+\sup_{n\geq1}\|u^n\|_0\\
   &=:C_{K,T}(\omega)<\infty, \ \ \mathbb{P}\text{-}a.s.\  \omega\in\Omega.
\end{align}
For any $J>0$, set $A_J=\{\omega\in\Omega:C_{K,T}(\omega)\leq J\}$. Then we have
\begin{align}\label{eq 20251008-0001 001}
    &\sup_{x\in K}\E\sup_{t\in [0,T]}\Big(\big\Vert(D\psi^{-1})(Y^n_{t}(\psi^n(x)))
                       -
                       (D(\psi^n)^{-1})(Y^n_{t}(\psi^n(x)))\big\Vert^{2p}
                       \Big)\\
    =&
    \sup_{x\in K}\E\sup_{t\in [0,T]}\Big(\big\Vert(D\psi^{-1})(Y^n_{t}(\psi^n(x)))
                       -
                       (D(\psi^n)^{-1})(Y^n_{t}(\psi^n(x)))\big\Vert^{2p}\1_{A_J}
                       \Big)\\
    &+
    \sup_{x\in K}\E\sup_{t\in [0,T]}\Big(\big\Vert(D\psi^{-1})(Y^n_{t}(\psi^n(x)))
                       -
                       (D(\psi^n)^{-1})(Y^n_{t}(\psi^n(x)))\big\Vert^{2p}\1_{A^c_J}
                       \Big)\\
\leq&
\sup_{y\in B_J}\Vert(D\psi^{-1})(y)
                       -
                       (D(\psi^n)^{-1})(y)\big\Vert^{2p}
    +
    C_p\Big(\sup_{n\geq1}\Vert D(\psi^n)^{-1}\Vert_0^{2p}+\Vert D\psi^{-1}\Vert_0^{2p}\Big)\mathbb{P}(A^c_J).
\end{align}
Then by \eqref{eq stab 1} and \eqref{lim psi n 02}, we have
\begin{align}\label{eq 20251008-0001 001-1}
    \limsup_{n\rightarrow\infty}\sup_{x\in K}\E\sup_{t\in [0,T]}\Big(\big\Vert(D\psi^{-1})(Y^n_{t}(\psi^n(x)))
                       -
                       (D(\psi^n)^{-1})(Y^n_{t}(\psi^n(x)))\big\Vert^{2p}
                       \Big)
\leq
    C_p\mathbb{P}(A^c_J).
\end{align}
The arbitrariness of $J$ implies \eqref{eq 20251008-0001}.

The proof of assertion \eqref{stability2-uniform-1}  and \eqref{stability2-1} is complete.
\end{proof}
Now it remains to prove \eqref{eq Part 01 Z}-\eqref{eq DYy DYy' 03}. The more difficult assertion will be 
 \eqref{eq DY DYn lim}. 
 
 \begin{proof}[Proof of
\eqref{eq Part 01 Z}]
Notice that by Page 444 and (4.15) of \cite{Pr12},
\begin{align}\label{eq SSSS 01}
\Vert Dh^n(\cdot,z)\Vert_0\leq c \Vert Du^n\Vert_{\gamma}(\1_{B}(z)|z\vert^{\gamma}+\1_{B^c}(z)),\ \ \ \forall z\in \R^{d}\setminus\{0\}, \;  \gamma = \alpha + \beta -1,
\end{align}
and
\begin{align}\label{eq 4.44}
\Vert Dh(\cdot,z)\Vert_0\leq c \Vert Du\Vert_{\gamma}(\1_{B}(z)|z\vert^{\gamma}+\1_{B^c}(z)),\ \ \ \forall z\in \R^{d}\setminus\{0\}.
\end{align}
By \eqref{eq tianjian 000-1}, \eqref{eq tianjian 000} and \eqref{eq stab 1}, it is easy to see that
 \begin{align*}
   \sup_{n}\Vert\tilde{b}^n\Vert_0+\Vert\tilde{b}\Vert_0\leq \varpi{}_{,M} C<\infty,
\end{align*}
and
\begin{align}
\begin{aligned}\label{eq Db 0 pro}
&\Vert D\tilde{b}^n\Vert_0\!\leq\!\varpi{}_{,M}\Vert Du^n\Vert_0\Vert D(\psi^n)^{-1}\Vert_0\!\leq\!\frac{\varpi{}_{,M}}{2},\\
& \Vert D\tilde{b}\Vert_0\!\leq\! \varpi{}_{,M}\Vert Du\Vert_0\Vert D\psi^{-1}\Vert_0\!\leq\! \frac{\varpi{}_{,M}}{2}.
\end{aligned}
\end{align}
For any $x\in\mathbb{R}^d$,
 \begin{align*}
   |\tilde{b}^n(x)-\tilde{b}(x)|
   &\leq
   \varpi{}_{,M}\Big(|u^n((\psi^n)^{-1}(x))-u((\psi^n)^{-1}(x))|+|u((\psi^n)^{-1}(x))-u(\psi^{-1}(x))|\Big)\\
   &\leq
   \varpi{}_{,M}\Big(\Vert u^n-u\Vert_0+\Vert Du\Vert_0\Vert(\psi^n)^{-1}-\psi^{-1}\Vert_0\Big),
\end{align*}
and, for any $x \in K$, 
 \begin{align*}
   \|D\tilde{b}^n(x)-D\tilde{b}(x)\|
   =&
   \varpi{}_{,M}\|Du^n((\psi^n)^{-1}(x))D(\psi^n)^{-1}(x)-Du(\psi^{-1}(x))D\psi^{-1}(x)\|\\
   \leq&
   \varpi{}_{,M}\|Du^n((\psi^n)^{-1}(x))D(\psi^n)^{-1}(x)-Du((\psi^n)^{-1}(x))D(\psi^n)^{-1}(x)\|\\
   &+
   \varpi{}_{,M}\|Du((\psi^n)^{-1}(x))D(\psi^n)^{-1}(x)-Du(\psi^{-1}(x))D(\psi^n)^{-1}(x)\|\\
   &+
   \varpi{}_{,M}\|Du(\psi^{-1}(x))D(\psi^n)^{-1}(x)-Du(\psi^{-1}(x))D\psi^{-1}(x)\|\\
   \leq& 
   \varpi{}_{,M} \sup_{y \in B_{M'}}\| Du^n(y) -Du(y)\| \, \Vert D(\psi^n)^{-1}\Vert_0
 \\   &  + 
   \varpi{}_{,M}[Du]_\gamma\Vert(\psi^n)^{-1}-\psi^{-1}\Vert_0^\gamma\Vert D(\psi^n)^{-1}\Vert_0
  \\ & +
   \varpi{}_{,M}\Vert Du\Vert_0  \sup_{x \in K}\| D(\psi^n)^{-1}(x)-D\psi^{-1}(x)\|,
\end{align*}
here $M'$ is introduced in \eqref{ciao3}.
Hence, \eqref{eq tianjian 000-1}, \eqref{eq tianjian 000}, \eqref{eq tianjian 000 20251008}, \eqref{eq stab 1}, \eqref{lim psi n 01} and \eqref{lim psi n 02} imply that
 \begin{align}\label{eq b Db lim} 
   \lim_{n\to \infty}\Big[\Vert\tilde{b}^n-\tilde{b}\Vert_0+ \sup_{x \in K} \|D\tilde{b}^n(x)-D\tilde{b}(x)\|\Big]=0.
\end{align}
Similar arguments imply that, for any $z\in \mathbb{R}^{d}\setminus\{0\}$,
 \begin{align}\label{Eq hn h 01} 
   \lim_{n\to \infty}\Big[\Vert{h}^n(\cdot,z)-{h}(\cdot,z)\Vert_0+  \sup_{x \in K}\| D{h}^n(x,z)-D{h}(x,z)\|\Big]=0.
\end{align}
 Let $y \in \R^d$. 
By \eqref{eq Yy} and \eqref{eq Yny},
\begin{align*}
      |Y^{n}_{t}(y)-Y_{t}(y)|
\leq&
    \int_{0}^{t}|\tilde{b}^n(Y^n_{s}(y))-\tilde{b}(Y_{s}(y))|\,d s
    +
    \Big|\int_{0}^{t}\lint_{\R^{d}}h^n(Y^n_{s-}(y),z)-h(Y_{s-}(y),z)\tilde{\mu}(d s,dz)\Big|\\
\leq&
    \int_{0}^{t}|\tilde{b}^n(Y^n_{s}(y))-\tilde{b}(Y^n_{s}(y))|\,d s
    +
    \int_{0}^{t}|\tilde{b}(Y^n_{s}(y))-\tilde{b}(Y_{s}(y))|\,d s\\
   &+
    \Bigl\vert\int_{0}^{t}\lint_{\R^{d}}h^n(Y^n_{s-}(y),z)-h(Y^n_{s-}(y),z)\tilde{\mu}(d s,dz)\Big|\\
   &+
    \Bigl\vert\int_{0}^{t}\lint_{\mathbb{R}^{d}}h(Y^n_{s-}(y),z)-h(Y_{s-}(y),z)\tilde{\mu}(d s,dz)\Big|\\
=:&
   I_1(t)+I_2(t)+I_3(t)+I_4(t).
\end{align*}
For $I_1(t)+I_2(t)$, we have
\begin{align}\label{eq J3 0001}
  I_1(t)+I_2(t)
\leq
  \Vert\tilde{b}^n-\tilde{b}\Vert_0t
  +
  \Vert D\tilde{b}\Vert_0\int_{0}^{t}|Y^n_{s}(y)-Y_{s}(y)|\,d s.
\end{align}
By \eqref{eq tianjian 000-1}, \eqref{eq 4.44} and \eqref{eqn-Burkholder inequality}, for any $t\in[0,T]$,
\begin{align}\label{eq J3 01000}
\mathbb{E}\Big(
    \sup_{s\in[0,t]}|I_4(s)\vert^p\Big)
\leq&
C_{p,T}\Vert Du\Vert_\gamma^p
\Big(
\lint_{\R^{d}}\1_{B}(z)|z\vert^{\gamma p}+\1_{B^c}(z)\,\nu(dz)
+
   \Big(
       \lint_{\R^{d}}\1_{B}(z)|z\vert^{2\gamma}+\1_{B^c}(z)\,\nu(dz)
   \Big)^{p/2}
\Big)\nonumber\\
 &\cdot\mathbb{E}
    \Big(\int_{0}^{t}\sup_{r\in[0,s]}|Y^n_{r}(y)-Y_{r}(y)\vert^pds
           \Big)\nonumber\\
\leq&
C_{p,T}\mathbb{E}
    \Big(\int_{0}^{t}\sup_{r\in[0,s]}|Y^n_{r}(y)-Y_{r}(y)\vert^pds
           \Big).
\end{align}
The Gronwall lemma with the above three inequalities implies that
\begin{align}\label{eq J3 01}
      \E\sup_{0\leq t\leq T}|Y^{n}_{t}(y)-Y_{t}(y)\vert^{p}
      \leq
      C_{p,T}\Big(
        \Vert\tilde{b}^n-\tilde{b}\Vert_0^p
        +
        \E(\sup_{0\leq t\leq T}|I_3(t)\vert^p)
      \Big).
\end{align}
By applying inequality \eqref{eqn-Burkholder inequality}, we deduce that
\begin{align}
\mathbb{E}\Big(
    \sup_{t\in[0,T]}|I_3(t)\vert^p
           \Big)\leq\;&
 C_p\mathbb{E}\Big(
         \Bigl\vert\int_{0}^{T}\lint_{\mathbb{R}^{d}}|h^n(Y^n_{s}(y),z)-h(Y^n_{s}(y),z)\vert^2\nu(dz)ds\Big\vert^{p/2}
           \Big)\\
    &+
    C_p\mathbb{E}\Big(
         \int_{0}^{T}\lint_{\mathbb{R}^{d}}|h^n(Y^n_{s}(y),z)-h(Y^n_{s}(y),z)\vert^p\nu(dz)ds
           \Big).
\end{align}
By \eqref{Eq hn h 01} and using similar arguments as \eqref{eq control 7} and \eqref{eq control 8}, we may invoke the Dominated Convergence Theorem to get
\begin{align}
\lim_{n\to \infty}\sup_{y\in\mathbb{R}^d}\mathbb{E}\Big(
    \sup_{t\in[0,T]}|I_3(t)\vert^p
           \Big)
=
0.
\end{align}
Hence, we have \eqref{eq Part 01 Z}.
\end{proof}

Next we move to prove \eqref{eq Yy Yy' 03}.
\begin{proof}[Proof of \eqref{eq Yy Yy' 03}]
By \eqref{eq Yy}, we have, for any $y,y'\in\mathbb{R}^d$,
\begin{align}
     |Y_{t}(y)-Y_{t}(y')|
\leq&
     \vert y-y'| + \Big|\int_{0}^{t}\tilde{b}(Y_{s}(y))-\tilde{b}(Y_{s}(y'))\,d s\Big|\nonumber\\
     &+
     \Big|\int_{0}^{t}\lint_{\R^{d}}h(Y_{s-}(y),z)-h(Y_{s-}(y'),z)\tilde{\mu}(d s,dz)\Big|.\label{eq Yy Yy' 00}
\end{align}
A similar argument to that used for the proof of \eqref{eq J3 0001} and \eqref{eq J3 01000} shows that
\begin{align}\label{eq Yy Yy' 01}
\Big|\int_{0}^{t}\tilde{b}(Y_{s}(y))-\tilde{b}(Y_{s}(y'))\,d s\Big|
\leq
\Vert D\tilde{b}\Vert_0\int_{0}^{t}|Y_{s}(y)-Y_{s}(y')|\,d s,
\end{align}
and
\begin{align}\label{eq Yy Yy' 02}
\mathbb{E}\Big(
    \sup_{l\in[0,t]}\Big|\int_{0}^{l}\lint_{\R^{d}}h(Y_{s-}(y),z)-h(Y_{s-}(y'),z)\tilde{\mu}(d s,dz)\Big|^p\Big)
\leq
C_{p,T} \mathbb{E}
    \Big(\int_{0}^{t}\sup_{r\in[0,s]}|Y_{r}(y)-Y_{r}(y')\vert^pds
           \Big).
\end{align}
Applying \eqref{eq Yy Yy' 00}--\eqref{eq Yy Yy' 02} and Gronwall's inequality, we obtain
\begin{align}\label{eq Yy Yy' 03-1}
\mathbb{E}\Big(
    \sup_{t\in[0,T]}|Y_{t}(y)-Y_{t}(y')\vert^p\Big)
\leq
C_{p,T} \vert y-y'\vert^p,
\end{align}
which gives  \eqref{eq Yy Yy' 03}.
\end{proof}

\begin{proof}[Proof of \eqref{eq zhai sup 01}]

Let us observe that \cite[Theorem 3.4]{Kunita_2004}, see also Page 445 of \cite{Pr12},  implies  that  the following equation for $DY_{t}(y)$ and $DY^n_{t}(y)$ holds,
\begin{align}\label{eq DY 01}
   DY_{t}(y)=I&+ \int_{0}^{t}D\tilde{b}(Y_{s}(y))DY_{s}(y)\, ds+ \int_{0}^{t}\lint_{\R^{d}}Dh(Y_{s-}(y),z)DY_{s-}(y)\tilde{\mu}(ds,dz),\quad
\end{align}
and
\begin{align}\label{eq DYn 01}
   DY^n_{t}(y)=I&+ \int_{0}^{t}D\tilde{b}^n(Y^n_{s}(y))DY^n_{s}(y)\, ds
    + \int_{0}^{t}\lint_{\R^{d}}Dh^n(Y^n_{s-}(y),z)DY^n_{s-}(y)\tilde{\mu}(ds,dz).\quad\quad
\end{align}
It is not difficulty to prove that, by \eqref{eq tianjian 000-1}, \eqref{eq SSSS 01}, \eqref{eq 4.44} and \eqref{eq Db 0 pro},
\begin{align}\label{eq zhai sup 01-1}
  & \sup_{n\geq1}\sup_{y\in\R^{d}}\E\sup_{0\leq t\leq T}\Vert DY^{n}_{t}(y)\Vert^{p}+\sup_{y\in\R^{d}}\E\sup_{0\leq t\leq T}\Vert DY_{t}(y)\Vert^{p}<\infty.
\end{align}
which implies \eqref{eq zhai sup 01}.
\end{proof}

\begin{proof}[Proof of \eqref{eq DY DYn lim}]
 We will prove that (recall that $K \subset \R^d$ is any compact set)
\begin{align} \label{eq DY DYn lim-1}
  &  \lim_{n\to\infty}\sup_{y\in K}\E\sup_{0\leq t\leq T}\Vert DY^{n}_{t}(y)-DY_{t}(y)\|^{p}=0.
\end{align}
By \eqref{eq DY 01} and \eqref{eq DYn 01}, 
\begin{align}\label{eq 20220118 00}
\Vert DY^{n}_{t}(y)-DY_{t}(y)\Vert \leq&
  \int_{0}^{t}\Vert D\tilde{b}^n(Y^n_{s}(y))DY^n_{s}(y)-D\tilde{b}(Y_{s}(y))DY_{s}(y)\Vert\, ds\nonumber\\
    &+ \Big\Vert\int_{0}^{t}\lint_{\R^{d}}Dh^n(Y^n_{s-}(y),z)DY^n_{s-}(y)-Dh(Y_{s-}(y),z)DY_{s-}(y)\tilde{\mu}(ds,dz)\Big\Vert\nonumber\\
\leq& 
  \int_{0}^{t}\Vert D\tilde{b}^n(Y^n_{s}(y))DY^n_{s}(y)-D\tilde{b}^n(Y_{s}(y))DY^n_{s}(y)\Vert\, ds\nonumber\\
   &+
  \int_{0}^{t}\Vert D\tilde{b}^n(Y_{s}(y))DY^n_{s}(y)-D\tilde{b}(Y_{s}(y))DY_{s}^n(y)\Vert\, ds\nonumber\\
    &+
    \int_{0}^{t}\Vert D\tilde{b}(Y_{s}(y))DY_{s}^n(y)-D\tilde{b}(Y_{s}(y))DY_{s}(y)\Vert\, ds\nonumber
    \\
    & + \Big\Vert\int_{0}^{t}\lint_{\R^{d}}Dh^n(Y^n_{s-}(y),z)DY^n_{s-}(y)-Dh^n(Y_{s-}(y),z)DY^n_{s-}(y)\tilde{\mu}(ds,dz)\Big\Vert\nonumber\\
    &+ \Big\Vert\int_{0}^{t}\lint_{\R^{d}}Dh^n(Y_{s-}(y),z)DY^n_{s-}(y)-Dh(Y_{s-}(y),z)DY^n_{s-}(y)\tilde{\mu}(ds,dz)\Big\Vert\nonumber\\
    &+ \Big\Vert\int_{0}^{t}\lint_{\R^{d}}Dh(Y_{s-}(y),z)DY^n_{s-}(y)-Dh(Y_{s-}(y),z)DY_{s-}(y)\tilde{\mu}(ds,dz)\Big\Vert\nonumber 
    \\
=:&\sum_{i=1}^6J_i(t,y,n).
\end{align}
Before proceeding, we note that
 by  \eqref{eq tianjian 000-1}, \eqref{eq Dpsin gamma}, \eqref{eq stab 1} and Page 444 Lines 9-24 of \cite{Pr12},
\begin{align}\label{new1}
\sup_{n \ge 1}[D\tilde{b}^n]_{\gamma}
\leq \sup_{n \ge 1} \, 
\varpi{}_{,M} \Big(\Vert Du^n\Vert_0[ D(\psi^n)^{-1}]_\gamma+[Du^n]_\gamma\Vert D(\psi^n)^{-1}\Vert_0^{1+\gamma}\Big)
\leq C<\infty,
\end{align}
with $\gamma = \alpha + \beta -1.$   
Similarly, we have
\begin{align}\label{eq b gamma pro}
 [D\tilde{b}]_{\gamma}
\leq
\varpi{}_{,M} \Big(\Vert Du\Vert_0[ D\psi^{-1}]_\gamma+[Du]_\gamma\Vert D\psi^{-1}\Vert_0^{1+\gamma}\Big)
\leq C<\infty.
\end{align}
For $J_6(t,y,n)$, using the Burkholder inequality \eqref{eqn-Burkholder inequality}, we have for any $0 \le S \le T$, $y \in \R^d$,
\begin{align}\label{eq J5 DYn DY 000}
& \E\sup_{t\in[0,S]}\vert J_6(t,y,n)\vert^p\\
&\leq
   C_p  \E\Big(
         \int_{0}^{S}\lint_{\R^{d}}\Vert Dh(Y_{s}(y),z)\Vert^2\Vert DY^n_{s}(y)-DY_{s}(y)\Vert^2\nu(dz)ds
      \Big)^{p/2}\nonumber\\
      &\quad +
      C_p \E\Big(
         \int_{0}^{S}\lint_{\R^{d}}\Vert Dh(Y_{s}(y),z)\Vert^p\Vert DY^n_{s}(y)-DY_{s}(y)\Vert^p\nu(dz)ds
      \Big)\nonumber\\
&\leq
   C_p\E\Big(
         \int_{0}^{S} \Vert DY^n_{s}(y)-DY_{s}(y)\Vert^2\lint_{\R^{d}}\Vert Dh(\cdot,z)\Vert^2_0\nu(dz)ds
      \Big)^{p/2}\nonumber\\
      &\quad+
      C_p\E\Big(
         \int_{0}^{S} \Vert DY^n_{s}(y)-DY_{s}(y)\Vert^p\lint_{\R^{d}}\Vert Dh(\cdot,z)\Vert^p_0\nu(dz)ds
      \Big)\nonumber\\
&\leq
      C_{p,T}\E\Big(
         \int_{0}^{S} \Vert DY^n_{s}(y)-DY_{s}(y)\Vert^pds
      \Big)
      \Big[\Big(\lint_{\R^{d}}\Vert Dh(\cdot,z)\Vert^2_0\nu(dz)\Big)^{p/2}+
         \lint_{\R^{d}}\Vert Dh(\cdot,z)\Vert^p_0\nu(dz)
      \Big]\nonumber\\
&\leq
      C_{p,T}\E\Big(
         \int_{0}^{S} \Vert DY^n_{s}(y)-DY_{s}(y)\Vert^pds
      \Big).
\end{align}
For the last inequality, we have used the fact that
 \eqref{eq 4.44} implies that
\begin{align}\label{eq Dh pro}
\lint_{\R^{d}}\Vert Dh(\cdot,z)\Vert^2_0\nu(dz)+\lint_{\R^{d}}\Vert Dh(\cdot,z)\Vert^p_0\nu(dz)<\infty.
\end{align}
The term $J_3(t,y,n)$ can be easily estimated  using the bound \eqref{eq Db 0 pro}. We get a similar estimate:
\begin{equation}
  \E
  \sup_{t\in[0,S]}\vert J_3(t,y,n)\vert^p   \le M_{p,T}\E
         \int_{0}^{S} \Vert DY^n_{s}(y)-DY_{s}(y)\Vert^pds. 
        \end{equation} 
In order to estimate $J_1(t,y,n)$, we use  \eqref{new1}. We find 
\begin{align}
  \E\sup_{t\in[0,T]}\vert J_1(t,y,n)\vert^p &\le  \E \Big(\int_{0}^{T}\Vert D\tilde{b}^n(Y^n_{s}(y))-D\tilde{b}^n(Y_{s}(y))\Vert \Vert DY^n_{s}(y)\Vert 
  \, ds \Big)^p  \\ 
  &\le C\E
\Big(\int_{0}^{T} |Y^n_{s}(y))- Y_{s}(y)|^{\gamma}  \Vert DY^n_{s}(y)\Vert 
  \, ds \Big)^p
\\
&\le C_T
\Big(\E 
   \sup_{s\in[0,T]}\Vert DY_{s}^n(y)\Vert^{2p}
   \Big)^{\frac{1}{2}}
\Big(\E
   \sup_{s\in[0,T]}|Y^n_{s}(y)-Y_{s}(y)\vert^{2\gamma p}\Big)^{\frac{1}{2}}.
\end{align}
 Hence, by \eqref{eq Part 01 Z} and \eqref{eq zhai sup 01},
$$
 \sup_{y \in \mathbb{R}^d}  \E\sup_{t\in[0,T]}\vert J_1(t,y,n)\vert^p \le M_{p,T} \sup_{y \in \mathbb{R}^d} \Big(\E
   \sup_{s\in[0,T]}|Y^n_{s}(y)-Y_{s}(y)\vert^{2\gamma p}\Big)^{\frac{1}{2}} \to 0,\quad\text{as }n \to \infty.
$$
\indent Let us estimate $J_{4}(t,y,n)$.  For any $y\in\mathbb{R}^d$,
\begin{align}\label{eq J4}
   &\E\sup_{s\in[0,T]}|J_4(s,y,n)\vert^p\\
&\leq
  C_p\E\Big(\int_{0}^{T}\lint_{\R^{d}}\Vert Dh^n(Y^n_{s}(y),z)-Dh^n(Y_{s}(y),z)\Vert^2\Vert DY^n_{s}(y)\Vert^2\nu(dz)ds\Big)^{p/2}\\
   &\quad +
   C_p\E\Big(\int_{0}^{T}\lint_{\R^{d}}\Vert Dh^n(Y^n_{s}(y),z)-Dh^n(Y_{s}(y),z)\Vert^p\Vert DY^n_{s}(y)\Vert^p\nu(dz)ds\Big)\nonumber\\
&\leq
    C_p\Big(\E\Big(\sup_{s\in[0,T]}\Vert DY^n_{s}(y)\Vert^{2p}\Big)\Big)^{\frac{1}{2}}
       \Big(\E\Big(\int_{0}^{T}\lint_{\R^{d}}\Vert Dh^n(Y^n_{s}(y),z)-Dh^n(Y_{s}(y),z)\Vert^2\nu(dz)ds\Big)^{p}\Big)^{\frac{1}{2}}\nonumber\\
   &\quad+
   C_p\Big(\E\Big(\sup_{s\in[0,T]}\Vert DY^n_{s}(y)\Vert^{2p}\Big)\Big)^{\frac{1}{2}}
       \Big(\E\Big(\int_{0}^{T}\lint_{\R^{d}}\Vert Dh^n(Y^n_{s}(y),z)-Dh^n(Y_{s}(y),z)\Vert^p\nu(dz)ds\Big)^{2}\Big)^{\frac{1}{2}}\nonumber\\
&\leq
    C_p
       \Big(\E\Big(\int_{0}^{T}\lint_{\R^{d}}\Vert Dh^n(Y^n_{s}(y),z)-Dh^n(Y_{s}(y),z)\Vert^2\nu(dz)ds\Big)^{p}\Big)^{\frac{1}{2}}\nonumber\\
   &\quad+
   C_p
       \Big(\E\Big(\int_{0}^{T}\lint_{\R^{d}}\Vert Dh^n(Y^n_{s}(y),z)-Dh^n(Y_{s}(y),z)\Vert^p\nu(dz)ds\Big)^{2}\Big)^{\frac{1}{2}}.\nonumber
\end{align} 
For the last inequality, we have used \eqref{eq zhai sup 01}.
Note that
\begin{equation}
Dh^n(x,z)=Du^n((\psi^n)^{-1}(x)+z)D(\psi^n)^{-1}(x)-Du^n((\psi^n)^{-1}(x))D(\psi^n)^{-1}(x).
\end{equation}
Arguing as in (4.15) in \cite{Pr12}, we obtain  that
there exists some $\delta>0$ and $\mathcal{K}(z)$ with $\int_{\R^{d}\setminus\{0\}}\mathcal{K}^p(z)\,\nu(dz)<\infty$ for any  $p \ge 2$ such that, for any $n \ge 1$, 
\begin{equation}
[Dh^n(\cdot,z)]_\delta\leq \mathcal{K}(z),\ \ \ \forall z\in\R^{d}\setminus\{0\}.
\end{equation}
Hence, for any $y\in\mathbb{R}^d$,
\begin{align}\label{eq J42}
   \E\sup_{s\in[0,T]}|J_4(s,y,n)\vert^p
   \le& 
  C_p
       \Big(\E\Big(\int_{0}^{T}\lint_{\R^{d}} |Y^n_{s}(y)-Y_{s}(y) |^{2 \delta} \, \mathcal{K}^2(z)\nu(dz)ds\Big)^{p}\Big)^{\frac{1}{2}}\nonumber\\
   &+
   C_p
       \Big(\E\Big(\int_{0}^{T}\lint_{\R^{d}} |Y^n_{s}(y)-Y_{s}(y) |^{p \delta} \, \mathcal{K}^p(z)\nu(dz)ds\Big)^{2}\Big)^{\frac{1}{2}}\\ \le& 
  C_{p,T}
       \Big(\E
   \sup_{s\in[0,T]}|Y^n_{s}(y)-Y_{s}(y)\vert^{2p\delta }\Big)^{\frac{1}{2}}\nonumber.
\end{align} 
Using \eqref{eq Part 01 Z} it follows easily that 
\begin{align}\label{eq J422}
  \sup_{y \in \mathbb{R}^d} \E\sup_{s\in[0,T]}|J_4(s,y,n)\vert^p \le
 C_{p,T} \Big(\sup_{y \in \R^d} \E
   \sup_{s\in[0,T]}|Y^n_{s}(y)-Y_{s}(y)\vert^{2p\delta }\Big)^{\frac{1}{2}} \to 0
 \nonumber
\end{align} 
 as $n \to \infty$.  

To estimate the other terms, we consider the compact set $K$. Let us estimate $J_2(t,y,n)$.  For any $y\in\mathbb{R}^d$,
\begin{align}
  \E\sup_{t\in[0,T]}\vert J_2(t,y,n)\vert^p &\le  \E \Big(\int_{0}^{T}\Vert D\tilde{b}^n(Y_{s}(y))-D\tilde{b}(Y_{s}(y))\Vert \Vert DY^n_{s}(y)\Vert 
  \, ds \Big)^p  
\\
&\le C_T
\Big(\E 
   \sup_{s\in[0,T]}\Vert DY_{s}^n(y)\Vert^{2p}
   \Big)^{\frac{1}{2}}
\Big(\E
   \sup_{s\in[0,T]}\Vert D\tilde{b}^n(Y_{s}(y))-D\tilde{b}(Y_{s}(y))\Vert^{2p}\Big)^{\frac{1}{2}}\\
&\le C_T\Big(\E
   \sup_{s\in[0,T]}\Vert D\tilde{b}^n(Y_{s}(y))-D\tilde{b}(Y_{s}(y))\Vert^{2p}\Big)^{\frac{1}{2}}.
\end{align}
 For the last inequality, we have used \eqref{eq zhai sup 01}. Note that we  have
$$
\sup_{y \in K}\E
   \sup_{s\in[0,T]}\Vert D\tilde{b}^n(Y_{s}(y))-D\tilde{b}(Y_{s}(y))\Vert^{2p} \le \E
   \sup_{s\in[0,T]} \sup_{y \in K}\Vert D\tilde{b}^n(Y_{s}(y))-D\tilde{b}(Y_{s}(y))\Vert^{ 2p}.
$$
Since we know that $\sup_{n\geq1}\Vert D\tilde{b}^n \Vert_0 + \Vert D\tilde{b} \Vert_0$ is bounded, see  \eqref{eq Db 0 pro}, we can prove that
$$
  \E
   \sup_{s\in[0,T]} \sup_{y \in K}\Vert D\tilde{b}^n(Y_{s}(y))-D\tilde{b}(Y_{s}(y))\Vert^{ 2p} \to 0
$$
as $n \to \infty$ by the dominated convergence theorem. To this purpose, we shall apply the fact that, for $\mathbb{P}$-a.s $\omega$,  
\begin{equation} \label{ma13}
\sup_{y \in K} \sup_{t \in [0,T]}|Y_{t}(y)(\omega)| = M(\omega)< \infty,
\end{equation}
and
$$
\lim_{n \to \infty} \sup_{|r| \le M(\omega)} \Vert D\tilde{b}^n(r)-D\tilde{b}(r)\Vert^{ }
=0,
$$
see  \eqref{eq b Db lim}. It remains to prove \eqref{ma13}.
 We note that that by \eqref{eq phi-y} we have, $\mathbb{P}$-a.s., for any $t \in [0,T]$, $y \in \R^d$,
\begin{align}\label{ma0}
 \psi(\phi_{t}(\psi^{-1}(y)))=Y_{t}(y).
 \end{align}
Recall that $\phi_{t}(x)$ solves the SDE \eqref{d566} with $s=0$. Since $b$ is globally bounded, we get that for $\mathbb{P}$-a.s. $\omega$, for any compact set $F \subset \R^d$, there exists $M_{F,T}(\omega)\in(0,\infty)$ such that
$$
\sup_{x \in F} \sup_{t \in [0,T]} |\phi_{t}(x)(\omega)| = M_{F,T}(\omega).
$$
Using also the continuity of $\psi$ and $\psi^{-1}$ we obtain \eqref{ma13}.   Finally we get
 \begin{equation}
\sup_{y \in K} \E\sup_{t\in[0,T]}\vert J_2(t,y,n)\vert^p \le C_T\Big(\E
   \sup_{s\in[0,T]}\sup_{y\in K}\Vert D\tilde{b}^n(Y_{s}(y))-D\tilde{b}(Y_{s}(y))\Vert^{2p}\Big)^{\frac{1}{2}}
 \to 0     
 \end{equation}
 as $n \to \infty.$

We still have to deal with $J_5(t,y,n)$. By \eqref{eq zhai sup 01}, we have, for any $y \in K,$ 
\begin{align}\label{eq J5}
   \E\sup_{s\in[0,T]}|J_5(s,y,n)\vert^p
\leq&
  C_p\E\Big(\int_{0}^{T}\lint_{\R^{d}}\Vert Dh^n(Y_{s}(y),z)-Dh(Y_{s}(y),z)\Vert^2\Vert DY^n_{s}(y)\Vert^2\nu(dz)ds\Big)^{p/2}\\
   &+
   C_p\E\Big(\int_{0}^{T}\lint_{\R^{d}}\Vert Dh^n(Y_{s}(y),z)-Dh(Y_{s}(y),z)\Vert^p\Vert DY^n_{s}(y)\Vert^p\nu(dz)ds\Big)\nonumber\\
\leq&
    C_p \Big(\E\Big(\int_{0}^{T}\lint_{\R^{d}}\sup_{y \in K}\Vert Dh^n(Y_{s}(y),z)-Dh(Y_{s}(y),z)\Vert^2\nu(dz)ds\Big)^{p}\Big)^{\frac{1}{2}}\nonumber\\
   &+ C_p 
    \Big(\E\Big(\int_{0}^{T}\lint_{\R^{d}}\sup_{y \in K}\Vert Dh^n(Y_{s}(y),z)-Dh(Y_{s}(y),z)\Vert^p\nu(dz)ds\Big)^{2}\Big)^{\frac{1}{2}}.\nonumber
\end{align} 
In order to get  $\lim_{n \to \infty }\sup_{y \in K} \E\sup_{s\in[0,T]}|J_5(s,y,n)\vert^p =0$ we need to apply the dominated convergence theorem in the previous formula. To this purpose, we first note that by \eqref{Eq hn h 01} for any compact set $K' \subset \R^d$ we have
\begin{align}\label{Eq hn h 01 V2 2025} 
   \lim_{n\to \infty} \sup_{x \in K'}\| D{h}^n(x,z)-D{h}(x,z)\|=0,\;\; z \not = 0.
\end{align}
And by \eqref{eq tianjian 000-1}, \eqref{eq SSSS 01} and \eqref{eq 4.44}, there exists $\mathcal{L}(z)$ such that
$$
\sup_{n\geq1}\sup_{x\in\mathbb{R}^d}\Vert Dh^n(x,z) \Vert + 
\sup_{x\in\mathbb{R}^d}\Vert Dh(x,z) \Vert
  \leq \mathcal{L}(z),\ \ \ \forall z\in\R^{d}\setminus\{0\},
$$
with $\int_{\R^{d}\setminus\{0\}}\mathcal{L}^p(z)\,\nu(dz)<\infty$ for any $p \ge 2$. Therefore, we get, $\mathbb{P}$-a.s., $n \ge 1,$ $z \not =0,$ $s \in [0,T]$, $y \in \R^d,$
$$
\Vert Dh^n(Y_{s}(y),z)-Dh(Y_{s}(y),z)\Vert^2
\le C\mathcal{L}^2(z).
$$
Recall \eqref{ma13} that for $\mathbb{P}$-a.s. $\omega$,
\begin{equation} 
\sup_{y \in K} \sup_{t \in [0,T]}|Y_{t}(y)(\omega)| = M(\omega) < \infty.
\end{equation}
 Hence, for $\mathbb{P}$-a.s. $\omega$, $z \not =0,$ $s \in [0,T]$,  we get
$$
\sup_{y \in K}\Vert Dh^n(Y_{s}(y)(\omega),z)-Dh(Y_{s}(\omega)(y),z)\Vert^2 \le \sup_{x \in B_{M(\omega)}}\Vert Dh^n(x,z)-Dh(Y_{s}(x,z)\Vert^2 \to 0
$$
as $n \to \infty$. We can apply the dominated convergence theorem and get
$$
\lim_{n \to \infty} \E\Big(\int_{0}^{T}\lint_{\R^{d}} \sup_{y \in K}\Vert Dh^n(Y_{s}(y),z)-Dh(Y_{s}(y),z)\Vert^2\nu(dz)ds\Big)^{p} =0.
$$
 Similarly, we get 
$$
\lim_{n \to \infty} \E\Big(\int_{0}^{T}\lint_{\R^{d}}\sup_{y \in K}\Vert Dh^n(Y_{s}(y),z)-Dh(Y_{s}(y),z)\Vert^p\nu(dz)ds\Big)^{2}  =0.
$$ 
We arrive at
\begin{equation} \label{c55}
    \lim_{n \to \infty }\sup_{y \in K} \E\sup_{s\in[0,T]}|J_5(s,y,n)\vert^p =0.
\end{equation}
 Finally let us come back to \eqref{eq 20220118 00}. 
By the previous estimates on $J_i(t,y,n)$ and the Gronwall lemma,   we obtain assertion \eqref{eq DY DYn lim}. 
   \end{proof}

\begin{proof}[Now let us prove \eqref{eq DYy DYy' 03}]
By \eqref{eq DY 01}, we have, for any $y,y'\in\mathbb{R}^d$,
\begin{align}\label{eq DYy DYy' 00}
     &\Vert DY_{t}(y)-DY_{t}(y')\Vert\\
     &\leq
     \Vert\int_{0}^{t}D\tilde{b}(Y_{s}(y))DY_{s}(y)-D\tilde{b}(Y_{s}(y'))DY_{s}(y')\,d s\Vert\nonumber\\
     &\quad +
    \big \Vert\int_{0}^{t}\lint_{\R^{d}}Dh(Y_{s-}(y),z)DY_{s-}(y)-Dh(Y_{s-}(y'),z)DY_{s-}(y')\tilde{\mu}(d s,dz)\big\Vert\nonumber\\
&\leq
     \big\Vert\int_{0}^{t}D\tilde{b}(Y_{s}(y))DY_{s}(y)-D\tilde{b}(Y_{s}(y'))DY_{s}(y)\,d s\big\Vert\nonumber\\
     &\quad +
     \big\Vert\int_{0}^{t}D\tilde{b}(Y_{s}(y'))DY_{s}(y)-D\tilde{b}(Y_{s}(y'))DY_{s}(y')\,d s\big\Vert\nonumber\\
     &\quad +
     \big\Vert\int_{0}^{t}\lint_{\R^{d}}Dh(Y_{s-}(y),z)DY_{s-}(y)-Dh(Y_{s-}(y'),z)DY_{s-}(y)\tilde{\mu}(d s,dz)\big\Vert\nonumber\\
     &\quad +
     \big\Vert\int_{0}^{t}\lint_{\R^{d}}Dh(Y_{s-}(y'),z)DY_{s-}(y)-Dh(Y_{s-}(y'),z)DY_{s-}(y')\tilde{\mu}(d s,dz)\big\Vert\nonumber\\
&\leq
     T[D\tilde{b}]_\gamma\sup_{s\in[0,T]}\Vert DY_{s}(y)\Vert \sup_{s\in[0,T]}|Y_{s}(y)-Y_{s}(y')\vert^\gamma+
     \Vert D\tilde{b}\Vert_0\int_{0}^{t}\Vert DY_{s}(y)-DY_{s}(y')\Vert\,d s\nonumber\\
     &\quad +
     \big\Vert\int_{0}^{t}\lint_{\R^{d}}Dh(Y_{s-}(y),z)DY_{s-}(y)-Dh(Y_{s-}(y'),z)DY_{s-}(y)\tilde{\mu}(d s,dz)\big\Vert\nonumber\\
     &\quad  +
     \big\Vert\int_{0}^{t}\lint_{\R^{d}}Dh(Y_{s-}(y'),z)DY_{s-}(y)-Dh(Y_{s-}(y'),z)DY_{s-}(y')\tilde{\mu}(d s,dz)\big\Vert.
\end{align}
A similar argument to that used for the proof of \eqref{eq J5 DYn DY 000} and \eqref{eq J42} shows that
\begin{align}\label{eq DYy DYy' 01}
&\E\sup_{t\in[0,T]} \big\Vert\int_{0}^{t}\lint_{\R^{d}}Dh(Y_{s-}(y'),z)DY_{s-}(y)-Dh(Y_{s-}(y'),z)DY_{s-}(y')\tilde{\mu}(d s,dz)\big\Vert^p\\
&\leq
C
 \E\Big(
         \int_{0}^{T}\Vert DY_{s}(y)-DY_{s}(y')\Vert^pds
      \Big),
\end{align}
where \eqref{eq Dh pro} has been used, and there exists $\delta>0$ such that
\begin{align}
    \label{eq DYy DYy' 02}
&\E\sup_{t\in[0,T]} \big\Vert\int_{0}^{t}\lint_{\R^{d}}Dh(Y_{s-}(y),z)DY_{s-}(y)-Dh(Y_{s-}(y'),z)DY_{s-}(y)\tilde{\mu}(d s,dz)\big\Vert^p\nonumber\\
&\leq
C_T
\Big(\E
   \sup_{s\in[0,T]}|Y_{s}(y)-Y_{s}(y')\vert^{2\delta p}\Big)^{\frac{1}{2}}.
\end{align}

\indent
Combining \eqref{eq zhai sup 01}, \eqref{eq Db 0 pro}, \eqref{eq b gamma pro}, \eqref{eq Yy Yy' 03}, and \eqref{eq DYy DYy' 00}, \eqref{eq DYy DYy' 01}, \eqref{eq DYy DYy' 02}, and choosing $q>1$ such that $\delta pq>2$, the Gronwall inequality and the $\rm H\ddot{o}lder$ inequality show that
\begin{align}\label{eq DYy DYy' 03-1}
\E\sup_{t\in[0,T]}\Vert DY_{t}(y)-DY_{t}(y')\Vert^p
\leq
  C\Big(\vert y-y'\vert^{p\gamma}+\vert y-y'\vert^{\delta p}\Big).
\end{align}
Here the constant $C$ is independent on $y$ and $y'$.\\
\indent The proof of \eqref{eq DYy DYy' 03} is complete.

\end{proof}

The proof of Theorem \ref{thm-stability} is complete.
\end{proof}

If $\eta:\mathbb{R}^d\to \mathbb{R}^d$ is a $C^1$-diffeomorphism, we will denote by $J\eta(x)=\det[D\eta(x)]$ its Jacobian determinant.
\begin{lemma}
\label{lemma change} Assume ${\divv\,}b(\cdot)=0$ in the sense of distributions. Then the
stochastic flow $\phi$ is $\mathbb{P}$-a.s. measure-preserving, i.e.,
$J\phi_{s,t}(x)=1$, for all $0\leq s\leq t\leq T$ and all
$x\in{\mathbb{R}}^{d}$, $\mathbb{P}$-a.s..
\end{lemma}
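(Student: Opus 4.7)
The plan is to reduce to the smooth case via mollification and then pass to the limit using the stability results of Section \ref{sec-stability}.

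First I would pick a sequence of smooth mollifiers $\rho_n$ and set $b^n := b \ast \rho_n \in C_{\mathrm{b}}^{\infty}(\mathbb{R}^d,\mathbb{R}^d)$. Since $\divv b = 0$ in the distributional sense, one has $\divv b^n = (\divv b) \ast \rho_n = 0$ identically. A standard property of mollification of H\"older functions yields $b^n \to b$ in $C_{\mathrm{b}}^{\beta'}(\mathbb{R}^d,\mathbb{R}^d)$ for every $\beta' \in (0,\beta)$; choose any such $\beta'$ still satisfying $\frac{\alpha}{2} + \beta' > 1$ (possible because $\frac{\alpha}{2} + \beta > 1$ by \eqref{eqn-beta+alpha half}). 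By Remark \ref{okk}, Theorem \ref{thm-stability} and hence Corollary \ref{cor-civuole} are applicable to this approximating sequence.

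Next, for each smooth and divergence-free $b^n$ the associated flow $\phi^n$ is a classical $C^1$-diffeomorphism, and the usual Liouville theorem (pathwise, since between jumps of $L$ the flow is a classical ODE flow and the jumps are mere translations of space, which have unit Jacobian) gives
\begin{equation}\label{eq:liouville}
 J\phi^n_{s,t}(x) = \exp\!\Big(\int_s^t \divv b^n(\phi^n_{s,r}(x))\,dr\Big) = 1,
 \qquad \mathbb{P}\text{-a.s.},\ 0\le s\le t\le T,\ x\in\mathbb{R}^d.
\end{equation}

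Then I pass to the limit. By Corollary \ref{cor-civuole} there is a subsequence (still denoted $\phi^n$) and a $\Leb\otimes\mathbb{P}$-full set $A\subset \mathbb{R}^d\times\Omega$ such that, for every $(x,\omega)\in A$, $D\phi^n_{0,t}(x,\omega)\to D\phi_{0,t}(x,\omega)$ uniformly in $t\in[0,T]$. Since the determinant is a continuous function of its matrix entries, \eqref{eq:liouville} with $s=0$ forces $J\phi_{0,t}(x,\omega) = 1$ for every $t\in[0,T]$ and every $(x,\omega)\in A$. On the $\mathbb{P}$-full event $\Omega''$ of Theorem \ref{ww1}, the map $x\mapsto D\phi_{0,t}(x)$ is continuous uniformly in $t\in[0,T]$, so by Fubini we can enlarge the $x$-variable to all of $\mathbb{R}^d$ and conclude that there exists a $\mathbb{P}$-full event on which
\begin{equation}
 J\phi_{0,t}(x) = 1, \qquad \forall\,(t,x)\in[0,T]\times \mathbb{R}^d.
\end{equation}

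Finally, to handle general $s\le t$, I would use the cocycle relation $\phi_{s,t} = \phi_{0,t}\circ \phi_{0,s}^{-1}$ from \eqref{xi} on the $\mathbb{P}$-full event $\Omega''$ provided by Theorem \ref{ww1}. Differentiating, $D\phi_{s,t}(x) = D\phi_{0,t}(\phi_{0,s}^{-1}(x))\cdot D\phi_{0,s}^{-1}(x)$, and combining this with the identity $D\phi_{0,s}^{-1}(x) = [D\phi_{0,s}(\phi_{0,s}^{-1}(x))]^{-1}$ gives
\begin{equation}
 J\phi_{s,t}(x) = \frac{J\phi_{0,t}(\phi_{0,s}^{-1}(x))}{J\phi_{0,s}(\phi_{0,s}^{-1}(x))} = \frac{1}{1} = 1.
\end{equation}
The main obstacle I anticipate is the simultaneous handling of the approximation for \emph{all} $(s,t,x)$ on a single $\mathbb{P}$-full event; this is precisely where the universal full set constructed in Theorem \ref{ww1}, combined with the stability Corollary \ref{cor-civuole}, is essential. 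The rest of the argument, including \eqref{eq:liouville} for the smooth approximation, is classical.
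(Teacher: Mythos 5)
Your proof is correct and follows essentially the same route as the paper: mollify $b$ into smooth divergence-free fields, apply Liouville to get $J\phi^{n}\equiv 1$, and pass to the limit using the stability results of Section \ref{sec-stability} together with the regularity of the flow from Theorem \ref{ww1}. The only (harmless) deviation is that you settle the case $s=0$ via Corollary \ref{cor-civuole} and then deduce general $0\le s\le t$ from the cocycle/chain-rule identity $J\phi_{s,t}(x)=J\phi_{0,t}(\phi_{0,s}^{-1}(x))\,/\,J\phi_{0,s}(\phi_{0,s}^{-1}(x))$, whereas the paper applies \eqref{stability2} at each fixed $(s,x)$ and then invokes Theorem \ref{ww1}; your use of $C_{\mathrm{b}}^{\beta'}$-convergence together with Remark \ref{okk} is likewise a sound way to handle the mollification step.
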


\begin{proof}This proof is adapted from the proof of \cite[Lemma 9]{FGP_2010-Inventiones} and \cite{FGP-2008}.
Let $b_{n}$ be $C_{\mathrm{b}}^{\infty}(\mathbb{R}^d,\mathbb{R}^d)$ vector fields that converges to
$b$ in $C_{\mathrm{b}}^{\beta}(\mathbb{R}^d,\mathbb{R}^d)$ and such that
${\divv\,}b_{n}=0$. The functions $b_{n}$ can be
constructed as in (6) of \cite{FGP_2010-Inventiones}.  Let $\phi^{n}$ be the associated
smooth diffeomorphism. Applying \cite[Theorem 3.4]{Kunita_2004}  and the
well known Liouville theorem, we get that the diffeomorphism
$\phi^{n}$ preserves the Lebsegue measure since $b_{n}$ is a
divergenceless vector field.

Then $\det [D\phi_{s,t}^{n}(x)]=1$ for all $0\leq s<t\leq T$ and
all $x\in{\mathbb{R}}^{d}$, $\mathbb{P}$-a.s.. Fix $x\in{\mathbb{R}}^{d}$ and $s \in[0,T]$. By
\eqref{stability2}, there exists a subsequence (possibly depending on
$x,s$ and still denoted by $D\phi_{s,t}^{n}$) such that $\mathbb{P}$-a.s.
\[
\sup_{s\leq t\leq T}\Vert D\phi_{s,t}^{n}(x)-D\phi_{s,t}(x)\Vert
^{2}\to0,\;\;\mbox{as}\;\;n\to\infty.
\]
We find that $J\phi_{s,t}(x)=1$, for every $t\in[s,T]$, $\mathbb{P}$-a.s.. Hence, by Theorem \ref{ww1}, $\phi$ is a  measure-preserving diffeomorphism, $\mathbb{P}$-a.s..
\end{proof}

\section {Definition and existence  of  weak solution to the stochastic transport equation}\label{sec-existence}

  Let us fix $T>0.$ Before  introducing  the   definition of solution $u$ of our transport equation, we point out two differences 
with respect to the  definition of solution given in  \cite[Definition 12]{FGP_2010-Inventiones}.

These differences are due to the fact that here we need to consider stochastic integrals with respect to compensated Poisson random measures of the following form
 \begin{equation}\label{s55}
 \int_0^t\lint_{B} \Bigl(\int_{\mathbb{R}^d}u(s-,x)\,[\theta(x+z)-\theta(x)]\, dx\Bigr)\tilde{\prm}(ds,dz),
 \end{equation}
  where $ \theta\in C_c^{\infty}(\mathbb{R}^d)$; see the next formula  \eqref{eqn-transport-Marcus}.

 Let us  clarify these  two differences.

\begin{differences}\label{differences-01}
\begin{trivlist}
\item[(i)] Instead of requiring, as in \cite{FGP_2010-Inventiones}, that  $u \in L^{\infty}([0,T]\times\mathbb{R}^d\times \Omega)$,
where $L^{\infty}([0,T]\times\mathbb{R}^d\times \Omega)$ is the set of equivalence classes of { all $ dt\otimes dx\otimes \mathbb{P}(d\omega)$ }essentially bounded functions, we only require that 
 $$
 u :   [0,T] \times \mathbb{R}^d \times \Omega \to \mathbb{R}
 $$
  is a $ {\mathcal B}([0,T]) \otimes {\mathcal B}(\mathbb{R}^d)\otimes {\call F}/\mathcal{B}(\mathbb{R})$-measurable function which is essentially bounded in the following sense (this is slightly stronger than requiring the  usual essential boundedness). 
  We are assuming that there exists $M>0 $ such
  \begin{align}\label{eq-esb0}
    \sup_{t\in[0,T]}\| u( t, \cdot,\omega) \|_{L^\infty(\mathbb{R}^d)} \leq M,\quad\mathbb{P}\text{-a.s.}
    \end{align}
   This means that there exists $M>0$ and 
a $\mathbb{P}$-full set $\Omega^\prime \subset \Omega$ such that for any $t \in [0,T]$, $\omega \in \Omega^\prime,$
\begin{align}\label{eq-esb03}
|u(t , x,\omega)| \le M, \;\;  \forall x \in A_{t, \omega},
\end{align}
where $A_{t, \omega}$ is a Borel set of $\mathbb{R}^d$ such that $\Leb( A_{t, \omega}^c)=0$.

\item[(ii)] We  require that,
for every test function $\theta\in C_c^{\infty}(\mathbb{R}^d)$,  the process $u_{\cdot}(\theta): \Omega \times [0,T] \to \mathbb{R}$ defined by 
 \begin{equation}\label{utt}
u_{t}(\theta)=\int_{\mathbb{R}^d}\theta(x)u(t,x)\,dx
\end{equation}
 is  $\mathbb{F}$-adapted and it has c\`{a}dl\`{a}g paths,  ${\mathbb P}$-a.s. Recall that $\mathbb{F}:=(\mathscr{F}_t)_{t\geq 0}$.

 This is different from  \cite[Definition 12]{FGP_2010-Inventiones} where $u_{\cdot}(\theta)$
  is an equivalent class in $L^{\infty}(\Omega\times[0,T])$ which
  has an $\mathbb{F}$-adapted c\`{a}dl\`{a}g modification (possibly depending on $\theta$).\\ 
  But note that the exceptional set in item (ii) above may depend on the test function $\theta$. 
\end{trivlist}

\end{differences}

\begin{remark}\label{rem-def-transport-weak-2}
    It should be not difficult to prove that item (ii) above implies the following stronger one with 
        $\theta \in L^{1}(\mathbb{R}^d)$. 
\begin{trivlist}
\item[(ii')] for every test function  $\theta\in L^{1}(\mathbb{R}^d)$, the process $\int_{\mathbb{R}^d}\theta(x)u(t,x)\,dx$, $t\in[0,T]$ is $\mathbb{F}$-adapted and with  c\`{a}dl\`{a}g paths, ${\mathbb P}$-a.s..
\end{trivlist}
This is quite standard, thanks to \eqref{eq-esb0}.  
 If $\theta\in L^{1}(\mathbb{R}^d)$ and  $(\theta_n)$ is a $C_c^{\infty}(\mathbb{R}^d)$-valued 
    sequence convergent to $\theta$ in $ L^{1}(\mathbb{R}^d)$, then  ${\mathbb P}$-a.s, 
    $u_{\cdot}(\theta_n) \to u_{\cdot}(\theta)$ uniformly on $[0,T]$. Therefore, the process $u_{\cdot}(\theta)$ is $\mathbb{F}$-adapted, see Corollary (i) to Theorem 1.14 in \cite{Rudin_RCA_1987} and
    it has c\`{a}dl\`{a}g paths  ${\mathbb P}$-a.s. 
Such a stronger version is needed because of, for instance, the second term in the second line of equality \eqref{eqn-transport-Marcus}.
    \end{remark}

 Beside items (i) and (ii)  stated above, we also use the following two lemmata. 

  \begin{lemma}\label{lem-transport-weak-2-(i)}
Assume that  a measurable and essentially bounded process
\begin{equation}\label{eqn-u1}
u:
[0,T]\times\mathbb{R}^d\times \Omega \to \mathbb{R},
\end{equation}
see  \eqref{eq-esb0},
satisfies the following condition, see  (ii) in Differences \ref{differences-01},   
for any  $\theta\in C_c^{\infty}(\mathbb{R}^d)$,  the process $u_{\cdot}(\theta):  [0,T]\times \Omega \to \mathbb{R}$
  is  $\mathbb{F}$-adapted and it has c\`{a}dl\`{a}g paths,  ${\mathbb P}$-a.s..

Then there exists a $\mathbb{P}$-full set $\Omega^\prime \subset \Omega$ such that for every $\omega \in \Omega^\prime$,
 for every test function  $\theta\in L^1(\mathbb{R}^d)$,
\begin{trivlist}
\item[(iii)] 
 the function
  \begin{equation}
\label{eqn-cadlag-001j}
u_\cdot(\theta)(\omega): [0,T]  \ni t \mapsto  \int_{\mathbb{R}^d}\theta(x)u( t,x,\omega)\,dx
\end{equation}
is c\`{a}dl\`{a}g.
\end{trivlist}
\end{lemma}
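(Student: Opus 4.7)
The plan is to upgrade the c\`adl\`ag property from the countable family of smooth test functions to \emph{all} $\theta \in L^1(\R^d)$, using a uniform approximation argument that exploits the essential boundedness hypothesis \eqref{eq-esb0}.

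First, I would fix a countable set $\{\theta_n\}_{n\geq 1}\subset C_c^{\infty}(\R^d)$ which is dense in $L^1(\R^d)$; such a set exists because $C_c^\infty(\R^d)$ is dense in $L^1(\R^d)$ and $L^1(\R^d)$ is separable. For each $n$, the hypothesis furnishes a $\mathbb{P}$-full event $\Omega_n\in\call{F}$ on which the path $t\mapsto u_t(\theta_n)(\omega)$ is c\`adl\`ag on $[0,T]$. Let $\Omega_0$ be the $\mathbb{P}$-full event from \eqref{eq-esb0} on which $\sup_{t\in[0,T]}\|u(t,\cdot,\omega)\|_{L^\infty(\R^d)}\leq M$, and set
\[
\Omega^\prime:=\Omega_0\cap\bigcap_{n\geq 1}\Omega_n.
\]
Clearly $\mathbb{P}(\Omega^\prime)=1$, and crucially $\Omega^\prime$ does not depend on any particular $\theta$.

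Next, fix $\omega\in\Omega^\prime$ and an arbitrary $\theta\in L^1(\R^d)$. Choose a subsequence $(\theta_{n_k})$ with $\theta_{n_k}\to \theta$ in $L^1(\R^d)$. The essential boundedness bound gives, uniformly in $t\in[0,T]$,
\[
\bigl|u_t(\theta)(\omega)-u_t(\theta_{n_k})(\omega)\bigr|
=\Bigl|\int_{\R^d}(\theta(x)-\theta_{n_k}(x))\,u(t,x,\omega)\,dx\Bigr|
\leq M\,\|\theta-\theta_{n_k}\|_{L^1(\R^d)},
\]
so the sequence of paths $(u_\cdot(\theta_{n_k})(\omega))_k$ converges to $u_\cdot(\theta)(\omega)$ \emph{uniformly} on $[0,T]$. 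Since each $u_\cdot(\theta_{n_k})(\omega)$ is c\`adl\`ag by the definition of $\Omega^\prime$, and the space $D([0,T];\R)$ equipped with the uniform norm is complete (uniform limits of c\`adl\`ag functions on a compact interval are c\`adl\`ag), we conclude that $u_\cdot(\theta)(\omega)$ is c\`adl\`ag. This yields assertion (iii).

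The argument is essentially routine once one notices the correct order of quantifiers: the only subtle point is to select the exceptional set \emph{before} quantifying over $\theta\in L^1(\R^d)$, which forces the use of a countable dense family together with the uniform $L^\infty$-bound in \eqref{eq-esb0} (rather than merely an $dt\otimes dx\otimes d\mathbb{P}$-essential boundedness). I do not anticipate any serious obstacle; the main care is to verify that the uniform-norm completeness of $D([0,T];\R)$ indeed preserves the c\`adl\`ag property, which is standard (see, e.g., Billingsley's treatment of the Skorokhod space).
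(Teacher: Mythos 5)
Your argument is correct and follows essentially the same route as the paper's own proof: a countable $L^1$-dense family of smooth test functions, intersection of the corresponding $\mathbb{P}$-full events together with the event where the uniform $L^\infty$-bound \eqref{eq-esb0} holds, and the estimate $\sup_{t\in[0,T]}|u_t(\theta)-u_t(\theta_{n_k})|\leq M\|\theta-\theta_{n_k}\|_{L^1}$ to pass the c\`adl\`ag property to the uniform limit. No gaps.
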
 

\begin{proof}   Since $L^1(\mathbb{R}^d)$ is separable we find that $C_c^{\infty}(\mathbb{R}^d )\subset L^1(\mathbb{R}^d)$ is separable as well.  Now we argue similarly to  Remark \ref{rem-def-transport-weak-2}. 

Let us choose  a countable dense  subset $\mathcal{A}$ of $C_c^{\infty}(\mathbb{R}^d)$ with respect to the $L^1$-norm. Thus for  any $\eta \in L^1(\mathbb{R}^d) $ there exists
 {$(\theta_n)_{n=1}^\infty \subset \mathcal{A}$} such that
\begin{equation}\label{app1}
\lim_{n \to \infty} \int_{\R^d}|\theta_n(x)- \eta(x)|dx = 0. 
\end{equation}
Then
there exists a $\mathbb{P}$-full set $\Omega^\prime \subset \Omega$ such that for every $\omega \in \Omega^\prime$,
 for every test function  $\theta\in \mathcal{A}$, condition  (iii)  is satisfied.    
 Moreover we can assume that for any $\omega \in \Omega^\prime$ property  \eqref{eq-esb0} holds.

Let us choose and fix an arbitrary $\eta \in L^1(\mathbb{R}^d)$  and consider the $\mathbb{F}$-adapted process
\begin{equation}
u_{\cdot}(\eta)= \{[0,T] \ni  t \mapsto u_t(\eta)=\int_{\mathbb{R}^d}\eta(x)u( t,x,\cdot)\,dx  \in \mathbb{R}\}
\end{equation}
defined on our stochastic basis.  This process exists by the assumptions of this lemma and Remark \ref{rem-def-transport-weak-2}. Let us choose  {$(\theta_n)_{n=1}^\infty \subset \mathcal{A}$} such that \eqref{app1} holds.
 For any $\omega \in \Omega^{\prime}$ using \eqref{eq-esb0} we have:
\begin{equation}\label{eq 2025-07-16 01}
\begin{aligned}
\sup_{t \in [0,T]} |u_t(\theta_n)(\omega) - u_t(\eta)(\omega)|
& \le \sup_{t \in [0,T]}  \int_{\mathbb{R}^d} |u( t,x,\omega)|\,  |\theta_n(x)-\eta(x)|dx
\\
&\le 
M \int_{\mathbb{R}^d}|\theta_n(x)-\eta(x)|dx \to 0
\end{aligned}
\end{equation}
  as $n \to \infty$. Since $\big(u_t(\theta_n)(\omega) \big)_{n\geq1}$ converges  uniformly w.r.t. $t\in [0,T]$ to $u_t(\eta)(\omega)$ as $n \to \infty$ and each function $[0,T] \ni t \mapsto u_t(\theta_n)(\omega) $ is  c\`{a}dl\`{a}g on $[0,T]$
  we infer that $t \mapsto u_t(\eta)(\omega)$ is  c\`{a}dl\`{a}g as well.
   \end{proof}

We prepare the next lemma with some considerations.

\begin{remark} \label{d88}
The previous Lemma \ref{lem-transport-weak-2-(i)} implies that  for any $\omega \in \Omega^\prime$, any test function $\theta \in C_c^{\infty}(\mathbb{R}^d)$ and 
 any $t \in [0,T)$:
\begin{gather*}
\lim_{s \to t^+} u_{s}(\theta)(\omega)=\int_{\mathbb{R}^d}\theta(x)u(t,x,\omega) dx = u_t(\theta)
\end{gather*}
and 
for any $\omega \in \Omega^\prime$,  any test function $\theta \in C_c^{\infty}(\mathbb{R}^d)$ and 
 any $t \in (0,T]$, there exists a finite limit
\begin{equation} \label{dqq}
\lim_{s \to t^-} u_{s}(\theta)(\omega).
\end{equation}
We denote such limit by $ u_{t-}(\theta)(\omega)$.  To simplify,  with \textbf{some abuse of notation} we will  sometimes write
$$
u_{t-}(\theta) = \int_{\mathbb{R}^d}\theta(x)u(t-,x) dx.
$$
However, note that $u( t-, x,\omega)$ is not well-defined.

It is clear that, for any test function $\theta \in C_c^{\infty}(\mathbb{R}^d)$ the process $ (u_{t-}(\theta))_{t\in (0,T]}$ is $\mathbb{F}$-adapted and left-continuous with finite right-limits, ${\mathbb P}$-a.s.. In fact,  for the path regularity one can consider $\omega \in \Omega^\prime$.

Let us fix $t\in (0,T]$  and $\omega\in \Omega^\prime$. By Lemma \ref{lem-transport-weak-2-(i)} again,  
 for any $\theta \in L^1(\R^d)$, there exists a finite limit
\begin{gather*}
\lim_{s \to t^-} u_{s}(\theta)(\omega)
\end{gather*}
which we still denote by $ u_{t-}(\theta)(\omega).$


Note that identifying the dual space $(L^1(\R^d))^\prime$ with $L^{\infty}(\R^d)$ we infer  that 
\[\{  u_{s}(\cdot )(\omega): 0 \leq  s  < t\} \subset (L^1(\R^d))^\prime.\]

Hence, by the Banach–Steinhaus Theorem we deduce that $ u_{t-}(\cdot)(\omega)\in (L^1(\R^d))^\prime$. It follows, see Theorem 6.16 in \cite{Rudin_RCA_1987}, that there exists a unique element 
$g_{t, \omega} \in L^{\infty}(\R^d)$
such that 
\begin{equation} \label{gt33}
u_{t-}(\theta)(\omega) = \int_{\mathbb{R}^d}\theta(x) g_{t,\omega}(x) dx,\;\; \forall\theta \in L^1(\R^d).
\end{equation}
In particular, \eqref{gt33} holds for any test function $\theta \in C_c^{\infty}(\mathbb{R}^d)$, $t \in (0,T]$, $\omega \in \Omega^\prime.$ 
But we do not assert any measurability  of the function 
$(0,T]\times \Omega \ni (t,\omega) \mapsto  g_{t, \omega} \in L^{\infty}(\R^d)$. On the other hand, $\Vert g_{t, \omega}\Vert_{L^{\infty}(\R^d)} \leq M$. 
\end{remark}

   In order to give a precise meaning to stochastic integrals as in \eqref{s55}
we prove
the following lemma. We will use the notation $\theta (\cdot + z)=\tau_{-z}\theta$ and  
 the predictable $\sigma$-field $\mathcal P$ on $[0,T] \times \Omega$.

 \begin{lemma} \label{zz} Let $T>0.$ Assume that $\Omega^\prime$ is the event from Lemma \ref{lem-transport-weak-2-(i)} and  Remark \ref{d88}. 
 Let $\theta \in C_c^{\infty}(\mathbb{R}^d)$. Then the following assertions are satisfied. 
\begin{trivlist}
 \item[(i)] For every $(t, \omega) \in (0,T] \times \Omega^\prime$, the function 
 \[\R^d \ni z \mapsto  u_{t-} (\theta (\cdot + z)) \in \mathbb{R} \] is continuous.  
 \item[(ii)] The map 
 \begin{equation}\label{eqn-u(omega,t-,z)}
  [0,T] \times \R^d \times \Omega^\prime \ni (t,z,\omega) \mapsto  u_{t-}(\theta (\cdot + z))(\omega)  \in \mathbb{R}
 \end{equation}
 is $\mathcal P\otimes {\mathcal B}(\mathbb{R}^d)/{\mathcal B}(\mathbb{R})$-measurable.
\end{trivlist}
 \end{lemma}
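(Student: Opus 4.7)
The plan is to deduce part (i) directly from the representation of $u_{t-}(\cdot)(\omega)$ as integration against an $L^\infty$-density, and then to derive part (ii) from part (i) by combining predictability in $(t,\omega)$ for each fixed $z$ with continuity in $z$ for each fixed $(t,\omega)$.

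For part (i), I would fix $(t,\omega)\in(0,T]\times\Omega'$ and invoke Remark \ref{d88} to obtain $g_{t,\omega}\in L^{\infty}(\R^d)$ with $\|g_{t,\omega}\|_{L^\infty}\le M$ and
\begin{equation*}
u_{t-}(\eta)(\omega)=\int_{\R^d}\eta(x)\,g_{t,\omega}(x)\,dx,\qquad \eta\in L^{1}(\R^d).
\end{equation*}
Since $\theta\in C_c^\infty(\R^d)$, for each $z\in\R^d$ the translate $\theta(\cdot+z)=\tau_{-z}\theta$ belongs to $L^1(\R^d)$, so
\begin{equation*}
u_{t-}(\theta(\cdot+z))(\omega)=\int_{\R^d}\theta(x+z)\,g_{t,\omega}(x)\,dx.
\end{equation*}
Continuity in $z$ then follows from the standard $L^1$-continuity of translations: if $z_n\to z$ in $\R^d$, then $\|\tau_{-z_n}\theta-\tau_{-z}\theta\|_{L^1(\R^d)}\to 0$ (use the uniform continuity of $\theta$ and the fact that the supports remain in a common compact), and hence
\begin{equation*}
\bigl|u_{t-}(\theta(\cdot+z_n))(\omega)-u_{t-}(\theta(\cdot+z))(\omega)\bigr|\le M\,\|\tau_{-z_n}\theta-\tau_{-z}\theta\|_{L^1(\R^d)}\longrightarrow 0.
\end{equation*}

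For part (ii), I would first fix $z\in\R^d$ and observe that the process $(t,\omega)\mapsto u_{t}(\theta(\cdot+z))(\omega)$ is $\mathbb{F}$-adapted and c\`adl\`ag on $[0,T]$ (by item (ii) of Differences \ref{differences-01} together with Remark \ref{rem-def-transport-weak-2}, since $\theta(\cdot+z)\in L^1(\R^d)$). Hence its left limit $(t,\omega)\mapsto u_{t-}(\theta(\cdot+z))(\omega)$ is adapted and left-continuous on $(0,T]$, and, extended at $t=0$ in the obvious way (e.g.\ by its value at $0^+$ or by $u_0(\theta(\cdot+z))$), is $\mathcal P$-measurable on $[0,T]\times\Omega$, since every left-continuous adapted process is predictable.

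Combining this with part (i), the mapping
\begin{equation*}
F:[0,T]\times\R^d\times\Omega'\ni(t,z,\omega)\longmapsto u_{t-}(\theta(\cdot+z))(\omega)\in\R
\end{equation*}
is a Carath\'eodory function: $\mathcal P$-measurable in $(t,\omega)$ for each fixed $z$ and continuous in $z$ for each fixed $(t,\omega)$. A standard approximation argument then yields joint $\mathcal P\otimes\mathcal B(\R^d)$-measurability: pick a dense sequence $(q_k)\subset\R^d$ and, for each $n\in\mathbb{N}^*$, the Borel partition $\{A_{k,n}\}_{k}$ of $\R^d$ where $A_{k,n}$ consists of points whose nearest $q_j$ among $q_1,\dots,q_n$ is $q_k$; then the functions
\begin{equation*}
F_n(t,z,\omega):=\sum_{k=1}^{n}F(t,q_k,\omega)\,\mathbf{1}_{A_{k,n}}(z)
\end{equation*}
are clearly $\mathcal P\otimes\mathcal B(\R^d)$-measurable, and by continuity in $z$ (part (i)) we have $F_n\to F$ pointwise on $[0,T]\times\R^d\times\Omega'$, whence $F$ is $\mathcal P\otimes\mathcal B(\R^d)$-measurable as a pointwise limit of measurable functions.

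I do not anticipate a substantive obstacle here; the only point requiring a little care is keeping track of the exceptional set $\Omega'$ (which does not depend on $z$, precisely because item (iii) of Lemma \ref{lem-transport-weak-2-(i)} produces a single $\mathbb{P}$-full event valid for \emph{every} $\theta\in L^1(\R^d)$), so that both (i) and (ii) can be stated on the same $\Omega'$.
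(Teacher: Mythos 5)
Your proposal is correct and follows essentially the same route as the paper: part (i) via the representation $u_{t-}(\cdot)(\omega)=\int g_{t,\omega}\,(\cdot)\,dx$ from Remark \ref{d88} (the paper uses dominated convergence where you use $L^1$-continuity of translations, an equivalent step), and part (ii) via predictability of the left-limit process for each fixed $z$ combined with the Carath\'eodory-function argument, which you spell out by hand where the paper cites Lemma 4.51 of \cite{Aliprantis}.
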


 \begin{proof} To prove (i), let us choose and fix $(t, \omega) \in (0,T] \times \Omega^\prime$.  Using  \eqref{gt33} the assertion follows  by the  dominated convergence theorem.  Indeed, we have
 $$
 u_{t-}(\theta (\cdot + z)) = \int_{\mathbb{R}^d}  g_{t,\omega}(x) \theta(x +z) dx.
 $$
Now let us prove (ii). We first note  that  for any fixed $z \in \R^d$, the process    : $[0,T] \times \Omega^\prime \to u_{t-} (\theta (\cdot + z))(\omega)$  is $\mathbb{F}$-adapted and left-continuous with finite right-limits; 
 see Remark \ref{d88}.
 Hence, by the definition of predictable $\sigma$-field, see for instance, page 21 in \cite{Ikeda+Watanabe_1989}, 
 we 
 infer  that for any $z \in \R^d$,  the function  
 \[
\ [0,T]\times \Omega^\prime \ni (t,\omega)
\mapsto  u_{t-}(\theta (\cdot + z))(\omega) \in \mathbb{R} \]
   is  $\mathcal P$-measurable. Thus, by  (i) and Definition 4.50 in \cite{Aliprantis},  this function is a Carath\'eodory function and therefore, by 
 Lemma 4.51 in \cite{Aliprantis}  to the map \eqref{eqn-u(omega,t-,z)}
  $(\omega,t,z) \mapsto  u_{t-}(\theta (\cdot + z)) (\omega) $,
 we deduce  the assertion.
\end{proof}

After this preparation, we introduce our definition of solutions.

\subsection{Definition of  solutions to the stochastic transport equation}\label{subsection-def-solution}
  The definition involves  a general L\'evy process $(L_t)_{t\geq 0}$.
  We take into account the discussion in Remark \ref{d88}.

\begin{definition}\label{def-transport-weak-1}
 Assume that  $b\in L_{loc}^{1}(\mathbb{R}^{d};\mathbb{R}^{d})$ satisfies $\divv\,b\in L_{loc}^{1}(\mathbb{R}^{d})
$.  If  $u_{0}\in L^{\infty }(\mathbb{R}^{d})$ then 
a weak$^\ast$-$\mathrm{L}^{\infty}$-solution to the stochastic transport equation problem \eqref{eqn-transport-strong}
is a  
{measurable and essentially bounded}  stochastic process 
\begin{equation}\label{eqn-u2}
u:
 [0,T]\times\mathbb{R}^d\times \Omega \to \mathbb{R}
\end{equation}
in the sense that 
{ for some $M>0 $, 
  \begin{align}\label{eq-esb1}
    \sup_{t\in[0,T]}\| u( t,  \cdot,\omega) \|_{L^\infty(\mathbb{R}^d)} \leq M,\quad\mathbb{P}\text{-a.s.}
\end{align}
 (cf. \eqref{eq-esb03})} and the following two conditions are satisfied. 
 \begin{trivlist}
 \item[(ds1)] For every test function $\theta\in C_c^{\infty}(\mathbb{R}^d)$, the process 
 $u_{\cdot}(\theta):  [0,T]\times \Omega \to \mathbb{R}$ defined by 
 \begin{equation}\label{utt-2}
u_{t}(\theta)=\int_{\mathbb{R}^d}\theta(x)u(t,x)\,dx, \;\; t \in [0,T], 
\end{equation}
   is $\mathbb{F}$-adapted and c\`{a}dl\`{a}g,  $\mathbb{P}$-almost surely. 
  \item[(ds2)] For every test function $\theta\in C_c^{\infty}(\mathbb{R}^d)$, for every $t \in [0,T]$, the following identity is satisfied $\mathbb{P}$-almost surely, 
\begin{equation}\label{eqn-transport-weak}
\begin{aligned}
       \int_{\mathbb{R}^d}u(t,x)\,\theta(x)\,dx=&\int_{\mathbb{R}^d}u_{0}(x)\,\theta(x)\,dx+\int_0^t\int_{\mathbb{R}^d}u(s,x)\,[b(x)\cdot  D\theta(x)+\divv\, b(x)\,\theta(x)]\,dx\,d s\nonumber\\
       &+\sum_{i=1}^d\int_0^t\Bigl( \int_{\mathbb{R}^d}u(s-,x)D_i\theta(x)\,dx      \Bigr)\diamond d L_s^i.
       \end{aligned}
\end{equation}
\end{trivlist}
\end{definition}
Let us emphasise that the exceptional set in item (ds2) may depend on both $\theta$ and $t \in [0,T]$.  On the other hand, the exceptional set in item (ds1) does not depend on $\theta$ (cf. Lemma \ref{lem-transport-weak-2-(i)}). 

\begin{remark}\label{rem-defi}
Let us observe that
given a Borel measurable function $f:\mathbb{R}^d\to \mathbb{R}$, a function
\begin{equation}\label{eqn-Phi}
\Phi(f):[0,T]\times\mathbb{R}^d\times\mathbb{R}^d \ni
(t,z, y)  \to \Phi(t,z,f)(y):=  f(y-tz) \in  \mathbb{R}
\end{equation}
 is a 
 {weak} solution to the following transport equation
\begin{align}
&\frac{ \partial \Phi}{ \partial t}(t,z,f)=-\sum_{i=1}^d z_iD_i\Phi(t,z,f),\\
&\Phi(0,z,f)=f \mbox{ on }\mathbb{R}^d,
\end{align}
where  
{$D_i\Phi(t,z,f)(y)$ is the weak derivative of $\Phi(t,z,f)$ at $y$  in the $i$-th coordinate direction. }

Let us denote
\begin{equation}\label{eqn-Phi-1}
\Phi(z,f)(\cdot)=\Phi(1,z,f)(\cdot)=f(\cdot-z).
\end{equation}

Thus,   a strong version of the Marcus canonical integral in the stochastic transport equation
 \eqref{eqn-transport-strong}  is of the following form
\begin{align}
-\sum_{i=1}^de_i\cdot Du(s-,x)\diamond d L_s^i=&\lint_{B} \bigl[ \Phi(z,u(s-))(x)-u(s-,x)\bigr]\, \tilde{\prm}(ds,dz)\\
&+\int_{B^{\mathrm{c}}} \bigl[\Phi(z,u(s-))(x)-u(s-,x)\bigr]\, \prm(ds,dz)\\
&+\lint_{B}\bigl[ \Phi(z,u(s))(x)-u(s,x)+\sum_{i=1}^d z_iD_iu(s,x)\bigr]\, \nu(dz)\, ds,
\end{align}
where $B$ is the unit ball in $\mathbb{R}^d$, cf. \cite{HartmannPavlyukevich23}.
In other words, in case when  $u$ is a classical regular solution, equation   \eqref{eqn-transport-strong} should be understood in the following way

\begin{align}\label{eqn-transport-strong-2}
\begin{aligned}
  &\frac{\partial{u(s,x)}}{\partial s}+b(x)\cdot { D} u(s,x)\,ds\\
 =&\lint_{B} \bigl[ \Phi(z,u(s-))(x)-u(s-,x)\bigr]\, \tilde{\prm}(ds,dz)
 +\int_{B^{\mathrm{c}}} \bigl[\Phi(z,u(s-))(x)-u(s-,x)\bigr]\, \prm(ds,dz)\\
&+\lint_{B}\bigl[ \Phi(z,u(s))(x)-u(s,x)+\sum_{i=1}^d z_iD_iu(s,x)\bigr]\, \nu(dz)\, ds,
 \\
  & u(0,x)=u_0(x),\ \ x\in\mathbb{R}^d.
  \end{aligned}
\end{align}
\end{remark}

In what follows,  we  rewrite equation \eqref{eqn-transport-strong-2}  in the weak form, and thus  we get an  equivalent version of Definition \ref{def-transport-weak-1}.

\begin{definition}\label{def-transport-weak-2}
A weak$^\ast$-$\mathrm{L}^{\infty}$-solution to the stochastic transport equation
\eqref{eqn-transport-strong}  is
\begin{trivlist}
\item[(o)] a measurable and essentially bounded function
\begin{equation}\label{eqn-u is bounded}
u:
[0,T]\times\mathbb{R}^d\times \Omega \to \mathbb{R}
\end{equation}
\end{trivlist}
{
in the sense  that there exists $M>0 $ satisfying
  \begin{align}\label{eq-esb2}
    \sup_{t\in[0,T]}\| u(t, \cdot, \omega) \|_{L^\infty(\mathbb{R}^d)} \leq M,\quad\mathbb{P}\text{-a.s.}
\end{align}
}
together with the following two additional conditions
\begin{trivlist}
\item[(i)] for every test function  $\theta\in C_c^{\infty}(\mathbb{R}^d)$, the process $u_t(\theta)=\int_{\mathbb{R}^d}\theta(x)u(t,x)\,dx$, $t\in [0,T]$ is  $\mathbb{F}$-adapted and it has   c\`{a}dl\`{a}g paths, $\mathbb{P}$-a.s.;
\item[(ii)] for every test function  $\theta\in C_c^{\infty}(\mathbb{R}^d)$, the process $\int_{\mathbb{R}^d}\theta(x)u(t,x)\,dx$,  $t\in [0,T]$,  satisfies that, for every $t\in [0,T]$, $\mathbb{P}$-a.s.,
\begin{align}\label{eqn-transport-Marcus}
      &\int_{\mathbb{R}^d}u(t,x)\,\theta(x)\,dx\\
   & =
    \int_{\mathbb{R}^d}u_{0}(x)\,\theta(x)\,dx
       +\int_0^t\int_{\mathbb{R}^d}u(s,x)\,[b(x)\cdot  D\theta(x)+\divv\, b(x)\,\theta(x)]\,dx\,ds\\
       &\quad +\int_0^t\lint_{B} \Bigl(\int_{\mathbb{R}^d}u(s-,x)\,[\theta(x+z)-\theta(x)]\, dx\Bigr)\tilde{\prm}(ds,dz)\\
       &\quad+\int_0^t\int_{B^{\mathrm{c}}} \Bigl(\int_{\mathbb{R}^d}u(s-,x)\,[\theta(x+z)-\theta(x)]\, dx\Bigr) \prm(ds,dz)\\
       &\quad+\int_0^t\lint_{B}\Bigl(\int_{\mathbb{R}^d} u(s,x)\,[\theta(x+z)-\theta(x)-\sum_{i=1}^dz_iD_i\theta(x)]\,dx\Bigr)\,\nu(dz)\, ds.
\end{align}
The $\mathbb{P}$-a.s. means as usual that we have a 
$\mathbb{P}$-full  set involved. 
Actually, one could make this $\mathbb{P}$-full set independent of $t$ and $\theta$ by Lemma \ref{lem-transport-weak-2-(i)}.
\end{trivlist}
\end{definition}

Recall that
according to Remark \ref{d88} we should write more precisely
$$
\int_{\mathbb{R}^d}u(s-,x)\,[\theta(x+z)-\theta(x)]\, dx = u_{s-}(\theta (\cdot +z) - \theta).
$$

\addtocounter{theorem}{-1}

\begin{remark}\label{rem-weak solution} Following the terminology used in \cite{FGP_2010-Inventiones} we should have used a name ``weak-$\mathrm{L}^\infty$ solution. However we have decided to use a different name, i.e., weak$^\ast\text{-}\mathrm{L}^\infty$, since our $L^\infty$-valued process $u$ is c\`{a}dl\`{a}g   with respect to the weak$^\ast$ topology on $L^\infty$.
\end{remark}

We also have the following notion of uniqueness of the stochastic transport equation.

\begin{definition}\label{def-transport-weak-uni} We say that uniqueness holds for the stochastic transport equation if for a fixed $u_0 \in L^{\infty}(\R^d)$
 given any two weak$^\ast$-$\mathrm{L}^{\infty}$-solutions $u^1$ and $u^2$
 we have:

 for any  test function  $\theta\in C_c^{\infty}(\mathbb{R}^d)$, it holds:
$$
u^{1}_t(\theta) = u^{2}_t(\theta),
$$
$\mathbb{P}$-a.s., for any $t \in [0,T]$.

\end{definition}



 
Let us recall that  by  $\theta^\prime(a)=d_a\theta\in\mathscr{L}(\mathbb{R}^d,\mathbb{R})$ we denote  the Fr\'echet derivative of $\theta$ at $a\in \mathbb{R}^d$, see \cite{Cartan_1971-DC}. By $D\theta(a)$ or $\nabla \theta (a)$, we denote the gradient  of $\theta$ at $a\in \mathbb{R}^d$, i.e., the unique element of $\mathbb{R}^d$ such that
\[
\theta^\prime(a)(y)=\langle D \theta (a), y \rangle=\langle \nabla \theta (a), y \rangle, \;\; y \in \mathbb{R}^d.
\]

\subsection{Existence of weak solution to the stochastic transport equation}

The following theorem is one of the main results  of our paper. It is a result about the existence
 of weak solutions given by the stochastic flow. We will also assume some additional integrability condition on $\divv b$
(interpreted in the distributional sense).

\begin{theorem}\label{thm-transport equation}
Assume that  $\alpha \in  [1,2)$ or $\alpha \in (0,1)$. Assume that $L = (L_t)_{t\geq 0}$ is a L\'evy  process  satisfying  Hypotheses \ref{hyp-nondeg1} in the former case or Hypothesis \ref{hyp-nondeg3} in the latter case. Assume that $\beta\in(0,1)$ satisfies condition \eqref{eqn-beta+alpha half} and
 $b\in C_{\mathrm{b}}^{\beta}(\mathbb{R}^d,\mathbb{R}^d)$ is  such that  \[\divv b \in L^1_{\loc}(\mathbb{R}^d).\]
 If a function $u_0:\mathbb{R}^d \to \mathbb{R}$ is  Borel measurable and essentially bounded, then the  stochastic process $u$ defined by the following formula 
\begin{equation}\label{eqn-def-u}
 u(t,x,\omega):=u_0(\phi_t^{-1}(\omega)x), \;\; t\in [0,T], x \in\mathbb{R}^d
\end{equation} %
is a weak$^\ast$-$\mathrm{L}^{\infty}$-solution to the problem \eqref{eqn-transport-Markus} in the sense of Definition \ref{def-transport-weak-2}.
 \end{theorem}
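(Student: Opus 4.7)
The plan is to verify the four requirements of Definition \ref{def-transport-weak-2} in turn, reserving the bulk of the work for item (ii). Item (o) is immediate: $|u_0(\phi_t^{-1}(x,\omega))| \le \|u_0\|_{\infty}$ pointwise, and the joint measurability follows from Theorems \ref{thm-homeomorphism} and \ref{ww1}. For item (i), since by Theorem \ref{ww1} the map $\phi_t$ is a $C^1$-diffeomorphism on a $\mathbb{P}$-full event $\Omega''$ independent of $t$, the change of variables $y=\phi_t^{-1}(x)$ yields
\begin{equation}\label{eq-plan-cov}
u_t(\theta) \;=\; \int_{\R^d} \theta(\phi_t(y))\, u_0(y)\, J\phi_t(y)\,dy.
\end{equation}
Adaptedness then follows from the $\mathcal{F}_t$-measurability of $\phi_t$ and $J\phi_t$, and the c\`adl\`ag property in $t$ follows from the separate c\`adl\`ag behaviour of $\phi_{s,t}$ and $D\phi_{s,t}$ in Theorem \ref{ww1} together with dominated convergence, using that the integrand in \eqref{eq-plan-cov} is dominated by $\|\theta\|_{0}\|u_0\|_{\infty}\, J\phi_t(y)\,\mathbf{1}_{\phi_t^{-1}(\mathrm{supp}\,\theta)}(y)$, whose $dy$-integral equals $\|\theta\|_{0}\|u_0\|_{\infty}|\mathrm{supp}\,\theta|$ by the inverse change of variables.

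For item (ii) I would proceed by a double approximation. Choose $b^n \in C_{\mathrm{b}}^\infty(\R^d,\R^d)$ with $b^n \to b$ in $C_{\mathrm{b}}^\beta$ and $\divv b^n \to \divv b$ in $L^1_{\mathrm{loc}}(\R^d)$, and $u_0^n \in C_c^\infty(\R^d)$ with $\|u_0^n\|_{\infty} \le \|u_0\|_{\infty}$ and $u_0^n \to u_0$ in $L^1_{\mathrm{loc}}(\R^d)$. For smooth $b^n$ the associated flow $\phi^n$ is smooth in the spatial variable, and $u^n(t,x) := u_0^n((\phi^n_t)^{-1}(x))$ satisfies the identity $u^n(t,\phi^n_t(y))=u_0^n(y)$. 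Since the Marcus canonical SDE obeys the classical chain rule, differentiating this identity in $t$, multiplying by $\theta \in C_c^\infty(\R^d)$ and integrating by parts in $x$ shows that $u^n$ satisfies the weak formulation \eqref{eqn-transport-Marcus} with data $(b^n,u_0^n)$; equivalently, one applies the It\^o--Wentzell formula of Appendix \ref{sec-IWF} to $u_0^n \circ (\phi^n_t)^{-1}$.

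It then remains to pass to the limit $n\to\infty$ in each term of \eqref{eqn-transport-Marcus} written for $(b^n,u_0^n)$. Via the representation \eqref{eq-plan-cov} applied to both $u^n$ and $u$, together with Corollary \ref{cor-civuole} and its counterpart for the inverse flow in Remark \ref{rem Sec 5 000}, one extracts a subsequence along which $(\phi^n_t)^{-1}(x) \to \phi_t^{-1}(x)$ and $D(\phi^n_t)^{-1}(x) \to D\phi_t^{-1}(x)$ for $dx\otimes d\mathbb{P}$-a.e.\ $(x,\omega)$, uniformly in $t\in[0,T]$. The deterministic integrals then converge by dominated convergence (using also $\divv b^n \to \divv b$ in $L^1_{\mathrm{loc}}$). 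For the stochastic integrals, the elementary estimate
\begin{equation}
\Big|\int_{\R^d} u^n(s-,x)[\theta(x+z)-\theta(x)]\,dx\Big| \le C_{\theta}\, \|u_0\|_{\infty}\, (|z|\wedge 1),
\end{equation}
combined with the Burkholder inequality \eqref{eqn-Burkholder inequality} and dominated convergence with respect to $ds\otimes\nu(dz)$, yields $L^2(\Omega)$-convergence of the compensated stochastic integral; the large-jump integral and the compensator term against $\nu(dz)\,ds$ are handled analogously by direct dominated convergence.

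The main obstacle I foresee is the smooth-case step: making rigorous the claim that $u^n$ satisfies \eqref{eqn-transport-Marcus} (with the precise Marcus-to-It\^o correction against $\nu(dz)\,ds$) requires a careful application of the It\^o formula in Marcus form to the semimartingale $\theta(\phi^n_t(y))$, followed by integration in $y$ against $u_0^n(y)J\phi^n_t(y)$ and a use of Fubini. This is the direct jump analogue of the Stratonovich argument of \cite{FGP_2010-Inventiones}, but the bookkeeping of the jump correction terms is delicate and is precisely where the Marcus canonical form plays its essential geometric role.
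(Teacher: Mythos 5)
Your overall route is essentially the one taken in the paper: verify (o) and (i) through the change of variables \eqref{eqn-change of variables}, mollify the drift, derive the weak identity for the smooth drift from an It\^o computation involving the flow and its Jacobian, and pass to the limit using the stability results of Section \ref{sec-stability} (Corollary \ref{cor-civuole} and Remark \ref{rem Sec 5 000}); the limit arguments you sketch for the deterministic, compensated small-jump and large-jump terms are exactly the content of Lemmata \ref{lem-A2}--\ref{lem-A7}.

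Two points in your smooth-case step need correction, and one simplification is available. First, the aside ``equivalently, one applies the It\^o--Wentzell formula of Appendix \ref{sec-IWF} to $u_0^n\circ(\phi_t^n)^{-1}$'' is circular: Theorem \ref{thm-Ito+Wentzel-LM} takes as input the semimartingale decomposition \eqref{eqn-F} of the random field, i.e.\ precisely the dynamics of $t\mapsto u^n(t,y)$ that you are trying to establish (the inverse flow satisfies the backward equation \eqref{de}, not a forward It\^o equation to which the formula applies directly); in the paper the It\^o--Wentzell formula is used only for uniqueness, where the field $u^\eps$ already carries the decomposition \eqref{eqn-transport-Marcus-2} inherited from the assumed weak solution, and even then checking its hypotheses occupies Appendix \ref{app}. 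The same circularity affects ``differentiating the identity $u^n(t,\phi^n_t(y))=u_0^n(y)$ in $t$''. Second, applying It\^o to $\theta(\phi^n_t(y))$ and only afterwards ``integrating in $y$ against $u_0^n(y)J\phi^n_t(y)$'' is not correct as stated, because $J\phi^n_t(y)$ is time dependent and cannot be inserted as a static weight once the stochastic differentials have been taken; one must apply It\^o to the pair $(\phi^n_t(y),J^n_t(y))$ with the map $(x,p)\mapsto\theta(x)\,p$, which is Lemma \ref{lem-A11}, and this is painless only because $J^n$ solves the continuous finite-variation equation of Lemma \ref{lem-A1}, so no covariation or extra jump terms arise. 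Finally, mollifying $u_0$ is unnecessary: the It\^o identity for the smooth drift holds pointwise in $y$ and is simply multiplied by the bounded measurable $u_0(y)$ and integrated, the integrands being supported in $(\phi^n_t)^{-1}(\mathrm{supp}\,\theta)$, which is bounded uniformly in $t$ by Corollary \ref{cor-homeomorphism}; keeping $u_0^n$ adds an extra limit to be interchanged with the $ds\otimes\nu(dz)$ and $\mathbb{P}$ integrations, which your uniform $L^\infty$ bound can handle but which buys nothing.
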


 \begin{remark}
     By applying arguments similar to those in Lemma  \ref{ef}, our solutions to the stochastic transport equation and the associated stochastic flow can be extended to the infinite time horizon $t\geq0$. However, to simplify the presentation, we will focus on  the interval $[0,T]$, for an arbitrary $T>0$.
 \end{remark}

 \begin{proof} [Proof of Theorem \ref{thm-transport equation}] 

 \def\cia2{Let us choose and fix an initial data
  $u_0:\mathbb{R}^d \to \mathbb{R}$ which is a  Borel measurable and bounded function. We also choose and fix   a  test function  $\theta\in C_c^{\infty}(\mathbb{R}^d)$. Put
  \begin{equation}\label{eqn-K=supp theta}
   K:=\supp \theta.
  \end{equation}
} 
  
 Let $T>0.$ Let us define a random field $u$ by formula \eqref{eqn-def-u}. 
  It is obvious    that 
  $$
 u : [0,T] \times \mathbb{R}^d \times \Omega   \to \mathbb{R}
 $$
  is a $  {\mathcal B}([0,T]) \otimes {\mathcal B}(\mathbb{R}^d)\otimes {\call F}/\mathcal{B}(\mathbb{R})$-measurable function using  the measurability properties of our stochastic flow given in Section \ref{sec-regular flow}.

 We prove directly that 
  \begin{align} \label{si1}
    \sup_{t\in[0,T]}\| u(t, \cdot, \omega) \|_{L^\infty(\mathbb{R}^d)} \leq M,\quad\mathbb{P}\text{-a.s.}
\end{align}
 holds. This gives that 
 condition  (o) of Definition \ref{def-transport-weak-2} is verified.

 \def\cia2{ It is obvious that $u$
is bounded. Because for all $(t,x) \in \coma{[0,\infty)} \times \mathbb{R}^d$, the map $\Omega \ni \omega \mapsto \phi_t^{-1}(\omega)x \in \mathbb{R}^d$ is $\mathscr{F}_t$ measurable, so
is the function $ u(t,x,\cdot):\Omega \ni \omega \mapsto u(t,x,\omega) \in \mathbb{R}$. Hence, for every $T>0$, the function  $u: [0,T] \times \mathbb{R}^d \times \Omega \to \mathbb{R}$ is $\mathscr{B}([0,\infty) \times \mathbb{R}^d)\otimes \mathscr{F}_T$ measurable.
}

Since $u_0:\mathbb{R}^d\rightarrow \mathbb{R}^d$ is essentially bounded, we have 
$$|u_0(y)| \leq M \quad \text{for a.e. } y \in \mathbb{R}^d.$$
Consider the set where $ u_0 $ is bounded:
$$
E = \{ y \in \mathbb{R}^d : |u_0(y)| \leq M \}.
$$
Then $\Leb(E^c) = 0$. To simplify notation let us define 
$$
h_t(x)(\omega) = \phi_t^{-1}(x)(\omega),
$$
so that we are considering $u(\omega,t,x):=u_0(h_t(x)(\omega)), \;\; t\in [0,T], x \in\mathbb{R}^d, \omega \in \Omega.
$

Since by Theorem \ref{thm-homeomorphism}, there exists an almost sure event $\Omega'$ such that for every $\omega\in\Omega'$, $t\in[0,T]$, the mapping $x\mapsto h_t(x)(\omega)$ is a homeomorphism, we infer  the preimage 
$$
B_{t,\omega}:=[h_t(\omega)]^{-1}(E^c) = \{ x \in \mathbb{R}^d : h_t(\omega) (x) \in E^c \}
$$
has Lebesgue measure zero. That is $\Leb(B_{t,\omega})=0$, for every $\omega\in\Omega'$ and $t\in[0,T]$. For each $t\in[0,T]$, $\omega\in\Omega'$, let $A_{t,\omega}:=B_{t,\omega}^c$. Then for all $x\in A_{t,\omega}$, $|u(t,x,\omega)| \le M$. Consequently, $\sup_{t\in[0,T]}\| u(t,\cdot,\omega)\|_{L^{\infty}(\mathbb{R}^d)}\leq M$, for all $\omega\in\Omega'$. So
$u$ satisfies condition (o) of Definition \ref{def-transport-weak-2}.

\def\cia3{Let us fix $\omega\in\Omega'$. For every $t\in[0,T]$, every test function $\theta\in C_c^{\infty}(\mathbb{R}^d)$ with $\|\theta\|_{L^1}\leq 1$, we deduce
\begin{align*}
   \left| \int_{\mathbb{R}^d} u(t,\omega,x)\theta(x)dx\right|&=\left|\int_{\mathbb{R}^d} u_0(\phi_t^{-1}(\omega)x)\theta(x)dx\right|\\
    &\leq \left|\int_{A_{t,\omega}^c}u_0(\phi_t^{-1}(\omega)x)\theta(x)dx\right|+\left|\int_{A_{t,\omega}}u_0(\phi_t^{-1}(\omega)x)\theta(x)dx\right|\\
    &\leq M\|\theta\|_{L^1}.
\end{align*}
By the duality between $L^1(\mathbb{R}^d)$ and $L^\infty(\mathbb{R}^d)$, 
$$\| u(t, \omega, \cdot) \|_{L^\infty(\mathbb{R}^d)} = \sup_{\theta \in L^1(\mathbb{R}^d), \|\theta\|_{L^1} \leq 1} \left| \int_{\mathbb{R}^d} u(t, \omega, x) \theta(x) \, dx \right|.$$
Since $ C_c^\infty(\mathbb{R}^d) $ is dense in $L^1(\mathbb{R}^d)$, we have
$$\| u(t, \omega, \cdot) \|_{L^\infty(\mathbb{R}^d)} = \sup_{\theta \in C_c^{\infty}(\mathbb{R}^d), \|\theta\|_{L^1} \leq 1} \left| \int_{\mathbb{R}^d} u(t, \omega, x) \theta(x) \, dx \right|.$$
Hence  we obtain 
Then   $u$ satisfies condition (o) of Definition \ref{def-transport-weak-2}.
}
 Moreover,
 by the Change of  Measure Theorem,  we infer that
\begin{equation}\label{eqn-change of variables}
\int_{\mathbb{R}^d}u(t,x)\theta(x)\,dx=\int_{\mathbb{R}^d}
u_{0}(y)\theta(\phi_{t}(y))J_{t}(y)\,dy,
\end{equation}
for any test function $\theta\in C_c^\infty(\R^d)$,
where $J_t$ is the corresponding Jacobian determinant
\begin{equation}\label{eqn-J_t}
J_t(y)=\det[D\phi_{t}(y)], \;\; (t,y) \in [0,T]\times \mathbb{R}^d.
\end{equation}
By using the properties of the processes  $\phi_{t}(y)$ and $J_{t}(y)$ proved before, see Theorems \ref{d32} and \ref{ww1}, this easily will yield condition (i) in Definition \ref{def-transport-weak-2}.

  We will show  the $\mathbb{R}$-valued process
  \[
  \int_{\mathbb{R}^d}\theta(x)u(t,x)\,dx, \;\;\; t\geq 0, \]
  satisfies  equality \eqref{eqn-transport-Marcus}. For this purpose, let us introduce some standard techniques and notation. First of all we set
\[B(r):=\{x\in\mathbb{R}^d:\; |x|<r\}, \;\; r>0.
\] Secondly, let  $\bar{\vartheta}:\mathbb{R}^d\to [0,1] \subset  \mathbb{R}$ be a symmetric   $C^\infty_c$-class function  such that
\begin{equation}\label{eqn-vartheta-properties}
    \mathds{1}_{B(\frac{1}{4})} \leq \bar{\vartheta} \leq \mathds{1}_{B(2)}
 \mbox{ and }\quad c_d\int_{\mathbb{R}^d}\bar{\vartheta}(x)dx=1,
\end{equation}
where $c_d$ is a positive constant dependent only on $d$.

Define  $\vartheta(x)=c_d\, \bar{\vartheta}(x)$, $x\in\mathbb{R}^d$.  We then introduce an approximation of functions $(\vartheta_\eps)_{\eps >0}$ defined by the following formula
\begin{align}\label{def-vartheta-eps}
\vartheta_\eps(x)=\,\eps^{-d}\vartheta(\frac{x}{\eps}), \;\; x \in \mathbb{R}^d,\;\; \eps >0.
\end{align}
Next we  define a $C_{\mathrm{b}}^\infty$ approximation $b^\eps$ of the vector field $b$ by formula
\begin{align}\label{def-b^eps}
b^\eps=b\ast \vartheta_{\eps}, \mbox{ i.e., }     b^{\eps}(x)=\int_{\R^d}b(x-y)\,\vartheta_{\eps}(y)\,dy,\;\;\; x\in \mathbb{R}^d.
\end{align}
For the completeness of our exposition, let us state the following result, which holds because $b\in C_{\mathrm{bu}}(\mathbb{R}^d)$, i.e., $b$ is bounded and uniformly continuous,  and $\divv b \in L^1_{\loc}(\mathbb{R}^d)$.
\begin{lemma}\label{lem-b^eps-to-b}
Using the notation introduced above, we have
\begin{equation} \label{eqn-b^eps-to b L^infty}
b^\eps  \to b \mbox{ in } \; C_{\mathrm{b}}^{\beta^\prime}(\mathbb{R}^d, \R^d).
\end{equation}
 with $0< \beta^\prime < \beta$  
and 
\begin{equation} \label{eqn-b^eps-to-b-L^1}
\divv b^\eps  \to  \divv b \mbox{ in } L^1_{\loc}(\mathbb{R}^d).
\end{equation} 
\end{lemma}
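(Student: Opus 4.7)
The argument splits into two independent parts, corresponding to the two claimed convergences.

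For the first part, the plan is to combine uniform convergence $b^{\eps}\to b$ in $C_{\mathrm{b}}^{0}(\mathbb{R}^d,\mathbb{R}^d)$ with the standard bound $[b^{\eps}]_{\beta}\le[b]_{\beta}$, and then upgrade to convergence in $C_{\mathrm{b}}^{\beta'}$ by interpolation. First, because $b\in C_{\mathrm{b}}^{\beta}(\mathbb{R}^d,\mathbb{R}^d)$ is in particular bounded and uniformly continuous, the classical property of mollifiers gives
\begin{equation}
\|b^{\eps}-b\|_{0}\le \sup_{|y|\le\eps}\|\tau_{y}b-b\|_{0}\xrightarrow[\eps\to 0]{}0 .
\end{equation}
Next, since $\vartheta_{\eps}\ge 0$ and $\int\vartheta_{\eps}=1$, the translation invariance of the H\"older seminorm yields, for every $x\neq y$,
\begin{equation}
\frac{|b^{\eps}(x)-b^{\eps}(y)|}{|x-y|^{\beta}}\le\int_{\mathbb{R}^d}\frac{|b(x-z)-b(y-z)|}{|x-y|^{\beta}}\vartheta_{\eps}(z)\,dz\le[b]_{\beta},
\end{equation}
hence $[b^{\eps}-b]_{\beta}\le 2[b]_{\beta}$. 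The elementary interpolation
\begin{equation}
\frac{|f(x)-f(y)|}{|x-y|^{\beta'}}=\Bigl(\frac{|f(x)-f(y)|}{|x-y|^{\beta}}\Bigr)^{\beta'/\beta}|f(x)-f(y)|^{1-\beta'/\beta}
\end{equation}
applied to $f=b^{\eps}-b$ then gives
\begin{equation}
[b^{\eps}-b]_{\beta'}\le\bigl(2\|b^{\eps}-b\|_{0}\bigr)^{1-\beta'/\beta}\bigl(2[b]_{\beta}\bigr)^{\beta'/\beta},
\end{equation}
which tends to $0$ as $\eps\to 0$. Combined with the uniform convergence above, this proves \eqref{eqn-b^eps-to b L^infty}.

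For the second part, the plan is to use that mollification commutes with distributional differentiation and then invoke the standard $L^{1}_{\mathrm{loc}}$ approximation property of mollifiers. Since $\vartheta_{\eps}\in C_{c}^{\infty}(\mathbb{R}^d)$ and $b$ is bounded, one has the identity
\begin{equation}
\divv b^{\eps}=(\divv b)\ast\vartheta_{\eps}
\end{equation}
in the sense of distributions (and in fact pointwise, since $(\divv b)\ast\vartheta_{\eps}$ is a smooth function). Given any compact set $K\subset\mathbb{R}^d$, fix a compact $K'\supset K+\overline{B(0,1)}$; for all $\eps\le 1$ the supports of $\vartheta_{\eps}(x-\cdot)$, $x\in K$, are contained in $K'$. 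Then the standard approximation by mollifiers in $L^{1}(K')$ applied to $\divv b\,\mathds{1}_{K'}\in L^{1}(\mathbb{R}^d)$ yields $\|(\divv b)\ast\vartheta_{\eps}-\divv b\|_{L^{1}(K)}\to 0$, establishing \eqref{eqn-b^eps-to-b-L^1}.

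The only non-routine point is the first claim, where the preservation of the H\"older seminorm under mollification does \emph{not} by itself give convergence in $C_{\mathrm{b}}^{\beta}$ (and indeed such convergence generally fails); the loss of an arbitrarily small amount of regularity to $\beta'<\beta$ is what allows the interpolation trick to close the argument.
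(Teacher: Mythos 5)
Your proof is correct and is exactly the standard mollifier argument that the paper merely asserts (it gives no proof, only the remark that the lemma holds because $b$ is bounded and uniformly continuous and $\divv b\in L^1_{\loc}$); in particular your interpolation step is the paper's own Lemma \ref{lem interpolatory ineq} with $\alpha_0=0$, so the loss from $\beta$ to $\beta'$ is handled in the intended way. The only slip is cosmetic: since $\supp\vartheta\subset \overline{B(2)}$, one has $\supp\vartheta_\eps\subset\overline{B(2\eps)}$, so the suprema and the enlarged compact set should use radius $2\eps$ (e.g.\ $K'\supset K+\overline{B(0,2)}$, or restrict to $\eps\le 1/2$), which does not affect the argument.
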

In the sequel, we will assume that, cf. Remark \ref{okk}, 
\begin{gather*}
 \beta^\prime + \frac{\alpha}{2} >1.
\end{gather*}
Let $\phi^{\eps}_{s,t}$ be the stochastic flow associated with the equation  \eqref{eqn-SDE}  
 with the drift  $b$ replaced by $b^{\eps}$, i.e.,
 \begin{align} \label{eqn-phi^eps}
\phi^{\eps}_{s,t}(y)=y+\int_{s}^{t}b^{\eps}(\phi_{s,r}^{\eps}(y))\, dr+L_{t}-L_{s}, \;\; t\geq s, \;\;  y \in\mathbb{R}^d.
\end{align}
Denote by $(\phi^{\eps}_{s,t})^{-1}$ the inverse of $\phi^{\eps}_{s,t}$. Then similar arguments as proving \eqref{de}, we have
\begin{equation} \label{de 2025 0102 01}
(\phi^{\eps}_{s,t})^{-1}(y) =  y - \int_s^t b^\eps((\phi^{\eps}_{r,t})^{-1}(y)) dr - (L_t - L_s), \;\;  0\le s \le t.
\end{equation}

Finally, let $J_{s,t}^{\eps}$ and $(J^{\eps}_{s,t})^{-1}$ be the corresponding Jacobian to $\phi^{\eps}_{s,t}$ and $(\phi^{\eps}_{s,t})^{-1}$, i.e.,
\begin{equation}\label{eqn-J_t^eps 2025}
J^\eps_{s,t}(y)=\det[D\phi^\eps_{s,t}(y)], \;\; (t,y) \in [s,\infty) \times \mathbb{R}^d,
\end{equation}
\begin{equation}\label{eqn-J_t^eps 2025 01}
(J^{\eps}_{s,t})^{-1}(y)=\det[D(\phi^{\eps}_{s,t})^{-1}(y)], \;\; (s,y) \in [0,t] \times \mathbb{R}^d.
\end{equation}

We begin with the following lemma which is the  stochastic version of   \cite[Theorem 3.6.1]{Cartan_1971-DC}.
\begin{lemma}\label{lem-A1}
There exists  a $\mathbb{P}$-full set  $\Omega^{(0)}$ such that
the following equality holds  for every $\omega \in \Omega^{(0)}$ and every $\eps \in (0,1]$, for any fixed $s\geq0$,
     \begin{align}\label{eqn-jacobian}
      J_{s,t}^{\eps}(x)= 1 + \int_s^t \divv b^{\eps}(\phi^{\eps}_{s,r}(x))\,J_{s,r}^{\eps}(x)\,dr, \;\; t\geq s, \; x\in \mathbb{R}^d,
     \end{align}
     and for any fixed $t\geq0$,
     \begin{align}\label{eqn-jacobian 2025 01 05}
      (J^{\eps}_{s,t})^{-1}(x)= 1 - \int_s^t \divv b^{\eps}((\phi^{\eps}_{r,t})^{-1}(x))\,(J^{\eps}_{r,t})^{-1}(x)\,dr, \;\; s\in[0,t], \; x\in \mathbb{R}^d.
     \end{align}
\end{lemma}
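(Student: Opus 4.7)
\textbf{Proof plan for Lemma \ref{lem-A1}.} The overall strategy is to treat everything pathwise, exploiting the fact that the noise $L$ enters \eqref{eqn-phi^eps} additively and that $b^\eps$ is smooth for every $\eps\in (0,1]$. Let $\Omega^{(0)}\subset\Omega$ be the $\mathbb{P}$-full event on which the path $t\mapsto L_t(\omega)$ is c\`adl\`ag; crucially this event depends only on $L$ and hence is independent of $\eps$, $s$, $t$, and $x$. Fix $\omega\in\Omega^{(0)}$ and enumerate the (at most countably many) jump times of $L(\omega)$ in $[s,T]$ as $s<\tau_1<\tau_2<\dots$. Between consecutive jumps $\tau_i$ and $\tau_{i+1}$, the path $t\mapsto\phi^\eps_{s,t}(x)(\omega)$ solves the classical smooth ODE $\dot\phi=b^\eps(\phi)$, while at $\tau_i$ it is updated by the additive shift $\phi^\eps_{s,\tau_i}(x)=\phi^\eps_{s,\tau_i-}(x)+\Delta L_{\tau_i}(\omega)$. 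By the pathwise uniqueness of Theorem \ref{d32}(v), this interlacing construction agrees with the SDE solution.

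Since $b^\eps\in C_{\mathrm b}^\infty(\mathbb{R}^d,\mathbb{R}^d)$, the classical smooth dependence of ODE flows on initial data gives that, between jumps, $x\mapsto\phi^\eps_{s,t}(x)(\omega)$ is $C^\infty$ and $t\mapsto D_x\phi^\eps_{s,t}(x)$ is $C^1$ with
\[
\partial_t D_x\phi^\eps_{s,t}(x) \,=\, Db^\eps\bigl(\phi^\eps_{s,t}(x)\bigr)\,D_x\phi^\eps_{s,t}(x).
\]
Because the jumps of $\phi^\eps_{s,\cdot}(x)$ are the $x$-independent increments of $L$, the derivative $D_x\phi^\eps_{s,t}(x)$ is continuous in $t$ across every $\tau_i$, so the above linear matrix ODE holds for all $t\in[s,T]$ with $D_x\phi^\eps_{s,s}(x)=I$. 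Applying Jacobi's formula $\frac{d}{dt}\det M(t)=\operatorname{tr}\bigl(\dot M(t)M(t)^{-1}\bigr)\det M(t)$ to $M(t)=D_x\phi^\eps_{s,t}(x)$ (note $M(t)$ is invertible since $D\phi^\eps$ is invertible by Theorem \ref{ww1} applied to $b^\eps$) gives
\[
\frac{d}{dt}J^\eps_{s,t}(x) \,=\, \operatorname{tr}\bigl(Db^\eps(\phi^\eps_{s,t}(x))\bigr)\,J^\eps_{s,t}(x) \,=\, \divv b^\eps\bigl(\phi^\eps_{s,t}(x)\bigr)\,J^\eps_{s,t}(x),
\]
and integrating from $s$ to $t$ yields \eqref{eqn-jacobian}.

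For the inverse Jacobian \eqref{eqn-jacobian 2025 01 05}, I would repeat the argument starting from the backward equation \eqref{de 2025 0102 01} with fixed terminal time $t$. Differentiating in $x$ the identity $(\phi^\eps_{s,t})^{-1}(x)=x-\int_s^t b^\eps((\phi^\eps_{r,t})^{-1}(x))\,dr-(L_t-L_s)$ (the differentiation is again justified pathwise between the jumps of $L$ by the smoothness of $b^\eps$, and the derivative is continuous across jumps since $L$ enters additively) gives
\[
\partial_s D(\phi^\eps_{s,t})^{-1}(x) \,=\, Db^\eps\bigl((\phi^\eps_{s,t})^{-1}(x)\bigr)\,D(\phi^\eps_{s,t})^{-1}(x),
\]
with terminal condition $D(\phi^\eps_{t,t})^{-1}(x)=I$. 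A second application of Jacobi's formula (now viewing $s\mapsto \det D(\phi^\eps_{s,t})^{-1}(x)$) produces
\[
\frac{d}{ds}(J^\eps_{s,t})^{-1}(x)\,=\,\divv b^\eps\bigl((\phi^\eps_{s,t})^{-1}(x)\bigr)\,(J^\eps_{s,t})^{-1}(x),
\]
and integrating from $s$ to $t$ with $(J^\eps_{t,t})^{-1}(x)=1$ gives exactly \eqref{eqn-jacobian 2025 01 05}. Alternatively, one could derive \eqref{eqn-jacobian 2025 01 05} from \eqref{eqn-jacobian} via the chain-rule identity $J^\eps_{s,t}(x)\cdot(J^\eps_{s,t})^{-1}(\phi^\eps_{s,t}(x))=1$ combined with the semigroup relation $(\phi^\eps_{s,t})^{-1}=(\phi^\eps_{r,t})^{-1}\circ\phi^\eps_{s,r}$.

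The only real subtlety, and the step that deserves the most care, is ensuring that the exceptional set $\Omega\setminus\Omega^{(0)}$ can be chosen \emph{independently of $\eps$}. The argument above does this in a completely transparent way: the randomness enters only through the c\`adl\`ag path of $L$, and for every fixed c\`adl\`ag trajectory of $L$ the interlacing construction produces smooth ODE flows whose Jacobians satisfy the asserted integral equations by purely deterministic calculus. No stochastic integration is required, so no further $\eps$-dependent null sets are introduced.
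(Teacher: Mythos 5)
There is a genuine gap in your justification, located exactly at the step you rely on most: the interlacing construction. You enumerate the jump times of $L(\omega)$ in $[s,T]$ as an increasing sequence $s<\tau_1<\tau_2<\dots$ and solve the smooth ODE $\dot\phi=b^\eps(\phi)$ ``between consecutive jumps''. For the noises considered in this paper this is impossible: the hypotheses force the Blumenthal--Getoor index $\alpha\in(0,2)$ to be positive, hence $\nu$ is an infinite L\'evy measure, and $\mathbb{P}$-a.s.\ the jump times of $L$ are countably infinite and \emph{dense} in every interval. There are no consecutive jumps and no jump-free subintervals, so the path $t\mapsto\phi^\eps_{s,t}(x)(\omega)$ never solves the autonomous ODE $\dot\phi=b^\eps(\phi)$ on any interval, and the piecewise-smooth-plus-shift picture on which your whole pathwise argument rests does not exist. (Interlacing only works directly for finite-activity noise; for infinite activity it would require truncating the small jumps and passing to a limit, an approximation step your proof does not contain.) Your subsequent computations --- the linear matrix equation for $D_x\phi^\eps_{s,t}(x)$, Jacobi's formula, and the backward analogue with terminal condition $I$ at $s=t$, including the signs --- are correct as statements, but the route by which you establish the differentiability in $t$ (resp.\ $s$) and the continuity of the derivative ``across jumps'' is not valid here. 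Also, invoking Theorem \ref{ww1} to get invertibility of $D\phi^\eps$ is unnecessary: invertibility comes for free from the linear equation with initial datum $I$.

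The paper's proof avoids this entirely by a simpler device, which is also the natural repair of your argument: since the noise is additive, set $\rho^\eps_t(x):=\phi^\eps_{0,t}(x)-L_t$ and observe that $\rho^\eps_t(x)=x+\int_0^t g^\eps(r,\rho^\eps_r(x))\,dr$ with $g^\eps(t,x)=b^\eps(x+L_t)$, a coefficient that is bounded, measurable (c\`adl\`ag) in $t$ and smooth in $x$ with uniformly bounded derivatives for each fixed trajectory of $L$. Classical (Carath\'eodory-type) ODE theory, applied $\omega$-wise, then gives differentiability in $x$, the Liouville/Jacobi formula for $\det D_x\rho^\eps_t(x)$, and since $D_x\rho^\eps_t(x)=D_x\phi^\eps_{0,t}(x)$ and $\divv_x g^\eps(t,x)=\divv b^\eps(x+L_t)$, equation \eqref{eqn-jacobian} follows, on the single $\mathbb{P}$-full set where $L$ is c\`adl\`ag (independent of $\eps,s,t,x$, exactly as you wanted). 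The same shift applied to the backward equation \eqref{de 2025 0102 01} yields \eqref{eqn-jacobian 2025 01 05}. So the conclusion and the spirit of ``purely deterministic pathwise calculus'' are right, but you must replace the interlacing step by this subtraction-of-the-noise argument (or some other argument handling infinite activity) for the proof to be valid.
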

\begin{proof}
Here we only prove \eqref{eqn-jacobian} with $s=0$, and the other cases are similar.

Since the noise is additive, this lemma can be  proved in an elementary way by
the $\omega$-wise application of the classical deterministic results to the equation
\begin{equation}
\rho_t^\eps(x)=x+\int_0^tg^\eps(s,\rho_s^\eps(x))ds,
\end{equation}
where
\begin{equation}
\rho_t^\eps(x):=\phi^{\eps}_{0,t}(x)-L_t,  \;\; g^\eps(t,x):=b^\eps(x+L_t).
\end{equation}
We can deduce  from the above that
\begin{equation}
J_{0,t}^{\eps}(x)= \det [D\rho_t^\eps(x)] \mbox{ and }\divv g^\eps(t,x)=\divv b^\eps(x+L_t).
\end{equation}
and hence the result follows.
\end{proof}

Now, let $\phi^\eps_{t}=\phi^\eps_{0,t}$  and $J_t^{\eps}=J_{0,t}^{\eps}$, i.e.,
\begin{equation}\label{eqn-J_t^eps}
J^\eps_t(y)=\det[D\phi^\eps_{t}(y)], \;\; (t,y) \in [0,\infty) \times \mathbb{R}^d.
\end{equation}

   In  the following result    we assume that $\Upsilon$  is  countable subset      of $(0,1]$ whose  boundary $\partial(\Upsilon)\ni 0$, and the boundary of every infinite subset
 of $\Upsilon$ also contains $\{0\}$. When we write $\lim_{\eps \in \Upsilon}$ we mean $\lim_{ \Upsilon \ni \eps  \to 0}$.

\begin{lemma}\label{lem-A11}
There exists  a $\mathbb{P}$-full set  $\Omega^{(1)} \subset \Omega^{(0)} $ such that
the following equality holds  for every $\omega \in \Omega^{(1)}$,
and all $t\geq 0$, $y\in\mathbb{R}^d$ and $\eps \in \Upsilon$,
\begin{align}
 \theta (\phi^{\eps}_t(y))J_t^{\eps}(y)=&\,\theta(y)
    +\int_{0}^{t}b^{\eps}_{s}(\phi_{s}^{\eps}(y))\cdot  D \theta(\phi_{s}^{\eps}(y))J_s^{\eps}(y)\, ds
    \\&+\int_{0}^{t}\,\theta(\phi_{s}^{\eps}(y)) \divv b_{s}^{\eps}(\phi_{s}^{\eps}(y))J_{s}^{\eps}(y)
     ds\\
         &+\int_0^t\lint_{B}\big[  \theta(\phi_{s}^{\eps}(y)+z)-\theta(\phi_{s}^{\eps}(y)) -\sum_{i=1}^d z_iD_i\theta(\phi_{s}^{\eps}(y))\big]{J_s^{\eps}(y)} \; \nu(dz)\, ds\label{lem-A11-Ito}\\
         &+\int_0^t\int_{B^{c}}\big[  \theta(\phi_{s-}^{\eps}(y)+z)-\theta(\phi_{s-}^{\eps}(y)) \big]{J_{s-}^{\eps}(y)}  \prm(ds,dz)\\
    &+\int_0^t\lint_{B}   \big[ \theta(\phi_{s-}^{\eps}(y)+z)-\theta(\phi_{s-}^{\eps}(y)) \big ]{J_{s-}^{\eps}(y)}\; \tilde{\prm}(ds,dz).
     \end{align}
The set $\Omega^{(1)}$ depends on the function $\theta$.
     \end{lemma}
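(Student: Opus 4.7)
The plan is to obtain \eqref{lem-A11-Ito} by combining the It\^o formula for jump semimartingales applied to $\theta\circ\phi^{\eps}$ with the classical integration-by-parts formula against the absolutely continuous process $J^{\eps}$. Since $b^{\eps}\in C^{\infty}_{\mathrm{b}}(\mathbb{R}^d,\mathbb{R}^d)$, the flow $\phi^{\eps}$ is classical and smooth in $y$, so all tools from the Lipschitz/smooth theory apply before we face the limit $\eps \to 0$.

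Step 1. For fixed $(y,\eps)$ I would write, via \eqref{eqn-phi^eps} and the L\'evy-It\^o decomposition \eqref{eqn-Levy-Ito-2}, that $\phi^{\eps}_t(y) = y + \int_0^t b^{\eps}(\phi^{\eps}_s(y))\,ds + \int_0^t\lint_B z\,\tilde{\mu}(ds,dz) + \int_0^t\int_{B^{c}} z\,\mu(ds,dz)$, and apply the jump It\^o formula (e.g.\ \cite[Thm.~4.4.7]{Applebaum_2009}) to $\theta\in C^{\infty}_c$. This yields, on a $\mathbb{P}$-full event $\Omega_{y,\eps}$, an identity for $\theta(\phi^{\eps}_t(y))-\theta(y)$ simultaneously in $t\in[0,T]$, consisting of the drift term $\int_0^t b^{\eps}(\phi^{\eps}_s(y))\cdot D\theta(\phi^{\eps}_s(y))\,ds$, the compensated small-jump integral, its $\nu$-compensator (lines 3 and 4 of the statement) and the large-jump integral against $\mu$.

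Step 2. Next I would multiply by $J^{\eps}_t(y)$. By Lemma~\ref{lem-A1} the process $t\mapsto J^{\eps}_t(y)$ is absolutely continuous with $dJ^{\eps}_t(y) = \divv b^{\eps}(\phi^{\eps}_t(y))J^{\eps}_t(y)\,dt$; in particular it is continuous of finite variation, so $J^{\eps}_{t-} = J^{\eps}_t$ and the quadratic covariation with any c\`adl\`ag semimartingale vanishes. The semimartingale product rule gives
\begin{equation*}
d\bigl[\theta(\phi^{\eps}_t(y))J^{\eps}_t(y)\bigr] = J^{\eps}_{t-}(y)\,d\bigl[\theta(\phi^{\eps}_t(y))\bigr] + \theta(\phi^{\eps}_s(y))\,\divv b^{\eps}(\phi^{\eps}_s(y))J^{\eps}_s(y)\,ds,
\end{equation*}
and substituting Step~1 produces \eqref{lem-A11-Ito} for fixed $(t,y,\eps)$, using that $J^{\eps}$ is predictable and locally bounded so that it can be absorbed into the stochastic integrals against $\tilde{\mu}$ and $\mu$ and under $\nu(dz)\,ds$ (where $J^{\eps}_{s-}=J^{\eps}_s$ in the Lebesgue integrals, since jumps occur on a countable set).

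Step 3. The delicate point, and the main obstacle, is to upgrade the $\mathbb{P}$-full set to one event $\Omega^{(1)}$ that works simultaneously for every $(t,y,\eps)\in[0,T]\times\mathbb{R}^d\times\Upsilon$. My plan is to intersect the events $\Omega_{y,\eps}$ over a countable dense set $\mathbb{D}\subset\mathbb{R}^d$ and over the countable index set $\Upsilon$:
\begin{equation*}
\Omega^{(1)} := \bigcap_{\eps\in\Upsilon}\bigcap_{y\in\mathbb{D}}\Omega_{y,\eps},
\end{equation*}
which is still $\mathbb{P}$-full by countability, and then to extend the identity in $y$ by a pathwise continuity argument, using that on $\Omega^{(1)}$ both sides are continuous in $y$ locally uniformly in $t$. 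For the deterministic pieces this follows from the smoothness of $b^{\eps}$, the classical continuous dependence of $\phi^{\eps}$ and $J^{\eps}$ on $y$, and the estimate $|\theta(x+z)-\theta(x)-\sum_i z_iD_i\theta(x)| \le C(|z|^2\wedge 1)$ together with $|\theta(x+z)-\theta(x)|\le C(|z|\wedge 1)$, which make all $\nu$-integrals absolutely convergent. For the Poisson stochastic integrals the required joint c\`adl\`ag--in--$t$ and continuous--in--$y$ modification must itself be constructed by a Kolmogorov-type criterion in the spirit of Theorem~\ref{thm-KTC-ZB}, exploiting $L^p$--moment bounds coming from Proposition~\ref{prop-Burkholder inequality}. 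Since the time-$t$ c\`adl\`ag regularity of both sides is then automatic, the dense-set extension in $t$ is immediate, and the identity propagates from $\mathbb{D}$ to every $y\in\mathbb{R}^d$ and every $t\in[0,T]$. Finally I would note that $\Omega^{(1)}$ may be chosen inside the set $\Omega^{(0)}$ supplied by Lemma~\ref{lem-A1}, so that the identity for $J^{\eps}$ used throughout holds on the same event.
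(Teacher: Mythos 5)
Your argument is correct and essentially coincides with the paper's proof: the paper applies the classical It\^o formula directly to the pair $(\phi^{\eps}_t(y),J^{\eps}_t(y))$ with $F(x,p)=\theta(x)p$, which is exactly your Steps 1--2 (It\^o for $\theta(\phi^{\eps}_t(y))$ followed by the product rule with the continuous finite-variation process $J^{\eps}_t(y)$). Your Step 3, building the universal $\mathbb{P}$-full set by a countable intersection over $\Upsilon$ and a dense set of $y$'s plus a pathwise-continuity/Kolmogorov argument, fills in a detail the paper leaves implicit but does not alter the route.
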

\begin{proof}[Proof of Lemma \ref{lem-A11}]
It is enough to apply  the classical It\^{o} formula, see, for instance, \cite[Theorem II.5.1]{Ikeda+Watanabe_1989},
to the $ \mathbb{R}^d\times \mathbb{R}$-valued  process     $(\phi^{\eps}_t(y),J_t^{\eps}(y))$ 
          and a function $F$ defined by
     \[F: \mathbb{R}^d \times \mathbb{R} \ni (x,p) \mapsto  \theta(x)p \in \mathbb{R}.
     \]   
\end{proof}

Since the set  $K:=\supp(\theta)$ is compact in  $\R^d$, by Corollary \ref{cor-homeomorphism} and Theorem  \ref{thm-homeomorphism},  there is a $\mathbb{P}$-full set (without loss of generality, denoted by $\Omega^{(1)}$, which is the same set as in Lemma \ref{lem-A11}) such that, for all $(\eps, T,\omega) \in \Upsilon \times [0,\infty) \times \Omega^{(1)}$, $\cup_{t\in[0,T]}\supp(\theta\circ\phi^\eps_t)=\cup_{t\in[0,T]}(\phi_t^\eps)^{-1}(K)$ is bounded.
  Also notice that Theorem \ref{thm-ww} implies that
\begin{equation}
[0,T]\times\mathbb{R}^d \ni (s,y) \longmapsto J_s^{\eps}(y)\in\mathbb{R}
\end{equation}
is continuous, and $b^\eps \in C_{\mathrm{b}}^\infty(\mathbb{R}^d,\mathbb{R}^d)$.
Therefore, every term of the equality \eqref{lem-A11-Ito}  multiplying $u_0(y)$ is integrable over $\R^d$, since $u_{0}\in L^{\infty }(\mathbb{R}^{d})$.

Next, by multiplying the equality above by $u_0(y)$ and then integrating over $\R^d$, using some classical arguments  we deduce that, for every $\omega \in \Omega^{(1)}$, and all $t\geq 0$, $y\in\mathbb{R}^d$ and $\eps \in \Upsilon$,
     \begin{align}\label{eq 20251105 V1}
    &\int_{\R^d} u_0(y)\,\theta (\phi^{\eps}_t(y))J_t^{\eps}(y)\,dy\\
&=\int_{\R^d}u_0(y)\,\theta(y)\,dy 
    +\int_0^t\Bigl(\int_{\R^d}u_0(y)\big[b^{\eps}(\phi^{\eps}_s(y))\cdot D \theta(\phi^{\eps}_s(y))+\theta(\phi_{s}^{\eps}(y)) \divv b^{\eps}(\phi^{\eps}_s(y))\big]J_s^{\eps}(y)\,dy \Bigr) ds\\
     &\quad+\int_0^t\lint_{B}\int_{\R^d}u_0(y)\big[  \theta(\phi_{s}^{\eps}(y)+z)-\theta(\phi_{s}^{\eps}(y)) -\sum_{i=1}^d z_iD_i\theta(\phi_{s}^{\eps}(y))\big] J_{s}^{\eps}(y)dy\; \nu(dz)\, ds\\
         &\quad+\int_0^t\int_{B^{c}}\int_{\R^d}u_0(y)\big[  \theta(\phi_{s-}^{\eps}(y)+z)-\theta(\phi_{s-}^{\eps}(y)) \big] J_{s-}^{\eps}(y)dy\; \prm(ds,dz)\\
    &\quad+\int_0^t\lint_{B}\int_{\R^d}u_0(y)  \big[ \theta(\phi_{s-}^{\eps}(y)+z)-\theta(\phi_{s-}^{\eps}(y)) \big ]J_{s-}^{\eps}(y)dy\; \tilde{\prm}(ds,dz),
     \end{align}
with $ D \theta(x) \in \mathbb{R}^d$ being the gradient of $\theta$ at $x\in \mathbb{R}^d$.\\

\indent Next, we will list and prove a series of Lemmata that we will need in order to pass with $\eps \todown 0$.
 All the convergence results from  Lemmata \ref{lem-A2}-\ref{lem-A7} are then applied  along a limit over an  infinite subset $\tilde{\Upsilon} \subset
 \Upsilon$ to obtain the  following limit
 \begin{align*}
    &\int_{\R^d} u_0(y)\,\theta (\phi_t(y))J_t(y)\,dy\\ &=\int_{\R^d}u_0(y)\,\theta(y)\,dy 
    +\int_0^t\int_{\R^d}u_0(y) \Bigl(b(\phi_s(y))\cdot  D \theta(\phi_s(y))+
    \theta(\phi_s(y)) \divv\;b(\phi_s(y))\Bigr)J_s(y)\,dyds\\
   &\quad+  \int_0^t\lint_{B}\int_{\R^d}u_0(y)\big[  \theta(\phi_{s}(y)+z)-\theta(\phi_{s}(y)) -\sum_{i=1}^d z_iD_i\theta(\phi_{s}(y))\big] J_s(y)dy\; \nu(dz)\, ds\\
         &\quad+\int_0^t\lint_{B}\int_{\R^d}u_0(y)  \big[ \theta(\phi_{s-}(y)+z)-\theta(\phi_{s-}(y)) \big ]J_{s-}(y)dy\; \tilde{\prm}(ds,dz)\\
    &\quad+\int_0^t\int_{B^c}\int_{\R^d}u_0(y)\big[  \theta(\phi_{s-}(y)+z)-\theta(\phi_{s-}(y)) \big] J_{s-}(y)dy\; \prm(ds,dz).
     \end{align*}
By changing variable $y=\phi_{t}^{-1}(x)$ or $y=\phi_{s}^{-1}(x)$, we obtain
 \begin{align}
   &\int_{\R^d} u_0(\phi_{t}^{-1}(x))\,\theta (x)\,dx\\ &=\int_{\R^d}u_0(x)\,\theta(x)\,dx
   +\int_0^t\int_{\R^d}u_0(\phi_{s}^{-1}(x)) \Bigl(b(x)\cdot  D \theta(x)+
    \theta(x) \divv\;b(x)\Bigr)\,dx ds \\
   &\quad+  \int_0^t\lint_{B}\int_{\R^d}u_0(\phi_{s}^{-1}(x))\big[  \theta(x+z)-\theta(x) -\sum_{i=1}^d z_iD_i\theta(x)\big] dx\; \nu(dz)\, ds\\
         &\quad+\int_0^t\lint_{B}\int_{\R^d}u_0(\phi_{s-}^{-1}(x))  \big[ \theta(x+z)-\theta(x) \big ]dx\; \tilde{\prm}(ds,dz)\\
    &\quad+\int_0^t\int_{B^c}\int_{\R^d}u_0(\phi_{s-}^{-1}(x))\big[  \theta(x+z)-\theta(x) \big] dx\; \prm(ds,dz)
     \end{align}
     which shows that $\int_{\R^d} u_0(\phi_{t}^{-1}(x))\,\theta (x)\,dx$ satisfies \eqref{eqn-transport-Marcus}.
\end{proof}
Of course, in order to fully complete the proof of Theorem \ref{thm-transport equation} we need to prove the remaining results.

\begin{notation}
 In what follows we put $\eps_\infty=0$ so that in particular $\phi^{\eps_\infty}_t=\phi_t^0=\phi_t$ and $b^{\eps_\infty}=b^0=b$.
\end{notation}

The proofs of the following two Lemmata are postponed till   Appendix  \ref{sec-proofs of 3 lemmata}.

Let us recall that in the following parts of our paper, whenever we mention $\mathbb{P}$-full set, we mean a set which may depend on $\theta$. If this is not the case, we will mention this explicitly.

\begin{lemma}\label{lem-A2}
There exists  a $\mathbb{P}$-full set  $\Omega^{(iv)} \subset \Omega^{(1)}$ and an infinite set $\Upsilon_{1} \subset \Upsilon$  such that for
every $\omega \in \Omega^{(iv)}$ and every $T>0$,
  \begin{equation}\label{eqn-A2}
\lim_{ \eps \in \Upsilon_{1}\rightarrow 0 }  \sup_{t \in [0,T]} \Big\vert   \int_{\R^d} u_0(y)\,\theta (\phi^{\eps}_t(y))J_t^{\eps}(y)\,dy- \int_{\R^d} u_0(y)\,\theta (\phi_t(y))J_t(y)\,dy \Big\vert=0.
    \end{equation}
\end{lemma}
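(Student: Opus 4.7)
The plan is to reduce \eqref{eqn-A2} to an $L^1$-convergence statement on a compact ball and then apply a Vitali-type argument along a carefully extracted subsequence. First, since $\theta \in C_c^\infty(\R^d)$ has compact support $K$, the integrand $u_0(y)\,\theta(\phi_t^\eps(y))\,J_t^\eps(y)$ vanishes unless $y \in (\phi_t^\eps)^{-1}(K)$. By Corollary \ref{cor-homeomorphism} applied to the approximating flow (using $\|b^\eps\|_0 \le \|b\|_0$), there exist a $\mathbb{P}$-full event and a finite constant $M'=M'(\omega,T)$, independent of $\eps \in \Upsilon$ and $t \in [0,T]$, such that $(\phi_t^\eps)^{-1}(K) \cup \phi_t^{-1}(K) \subset B_{M'}$ for all such $(\eps,t)$. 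Consequently, the quantity to be controlled is
\begin{equation*}
\sup_{t\in[0,T]}\Big|\int_{\R^d}u_0(y)\,\theta(\phi_t^\eps(y))\,J_t^\eps(y)\,dy-\int_{\R^d}u_0(y)\,\theta(\phi_t(y))\,J_t(y)\,dy\Big|\le \|u_0\|_\infty \int_{B_{M'}} F_\eps(y)\,dy,
\end{equation*}
where $F_\eps(y) := \sup_{t\in[0,T]} \big|\theta(\phi_t^\eps(y))\,J_t^\eps(y) - \theta(\phi_t(y))\,J_t(y)\big|$.

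Next I would extract the subsequence and the exceptional set. By Lemma \ref{lem-b^eps-to-b} (and Remark \ref{okk}), $b^\eps \to b$ in $C_{\mathrm b}^{\beta'}$ for some $\beta' < \beta$ still satisfying $\alpha/2 + \beta' > 1$, and likewise $\divv b^\eps \to \divv b$ in $L^1_{\mathrm{loc}}$. Applying Corollary \ref{cor-civuole} to the sequence $b^\eps$ with $\eps$ ranging over $\Upsilon$, and then performing a diagonal extraction over $T \in \mathbb{N}^\ast$, I obtain an infinite $\Upsilon_1 \subset \Upsilon$ and a $\mathbb{P}$-full event $\Omega^{(iv)} \subset \Omega^{(1)}$ such that, for every $\omega \in \Omega^{(iv)}$, every $T>0$ and Lebesgue-a.e.\ $y \in \R^d$,
\begin{equation*}
\sup_{t\in[0,T]}|\phi_t^\eps(y) - \phi_t(y)| + \sup_{t\in[0,T]}\|D\phi_t^\eps(y) - D\phi_t(y)\| \longrightarrow 0 \text{ as } \Upsilon_1 \ni \eps \to 0.
\end{equation*}
Since $J_t^\eps(y) = \det D\phi_t^\eps(y)$ is a polynomial in the entries of $D\phi_t^\eps(y)$ and $\theta$ is uniformly continuous, this readily implies $F_\eps(y) \to 0$ pointwise for almost every $y \in B_{M'}$.

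Finally, I would upgrade this a.e.\ convergence to $L^1(B_{M'})$-convergence via Vitali's theorem. The key ingredient is uniform integrability of $\{F_\eps\}_{\eps \in \Upsilon_1}$ on $B_{M'}$. Using the Hadamard bound $|J_t^\eps(y)| \le C_d \|D\phi_t^\eps(y)\|^d$, it suffices to bound $\sup_{t\in[0,T]}\|D\phi_t^\eps(y)\|^d$ uniformly in $\eps$ in some $L^p(B_{M'})$ with $p>1$. This follows, for any desired $p > 1$, from the inequality
\begin{equation*}
\int_{B_{M'}}\!\!\sup_t\|D\phi_t^\eps\|^{pd}\,dy \le 2^{pd-1}\!\int_{B_{M'}}\!\!\sup_t\|D\phi_t^\eps - D\phi_t\|^{pd}\,dy + 2^{pd-1}\!\int_{B_{M'}}\!\!\sup_t\|D\phi_t\|^{pd}\,dy,
\end{equation*}
combined with \eqref{stability-final-000} (which makes the first term eventually $\le 1$ along $\Upsilon_1$, after a further subsequence if needed) and \eqref{stability-final-0001} (which makes the second term $\mathbb{P}$-a.s.\ finite). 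Vitali's theorem then yields $\int_{B_{M'}} F_\eps(y)\,dy \to 0$ as $\Upsilon_1 \ni \eps \to 0$, and multiplication by $\|u_0\|_\infty$ gives \eqref{eqn-A2}. The main obstacle is precisely this last step: synthesising pointwise convergence of the $\sup_t$-integrand with a uniform integrability bound, while ensuring that the resulting exceptional set and subsequence are simultaneously independent of $T$ (handled by the diagonal argument) and of $\eps$ (handled by the common $L^p$-bound that transfers from the $\mathbb{E}$-level stability in Theorem \ref{thm-stability} to the pathwise level via Fubini and Fatou along $\Upsilon_1$).
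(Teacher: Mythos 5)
Your proposal is correct and follows essentially the same route as the paper's proof: localization of the supports to a fixed ball via the inverse-flow bound of Corollary \ref{cor-homeomorphism} (with constants uniform in $\eps$), then passage to the limit using the pathwise stability of Corollary \ref{cor-civuole} together with a diagonal extraction over $T\in\mathbb{N}$. The only divergence is the final limit passage: the paper estimates the two pieces directly in $L^1$ of the ball, using the Lipschitz-type bound $|\det A-\det B|\le C_d(\Vert A\Vert\vee\Vert B\Vert)^{d-1}\Vert A-B\Vert$ together with the pathwise bound on $\sup_t |J_t|$ from Theorem \ref{thm-ww}, whereas you use a.e. convergence plus a Hadamard bound, uniform integrability and Vitali's theorem; both closing arguments are valid.
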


\begin{proof}[Proof of Lemma \ref{lem-A2}] We refer to Appendix \ref{sec-proofs of 3 lemmata} which is based on Corollary \ref{cor-civuole}, see  also Remark \ref{okk}.
\end{proof} 

\begin{remark} \label{rem-ciao}
Let us observe that  Lemma \ref{lem-A2} implies that  for every $\omega \in \Omega^{(iv)}$ and every $T>0$,
  \[u_0 \circ (\phi^\eps_t)^{-1} \to u_0 \circ \phi_t^{-1}, \text{ as }\eps\in \Upsilon_{1}\rightarrow0,
  \]
weakly$^\ast$ in $L^{\infty}(\mathbb{R}^d)$, uniformly in $t \in [0,T]$. \\
Indeed, for every $\theta \in C_c^\infty(\mathbb{R} ^d)$ we
have
\begin{align}
\label{eqn-weak-star}
& \sup_{t \in [0,T]}
\Big | \int_{{}{\mathbb{R}}^{d}} (u_0 \circ (\phi^\eps_t)^{-1}
)(x) \theta(x) dx  -
\int_{{}{\mathbb{R}}^{d}} (u_0 \circ (\phi_t)^{-1}
)(x) \theta(x) dx \Big|
\\
&=\sup_{t \in [0,T]}\Big | \int_{{}{\mathbb{R}}^{d}} u_0(y)
\theta(\phi^{{\eps}}_t(y)) J_t^\eps(y) dy  -
\int_{{}{\mathbb{R}}^{d}} u_0(y) \theta(\phi_t(y)) J_t(y) dy \Big|
\to 0 \text{ as }\eps\in \Upsilon_{1}\rightarrow0.
\end{align}
Since the space $C_c^\infty(\mathbb{R}^d)$ is dense in $L^1(\mathbb{R} ^d)$,
by employing the classical density argument we can prove that  the
convergence \eqref{eqn-weak-star}  holds for every  $\theta \in L^1(\mathbb{R} ^d)$.
 \end{remark}

\begin{lemma}\label{lem-A3}
 For   every $T>0$ and   every $\omega \in \Omega^{(iv)}$,
where  $\Omega^{(iv)}$ and $\Upsilon_1$ have been introduced in   Lemma \ref{lem-A2},  the following holds
\begin{equation}\label{eqn-A3}
\begin{aligned}
\lim_{\eps \in \Upsilon_1} \sup_{t\in [0,T]}
&\Bigl\vert \int_0^t\Bigl(\int_{\R^d}u_0(y) b^{\eps}(\phi^{\eps}_s(y))\cdot  D \theta(\phi^{\eps}_s(y))J_s^{\eps}(y)\,dy \Bigr)\, ds
\\
&\hspace{1truecm}-
\int_0^t\Bigl(\int_{\R^d}u_0(y) b(\phi_s(y))\cdot D \theta(\phi_s(y))J_s(y)\,dy \Bigr)\, ds \Bigr\vert =0.
\end{aligned}
  \end{equation}
\end{lemma}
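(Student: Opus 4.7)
The plan is to apply the change of variables $x = \phi^\eps_s(y)$ in the first integral of \eqref{eqn-A3} and $x = \phi_s(y)$ in the second, thereby reducing the claim to a combination of the two convergence mechanisms already in hand: the uniform convergence $b^\eps \to b$ on $\R^d$ from Lemma \ref{lem-b^eps-to-b}, and the uniform-in-time weak$^\ast$-$L^\infty$ convergence of $u_0 \circ (\phi^\eps_s)^{-1}$ to $u_0 \circ \phi_s^{-1}$ from Remark \ref{rem-ciao}. Since $\phi^\eps_s$ is a $C^1$-diffeomorphism (Theorem \ref{ww1} applied to $b^\eps$) and $J^\eps_0 = 1$, continuity in $s$ forces $J^\eps_s > 0$ throughout $[0,T]$, so the change of variables is valid without absolute values and produces, setting $g^\eps := b^\eps \cdot D\theta$ and $g := b \cdot D\theta$,
\begin{equation*}
\int_{\R^d} u_0(y)\, b^\eps(\phi^\eps_s(y)) \cdot D\theta(\phi^\eps_s(y))\, J^\eps_s(y)\, dy = \int_{\R^d} u_0((\phi^\eps_s)^{-1}(x))\, g^\eps(x)\, dx,
\end{equation*}
and likewise for the limit. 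Both $g^\eps$ and $g$ are supported in the compact set $K = \supp \theta$ and uniformly bounded, hence they belong to $L^1(\R^d)$.

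\textbf{Splitting.} I would split the difference of the two integrands pointwise in $x$ as
\begin{equation*}
u_0((\phi^\eps_s)^{-1}(x))\, [g^\eps(x) - g(x)] + g(x)\, [u_0((\phi^\eps_s)^{-1}(x)) - u_0(\phi_s^{-1}(x))],
\end{equation*}
and call the resulting integrals over $\R^d$ respectively $A^\eps_s$ and $B^\eps_s$. For $A^\eps_s$ the elementary estimate
\begin{equation*}
|A^\eps_s| \le \Vert u_0 \Vert_{L^\infty(\R^d)}\, \Leb(K)\, \Vert D\theta \Vert_0\, \Vert b^\eps - b \Vert_0
\end{equation*}
is uniform in $s \in [0,T]$ and vanishes as $\eps \to 0$ by Lemma \ref{lem-b^eps-to-b}. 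For $B^\eps_s$, since $g \in L^1(\R^d)$, the extension of Remark \ref{rem-ciao} to $L^1$ test functions gives $\sup_{s \in [0,T]} |B^\eps_s| \to 0$ along the sequence $\Upsilon_1$ for every $\omega \in \Omega^{(iv)}$. Integrating in $s$ and taking the supremum in $t \in [0,T]$ then yields
\begin{equation*}
\sup_{t \in [0,T]} \Big| \int_0^t (A^\eps_s + B^\eps_s)\, ds \Big| \le T \Big( \sup_{s \in [0,T]} |A^\eps_s| + \sup_{s \in [0,T]} |B^\eps_s| \Big) \to 0,
\end{equation*}
which is exactly \eqref{eqn-A3}.

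\textbf{Main obstacle.} The only delicate point is the term $B^\eps_s$: since $u_0$ is merely $L^\infty$, the composition $u_0 \circ (\phi^\eps_s)^{-1}$ has no pointwise limit and the convergence is only weak$^\ast$ in $L^\infty$. The crucial feature that makes the argument close is the uniformity in $s \in [0,T]$ of this weak$^\ast$ convergence, which itself follows from the uniform-in-$s$ stability estimates of Corollary \ref{cor-civuole} (and ultimately from Theorem \ref{thm-stability}). A minor secondary point is the verification that $J^\eps_s > 0$, so that the change of variables does not require an absolute value; this is immediate from continuity in $s$ and $J^\eps_0 = 1$. Beyond these, everything is routine once the change of variables has collapsed the problem into the action of a fixed $L^1$ function against the weakly$^\ast$ convergent family $u_0 \circ (\phi^\eps_s)^{-1}$.
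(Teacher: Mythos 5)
Your proof is correct, but it follows a different route from the paper's own proof of this lemma. The paper does \emph{not} change variables here: it keeps the integrals in the $y$-variable, restricts to the compact set $K_0(\omega)\supset\bigcup_{n,s}(\phi^{\eps_n}_s)^{-1}(\supp\theta)$, and splits the difference of integrands into four terms — $|J_s^{\eps}-J_s|$, $|D\theta(\phi^{\eps}_s)-D\theta(\phi_s)|$, $|b^{\eps}(\phi^{\eps}_s)-b(\phi^{\eps}_s)|$ and $|b(\phi^{\eps}_s)-b(\phi_s)|$ — which are then handled via Corollary \ref{cor-civuole} (uniform-in-$s$ $L^1_{\mathrm{loc}}$ convergence of the flows and their spatial derivatives, hence of the Jacobians), the continuity of $b$ and $D\theta$, and the uniform Jacobian bound from Theorem \ref{thm-ww}. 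You instead push $u_0$ through the change of variables and reduce to testing the weak$^\ast$ convergence $u_0\circ(\phi^{\eps}_s)^{-1}\to u_0\circ\phi_s^{-1}$ (Remark \ref{rem-ciao}, in its $L^1$-test-function form) against the fixed function $b\cdot D\theta\in L^1$, plus the trivial sup-norm estimate for $b^\eps-b$; this is in fact exactly the mechanism the paper uses for Lemmas \ref{lem-A4}--\ref{lem-A7}, just transplanted to \ref{lem-A3}. What your route buys is brevity: you avoid dealing with $J^\eps_s\to J_s$ and $b(\phi^\eps_s)\to b(\phi_s)$ directly, at the price of leaning on Remark \ref{rem-ciao} (which itself encapsulates Lemma \ref{lem-A2} and hence Corollary \ref{cor-civuole}) and of justifying the sign-free change of variables; your positivity argument for $J^\eps_s$ is fine (and is even more immediate from Lemma \ref{lem-A1}, which gives $J^\eps_s=\exp\int_0^s\divv b^\eps(\phi^\eps_r)\,dr>0$), and the same positivity for $J_s$, which you only mention in passing ("likewise for the limit"), follows identically from the continuity of $(t,x)\mapsto D\phi_t(x)$ in Theorem \ref{thm-ww} together with $J_0=1$. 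Since the statement explicitly fixes $\Omega^{(iv)}$ and $\Upsilon_1$ from Lemma \ref{lem-A2}, invoking Remark \ref{rem-ciao} involves no circularity, so the argument closes.
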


\begin{proof}[Proof of Lemma \ref{lem-A3}]

 See Appendix \ref{sec-proofs of 3 lemmata}.

\end{proof}

\begin{lemma}\label{lem-A4} There exists  a $\mathbb{P}$-full set  $\Omega^{(v)} \subset \Omega^{(iv)}$ and an infinite set $\Upsilon_{2} \subset \Upsilon_{1}$  such that
for every $\omega \in \Omega^{(v)}$ and every $T>0$,
\begin{align}\label{eqn-A4}
&\lim_{ \eps \in \Upsilon_{2} } \sup_{t \in [0,T]}  \Big\vert \int_0^t\int_{\R^d}u_0(y)\theta(\phi^{\eps}_s(y)) \divv\,b^{\eps}(\phi^{\eps}_s(y))J_s^{\eps}(y)\,dyds\\
& \hspace{1truecm} -\int_0^t\int_{\R^d}u_0(y)\theta(\phi_s(y)) \divv\,b(\phi_s(y))J_s(y)\,dyds
\Big\vert =0.
\end{align}
\end{lemma}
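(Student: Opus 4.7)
The plan is to switch to the Eulerian picture via change of variables and then reduce the problem to the weak-$*$ convergence of $u_0\circ(\phi^{\eps}_s)^{-1}$ against a fixed $L^1$ test function. Since $\phi^{\eps}_s$ is a $C^1$-diffeomorphism with strictly positive Jacobian $J^{\eps}_s$ (positivity follows from \eqref{eqn-jacobian} together with Gronwall), the substitution $x=\phi^{\eps}_s(y)$ rewrites the inner $dy$-integral as
\[
\int_{\R^d}\theta(x)\,u_0((\phi^{\eps}_s)^{-1}(x))\,\divv b^{\eps}(x)\,dx,
\]
and analogously for the limiting integrand. Writing the difference as $A^{\eps}_s+B^{\eps}_s$ with
\[
A^{\eps}_s:=\int_{\R^d}\theta(x)\,u_0((\phi^{\eps}_s)^{-1}(x))\bigl[\divv b^{\eps}(x)-\divv b(x)\bigr]dx,
\]
\[
B^{\eps}_s:=\int_{\R^d}\theta(x)\,\divv b(x)\bigl[u_0((\phi^{\eps}_s)^{-1}(x))-u_0(\phi_s^{-1}(x))\bigr]dx,
\]
the goal becomes to show that both $\sup_{s\in[0,T]}|A^{\eps}_s|$ and $\sup_{s\in[0,T]}|B^{\eps}_s|$ vanish as $\eps\to 0$ along a suitable subsequence.

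For the first term, since $\theta$ is supported in the compact set $K:=\supp\theta$ and $u_0\in L^\infty(\R^d)$, one has the uniform-in-$s$ bound
\[
|A^{\eps}_s|\le \|\theta\|_0\,\|u_0\|_{L^\infty}\,\|\divv b^{\eps}-\divv b\|_{L^1(K)},
\]
which tends to $0$ as $\eps\to 0$ along the full sequence by Lemma \ref{lem-b^eps-to-b}. For the second term, observe that $g:=\theta\cdot\divv b$ lies in $L^1(\R^d)$, because $\supp g\subset K$ and $\divv b\in L^1_{\loc}(\R^d)$. Now Remark \ref{rem-ciao} (which extends the weak-$*$ convergence of Lemma \ref{lem-A2} to $L^1$ test functions and is uniform in the time variable) applied with $g$ in place of $\theta$ yields, along the subsequence $\Upsilon_1$,
\[
\sup_{s\in[0,T]}|B^{\eps}_s|\longrightarrow 0.
\]

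Combining the two estimates with the trivial bound $\sup_{t\in[0,T]}|\int_0^t(A^{\eps}_s+B^{\eps}_s)\,ds|\le T\bigl(\sup_s|A^{\eps}_s|+\sup_s|B^{\eps}_s|\bigr)$ gives \eqref{eqn-A4} with the choices $\Upsilon_2:=\Upsilon_1$ and $\Omega^{(v)}:=\Omega^{(iv)}$. The main obstacle is that $\divv b$ is only locally integrable, so no pointwise or uniform bound on $\divv b^{\eps}$ is available; the argument bypasses this by keeping $\divv b$ fixed as an $L^1$ test function and shifting all the regularity requirements onto the already-controlled sequence $u_0\circ(\phi^{\eps}_s)^{-1}$, for which the uniform-in-$s$ weak-$*$ convergence was furnished by Lemma \ref{lem-A2} and Remark \ref{rem-ciao}.
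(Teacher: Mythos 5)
Your proposal is correct and follows essentially the same route as the paper: after the change of variables it splits the difference into the term $\int_K u_0((\phi^{\eps}_s)^{-1}(x))\theta(x)[\divv b^{\eps}(x)-\divv b(x)]\,dx$, controlled by the $L^1_{\loc}$ convergence of Lemma \ref{lem-b^eps-to-b}, and the term testing $u_0\circ(\phi^{\eps}_s)^{-1}-u_0\circ\phi_s^{-1}$ against the $L^1$ function $\theta\,\divv b$, handled by the uniform-in-$s$ weak-$\ast$ convergence of Remark \ref{rem-ciao}, with the same choices $\Upsilon_2=\Upsilon_1$ and $\Omega^{(v)}=\Omega^{(iv)}$.
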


\begin{proof}[Proof of Lemma \ref{lem-A4}]
In the proof below we set 
\begin{equation}\label{eqn-K=supp theta}
   K:=\supp \theta.
  \end{equation}
 Obverse that
\begin{align*}
&\Big|\int_0^t\int_{\R^d}u_0(y)\theta(\phi^{\eps}_s(y)) \divv\,b^{\eps}(\phi^{\eps}_s(y))J_s^{\eps}(y)\,dyds
 -    \int_0^t\int_{\R^d}u_0(y)\theta(\phi^{}_s(y)) \divv\,b(\phi^{}_s(y))J_s(y)\,dyds
 \Big|
\\
&= \Big|\int_0^t\int_{K}u_0((\phi^{\eps}_s)^{-1}(x))\theta(x) \divv\,b^{\eps}(x)\,dxds -
      \int_0^t\int_{K}u_0((\phi^{\eps}_s)^{-1}(x))\theta(x) \divv\,b(x)\,dxds
 \Big|
 \\
 &\quad+ \Big|\int_0^t\int_{K}[ u_0((\phi^{\eps}_s)^{-1}(x))
 - u_0 ((\phi_s)^{-1}(x))]
  \theta(x)   \divv\,b(x)\,dxds
 \Big|.
\end{align*}
Now since $u_0$ is bounded and  $\divv b^\eps \to \divv b $ in
$L^1_{\loc}(\mathbb{R}^d)$
by \eqref{eqn-b^eps-to-b-L^1} from  Lemma \ref{lem-b^eps-to-b}, we infer  that
\begin{align}
\sup_{t \in [0,T]} \Big|\int_0^t\int_{K}u_0((\phi^{\eps}_s)^{-1}(x))\theta(x) \divv\,b^{\eps}(x)\,dxds-
      \int_0^t\int_{K}u_0((\phi^{\eps}_s)^{-1}(x))\theta(x) \divv\,b(x)\,dxds
 \Big| \to 0.
\end{align}
 On the other hand, using Remark \ref{rem-ciao}, we find that
\begin{align}
\sup_{t \in [0,T]} \Big|\int_0^t\int_{K}[ u_0((\phi^{\eps}_s)^{-1}(x))
 - u_0 ((\phi_s)^{-1}(x))]
  \theta(x)   \divv\,b(x)\,dxds
 \Big| \to 0.
\end{align}
The proof is complete.
 \end{proof}
\begin{lemma}\label{lem-A5}
There exists   $\mathbb{P}$-full set  $\Omega^{(vi)} \subset \Omega^{(v)}$ and an infinite set $\Upsilon_{3} \subset \Upsilon_{2}$ such that for every  $T\geq 0$ and every $\omega \in \Omega^{(vi)}$,
\begin{equation}\label{eqn-A5}
\begin{aligned}
&\lim_{\eps \in \Upsilon_{3} }\sup_{t\in[0,T]} \Big\vert \int_0^t\lint_{B}\int_{\R^d}u_0(y)  \big[ \theta(\phi_{s-}^{\eps}(y)+z)-\theta(\phi_{s-}^{\eps}(y)) \big ]J_{s-}^{\eps}(y)dy\; \tilde{\prm}(ds,dz)
\nonumber\\
&\hspace{2truecm}-
\int_0^t\lint_{B}\int_{\R^d}u_0(y)  \big[ \theta(\phi_{s-}(y)+z)-\theta(\phi_{s-}(y)) \big ]J_{s-}(y)dy\; \tilde{\prm}(ds,dz) \Big\vert =0.
\end{aligned}
\end{equation}
\end{lemma}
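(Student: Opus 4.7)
My plan is to exploit the martingale structure of the stochastic integrals and reduce the problem, via Doob's maximal inequality, to an $L^{2}$-convergence that can be handled by the stability estimates of Theorem \ref{thm-stability}. Set
\[
H^{\eps}(s,z) := \int_{\R^d} u_0(y)\,[\theta(\phi^{\eps}_{s-}(y)+z)-\theta(\phi^{\eps}_{s-}(y))]\,J^{\eps}_{s-}(y)\,dy,
\]
and define $H(s,z)$ analogously without the $\eps$. The quantity inside the absolute value in \eqref{eqn-A5} is the c\`adl\`ag $L^2$-martingale
\[
M^{\eps}_t := \int_0^t \lint_B [H^{\eps}(s,z)-H(s,z)]\,\tilde{\prm}(ds,dz),
\]
well-defined thanks to Proposition \ref{prop-Ito integral}. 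Doob's maximal inequality reduces the task to showing
\[
\E\int_0^T \lint_B |H^{\eps}(s,z)-H(s,z)|^2\,\nu(dz)\,ds \longrightarrow 0
\]
along some infinite $\Upsilon_3 \subset \Upsilon_2$; a further subsequence then yields the almost-sure uniform convergence in \eqref{eqn-A5}.

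Performing the change of variables $x=\phi^{\eps}_{s-}(y)$ (respectively $x=\phi_{s-}(y)$) in the integrals defining $H^\eps$ and $H$ --- legitimate because the flows are diffeomorphisms by Theorem \ref{thm-homeomorphism} and because the Jacobians cancel --- yields
\[
H^{\eps}(s,z) - H(s,z) = \int_{K'} \bigl[u_0\circ(\phi^{\eps}_{s-})^{-1} - u_0\circ\phi_{s-}^{-1}\bigr](x)\,[\theta(x+z)-\theta(x)]\,dx,
\]
where $K':=\supp\theta + B$ is a fixed compact set independent of $(\eps,s,z)$. Combining $|\theta(x+z)-\theta(x)| \leq \|D\theta\|_0|z|$ for $z \in B$ with $\lint_B |z|^2\,\nu(dz)<\infty$ (a general property of L\'evy measures, see \eqref{eqn-Levy measure moment}), the goal is now
\[
\int_0^T \E\,\bigl\|u_0\circ(\phi^{\eps}_{s-})^{-1} - u_0\circ\phi_{s-}^{-1}\bigr\|_{L^1(K')}^2\,ds \longrightarrow 0.
\]
Since the $L^1$-norm on the left is uniformly bounded by $2M\Leb(K')$, the square can be replaced by the first power; the task becomes showing $\int_0^T \int_{K'} \E|u_0\circ(\phi^{\eps}_{s-})^{-1}(x) - u_0\circ\phi_{s-}^{-1}(x)|\,dx\,ds \to 0$.

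To control this final expression, I approximate $u_0$ by a continuous, compactly supported $f$ (via mollification) and decompose the integrand by the triangle inequality into three terms. The middle term, involving $|f\circ(\phi^{\eps}_{s-})^{-1} - f\circ\phi_{s-}^{-1}|$, tends to zero by the continuity of $f$, the $L^p$-convergence $(\phi^{\eps}_{s-})^{-1}(x) \to \phi_{s-}^{-1}(x)$ (Remark \ref{rem Sec 5 000}), and dominated convergence on the compact $K'$. For the two residual terms involving $|u_0 - f|$, reversing the change of variables and localising on the high-probability event $\{R(\omega)\leq R_0\}$ furnished by Corollary \ref{cor-homeomorphism} transforms each contribution into an integral of the form
\[
\int_0^T \int_{B_{R_0}} |u_0-f|(y)\,\E J^{\eps}_{s-}(y)\,dy\,ds,
\]
which is bounded by $CT\|u_0-f\|_{L^1(B_{R_0})}$ thanks to the uniform moment bound $\sup_\eps\sup_{s,y}\E J^{\eps}_{s-}(y) \leq \sup_\eps\sup_{s,y}\E\|D\phi^{\eps}_{s-}(y)\|^d < \infty$ supplied by Theorem \ref{thm-stability}. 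Choosing first $R_0$ large (to kill the complementary event) and then the mollification parameter small completes the argument.

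The main obstacle is the last step: because $u_0$ is only essentially bounded, one cannot pass to the limit naively in the composition $u_0\circ(\phi^{\eps}_{s-})^{-1}$, and a density argument must be coupled with precise Jacobian bounds. The resolution rests critically on the uniform-in-$\eps$ moment estimates from Theorem \ref{thm-stability} together with the uniform control on the size of the images $(\phi^{\eps}_{s-})^{-1}(K')$ from Corollary \ref{cor-homeomorphism} --- exactly the refinements of the flow theory built in Sections \ref{sec-regular flow} and \ref{sec-stability}.
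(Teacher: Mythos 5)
Your proposal is correct, but the heart of it differs from the paper's own proof. You share the same outer skeleton: both arguments treat the difference of the two stochastic integrals as an $L^2$-martingale, use a maximal/Burkholder-type inequality (your Doob estimate is equivalent to the paper's use of Proposition \ref{prop-Burkholder inequality} with $p=2$) to reduce everything to $\E\int_0^T\lint_B|H^\eps-H|^2\,\nu(dz)\,ds\to 0$, perform the change of variables $x=\phi^\eps_{s-}(y)$ so the Jacobians disappear and the spatial integral lives on the fixed compact $K^1$, dominate by $C|z|^2$, and finally extract an a.s.\ convergent subsequence. The divergence is in how the inner spatial integral is shown to vanish. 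The paper simply reuses Remark \ref{rem-ciao} (a direct consequence of Lemma \ref{lem-A2}): on the $\mathbb{P}$-full set $\Omega^{(iv)}$ one already knows $u_0\circ(\phi^\eps_t)^{-1}\to u_0\circ\phi_t^{-1}$ weakly$^\ast$ in $L^\infty$, uniformly in $t$, and since $\theta(\cdot+z)-\theta(\cdot)\in L^1$ for each fixed $z$, the integrand converges pathwise; dominated convergence in $(\omega,s,z)$ then finishes in one line. You instead prove the stronger statement that $u_0\circ(\phi^\eps_{s})^{-1}\to u_0\circ\phi_{s}^{-1}$ in $L^1(K')$ in expectation, via mollifying $u_0$, the Jacobian moment bounds of Theorem \ref{thm-stability} (through $|J^\eps_s(y)|\le C_d\|D\phi^\eps_s(y)\|^d$), the uniform localisation of $(\phi^\eps_s)^{-1}(K')$ from Corollary \ref{cor-homeomorphism}, and the inverse-flow stability \eqref{stability1-001}. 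This route is self-contained (it does not need Lemma \ref{lem-A2}/Remark \ref{rem-ciao}) and yields a stronger convergence, at the price of a longer three-term density argument; the paper's route is shorter because the weak$^\ast$ convergence has already been paid for in Lemma \ref{lem-A2}.

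Three small points to tighten: take the mollified approximant with $\|f\|_0\le\|u_0\|_{L^\infty}$ so that the bad-event bound $2M\,\mathrm{Leb}(K')\,\mathbb{P}(\text{bad})$ is uniform in $f$; when invoking Theorem \ref{thm-stability} and Remark \ref{rem Sec 5 000} for the mollified drifts recall that $b^\eps\to b$ only in $C^{\beta'}_{\mathrm{b}}$ with $\beta'<\beta$, so Remark \ref{okk} is needed (exactly as in the paper's existence proof); and to get one null set and one subsequence working for all $T\ge 0$ you should run the argument for $T\in\mathbb{N}$ and diagonalise, as the paper does implicitly.
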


\begin{proof}[Proof of Lemma \ref{lem-A5}]
 Set $K^{1}=\{x\in\R^d:\dist(K,\{x\})\leq 1\}$. Then we can find  $R_0>0$ such that $K^{1}\subset B_{R_0}:=\{y\in\mathbb{R}^d:|y|\leq R_0\}$. By the Burkholder inequality, see Proposition \ref{prop-Burkholder inequality},
\begin{align}\label{eqn-A5 01}
&\E\sup_{t\in[0,T]}\Big|\int_0^t\lint_{B}\int_{\R^d}u_0(y)  \big[ \theta(\phi_{s-}^{\eps}(y)+z)-\theta(\phi_{s-}^{\eps}(y)) \big ]J_{s-}^{\eps}(y)dy\; \tilde{\prm}(ds,dz)\\
&\quad\quad\quad\quad-
\int_0^t\lint_{B}\int_{\R^d}u_0(y)  \big[ \theta(\phi_{s-}(y)+z)-\theta(\phi_{s-}(y)) \big ]J_{s-}(y)dy\; \tilde{\prm}(ds,dz)
\Big|^2\nonumber\\
&=
\E\sup_{t\in[0,T]}\Big|\int_0^t\lint_{B}\int_{K^{1}}u_0((\phi_{s-}^{\eps})^{-1}(x))  \big[ \theta(x+z)-\theta(x) \big ]dx\; \tilde{\prm}(ds,dz)\nonumber\\
&\quad\quad\quad\quad\quad-
\int_0^t\lint_{B}\int_{K^{1}}u_0(\phi_{s-}^{-1}(x))  \big[ \theta(x+z)-\theta(x) \big ]dx\; \tilde{\prm}(ds,dz)
\Big|^2\nonumber\\
&\leq C
\E\Big(\int_0^T\lint_{B}\Big|\int_{K^{1}}\big[u_0((\phi_{s}^{\eps})^{-1}(x))-u_0(\phi_{s}^{-1}(x))\big]  \big[ \theta(x+z)-\theta(x) \big ]dx\Big|^2\nu(dz)ds\Big).\nonumber
\end{align}
 By  Remark \ref{rem-ciao},   on  $\Omega^{(iv)}$
\begin{align}
\int_{K^{1}}\big[u_0((\phi_{s}^{\eps})^{-1}(x))-u_0(\phi_{s}^{-1}(x))\big]  \big[ \theta(x+z)-\theta(x) \big ]dx \to 0,
\end{align}
for every $z \in B$,  uniformly w.r.t.  $s \in [0,T]$. \\
\indent Moreover, with $C_0 = \Leb (B_{R_0})$,  we also have the following estimate: for any $(z,s)\in B\times [0,T]$,
\begin{align}
\Big |\int_{K^{1}}\big[u_0((\phi_{s}^{\eps})^{-1}(x))-u_0(\phi_{s}^{-1}(x))\big]  \big[ \theta(x+z)-\theta(x) \big ]dx \Big|^2 \le 4 C_0^2 \| u_0\|_{L^\infty(\R^d)}^2 \|  D \theta \|^2_{0}
 |z|^2.
\end{align}
Since $\lint_B |z|^2 \nu (dz) < \infty$, we can finally apply the Lebesgue  DCT and  deduce that
\begin{align}
\E\Bigl[\int_0^T\lint_{B}\Big|\int_{K^{1}}\big[u_0((\phi_{s}^{\eps})^{-1}(x))-u_0(\phi_{s}^{-1}(x))\big]
\big[ \theta(x+z)-\theta(x) \big ]dx\Bigr|^2\nu(dz)ds\Big) \to 0.
\end{align}
Since convergence in the mean implies the a.s. convergence of a subsequence,
the result follows.
\end{proof}
\begin{lemma}\label{lem-A6}
There exists   $\mathbb{P}$-full set $\Omega^{(vii)} \subset \Omega^{(vi)}$ and  an infinite set $\Upsilon_{4} \subset \Upsilon_{3}$  such that for all $T\geq 0$ and every $\omega \in \Omega^{(vii)}$,
\begin{equation}\label{eqn-A6}
\begin{aligned}
&\lim_{\eps  \in \Upsilon_{4} }\sup_{t\in[0,T]}\Big| \int_0^t\int_{B^c}\int_{\R^d}u_0(y)\big[  \theta(\phi_{s-}^{\eps}(y)+z)-\theta(\phi_{s-}^{\eps}(y)) \big] J_{s-}^{\eps}(y)dy\; \prm(ds,dz)
\\
& \hspace{3truecm} - \int_0^t\int_{B^c}\int_{\R^d}u_0(y)\big[  \theta(\phi_{s-}(y)+z)-\theta(\phi_{s-}(y)) \big] J_{s-}(y)dy\; \prm(ds,dz)\Big|=0.\nonumber
\end{aligned}
\end{equation}
\end{lemma}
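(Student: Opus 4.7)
The plan is to exploit the fact that the Poisson random measure $\mu$ restricted to the large-jump region $B^{c}$ has only finitely many atoms on any bounded time interval, so that the two stochastic integrals collapse to $\mathbb{P}$-a.s. finite sums. This will convert the problem into a finitely-many-term analogue of Lemma \ref{lem-A5}, where the desired convergence comes directly from Remark \ref{rem-ciao}.

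Concretely, I would proceed as follows. Since $\nu(B^{c})<\infty$, there exists a $\mathbb{P}$-full event $\tilde{\Omega}\subset \Omega^{(vi)}$ such that, for every $T>0$ and every $\omega\in \tilde{\Omega}$, the set $\mathcal{J}_{T}(\omega):=\{(\tau_i,Z_i)\}_{i=1}^{N_T(\omega)}$ of jump times and sizes of $L$ with $Z_i\in B^{c}$ is finite. On $\tilde{\Omega}$ both stochastic integrals reduce to pathwise finite sums, and after the change of variables $x=\phi_{\tau_i-}^{\eps}(y)$ (resp.\ $x=\phi_{\tau_i-}(y)$), using $J_{\tau_i-}^{\eps}(y)$ and $J_{\tau_i-}(y)$ as Jacobians, the difference to be estimated becomes
\begin{align*}
\sup_{t\in[0,T]}\sum_{i:\,\tau_i\le t}\Bigl|\int_{\R^d}\bigl[u_0\circ(\phi_{\tau_i-}^{\eps})^{-1}(x)-u_0\circ(\phi_{\tau_i-})^{-1}(x)\bigr]\,g_i(x)\,dx\Bigr|,
\end{align*}
where $g_i(x):=\theta(x+Z_i)-\theta(x)$ is compactly supported and hence lies in $L^{1}(\R^d)$.

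The crucial input is Remark \ref{rem-ciao}, extended to left limits as follows. Since the convergence in Remark \ref{rem-ciao} is \emph{uniform} in $t\in[0,T]$ with respect to the weak$^\ast$-$L^\infty$ topology, for every $g\in L^{1}(\R^d)$ the scalar c\`adl\`ag functions $F^{\eps}(t):=\int u_0\circ(\phi_{t}^{\eps})^{-1}(x)g(x)\,dx$ converge uniformly to $F(t):=\int u_0\circ(\phi_{t})^{-1}(x)g(x)\,dx$, and uniform convergence of c\`adl\`ag functions transfers to the left limits, $\sup_{t\in[0,T]}|F^{\eps}(t-)-F(t-)|\to 0$. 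Applying this observation with $g=g_i$, for each fixed $i$ we obtain that the $i$-th summand above tends to $0$ as $\eps\in\Upsilon_3\to 0$. Since, for every fixed $\omega\in\tilde{\Omega}$, the sum is finite (with $N_T(\omega)$ terms independent of $\eps$) and each term is uniformly bounded by $2C_0\Vert u_0\Vert_{L^\infty}\Vert\theta\Vert_0\cdot\Leb(\supp g_i)$, taking the maximum over $i\le N_T(\omega)$ yields the claimed uniform-in-$t$ convergence along the whole sequence $\Upsilon_3$.

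No routine extraction of a further subsequence is actually needed beyond what was already arranged in Lemma \ref{lem-A5}; one may simply take $\Upsilon_{4}:=\Upsilon_{3}$ and $\Omega^{(vii)}:=\Omega^{(vi)}\cap\tilde{\Omega}$. The only potentially delicate point I foresee is the handling of the left-limit terms $\phi^{\eps}_{\tau_i-}$: since the jump times $\tau_i$ are exactly the points at which $\phi^{\eps}$ (and $\phi$) has jumps arising from $L$, one must verify that uniform weak$^\ast$ convergence indeed descends to these left traces. This is taken care of by the elementary observation above on uniform convergence of c\`adl\`ag scalar functions, which completes the argument.
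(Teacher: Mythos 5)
Your proposal is correct and follows essentially the same route as the paper: change variables via the Jacobian to reduce to $u_0\circ(\phi^{\eps}_{s-})^{-1}$, note that the integral against $\mu$ over $B^{c}$ is a $\mathbb{P}$-a.s.\ finite sum over the large jumps of $L$, and conclude from the uniform-in-$s$ weak$^\ast$ convergence of Remark \ref{rem-ciao}. Your explicit remark that uniform convergence of c\`adl\`ag scalar functions passes to left limits is a slightly more careful handling of the $s-$ traces than the paper gives, but the argument is the same.
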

\begin{proof}[Proof of Lemma \ref{lem-A6}]
Using similar argument of proving \eqref{eqn-A5 01}, we have
\begin{align}\label{eqn-A6 01}
&\sup_{t\in[0,T]}\Big|
\int_0^t\int_{B^c}\int_{\R^d}u_0(y)\big[  \theta(\phi_{s-}^{\eps}(y)+z)-\theta(\phi_{s-}^{\eps}(y)) \big] J_{s-}^{\eps}(y)dy\; \prm(ds,dz)\\
&\quad\quad\quad\quad\quad\quad-
\int_0^t\int_{B^c}\int_{\R^d}u_0(y)\big[  \theta(\phi_{s-}(y)+z)-\theta(\phi_{s-}(y)) \big] J_{s-}(y)dy\; \prm(ds,dz)
\Big| \\
&=\sup_{t\in[0,T]}
\Big|
\int_0^t\int_{B^c}\int_{\R^d}u_0((\phi_{s-}^{\eps})^{-1}(x))\big[  \theta(x+z)-\theta(x) \big]dx\; \prm(ds,dz)\\
&\quad\quad\quad\quad\quad\quad-
\int_0^t\int_{B^c}\int_{\R^d}u_0(\phi_{s-}^{-1}(x))\big[  \theta(x+z)-\theta(x) \big]dx\; \prm(ds,dz)
\Big|\\
&\leq \int_0^T\int_{B^c} \Big|\int_{\R^d}u_0((\phi_{s-}^{\eps})^{-1}(x))\big[  \theta(x+z)-\theta(x) \big]dx\\
&\quad\quad\quad\quad\quad\quad-
\int_{\R^d}u_0(\phi_{s-}^{-1}(x))\big[  \theta(x+z)-\theta(x) \big]dx\Big| \; \prm(ds,dz).
\end{align}
Let us fix $z \in B^c$, and set $K^{1}_z=\{x\in\R^d:\dist(K,\{x\})\leq |z|+1\}$. Since by  Remark \ref{rem-ciao},  on  $\Omega^{(iv)}$,
\begin{align}
&\int_{\R^d}\big[u_0((\phi_{s}^{\eps})^{-1}(x))- u_0(\phi_{s}^{-1}(x))\big]  \big[ \theta(x+z)-\theta(x) \big ]dx\\
&=\int_{K^{1}_z}\big[u_0((\phi_{s}^{\eps})^{-1}(x))- u_0(\phi_{s}^{-1}(x))\big]  \big[ \theta(x+z)-\theta(x) \big ]dx
\to 0,
\end{align}
uniformly w.r.t.  $s \in [0,T]$,  by recalling  that
 $$
\int_0^t \int_{B^c} |F_\eps(s,z) | \mu(ds, dz)  = \sum_{0 < s \le t
  \; }
  |F_\eps(s, \triangle L _s)| \1_{B^c} (\triangle L _s)
$$  as a random finite sum, with
\begin{align}
F_\eps(s,z) = \int_{\R^d}\big[u_0((\phi_{s}^{\eps})^{-1}(x))- u_0(\phi_{s}^{-1}(x))\big]  \big[ \theta(x+ z)-\theta(x) \big ]dx,
\end{align}
we deduce
 the assertion.
\end{proof}

\begin{lemma}\label{lem-A7}
There exists   $\mathbb{P}$-full set  $\Omega^{(viii)} \subset \Omega^{(vii)}$ and  an infinite set $\Upsilon_{5} \subset \Upsilon_{4}$
 such that for all $T\geq 0$ and every $\omega \in \Omega^{(viii)}$,
\begin{equation}\label{eqn-A7}
\begin{aligned}
&
\lim_{\eps\in \Upsilon_{5} }\sup_{t\in[0,T]}\Big| \int_0^t\lint_{B}\int_{\R^d}u_0(y)\big[  \theta(\phi_{s}^{\eps}(y)+z)-\theta(\phi_{s}^{\eps}(y)) -\sum_{i=1}^d z_iD_i\theta(\phi_{s}^{\eps}(y))\big] J_s^{\eps}(y)dy\, \nu(dz)\, ds
\\
& \hspace{1.9truecm} - \int_0^t\lint_{B}\int_{\R^d}u_0(y)\big[  \theta(\phi_{s}(y)+z)-\theta(\phi_{s}(y)) -\sum_{i=1}^d z_iD_i\theta(\phi_{s}(y))\big] J_s(y)dy\, \nu(dz)\, ds\Big|=0.
\end{aligned}
\end{equation}
\end{lemma}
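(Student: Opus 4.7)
My plan is to reduce the claim to a deterministic application of the dominated convergence theorem with respect to $\Leb\otimes\nu$ on $[0,T]\times B$, working on the $\mathbb{P}$-full event $\Omega^{(iv)}$ and along the index set $\Upsilon_1$ produced in Lemma~\ref{lem-A2} (and inherited by $\Upsilon_4\subset\Upsilon_1$). After the change of variables $x=\phi_s^{\eps}(y)$ (respectively $x=\phi_s(y)$) in both iterated integrals and setting
\[
f_z(x):=\theta(x+z)-\theta(x)-\sum_{i=1}^d z_i D_i\theta(x),
\]
the quantity whose supremum over $t\in[0,T]$ I must control is bounded by $\int_0^T\lint_{B}G_\eps(s,z)\,\nu(dz)\,ds$, where
\[
G_\eps(s,z):=\Bigl|\int_{\R^d}\bigl[u_0((\phi_s^{\eps})^{-1}(x))-u_0(\phi_s^{-1}(x))\bigr] f_z(x)\,dx\Bigr|.
\]

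I then record two quantitative facts. Let $K=\supp\theta$ and $K^1=\{x\in\R^d:\dist(x,K)\le 1\}$. Since $f_z$ is supported in $K\cup(K-z)\subset K^1$ for every $z\in B$, Taylor's theorem applied to $\theta\in C_c^\infty(\R^d)$ yields the pointwise bound $|f_z(x)|\le\tfrac12\|D^2\theta\|_0\,|z|^2\,\mathbf{1}_{K^1}(x)$ and hence $\|f_z\|_{L^1(\R^d)}\le C|z|^2$ with $C:=\tfrac12\|D^2\theta\|_0\,\Leb(K^1)$. This gives the uniform domination
\[
G_\eps(s,z)\le 2\|u_0\|_{L^\infty(\R^d)}\,\|f_z\|_{L^1(\R^d)}\le 2C\|u_0\|_{L^\infty(\R^d)}\,|z|^2,
\]
valid for every $\eps\in\Upsilon_1$, $s\in[0,T]$ and $\omega\in\Omega^{(iv)}$; the dominating function is $\nu$-integrable on $B$ thanks to the defining property \eqref{eqn-Levy measure moment} of a L\'evy measure.

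On the other hand, Remark~\ref{rem-ciao} asserts, on $\Omega^{(iv)}$ and along $\eps\in\Upsilon_1$, the weak$^\ast$ convergence $u_0\circ(\phi_s^{\eps})^{-1}\to u_0\circ\phi_s^{-1}$ in $L^\infty(\R^d)$ \emph{uniformly in} $s\in[0,T]$. Testing this convergence against the fixed function $f_z\in L^1(\R^d)$ for each $z\in B$ separately yields
\[
\lim_{\Upsilon_1\ni\eps\to 0}\;\sup_{s\in[0,T]}G_\eps(s,z)=0\qquad\text{for every }z\in B.
\]
Combined with the previous $|z|^2$-domination, the dominated convergence theorem applied to the (finite) measure $\nu\lfloor_B$ gives $\lint_B\sup_{s\in[0,T]}G_\eps(s,z)\,\nu(dz)\to 0$, whence
\[
\sup_{t\in[0,T]}\Bigl|\int_0^t\lint_{B}(\cdots)\,\nu(dz)\,ds\Bigr|\le T\lint_B\sup_{s\in[0,T]}G_\eps(s,z)\,\nu(dz)\xrightarrow[\Upsilon_1\ni\eps\to 0]{}0.
\]

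This convergence is pathwise on the already available set $\Omega^{(vii)}$ and holds along the entire sequence $\Upsilon_4$, so in the statement of the lemma one may simply take $\Omega^{(viii)}=\Omega^{(vii)}$ and $\Upsilon_5=\Upsilon_4$; no further extraction of a subsequence is necessary. The only point that requires care is the uniformity in $s$ of the weak$^\ast$ convergence, which is precisely what Remark~\ref{rem-ciao} records. In contrast to Lemmata~\ref{lem-A5} and~\ref{lem-A6}, the second-order Taylor cancellation built into $f_z$ is strong enough to offset the singularity of $\nu$ near the origin without any stochastic estimate, so I do not anticipate a genuine obstacle here.
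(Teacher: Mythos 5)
Your argument is correct and coincides with the paper's own proof: the same change of variables to $u_0\circ(\phi_s^{\eps})^{-1}-u_0\circ\phi_s^{-1}$ tested against the second-order Taylor remainder of $\theta$, the same $|z|^2$-domination combined with $\lint_B|z|^2\nu(dz)<\infty$, the same appeal to Remark~\ref{rem-ciao} for convergence uniform in $s$, and the same dominated convergence step. Your observation that no new exceptional set or subsequence extraction is needed (one may take $\Omega^{(viii)}=\Omega^{(vii)}$, $\Upsilon_5=\Upsilon_4$) is also consistent with the paper, whose proof works directly on $\Omega^{(iv)}$ along $\Upsilon_1$.
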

\begin{proof}[Proof of Lemma \ref{lem-A7}]  We use notations introduced in the proof of Lemma \ref{lem-A5}. Arguing as before we have
\begin{align}\label{eqn-A7 01}
 &\sup_{t\in[0,T]}\Big|\int_0^t\lint_{B}\int_{\R^d}u_0(y)\big[  \theta(\phi_{s}^{\eps}(y)+z)-\theta(\phi_{s}^{\eps}(y)) -\sum_{i=1}^d z_iD_i\theta(\phi_{s}^{\eps}(y))\big] J_s^{\eps_n}(y)dy\; \nu(dz)\, ds\nonumber\\
&\quad\quad\quad\quad\quad\quad-
 \int_0^t\lint_{B}\int_{\R^d}u_0(y)\big[  \theta(\phi_{s}(y)+z)-\theta(\phi_{s}(y)) -\sum_{i=1}^d z_iD_i\theta(\phi_{s}(y))\big] J_s(y)dy\; \nu(dz)\, ds
\Big|\nonumber\\
&=
 \sup_{t\in[0,T]}\Big|\int_0^t\lint_{B}\int_{\R^d}u_0((\phi_{s}^{\eps})^{-1}(x))\big[  \theta(x+z)-\theta(x) -\sum_{i=1}^d z_iD_i\theta(x)\big] dx\; \nu(dz)\, ds\nonumber\\
&\quad\quad\quad\quad\quad\quad-
 \int_0^t\lint_{B}\int_{\R^d}u_0(\phi_{s}^{-1}(x))\big[  \theta(x+z)-\theta(x) -\sum_{i=1}^d z_iD_i\theta(x)\big] dx\; \nu(dz)\, ds
\Big|\nonumber\\
&\leq
\int_0^T\lint_{B}\Big|\int_{K^1}[u_0((\phi_{s}^{\eps})^{-1}(x))-u_0(\phi_{s}^{-1}(x))]\big[  \theta(x+z)-\theta(x) -\sum_{i=1}^d z_iD_i\theta(x)\big] dx\Big| \nu(dz)\, ds
\nonumber.
\end{align}
 Let us fix $z \in B$.   By  Remark \ref{rem-ciao}, we have  on  $\Omega^{(iv)}$,
\begin{align}
\int_{K^{1}}\big[u_0((\phi_{s}^{\eps})^{-1}(x))-u_0(\phi_{s}^{-1}(x))\big]  \big[ \theta(x+z)-\theta(x) - \sum_{i=1}^d z_iD_i\theta(x) \big ]dx \to 0,
\end{align}
uniformly w.r.t.  $s \in [0,T]$.
 We also have the estimate, for $z \in B$ and $s\in[0,T]$,
\begin{align*}
\Big |\int_{K^{1}}\big[u_0((\phi_{s}^{\eps})^{-1}(x))-u_0(\phi_{s}^{-1}(x))\big]  \big[ \theta(x+z)-\theta(x) - \sum_{i=1}^d z_iD_i\theta(x) \big ]dx \Big|
\le 2 C_0 \| u_0\|_{L^\infty(\R^d)}\| D^2 \theta \|_{0}
 |z|^2,
\end{align*}
where $C_0 = Leb (B_{R_0})$.
Since $\lint_B |z|^2 \nu (dz) < \infty$, we can finally apply the Lebesgue DCT and get, on  $\Omega^{(iv)}$,
\begin{align*}
\int_0^T\lint_{B}\Big|\int_{K^{1}}\big[u_0((\phi_{s}^{\eps})^{-1}(x))-u_0(\phi_{s}^{-1}(x))\big]  \big[ \theta(x+z)-\theta(x) -  \sum_{i=1}^d z_iD_i\theta(x) \big ]dx\Big|\nu(dz)ds  \to 0.
\end{align*}
 The proof of this lemma is complete.
\end{proof}

Now the proof of Theorem \ref{thm-transport equation} is complete.

\section{Uniqueness of weak solutions to the stochastic transport equation}\label{uni1}

{The two main results in this section are Theorems \ref{thm-uniqueness-1} and \ref{thm-uniqueness} about the 
    uniqueness of  weak$^\ast$-$\mathrm{L}^{\infty}$-solutions to the problem \eqref{eqn-transport-Markus} 
 in the sense of Definition \ref{def-transport-weak-2} under different assumptions. }

\subsection{ New auxiliary regularity type  results }\label{subsec 6.1}

We begin with recalling a basic definition. 
\begin{definition}\label{def-weak derivative}
Suppose that $u:\mathbb{R}^d \to  \mathbb{R}$ is locally Lebesgue essentially bounded  and $b:\mathbb{R}^d \to  \mathbb{R}^d$ is locally Lebesgue integrable  such that $\divv b :\mathbb{R}^d \to  \mathbb{R}$ is locally Lebesgue integrable. \\
We define a  distribution $b \cdot  D u \in \call{D}^\prime(\mathbb{R}^d)$ by the following formula
\begin{equation}\label{eqn-v nabla g}
\lb b\cdot  D u, \rho \rb:=-\lb u  b  ,  D \rho \rb   -\lb   u \,\divv b, \rho \rb, \;\; \rho \in \call{D}(\mathbb{R}^d),
\end{equation}
where $\call{D}^\prime(\mathbb{R}^d)$ is the space of distributions.
\end{definition}
\begin{remark}\label{rem-weak derivative}
 The above   formula is true when the functions $b$ and $u$ are of $C^1$ class. Let us also observe that the bilinear map
 \begin{equation}\label{eqn-???}
 (b,u) \mapsto b\cdot  D u \in \call{D}^\prime(\mathbb{R}^d)
 \end{equation}
 is continuous with respect to the topology on $(b,u)$ as required above.
    \end{remark}
Below we present an equivalent form of the definition of $b\cdot D u$.
\begin{proposition}\label{prop-weak derivative}
Suppose that $u:\mathbb{R}^d \to  \mathbb{R}$ is a locally Lebesgue essentially bounded function  and $b:\mathbb{R}^d \to  \mathbb{R}^d$ is a locally Lebesgue integrable vector field  such that the distributional derivative  $\divv b :\mathbb{R}^d \to  \mathbb{R}$ is locally Lebesgue integrable.\\
 Then the   distribution $b\cdot D u $ defined in Definition \ref{def-weak derivative} satisfies the following identity
\begin{equation}\label{eqn-v nabla g-2}
 b\cdot  D u :=  -  u \,\divv b +\divv( u  b),
\end{equation}
where \\
(i) $u \,\divv b$ is a point-wise product of $u$ and  $\divv b $ and so is a locally Lebesgue integrable function,\\
(ii)  $u  b$ is a locally Lebesgue integrable vector field,\\
(iii) $\divv( u  b)$ is the distributional divergence of $ u  b$.
\end{proposition}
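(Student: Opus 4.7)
The plan is to observe that this proposition is essentially a reformulation of Definition \ref{def-weak derivative}, so the strategy will consist of two short steps: first verify the integrability assertions (i)--(iii), and then rewrite the defining formula \eqref{eqn-v nabla g} using the distributional divergence.

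For the integrability claims, I would note that (i) follows because $u \in L^\infty_{\mathrm{loc}}(\mathbb{R}^d)$ and $\divv b \in L^1_{\mathrm{loc}}(\mathbb{R}^d)$, so the pointwise product $u\,\divv b$ lies in $L^1_{\mathrm{loc}}(\mathbb{R}^d)$; for (ii) I would argue analogously that $ub \in L^1_{\mathrm{loc}}(\mathbb{R}^d,\mathbb{R}^d)$ since $u$ is locally essentially bounded and $b \in L^1_{\mathrm{loc}}(\mathbb{R}^d,\mathbb{R}^d)$; and (iii) then makes sense because every element of $L^1_{\mathrm{loc}}(\mathbb{R}^d,\mathbb{R}^d)$ defines a vector-valued distribution whose distributional divergence is a well-defined scalar distribution.

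For the identity \eqref{eqn-v nabla g-2}, my plan is to fix an arbitrary test function $\rho \in \mathcal{D}(\mathbb{R}^d)$ and compute the pairing of the RHS against $\rho$. By linearity of distributions,
\begin{equation}
\langle -u\,\divv b + \divv(ub),\rho\rangle = -\langle u\,\divv b,\rho\rangle + \langle \divv(ub),\rho\rangle.
\end{equation}
By the definition of the distributional divergence of the vector field $ub \in L^1_{\mathrm{loc}}(\mathbb{R}^d,\mathbb{R}^d)$, applied componentwise,
\begin{equation}
\langle \divv(ub),\rho\rangle = -\langle ub, D\rho\rangle = -\sum_{i=1}^d \int_{\mathbb{R}^d} u(x)\,b_i(x)\,\partial_i\rho(x)\,dx,
\end{equation}
where each integral is absolutely convergent by items (i) and (ii) and the compact support of $\rho$. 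Substituting this into the previous display yields exactly the right-hand side of \eqref{eqn-v nabla g}, which by definition equals $\langle b\cdot Du,\rho\rangle$.

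Since there is no genuine obstacle here, I do not anticipate a hard step; the only point worth caring about is being explicit about the distributional meaning of each term, and in particular ensuring that the componentwise interpretation of $\divv(ub)$ matches the vector-duality pairing $\langle ub,D\rho\rangle$ used in Definition \ref{def-weak derivative}. Once this bookkeeping is spelled out, the identity follows immediately by comparing the two expressions term by term, completing the proof.
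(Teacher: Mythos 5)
Your proposal is correct and is exactly the intended argument: the paper states this proposition without a separate proof, treating it as an immediate reformulation of Definition \ref{def-weak derivative}, and your unwinding of the pairing $\langle \divv(ub),\rho\rangle=-\langle ub,D\rho\rangle$ together with the elementary integrability observations (i)--(iii) is precisely that reformulation.
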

\begin{remark}\label{rem-divergence}
Let us note that if a locally Lebesgue integrable  vector field $b:\mathbb{R}^d \to  \mathbb{R}^d$ is   such that $\divv b :\mathbb{R}^d \to  \mathbb{R}$ is locally Lebesgue integrable, then for every $C^1$-class function $u:\mathbb{R}^d \to \mathbb{R}$ the following formula  holds in the sense of distributions
\begin{equation}\label{eqn-divergence}
 \divv( u  b)=b\cdot  D u +  u \,\divv b,
\end{equation}
where now each term is a well defined distribution. Indeed, by assumptions $ub$ is a locally Lebesgue integrable  vector field and functions $u \,\divv b$ and $b\cdot D u$ are  locally Lebesgue integrable functions.
    \end{remark}

\begin{definition}\label{def-commutaror}
Suppose that $u:\mathbb{R}^d \to  \mathbb{R}$ is locally Lebesgue essentially bounded and $b:\mathbb{R}^d \to  \mathbb{R}^d$ is locally Lebesgue integrable  such that $\divv b :\mathbb{R}^d \to  \mathbb{R}$ is locally Lebesgue integrable. We define  an expression $\mathcal{R}_\eps[b,u]$  by the following formula
\begin{equation}\label{eqn-commutator R_eps}
 \mathcal{R}_\eps[b,u]:= \vartheta_\eps \ast ( b\cdot  D u) -  b \cdot   D ( \vartheta_\eps \ast u),
\end{equation}
where $b\cdot  D u$ is defined in Definition \ref{def-weak derivative} and $\vartheta_\eps$ is defined by \eqref{def-vartheta-eps},  and we call $ \mathcal{R}_\eps[b,u]$ the  $\eps$-commutator.
\end{definition}
Let us observe that although the first term on the RHS of \eqref{eqn-commutator R_eps} is a $C^\infty$ function,  the second is not even $C^1$.

Let $\mathcal{S}(\mathbb{R}^{d})$ be the space of smooth functions on $\mathbb{R}^d$ rapidly decreasing at infinity.
For $s \in \mathbb{R}$, we define the Bessel potential of order $s$ to be the continuous bijective linear operator $\mathcal{J}^{s}: \mathcal{S}\left(\mathbb{R}^{d}\right) \to  \mathcal{S}(\mathbb{R}^{d})$ by
\begin{equation}
\mathcal{J}^{s} u=\mathscr{F}^{-1}\left(1+|\cdot\vert^{2}\right)^{s / 2} \mathscr{F} u,
\end{equation}
where $\mathscr{F}$ is the Fourier transform and $\mathscr{F}^{-1}$ is the inverse Fourier transform.

For $p \in(1, \infty)$ and $s\geq0$, we use $H^{s,p}\left(\mathbb{R}^{d}\right)$ to denote the Bessel potential space
\begin{equation}
H^{s,p}(\mathbb{R}^{d})=\left\{u \in L^{p}(\mathbb{R}^{d}):\mathcal{J}^s u \in L^{p}(\mathbb{R}^{d})\right\}
\end{equation}
which is equipped with the norm
\begin{equation*}
\Vert u\Vert_{H^{s,p}}:=\left\Vert\mathcal{J}^s u\right\Vert_{L^{p}}.
\end{equation*}
{
For positive integers $m$ and $1\leq p<\infty$, let $$W^{m, p}(\mathbb{R}^d) =\left\{u \in L^p(\mathbb{R}^d): D^\alpha u \in L^p(\mathbb{R}^d), \text{ for }0 \leq|\alpha| \leq m\right\}.$$
Then $W^{m,p}(\mathbb{R}^d)=H^{m,p}(\mathbb{R}^d)$. }
Let $W^{s, p}\left(\mathbb{R}^{d}\right)$, $0\leq s<1$, $p\geq 1$ consist of all functions $u$ in $L^{p}\left(\mathbb{R}^{d}\right)$ for which the norm

\begin{equation}
\Vert u\Vert_{W^{s,p}}:=\Vert u\Vert_{L^p}+\left(\int_{\mathbb{R}^{d}} \int_{\mathbb{R}^{d}} \frac{|u(x)-u(y)\vert^{p}}{|x-y\vert^{d+s p}} d x d y\right)^{1 / p}
:=
\Vert u\Vert_{L^p}
+
[u]_{W^{s,p}}
\end{equation}
is finite. {Then $W^{s, p}(\mathbb{R}^d)=(L^p(\R^d), W^{1, p}(\mathbb{R}^d))_{s , p}$ by the real interpolation.} By $C_r^{\infty}:= C_{0}^{\infty}(B_r)$ we denote the space of smooth functions with compact support in $B_r=\{x\in\R^d: |x|\leq r\}$. 
Similarly we use the following  notations. For any $0<s<1$ and $p \in[1, \infty]$,
\begin{align}
 L_{r}^{p}&\coloneq L^{p}(B_r),  \\ 
W_{r}^{s, p}&\coloneq W^{s, p}(B_r).
 \end{align}
Let $\Vert\cdot\Vert_{L_r^p}$ and $\Vert\cdot\Vert_{W_{r}^{s, p}}$ denote the $L^{p}$-norm and the $W^{s, p}$-norm on $B_r$, $0<s<1$, $p \in[1, \infty]$, respectively.

We will analyze the regularity of the Jacobian of the stochastic flow in Theorem \ref{Reg-est-Jaco} and Theorem \ref{Reg-est-Jaco-2}. Under some integrability assumption on $\mathrm{div }\,b$, we will show that the Jacobian $J\phi_\cdot(\cdot,\omega)$ of the stochastic flow is in $L^2(0,T;W_r^{\delta,p})$, $\mathbb{P}$-a.s. for some $p>1$.
 Before moving to the regularity of Jacobian, let us establish first some results on the distributional commutator. 
 
 \begin{remark}
   Comparing Proposition \ref{comu-prop-1} below with \cite[Lemma 1]{{FGP-2012}}, we observe that the latter requires $J\phi_t \in W^{1-\beta,1}_{r}$ which is not sufficient for our purpose.  Because even for bounded domain $\mathcal{O}$ in $\mathbb{R}^d$,  the embedding $W^{\delta, p}(\mathcal{O}) \subset W^{\delta, 1}(\mathcal{O})$, $p>1$ no longer holds when  $\delta$ is not an integer, see \cite{Mir+Sic}.
So we generalize the result in Proposition \ref{comu-prop-1} to prove that $J\phi_t \in W^{1-\beta,p}_{r}$, $p\geq 1$.
\end{remark}

The proofs of the following Lemma \ref{Lem-commu} and Proposition \ref{comu-prop-1} are similar in spirit to \cite[Corollary 1 and Lemma 1]{{FGP-2012}}.

\begin{lemma}\label{Lem-commu}
Assume $v \in L_{\mathrm{loc}}^{\infty}(\mathbb{R}^{d}, \mathbb{R}^{d}), \,\divv v \in L_{\mathrm{loc}}^{1}(\mathbb{R}^{d}),\, g \in L_{\mathrm{loc}}^{\infty}(\mathbb{R}^{d})$. If there exists $\beta \in(0,1)$ such that $v \in C_{\mathrm{loc}}^{\beta}(\mathbb{R}^{d}, \mathbb{R}^{d})$, then for any $\rho \in C_{r}^{\infty}$ and for sufficiently small $\eps$, we have, for $p\geq 1$,
\begin{align}
\left|\int_{\R^d} \mathcal{R}_{\eps}[v, g](x) \rho(x) d x\right|\leq  C_{r} \Vert g\Vert_{L_{r+1}^{\infty}}\left(\Vert\rho\Vert_{L_{r}^{\infty}}\Vert\divv v\Vert_{L_{r+1}^{1}}+[v]_{C_{r+1}^{\beta}}[\rho]_{W_{r}^{1-\beta, p}}\right).
\end{align}
\end{lemma}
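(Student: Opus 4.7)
The plan is to follow the classical DiPerna--Lions commutator strategy, adapted to the Hölder setting where $v$ has only fractional regularity.

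First, I would rewrite the commutator pointwise. Using the symmetry of $\vartheta_\eps$ and the distributional definition of $v\cdot Dg$ given in Definition \ref{def-weak derivative}, one unwinds
\[
\vartheta_\eps*(v\cdot Dg)(x)
=\int_{\R^d} g(y)\,v(y)\cdot D\vartheta_\eps(x-y)\,dy
-\int_{\R^d} g(y)\,\divv v(y)\,\vartheta_\eps(x-y)\,dy,
\]
while $v(x)\cdot D(\vartheta_\eps*g)(x)=\int g(y)\,v(x)\cdot D\vartheta_\eps(x-y)\,dy$. Subtracting gives the representation
\[
\mathcal{R}_\eps[v,g](x)=\int g(y)[v(y)-v(x)]\cdot D\vartheta_\eps(x-y)\,dy
-\int g(y)\,\divv v(y)\,\vartheta_\eps(x-y)\,dy,
\]
so pairing with $\rho$ produces $\int\mathcal{R}_\eps[v,g]\rho\,dx=I_1-I_2$ with the obvious definitions.

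Next I would estimate the easy piece and split the hard piece. By Fubini, $I_2=\int(\vartheta_\eps*\rho)(y)\,g(y)\divv v(y)\,dy$, whence for $\eps<1$ Young's inequality yields $|I_2|\le\|\rho\|_{L^\infty_r}\|g\|_{L^\infty_{r+1}}\|\divv v\|_{L^1_{r+1}}$. For $I_1$ I would write $\rho(x)=\rho(y)+[\rho(x)-\rho(y)]$. The ``$\rho(y)$-part'' reduces, via the identities $\int D\vartheta_\eps=0$ and $\int v(x)\cdot D\vartheta_\eps(x-y)\,dx=-(\vartheta_\eps*\divv v)(y)$ (distributional integration by parts), to $\int g(y)\rho(y)(\vartheta_\eps*\divv v)(y)\,dy$, which is controlled again by $\|\rho\|_{L^\infty_r}\|g\|_{L^\infty_{r+1}}\|\divv v\|_{L^1_{r+1}}$.

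The remaining ``genuine commutator'' term is
\[
J_1=\int\!\!\int g(y)[v(y)-v(x)]\cdot D\vartheta_\eps(x-y)[\rho(x)-\rho(y)]\,dx\,dy.
\]
Since $\rho\in C_r^\infty$ has support strictly inside $B_r$, choosing $\eps$ small enough (depending on the support of $\rho$) forces both $x,y\in B_r$ whenever the integrand is nonzero. I then use $|v(y)-v(x)|\le[v]_{C^\beta_{r+1}}|x-y|^\beta$ and factor the kernel as
\[
\frac{|\rho(x)-\rho(y)|}{|x-y|^{(d+(1-\beta)p)/p}}\cdot|D\vartheta_\eps(x-y)|\,|x-y|^{\beta+(d+(1-\beta)p)/p},
\]
where the second exponent equals $1+d/p$. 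Hölder's inequality with conjugate exponents $p,p'$ then gives
\[
|J_1|\le[v]_{C^\beta_{r+1}}\|g\|_{L^\infty_{r+1}}\,[\rho]_{W^{1-\beta,p}_r}\,
\Bigl(\int_{B_r}\!\!\int_{B_r}|D\vartheta_\eps(x-y)|^{p'}|x-y|^{(1+d/p)p'}dxdy\Bigr)^{1/p'}.
\]

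The main technical point, which I view as the crux of the argument, is the scaling computation for this last kernel integral. Changing variables $u=(x-y)/\eps$ and using $|D\vartheta_\eps(u)|=\eps^{-d-1}|D\vartheta(u/\eps)|$, the $\eps$-powers are $(-(d+1)+1+d/p)p'+d=d(1/p-1)p'+d$, and since $(1-1/p)p'=1$ this exponent collapses to $0$. Hence the kernel integral is bounded by a constant depending only on $\vartheta$ and $r$, independently of $\eps$. Summing the three estimates yields exactly the claimed bound. The most delicate aspects are the correct distributional bookkeeping that turns the first part of $I_1$ into a $(\vartheta_\eps*\divv v)$-term (rather than requiring any smoothness of $g$), and verifying that for sufficiently small $\eps$ the Gagliardo seminorm involved is really $[\rho]_{W^{1-\beta,p}_r}$ on $B_r$ rather than a larger ball.
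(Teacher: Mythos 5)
Your argument is correct and follows essentially the same route as the paper: the pointwise DiPerna--Lions representation of $\mathcal{R}_\eps[v,g]$ together with the bounds on the divergence-type terms is exactly the content the paper imports from \cite[Lemma 22]{FGP_2010-Inventiones} (and, for $p=1$, from \cite{FGP-2012}), while your H\"older-plus-scaling estimate of the remaining double integral $J_1$ is the same computation the paper performs, merely with the $\eps$-powers distributed slightly differently between the two H\"older factors. The only caveat — that the pairs $(x,y)$ contributing to $J_1$ need only lie in $B_{r+2\eps}\subset B_{r+1}$ rather than in $B_r$, so the natural output is $[\rho]_{W^{1-\beta,p}_{r+1}}$ — is shared by the paper's own proof, which likewise concludes with the Gagliardo seminorm on $B_{r+1}$.
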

\begin{proof}The case $p=1$ is proved in \cite[Corollary 1]{{FGP-2012}}. We only  need to consider $p>1$.
On account of  \cite[Lemma 22]{FGP_2010-Inventiones} and for sufficiently small $\eps$, we have
\begin{align}
&\left|\int_{\R^d} \mathcal{R}_{\eps}[v, g](x) \rho(x) d x\right|\\
\leq& C_{r}  \Vert g\Vert_{L_{r+1}^{\infty}}\Vert\rho\Vert_{L_{r}^{\infty}}\Vert\divv v\Vert_{L_{r+1}^{1}}
+\left|\iint_{\R^d\times \R^d} g(x^{\prime}) D_{x} \vartheta_{\eps}(x-x^{\prime})(\rho(x)-\rho(x^{\prime}))\left[v(x)-v(x^{\prime})\right] d x d x^{\prime}\right|.
\end{align}
Let $q=\frac{p}{p-1}$ be the H\"older conjugate of $p$.
By applying the H\"older inequality, we find
\begin{align*}
&\left|\iint_{\R^d\times \R^d} g(x^{\prime}) D_{x} \vartheta_{\eps}(x-x^{\prime})\left(\rho(x)-\rho(x^{\prime})\right)\left[v(x)-v(x^{\prime})\right] d x d x^{\prime}\right| \\
&\leq  \frac{1}{\eps}[v]_{C_{r+1}^{\beta}}\Vert g\Vert_{L_{r+1}^{\infty}}  \frac{1}{\eps^{d}} \Big( \iint_{B_{r+1}\times B_{r+1}}\left|D_{x} \vartheta(\frac{x-x^{\prime}}{\eps})\right\vert^q d x d x^{\prime}\Big)^{\frac1q}\\
&\hspace{3cm}\cdot\Big(  \iint_{B_{r+1}\times B_{r+1}}\left|\rho (x)-\rho(x^{\prime})\right\vert^p |x-x'\vert^{p\beta} \1_{[0,2\eps]}(|x-x'|) d x d x^{\prime}\Big)^{\frac 1p} \\
&\leq C_r \frac{\eps^{\beta}}{\eps}[v]_{C_{r+1}^{\beta}}\Vert g\Vert_{L_{r+1}^{\infty}}  \frac{1}{\eps^{d}} \Big( \iint_{B_{r+1}\times B_{r+1}}\left|D_{x} \vartheta(\frac{x-x^{\prime}}{\eps})\right\vert^q d x d x^{\prime}\Big)^{\frac1q}\\
&\hspace{3cm}\cdot\Big( \iint_{B_{r+1}\times B_{r+1}} \frac{\left|\rho(x)-\rho(x^{\prime})\right\vert^p}{\left|x-x^{\prime}\right\vert^{(1-\beta) p+d}}\left|x-x^{\prime}\right\vert^{(1-\beta) p+d}\1_{[0,2\eps]}(|x-x'|) d x d x^{\prime}\Big)^{\frac1p} \\
&= C_r[v]_{C_{r+1}^{\beta}}\Vert g\Vert_{L_{r+1}^{\infty}}    \Big( \iint_{B_{r+1}\times B_{r+1}}\left|D_{x} \vartheta(\frac{x-x^{\prime}}{\eps})\right\vert^q \frac{1}{\eps^d}d x d x^{\prime}\Big)^{\frac1q}\\
&\hspace{1cm}\cdot\Big( \iint_{B_{r+1}\times B_{r+1}} \frac{|\rho(x)-\rho(x^{\prime})\vert^p}{|x-x^{\prime}\vert^{(1-\beta) p+d}}\left|x-x^{\prime}\right\vert^{(1-\beta) p+d} \frac{1}{\eps^{(1-\beta) p+d}}\1_{[0,2\eps]}(|x-x'|)d x d x^{\prime}\Big)^{\frac1p} \\
&\leq  C_r   [v]_{C_{r+1}^{\beta}}\Vert D\vartheta\Vert_{0}\Vert g\Vert_{L_{r+1}^{\infty}}[\rho]_{W_{r+1}^{1-\beta, p}},
\end{align*}
where the constant $C_r$ is independent on $\eps$ and its value may change from line to line. 
For the last inequality, we have used that
\begin{align*}
&\quad\ \iint_{B_{r+1}\times B_{r+1}}\left|D_{x} \vartheta(\frac{x-x^{\prime}}{\eps})\right\vert^q \frac{1}{\eps^d}d x d x^{\prime}\\
&=\iint_{B_{r+1}\times B_{r+1}}\left|D_{x} \vartheta(\frac{x-x^{\prime}}{\eps})\right\vert^q \frac{1}{\eps^d}\1_{[0,2\eps]}(|x-x'|)d x d x^{\prime}\\
&\leq
\Vert D\vartheta\Vert^q_{0}\iint_{B_{r+1}\times B_{r+1}} \frac{1}{\eps^d}\1_{[0,2\eps]}(|x-x'|)d x d x^{\prime} \leq C_r\Vert D\vartheta\Vert^q_{0}.
\end{align*}

\end{proof}

In the following, we always assume that $\eps$ is sufficiently small. The constants appearing in the following two propositions are independent of $\eps$.
\begin{proposition}\label{comu-prop-1}
 Let $\phi$ be a $C^{1}$-diffeomorphism of $\mathbb{R}^{d}$.  Assume that there exist $\beta \in(0,1)$ and $p\geq 1$ such that 
 \begin{trivlist}
\item[(i)] $v \in C_{\mathrm{loc }}^{\beta}(\mathbb{R}^{d}, \mathbb{R}^{d})$,
\item[(ii)]
$\divv v \in L_{\mathrm{loc}}^{1}(\mathbb{R}^{d})$,
\item[(iii)]
$g \in L_{\mathrm{loc}}^{\infty}(\mathbb{R}^{d})$ and 
\item[(iv)]
$J \phi^{-1} \in W_{\mathrm{loc}}^{1-\beta,p}\left(\mathbb{R}^{d}\right)$.
  \end{trivlist}
 Then for  all $r>0$ and any $R>0$, 
 there exist  constants $C_R>0$ and $C_{R,p}>0$ such that if 
 $\rho \in C_{r}^{\infty}$  such that $\operatorname{supp}\left(\rho \circ \phi^{-1}\right) \subseteq B_R$, then 
\begin{align}
&\quad\left|\int_{\R^d} \mathcal{R}_{\eps}[ v,g](\phi(x)) \rho(x) d x\right|\\
&\leq C_{R}\Vert g\Vert_{L_{R+1}^{\infty}}\Vert\rho\Vert_{L_{r}^{\infty}}
\Vert\divv v\Vert_{L_{R+1}^{1}}\Vert J \phi^{-1}\Vert_{L_{R}^{\infty}}\\
&\quad+C_{R,p}\Vert g\Vert_{L_{R+1}^{\infty}}[v]_{C_{R+1}^{\beta}}\left(\Vert D \rho\Vert_{L_{r}^{p}}\left\Vert D \phi^{-1}\right\Vert_{L_{R}^{\infty}}\left\Vert J \phi^{-1}\right\Vert_{L_{R}^{\infty}}+\left[J \phi^{-1}\right]_{W_{R}^{1-\beta, p}}\Vert\rho\Vert_{L_{r}^{\infty}}\right).
\end{align}
Moreover, 
\begin{equation}
\lim _{\eps \to  0} \int_{\R^d} \mathcal{R}_{\eps}[v, g](\phi(x)) \rho(x) d x=0.
\end{equation}

\end{proposition}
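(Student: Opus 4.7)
The plan is to reduce the composed integral to a standard commutator integral via a change of variables, and then apply Lemma \ref{Lem-commu} together with the product rule in fractional Sobolev spaces. First I would substitute $y=\phi(x)$, using that $\phi$ is a $C^1$-diffeomorphism, to rewrite
$$\int_{\R^d}\mathcal{R}_{\eps}[v,g](\phi(x))\rho(x)\,dx=\int_{\R^d}\mathcal{R}_{\eps}[v,g](y)\tilde\rho(y)\,dy,\qquad \tilde\rho(y):=\rho(\phi^{-1}(y))\,J\phi^{-1}(y),$$
where $\operatorname{supp}\tilde\rho\subseteq B_R$ by hypothesis. Applying Lemma \ref{Lem-commu} to $\tilde\rho$ (with $r$ replaced by $R$) immediately yields
$$\Bigl|\int_{\R^d}\mathcal{R}_{\eps}[v,g](\phi(x))\rho(x)\,dx\Bigr|\le C_R\|g\|_{L^\infty_{R+1}}\bigl(\|\tilde\rho\|_{L^\infty_R}\|\divv v\|_{L^1_{R+1}}+[v]_{C^\beta_{R+1}}[\tilde\rho]_{W^{1-\beta,p}_R}\bigr).$$

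Next I would estimate the two norms of $\tilde\rho$. The $L^\infty$-bound is immediate: $\|\tilde\rho\|_{L^\infty_R}\le\|\rho\|_{L^\infty_r}\|J\phi^{-1}\|_{L^\infty_R}$. For the fractional seminorm I use the elementary product rule on $B_R$,
$$[fg]_{W^{1-\beta,p}_R}\le\|f\|_{L^\infty_R}[g]_{W^{1-\beta,p}_R}+\|g\|_{L^\infty_R}[f]_{W^{1-\beta,p}_R},$$
applied with $f=\rho\circ\phi^{-1}$ and $g=J\phi^{-1}$. Since $\rho\circ\phi^{-1}\in W^{1,p}$ (being $C^1$ with compact support in $B_R$) and since on bounded domains the embedding $W^{1,p}\hookrightarrow W^{1-\beta,p}$ is continuous with a constant $C_{R,p}$, one has
$$[\rho\circ\phi^{-1}]_{W^{1-\beta,p}_R}\le C_{R,p}\|D(\rho\circ\phi^{-1})\|_{L^p_R}\le C_{R,p}\|D\rho\|_{L^p_r}\|D\phi^{-1}\|_{L^\infty_R}.$$
Combining these bounds produces the asserted estimate.

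For the limit assertion, the uniform bound above does not vanish as $\eps\to 0$, so I would argue by approximation. Pick a sequence $(v^n)\subset C_{\mathrm{b}}^\infty(\R^d,\R^d)$ with $v^n\to v$ in $C^{\beta'}_{\mathrm{loc}}$ for some $\beta'\in(0,\beta)$ and $\divv v^n\to\divv v$ in $L^1_{\mathrm{loc}}$; observe that by interpolation $J\phi^{-1}\in W^{1-\beta',p}_{\mathrm{loc}}$ as well, so the estimate of the previous paragraph applies with $\beta'$ in place of $\beta$. For each smooth $v^n$, a direct computation using $v^n\cdot Dg=\divv(gv^n)-g\,\divv v^n$ rewrites $\mathcal{R}_{\eps}[v^n,g]$ in the form $\divv\bigl(\vartheta_\eps*(gv^n)-v^n(\vartheta_\eps*g)\bigr)+(\vartheta_\eps*g-g)\divv v^n$, and the classical Friedrichs commutator lemma then yields $\mathcal{R}_{\eps}[v^n,g]\to 0$ in $L^1_{\mathrm{loc}}$, so $\int\mathcal{R}_{\eps}[v^n,g](\phi(x))\rho(x)\,dx\to 0$ as $\eps\to 0$. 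Applying the $\beta'$-version of the estimate to $v-v^n$, whose $C^{\beta'}_{R+1}$ seminorm and $L^1_{R+1}$ divergence are small, lets us pass to the limit $n\to\infty$ and conclude. The main obstacle is precisely this last step: one must verify that the product-rule and embedding arguments give a constant depending continuously on the approximation parameters, so that the smooth $v^n$-estimate and the remainder $v-v^n$-estimate can be made small simultaneously.
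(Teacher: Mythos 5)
Your bound on $\bigl|\int_{\R^d}\mathcal{R}_{\eps}[v,g](\phi(x))\rho(x)\,dx\bigr|$ follows exactly the paper's route: change of variables to $\rho_\phi=\rho(\phi^{-1}(\cdot))\,J\phi^{-1}(\cdot)$, the product rule for the fractional seminorm, the embedding $W^{1,p}_R\subset W^{1-\beta,p}_R$ to control $[\rho\circ\phi^{-1}]_{W^{1-\beta,p}_R}$ by $\Vert D\rho\Vert_{L^p_r}\Vert D\phi^{-1}\Vert_{L^\infty_R}$, and then Lemma \ref{Lem-commu} applied to $\rho_\phi$ (whose proof indeed only uses $\Vert\rho_\phi\Vert_{L^\infty}$ and $[\rho_\phi]_{W^{1-\beta,p}}$, so the lack of smoothness of $\rho_\phi$ is harmless, just as in the paper). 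So the estimate part is correct and essentially identical to the paper's argument.

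The limit assertion is where your proposal has a genuine gap. You approximate $v$ by mollifications $v^n$, which converge to $v$ only in $C^{\beta'}_{\mathrm{loc}}$ for $\beta'<\beta$ (the seminorm $[v-v^n]_{C^{\beta}_{R+1}}$ does not tend to zero in general), and you then invoke the estimate "with $\beta'$ in place of $\beta$", justified by the claim that "by interpolation $J\phi^{-1}\in W^{1-\beta',p}_{\mathrm{loc}}$ as well". This is false: since $\beta'<\beta$ one has $1-\beta'>1-\beta$, so $W^{1-\beta',p}$ is the \emph{smaller} (more regular) space, and membership in $W^{1-\beta,p}_{\mathrm{loc}}$ gives no control of $[J\phi^{-1}]_{W^{1-\beta',p}_R}$; interpolation only lowers, never raises, the smoothness index. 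Consequently the remainder term $\mathcal{R}_\eps[v-v^n,g]$ cannot be estimated uniformly in $\eps$ by a quantity that is small in $n$, and your two-parameter limit does not close. A correct repair keeps $\beta$ fixed and approximates the test function instead: since $\phi$ is a $C^1$-diffeomorphism, $\rho_\phi$ is continuous with support in $B_R$ and lies in $W^{1-\beta,p}_R$, so it can be approximated by smooth $\rho_k$ simultaneously in $L^\infty$ and in $W^{1-\beta,p}_R$; for smooth test functions the vanishing of $\int_{\R^d}\mathcal{R}_\eps[v,g]\rho_k\,dx$ as $\eps\to0$ is the classical commutator result for $v\in C^\beta_{\mathrm{loc}}$ with $\divv v\in L^1_{\mathrm{loc}}$ (the route the paper inherits from \cite{FGP-2012} and \cite{FGP_2010-Inventiones}), and the uniform-in-$\eps$ estimate you already proved, which is linear in $(\Vert\rho_\phi-\rho_k\Vert_{L^\infty_R},[\rho_\phi-\rho_k]_{W^{1-\beta,p}_R})$, transfers the limit to $\rho_\phi$.
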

\begin{proof}
 By changing variable, we obtain
 \begin{equation}\int_{\R^d} \mathcal{R}_{\eps}[v, g](\phi(x)) \rho(x) d x=\int_{\R^d} \mathcal{R}_{\eps}[v, g](y) \rho_{\phi}(y) d y,\end{equation} where the function $\rho_{\phi}$, which is defined by 
\begin{equation}
\rho_{\phi}(y)=\rho\left(\phi^{-1}(y)\right) J \phi^{-1}(y),
\end{equation}
has the support strictly contained in the ball of radius $R$. Observe that
\begin{equation*}
\Vert\rho_{\phi}\Vert_{L^{\infty}_R} \leq \Vert\rho \Vert_{L^{\infty}_r}\Vert J\phi^{-1}\Vert_{L^{\infty}_R}.
\end{equation*}
Now let us check that $\rho_{\phi}\in W^{1-\beta,p}_{\mathrm{loc}}(\R^d)$. By \cite[Corollary 2.2]{Brz+Millet_2014},
\begin{align}
[\rho_{\phi}]_{W_{R}^{1-\beta, p}} \leq C_p \Big([\rho \circ \phi^{-1}]_{W_{R}^{1-\beta, p}} \Vert J \phi^{-1}\Vert_{L_{R}^{\infty}}+[J \phi^{-1}]_{W_{R}^{1-\beta, p}}\Vert\rho\Vert_{L_{r}^{\infty}}\Big).
\end{align}
In view of the Sobolev embedding $W^{1,p}_R\subset W^{1-\beta,p}_R$, we infer
\begin{equation}
[\rho \circ \phi^{-1}]_{W_{R}^{1-\beta, p}} \leq C_R\left\Vert D\left(\rho \circ \phi^{-1}\right)\right\Vert_{L_{R}^{p}} \leq C_R\Vert D \rho\Vert_{L_{r}^{p}}\left\Vert D \phi^{-1}\right\Vert_{L_{R}^{\infty}}.
\end{equation}
Combining the above estimates gives
\begin{equation}
[\rho_{\phi}]_{W_{R}^{1-\beta, p}} \leq C_{R,p}\Big(\Vert D \rho\Vert_{L_{r}^{p}}\left\Vert D \phi^{-1}\right\Vert_{L_{R}^{\infty}}\left\Vert J \phi^{-1}\right\Vert_{L_{R}^{\infty}}+\left[J \phi^{-1}\right]_{W_{R}^{1-\beta, p}}\Vert\rho\Vert_{L_{r}^{\infty}}\Big).
\end{equation}

Applying Lemma \ref{Lem-commu}, the proof is complete.
\end{proof}

We recall the following commutator estimates result from \cite{FGP_2010-Inventiones}. As a minor remark, we note that the commutator should be bounded by $\Vert D \phi^{-1}\Vert^2_{C_{R+1}^{1-\theta}}$ instead of $\Vert D \phi^{-1}\Vert_{C_{R}^{1-\theta}}$ by examining the proof of \cite[Corollary 23]{FGP_2010-Inventiones}.

\begin{proposition}\label{commu-pro}\cite[Corollary 23]{FGP_2010-Inventiones} Let $\phi$ be a $C^{1}$-diffeomorphism of $\mathbb{R}^{d}$.  Assume that there exists $\theta \in(0,1)$ such that $v \in C_{\mathrm{loc }}^{\theta}(\mathbb{R}^{d}, \mathbb{R}^{d})$, $\divv v \in$ $L_{\mathrm{loc}}^{1}(\mathbb{R}^{d}), g \in L_{\mathrm{loc}}^{\infty}(\mathbb{R}^{d})$. Moreover, assume that $J \phi \in C_{\mathrm{loc}}^{1-\theta}\left(\mathbb{R}^{d}\right)$. Then, for any $\rho \in C_{r}^{\infty}$ and any $R>0$ such that $\operatorname{supp}\left(\rho \circ \phi^{-1}\right) \subseteq B_R$, we have the uniform bound
\begin{align*}
\left|\int_{\R^d} \mathcal{R}_{\eps}[ v,g](\phi(x)) \rho(x) d x\right|
&\leq C_{R}\Vert g\Vert_{L_{R+1}^{\infty}}\Vert\rho\Vert_{L_{r}^{\infty}}\left\Vert J \phi^{-1}\right\Vert_{L_{R}^{\infty}}\Vert\divv v\Vert_{L_{R+1}^{1}}\\
&\quad+C_{R}\Vert g\Vert_{L_{R+1}^{\infty}}[v]_{C_{R+1}^{\theta}}\left(\Vert D \phi^{-1}\Vert^2_{C_{R+1}^{1-\theta}}\Vert D \rho\Vert_{L_{r}^{\infty}}+\Vert\rho\Vert_{L_{r}^{\infty}}\left[D \phi^{-1}\right]_{C_{R+1}^{1-\theta}}\right),
\end{align*}
and
in addition,
\begin{equation}
\lim _{\eps \to  0} \int_{\R^d} \mathcal{R}_{\eps}[v, g](\phi(x)) \rho(x) d x=0.
\end{equation}
\end{proposition}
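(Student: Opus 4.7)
The plan is to follow the same blueprint that proved Proposition~\ref{comu-prop-1}, but replace the fractional Sobolev regularity of the pushed-forward test function by H\"older regularity, which is precisely the hypothesis available here. First I would apply the change of variables $y=\phi(x)$, using that $\phi$ is a $C^1$-diffeomorphism, to obtain
\begin{equation}
\int_{\R^d}\mathcal{R}_{\eps}[v,g](\phi(x))\,\rho(x)\,dx
=\int_{\R^d}\mathcal{R}_{\eps}[v,g](y)\,\rho_{\phi}(y)\,dy,
\qquad \rho_{\phi}(y):=\rho(\phi^{-1}(y))\,J\phi^{-1}(y),
\end{equation}
noting that $\operatorname{supp}\rho_{\phi}=\operatorname{supp}(\rho\circ\phi^{-1})\subseteq B_R$.

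Next I would establish a H\"older variant of Lemma~\ref{Lem-commu}: for any $\psi\in C_R^\infty$ (or more generally a compactly supported Lipschitz function with controlled H\"older norm) one has, for $\eps$ small enough,
\begin{equation}
\Bigl|\int_{\R^d}\mathcal{R}_\eps[v,g](y)\,\psi(y)\,dy\Bigr|
\le C_R\Vert g\Vert_{L^\infty_{R+1}}\Vert\psi\Vert_{L^\infty_R}\Vert\divv v\Vert_{L^1_{R+1}}
+C_R\Vert g\Vert_{L^\infty_{R+1}}[v]_{C^\theta_{R+1}}[\psi]_{C^{1-\theta}_R}\Vert D\psi\Vert_{L^\infty_R},
\end{equation}
using the same decomposition of the commutator as in \cite[Lemma~22]{FGP_2010-Inventiones}: the first term comes from the piece with $\divv v$, while the difference piece $g(x')D_x\vartheta_\eps(x-x')[\rho(x)-\rho(x')][v(x)-v(x')]$ is bounded by using $|v(x)-v(x')|\le[v]_{C^\theta}|x-x'|^\theta$ on a ball of radius $2\eps$ around $x$, together with the estimate $|\psi(x)-\psi(x')|\le[\psi]_{C^{1-\theta}}|x-x'|^{1-\theta}$, and then integrating $|D_x\vartheta_\eps|$ which contributes a factor $\eps^{-1}$ that is absorbed by $\eps^{\theta}\cdot\eps^{1-\theta}=\eps$.

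The substantive step, and the one I expect to be the main obstacle, is estimating $\rho_\phi$ in $C^{1-\theta}_R$. For this I would write $\rho_\phi=(\rho\circ\phi^{-1})\cdot J\phi^{-1}$ and use the H\"older product inequality \eqref{eq zhai prio 1} together with a composition estimate: if $\phi^{-1}\in C^1$ with $D\phi^{-1}\in C^{1-\theta}_{R+1}$, then
\begin{equation}
[D(\rho\circ\phi^{-1})]_{C^{1-\theta}_R}
\le [D\rho\circ\phi^{-1}\cdot D\phi^{-1}]_{C^{1-\theta}_R}
\le C\bigl(\Vert D\rho\Vert_{L^\infty_r}[D\phi^{-1}]_{C^{1-\theta}_{R+1}}
+\Vert D\rho\Vert_{L^\infty_r}\Vert D\phi^{-1}\Vert^2_{C^{1-\theta}_{R+1}}\bigr),
\end{equation}
which, combined with the analogous bounds for $J\phi^{-1}\in C^{1-\theta}_R$, yields the asserted quadratic dependence $\Vert D\phi^{-1}\Vert^2_{C^{1-\theta}_{R+1}}$ (this is precisely the subtle point flagged in the statement, correcting the exponent given in \cite[Corollary~23]{FGP_2010-Inventiones}). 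Combining this with the H\"older commutator estimate above produces the claimed inequality.

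Finally, for the convergence $\int\mathcal{R}_\eps[v,g](\phi(x))\rho(x)\,dx\to 0$ as $\eps\to0$, I would argue by density: approximate $v$ in $C^{\theta'}_{R+1}$ (with $\theta'<\theta$) by smooth vector fields $v^n$ for which the commutator $\mathcal{R}_\eps[v^n,g]$ tends to $0$ strongly as $\eps\to0$ (by a direct computation, since then $\vartheta_\eps\ast(v^n\cdot Dg)\to v^n\cdot Dg$ and $v^n\cdot D(\vartheta_\eps\ast g)\to v^n\cdot Dg$ in $L^1_{\mathrm{loc}}$), and use the uniform-in-$\eps$ bound just proved to pass $n\to\infty$. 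The uniform estimate is what makes this density argument work and is exactly why the $\eps$-independence of the constants is so important.
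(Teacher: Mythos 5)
The paper does not actually prove this proposition: it is quoted (with only a remark correcting the exponent) from \cite[Corollary 23]{FGP_2010-Inventiones}, so the only in-paper material to compare with is the proof of the Sobolev analogue, Lemma \ref{Lem-commu} together with Proposition \ref{comu-prop-1}. Your overall strategy — change of variables to $\rho_{\phi}=(\rho\circ\phi^{-1})\,J\phi^{-1}$, a H\"older version of the commutator bound in which $[\rho]_{W^{1-\beta,p}}$ is replaced by a $C^{1-\theta}$ seminorm, product/composition estimates for $\rho_\phi$, and then a limiting argument — is exactly the right template and matches both the paper's Sobolev proof and the route of \cite{FGP_2010-Inventiones}.

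However, two steps are not correct as written. First, your intermediate commutator estimate carries the factor $[v]_{C^\theta}[\psi]_{C^{1-\theta}}\Vert D\psi\Vert_{L^\infty}$; besides being redundant (the scaling $\eps^{\theta}\cdot\eps^{1-\theta}\cdot\eps^{-1}=1$ already closes with only $[\psi]_{C^{1-\theta}}$), any version requiring $D\psi$ is unusable here, because $\rho_\phi$ is in general \emph{not} Lipschitz: $J\phi^{-1}$ is only $C^{1-\theta}_{\mathrm{loc}}$. Second, your composition step estimates $[D(\rho\circ\phi^{-1})]_{C^{1-\theta}_R}$, which is the wrong object (nowhere do you need the H\"older seminorm of the \emph{derivative} of $\rho\circ\phi^{-1}$) and in any case cannot be bounded by $\Vert D\rho\Vert_{L^\infty}$ alone — it would involve second differences of $D\rho$. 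What is actually needed is
\begin{equation}
[\rho_\phi]_{C^{1-\theta}_R}\le [\rho\circ\phi^{-1}]_{C^{1-\theta}_R}\,\Vert J\phi^{-1}\Vert_{L^\infty_R}+\Vert\rho\Vert_{L^\infty_r}\,[J\phi^{-1}]_{C^{1-\theta}_R},
\qquad
[\rho\circ\phi^{-1}]_{C^{1-\theta}_R}\le C_R\Vert D\rho\Vert_{L^\infty_r}\Vert D\phi^{-1}\Vert_{L^\infty_R},
\end{equation}
after which the constants in the statement (in particular the quadratic factor flagged in the paper's remark) come from the bookkeeping relating $\Vert J\phi^{-1}\Vert_{L^\infty}$, $[J\phi^{-1}]_{C^{1-\theta}}$ to $\Vert D\phi^{-1}\Vert_{C^{1-\theta}}$; your argument does not track this and so does not actually reproduce the asserted bound.

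The convergence statement is also not established by your sketch. Mollifying $v$ gives $v^n\to v$ only in $C^{\theta'}$ with $\theta'<\theta$; using the uniform bound with exponent $\theta'$ would require $\rho_\phi\in C^{1-\theta'}$ with $1-\theta'>1-\theta$, i.e.\ strictly more regularity of $J\phi^{-1}$ than assumed, while with exponent $\theta$ the seminorm $[v-v^n]_{C^\theta}$ does not tend to zero, so the splitting $\mathcal{R}_\eps[v,g]=\mathcal{R}_\eps[v^n,g]+\mathcal{R}_\eps[v-v^n,g]$ leaves an uncontrolled term. (Also, the claim that $\vartheta_\eps\ast(v^n\cdot Dg)\to v^n\cdot Dg$ ``in $L^1_{\mathrm{loc}}$'' does not typecheck, since $v^n\cdot Dg$ is only a distribution; the correct input for smooth $v^n$ and $g\in L^\infty_{\mathrm{loc}}$ is the DiPerna--Lions commutator lemma, $\mathcal{R}_\eps[v^n,g]\to0$ in $L^1_{\mathrm{loc}}$.) The exponent balance $\theta+(1-\theta)=1$ is exactly critical, so the vanishing of the limit genuinely requires an extra argument — e.g.\ a quantitative bound retaining the modulus of continuity of $v$ together with an $\eps$-dependent choice $n=n(\eps)$, or simply following the proof of \cite[Corollary 23]{FGP_2010-Inventiones} — and cannot be obtained from the uniform bound plus a fixed-$n$ density argument as you propose.
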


Next we will prove some Sobolev type estimates of the Jacobian of the stochastic flow.
We begin with a result  which can be deduced from   \cite{Zhang_2013}; see
\cite[Theorem 2.1]{Dong+Kim_2012} for a related result.

\begin{theorem}\label{TH-Lp-est}
Let   $L = (L_t)_{t\geq 0}$ be a L\'evy  process with L\'evy triplet $(0,0,\nu)$ satisfying Hypotheses  \ref{hyp-nondeg2} with $\alpha \ge 1$. Let us consider
 \begin{align}\label{eqn-resolvent0}
 \lambda v -  \gen{A} v  - b \cdot Dv = f,\;\; \lambda>0,
 \end{align}
 with $b \in C_{\mathrm{b}}^{\infty}(\mathbb{R}^d,\mathbb{R}^d)$ and $f \in L^p(\R^d)\cap C_{\mathrm{b}}^{\infty}(\mathbb{R}^d)$, for some $p>1$. Let $v \in C^2_\mathrm{b}(\R^d)$ be the classical bounded solution to \eqref{eqn-resolvent0}.  
  Then, the following assertions hold.      
\\
1) If  $\alpha \in(1,2)$ there exists  $c = c(d,\alpha,\lambda,p,\Vert b\Vert_{\infty}) $ such that 
the solution $v$ satisfies
 \begin{equation}\label{eqn-resolvent-estimate}
 \Vert v\Vert_{H^{\alpha,p}}
  \le c \Vert f\Vert_{L^p}. 
 \end{equation}
2) If $\alpha =1$, $\beta \in (0,1)$  there exists  $c = c(d,\alpha,\lambda,p,\Vert b\Vert_{\beta}) $
 such that 
\begin{equation}\label{eqn-resolvent-estimate1}
 \Vert v\Vert_{H^{1,p}}
  \le c \Vert f\Vert_{L^p}.   
 \end{equation} 
  \end{theorem}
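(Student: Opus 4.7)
The plan is to reduce everything to the results in \cite{Zhang_2013} (which develops an $L^p$-theory for nonlocal operators of $\alpha$-stable type with drift), after recasting the stationary resolvent equation \eqref{eqn-resolvent0} in a form where those estimates apply. I would proceed in three steps.

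First I would treat the constant-coefficient model problem $\lambda v - \gen{A} v = g$. Taking Fourier transform gives $\widehat{v}(\xi) = (\lambda+\psi(\xi))^{-1}\widehat{g}(\xi)$, and Hypothesis \ref{hyp-nondeg2} together with the homogeneity $\psi(r\xi)=r^{\alpha}\psi(\xi)$ imply that the symbol
\[
m_{\lambda}(\xi) := \frac{(1+|\xi|^{2})^{\alpha/2}}{\lambda+\psi(\xi)}
\]
is smooth off the origin and satisfies the H\"ormander-Mikhlin bound $|\partial^{\gamma} m_{\lambda}(\xi)| \le C_{\gamma}|\xi|^{-|\gamma|}$ for every multi-index $\gamma$ (this uses that $\psi$ is the symbol of a symmetric $\alpha$-stable process, hence real analytic away from zero, together with the coercivity lower bound). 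The classical Mikhlin multiplier theorem then yields $\|v\|_{H^{\alpha,p}} \le c(d,\alpha,\lambda,p)\|g\|_{L^{p}}$ for every $p\in(1,\infty)$. This is exactly the building block used in \cite{Zhang_2013}.

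Next, for part 1) with $\alpha\in(1,2)$, I would treat the drift $b\cdot Dv$ as a lower-order perturbation. Rewriting \eqref{eqn-resolvent0} as $v = (\lambda-\gen{A})^{-1}(f + b\cdot Dv)$ and applying the model-problem estimate gives
\[
\|v\|_{H^{\alpha,p}} \le c\bigl(\|f\|_{L^{p}} + \|b\|_{\infty}\,\|Dv\|_{L^{p}}\bigr).
\]
Since $\alpha>1$, the interpolation inequality $\|Dv\|_{L^{p}} \le \varepsilon \|v\|_{H^{\alpha,p}} + C_{\varepsilon}\|v\|_{L^{p}}$ holds, and the $L^{p}$-contraction property $\lambda\|v\|_{L^{p}} \le \|f\|_{L^{p}}$ (a consequence of the fact that $\gen{A}$ generates a Markov semigroup which is sub-Markov on $L^{p}$) lets me absorb the $\|Dv\|_{L^{p}}$ term on the right-hand side for $\varepsilon$ small, producing \eqref{eqn-resolvent-estimate}. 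The constant depends on $\|b\|_{\infty}$ only, as claimed.

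The hard part is part 2), the critical case $\alpha=1$, where $b\cdot Dv$ is of the \emph{same} order as $\gen{A}v$, so the perturbative absorption of the previous paragraph breaks down. Here I would follow Zhang's freezing-of-coefficients/paraproduct strategy: cover $\R^{d}$ by balls of a mesoscopic size $r$, replace $b(x)$ by $b(x_{0})$ on each ball so that the frozen operator $\gen{A} + b(x_{0})\cdot D$ is still a translation-invariant Fourier multiplier (with the bounded modification $\psi(\xi)-\mathrm{i}\,b(x_{0})\cdot\xi$ of the symbol), apply step 1 locally to obtain $\|v\|_{H^{1,p}(B_{r}(x_{0}))}$-bounds, and then sum using that the error term $(b(x)-b(x_{0}))\cdot Dv$ is controlled by $[b]_{\beta}\, r^{\beta}\|Dv\|_{L^{p}}$. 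The H\"older regularity $b\in C^{\beta}_{\mathrm{b}}$ with $\beta>1/2 = 1-\alpha/2$ is exactly what is needed to close this bootstrap: choosing $r$ small makes the frozen error a genuine small perturbation in $L^{p}$, and the constant $c$ then depends on $\|b\|_{\beta}$. This is the step whose execution in \cite{Zhang_2013} requires the most delicate commutator estimates, and which forces the restriction $\alpha\ge 1$ in our statement.
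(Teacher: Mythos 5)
There is a genuine gap, and it sits in your Step 1. Under Hypothesis \ref{hyp-nondeg2} the process is only assumed to be a \emph{symmetric, non-degenerate} $\alpha$-stable process, so its symbol has the form $\psi(\xi)=\int_{S^{d-1}}|\langle \xi,\theta\rangle|^{\alpha}\,\Sigma(d\theta)$ for a possibly very singular spectral measure $\Sigma$ (e.g.\ the cylindrical case $\psi(\xi)=\sum_i |\xi_i|^{\alpha}$). Such a $\psi$ is \emph{not} real analytic, nor even $C^{2}$, away from the origin, and the H\"ormander--Mikhlin bounds $|\partial^{\gamma}m_{\lambda}(\xi)|\le C_{\gamma}|\xi|^{-|\gamma|}$ fail (second derivatives of $|\xi_i|^{\alpha}$ blow up like $|\xi_i|^{\alpha-2}$ on the hyperplanes $\xi_i=0$). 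So the constant-coefficient estimate $\Vert v\Vert_{H^{\alpha,p}}\le c\Vert g\Vert_{L^p}$ cannot be obtained by a bare Mikhlin argument in the stated generality; this is exactly the nontrivial content of \cite{Zhang_2013}, which the paper cites rather than re-proves. A second, smaller gap is in your absorption step for $\alpha\in(1,2)$: the inequality $\lambda\Vert v\Vert_{L^p}\le\Vert f\Vert_{L^p}$ is not a consequence of sub-Markovianity once the drift is present, because the transition semigroup of $dX=b(X)\,dt+dL_t$ is an $L^\infty$-contraction but not an $L^p$(Lebesgue)-contraction unless, say, $\divv b\le 0$; at best one gets a constant involving $\divv b$ or validity only for $\lambda$ large, so the bootstrap as written does not close for every $\lambda>0$ with a constant depending only on $\Vert b\Vert_{\infty}$.

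For comparison, the paper's proof takes a different and much shorter route: for $\alpha\in(1,2)$ it invokes \cite[Corollary 4.4]{Zhang_2013}, which identifies the $L^p$-domain of $\gen{A}$ with $H^{\alpha,p}(\R^d)$ with equivalent norms, and treats $b\cdot Dv$ as a lower-order perturbation; for the critical case $\alpha=1$ it does \emph{not} freeze coefficients at the elliptic level, but reduces to the \emph{parabolic} maximal regularity of \cite[Theorem 5.3]{Zhang_2013}: one solves $\partial_t u=\gen{A}u+b\cdot Du+e^{\lambda t}f$ with zero initial data, obtains $\int_0^T\Vert Du(t)\Vert_{L^p}^p\,dt\le C_T\Vert f\Vert_{L^p}^p$, and then observes via the maximum principle that $v(t,x)=e^{-\lambda t}u(t,x)$ is stationary and coincides with the resolvent solution, so the parabolic bound transfers to $\Vert Dv\Vert_{L^p}\le c\Vert f\Vert_{L^p}$. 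Your freezing-of-coefficients sketch for $\alpha=1$ essentially re-describes the machinery inside Zhang's paper without supplying it, inherits the multiplier problem above for the frozen operator, and imposes the spurious restriction $\beta>1/2$, whereas the theorem (and Zhang's result) only needs $b\in C_{\mathrm{b}}^{\beta}$ for some $\beta\in(0,1)$.
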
 
\begin{proof} Recall that $H^{1,p}(\mathbb{R}^d)$ $=W^{1,p}(\mathbb{R}^d)$. The existence of $v$ follows by the classical approach of pages 173-177 in  \cite{skor} which are dedicated to  the  study of non-local Kolmogorov equations with smooth coefficients.

The first assertion follows easily from Corollary 4.4 in \cite{Zhang_2013} noting that $b \cdot Dv$ is a ``small perturbation'' of
$ \gen{A} v $.  Indeed, by  \cite[Corollary 4.4]{Zhang_2013} the domain $\mathcal{D}(\gen{A})$ of the operator $\gen{A}$ on the $L^p(\mathbb{R}^d)$ is equal to the Sobolev space $H^{\alpha,p}(\mathbb{R}^d)$ with equivalent norms.

The second assertion can be deduced by  \cite[Theorem 5.3]{Zhang_2013}, which deals with a  related parabolic problem. Let us  explain the proof.
 We first consider  
\begin{gather*}
\partial_t u (t,x) = \gen{A} u(t,x) +  b(x) \cdot Du(t,x) + e^{\lambda t} f(x),\;\;
t \in ]0,T],
\end{gather*} 
 $u(0,x)=0$, $x \in \R^d$. By \cite[Theorem 5.3]{Zhang_2013} 
 the unique bounded classical solution $u$ belongs to
 $C([0,T]; W^{1 -1/p,p}(\R^d)) \cap L^p([0,T]; W^{1,p}(\R^d))$.
 Moreover it holds:  
 \begin{equation} \label{s33aa} 
  \int_0^T \|  D u(t)\|_{L^p}^p dt \le C_T  \| f\|_{L^p}^p.
\end{equation}  
 Let us define $v(t,x) = e^{-\lambda t }u(t,x)$. We find that $v$ solves
 \begin{gather*}
\partial_t v (t,x) = \gen{A} v(t,x) +  b(x) \cdot Dv(t,x) +  f(x) - \lambda v(t,x),\;\;
t \in ]0,T], \;\; v(0,x)=0. 
\end{gather*}
By the maximum principle   it follows that $v(t,x) = v(x)$, $(t,x) \in [0,T] \times \R^d$ where $v$ solves  \eqref{eqn-resolvent0}. We find by \eqref{s33aa}
\begin{gather*}
\int_0^T \|  D u(t)\|_{L^p}^pdt = \int_0^T e^{\lambda p t }dt \, \|  D v\|_{L^p}^p 
\le  C_T  \| f\|_{L^p}^p.
\end{gather*}
  The assertion is proved. 
  \end{proof}

The next result is a consequence of \cite[Theorem 1.1]{Felix-2020} or \cite[Lemma A.1]{Bra+Lin+Par}\label{tran-est-frac}.
\begin{proposition}
There  exists  a constant $C>0$, depending  only on $d$, such that for every  $p \in[1,+\infty]$,  $s \in[0,1]$,  and $u \in W^{s, p}(\mathbb{R}^{d})$,
the following estimate holds
\begin{align}\label{tran-est-frac-eq}
\Vert u(\cdot+y)-u\Vert_{L^{p}} \leq C[u]_{W^{s, p}}\vert y\vert^{s} .
\end{align}
\end{proposition}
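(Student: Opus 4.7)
The plan is to reduce to the cases $s=1$ and $s\in(0,1)$ separately (the case $s=0$ is essentially trivial since $|y|^0=1$ and one can use $\|u(\cdot+y)-u\|_{L^p}\le 2\|u\|_{L^p}$), using density of $C^\infty_c(\R^d)$ in $W^{s,p}(\R^d)$ for $p<\infty$, and to treat $p=\infty$ via the classical identification of $W^{s,\infty}$-type spaces with H\"older spaces.

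First I would dispense with the endpoint $s=1$: for $u\in C^\infty_c(\R^d)$ the fundamental theorem of calculus yields $u(x+y)-u(x)=\int_0^1 Du(x+ty)\cdot y\,dt$, so by Minkowski's integral inequality and the translation invariance of Lebesgue measure,
\begin{equation*}
\|u(\cdot+y)-u\|_{L^p}\le|y|\int_0^1\|Du(\cdot+ty)\|_{L^p}\,dt=|y|\,\|Du\|_{L^p}=|y|\,[u]_{W^{1,p}},
\end{equation*}
and this extends to all of $W^{1,p}(\R^d)$ by density.

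The substantive case is $s\in(0,1)$ and $p\in[1,\infty)$. The core idea is an averaging argument over a ball of radius $|y|$. For any $z\in B(x,|y|)$ write $|u(x+y)-u(x)|\le|u(x+y)-u(z)|+|u(z)-u(x)|$, raise to the $p$-th power using $(a+b)^p\le 2^{p-1}(a^p+b^p)$, and then average over $z\in B(x,|y|)$ using Jensen's inequality. Since $|x+y-z|\le 2|y|$ and $|x-z|\le|y|$ on this ball, one can amplify each integrand by the trivial identity $1=|x+y-z|^{d+sp}/|x+y-z|^{d+sp}$ (resp.~with $|x-z|$) to obtain the key pointwise bound
\begin{align*}
|u(x+y)-u(x)|^p &\le \frac{C_{p,d}}{|y|^{d}}\int_{B(x,|y|)}\biggl[(2|y|)^{d+sp}\frac{|u(x+y)-u(z)|^p}{|x+y-z|^{d+sp}}
\\
&\hspace{3truecm}+|y|^{d+sp}\frac{|u(z)-u(x)|^p}{|x-z|^{d+sp}}\biggr]dz.
\end{align*}
Integrating in $x\in\R^d$, interchanging the order of integration (Tonelli), and performing the change of variables $w=x+y$ in the first term (respectively keeping $x$ in the second), the remaining double integrals in $(w,z)$ (resp.~$(x,z)$) are dominated by the full Gagliardo double integral $\iint |u(\cdot)-u(\cdot)|^p/|\cdot-\cdot|^{d+sp}$. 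One then arrives at $\|u(\cdot+y)-u\|_{L^p}^p\le C|y|^{sp}[u]_{W^{s,p}}^p$, and taking $p$-th roots yields the claim.

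For $p=\infty$ the same averaging argument goes through with suprema replacing integrals (or, equivalently, one invokes the standard identification of $W^{s,\infty}(\R^d)$ with the H\"older space $C^s(\R^d)$, which delivers the pointwise H\"older bound immediately). The main technical obstacle will be keeping the constants clean and uniform in $p\in[1,\infty]$ and $s\in[0,1]$: the exponents $d$, $sp$, and the factors $(2|y|)^{d+sp}/|y|^d=2^{d+sp}|y|^{sp}$ must be tracked so that the final constant depends only on $d$. Care is also needed so that the change of variables in the $(x,z)$-plane respects the full domain $\R^d\times\R^d$ rather than a bounded region, which is transparent here because the original integration is over all of $\R^d$ and the ball $B(x,|y|)$ varies with $x$.
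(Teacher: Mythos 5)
Your proposal is correct, and in fact it supplies more than the paper does: the paper does not prove this proposition at all, but simply quotes it as a consequence of del Teso--G\'omez-Castro--V\'azquez (Theorem 1.1 of \cite{Felix-2020}) or of Lemma A.1 in Brasco--Lindgren--Parini \cite{Bra+Lin+Par}. Your averaging-over-a-ball argument for $s\in(0,1)$, $p\in[1,\infty)$ is essentially the standard proof of that cited Lemma A.1: the Jensen average over $z\in B(x,|y|)$, the amplification by $|x+y-z|^{d+sp}\le (2|y|)^{d+sp}$ and $|x-z|^{d+sp}\le |y|^{d+sp}$, Tonelli, and the translation $w=x+y$ give exactly $\|u(\cdot+y)-u\|_{L^p}^p\le 2^{p-1}\omega_d^{-1}\bigl(2^{d+sp}+1\bigr)|y|^{sp}[u]_{W^{s,p}}^p$, whose $p$-th root has a constant bounded by a quantity depending only on $d$, uniformly in $p\ge1$ and $s\in[0,1]$, as required. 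The endpoint treatments (fundamental theorem of calculus plus density for $s=1$, the H\"older interpretation of $[\,\cdot\,]_{W^{s,\infty}}$ for $p=\infty$) are also fine and match the conventions implicitly used in the references. One small remark: for $s=0$ your bound $2\|u\|_{L^p}$ proves the statement only under the convention $[u]_{W^{0,p}}=\|u\|_{L^p}$; if one insists on the literal Gagliardo expression at $s=0$, note that your averaging computation is already uniform in $s$ and covers $s=0$ directly (giving the bound by the seminorm itself), so nothing is lost. In the paper this proposition is only ever invoked with $s\in(0,1)$, so the endpoint conventions are immaterial to its use.
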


\begin{lemma}\label{lem-Sob-space-comp}

 Assume the assumptions of  Theorem \ref{d32}.
Assume that $p> 1$,    $\delta \in (0,1)$ and $\eps \in (0,1)$. Let  $\phi^{\eps}=\left( \phi_{t}^{\eps}\right)_{t\geq 0}$ be the flow corresponding to equation \eqref{eqn-SDE} with the vector field  $b$ replaced by $b^{\eps}=b \ast \vartheta_{\eps}$. Assume that  $\tilde{p}>p$, let  $\tilde{\delta}=\delta+d(\frac1p-\frac1{\tilde{p}})$. Then, the following assertion holds. \\
 If   $f\in W^{\tilde{\delta},
\tilde{p}}(\mathbb{R}^d)$,   then for all  $r>0$ and  $s\in[0,T]$, $f \circ \phi_s^{\eps}\in L^p(\Omega; W_r^{\delta,p})$, and there exists  a constant $M_{r,T,\tilde{p},d,\delta,p}$  independent of $f$ and $\eps$, such that 
\begin{align}\label{eq 20250103 01}
   \mathbb{E}\Vert f(\phi_s^{\eps})\Vert^p_{W_r^{\delta,p}}\leq M_{r,T,\tilde{p},d,\delta,p}\|f\|^{p}_{W^{\tilde{\delta},\tilde{p}}},
\end{align}
and,  for all $0\leq s \leq t\leq T$, 
\begin{align}\label{eq 20250103 02}
\mathbb{E}\Vert f((\phi^{\eps}_{s,t})^{-1})\Vert^p_{W_r^{\delta,p}}\leq M_{r,T,\tilde{p},d,\delta,p}\|f\|^{p}_{W^{\tilde{\delta},\tilde{p}}},
\end{align}
where  $(\phi^{\eps}_{s,t})^{-1}$ satisfies  \eqref{de 2025 0102 01}.
\end{lemma}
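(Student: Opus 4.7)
The plan is to split
\[
\|f\circ\phi_s^{\eps}\|_{W_r^{\delta,p}}^p \;=\; \|f\circ\phi_s^{\eps}\|_{L_r^p}^p + [f\circ\phi_s^{\eps}]_{W_r^{\delta,p}}^p
\]
and bound each piece by a change of variables $z=\phi_s^{\eps}(x)$ combined with H\"older's inequality at the conjugate exponents $\tilde p/p$ and $q'=\tilde p/(\tilde p-p)$, using the uniform-in-$\eps$ moment bounds for $\phi_s^{\eps}$, $D\phi_s^{\eps}$, $(\phi_s^{\eps})^{-1}$ and $D(\phi_s^{\eps})^{-1}$ supplied by Theorem \ref{thm-stability} applied to the mollified sequence $b^{\eps}\to b$ in $C_{\mathrm{b}}^{\beta'}$ for some $\beta'<\beta$ with $\alpha/2+\beta'>1$ (cf.\ Remark \ref{okk}).

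\textbf{The $L^p$ part.} First I would change variables,
\[
\int_{B_r}|f(\phi_s^{\eps}(x))|^p\,dx \;=\; \int_{\phi_s^{\eps}(B_r)} |f(z)|^p\, J(\phi_s^{\eps})^{-1}(z)\,dz,
\]
and apply H\"older with exponents $\tilde p/p$ and $q'$ to dominate this by
\[
\|f\|_{L^{\tilde p}(\R^d)}^p\, \Big(\int_{\phi_s^{\eps}(B_r)} J(\phi_s^{\eps})^{-1}(z)^{q'}\,dz\Big)^{1/q'}.
\]
Changing back to $x$ in the last integral produces $\bigl(\int_{B_r} J\phi_s^{\eps}(u)^{-(q'-1)}\,du\bigr)^{1/q'}$, and since $J\phi_s^{\eps}(u)^{-1}\le \|(D\phi_s^{\eps}(u))^{-1}\|^d$, the identity \eqref{inverse-identity-eq-101} gives $(D\phi_s^{\eps}(u))^{-1}=K_s^{\eps}(u)\,(D\psi)(\phi_s^{\eps}(u))$, so combined with (F4) and the moment bound \eqref{eq-bounded-kt-1} for $K_s^{\eps}$ one obtains the required $\eps$-uniform expectation; Fubini and Jensen then close the $L^p$ estimate, noting that $\|f\|_{L^{\tilde p}}\le \|f\|_{W^{\tilde\delta,\tilde p}}$ by definition of the latter norm.

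\textbf{The Gagliardo seminorm.} For $[f\circ\phi_s^{\eps}]_{W_r^{\delta,p}}^p$ I would use the mean value inequality $|x-y|\ge \|D\phi_s^{\eps}\|_{L^\infty(B_r)}^{-1}|\phi_s^{\eps}(x)-\phi_s^{\eps}(y)|$ along the segment $[x,y]\subset B_r$ and then substitute $u=\phi_s^{\eps}(x)$, $v=\phi_s^{\eps}(y)$, bounding the seminorm by
\[
\|D\phi_s^{\eps}\|_{L^\infty(B_r)}^{d+\delta p}\int\!\!\int \frac{|f(u)-f(v)|^p}{|u-v|^{d+\delta p}}\, J(\phi_s^{\eps})^{-1}(u)\, J(\phi_s^{\eps})^{-1}(v)\, \mathbf{1}_{\phi_s^{\eps}(B_r)\times \phi_s^{\eps}(B_r)}(u,v)\,du\,dv.
\]
Applying H\"older on this double integral with the same pair of exponents $\tilde p/p$ and $q'$, the crucial algebraic identity
\[
(d+\delta p)\,\tfrac{\tilde p}{p} \;=\; d+\tilde\delta\tilde p,
\]
which holds precisely because $\tilde\delta=\delta+d(1/p-1/\tilde p)$, makes the $\tilde p/p$-factor equal to $[f]_{W^{\tilde\delta,\tilde p}(\R^d)}^p$ with no residual weight left on $|u-v|$. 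The $q'$-factor splits as the square of $\|J(\phi_s^{\eps})^{-1}\|_{L^{q'}(\phi_s^{\eps}(B_r))}$, controlled exactly as in the $L^p$ step, and the extra factor $\|D\phi_s^{\eps}\|_{L^\infty(B_r)}^{d+\delta p}$ is absorbed by its moment bounds (see below).

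\textbf{Inverse flow and main obstacle.} The bound \eqref{eq 20250103 02} follows by repeating the same argument for $(\phi_{s,t}^{\eps})^{-1}$: by \eqref{de 2025 0102 01} the inverse flow solves a jump SDE of the same type with drift $-b^{\eps}$ and noise $-L$, so the stability moments of Theorem \ref{thm-stability} and their inverse-flow analogues (cf.\ Remark \ref{rem Sec 5 000}) apply verbatim. The main obstacle I anticipate is controlling the random $L^\infty$-norm $\|D\phi_s^{\eps}\|_{L^\infty(B_r)}$ and the random support $\phi_s^{\eps}(B_r)$ in expectation, uniformly in $\eps\in(0,1)$ and $s\in[0,T]$. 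To pass from the pointwise-in-$x$ moment bounds of Theorem \ref{thm-stability} to the required $L^\infty$-in-$x$ bounds on $B_r$, I would combine the local $\delta'$-H\"older continuity of $D\phi_s^{\eps}$ established in Theorem \ref{thm-ww} (uniformly in $\eps$ thanks to the stability of the Schauder constants $c(\varpi,r)$ in Theorem \ref{reg}) with the Kolmogorov continuity theorem, choosing $p$ large enough that $p\delta'>d$; this yields $\mathbb{E}\|D\phi_s^{\eps}\|_{C(B_r)}^q<\infty$ uniformly in $\eps$ for every $q\ge 1$, closing the argument.
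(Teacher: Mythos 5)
Your proposal is correct in substance and rests on the same two pillars as the paper's proof: the change of variables $u=\phi_s^{\eps}(x)$, $v=\phi_s^{\eps}(y)$ bringing in the Jacobians $J(\phi_s^{\eps})^{-1}$, and H\"older at the conjugate exponents $\tilde p/p$, $\tilde p/(\tilde p-p)$ together with the identity $(d+\delta p)\tilde p/p=d+\tilde\delta\tilde p$, which is exactly how the paper converts the Gagliardo kernel into the $W^{\tilde\delta,\tilde p}$ seminorm of $f$. The execution, however, differs in one meaningful way. The paper applies H\"older at the level of $\mathbb{E}\int\!\!\int(\cdot)\,dx\,dy$, keeps the identity $\phi_s^{\eps}(x)-\phi_s^{\eps}(y)=\int_0^1 D\phi_s^{\eps}(y+\theta(x-y))(x-y)\,d\theta$ inside the integral, and then only needs $\sup_{\eps,s,x}\mathbb{E}\|D\phi_s^{\eps}(x)\|^{q}$ and $\sup_{\eps,s,x}\mathbb{E}|J(\phi_s^{\eps})^{-1}(x)|^{2}$, which are available directly from Theorem \ref{thm-stability} and Remark \ref{rem Sec 5 000} (the convexity of $B_r$ makes the pointwise-in-$x$ bounds sufficient). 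You instead apply H\"older pathwise, which forces the random quantity $\|D\phi_s^{\eps}\|_{L^\infty(B_r)}^{d+\delta p}$ outside the spatial integrals, so you must produce $\mathbb{E}\|D\phi_s^{\eps}\|_{C(B_r)}^{q}<\infty$ uniformly in $\eps$ — a supremum-in-space moment that the paper never needs and does not state. Your proposed fix (the difference estimate of Proposition \ref{prop-moment estimates-difference} with $\delta p>d$, fed into a Kolmogorov-type theorem with moment control such as Theorem \ref{thm-KTC-ZB}) does work, because the constants there depend on the drift only through $\|b^{\eps}\|_{\beta}\le\|b\|_{\beta}$ and on $\|u^{\eps}_{\lambda}\|_{\alpha+\beta}$, which Theorem \ref{thm-approximation} bounds uniformly in $\eps$; but it adds a genuinely nontrivial layer (passing from the difference-quotient estimate to the derivative via \eqref{22} and Fatou, and extracting the moment of the H\"older constant) that the paper's arrangement of H\"older avoids entirely. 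Two smaller points to tidy up: the identity \eqref{inverse-identity-eq-101} and the moment bound \eqref{eq-bounded-kt-1} are stated for the flow with drift $b$, so for your $L^p$ step you should either redo them with $u^{\eps}_{\lambda}$, $\psi^{\eps}$, $K^{\eps}$ (uniformity again coming from Theorem \ref{thm-approximation}) or, more simply, bound $J(\phi_s^{\eps})^{-1}$ through the inverse-flow stability estimate \eqref{stability2-uniform-001}, which is the route the paper takes; and the inverse-flow case \eqref{eq 20250103 02} is handled in both approaches by the same symmetry argument via \eqref{de 2025 0102 01}, as you indicate.
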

\begin{proof}
Here we only prove \eqref{eq 20250103 01} because  we can  prove  \eqref{eq 20250103 02} by a  similar argument.
Let us choose and  fix  $p> 1$,    $\delta \in (0,1)$,  $\eps \in (0,1)$,   $\tilde{p}>p$ and    $\tilde{\delta}:=\delta+d(\frac1p-\frac1{\tilde{p}})$.

By using  the definition of the space $W^{\delta,p}_r$, we infer
\begin{align*}
 \mathbb{E}\Vert f(\phi_{s}^{\eps}(\cdot)) \Vert^p_{W^{\delta,p}_r}
\leq C_p\mathbb{E}\int_{B(r)} | f(\phi_{s}^{\eps}(x)) \vert^p dx
+C_p
\mathbb{E}\int_{B(r)} \int_{B(r)} \frac{\big|f( \phi_{s}^{\eps}(x))-f\big(\phi_{s}^{\eps}(y))\big\vert^p}{|x-y\vert^{d+\delta p}}\,dxdy.
\end{align*}

Since $\tilde{p}>p$ by the H\"older inequality and the change of measure theorem   we have
\begin{align}
    &\quad\mathbb{E}\int_{B(r)} \int_{B(r)} \frac{\big|f( \phi_{s}^{\eps}(x))-f\big(\phi_{s}^{\eps}(y))\big\vert^p}{|x-y\vert^{d+\delta p}}\,dxdy\\
    &=
    \mathbb{E}\int_{B(r)} \int_{B(r)} \frac{\big|f( \phi_{s}^{\eps}(x))-f\big(\phi_{s}^{\eps}(y))\big\vert^p}{|\phi_{s}^{\eps}(x)-\phi_{s}^{\eps}(y)\vert^{d+\delta p}}\frac{|\phi_{s}^{\eps}(x)-\phi_{s}^{\eps}(y)\vert^{d+\delta p}}{|x-y\vert^{d+\delta p}}\,dxdy\\
    &\leq
    \Big(\mathbb{E}\int_{B(r)} \int_{B(r)} \frac{\big|f( \phi_{s}^{\eps}(x))-f\big(\phi_{s}^{\eps}(y))\big\vert^{\tilde{p}}}{|\phi_{s}^{\eps}(x)-\phi_{s}^{\eps}(y)\vert^{d+\tilde{\delta}\tilde{p}}}\,dxdy\Big)^{\frac{p}{\tilde{p}}}\\
    &\quad\cdot\Big(\mathbb{E}\int_{B(r)} \int_{B(r)} \Big(\frac{|\phi_{s}^{\eps}(x)-\phi_{s}^{\eps}(y)\vert^{d+\delta p}}{|x-y\vert^{d+\delta p}}\Big)^{\frac{\tilde{p}}{\tilde{p}-p}}\,dxdy\Big)^{\frac{\tilde{p}-p}{\tilde{p}}}.
\end{align}
The first factor (inside the bracket) above we estimate by  
    \begin{align}
    &\quad\mathbb{E}\int_{\phi_{s}^{\eps}(B(r))} \int_{\phi_{s}^{\eps}(B(r))} \frac{\big|f(x')-f\big(y')\big\vert^{\tilde{p}}}{|x'-y'\vert^{d+\tilde{\delta}\tilde{p}}}J (\phi_s^{\eps})^{-1}(x')J (\phi_s^{\eps})^{-1}(y')\,dx'dy'\\
    &=
    \int_{\mathbb{R}^d} \int_{\mathbb{R}^d} \frac{\big|f(x')-f\big(y')\big\vert^{\tilde{p}}}{|x'-y'\vert^{d+\tilde{\delta}\tilde{p}}}\mathbb{E}\Big(J (\phi_s^{\eps})^{-1}(x')J (\phi_s^{\eps})^{-1}(y')\1_{\phi_{s}^{\eps}(B(r))}(x')\1_{\phi_{s}^{\eps}(B(r))}(y')\Big)\,dx'dy'
    \\
&\leq 
   [f]^{\tilde{p}}_{W^{\tilde{\delta},\tilde{p}}}\sup_{(\eps,s,x) \in(0,1)\times [0,T] \times \mathbb{R}^d  }\mathbb{E}\Big( \vert J (\phi_s^{\eps})^{-1}(x) \vert^2\Big),    \end{align}
while,   using $\phi_{s}^{\eps}(x)-\phi_{s}^{\eps}(y)=\int_0^1D\phi_{s}^{\eps}(y+\theta(x-y))(x-y)d\theta$, we get the following estimate for  second factor
\begin{align}
    &\Big(\mathbb{E}\int_{B(r)} \int_{B(r)} \Big(\frac{|\phi_{s}^{\eps}(x)-\phi_{s}^{\eps}(y)\vert^{d+\delta p}}{|x-y\vert^{d+\delta p}}\Big)^{\frac{\tilde{p}}{\tilde{p}-p}}\,dxdy\Big)^{\frac{\tilde{p}-p}{\tilde{p}}}\\
     \leq&
    \Big(\int_{B(r)} \int_{B(r)} \int_0^1\mathbb{E}\Big(\|D\phi_{s}^{\eps}(y+\theta(x-y))\|^{\frac{(d+\delta p)\tilde{p}}{\tilde{p}-p}}\Big)d\theta\,dxdy\Big)^{\frac{\tilde{p}-p}{\tilde{p}}}\\
\leq&
  \Big(\sup_{\eps\in(0,1)} \sup_{s\in[0,T]}\sup_{x\in \mathbb{R}^d}\mathbb{E}\Big(\|D\phi_{s}^{\eps}(x)\|^{\frac{(d+\delta p)\tilde{p}}{\tilde{p}-p}}\Big)\Big)^{\frac{\tilde{p}-p}{\tilde{p}}}
   \Big(\int_{B(r)} \int_{B(r)} 1\,dxdy\Big)^{\frac{\tilde{p}-p}{\tilde{p}}}.
\end{align}

Hence
    \begin{align}
    \mathbb{E}\int_{B(r)} \int_{B(r)} \frac{\big|f( \phi_{s}^{\eps}(x))-f\big(\phi_{s}^{\eps}(y))\big\vert^p}{|x-y\vert^{d+\delta p}}\,dxdy
\leq
C_{r,T,\tilde{p},d,\delta,p} [f]^{p}_{W^{\tilde{\delta},\tilde{p}}}.
\end{align}
Here 
\begin{align}
C_{r,T,\tilde{p},d,\delta,p}:=&
\Big(\sup_{\eps\in(0,1)} \sup_{s\in[0,T]}\sup_{x\in \mathbb{R}^d}\mathbb{E}|J (\phi_s^{\eps})^{-1}(x) \vert^2\Big)^{\frac{p}{\tilde{p}}}\\
&\Big(\sup_{\eps\in(0,1)} \sup_{s\in[0,T]}\sup_{x\in \mathbb{R}^d}\mathbb{E}\Big(\|D\phi_{s}^{\eps}(x)\|^{\frac{(d+\delta p)\tilde{p}}{\tilde{p}-p}}\Big)\Big)^{\frac{\tilde{p}-p}{\tilde{p}}}\Big(\int_{B(r)} \int_{B(r)} 1\,dxdy\Big)^{\frac{\tilde{p}-p}{\tilde{p}}}.
   \end{align}
Meanwhile, we have
\begin{align*}
\mathbb{E}\int_{B(r)} | f(\phi_{s}^{\eps}(x)) \vert^p dx
&\leq
C_{p,\tilde{p},r}\Big(\mathbb{E}\int_{B(r)} | f(\phi_{s}^{\eps}(x)) \vert^{\tilde{p}} dx\Big)^{\frac{p}{\tilde{p}}}\\
&=
C_{p,\tilde{p},r}\Big(\mathbb{E}\int_{\phi_s^{\eps}(B(r))} |f(y)\vert^{\tilde{p}} J (\phi_s^{\eps})^{-1}(y) dy\Big)^{\frac{p}{\tilde{p}}}\\
&\leq
C_{p,\tilde{p},r}\Big(\sup_{\eps\in(0,1)} \sup_{s\in[0,T]}\sup_{x\in \mathbb{R}^d}\mathbb{E} |J (\phi_s^{\eps})^{-1}(x) |\Big)^{\frac{p}{\tilde{p}}}
\Vert f  \Vert^{p}_{L^{\tilde{p}}}\\
&=
C_{p,\tilde{p},r,T}
\Vert f  \Vert^{p}_{L^{\tilde{p}}}.
\end{align*}
Here $C_{p,\tilde{p},r,T}:=C_{p,\tilde{p},r}\Big(\sup_{\eps\in(0,1)} \sup_{s\in[0,T]}\sup_{x\in \mathbb{R}^d}\mathbb{E} |J (\phi_s^{\eps})^{-1}(x) |\Big)^{\frac{p}{\tilde{p}}}.$

   Note that according to Theorem \ref{thm-stability} and Remark \ref{rem Sec 5 000},
  $C_{r,T,\tilde{p},d,\delta,p}+C_{\tilde{p},r,T}<\infty$. Obviously $C_{r,T,\tilde{p},d,\delta,p}+C_{\tilde{p},r,T}$  is independent on $f$ and $\eps$.

  The result follows.
\end{proof}


\begin{theorem}\label{Reg-est-Jaco} Let us assume that $p\geq 2$ and that the parameters $\alpha\in[1,2)$  and
 $\beta  \in (0, 1)  $ satisfy condition \eqref{eqn-beta+alpha half}. Let   $L = (L_t)_{t\geq 0}$ be a L\'evy  process with L\'evy triplet $(0,0,\nu)$ satisfying Hypothesis \ref{hyp-nondeg2}, i.e., 
$L$ is a symmetric $\alpha$-stable  process, which is non-degenerate in the sense that it satisfies \eqref{stim}.

 Assume that $b\in  C_{\mathrm{b}}^{\beta}( \mathbb{R}^{d}, \R^{d})$ and  $\divv b \in L^{p}( \mathbb{R}^{d})$.  Then,
           for all $\delta \in [0,\frac{\alpha}2)$ and  $r>0$,
     \begin{align}
     J\phi\in L^2(0,T;W_r^{\delta,p}), \;\; \mathbb{P}\mbox{-a.s.}
     \end{align}
\end{theorem}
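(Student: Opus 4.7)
The plan is to use the It\^o-Tanaka trick applied to the Jacobian identity $\log J\phi^\eps_t(x) = \int_0^t \divv b^\eps(\phi^\eps_s(x))\,ds$ (valid for the smoothed $b^\eps = b * \vartheta_\eps$ by Lemma \ref{lem-A1}), combined with fractional Sobolev space estimates that exploit the martingale-type-$2$ structure of $W^{\delta,p}_r$ when $p\ge 2$. First I would mollify $b$ into $b^\eps \in C_{\mathrm{b}}^\infty$, with flow $\phi^\eps$ and Jacobian $J\phi^\eps$. Since $\divv b\in L^p$ is not H\"older continuous, Theorem~\ref{reg} is unavailable, so the appropriate PDE tool is Theorem~\ref{TH-Lp-est}: for $\lambda$ large enough, the Kolmogorov equation
\[
\lambda v^\eps - \gen{A}v^\eps - b^\eps\cdot D v^\eps = \divv b^\eps
\]
has a bounded classical solution $v^\eps \in H^{\alpha,p}$, and the estimate $\|v^\eps\|_{H^{\alpha,p}} \le C\|\divv b^\eps\|_{L^p} \le C\|\divv b\|_{L^p}$ is uniform in $\eps$ (using $\|\divv b^\eps\|_{L^p} \le \|\divv b\|_{L^p}$).

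Because $v^\eps$ and $b^\eps$ are smooth, the standard It\^o formula for jump diffusions, applied to $v^\eps(\phi^\eps_t(x))$, together with the PDE yields the key representation
\[
\log J\phi^\eps_t(x) = v^\eps(x) - v^\eps(\phi^\eps_t(x)) + \lambda\int_0^t v^\eps(\phi^\eps_s(x))\,ds + M^\eps_t(x),
\]
with jump martingale $M^\eps_t(x) = \int_0^t\lint_{\R^d}\bigl[v^\eps(\phi^\eps_{s-}(x)+z)-v^\eps(\phi^\eps_{s-}(x))\bigr]\,\tilde{\mu}(ds,dz)$. The three non-stochastic terms are bounded uniformly in $\eps$ in $L^p(\Omega;L^2(0,T;W^{\delta,p}_r))$ by combining the uniform $H^{\alpha,p}$-bound on $v^\eps$, the Sobolev embedding $H^{\alpha,p}(\R^d) \hookrightarrow W^{\tilde\delta,\tilde p}(\R^d)$ for $\tilde p > p$ slightly larger than $p$ and $\tilde\delta := \delta + d(1/p - 1/\tilde p)$ (licit since $\delta<\alpha/2<\alpha$), and the composition estimate of Lemma~\ref{lem-Sob-space-comp}, which transfers the Sobolev regularity of $v^\eps$ through the random flow $\phi^\eps$.

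The stochastic term is where the sharp exponent $\alpha/2$ enters. Since $p\ge 2$, the space $X=W^{\delta,p}_r$ is of martingale type $2$, so the Banach-space Burkholder inequality (cf.~\cite{Zhu+Brz+Liu-2019}) gives
\[
\mathbb{E}\sup_{t\le T}\|M^\eps_t\|^p_X \le C\,\mathbb{E}\Big(\int_0^T\lint_{\R^d}\|F^\eps_{s,z}\|^2_X\,\nu(dz)\,ds\Big)^{p/2} + C\,\mathbb{E}\int_0^T\lint_{\R^d}\|F^\eps_{s,z}\|^p_X\,\nu(dz)\,ds,
\]
where $F^\eps_{s,z}(x)=v^\eps(\phi^\eps_{s-}(x)+z)-v^\eps(\phi^\eps_{s-}(x)) = g^\eps_z(\phi^\eps_{s-}(x))$ with $g^\eps_z(y):=v^\eps(y+z)-v^\eps(y)$. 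Lemma~\ref{lem-Sob-space-comp} bounds $\mathbb{E}\|g^\eps_z\circ\phi^\eps_{s-}\|^p_X$ by $\|g^\eps_z\|^p_{W^{\tilde\delta,\tilde p}}$, and the fractional translation estimate together with Sobolev embedding gives
\[
\|g^\eps_z\|_{W^{\tilde\delta,\tilde p}} \le C|z|^\eta\,\|v^\eps\|_{W^{\tilde\delta+\eta,\tilde p}} \le C|z|^\eta\,\|v^\eps\|_{H^{\alpha,p}}
\]
for any $\eta\in(0,\alpha-\tilde\delta]$. Integrability against $\nu$ on the small-jump ball forces $2\eta>\alpha$ by the very definition of the Blumenthal--Getoor index, while the Sobolev constraint forces $\eta<\alpha-\tilde\delta$; compatibility of the two demands $\tilde\delta<\alpha/2$, which reduces to $\delta<\alpha/2$ after choosing $\tilde p$ sufficiently close to $p$. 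Hypothesis~\ref{hyp-nondeg2} together with condition \textbf{(T)} (via \eqref{eqn-big jumps}) handles the large jumps and the extra $L^p$-term. Collecting everything gives $\sup_\eps \mathbb{E}\|\log J\phi^\eps\|^p_{L^2(0,T;W^{\delta,p}_r)}<\infty$.

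Finally I would pass to the limit $\eps\downarrow 0$. By Theorem~\ref{thm-stability} and Remark~\ref{cdd} (which provides global stability in the regime $\alpha\in[1,2)$ used here), $D\phi^\eps \to D\phi$ and hence $J\phi^\eps\to J\phi$ in $L^p(\Omega)$ uniformly on compacts; since $J\phi$ is continuous and strictly positive on compacts by Theorem~\ref{ww1}, $\log J\phi^\eps \to \log J\phi$ locally uniformly $\mathbb{P}$-a.s., so the uniform bound transfers to $\log J\phi$ by Fatou's lemma. Because $\exp$ is $C^\infty$ and $J\phi$ is a.s.\ locally bounded away from $0$ and $\infty$, the chain rule in fractional Sobolev spaces converts the regularity of $\log J\phi$ into $J\phi \in L^2(0,T;W^{\delta,p}_r)$ $\mathbb{P}$-a.s., as claimed. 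The hardest step is the stochastic-integral estimate: one must simultaneously manage the random composition with $\phi^\eps$ (delicate because for non-integer $\tilde\delta$ the embedding $W^{\tilde\delta,\tilde p}\hookrightarrow W^{\tilde\delta,1}$ fails, unlike in the Brownian-motion case of \cite{FGP_2010-Inventiones}; cf.~\cite{Mir+Sic}), the Sobolev embedding in fractional order, and the sharp trade-off between translation regularity and L\'evy-measure integrability, which is precisely what produces the threshold $\delta<\alpha/2$.
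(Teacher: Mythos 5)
Your proposal is correct and follows essentially the same route as the paper's proof: mollification of $b$, the Kolmogorov equation with the $L^p$-estimate of Theorem \ref{TH-Lp-est}, the It\^o--Tanaka representation of $\log J\phi^\eps$, the martingale-type-$2$ Burkholder inequality in $W_r^{\delta,p}$ combined with the composition estimate of Lemma \ref{lem-Sob-space-comp}, the translation estimate and Sobolev embeddings producing the threshold $\delta<\frac{\alpha}{2}$, and the exponential chain rule to pass from $\log J\phi$ to $J\phi$. The only real difference is the limit $\eps\downarrow 0$: you use a.s.\ subsequential convergence of $\log J\phi^{\eps_n}$ plus Fatou (lower semicontinuity of the Gagliardo seminorm), whereas the paper uses uniform integrability and the Vitali theorem in $L^1$ together with Banach--Alaoglu weak-star compactness in $L^2(\Omega\times(0,T);W_r^{\delta,p})$ to identify the limit -- both arguments are valid.
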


\begin{proof}
Let us choose and fix numbers  $p\geq 2$,
 $\alpha\in[1,2)$  and
 $\beta  \in (0, 1)  $ satisfying condition
\eqref{eqn-beta+alpha half}.
 Assume also that that $b\in C_{\mathrm{b}}^{\beta}( \mathbb{R}^{d}, \R^{d})$  such that  $\divv b \in L^{p}( \mathbb{R}^{d})$.
 Let us choose and fix $r>0$ and $\delta \in [0,\frac{\alpha}2)$.\\
 \indent \textbf{Step 1}  In view of the chain rule for fractional Sobolev spaces, we infer that 
 \begin{equation}
[J \phi_{t}]_{W_{r}^{\delta, p}}=[e^{\log(J \phi_{t})}]_{W_{r}^{\delta, p}} \leq\Big(\sup _{x \in B(r)}\big|J \phi_{t}(x)\big|\Big)[\log J \phi_{t}]_{W_{r}^{\delta, p}}, \;\; t \in [0,T].
\end{equation}
Since by 
{Theorem \ref{ww1}} $\sup_{t\in[0,T],\,x\in B(r)}|J\phi_t(x)|<\infty$,  it is enough to prove that
\begin{equation*}
\log J \phi_{\cdot}(\cdot)\in L^2(0,T;W_r^{\delta,p}).
\end{equation*}
\indent \textbf{Step 2}:  Define, for $0<\eps<1$,  as that in the proof of Theorem \ref{thm-transport equation}, $b^{\eps}=b*\vartheta_{\eps} $. Let us also set $b^{0}=b$. Let $\phi_{t}^{\eps}$ be the flow corresponding to \eqref{eqn-SDE} with $b$ replaced by $b^{\eps}$. According to Lemma \ref{lem-A1},  we have
\begin{equation}
\log J \phi_{t}^{\eps}(x)=\int_{0}^{t} \divv b^{\eps}\left(\phi_{s}^{\eps}(x)\right) d s.
\end{equation}
Define
\begin{equation}
\psi_{\eps}(t, x)=\int_{0}^{t} \divv b^{\eps}\left(\phi_{s}^{\eps}(x)\right) d s .
\end{equation}
By Theorem \ref{thm-stability}, we have for every $R>0$, $p\geq 1$,
\begin{align}
\sup_{\eps \in (0,1)}\mathbb{E}\int_0^T  \Vert D\phi_{t}^{\eps}\Vert^{p}_{L_R^{p}} dt+\mathbb{E}\int_0^T  \Vert D\phi_{t} \Vert^{p}_{L_R^{p}} dt<\infty,
\end{align}
and
\begin{align}
\lim_{\eps\to 0}\mathbb{E}\int_0^T  \Vert D\phi_{t}^{\eps}-D\phi_{t} \Vert^{p}_{L_R^{p}} dt=0.
\end{align}
Then we can extract a sequence $\{\eps_n\}$ such that
\begin{align}
D\phi_{t}^{\eps_n}(x) \to  D\phi_{t}(x),\quad \text{a.e. in }t,\,x,\,\omega,\;\text{as }\eps_n\to 0.
\end{align}
By the continuity of the determinant function,  we have
\begin{align}\label{eq FT}
\log J \phi_{t}^{\eps_n}(x)=\psi_{\eps_{n}}(t, x) \to  \log J \phi_{t}(x),\quad \text{a.e. in }t,\,x,\,\omega,\;\text{as }\eps_n\to 0.
\end{align}
In view of Remark \ref{rem Sec 5 000}, we see, for $p\geq 2$
\begin{align*}
&\quad\ \sup_{\eps \in (0,1)}\Big(\mathbb{E} \int_{0}^{T} \int_{B(r)}|\psi_{\eps}(t, x)\vert^{p} d x d t\Big) \\
&=\sup_{\eps \in (0,1)}\mathbb{E} \int_{0}^{T}\ \int_{B(r)}\Big|\int_{0}^{t} \divv b^{\eps}( \phi_{s}^{\eps}(x)) d s\Big|^{p} d x d t\\
& \leq C_{p,T}\sup_{\eps \in (0,1)} \mathbb{E} \int_{0}^{T}  \int_{B(r)}\big|\divv b^{\eps}\big(\phi_{s}^{\eps}(x)\big)\big\vert^{p} d x d s \\
&\leq C_{p,T}\sup_{\eps \in (0,1)} \mathbb{E} \int_{0}^{T}  \int_{\mathbb{R}^{d}}\big|\divv b^{\eps}( y)\big\vert^{p} J(\phi_{s}^{\eps})^{-1}(y)d y  ds\\
& \leq C_{p,T} \sup_{\eps \in (0,1)}  \sup _{s \in[0, T], y \in \mathbb{R}^{d}} \mathbb{E}\Big[J\big(\phi_{s}^{\eps}\big)^{-1}(y)\Big] \int_{0}^{T}  \int_{\mathbb{R}^{d}}\Big|\divv b^{\eps}( y)\Big\vert^{p} d y d s  \leq C_{p,T}<\infty.
\end{align*}
This shows that $\left(\psi_{\eps}\right)_{\eps \in (0,1)}$ is uniformly bounded in $L^p(\Omega\times(0,T);L^p_r)$ and hence $\left(\psi_{\eps}\right)_{\eps \in (0,1)}$ is uniformly integrable on $\Omega \times(0, T) \times B(r)$. Up to extraction of a subsequence $\psi_{\eps_n}$, 
the Vitali theorem ensures that $\psi_{\eps_n}$ converges strongly in $L^1(\Omega\times(0,T)\times B(r))$ to $\log J\phi$. Notice that the dual space of the Banach space $W_r^{-\delta,p'}$, $p'=\frac{p}{p-1}$  is $W_r^{\delta,p}$. If we can prove that $(\psi_{\eps})_{\eps \in (0,1)}$ is uniformly bounded in $L^2(\Omega\times(0,T);W_r^{\delta,p})$, what  will be done in \textbf{Step 3}, then according to the Banach-Alaoglu Theorem, we can extract a subsequence $(\psi_{\eps_{n_k}})$  of  $(\psi_{\eps_n})$ such that $\psi_{\eps_{n_k}}$ converges weakly star in $L^2(\Omega\times(0,T);W_r^{\delta,p})$ to some $\psi'\in L^2(\Omega\times(0,T);W_r^{\delta,p})$. Consequently,  $\psi_{\eps_{n_k}}$ converges weakly star in $L^2(\Omega\times(0,T);L^p_r)$ to $\psi'$. By the uniqueness of the limit, we infer $\psi'=\log J\phi$. Hence $\log J\phi \in L^2(\Omega\times(0,T);W_r^{\delta,p})$.

\textbf{Step 3}: In this step, we shall prove that $(\psi_{\eps})_{\eps \in (0,1)}$ is uniformly bounded in $L^2(\Omega\times(0,T);W_r^{\delta,p})$.

 Consider first the case when 
  { $\delta\in(\alpha-1,\frac{\alpha}2)$.}
Recall that $b^{\eps} \in C_{\mathrm{b}}^{\infty}(\R^d,\R^d)$.  We introduce the following Kolmogorov problem, for $0<\eps <1$,
\begin{align}\label{eq-Cauchy-pro-1}
&\lambda v^{\eps} -  \gen{A} v^{\eps}  - b^{\eps} \cdot Dv^{\eps} = \divv b^{\eps}.
\end{align}
By Theorem \ref{TH-Lp-est}, there exists $c>0$ such that
\begin{align}\label{est-v-divb}
 \Vert v^{\eps}\Vert_{H^{\alpha,p}}
  \le c \Vert \divv b^\eps  \Vert_{L^p}  \le c \Vert \divv b \Vert_{L^p}, \;\;  \eps \in (0,1]. 
\end{align} 
  Moreover,  
  by applying the   It\^o formula,
  we obtain 
\begin{align}\label{psi-Ito-iden}
&v^{\eps}\left(\phi_{t}^{\eps}(x)\right)-v^{\eps}(x)-\int_{0}^{t}\lint_{\mathbb{R}^d} v^{\eps}\left( \phi_{s-}^{\eps}(x)+z\right)-v^{\eps}\left( \phi_{s-}^{\eps}(x)\right) \tilde{\mu}(ds,dz)-\lambda \int_0^t v^{\eps}(\phi_s^{\eps}(x))ds\nonumber\\
=&-\int_{0}^{t} \divv b^{\eps}\left(\phi_{s}^{\eps}(x)\right) d s=-\psi_{\eps}(t, x).
\end{align} 
To this purpose, note that since $b^{\eps} $ and div $ b^{\eps}$ are $ C_{\mathrm{b}}^{\infty}$-functions, the solutions $v^{\eps}$ are $C_{\mathrm{b}}^2$-functions, see  the proof of Theorem
 \ref{TH-Lp-est} where we mention 
\cite{skor}.  
Alternatively, one can remark that by Theorem \ref{reg},  we have $v^\eps \in C^{\alpha+\beta}_{\mathrm{b}}(\mathbb{R}^d)$ and we can apply the   It\^o formula as in  \cite[(4.6)]{Pr12}.

Since the space $W_r^{\delta,p}$, $p\geq 2$ is a martingale type $2$ Banach space, we infer (cf. \cite{Zhu_2010} or \cite{Zhu+Brz+Liu-2019}) 
 \begin{align*} 
&\mathbb{E}\int_{0}^{T} \Big\Vert\int_{0}^{t}\lint_{\mathbb{R}^d} v^{\eps}\big( \phi_{s-}^{\eps}(\cdot)+z\big)-v^{\eps}( \phi_{s-}^{\eps}(\cdot)) \tilde{\mu}(ds,dz)\Big\Vert^2_{W^{\delta,p}_r} dt\\
\leq &C_{p,T}\mathbb{E}\Big(\int_{0}^{T}\lint_{\mathbb{R}^d} \Vert v^{\eps}\big( \phi_{s}^{\eps}(\cdot)+z\big)-v^{\eps}( \phi_{s}^{\eps}(\cdot))\Vert^2_{W_r^{\delta,p}}\nu(dz)ds\Big)\\
= &C_{p,T}\mathbb{E}\Big(\int_{0}^{T}\lint_{B} \Vert v^{\eps}( \phi_{s}^{\eps}(\cdot)+z)-v^{\eps}( \phi_{s}^{\eps}(\cdot))\Vert^2_{W_r^{\delta,p}}\nu(dz)ds\Big)\\
&+C_{p,T}\mathbb{E}\Big(\int_{0}^{T}\int_{B^c} \Vert v^{\eps}( \phi_{s}^{\eps}(\cdot)+z)-v^{\eps}( \phi_{s}^{\eps}(\cdot))\Vert^2_{W_r^{\delta,p}}\nu(dz)ds\Big)\\
:=&I_1+I_2.
\end{align*}
Since $\alpha\in[1,2)$ and $\delta\in (\alpha-1,\frac{\alpha}2)$, we can always choose $\tilde{p}>p$ such that $\frac{\alpha-\delta-1}2<d(\frac1p-\frac1{\tilde{p}})<\frac{\alpha-2\delta}4$. It follows that $s:=\alpha-\delta-2d(\frac1p-\frac1{\tilde{p}})\in(0,1)$.
By applying Lemma \ref{lem-Sob-space-comp},  we obtain
\begin{align}\label{eq 20251108 01}
 \mathbb{E}\Vert v^{\eps}( \phi_{s}^{\eps}(\cdot)+z)-v^{\eps}( \phi_{s}^{\eps}(\cdot))\Vert^2_{W_r^{\delta,p}}\leq M_{r,T,\tilde{p},d,\delta,p} \Vert v^{\eps}(\cdot+z)-v^{\eps}(\cdot)    \Vert^2_{W^{\tilde{\delta},\tilde{p}}}
\end{align}
where $\tilde{\delta}=\delta+d(\frac1p-\frac1{\tilde{p}})$.
Using the Sobolev embedding $H^{\tilde{\delta},\tilde{p}}(\mathbb{R}^d)\subset W^{\tilde{\delta},\tilde{p}}(\mathbb{R}^d)$, for $\tilde{p}> 2$ (c.f.  \cite[p. 155, Theorem 5]{Stein}, \cite[p. 49 (9)]{Triebel-10}) gives
\begin{align}\label{eq 20251108 02}
I_1&\leq C_{p,T}M_{r,T,\tilde{p},d,\delta,p} \Big(\int_0^T \lint_{B}\Vert v^{\eps}(\cdot+z)-v^{\eps}(\cdot)    \Vert^2_{W^{\tilde{\delta},\tilde{p}}} \nu(dz)ds\Big)
\\
&\leq C_{r,T,\tilde{p},d,\delta,p}\Big(\int_0^T \lint_{B}\Vert v^{\eps}(\cdot+z)-v^{\eps}(\cdot)    \Vert^2_{H^{\tilde{\delta},\tilde{p}}} \nu(dz)ds\Big)
\\
&= C_{r,T,\tilde{p},d,\delta,p}\Big(
\int_0^T \lint_{B}
\big\Vert \mathcal{J}^{\tilde{\delta}} \big(v^{\eps}\left( \cdot+z\right)\big)-
\mathcal{J}^{\tilde{\delta}}\big(
v^{\eps}
( \cdot)\big)
\Vert^2_{L^{\tilde{p}}} \nu(dz)ds\Big).
\end{align}
By using  the translation estimate in the fractional Sobolev space \eqref{tran-est-frac-eq} with
 $s=\alpha-\delta-2d(\frac1p-\frac1{\tilde{p}})\in(0,1)$, the Sobolev embedding $H^{s,\tilde{p}}(\mathbb{R}^d)\subset W^{s,\tilde{p}}(\mathbb{R}^d)$, $\tilde{p}> 2$, the fact that $\mathcal{J}^{\tilde{\delta}}$ is an isomorphism of $H^{s+\tilde{\delta},\tilde{p}}(\mathbb{R}^d)$ to $H^{s,\tilde{p}}(\mathbb{R}^d)$ (c.f. \cite[Section 2.3.8, Theorem]{Triebel-10}) and also the Sobolev embedding $H^{\alpha,p}(\mathbb{R}^d)\subset H^{\tilde{\delta}+s,\tilde{p}}(\mathbb{R}^d)$ (which holds because the parameters satisfy the relation $\alpha-\frac{d}p=\tilde{\delta}+s-\frac{d}{\tilde{p}}$), we infer
\begin{align}\label{eq 20251108 03}
\big\Vert \mathcal{J}^{\tilde{\delta}} \big(v^{\eps}\left( \cdot+z\right)\big)- \mathcal{J}^{\tilde{\delta}}\big(v^{\eps}( \cdot)\big)
\Vert^2_{L^{\tilde{p}}}
&=\big\Vert \mathcal{J}^{\tilde{\delta}} (v^{\eps})( \cdot+z)- \mathcal{J}^{\tilde{\delta}}(v^{\eps})(\cdot)\Vert^2_{L^{\tilde{p}}}\\
&\leq C_d\big\Vert \mathcal{J}^{\tilde{\delta}}(v^{\eps})\Vert^2_{W^{s,\tilde{p}}}|z\vert^{2s}\\
&\leq C_d\big\Vert \mathcal{J}^{\tilde{\delta}}(v^{\eps})\Vert^2_{H^{s,\tilde{p}}}|z\vert^{2s}\\
&= C_d\big\Vert v^{\eps}\Vert^2_{H^{s+\tilde{\delta},\tilde{p}}}|z\vert^{2s}\\
&\leq C_d\big\Vert v^{\eps}\Vert^2_{H^{\alpha,p}}|z\vert^{2s}\\
&\leq C_d\big\Vert \divv b \Vert^2_{L^{p}}|z\vert^{2s},
\end{align}
where we used \eqref{est-v-divb} in the last inequality.
Observe that $2s>\alpha$. 
 By Hypothesis \ref{hyp-nondeg2},  we have $\lint_{B}|z\vert^{2s}\nu(dz)<\infty$. It follows that
\begin{align*}
I_1&\leq C_{r,T,\tilde{p},d,\delta,p}\Vert  \divv b  \Vert_{L^p}^2 \Big(\lint_{B}|z\vert^{2s}\nu(dz)
\Big)\leq C<\infty,
\end{align*}
where $C_{r,T,\tilde{p},d,\delta,p}$ is independent of $\eps$.\\
\indent For the other term $I_2$, choose $\tilde{p}>p$ such that $\alpha-\frac{d}p\geq\tilde{\delta}-\frac{d}{\tilde{p}}$, where $\tilde{\delta}=\delta+d(\frac1p-\frac1{\tilde{p}})$   and then apply Lemma \ref{lem-Sob-space-comp}   and the Sobolev embedding $H^{\alpha,p}(\mathbb{R}^d)\subset H^{\tilde{\delta},\tilde{p}}(\mathbb{R}^d)$ to get
\begin{align*}
   I_2&=C_{p,T}\mathbb{E}\Big(\int_{0}^{T}\int_{B^c} \Vert v^{\eps}( \phi_{s}^{\eps}(\cdot)+z)-v^{\eps}( \phi_{s}^{\eps}(\cdot))\Vert^2_{W_r^{\delta,p}}\nu(dz)ds\Big)\\
   & \leq C_{r,T,\tilde{p},d,\delta,p}\Big(\int_{0}^{T}\int_{B^c} \Vert v^{\eps}( \cdot+z)-v^{\eps}( \cdot)\Vert^2_{W^{\tilde{\delta},\tilde{p}}}\nu(dz)ds\Big)\\
   & \leq C_{r,T,\tilde{p},d,\delta,p}\Vert v^{\eps}\Vert^2_{W^{\tilde{\delta},\tilde{p}}} \big(\nu(\{|z|>1\})\big)
   \\
   &\leq C_{r,T,\tilde{p},d,\delta,p} \Vert v^{\eps}\Vert_{H^{\alpha,p}}^2\big(\nu(\{|z|>1\})\big)
   \\
   &\leq C_{r,T,\tilde{p},d,\delta,p} \Vert  \divv b  \Vert_{L^p}^2\big(\nu(\{|z|>1\})\big)<\infty.
\end{align*}
   Combining the above estimates, we find
   \begin{align*}
&\mathbb{E}\int_{0}^{T} \Big\Vert\int_{0}^{t}\lint_{\mathbb{R}^d} v^{\eps}
\left( \phi_{s-}^{\eps}(\cdot)+z\right)-v^{\eps}
\left( \phi_{s-}^{\eps}(\cdot)\right) \tilde{\mu}(ds,dz)\Big\Vert^2_{W^{\delta,p}_r} dt\leq C<\infty,
\end{align*}
where the constant $C$ is independent of $\eps$.\\
Similarly  to $I_2$,  we  apply Lemma \ref{lem-Sob-space-comp} to get
\begin{align}
\mathbb{E}\int_{0}^{T} \Big\Vert \lambda \int_0^t v^{\eps}(\phi_s^{\eps}(\cdot))ds \Big\Vert_{W_r^{\delta,p}}^2 d t
&\leq  \lambda^2 T \mathbb{E}\int_{0}^{T}  \Vert v^{\eps}(\phi_t^{\eps}(\cdot))\Vert_{W_r^{\delta,p}}^2 d t\\
\leq C_{\lambda,r,T,\tilde{p},d,\delta,p} \Vert v^{\eps} \Vert^2_{W^{\tilde{\delta},\tilde{p}}}
&\leq  C_{\lambda,r,T,\tilde{p},d,\delta,p}  \Vert  \divv b  \Vert_{L^p}^2\leq C<\infty.
\end{align}
Consequently,
$
(\psi_{\eps})_{\eps \in (0,1)}
$
is bounded in $L^2((\Omega\times(0,T);W_r^{\delta,p})$, for 
 {$\delta\in(\alpha-1,\frac{\alpha}2)$}. Since $W_r^{\delta_2,p} \subset W_r^{\delta_1,p}$ when $\delta_1<\delta_2$, the uniform boundedness of $ (\psi_{\eps})_{\eps \in (0,1)}$ also holds for all $\delta\in[0,\frac{\alpha}2)$.

\end{proof}

\begin{theorem}\label{Reg-est-Jaco-2}
 
{ Let us assume that   $\alpha\in[1,2)$  and
 $\beta  \in (0, 1)  $ be such that \eqref{eqn-beta+alpha half} holds. Let   $L = (L_t)_{t\geq 0}$ be a L\'evy  process with L\'evy triplet $(0,0,\nu)$ satisfying Hypothesis \ref{hyp-nondeg2}.}
     Assume that $b\in C_{\mathrm{b}}^{\beta}(\mathbb{R}^d,\mathbb{R}^d)$ and there exists
     {$p \in (\alpha,2]$} such that $\divv b \in L^{p}( \mathbb{R}^{d})$.\\ Then for any $\delta \in [0,\frac{p-1}{p}\alpha)$ and $r>0$,
     \begin{align*}
     J\phi\in L^p(0,T;W_r^{\delta,p}), \;\; \mathbb{P}\mbox{-a.s.}
     \end{align*}
\end{theorem}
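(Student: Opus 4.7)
The proof will mirror the structure of Theorem \ref{Reg-est-Jaco}, with the central modification that one must exploit the martingale type $p$ property of $W_r^{\delta,p}$, valid for $p\in(1,2]$, rather than the martingale type $2$ used previously. First I would reduce, via the chain rule for fractional Sobolev norms and the pointwise bound on $J\phi$ from Theorem \ref{ww1}, to showing $\log J\phi \in L^p(0,T;W_r^{\delta,p})$ almost surely. Introducing the smooth approximation $b^\eps = b*\vartheta_\eps$, Lemma \ref{lem-A1} gives $\log J\phi_t^\eps(x) = \psi_\eps(t,x):=\int_0^t \divv b^\eps(\phi_s^\eps(x))\,ds$, and the stability result Theorem \ref{thm-stability} yields a subsequence $\eps_n\downarrow 0$ along which $\psi_{\eps_n}\to\log J\phi$ almost everywhere on $\Omega\times(0,T)\times B(r)$.

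The core step is to establish the uniform boundedness of $(\psi_\eps)_\eps$ in $L^p(\Omega\times(0,T);W_r^{\delta,p})$. I would solve the resolvent Kolmogorov equation $\lambda v^\eps-\gen{A}v^\eps-b^\eps\cdot Dv^\eps = \divv b^\eps$, for which Theorem \ref{TH-Lp-est} gives $\|v^\eps\|_{H^{\alpha,p}}\leq c\|\divv b\|_{L^p}$, and apply It\^o's formula to the $C_b^2$ function $v^\eps$ composed with the flow $\phi_t^\eps$ to obtain the representation
\begin{equation*}
-\psi_\eps(t,x) = v^\eps(\phi_t^\eps(x))-v^\eps(x)-\lambda\int_0^t v^\eps(\phi_s^\eps(x))\,ds - \int_0^t\lint [v^\eps(\phi_{s-}^\eps(x)+z)-v^\eps(\phi_{s-}^\eps(x))]\,\tilde{\mu}(ds,dz).
\end{equation*}
The compensated Poisson integral is then controlled by the Burkholder inequality for stochastic integrals in a Banach space of martingale type $p$, which replaces the $L^2$-based estimate of Theorem \ref{Reg-est-Jaco} by
\begin{equation*}
\mathbb{E}\int_0^T \Big\|\int_0^t\lint F(s,z)\,\tilde{\mu}(ds,dz)\Big\|^p_{W_r^{\delta,p}}\,dt \leq C_T \mathbb{E}\int_0^T\lint \|F(s,z)\|^p_{W_r^{\delta,p}}\,\nu(dz)\,ds.
\end{equation*}

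To estimate the integrand I would apply Lemma \ref{lem-Sob-space-comp} with $\tilde p>p$ sufficiently close to $p$ and $\tilde\delta = \delta + d(1/p - 1/\tilde p)$, reducing to a deterministic translation estimate for $v^\eps$ in $W^{\tilde\delta,\tilde p}$. Using the Sobolev embedding $H^{\alpha,p}\hookrightarrow W^{\tilde\delta + s,\tilde p}$ valid for $\tilde p>p$ with $\alpha - d/p = \tilde\delta+s-d/\tilde p$, combined with real interpolation between $L^{\tilde p}$ and $W^{\tilde\delta+s,\tilde p}$ and the translation bound \eqref{tran-est-frac-eq}, yields
\begin{equation*}
\|v^\eps(\cdot+z)-v^\eps(\cdot)\|_{W^{\tilde\delta,\tilde p}}\leq C|z|^s \|\divv b\|_{L^p}, \qquad s = \alpha-\delta-2d(1/p-1/\tilde p).
\end{equation*}
By Hypothesis \ref{hyp-nondeg2} the integral $\lint_B|z|^{ps}\nu(dz)$ converges exactly when $ps > \alpha$; as $\tilde p\to p^+$ this condition reduces to $p(\alpha-\delta)>\alpha$, equivalently $\delta < \frac{p-1}{p}\alpha$, matching the claimed range. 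The large-jump contribution over $B^c$ and the time-integral term $\lambda\int_0^t v^\eps(\phi_s^\eps(\cdot))\,ds$ are treated analogously, using the boundedness of $v^\eps$ in $W^{\tilde\delta,\tilde p}$ through Sobolev embedding from $H^{\alpha,p}$.

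The main technical obstacle is the translation estimate in $W^{\tilde\delta,\tilde p}$ with exponent approaching the sharp value $\alpha-\delta$, in the regime $\tilde p$ close to $p\leq 2$ where the embedding $H^{s,\tilde p}\hookrightarrow W^{s,\tilde p}$ used in the proof of Theorem \ref{Reg-est-Jaco} (which requires $\tilde p\geq 2$) is no longer available. The argument must instead rely on a direct Sobolev-type embedding of the Bessel potential space $H^{\alpha,p}$ into the fractional Sobolev space $W^{\tilde\delta+s,\tilde p}$ that holds for any $\tilde p > p$, combined with interpolation to pass from $W^{\tilde\delta+s,\tilde p}$ back to $W^{\tilde\delta,\tilde p}$ with a gain of $|z|^s$. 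Once uniform boundedness of $(\psi_\eps)$ in $L^p(\Omega\times(0,T);W_r^{\delta,p})$ is established, a Banach--Alaoglu argument produces a weakly-$\ast$ convergent subsequence whose limit is identified with $\log J\phi$ through the almost-everywhere convergence from the reduction step, completing the proof.
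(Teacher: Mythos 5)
Your proposal is correct and follows the same skeleton as the paper's proof: reduction via the chain rule and Theorem \ref{ww1} to $\log J\phi$, mollification $b^\eps=b\ast\vartheta_\eps$ with the representation $\psi_\eps(t,x)=\int_0^t\divv b^\eps(\phi_s^\eps(x))\,ds$, the Kolmogorov resolvent equation with the $H^{\alpha,p}$ bound from Theorem \ref{TH-Lp-est} and the It\^o--Tanaka identity \eqref{psi-Ito-iden}, the martingale type $p$ maximal inequality (the paper cites \cite[Proposition 2.2]{Zhu+Brz+Liu-2019}), Lemma \ref{lem-Sob-space-comp} with $\tilde p>p$, $\tilde\delta=\delta+d(\tfrac1p-\tfrac1{\tilde p})$, and finally Vitali plus Banach--Alaoglu to identify the weak-$\ast$ limit with $\log J\phi$. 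The only real divergence is in the deterministic translation estimate for $v^\eps$: you invoke a direct Sobolev-type embedding $H^{\alpha,p}\hookrightarrow W^{\tilde\delta+s,\tilde p}$ with change of integrability (this is indeed true for any $\tilde p>p$, being a Triebel--Lizorkin-to-Besov embedding of Jawerth--Franke type, though you should cite it, since the same-exponent embedding $H^{\sigma,\tilde p}\subset W^{\sigma,\tilde p}$ genuinely fails for $\tilde p<2$) and then gain $|z|^s$ by interpolating within the $W$-scale, obtaining the sharp exponent $s$; the paper instead sandwiches $W^{\sigma,\tilde p}$ between Bessel potential spaces $H^{\sigma\pm\eps_0,\tilde p}$, see \eqref{eq-20251108-1}, paying an arbitrarily small loss $\eps_0$ and getting exponent $s-2\eps_0$, which is equally sufficient because $sp>\alpha$ holds strictly. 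One small caution in your parameter choice: taking $\tilde p$ merely ``sufficiently close to $p$'' lets $s=\alpha-\delta-2d(\tfrac1p-\tfrac1{\tilde p})$ exceed $1$ when $\alpha-\delta>1$, and a single difference $\tau_zv^\eps-v^\eps$ can only gain $|z|^{s\wedge 1}$; either impose the paper's lower bound $d(\tfrac1p-\tfrac1{\tilde p})>\tfrac{\alpha-\delta-1}{2}$ so that $s\in(0,1)$, or observe that the exponent $|z|^1$ already suffices since $p>\alpha$ makes $\lint_B|z|^p\nu(dz)<\infty$. Also note that your statement of the interpolation (``between $L^{\tilde p}$ and $W^{\tilde\delta+s,\tilde p}$'') should really be an interpolation between $W^{\tilde\delta,\tilde p}$ and $W^{\tilde\delta+1,\tilde p}$ (or a Besov-scale argument) to land in $W^{\tilde\delta,\tilde p}$ with the gain $|z|^s$; as written it only controls the $L^{\tilde p}$ norm of the difference. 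With these adjustments the argument is complete and equivalent in strength to the paper's.
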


The first two steps of the proof are similar to the proof of Theorem \ref{Reg-est-Jaco}. The crucial part is step 3.
\begin{proof}
\textbf{Step 1}  We may apply similar arguments as in the proof of Theorem \ref{Reg-est-Jaco} to get
 \begin{equation}
[J \phi_{t}]_{W_{r}^{\delta, p}}^{p}\leq\Big(\sup _{x \in B(r)}|J \phi_{t}(x)|\Big)^{p}[\log J \phi_{t}]_{W_{r}^{\delta, p}}^{p}.
\end{equation}
Hence it is enough to prove that
\begin{equation}
\log J \phi_{\cdot}(\cdot)\in L^p(0,T;W_r^{\delta,p}).
\end{equation}
\textbf{Step 2.} Similarly, we define as in Theorem \ref{Reg-est-Jaco},  $b^{\eps}(x)=\left(b * \vartheta_{\eps}\right)(x), \eps \in (0,1)$ and \begin{equation}
\psi_{\eps}(t, x)=\int_{0}^{t} \divv b^{\eps}\left(\phi_{s}^{\eps}(x)\right) d s .
\end{equation}
Observe that
\begin{align*}
&\quad\ \sup_{\eps \in (0,1)}\Big(\mathbb{E} \int_{0}^{T} \int_{B(r)}|\psi_{\eps}(t, x)\vert^{p} d x d t\Big)\\
& \leq C_{p,T} \sup_{\eps \in (0,1)}  \sup _{s \in[0, T], y \in \mathbb{R}^{d}} \mathbb{E}\left[J\left(\phi_{s}^{\eps}\right)^{-1}(y)\right] \int_{0}^{T}  \int_{\mathbb{R}^{d}}\left|\divv b^{\eps}( y)\right\vert^{p} d y d s  \leq C_{p,T}<\infty.
\end{align*}
By the Vitali theorem, there exists a subsequence $\psi_{\eps_n}$ converges strongly in $L^1(\Omega\times(0,T)\times B(r))$ to $\log J\phi$.  Therefore, we only need to prove that $(\psi_{\eps})_{\eps \in (0,1)}$ is uniformly bounded in $L^p(\Omega\times(0,T);W_r^{\delta,p})$.

\textbf{Step 3} To prove the  uniform boundedness of $(\psi_{\eps})_{\eps \in (0,1)}$ in $L^p(\Omega\times(0,T);W_r^{\delta,p})$, in view of \eqref{psi-Ito-iden}, we now need only to show
   \begin{align}\label{uni-boundedness-ST}
&\mathbb{E}\int_{0}^{T} \Big\Vert\int_{0}^{t}\lint_{\mathbb{R}^d} v^{\eps}\left( \phi_{s-}^{\eps}(\cdot)+z\right)-v^{\eps}\left( \phi_{s-}^{\eps}(\cdot)\right) \tilde{\mu}(ds,dz)\Big\Vert^p_{W^{\delta,p}_r} dt\leq C<\infty.
\end{align}
Since the space $W_r^{\delta,p}$  is a martingale type $p$ Banach space,  by using \cite[Propositon 2.2]{Zhu+Brz+Liu-2019}, we infer
\begin{align*}
&\mathbb{E}\int_{0}^{T} \Big\Vert\int_{0}^{t}\lint_{\mathbb{R}^d} v^{\eps}( \phi_{s-}^{\eps}(\cdot)+z)-v^{\eps}( \phi_{s-}^{\eps}(\cdot)) \tilde{\mu}(ds,dz)\Big\Vert^p_{W^{\delta,p}_r} dt\\
\leq &C_{T}\mathbb{E}\Big(\int_{0}^{T}\lint_{\mathbb{R}^d} \Vert v^{\eps}( \phi_{s}^{\eps}(\cdot)+z)-v^{\eps}( \phi_{s}^{\eps}(\cdot))
\Vert^p_{W_r^{\delta,p}}\nu(dz)ds\Big)\\
= &C_{T}\mathbb{E}\Big(\int_{0}^{T}\lint_{B} \Vert v^{\eps}( \phi_{s}^{\eps}(\cdot)+z)-v^{\eps}( \phi_{s}^{\eps}(\cdot))\Vert^p_{W_r^{\delta,p}}\nu(dz)ds\Big)\\
&+C_{T}\mathbb{E}\Big(\int_{0}^{T}\int_{B^c} \Vert v^{\eps}( \phi_{s}^{\eps}(\cdot)+z)-v^{\eps}
( \phi_{s}^{\eps}(\cdot))\Vert^p_{W_r^{\delta,p}}\nu(dz)ds\Big)\\
:=&I_1+I_2.
\end{align*}
Let's consider $I_1$ first. Since $\alpha\in[1,2)$, $p \in (\alpha,2]$ and $\delta\in[0,\frac{p-1}p\alpha)$, we can always choose $\tilde{p}>p$ such that $\frac{\alpha-\delta-1}2<d(\frac1p-\frac1{\tilde{p}})<\frac{\frac{p-1}{p}\alpha-\delta}2$. It follows that $s:=\alpha-\delta-2d(\frac1p-\frac1{\tilde{p}})\in(0,1)$.
By applying Lemma \ref{lem-Sob-space-comp},  we obtain
\begin{align*}
I_1
&\leq C_TM_{r,T,\tilde{p},d,\delta,p}\int_0^T\lint_{B} \Vert v^{\eps}(\cdot+z)-v^{\eps}(\cdot)    \Vert^p_{W^{\tilde{\delta},\tilde{p}}}\nu(dz)ds.
\end{align*}
Here $\tilde{\delta}=\delta+d(\frac1p-\frac1{\tilde{p}})$.
Since $d(\frac1p-\frac1{\tilde{p}})<\frac{\frac{p-1}{p}\alpha-\delta}2$, we have $sp>\alpha$. Choosing $\eps_0>0$ such that 
$(s-2\eps_0)p>\alpha$,
which implies that $ \lint_{B} |z\vert^{(s-2\eps_0)p}\nu(dz) <\infty$ by 
 {Hypothesis \ref{hyp-nondeg2}}.
Since $H^{s+\eps',\kappa}(\mathbb{R}^d) \subset W^{s,\kappa}(\mathbb{R}^d) \subset H^{s-\eps',\kappa}(\mathbb{R}^d) $ for any $s>0$, $\kappa\in(1,+\infty)$ and $0<\eps'<s$ (cf. \cite[Section 2.3.3]{Triebel}),
we obtain
\begin{align}\label{eq-20251108-1}
\big\Vert  v^{\eps}( \cdot+z)- v^{\eps}( \cdot)\Vert^p_{W^{\tilde{\delta},\tilde{p}}}&\leq C
   \big\Vert  v^{\eps}( \cdot+z)- v^{\eps}( \cdot)\Vert^p_{H^{\tilde{\delta} +\eps_0,\tilde{p}}}\\
   &= C \Vert J^{\tilde{\delta}+\eps_0}v^{\eps}( \cdot+z)- J^{\tilde{\delta}+\eps_0}v^{\eps}( \cdot)\Vert^{p}_{L^{\tilde{p}}}\\
   &\leq C_d\Vert J^{\tilde{\delta}+\eps_0}v^{\eps}\Vert_{W^{s-2\eps_0,\tilde{p}}}^p|z\vert^{(s-2\eps_0)p}\\
   &\leq C_d\Vert J^{\tilde{\delta}+\eps_0}v^{\eps}\Vert_{H^{s-\eps_0,\tilde{p}}}^p|z\vert^{(s-2\eps_0)p}\\
   &=C_d\Vert v^{\eps}\Vert_{H^{\tilde{\delta}+s,\tilde{p}}}^p|z\vert^{(s-2\eps_0)p}\\
      &\leq C_d\Vert v^{\eps}\Vert_{H^{\alpha,p}}^p|z\vert^{(s-2\eps_0)p}\\
     &\leq C_d\Vert \divv b\Vert_{L^p}^p|z\vert^{(s-2\eps_0)p},
\end{align}
where we used the translation estimate \eqref{tran-est-frac-eq} in the second inequality, the Sobolev embedding $H^{\alpha,p}(\mathbb{R}^d)\subset H^{\tilde{\delta}+s,\tilde{p}}(\mathbb{R}^d)$, and \eqref{est-v-divb}.  It follows that
\begin{align}
I_1\leq C_{T,d}M_{r,T,\tilde{p},d,\delta,p}  \Vert \divv b\Vert_{L^p}^p \lint_{B} |z\vert^{(s-2\eps_0)p}\nu(dz) \leq C<\infty,
\end{align}
where the constant $C$ is independent of $\eps$.

For $\delta\in[0,\frac{p-1}{p}\alpha)$, choose $\tilde{p}$ such that $p<\tilde{p}$ and $\alpha-\frac{d}p\geq\tilde{\delta}-\frac{d}{\tilde{p}}$, where $\tilde{\delta}=\delta+d(\frac1p-\frac1{\tilde{p}})$.   Therefore, for the other term $I_2$, we can apply Lemma \ref{lem-Sob-space-comp}    and the embedding $H^{\alpha,p}(\mathbb{R}^d)\subset W^{\tilde{\delta},\tilde{p}}(\mathbb{R}^d)$ to get
\begin{align}
I_2
&\leq C_TM_{r,T,\tilde{p},d,\delta,p}\int_0^T\int_{B^c} \Vert v^{\eps}(\cdot+z)-v^{\eps}(\cdot)    \Vert^p_{W^{\tilde{\delta},\tilde{p}}}\nu(dz)ds\\
   & \leq C_{r,T,\tilde{p},d,\delta,p}\mathbb{E}\Big(\int_{0}^{T}\int_{B^c} \Vert v^{\eps}( \cdot+z)-v^{\eps}( \cdot)\Vert^p_{H^{\alpha,p}}\nu(dz)ds\Big)\\
   &\leq C_{r,T,\tilde{p},d,\delta,p}\Vert v^{\eps}\Vert_{H^{\alpha,p}}^p\big(\nu(\{|z|>1\})\big)\\
   &\leq C_{r,T,\tilde{p},d,\delta,p} \Vert  \divv b  \Vert_{L^p}^p\big(\nu(\{|z|>1\})\big)\leq C<\infty.
\end{align}
Combining the above estimates for  $I_1$ and $I_2$ gives \eqref{uni-boundedness-ST}.

\end{proof}

\begin{remark}\label{Remark 202501}
    Using similar arguments, Theorems \ref{Reg-est-Jaco} and \ref{Reg-est-Jaco-2} also hold with $\phi$ replaced by $\phi^{-1}$. In the process of proofs, we need to pay special attention to the following results/claim:
\begin{enumerate}
    \item[(1)] By Theorem \ref{ww1}, $\sup_{t\in[0,T],\,x\in B(r)}|J\phi^{-1}_t(x)|<\infty$, $\mathbb{P}$-a.s..

    \item[(2)] Recall $(\phi^{\eps}_{s,t})^{-1}$ in \eqref{de 2025 0102 01}. By Lemma \ref{lem-A1}, we have,
    for any $0\leq s\leq t\leq T$
\begin{equation}
\log J (\phi^{\eps}_{s,t})^{-1}(x)=-\int_{s}^{t} \divv b^{\eps}\left((\phi^{\eps}_{r,t})^{-1}(x)\right) d r.
\end{equation}
\end{enumerate}
Alternatively, one can use that $D\phi^{-1}_t(x) = (D\phi_t( \phi^{-1}_t(x)))^{-1}$ and so
$$
J \phi^{-1}_t(x) =\det[D\phi^{-1}_t(x)]  = ( J \phi_t( \phi^{-1}_t(x))  )^{-1}. 
$$
Therefore the fractional  Sobolev regularity of  $\log (J \phi_t^{-1}(x))$ follows from the one of  
$y \mapsto \log (J \phi_t(y)).$ 
\end{remark}


\subsection{Uniqueness results for our SPDEs }
Now we prove the uniqueness results for our SPDE.  To this purpose we will use the regularity results of Subsection \ref{subsec 6.1} together with an application of the Ito-Wentzell formula  in the jump case, see  Theorem \ref{thm-IW-applied}. This is a delicate issue; see our  Appendix  
\ref{sec-IWF}.

\begin{theorem}\label{thm-uniqueness-1}
{Let us assume that   $\alpha\in[1,2)$  and
 $\beta  \in (0, 1)  $ be such that \eqref{eqn-beta+alpha half} holds.
Let   $L = (L_t)_{t\geq 0}$ be a L\'evy  process with L\'evy triplet $(0,0,\nu)$ satisfying Hypothesis \ref{hyp-nondeg2}.}
 Assume that $b\in C_{\mathrm{b}}^{\beta}(\R^d,\R^d)$.
 Assume  also that one of the following conditions hold. \\
 (1)Assume
$\divv b \in L^{p}( \mathbb{R}^{d})$ for some $p\geq2$ when $d\geq 1$ or simply $Db\in L^1_{loc}(\mathbb{R})$ in the case $d=1$.
\\
(2) Assume $\beta >1-\alpha+\frac{\alpha}p$ and $\divv b \in L^{p}( \mathbb{R}^{d})$ for some 
{$p\in(\alpha,2]$} when $d\geq 1$ or simply $Db\in L^1_{loc}(\mathbb{R})$ in the case $d=1$.\\
Then, for every $u_{0} \in L^{\infty}(\mathbb{R}^{d})$, there exists a unique weak$^\ast$-$\mathrm{L}^{\infty}$-solution to the problem \eqref{eqn-transport-Markus} of the form $u(\omega,t,x)=u_0(\phi_t^{-1}(\omega)x), \;\; t\in [0,T], x \in\mathbb{R}^d,
$ in the sense of Definition \ref{def-transport-weak-2}.
\end{theorem}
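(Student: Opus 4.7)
By linearity of equation \eqref{eqn-transport-Markus}, it suffices to show that any weak$^\ast$-$\mathrm{L}^{\infty}$-solution $u$ with initial datum $u_0 \equiv 0$ satisfies $u(t,\cdot)=0$ in $L^{\infty}(\mathbb{R}^d)$ for every $t \in [0,T]$, $\mathbb{P}$-a.s. The plan is to follow the commutator/renormalization strategy of \cite{FGP_2010-Inventiones}, suitably adapted to the pure-jump setting. First, I would fix a standard mollifier $\vartheta_\eps$ as in \eqref{def-vartheta-eps} and test equation \eqref{eqn-transport-Marcus} against $\theta(\cdot) = \vartheta_\eps(y-\cdot)$ for fixed $y \in \mathbb{R}^d$. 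Denoting $u^\eps(t,y) := (u(t,\cdot) \ast \vartheta_\eps)(y)$, this produces, for every $y$, a $y$-dependent $\mathbb{P}$-full event on which $u^\eps(\cdot,y)$ satisfies a smooth SDE in $t$ with an additional commutator drift $\mathcal{R}_\eps[b,u(s,\cdot)](y)$ from Definition \ref{def-commutaror}.

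The next step, which constitutes the main technical obstacle, is to apply the It\^o-Wentzell formula in the jump case (Appendix \ref{sec-IWF}) to the composition $u^\eps(t, \phi_{0,t}(x))$, with $\phi_{0,t}$ the stochastic flow from Theorem \ref{ww1-1-intro}. The crucial preliminary is to upgrade the pointwise-in-$y$ SDE for $u^\eps(\cdot,y)$ to one holding on a $y$-uniform $\mathbb{P}$-full event, with $C^2$-regularity in $y$ and c\`adl\`ag paths in $D([0,T];C^2(\mathbb{R}^d))$; this regularity follows from the bound $\|u^\eps(t,\cdot)\|_{C^k} \le C_{\eps,k}\|u(t,\cdot)\|_{L^\infty}$, the measurability structure of Lemma \ref{zz}, and the uniform bound \eqref{eq-esb0}, together with the jump-compatibility conditions discussed around \eqref{eq-1029-1}--\eqref{s33}. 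After a careful check of these assumptions one gets the key identity, for $\mathbb{P}$-a.s. $\omega$ and every $x \in \mathbb{R}^d$, $t \in [0,T]$,
\begin{equation}\label{eq-plan-ITWF}
u^\eps(t,\phi_{0,t}(x)) = u^\eps(0,x) + \int_0^t \mathcal{R}_\eps[b,u(s,\cdot)](\phi_{0,s}(x))\, ds,
\end{equation}
in which the geometric invariance of the Marcus form forces the cancellation of the transport drift and of all stochastic jump contributions against the corresponding terms in the SDE for $\phi_{0,t}$.

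To close the argument, I would test \eqref{eq-plan-ITWF} against an arbitrary $\rho \in C_c^\infty(\mathbb{R}^d)$, perform the change of variables $x \mapsto \phi_{0,t}^{-1}(y)$, and study the limit $\eps \to 0$. The left-hand side gives $\int_{\mathbb{R}^d} u^\eps(t,y)\,\rho_\phi(t,y)\,dy$, where $\rho_\phi(t,y) = \rho(\phi_{0,t}^{-1}(y))J\phi_{0,t}^{-1}(y)$, and converges to $\int u(t,y)\rho_\phi(t,y)\,dy$ by the weak-$\ast$ convergence $u^\eps(t,\cdot)\rightharpoonup u(t,\cdot)$ in $L^\infty$. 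The commutator term is handled using Proposition \ref{comu-prop-1} applied pointwise in $\omega$ with $v = b$, $g = u(s,\cdot)$, $\phi = \phi_{0,s}$: the a.s. convergence $\int \mathcal{R}_\eps[b,u(s,\cdot)](\phi_{0,s}(x))\rho(x)\,dx \to 0$ requires exactly the hypothesis $J\phi_{0,s}^{-1} \in W^{1-\beta,p}_{\mathrm{loc}}$, $\mathbb{P}$-a.s., for a suitable $p$. Under hypothesis~(1), the pointwise-in-$s$ regularity is supplied by Theorem \ref{Reg-est-Jaco} with $\delta = 1-\beta < \tfrac{\alpha}{2}$ (using $\tfrac{\alpha}{2}+\beta>1$) and any $p\ge 2$, while under hypothesis~(2) it follows from Theorem \ref{Reg-est-Jaco-2} with $\delta = 1-\beta < \tfrac{p-1}{p}\alpha$ (which is equivalent to $\beta > 1 - \alpha + \tfrac{\alpha}{p}$), applied to $\phi^{-1}$ via Remark \ref{Remark 202501}. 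A dominated-convergence argument, combined with the uniform bound $\|\mathcal{R}_\eps[b,u]\|_\infty \le C\|b\|_0\|u\|_\infty$ from Lemma \ref{Lem-commu}, passes the time integral to the limit. For $d=1$ with $Db \in L^1_{\mathrm{loc}}$, one uses Proposition \ref{commu-pro} instead of Proposition \ref{comu-prop-1}, relying on the $C^{1+\delta}$-regularity of $\phi_{0,t}^{-1}$ from Theorem \ref{ww1-1-intro}. The outcome is $\int u(t,\phi_{0,t}(x))\rho(x)\,dx = 0$ for all $\rho \in C_c^\infty$, hence $u(t,\phi_{0,t}(x)) = 0$ for Lebesgue a.e.\ $x$, $\mathbb{P}$-a.s., and the homeomorphism property of $\phi_{0,t}$ (Theorem \ref{thm-homeomorphism}) together with the fact that $\phi_{0,t}$ preserves null sets (by the a.s. boundedness of $J\phi_{0,t}^{-1}$) yields $u(t,\cdot) = 0$ in $L^\infty$. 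Finally, the identification $u(t,x) = u_0(\phi_{0,t}^{-1}(x))$ for a general $u_0 \in L^\infty$ follows by applying the argument to the difference of any two solutions, which is itself a weak$^\ast$-$L^\infty$-solution with zero initial datum thanks to the existence result Theorem \ref{thm-transport equation}.
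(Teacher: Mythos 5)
Your plan follows the paper's argument essentially verbatim: mollify, apply the jump It\^o--Wentzell formula to $u^\eps(t,\phi_{0,t}(x))$ after the $D([0,T];C^2(\mathbb{R}^d))$ upgrade (Lemma \ref{lem-u^eps satisfies assumptions ITWF}), reduce to the commutator identity (the paper obtains $u^\eps(t,\phi_t(x))=-\int_0^t\mathcal R_\eps[b,u_s](\phi_s(x))\,ds$; your sign is flipped, harmlessly), and then kill the commutator in the limit via Proposition \ref{comu-prop-1} fed by the fractional Sobolev regularity of $J\phi^{-1}$ from Theorem \ref{Reg-est-Jaco} (case (1), $\delta=1-\beta<\tfrac{\alpha}{2}$) and Theorem \ref{Reg-est-Jaco-2} (case (2), $1-\beta<\tfrac{p-1}{p}\alpha$) together with Remark \ref{Remark 202501}; this is exactly the paper's parameter bookkeeping, and the final density/change-of-variables step is the same.

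Two points do not survive scrutiny. First, your treatment of the case $d=1$, $Db\in L^1_{\mathrm{loc}}(\mathbb R)$, via Proposition \ref{commu-pro} fails: that proposition requires $D\phi_t^{-1}$ (hence $J\phi_t^{-1}$) to be locally $(1-\beta)$-H\"older, whereas Theorem \ref{ww1} only gives local $\delta$-H\"older continuity for $\delta<\tfrac{\alpha}{2}+\beta-1$, and $1-\beta<\tfrac{\alpha}{2}+\beta-1$ is equivalent to $\beta>1-\tfrac{\alpha}{4}$, which is the hypothesis of Theorem \ref{thm-uniqueness}, not of the present theorem (e.g.\ $\alpha=1.2$, $\beta=0.5$ satisfies \eqref{eqn-beta+alpha half} but the available H\"older exponent is below $0.1$, far short of $1-\beta=0.5$). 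The paper instead invokes the one-dimensional commutator estimate of \cite[Corollary 19]{FGP_2010-Inventiones}, which needs only $b\in W^{1,1}_{\mathrm{loc}}$ and $\|J\phi_s^{-1}\|_{L^\infty_R}$, with no H\"older regularity of the Jacobian. Second, the ``uniform bound $\|\mathcal R_\eps[b,u]\|_\infty\le C\|b\|_0\|u\|_\infty$ from Lemma \ref{Lem-commu}'' does not exist: that lemma only bounds the pairing of $\mathcal R_\eps$ against a test function, and a pointwise bound would degenerate like $\eps^{\beta-1}$ and also involve $\divv b\notin L^\infty$. The dominating function for the time integral must be taken, as in \eqref{sec-6-conv-u-R-2}, from the right-hand side of Proposition \ref{comu-prop-1} itself, whose membership in $L^1(0,T)$ $\mathbb{P}$-a.s.\ is precisely what Theorems \ref{Reg-est-Jaco} and \ref{Reg-est-Jaco-2} (applied to $\phi^{-1}$) deliver. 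With these two repairs your plan coincides with the paper's proof.
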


\begin{remark}
    The uniqueness result from Theorem \ref{thm-uniqueness-1} is comparable to that of  from \cite[Theorem 20]{FGP_2010-Inventiones}, which requires the hypothesis that there exists $p\in (2,\infty)$ such that $\divv b \in L^{p}( \mathbb{R}^{d})$. On the other hand, our result extends \cite[Theorem 20]{FGP_2010-Inventiones} by allowing $p$ to range also in $(\alpha,2]$. 
\end{remark}

Our   next uniqueness result  holds in particular under the general Hypothesis  \ref{hyp-nondeg1} when $\alpha \ge 1$. It does not require any global integrability condition on $\divv b$. On the other hand \eqref{eqn-alphaover4+beta>1} is stronger than condition \eqref{eqn-beta+alpha half}.

It is quite unexpected that the result  even holds when $\alpha \in (0,1)$ in the case of rotationally invariant $\alpha$-stable L\'evy processes.

\begin{theorem}\label{thm-uniqueness}
 Assume Hypothesis \ref{hyp-nondeg1}
 if $\alpha\in [1,2)$ or assume  Hypothesis \ref{hyp-nondeg3}
 if $\alpha\in (0,1)$.    Let  $\beta  \in (0, 1)  $ be such that
\begin{align}\label{eqn-alphaover4+beta>1}
    &\frac{\alpha}4+\beta>1.
    \end{align}
   Assume  also that $b\in C_{\mathrm{b}}^{\beta}(\mathbb{R}^d,\mathbb{R}^d)$  and $\divv b \in L_{\mathrm{loc}}^{1}( \mathbb{R}^{d})$, $d\geq 1$.
Then, for every $u_{0} \in L^{\infty}(\mathbb{R}^{d})$, there exists a unique weak$^\ast$-$\mathrm{L}^{\infty}$-solution to the problem \eqref{eqn-transport-Markus} of the form \[u(\omega,t,x)=u_0(\phi_t^{-1}(\omega)x), \;\; t\in[0,T], x \in\mathbb{R}^d,
\]
 in the sense of Definition \ref{def-transport-weak-2}.

\end{theorem}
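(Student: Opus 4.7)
Existence follows directly from Theorem \ref{thm-transport equation}, whose hypotheses are implied by those above (note that $\frac{\alpha}{4}+\beta>1$ is strictly stronger than $\frac{\alpha}{2}+\beta>1$). I therefore concentrate on uniqueness, and by linearity of equation \eqref{eqn-transport-Marcus} it suffices to show that every weak$^\ast$-$L^\infty$-solution $u$ with initial datum $u_0 = 0$ vanishes identically. The strategy is a jump analogue of the Brownian argument of \cite[Thm.~20]{FGP_2010-Inventiones}: regularise $u$ by convolution, compose the regularisation with the stochastic flow via the It\^o-Wentzell formula from Appendix \ref{sec-IWF}, and pass to the limit through a commutator estimate that exploits the sharp $C^{1+\delta}$-regularity of the flow from Theorem \ref{ww1}.

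Setting $u^\eps(t,x):=(u(t,\cdot)\ast\vartheta_\eps)(x)$ and inserting the test function $\theta(\cdot) = \vartheta_\eps(x-\cdot)$ into condition (ii) of Definition \ref{def-transport-weak-2}, I expect the process $t\mapsto u^\eps(t,x)$ to satisfy
\begin{align}
du^\eps(t,x) &= -\bigl[b(x)\cdot Du^\eps(t,x) + \mathcal{R}_\eps[b,u](t,x)\bigr]\,dt \\
&\quad + \int_B [u^\eps(t-,x-z)-u^\eps(t-,x)]\,\tilde\mu(dt,dz) + \int_{B^c} [u^\eps(t-,x-z)-u^\eps(t-,x)]\,\mu(dt,dz) \\
&\quad + \int_B [u^\eps(t,x-z)-u^\eps(t,x)+z\cdot Du^\eps(t,x)]\,\nu(dz)\,dt,
\end{align}
where $\mathcal{R}_\eps[b,u]$ is the commutator of Definition \ref{def-commutaror}. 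After verifying the joint-measurability and path-regularity hypotheses of the It\^o-Wentzell formula for $u^\eps$ and the flow $\phi_t$ (exactly the task of Appendix \ref{app}), the formula applied to $u^\eps(t,\phi_t(x))$ should yield, after exact cancellation of the $b\cdot Du^\eps$-drift against the drift of $\phi$ and exact cancellation of the jumps of $u^\eps$ (which at a jump of $L$ of size $z$ shift the spatial argument by $-z$) against those of $\phi$ in Marcus form (which shift by $+z$),
\begin{equation}
u^\eps(t,\phi_t(x)) \;=\; u^\eps(0,x) \;-\; \int_0^t \mathcal{R}_\eps[b,u](s,\phi_s(x))\,ds.
\end{equation}

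To conclude I would test this identity against $\rho\in C_c^\infty(\mathbb{R}^d)$ and let $\eps\to 0$. By the change of variables $y = \phi_t(x)$ together with the duality $\int (u\ast\vartheta_\eps)\, g\,dx = \int u\, (g\ast\vartheta_\eps)\,dx$, the left-hand side converges to $\int u(t,y)(\rho\circ\phi_t^{-1})(y)\, J\phi_t^{-1}(y)\,dy$, while the first term on the right vanishes since $u_0 = 0$. The commutator term is treated by Proposition \ref{commu-pro}: since $\frac{\alpha}{4}+\beta > 1$, I can choose $\theta\in(2-\frac{\alpha}{2}-\beta,\,\beta]$, and then $b\in C^\theta_{\mathrm{loc}}$ while, by Theorem \ref{ww1}, both $D\phi_s$ and $D\phi_s^{-1}$ are locally $(1-\theta)$-H\"older continuous $\mathbb{P}$-a.s., uniformly for $s\in[0,T]$. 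Proposition \ref{commu-pro} then provides a uniform-in-$\eps$ bound and the pointwise-in-$(s,\omega)$ vanishing $\lim_{\eps\to 0}\int \mathcal{R}_\eps[b,u](s,\phi_s(x))\rho(x)\,dx = 0$; dominated convergence absorbs this into the $ds$-integral. Consequently $\int u(t,y)(\rho\circ\phi_t^{-1})(y) J\phi_t^{-1}(y)\,dy = 0$ for every test function $\rho$, and since $\phi_t^{-1}$ is a diffeomorphism with positive Jacobian, this forces $u(t,\cdot) = 0$ a.e., $\mathbb{P}$-a.s.

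The principal difficulty is the rigorous application of the It\^o-Wentzell formula in the jump case: one must not only check the stringent measurability and path conditions of Appendix \ref{sec-IWF} for the specific random field $u^\eps$, but also manage delicate jump sums of the form $\sum_{s\leq T} \sup_{y\in \mathbb{K}}|Du^\eps(s,y)-Du^\eps(s-,y)||\phi_s-\phi_{s-}|$ and carry out the exact cancellation between the small-jump It\^o compensators appearing in the equation for $u^\eps$ and the compensators produced by the Marcus composition with $\phi$. A secondary but manageable issue concerns the supercritical regime $\alpha\in(0,1)$: only the local stability of the flow is available (see \eqref{stability23-intro} and Remark \ref{remark 2025-11-226}), but this is sufficient because all passages to the limit take place on the compact support of the test function $\rho$.
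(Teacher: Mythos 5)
Your proposal is correct and follows essentially the same route as the paper: mollify $u$, apply the jump It\^o--Wentzell formula (verified as in Appendix \ref{app}) to $u^\eps(t,\phi_t(x))$ to obtain $u^\eps(t,\phi_t(x))=-\int_0^t\mathcal{R}_\eps[b,u_s](\phi_s(x))\,ds$, then test against $\rho$, and control the commutator via Proposition \ref{commu-pro} using the local $(1-\beta)$-H\"older regularity of $D\phi_s^{-1}$ from Theorem \ref{ww1}, which is exactly what $\frac{\alpha}{4}+\beta>1$ guarantees (the paper takes $\theta=\beta$, which lies in your admissible interval). Your observation that the H\"older-based commutator bound, rather than the Sobolev-based one, is what permits $\divv b\in L^1_{\mathrm{loc}}$ and the supercritical range $\alpha\in(0,1)$ matches the paper's Step 3 precisely.
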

\begin{remark}\label{rem-thm-uniqueness}
The uniqueness from Theorem \ref{thm-uniqueness}  is comparable with the paper \cite[Theorem 21]{FGP_2010-Inventiones}, whose  uniqueness result in the case of Brownian Motion, i.e., $\alpha=2$, requires $b\in C_{\mathrm{b}}^{\beta}(\R^d,\R^d)$ with $\beta>\frac12$.\\
Let us emphasize that Theorem \ref{thm-uniqueness}  does not rely on the paper \cite{Zhang_2013} (the previous result, i.e., Theorem \ref{thm-uniqueness-1}, depends on \cite{Zhang_2013}).
\end{remark}

\begin{proof}[Proof of Theorem \ref{thm-uniqueness-1}]
Let us assume that  $u$ is a weak$^\ast$-$\mathrm{L}^{\infty}$-solution of equation \eqref{eqn-transport-Markus} in the sense of Definition \ref{def-transport-weak-2} with   $u_0=0$. 
By linearity of the equation, the uniqueness follows from showing that for any test function $\theta\in C_c^{\infty}(\R^d)$, 
\begin{align}\label{eq-uni101}
\int_{\R^d} u(t,x)\theta(x) dx=0,\quad \mathbb{P}\text{-a.s., for any }t\in[0,T].
\end{align}
\indent In what follows, we establish the claim \eqref{eq-uni101}. Define an auxiliary $\mathbb{R}$-valued process $u^\eps(t)$, $t\in [0,T]$ by
\begin{equation}\label{eqn-u^eps}
u^\eps(t,\cdot )=(\vartheta_\eps \ast u)(t, \cdot ), \;\; t \in [0,T],
\end{equation}
i.e.,
\[
u^\eps(t,y )=\int_{\mathbb{R}^d} \vartheta_\eps(y-x)  u(t,x )\; dx,
\]
where $\vartheta_\eps$ is defined by \eqref{def-vartheta-eps}.
By \eqref{eqn-u^eps}, we have
\[
u^\eps(0,\cdot )=0.
\]
Note that for every $y \in \mathbb{R}^d$, the real-valued process $u^\eps(t,y)$ is well defined.

Let us fix $y \in \mathbb{R}^d$ and introduce an auxiliary test function $\theta$  defined  by
\begin{equation}\label{eqn-theta}
\theta_y =\vartheta_\eps(y-\cdot) \;\; \mbox{ i.e., for }x\in \R^d, \,  \;\;\theta_y(x)=\vartheta_\eps(y-x) \in \mathbb{R}.
\end{equation}
Then, by the definition \eqref{eqn-u^eps} of $u^\eps$,   we have
\begin{align}\label{eqn-u^eps-1}
u^\eps(t,y)&= \int_{\mathbb{R}^d} \vartheta_\eps(y-x) u(t,x)\,dx
=     \int_{\mathbb{R}^d}  u(t,x)\theta_y(x) \,dx.
\end{align}
Therefore, by (i) of Definition \ref{def-transport-weak-2} and Lemma \ref{lem-transport-weak-2-(i)}, we know that the process $u^\eps(t,y)$, $t\geq 0$, is well defined and for every $y$, $u^\eps(t,y)$ has  c\`{a}dl\`{a}g paths $\mathbb{P}$-a.s. Observe that
by the above equality \eqref{eqn-u^eps-1} and the equality \eqref{eqn-transport-Marcus} from Definition \ref{def-transport-weak-2},   we have
\begin{align}
\begin{aligned}
       u^\eps(t,y)
      =& \int_0^t\; \int_{\mathbb{R}^d}u(s,x)\,[b(x)\cdot  { D}\theta_y(x)+\divv\, b(x)\,\theta_y(x)]\,dx\, ds\\
       &+\int_0^t\lint_{B} \Bigl(\int_{\mathbb{R}^d}u(s-,x)\,[\theta_y(x+z)-\theta_y(x)]\, dx\Bigr)\tilde{\prm}(ds,dz)\\
       &+\int_0^t\int_{B^{\mathrm{c}}} \Bigl(\int_{\mathbb{R}^d}u(s-,x)\,[\theta_y(x+z)-\theta_y(x)]\, dx\Bigr) \prm(ds,dz)\\
       &+\int_0^t\lint_{B}\Bigl(\int_{\mathbb{R}^d} u(s,x)\,[\theta_y(x+z)-\theta_y(x)-\sum_{i=1}^dz_iD_{i}\theta_y(x)]\,dx\Bigr)\,\nu(dz)\, ds.
                    \end{aligned}
\end{align}
It follows that for $y \in \mathbb{R}^d$, the real-valued process $u^\eps(t,y)$ satisfies $t\geq 0$
\begin{align}\label{eqn-transport-Marcus-2}
       u^\eps(t,y)=&\int_0^t\; \int_{\mathbb{R}^d}u(s,x)\,[b(x)\cdot D_x \bigl(\vartheta_\eps(y-x)\bigr) +\divv\, b(x) \vartheta_\eps(y-x)]\,dx\, ds
       \\
       &+\int_0^t\lint_{B} \Bigl(\int_{\mathbb{R}^d}u(s-,x)\,[ \vartheta_\eps(y-z-x) -\vartheta_\eps(y-x) ] \,dx\Bigr)\tilde{\prm}(ds,dz)\nonumber \\
       &+\int_0^t\int_{B^{\mathrm{c}}} \Bigl(\int_{\mathbb{R}^d}u(s-,x)\,[\vartheta_\eps(y-z-x) -\vartheta_\eps(y-x) ]\, dx\Bigr) \prm(ds,dz) \nonumber\\
       &+\int_0^t\lint_{B}\Bigl(\int_{\mathbb{R}^d} u(s,x) \bigl[\vartheta_\eps(y-z-x) -\vartheta_\eps(y-x)
      -\sum_{i=1}^dz_iD_{i} \bigl( \vartheta_\eps(y-x)\bigr) \bigr]\, dx\Bigr)\,\nu(dz)\, ds.\nonumber
\end{align}
 By $D_x (\vartheta_\eps(y-x))$ we mean the Fr{\'e}chet derivative at point $x$ of the function $\theta: x\mapsto \vartheta_\eps(y-x)$, i.e.,
\[
D_x (\vartheta_\eps(y-x))= D_{x} \theta_y,
\]
where the function $\theta_y$ is defined by the formula \eqref{eqn-theta}.
\\
Let us observe that the following identity holds: \[D_{i} (\vartheta_\eps(y-x)) =-D_{i} \vartheta_\eps(y-x),\]
where $D \vartheta_\eps(a)$ is the directional Fr{\'e}chet derivative of $ \vartheta_\eps$ at $a$ and $D_i \vartheta_\eps(a)=\langle D \vartheta_\eps(a),e_i\rangle$. Similarly,  when we write $D\vartheta_\eps(y-x)$ we mean the Fr{\'e}chet derivative  of $\vartheta_\eps$ at the point $y-x$.

By Theorem \ref{thm-IW-applied}, we have 
 $\mathbb P$-a.s., for any $t \in [0,T]$,
 \begin{align}
 u^\eps(t,\phi_t(x))
=& \int_0^t Du^{\eps}(s,\phi_{s}(x))\cdot b(\phi_{s}(x))\,ds\label{eqn-u^eps-final-1}
\\
&+\int_0^t \int_{\mathbb{R}^d}u(s,y)\,\bigl[
b(y)\cdot D_y\bigl(\vartheta_\eps(\phi_{s}(x)-y)\bigr)
+\divv\, b(y) \vartheta_\eps(\phi_{s}(x)-y)
\bigr]\,d y\, ds.\nonumber
\end{align}

\textbf{Step 2} Let us fix $x \in \mathbb{R}$. Since  the function $\theta: y \mapsto \vartheta_\eps(\phi_{s}(x)-y)$ is of $C_c^\infty$-class, according to Definition \ref{def-weak derivative} we have
\begin{align*}
& \int_0^t \int_{\mathbb{R}^d}u(s,y)\,\bigl[
b(y)\cdot D_y \bigl(\vartheta_\eps(\phi_{s}(x)-y) \bigr)
+\divv\, b(y) \vartheta_\eps(\phi_{s}(x)-y)
\bigr]\,d y\, ds
\\
&=\int_0^t \Bigl[  \int_{\mathbb{R}^d}u(s,y)\,
b(y)\cdot D_y \theta
\,d y\,  + \int_{\mathbb{R}^d}u(s,y)\, \divv\, b(y) \theta (y)  \,d y \bigr]\, ds
\\
&=-\int_0^t \int_{\mathbb{R}^d} b(y)  \cdot  D_y u(s,y) \, \theta(y)  \,d y\, ds
\\
&=-\int_0^t \int_{\mathbb{R}^d} b(y)  \cdot  D_y u(s,y) \, \vartheta_\eps(\phi_{s}(x)-y)  \,d y\, ds\\
&=-\int_0^t  [\vartheta_\eps \ast (b\cdot D u(s,\cdot ))](\phi_{s}(x))  \, ds,
\end{align*}
where $b(y)  \cdot  D_y u(s,y)$ is the object defined in \eqref{eqn-v nabla g}.
Hence, we infer, using notation $u_s=u(s,\cdot)$,
\begin{align}
u^{\eps}(t,\phi_t(x))=& \int_0^t b(\phi_{s}(x)) \cdot Du^{\eps}(s,\phi_{s}(x))\,ds\label{sec-6-u-commutator-1}
-\int_{0}^{t} \bigl[\vartheta_{\eps} \ast (b \cdot D u_{s})\bigr](\phi_{s}(x)) \, dx\nonumber\\
=&-\int_{0}^{t}\bigl[\vartheta_{\eps} \ast (b \cdot D u_{s})-b \cdot D(\vartheta_{\eps} \ast  u_{s})\bigr](\phi_{s}(x)) d s\nonumber\\
=&-\int_{0}^{t} \mathcal{R}_{\eps}\bigl[b, u_{s}\bigr](\phi_{s}(x)) d s.
\end{align}
Let $\rho\in C_c^{\infty}(\mathbb{R}^d)$. By changing the variable we find
\begin{equation}
\int_{\mathbb{R}^d} u^{\eps}(t, \phi_{t}(x)) \rho(x) d x = \int_{\mathbb{R}^d} u^{\eps}(t, y) \rho(\phi_{t}^{-1}(y)) J \phi_{t}^{-1}(y) d y.
\end{equation}
Since $u$ satisfies (o) of Definition \ref{def-transport-weak-2} and $\rho\in C_c^{\infty}(\mathbb{R}^d)$, for any $t\in[0,T]$,  there exists a subsequence $\eps_n$ such that $\mathbb{P}$-a.s.
\begin{align*}
\lim_{\eps_n\to  0}\int_{\mathbb{R}^d} u^{\eps_n}(t, \phi_{t}(x)) \rho(x) d x &=\lim_{\eps_n\to  0} \int_{\mathbb{R}^d} u^{\eps_n}(t, y) \rho(\phi_{t}^{-1}(y)) J \phi_{t}^{-1}(y) d y\\
&=\int_{\mathbb{R}^d} u(t, y) \rho(\phi_{t}^{-1}(y)) J \phi_{t}^{-1}(y) d y\\
&=\int_{\mathbb{R}^d} u(t, \phi_{t}(x)) \rho(x) d x.
\end{align*}
Therefore, in view of \eqref{sec-6-u-commutator-1}, we infer $\mathbb{P}$-a.s.
\begin{align}\label{sec-6-conv-u-R-1}
\int_{\mathbb{R}^d} u(t, \phi_{t}(x)) \rho(x) d x=-\lim_{\eps_n\to  0}\int_{\mathbb{R}^d} \Big(\int_{0}^{t} \mathcal{R}_{\eps_n}\bigl[b, u_{s}\bigr]\left(\phi_{s}(x)\right) d s\Big) \rho(x) dx.
\end{align}
\textbf{Step 3} Now assume $\rho\in C_r^{\infty}(\mathbb{R}^d)$ for some $r>0$. Define a random variable \begin{equation}R=\sup_{s\in[0,T],\,x\in B(r)}|\phi_s(x)|\end{equation}
so that the map $\phi_s$, $s\in[0,T]$ sends the support of $\rho$  in the ball of radius $R$.
By Theorem \ref{d32}, we have $R\leq r+T\|b\|_0+\sup_{s\in[0,T]}|L_s|<\infty$, $\mathbb{P}$-a.s. In view of Theorem \ref{ww1}, Theorem \ref{thm-stability} and  Remark \ref{rem Sec 5 000}, we get, $\mathbb{P}$-a.s.,
\begin{align}
&\sup _{s \in[0, T],\, x \in B(R)}\left|J \phi_{s}^{-1}(x)\right|<\infty,\\
&\sup _{s \in[0, T],\, x \in B(R)}\Vert D\phi_s^{-1}(x)\Vert<\infty.
\end{align}
Since $1-\frac{\alpha}2<\beta<1$,  by applying Theorem \ref{Reg-est-Jaco} with $\delta=1-\beta \in (0,\frac{\alpha}{2})$ we have  $J\phi\in L^2(0,T;W_r^{1-\beta,p})$, $\mathbb{P}$-a.s. for any $r>0$, when $p\geq 2$. In view of Remark \ref{Remark 202501}, we infer $J\phi^{-1}\in L^2(0,T;W_r^{1-\beta,p})$, $\mathbb{P}$-a.s. for any $r>0$, when $p\geq 2$.\\
\indent
 If
 $p\in(\alpha,2]$ and $1-\alpha+\frac{\alpha}p<\beta<1$,  we may apply Theorem \ref{Reg-est-Jaco-2} with $\delta=1-\beta\in (0,\frac{(p-1)\alpha}{p})$ to yield that $J\phi\in L^p(0,T;W_r^{1-\beta,p})$, $\mathbb{P}$-a.s. for any $r>0$ and hence $J\phi^{-1}\in L^p(0,T;W_r^{1-\beta,p})$, $\mathbb{P}$-a.s. for $r>0$, by Remark \ref{Remark 202501}.
Now we can invoke Proposition \ref{comu-prop-1} to deduce that for $p>1$ and $r>0$,
\begin{align}
\begin{aligned}\label{sec-6-conv-u-R-2}
&\Big|\int_{\mathbb{R}^d} \mathcal{R}_{\eps_n}\bigl[b, u_{s}\bigr]\left(\phi_{s}(x)\right)  \rho(x) dx\Big|\\
&\leq C_{R}\Vert u_s\Vert_{L_{R+1}^{\infty}}\Vert\rho\Vert_{L_{r}^{\infty}}\Vert\divv b\Vert_{L_{R+1}^{1}}\Vert J \phi^{-1}_s\Vert_{L_{R}^{\infty}}\\
&\quad+C_{R,p}\Vert u_s\Vert_{L_{R+1}^{\infty}}[b]_{C_{R+1}^{\beta}}\left(\left\Vert D \phi^{-1}_s\right\Vert_{L_{R}^{\infty}}\Vert D \rho\Vert_{L_{r}^{p}}\left\Vert J \phi^{-1}_s\right\Vert_{L_{R}^{\infty}}+\left[J \phi^{-1}_s\right]_{W_{R}^{1-\beta, p}}\Vert\rho\Vert_{L_{r}^{\infty}}\right)\\
&\in L^1(0,T),\quad\mathbb{P}\text{-a.s.}
\end{aligned}
\end{align}
and
\begin{align}
\begin{aligned}\label{sec-6-conv-u-R-3}
\lim _{\eps_n \to  0}\int_{\mathbb{R}^d} \mathcal{R}_{\eps_n}\bigl[b, u_{s}\bigr]\left(\phi_{s}(x)\right)  \rho(x) dx=0,\quad\mathbb{P}\text{-a.s.}
\end{aligned}
\end{align}
(the limit is in $L^1(0,T)$ by the dominated convergence theorem). 
 In particular, when $d=1$, we can apply \cite[Corollary 19]{FGP_2010-Inventiones} to obtain
\begin{align}
\begin{aligned}\label{sec-6-conv-u-R-4}
\Big|\int_{\mathbb{R}^d} \mathcal{R}_{\eps_n}\bigl[b, u_{s}\bigr]\left(\phi_{s}(x)\right)  \rho(x) dx\Big|
\leq C_{\rho}\Vert u_s\Vert_{L_{R+2}^{\infty}}\left\Vert J \phi^{-1}_s\right\Vert_{L_{R}^{\infty}}\Vert b\Vert_{W^{1,1}_{R+2}}\in L^1(0,T)
\quad\mathbb{P}\text{-a.s.}
\end{aligned}
\end{align}
and
\begin{align}
\begin{aligned}\label{sec-6-conv-u-R-5}
\lim _{\eps_n \to  0}\int_{\mathbb{R}^d} \mathcal{R}_{\eps_n}\bigl[b, u_{s}\bigr]\left(\phi_{s}(x)\right)  \rho(x) dx=0,\quad\mathbb{P}\text{-a.s.}
\end{aligned}
\end{align}
(the limit is in $L^1(0,T$)).

 Consequently, we 
conclude from \eqref{sec-6-conv-u-R-1}--\eqref{sec-6-conv-u-R-5} that, for any test function $\rho\in C_c^\infty(\mathbb{R}^d)$, $\mathbb{P}$\text{-a.s.}, 
\begin{align} \label{dee}
\int_{\mathbb{R}^d} u(t, \phi_{t}(x)) \rho(x) d x=0,  \quad  t \in [0,T].
\end{align}
{
 For any test function $\theta\in  C_c^\infty(\mathbb{R}^d)$,  
 \begin{align}
\int_{\mathbb{R}^d} u(t, y) \theta(y) d y=\int_{\R^d}u(t,\phi_t(x)) \tilde{\rho}(x) dx,  \, t \in [0,T],
\end{align}
where $\tilde{\rho}=(\theta\circ \phi_t )J\phi_t$ is a continuous function with compact support. Since $C_c^{\infty}(\R^d)$ is dense in $C_c(\R^d)$ in $L^1$-norm  and \eqref{dee} holds for any test function, by a standard density argument, we can conclude that
for any test function $\theta\in  C_c^\infty(\mathbb{R}^d)$,  $\mathbb{P}$-a.s.
 \begin{align}
\int_{\mathbb{R}^d} u(t, y) \theta(y) d y=0, \quad t \in [0,T],
\end{align}
which completes the proof of uniqueness.}
\end{proof} 
\begin{proof}[Proof of Theorem \ref{thm-uniqueness}]
The first two steps are the same as that in the proof of Theorem \ref{thm-uniqueness-1}.

\textbf{Step 3} 
According to Theorem \ref{ww1}, $D\phi^{-1}_s$ is $\mathbb{P}$-a.s.  locally $\theta$-H\"older continuous, uniformly in $s\in[0,T]$, for any $0<\theta<\frac\alpha2+\beta-1$. Since by assumption $1>\beta>1-\frac{\alpha}4$, we have  $1-\beta<\frac\alpha2+\beta-1$. It follows that $D\phi^{-1}_s$ and $J\phi^{-1}_s$ are $\mathbb{P}$-a.s.  locally $(1-\beta)$-H\"older continuous in $x$, uniformly in $s\in[0,T]$. Observe also that
\begin{align}
&\sup _{s \in[0, T]}\Vert D\phi_s^{-1}\Vert_{C_R^{1-\beta}}<\infty,\quad\mathbb{P}\text{-a.s.}
\end{align}
 Now we can apply Proposition \ref{commu-pro} to get
\begin{align}
\begin{aligned}\label{sec-6-conv-u-R-6}
&\Big|\int_{\R^d} \mathcal{R}_{\eps_n}\bigl[b, u_{s}\bigr]\left(\phi_{s}(x)\right)  \rho(x) dx\Big|\\
&\leq C_{R}\Vert u_s\Vert_{L_{R+1}^{\infty}}\Vert\rho\Vert_{L_{r}^{\infty}}\left\Vert J \phi^{-1}_s\right\Vert_{L_{R}^{\infty}}\Vert\divv b\Vert_{L_{R+1}^{1}}\\
&\quad+C_{R}\Vert u_s\Vert_{L_{R+1}^{\infty}}[b]_{C_{R+1}^{\beta}}\left(\left\Vert D \phi^{-1}_s\right\Vert_{C_{R+1}^{1-\beta}}^2\Vert D \rho\Vert_{L_{r}^{\infty}}+\Vert\rho\Vert_{L_{r}^{\infty}}\left[D \phi^{-1}_s\right]_{C_{R+1}^{1-\beta}}\right)\\
&\in L^1(0,T),\quad\mathbb{P}\text{-a.s.}
\end{aligned}
\end{align}
and
\begin{align}
\begin{aligned}\label{sec-6-conv-u-R-7}
\lim _{\eps_n \to  0}\int_{\R^d} \mathcal{R}_{\eps_n}\bigl[b, u_{s}\bigr]\left(\phi_{s}(x)\right)  \rho(x) dx=0,\quad\mathbb{P}\text{-a.s.}
\end{aligned}
\end{align}
(the limit is in $L^1(0,T)$ by the dominated convergence theorem).  Consequently, using \eqref{sec-6-conv-u-R-1},  \eqref{sec-6-conv-u-R-6} and \eqref{sec-6-conv-u-R-7}, an application of dominated convergence theorem shows
\begin{align}
\int_{\R^d} u(t, \phi_{t}(x)) \rho(x) d x=0.
\end{align}
 Now we conclude as in the proof of 
Theorem \ref{thm-uniqueness-1}. 
We have thus proved the theorem.

\end{proof}

\appendix


\newenvironment{tightenumerate}{
\begin{enumerate}[(1)]
  \setlength{\itemsep}{3pt}
  \setlength{\parskip}{0pt}
}{\end{enumerate}}

\newcommand{\rE}{\bR^{d}}
\newcommand\bR{\mathbb{R}}
\newcommand\bE{\mathbb{E}}
\newcommand\bF{\mathbb{F}}
\newcommand\bP{\mathbb{P}}
\newcommand\cC{\mathcal{C}}
\newcommand\cF{\mathscr{F}}
\newcommand\cZ{\mathcal{Z}}

\section{Appendix: The It\^{o}-Wentzell formula in the jump case }
\label{sec-IWF}

In this section, we present in  Theorem
\ref{thm-Ito+Wentzel-LM} the  It\^o-Wentzell formula in the  form taken from
\cite{Leahy+Mik_2016}, see their Proposition 4.16, see also \cite{Mik_1983}. This concerns the computation of the stochastic differential of $F_t(\xi_t)$ where $F_t(y)$ is an adapted  random field with jumps and $\xi_t$ is in general
an   adapted quasi-left continuous  semimartingale. To the best of our knowledge,  only the references \cite{Leahy+Mik_2016} and \cite{Mik_1983} contain complete proofs of such  It\^o-Wentzell formula.

We will simplify the assumptions on $\xi_t$ and $F_t(y)$ (see  below) 
with respect to \cite{Leahy+Mik_2016}. 
The obtained simplified  It\^o-Wentzell formula  can still cover  our situation, cf. Appendix \ref{app}. However, we stress  that even if such a simplified  form  contains  some natural assumptions taken from \cite{Leahy+Mik_2016}, it is still   quite difficult to apply in concrete
examples related to SDEs. Indeed, it is a difficult task to check when  the several  assumptions
 hold  and to understand the consequences of the   It\^o-Wentzell formula in the jump case, see  for instance our computations in Appendix \ref{app}. Therefore, the  situation appears to be very different with respect to the one  of the well-known It\^o-Wentzell formula in  the continuous case, c.f. \cite{Kunita_1990}.

\vskip 2mm

We fix a stochastic basis or  a complete
probability space  $\left( \Omega ,\cF, \mathbb{F}\right) $. Let $\newL= (\newL_t)_{t\ge 0}$ be an adapted pure-jump L\'evy process with values in $\R^d$. 
 Moreover, $A = (A_t)_{t\ge 0}$  is an $\R^d$-valued continuous adapted process of finite variation with $A_0 = 0$.

 Assume that  $\oldL=(\oldL_{t}(x))_{t\ge 0}$, $x \in \R^d$
 verifies $\mathbb{P}$-a.s.\ for all $t\ge 0$
\begin{align} \label{f4}
  \oldL_{t}(x) =x+A_{t} + \newL_t.
\end{align}%
Then $\oldL = (\oldL_{t}(x))_{t\geq 0}$ is a special  It\^o process with jumps. {\it  In particular, our
stochastic flow $\phi_t(x)$ is a particular process of type \eqref{f4}}. In the sequel,  we will often omit the dependence of $\xi$ by $x$ and simply write $\xi_t$.

According to \cite{Applebaum_2009}, it is known that
 $\mathbb{P}$-a.s.\ for all $t> 0$,
\begin{equation}
\sum_{0<s\leq t}|\Delta \oldL_{s}|^{2}<\infty.
\end{equation}
Indeed,  using the Poisson random measure $\mu^S$  associated to $S$, we have by the L\'evy-It\^o formula:
\begin{align}
    \sum_{s\in(0,t]}|\Delta \xi_s|^2 = \sum_{s\in(0,t]}|\Delta S_s|^2
   =
    \int_0^t \lint_{B} |z|^2 \mu^S(ds, dz) +
 \int_0^t \int_{B^{\mathrm{c}}} |z|^2  \mu^S(ds, dz)<\infty.
\end{align}
Here $B=\{z \in \mathbb{R}^d: | z|  \leq 1\}$ and $B^c=\{z \in \mathbb{R}^d : | z| >1\}$ as before.
 Let us now introduce the hypotheses on the random field $F_t(y)$ in a simplified form with respect to  \cite{Leahy+Mik_2016}. In the following, the L\'evy measure of $S$ is denoted by $\nu^S$. We point out that
 $$
 S_t=\int_0^t \lint_{B} z \tilde{\mu}^S(ds, dz) +
 \int_0^t \int_{B^{\mathrm{c}}} z  \mu^S(ds, dz),
 $$
here $\tilde{\mu}^S(ds, dz)=\mu^S(ds, dz)-\nu^S(dz)ds$.

 To this purpose,  we introduce  another pure-jump L\'evy process $\newS= (\newS_t)_{t\geq 0}$ with values in $\R^d$ and adapted to the previous stochastic basis:
 \begin{equation}\label{ds}
\newS_t = \int_0^t\lint_{B}z \newq(ds,dz)+\int_0^t\int_{B^c}z
\newp(ds,dz),
\end{equation}
 where $\newp$ is the related Poisson random measure and  $\tilde \mu$ is the compensated   Poisson random measure. Moreover, the L\'evy measure of $\newS$ is denoted by $\nu$.

Let us  fix $T>0$.
 In the sequel, as in  \cite{Leahy+Mik_2016}, we will use the predictable  $\mathscr{P}=\mathscr{P}_T$
   and the optional, i.e., well-measurable,    $\mathscr{O}=\mathscr{O}_T$, $\sigma$-fields, see   \cite[Definition I.5.2, page 20]{Ikeda+Watanabe_1989}.
    The former one is generated by all real adapted  left-continuous  processes on $[0,T] \times \Omega$; while the latter one is  generated by all real adapted  right-continuous  processes on $[0,T] \times \Omega$.
    It is well-known
\begin{align*}
\mathscr{P} \subset   \mathscr{O}.
\end{align*}

We first assume
\begin{hypothesis} \label{ftx}

 \begin{trivlist}
\item[(i)] a function
 \[f:\Omega\times [0,T] \times \rE\rightarrow \bR^{m}\]  is
 $\mathscr{O}\otimes \mathcal{B}(\rE)/\mathcal{B}(\R^m)$-measurable,  and
\item[(ii)] a function
\[%
h:\Omega\times [0,T] \times \rE\times \R^d \rightarrow \bR^{m}
\]
is  $\mathscr{P}\otimes \mathcal{B}(\rE)\otimes  \mathcal{B}(\rE)/\mathcal{B}(\R^m)$-measurable.
\end{trivlist}
\item[(iii)] Moreover, assume that, $\mathbb{P}$-a.s.\ for all $y\in \rE$,
\begin{equation*}
\int_0^T|f_t(y)|dt +\int_0^T\lint_{B}|h_t(y,z)|^{2}\nu(dz)dt+\int_0^T\int_{B^c}|h_t(y,z)|\nu(dz)dt<\infty.
\end{equation*}%
\item[(iv)] Let
\begin{equation}
F:\Omega\times [0,T]\times \rE\rightarrow \bR^{m}
\end{equation}
 be $\mathscr{O}\otimes \mathcal{B}(\rE)/\mathcal{B}(\R^m)$-measurable and  assume that for each $y\in \mathbb{R}^d$, $\mathbb{P}$-a.s.,\ for all $t\in[0,T]$,
\begin{equation}\label{eqn-F}
F(t,y) =F_{0}(y)+\int_0^tf_s(y)ds +\int_0^t\int_{\mathbb{R}^d}h_s(y,z)[\1_{B\setminus\{0\}}(z)\newq(ds,dz) +\1_{B^c}(z)\newp(ds,dz)].
\end{equation}

\end{hypothesis}

For each $n\in \{1,2\}$, let ${C}^{n}(\rE;\bR^{m})$ be the space of $n$-times continuously differentiable functions $g:\rE\rightarrow \bR^{m}$.  The topology is defined via suitable semi-norms over balls of natural radii.

We now state our version of the It{\^o}-Wentzell formula. For each $\omega,t$ and $y$, we denote
\[
\Delta F({t},y, \omega)=F(t,y, \omega)-F(t-,y, \omega)
\]
(as usual in the sequel we omit  the dependence on $\omega$).
The following result is a  reformulation  of Proposition 4.16  \cite{Leahy+Mik_2016} (with $\alpha=2$ in the notation of \cite{Leahy+Mik_2016}) which in turn is based on Proposition 1 in \cite{Mik_1983}.

Note that for a Fr\'echet space ${\mathcal X}$, $D([0, T ];{\mathcal X}  )$ is the space of ${\mathcal X}$-valued c\`adl\`ag
functions on $[0, T ];$  it is endowed with
 semi-norms involving $\sup_{t \in [0,T]}$.

    \begin{theorem}
\label{thm-Ito+Wentzel-LM} Let $(\oldL_t)_{t\ge 0}$ be as before, i.e., in \eqref{f4}. In addition to  Hypothesis \ref{ftx}, let us assume   that:
\begin{enumerate}
\item[(1a)]  $\mathbb{P}$-a.s.\  $F\in D([0,T];{C}^{2}(\bR^d;\bR^m))$;
\item[(1b)]   for $\mathbb{P}\otimes \Leb$-almost-all $(\omega,t)\in \Omega\times [0,T]$,  the function
\[
\bR^d \ni y \mapsto   f_t(y) \in \bR^m
\]
is continuous  and, for any $x \in \R^d$, 
\begin{equation}
\lim_{y\rightarrow x}\left[\lint_{B}|h_t(y,z)-h_t(x,z)|
^{2}\nu (dz) +\int_{B^c}|h_t(y,z)-h_t(x,z)|\nu (dz)\right]= 0;
\end{equation}
\item[(2)] for  each compact and convex subset $K$ of $\rE$, $\mathbb{P}$-a.s.\
\begin{equation}\label{eqn-jumps supp 2025 06}
\int_0^T\sup_{y\in K} \left(|f_t(y)| +\lint_{B}|h_t(y,z)|^2\nu(dz)+ \int_{B^c}|h_t(y,z)|\nu(dz)\right)dt<\infty,
\end{equation}
and
\begin{equation}\label{eqn-jumps do not overlap}
\sum_{t\le T}[\Delta F(t,\cdot)]_{{1};K}|\Delta \oldL_t|
<\infty,
\end{equation}
where the notation $[u]_{{1};K}$ used above is defined  
at the beginning of the paper. We recall that
\begin{align*}
[\Delta F(t,\cdot)]_{{1};K} &= \sup_{x, y \in K,\, x \not = y} \frac{|\Delta F(t,x)- \Delta F(t,y)|}{|x-y|}\\
&\leq\sup_{x\in K}|\Delta DF(t,x)|=\sup_{x\in K}|DF(t,x)-DF(t-,x)|.
\end{align*}
\end{enumerate}
Then $\mathbb{P}$-a.s., for all $t\in [0,T]$,
\begin{align}\label{eqn-Ito+Wentzel-LM}
F(t,\oldL_t)&=F_{0}(\oldL_{0})+\int_0^tf_s(\oldL_{s})ds\\
&\quad +\int_0^t\int_{\mathbb{R}^d}h_s(\oldL_{s-},z)[\1_{B\setminus\{0\}}(z)\newq(ds,dz) +\1_{B^c}(z)\newp(ds,dz)]\notag\\
&\quad  +{\int_0^t D F(s-,\oldL_{s-})
d {\xi_s}}\\
&\quad +\sum_{s\le t}\Big(F(s-,\oldL_{s})-F(s-,\oldL_{s-})- D F(s-,\oldL_{s-})\cdot\Delta \oldL_s\Big)\notag\\
&\quad +\sum_{s\leq t}\Big(\Delta F(s,\oldL_{s})-\Delta F(s,\oldL_{s-})\Big).
\notag
\end{align}
\end{theorem}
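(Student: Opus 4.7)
The plan is to follow the strategy of Mikulevicius \cite{Mik_1983} and Leahy--Mikulevicius \cite{Leahy+Mik_2016}: localize, discretize in space via a partition of unity, and reduce to the classical semimartingale It\^o formula for each fixed spatial argument, then pass to the limit. First I would localize by introducing stopping times $\tau_n = \inf\{t: |\xi_t|>n\} \wedge T$ so that $\xi$ stays in a fixed compact convex set $K$; Hypothesis (2) guarantees the required integrability of $f_s(\cdot)$ and $h_s(\cdot,z)$ on $K$ after this reduction.

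Next, for $\eps>0$ I cover $K$ by finitely many balls $B(y_k^\eps,\eps)$ with a smooth partition of unity $(\psi_k^\eps)$ subordinate to the cover. For each fixed $y_k^\eps$, the process $t \mapsto F(t,y_k^\eps)$ is a semimartingale by \eqref{eqn-F}, and $\psi_k^\eps(\xi_t)$ is a smooth function of the semimartingale $\xi_t$; hence the \emph{classical} It\^o product rule for jump semimartingales applies to $\psi_k^\eps(\xi_t)\, F(t, y_k^\eps)$, generating a drift part driven by $f_s(y_k^\eps)$, a Brownian/jump martingale part driven by $h_s(y_k^\eps,\cdot)$, the differential $d\xi_s$ integrated against $D(\psi_k^\eps F(\cdot,y_k^\eps))$, and the usual sum of jump-correction terms. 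Summing over $k$ and using a Taylor expansion of $F(t,y) - F(t, y_k^\eps)$ in $|y - y_k^\eps| \le \eps$ on $\supp \psi_k^\eps$, with the moduli of continuity in $y$ provided by Hypothesis (1b), I let $\eps \to 0$ and identify the limits with the right-hand side of \eqref{eqn-Ito+Wentzel-LM}. The integrability conditions in (2) dominate the error terms and allow the dominated convergence theorem to be applied both pathwise and in $L^2(\tilde{\mu})/L^1(\mu)$.

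The key new feature in the jump case is the appearance of the two sums: $\sum_{s\le t}[F(s-,\xi_s)-F(s-,\xi_{s-}) - DF(s-,\xi_{s-})\cdot \Delta \xi_s]$, which is the standard ``large jump'' correction treating $F(s-, \cdot)$ as a deterministic function, and $\sum_{s\le t}[\Delta F(s,\xi_s)-\Delta F(s,\xi_{s-})]$, which captures the genuine simultaneous jumps of $F$ (in time) and of $\xi$. The absolute convergence of the second sum is exactly what \eqref{eqn-jumps do not overlap} ensures, since each increment is bounded by $[\Delta F(s,\cdot)]_{1;K}\, |\Delta \xi_s|$ on $K$. The first sum converges because $F(s-,\cdot) \in C^2$ and $\sum_{s\le t}|\Delta \xi_s|^2 < \infty$ for a L\'evy-type $\xi$.

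The hardest step will be a careful treatment of the stochastic integral $\int_0^t h_s(\xi_{s-},z)\,[\mathbf{1}_{B\setminus\{0\}}\tilde\mu + \mathbf{1}_{B^c}\mu](ds,dz)$ when the random field $F$ and the driving process $\xi$ may share jumps (as they do in our applications in Appendix \ref{app}). One must carefully separate the compensated and uncompensated parts of the Poisson random measure, show that the substitution $y \mapsto \xi_{s-}$ inside $h_s(y,z)$ is well-defined as a predictable integrand (using the joint measurability from Hypothesis \ref{ftx}(ii) together with left-continuity of $\xi_{s-}$), and verify that the additional cross term arising from the interaction of $d\xi_s$ with the large-jump part of $F$ precisely collapses to the last sum in \eqref{eqn-Ito+Wentzel-LM}. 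This bookkeeping, together with checking that \eqref{eqn-jumps supp 2025 06}--\eqref{eqn-jumps do not overlap} hold in the limit, is the technical core of the argument and the reason Theorem \ref{thm-Ito+Wentzel-LM} requires substantially more effort than its continuous-time counterpart.
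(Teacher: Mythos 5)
There is a genuine gap at the core of your scheme, in the step where you sum the It\^o product formulas over the partition of unity and pass to the limit. Your approximant is $G_\varepsilon(t,x)=\sum_k \psi_k^\varepsilon(x)\,F(t,y_k^\varepsilon)$, and the exact identity you obtain from the product rule involves the \emph{actual} spatial derivatives of $G_\varepsilon$: the drift-in-$\xi$ term is $\int_0^t DG_\varepsilon(s-,\xi_{s-})\,d\xi_s$ and the quadratic jump-correction sum is dominated by $\sup_K\|D^2G_\varepsilon(s-,\cdot)\|\,|\Delta\xi_s|^2$. But uniform convergence $G_\varepsilon\to F$ does not give convergence of derivatives: since $\sum_k\psi_k^\varepsilon\equiv 1$, one has $DG_\varepsilon(t,x)=\sum_k\bigl[F(t,y_k^\varepsilon)-F(t,x)\bigr]D\psi_k^\varepsilon(x)$, and Taylor expansion (which is the only place your Hypothesis (1a)/(1b) enters) yields $DG_\varepsilon(t,x)=\bigl(I+M_\varepsilon(x)\bigr)^{T}DF(t,x)+O(\varepsilon)$ with $M_\varepsilon(x)=\sum_k D\psi_k^\varepsilon(x)\otimes(y_k^\varepsilon-x)-I$. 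For a generic smooth partition of unity $M_\varepsilon$ is an $O(1)$ oscillating matrix (it vanishes only if the partition reproduces affine functions, i.e. $\sum_k\psi_k^\varepsilon(x)\,y_k^\varepsilon\equiv x$), so $DG_\varepsilon\not\to DF$ and the $d\xi$-integral does not converge to $\int_0^t DF(s-,\xi_{s-})\,d\xi_s$; similarly $D^2G_\varepsilon$ is in general of order $\varepsilon^{-1}$, so you lose the uniform bound needed to dominate the sum $\sum_{s\le t}\bigl(F(s-,\xi_s)-F(s-,\xi_{s-})-DF(s-,\xi_{s-})\cdot\Delta\xi_s\bigr)$. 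The Taylor-expansion step you invoke controls the zeroth-order reassembly of $f$, $h$ and $F$, but not these derivative terms, and (1b) gives no quantitative modulus that could help. (Minor further points: there is no Brownian part here, and your stopping time should also cap $|A|_t$ and $\sum_{s\le t}|\Delta\xi_s|^2$ as in the localization actually needed for the quantitative bounds.)

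The paper's proof avoids exactly this obstruction by freezing the spatial argument through \emph{convolution} rather than interpolation: one applies the classical It\^o formula to the scalar semimartingale $F(t,y)\,\phi_\varepsilon(y-\xi_t)$ for each fixed $y$, integrates in $y$ using the stochastic Fubini theorem, and integrates by parts in $y$, so that the approximant is $F^{(\varepsilon)}=\phi_\varepsilon\ast F$, for which $DF^{(\varepsilon)}=\phi_\varepsilon\ast DF$ and $D^2F^{(\varepsilon)}=\phi_\varepsilon\ast D^2F$ converge locally uniformly by (1a); the limits of all terms, including the two jump sums, then follow from (1b), the integrability in (2) and the summability condition \eqref{eqn-jumps do not overlap} via dominated convergence. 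You could salvage your discrete route only by choosing a partition of unity that reproduces affine functions exactly (so $M_\varepsilon\equiv 0$ and the spurious first- and second-derivative terms cancel), or by replacing the point evaluations $F(t,y_k^\varepsilon)$ with local averages, which brings you back to the mollification argument; as written, the proposal does not close this step.
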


\begin{remark}\label{rem-comparison}
In the special case when $F=F_0$, the It\^o-Wentzell formula \eqref{eqn-Ito+Wentzel-LM} reduces to the It\^o formula as in Theorem 1.12 in the paper \cite{Kunita_2004} rather then
the It\^o formula as in Theorem II.5.1 in the monograph \cite{Ikeda+Watanabe_1989}. However, when $\xi$ is a semimartigale of the form (5.1) in \cite[Section II]{Ikeda+Watanabe_1989}, then one can give a form of
equality \eqref{eqn-Ito+Wentzel-LM} in a form analogous to formula (5.2) therein. 

Calculations in this directions are presented in the following Remark \ref{rem-ooo}.
\end{remark}

\begin{remark} \label{rem-ooo} In our understanding of the It\^o-Wentzell formula from \cite{Leahy+Mik_2016}, the term ${\int_0^t D F(s-,\oldL_{s-})
d {\xi_s}}$ in line 3 of  \eqref{eqn-Ito+Wentzel-LM} should be as it is and not  ${\int_0^t D F(s-,\oldL_{s-})
d {A_s}}$ as it is  written in paper \cite{Leahy+Mik_2016}. This agrees with the corresponding formula  from   the original 1983 paper \cite{Mik_1983}. Let us show, by  an additional argument,   why this is the correct form.

Let us consider the simplified case \[F(t,y)=F_0(y),\]
where $F_0(y)\in C^2(\mathbb{R}^d)$. In this case $f_s=h_s=0$. Then the above It\^o-Wentzell formula \eqref{eqn-Ito+Wentzel-LM} (with {$d\xi_s$}) becomes
\begin{align*}
F_0(\oldL_t)&=F_{0}(\oldL_{0}) +{\int_0^t D F_0(\oldL_{s-})
d {\xi_s}}+\sum_{s\le t}\Big(F_0(\oldL_{s})-F_0(\oldL_{s-})- D F_0(\oldL_{s-})\cdot\Delta \oldL_s\Big)\notag\\
&=F_{0}(\oldL_{0})+{\int_0^t D F_0(\oldL_{s})
d A_s + \int_0^t D F_0(\oldL_{s-}) dS_s}\\
&\quad+\int_0^t \lint_{B} \Big(F_0(\oldL_{s-}+z)-F_0(\oldL_{s-})- D F_0(\oldL_{s-})\cdot z\Big) \mu^S(ds,dz)\\
&\quad+\int_0^t \int_{B^c} \Big(F_0(\oldL_{s-}+z)-F_0(\oldL_{s-})- D F_0(\oldL_{s-})\cdot z\Big) \mu^S(ds,dz)\\
&=F_{0}(\oldL_{0})+{\int_0^t D F_0(\oldL_{s})
d A_s + \int_0^t\lint_B D F_0(\oldL_{s-})\cdot z \tilde{\mu}^S(ds,dz)
}\\
&\quad+{ \int_0^t\int_{B^c} D F_0(\oldL_{s-})\cdot z \mu^S(ds,dz)}\\
&\quad+\int_0^t \lint_{B} \Big(F_0(\oldL_{s-}+z)-F_0(\oldL_{s-})- D F_0(\oldL_{s-})\cdot z\Big) \mu^S(ds,dz)\\
&\quad+\int_0^t \int_{B^c}\Big(F_0(\oldL_{s-}+z)-F_0(\oldL_{s-}) - D F_0(\oldL_{s-})\cdot z\Big) \mu^S(ds,dz)\\
&=F_{0}(\oldL_{0})+{\int_0^t D F_0(\oldL_{s})
d A_s + \int_0^t\lint_B D F_0(\oldL_{s-})\cdot z \tilde{\mu}^S(ds,dz)}\\
&\quad+\int_0^t \lint_{B} \Big(F_0(\oldL_{s-}+z)-F_0(\oldL_{s-})- D F_0(\oldL_{s-})\cdot z \Big)\mu^S(ds,dz)\\
&\quad+\int_0^t \int_{B^c} \Big(F_0(\oldL_{s-}+z)-F_0(\oldL_{s-})\Big)  \mu^S(ds,dz),
\end{align*}
where   the two terms $ \int_0^t\int_{B^c} D F_0(\oldL_{s-})\cdot z \mu^S(ds,dz)$ cancel each other out.

Next, let us observe that by the Taylor formula in the integral form, see e.g., \cite{Cartan_1971-DC},
\begin{align*}
 \Big| F_0(\oldL_{s-}+z)-F_0(\oldL_{s-})- D F_0(\oldL_{s-})\cdot z \Big| \leq \int_0^1|D^2 F_0(\xi_{s-}+\delta z) ||z|^2\, d \delta .
\end{align*}
Hence by applying formula \cite[II. (3.8) on page 62]{Ikeda+Watanabe_1989} we can rewrite the fourth term on the right-hand side of the above formula as
\begin{align*}
&\int_0^t \lint_{B} \Big(F_0(\oldL_{s-}+z)-F_0(\oldL_{s-})- D F_0(\oldL_{s-})\cdot z \Big)\mu^S(ds,dz)\\
&= \int_0^t \lint_{B} \Big(F_0(\oldL_{s-}+z)-F_0(\oldL_{s-})- D F_0(\oldL_{s-})\cdot z \Big)\tilde{\mu}^S(ds,dz)\\
&\quad +\int_0^t \lint_{B} \Big(F_0(\oldL_{s-}+z)-F_0(\oldL_{s-})- D F_0(\oldL_{s-})\cdot z \Big)\nu^S(dz)ds.
\end{align*}
Thus, by  simplifying through cancellations,
we infer 
\begin{align*}
F_0(\oldL_t)=&F_{0}(\oldL_{0})+{\int_0^t D F_0(\oldL_{s})
d A_s }\\
&+\int_0^t \lint_{B} \Big(F_0(\oldL_{s-}+z)-F_0(\oldL_{s-}) \Big) \tilde{\mu}^S(ds,dz)\\
&+\int_0^t \lint_{B} \Big(F_0(\oldL_{s-}+z)-F_0(\oldL_{s-})- D F_0(\oldL_{s-})\cdot z\Big) \nu^S(dz)ds\\
&+\int_0^t \int_{B^c} \Big(F_0(\oldL_{s-}+z)-F_0(\oldL_{s-})\Big) \mu^S(ds,dz).
\end{align*}
This is consistent with the It\^o formula given in \cite{Ikeda+Watanabe_1989}, see Theorem II.5.1 on page 66. 
The above  argument can be reversed.
\end{remark}

\begin{proof}[Proof of Theorem \ref{thm-Ito+Wentzel-LM}] We give a sketch of  proof for the sake of completeness, assuming $m=1$. We also provide some additional details and  clarify the point indicated in Remark \ref{rem-ooo}.

Recall that, for fixed $y \in \R^d$, in particular, the process $(F(t,y),\xi_t)_{t \in [0,T]}$ is 
a semimartingale, see , for instance,  \cite{Protter_2004} and \cite{Kunita_2004}.

{Since both sides of formula \eqref{eqn-Ito+Wentzel-LM} have identical jumps (more precisely, we are considering the jumps of both $t \mapsto \xi_t\in \R^d$ and $t \mapsto F(t, \cdot) \in C^{2}(\R^d, \R)$) and we can always interlace a finite set of jumps. }
 In particular  we may assume that  $|\Delta \oldL_{t}|\le 1$ for all $t\in [0,\infty)$.
 It also suffices to assume that for some $K>0$,
 \[|\oldL_0|\le K \mbox{ for all $\omega$ }.\]
 For each  $R>K$, let us define \begin{equation}
\tau_R=\inf \Big( t\in [0,\infty):|A|_t +\sum_{s\le t}|\Delta \oldL_s|^{2}+|\oldL_{t}|>
R\Big) \wedge T.\end{equation}  
 Here $|A|_t$ indicates the total variation of the random function: $s \to A_s$ on $[0,t]$.
Then we note that $
\tau_R$ is a stopping time and   $\mathbb{P}$-a.s.
\[
\tau_R\toup  T \mbox{ as } R \to \infty.\]
Moreover, since the jumps are limited to size 1, $|\Delta \oldL_{t}|\le 1$ for all $t\in [0,\infty)$, we have for every $t\in[0,\tau_n]$,
\begin{equation}\label{eq-1031}
|A|_t+\sum_{s\le t}|\Delta \oldL_s|^{2}+|\oldL_{t}|\le R+1.
 \end{equation}
Next we observe that the assumptions  of the theorem hold for the stopped processes
 $\oldL_{\cdot\wedge \tau_R}$, $f\1_{(0,\tau_R]}$,   $h\1_{(0,\tau_R]}$, $F\1_{(0,\tau_R]}$.
In particular,  the process $h\1_{(0,\tau_R]}$ is predictable, c.f., Proposition I.4.4 in \cite{Metivier_1982}.

 Moreover, if we can prove \eqref{eqn-Ito+Wentzel-LM} for this new set of processes, then by taking the limit as $R$ tends to infinity, we obtain \eqref{eqn-Ito+Wentzel-LM}. Therefore, without loss of generality, we may assume that for some $R>0$, $\mathbb{P}$-a.s.\ for all $t\in [0,\infty)$,
\begin{equation}\label{ineq:boundsfubini}
|A|_t+\sum_{s\le t}|\Delta \oldL_s|^{2}+|\oldL_{t}|\le R.
 \end{equation}
 In fact, this $R$ can be taken to be the previous bound $R+1$ in \eqref{eq-1031} (enlarging the constant if necessary). 

Let $\phi\in C_{c}^{\infty }( \rE;\bR)$ have support in the unit ball in $\rE$ and  satisfy  $\int_{%
\rE}\phi(x)dx=1,\phi(x)=\phi(-x),$ and $\phi(x)\geq 0$, for all $x\in\rE$. For
each $\varepsilon \in(0,1)$, let $\phi_{\varepsilon }(x)=\varepsilon ^{-d}\phi\left(
x/\varepsilon \right) ,x\in \rE$.

Let us first fix $y \in \R^d$. By applying It\^{o}'s formula to the semimartingale  $(F(t,y),\xi_t)$ using the smooth function $(u,v) \mapsto G(u,v)= u \phi_{\eps} (y -v)$ we deduce, see  for instance,  Theorem 1.12 \cite{Kunita_2004} or  Section II.7 in \cite{Protter_2004}, that 
  $\mathbb{P}$-a.s.\ for all $t\in [0,T]$,
\begin{align*}
G(F(t,y),  \oldL_{t}) &= F(t,y) \phi_{\varepsilon }(y-\oldL_{t})
\\
&= F_{0}(y)\phi_{\varepsilon }(y-\oldL_0) + \int_0^t \phi_{\varepsilon }\left( y-\oldL_{s-}\right) dF(s,y) - \int_0^t F(s-,y)D\phi_{\varepsilon }\left( y-\oldL_{s-}\right) d\xi_s
\\
&\ \ \ + \sum_{s \le t} \Big( F(s,y) \phi_{\eps}(y- \xi_s) -  F(s-,y) \phi_{\eps}(y- \xi_{s-})
  \\&\ \ \ \ \ \ \ \quad\quad\quad\quad\quad\quad  - \phi_{\eps}(y- \xi_{s-}) \Delta F(s,y)  + F(s-,y) D\phi_{\varepsilon}\left( y-\oldL_{s-}\right) \Delta \xi_s \Big).
\end{align*}
   Now, writing
 \begin{align*}
& F(s,y) \phi_{\eps}(y- \xi_s) -  F(s-,y) \phi_{\eps}(y- \xi_{s-})
 \\
 =& F(s,y) \phi_{\eps}(y- \xi_s) -  F(s-,y) \phi_{\eps}(y- \xi_s) + F(s-,y) \phi_{\eps}(y- \xi_s)- F(s-,y) \phi_{\eps}(y- \xi_{s-})
 \\=& \Delta F(s,y) \phi_{\eps}(y- \xi_s)   + F(s-,y) \phi_{\eps}(y- \xi_s)- F(s-,y) \phi_{\eps}(y- \xi_{s-}),
\end{align*}
 we infer that
  \begin{align}\label{eq:productruleIto-Wentzell}
F(t,y) \phi_{\varepsilon }(y-\oldL_{t}) &= F_{0}(y)\phi_{\varepsilon }(y-\oldL_0){-\sum_{i=1}^d\int_0^tF(s-,y) \partial _{i}\phi_{\varepsilon
}(y-\oldL_{s-})d\oldL_{s}^{i}}\\
&\quad  +\int_0^t\phi_{\varepsilon }\left( y-\oldL_{s}\right)
f_s(y)ds\notag\\
&\quad +\int_0^t\int_{\mathbb{R}^d}\phi_{\varepsilon }\left( y-\oldL_{s-}\right)
h_s(y,z)[\1_{B\setminus\{0\}}(z)\newq(ds,dz)+\1_{B^c}(z)\newp(ds,dz)]\notag\\
&\quad +\sum_{s\leq t}\Delta F(s,y) \left(\phi_{\varepsilon
}(y-\oldL_{s})-\phi_{\varepsilon }\left( y-\oldL_{s-}\right) \right) \notag\\
&\quad +\sum_{s\leq t}F(s-,y)\big( \phi_{\varepsilon }(y-\oldL_{s})-\phi_{\varepsilon }(y-\oldL_{s-})+\sum_{i=1}^d\partial _{i}\phi_{\varepsilon }\left(y-
\oldL_{s-}\right) \Delta \oldL_{s}^i\big).
\end{align}
Recall that  (cf. \eqref{f4})
\begin{align*}
\int_0^tF(s-,y) \partial _{i}\phi_{\varepsilon
}(y-\oldL_{s-})d\oldL_{s}^{i} = \int_0^tF(s,y) \partial _{i}\phi_{\varepsilon
}(y-\oldL_{s})dA_{s}^{i}
 + \int_0^tF(s-,y) \partial _{i}\phi_{\varepsilon
}(y-\oldL_{s-})dS_{s}^{i}.
\end{align*}
Let us define, for each $\omega,t,y,$ and $z$,
\begin{align*}
F^{(\varepsilon )}(t,y)&:=[\phi_{\varepsilon }\ast F(t,\cdot)](y),
\\
f^{(\varepsilon)}_t(y)&=[\phi_{\varepsilon }\ast f_t](y),
\\
h^{(\varepsilon) }_t(y,z)&=[\phi_{\varepsilon }\ast h_t( \cdot ,z)](y),
\end{align*}%
where $\ast $ denotes the convolution operator on $\rE$.
Appealing to assumption (2) and \eqref{ineq:boundsfubini} (i.e., for the integrals against $F$),  we  integrate both sides of the above \eqref{eq:productruleIto-Wentzell} in $y$, apply the stochastic Fubini theorem,
see  Corollary 4.13  from \cite{Leahy+Mik_2016}, see also, Remark 4.14  therein,  and the deterministic Fubini Theorem, and then integrate by parts  to get that   $\mathbb{P}$-a.s.\ for all $t\in [0,T]$,
\begin{align}\label{eq:Ito-Wentzellapprox}
F^{(\varepsilon) }(t,\oldL_{t})
&=F^{(\varepsilon)}_0(\oldL_0)
{+\int_0^tD F^{(\varepsilon)}(s-,\oldL_{s-})d\xi_s}
+\int_0^tf^{(\varepsilon) }_s(\oldL_{s})ds \notag\\
&\quad +\int_0^t\int_{\mathbb{R}^d}h^{(\varepsilon) }_s(\oldL_{s-},z)[\1_{B\setminus\{0\}}(z)\newq(ds,dz)+\1_{B^c}(z)\newp(ds,dz)]\notag\\
&\quad +\sum_{s\leq t}\left(\Delta F^{(\varepsilon) }(s,\oldL_{s})-\Delta F^{(\varepsilon)}(s,\oldL_{s-})\right)\notag\\
&\quad +\sum_{s\leq t}\left(F^{(\varepsilon) }(s-,\oldL_{s})-F^{(\varepsilon)
}(s-,\oldL_{s-})-D F^{(\varepsilon) }(s-,\oldL_{s-})\Delta \oldL_s\right).
\end{align}
  Let
\begin{equation}
\label{eqn-B_R}
B_{r}=\{x\in\rE: |x|\le r\}, \;\; r>0.
\end{equation}
   Owing to Assumption (1a) and standard properties of mollifiers,  we infer that for each multi-index $\gamma$ with $|\gamma|\le 2$, $\mathbb{P}$-a.s.,
\begin{equation}
\sup_{t\le T}|\partial^{\gamma}F^{(\varepsilon )}(t,\oldL_t)|\le \sup_{t\le T}\sup_{x\in B_{R+1}}| \partial^{\gamma}F(t,x)|<\infty,
\end{equation}
and for each  $K\in\mathbb{N}$,
\begin{equation}\label{eq:convergenceFepsilon}
\lim_{\varepsilon\todown 0}\sup_{t\in[0,T]}\sup_{|y|\leq K}| \partial^{\gamma}F^{(\varepsilon )}(t,y)-\partial^{\gamma}F(t,y)|=0.
\end{equation}
Similarly, by Assumptions (1b) and (2), $\mathbb{P}\otimes \Leb$-almost-all $(\omega,t)\in \Omega\times [0,T]$,
\begin{align*}
|f^{(\varepsilon)}_t(\oldL_t)|\le \sup_{x\in B_{R+1}}|f_t(x)|<\infty, \quad   \\
\lint_{B}|h^{\varepsilon}_t(\oldL_t,z)|^2\nu(dz)\le  \sup_{x\in B_{R+1}}\lint_{B}|h_t(x,z)|^2\nu(dz),\\
\int_{B^c}|h^{\varepsilon}_t(\oldL_t,z)|\nu(dz)\le \sup_{x\in B_{R+1}}\int_{B^c}|h_t(x,z)|\nu(dz),
\end{align*}
and for each $K\in\mathbb{N}$,
\begin{equation}
\mathbb{P}\otimes \Leb-\lim_{\varepsilon\todown 0} \sup_{|y|\leq K}|f^{(\varepsilon)}_t(y) -f_t(y)|=0,
\end{equation}
and, $\mathbb{P}\otimes \Leb$-almost everywhere,
\begin{equation}\label{eq:hconvergstochasticfubini}
\lim_{\varepsilon \todown  0}\sup_{|y|\leq K}\int_{\mathbb{R}^d}[\1_{B\setminus\{0\}}(z)|h^{(\varepsilon )}_t(y,z)-h_t(y,z)|^2+\1_{B^c}(z)|h^{(\varepsilon )}_t(y,z)-h_t(y,z)|]\nu(dz)=0,
\end{equation}
where in the last-line we have also used 
a standard mollifying convergence argument.

  Owing to Assumptions  (1a) and \eqref{ineq:boundsfubini}, $\mathbb{P}$-a.s.\, for any $s\in[0,T]$,
  $$
  |F^{(\varepsilon) }(s-,\oldL_{s})-F^{(\varepsilon)
}(s-\oldL_{s-})-D F^{(\varepsilon) }(s-,\oldL_{s-})\cdot\Delta \oldL_s| \le  \sup_{t\le T}[F_{t}]_{2;B_{R+1}}
|\Delta \oldL_s|^{2 }
  $$
   and $\sum_{s\le T}|\Delta \oldL_s|^{2 }\le R$.

Since $\mathbb{P}$-a.s.\ $F\in D([0,T];C^{2}(\bR^d;\bR))$, it follows that { $\mathbb{P}$-a.s.}, for each $K\in\mathbb{N}$,
\begin{equation}
\lim_{\varepsilon\todown 0}\sup_{t\in[0,T]}\sup_{|y|\leq K}|\Delta F^{\varepsilon}(t,y)-\Delta F(t,y)|=0.
\end{equation}
By Assumption (2), $\mathbb{P}$-a.s for all $s\in[0,T]$, we have
$$
\left|\Delta F^{(\varepsilon) }(s,\oldL_{s-}+\Delta \oldL_{s})-\Delta F^{(\varepsilon)
}(s,\oldL_{s-})\right|\le  [\Delta F({s},\cdot)]_{{1};B_{R+1}}|\Delta \oldL_s|
$$
and, 
\begin{equation}
\sum_{s\leq T}[\Delta F({s},\cdot)]_{{1};B_{R+1}}|\Delta \oldL_s|< \infty.
\end{equation}
Combining the above and using Assumptions (1a) and (2) and the bounds given in \eqref{ineq:boundsfubini}  and the 
dominated convergence theorem, we obtain convergence of all the terms in  \eqref{eq:Ito-Wentzellapprox}, which completes the proof.
\end{proof}

\section{A crucial  application of the  It\^o-Wentzell formula}\label{app}

Recall (cf. \eqref{eqn-u^eps}) that
\begin{align} \label{v22}
u^\eps(t,y)= \int_{\mathbb{R}^d} \vartheta_\eps(y-x) u(t,x)\,dx
=     \int_{\mathbb{ R}^d}  u(t,x)\theta_y(x) \,dx,
\end{align}
with  $u^{}(0, \cdot)=0$,   where the function $\theta_y$ is defined by
$$
\theta_y =\vartheta_\eps(y-\cdot) \;\; \mbox{ i.e., for }x\in \R^d, \,  \;\;\theta_y(x)=\vartheta_\eps(y-x) \in \mathbb{R}.
$$
For any $y \in \mathbb{R}^d$, the real-valued process $u^\eps(t,y)$ satisfies: $\mathbb{P}$-a.s., for any  $t\geq 0$,
\begin{align}\label{eqn-transport-Marcus-2-1}
       u^\eps(t,y)=&\int_0^t\; \int_{\mathbb{R}^d}u(s,x)\,[b(x)\cdot D_x \bigl(\vartheta_\eps(y-x)\bigr) +\divv\, b(x) \vartheta_\eps(y-x)]\,dx\, ds
       \\
       &+\int_0^t\lint_{B} \Bigl(\int_{\mathbb{R}^d}u(s-,x)\,[ \vartheta_\eps(y-x-z) -\vartheta_\eps(y-x) ] \,dx\Bigr)\tilde{\prm}(ds,dz)\nonumber \\
       &+\int_0^t\int_{B^{\mathrm{c}}} \Bigl(\int_{\mathbb{R}^d}u(s-,x)\,[\vartheta_\eps(y-x-z) -\vartheta_\eps(y-x) ]\, dx\Bigr) \prm(ds,dz) \nonumber\\
       &+\int_0^t\lint_{B}\Bigl(\int_{\mathbb{R}^d} u(s,x) \bigl[\vartheta_\eps(y-x-z) -\vartheta_\eps(y-x)
      -\sum_{i=1}^dz_iD_{i} \bigl( \vartheta_\eps(y-x)\bigr) \bigr]\, dx\Bigr)\,\nu(dz)\, ds.\nonumber
\end{align}
Formula \eqref{eqn-transport-Marcus-2-1} can be rewritten in the following compact form,  for each $y \in \mathbb{R}^d$, $t \ge 0,$
\begin{equation} \label{eqn-u^eps-2}
   u^\eps(t,y)=\int_0^tG^\eps_s(y) \,ds +\int_0^t\int_{B^c}\newh^\eps_s(z,y)\, \prm(ds,dz)
+\int_0^t\lint_{B}\newH^\eps_s(z,y)\tilde{\prm}(ds,dz),
\end{equation}
where    the functions
\begin{align*}
G^\eps &:\Omega \times [0,\infty)\times    \mathbb{R}^d \to {\mathbb{R}},
\\
 \newH^\eps&: \Omega \times [0,\infty)\times  \mathbb{R}^d \times    \mathbb{R}^d\to {\mathbb{R}},
\end{align*}
are defined by the following formulae, for $s\in [0,\infty)$,  $y \in \mathbb{R}^d$ and $z \in \mathbb{R}^d$,
\begin{align}\label{eqn-G^eps}
   G^\eps_s(y)&= \int_{\mathbb{R}^d}u(s,x)\,[b(x)\cdot D_x \bigl(\vartheta_\eps(y-x)\bigr) +\divv\, b(x) \vartheta_\eps(y-x)]\,dx
   \\
  &\quad+ \lint_{B}\Bigl(\int_{\mathbb{R}^d} u(s,x)\,[\vartheta_\eps(y-x-z) -\vartheta_\eps(y-x) -\sum_{i=1}^dz_iD_{i} \bigl( \vartheta_\eps(y-x)\bigr) ]\, dx\Bigr)\,\nu(dz),\nonumber
   \\
\newh^\eps_s(y,z) &= \int_{\mathbb{R}^d}u(s-,x)\,[\vartheta_\eps(y-x-z) -\vartheta_\eps(y-x) ]\, dx.
\label{eqn-p^eps}
\end{align}
Note that,  for $s\in [0,\infty)$,  $y \in \mathbb{R}^d$ and $z \in \mathbb{R}^d$, we have
\begin{align}\label{eqn-p^eps-2}
\newh^\eps_s(y,z) &=
u^{\eps}(s-,y-z)-u^{\eps}(s-,y),
\end{align}
and
\begin{align}\label{eqn-G^eps-21}
 G^\eps_s(y)=& \int_{\mathbb{R}^d}u(s,x)\,[b(x)\cdot D_x \bigl(\vartheta_\eps(y-x)\bigr) +\divv\, b(x) \vartheta_\eps(y-x)]\,dx\nonumber\\
 &+\lint_B\Big[ u^{\eps}(s,y-z)-u^{\eps}(s,y)+\int_{\mathbb{R}^d}u(s,x)z \cdot D\vartheta_\eps(y-x) \, dx \Big]\nu(dz)
 \\
 =&\int_{\mathbb{R}^d}u(s,x)\,[b(x)\cdot D_x \bigl(\vartheta_\eps(y-x)\bigr) +\divv b(x) \vartheta_\eps(y-x)]\,dx\nonumber\\
 &+ \lint_B\Big[ u^{\eps}(s,y-z)-u^{\eps}(s,y)+ Du^{\eps}(s,y) \cdot z  \Big]\nu(dz).
\end{align}
We have a series of important lemmas.

\begin{lemma}
\label{lem-u^eps satisfies assumptions ITWF}    Let $T>0.$
The process $F=u^\eps$   satisfies all assumptions of Theorem \ref{thm-Ito+Wentzel-LM}
 with
 $$
\xi_t(x) = \phi(t,x)= \phi_t(x),\;\;\; x \in \R^d, \;\; t \in [0,T],
 $$
 where $\phi$ is the stochastic flow we constructed in Section \ref{sec-regular flow}, i.e.,
 \begin{equation}\label{Ito-nxi-psi}
\nxi_{t}(x)=x+ \int_{0}^{t}b(\nxi_{r}( x) )\,dr+ L_{t}.
\end{equation}
\end{lemma}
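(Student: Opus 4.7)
The plan is to read off the candidate coefficients $f_s = G^\eps_s$ and $h_s = \newh^\eps_s$ from the explicit decomposition \eqref{eqn-u^eps-2} (combined with \eqref{eqn-G^eps-21} and \eqref{eqn-p^eps-2}) and then verify, one by one, Hypothesis \ref{ftx} and assumptions (1a), (1b), (2) of Theorem \ref{thm-Ito+Wentzel-LM}. All proofs will ultimately rest on two simple facts: (a) since $\vartheta_\eps \in C_c^\infty(\R^d)$ and $u$ is essentially bounded (in the strong sense of \eqref{eq-esb2}), differentiation under the integral sign gives $u^\eps(s,\cdot) \in C_{\mathrm{b}}^\infty(\R^d)$ with, for every multi-index $\gamma$,
\[
\| D^\gamma_y u^\eps(s,\cdot)\|_{0} \le \|u(s,\cdot)\|_{L^\infty(\R^d)} \, \|D^\gamma \vartheta_\eps\|_{L^1(\R^d)} \le C_{\eps,\gamma};
\]
(b) the identity $\newh^\eps_s(y,z) = u^\eps(s-,y-z) - u^\eps(s-,y)$ holds, which will reduce every estimate in $z$ to a Taylor expansion with Lévy-measure tail controls.

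For Hypothesis \ref{ftx}, the integral representation \eqref{eqn-F} is precisely \eqref{eqn-u^eps-2}. The measurability of $G^\eps$ (as an optional random field) follows from the joint $\mathcal B([0,T])\otimes\mathcal B(\R^d)\otimes \mathcal F$-measurability of $u$ (property~(o) of Definition~\ref{def-transport-weak-2}) together with $\mathbb F$-adaptedness of $u_s(\theta)$ (item (i) of Definition \ref{def-transport-weak-2}) and continuity of $G^\eps_s(y)$ in $y$. The random field $\newh^\eps_s(y,z) = u^\eps(s-,y-z) - u^\eps(s-,y)$ is left-continuous in $s$ and jointly continuous in $(y,z)$, hence predictable. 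The integrability condition in Hypothesis \ref{ftx}(iii) follows from
\[
|G^\eps_s(y)| \le C_\eps \bigl(\|b\|_0 + \|\divv b\|_{L^1_{\loc}} + \|u\|_\infty\bigr),
\]
and from the Taylor estimate $|\newh^\eps_s(y,z)| \le \|Du^\eps\|_0 |z|$ for $|z|\le 1$ combined with $\lint_B |z|^2 \nu(dz) < \infty$, plus the trivial bound $|\newh^\eps_s(y,z)| \le 2\|u^\eps\|_0$ for $|z|>1$ and $\nu(B^c) < \infty$ (since $L$ has bounded compensating measure on $B^c$; if needed we use the trivial bound $\int_{B^c} |\newh_s^\eps(y,z)|\nu(dz) \le 2\|u^\eps\|_0 \nu(B^c)$).

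For assumption (1a), smoothness in $y$ was noted above. To get c\`adl\`ag paths in $C^2(\R^d;\R)$, fix a compact $K\subset\R^d$ and any multi-index $|\gamma|\le 2$, and apply $D^\gamma_y$ under the integral in \eqref{eqn-u^eps-2}; since $D^\gamma_y\vartheta_\eps(y-\cdot) \in L^1(\R^d)$, Lemma \ref{lem-transport-weak-2-(i)} gives that for every $y\in\R^d$ the real process $t\mapsto D^\gamma_y u^\eps(t,y)$ is c\`adl\`ag on a single $\mathbb P$-full event $\Omega^\prime$. To upgrade this to c\`adl\`ag in the sup-norm over $K$, we use the jump-by-jump description $u^\eps(t,y)-u^\eps(t-,y)=\newh^\eps_{t}(y,\Delta \xi_t)$ together with the equicontinuity in $y$ guaranteed by (a); the non-jumping part $\int_0^t G^\eps_s(y)\,ds$ is continuous in $t$ uniformly on $K$ because $\sup_{y\in K}|G^\eps_s(y)|\in L^1([0,T])$. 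Assumption (1b) then reduces to showing $y \mapsto G^\eps_s(y)$ is continuous (clear from smoothness) and that
$|\newh^\eps_s(y,z)-\newh^\eps_s(x,z)| \le \|D^2u^\eps(s-,\cdot)\|_0 \, |z|\,|y-x|$ for $|z|\le 1$ (via Lemma \ref{lem-Lemma 4.1} applied to $u^\eps$) and $|\newh^\eps_s(y,z)-\newh^\eps_s(x,z)|\le 2\|Du^\eps(s-,\cdot)\|_0 |y-x|$ for $|z|>1$, combined with $\lint_B |z|^2 \nu(dz) + \nu(B^c) < \infty$ and dominated convergence.

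The only step requiring a small argument is assumption (2), and this is where the jump-summability \eqref{eqn-jumps do not overlap} must be checked. Since $\xi_t = x + \int_0^t b(\phi_r(x))dr + L_t$ has $\Delta\xi_t = \Delta L_t$, and since the only source of jumps in $u^\eps$ is the Poisson measure terms, one has
\[
\Delta F(s, y) = \Delta u^\eps(s, y) = u^\eps(s-, y - \Delta L_s) - u^\eps(s-, y).
\]
Differentiating in $y$ and applying the mean value theorem yields
\[
[\Delta F(s,\cdot)]_{1;K} \le \sup_{y \in K + B_{|\Delta L_s|}} \|D^2 u^\eps(s-,y)\| \, |\Delta L_s| \le C_\eps |\Delta L_s|,
\]
because $\|D^2 u^\eps\|_0 \le \|u\|_\infty \|D^2 \vartheta_\eps\|_{L^1}$. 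Consequently
\[
\sum_{s\le T} [\Delta F(s,\cdot)]_{1;K} |\Delta \xi_s| \le C_\eps \sum_{s \le T} |\Delta L_s|^2 < \infty, \quad \mathbb P\text{-a.s.},
\]
which is the desired bound; the remaining condition \eqref{eqn-jumps supp 2025 06} follows from $\sup_{y\in K}|G^\eps_s(y)|$ being bounded in $(s,\omega)$, and from the same Taylor/tail estimates used for Hypothesis \ref{ftx}(iii). The main subtlety will be the uniform-in-$y$ c\`adl\`ag statement in (1a); all other items are essentially bookkeeping once one uses that $\vartheta_\eps$ is a $C_c^\infty$ mollifier.
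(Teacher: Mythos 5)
Your overall route coincides with the paper's: read off $f_s=G^\eps_s$ and $h_s=\newh^\eps_s$ from \eqref{eqn-u^eps-2}, check Hypothesis \ref{ftx} using the strong essential boundedness of $u$ and the smoothness of $\vartheta_\eps$, obtain (1a) from the uniform-in-$(t,\omega)$ Lipschitz estimate in $y$ combined with the pointwise c\`adl\`ag property from Lemma \ref{lem-transport-weak-2-(i)} (the paper does exactly this with a dyadic-lattice/equicontinuity argument), verify (1b) by elementary Lipschitz/Taylor bounds, and verify \eqref{eqn-jumps supp 2025 06} and \eqref{eqn-jumps do not overlap} from $\|D^2u^\eps\|_0\le C_\eps$ together with $\lint_B|z|^2\nu(dz)+\nu(B^c)<\infty$; your final bound $C_\eps\sum_{s\le T}|\Delta L_s|^2<\infty$ is equivalent to the paper's split into small and large jumps.

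There is, however, one genuine gap. In your verification of \eqref{eqn-jumps do not overlap} you assert, as if obvious, that $\Delta F(s,y)=u^\eps(s-,y-\Delta L_s)-u^\eps(s-,y)$ because ``the only source of jumps in $u^\eps$ is the Poisson measure terms.'' In the paper this identity is the content of Lemma \ref{lemma 20250731 01}, and it is precisely one of the delicate points of the whole construction: for each \emph{fixed} $y$ one must identify the jumps of the compensated small-jump integral $t\mapsto\int_0^t\lint_B h^\eps_s(y,z)\tilde\mu(ds,dz)$ (done there via a limiting procedure over $B_\delta\uparrow B\setminus\{0\}$ with a.s.\ uniform convergence along a subsequence), and the resulting exceptional null set depends on $y$; to use the identity for all $y\in K$ simultaneously, as your sum over jump times requires, one must remove this $y$-dependence via a countable dense set of $y$'s together with the continuity in $y$ of both sides, the latter being supplied by (1a). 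Without this step your verification of \eqref{eqn-jumps do not overlap} is incomplete. Relatedly, invoking the same jump identity inside your proof of (1a) is circular with respect to this logical order (the paper proves (1a) first, by the equicontinuity-plus-dense-set argument you also mention, and only then the jump identity); fortunately the equicontinuity argument suffices on its own, so for (1a) you should simply drop the appeal to the jump-by-jump description and note that the stochastic-integral terms are not a ``non-jumping part plus jumps'' in any elementary sense. Finally, a minor point: the joint $\mathscr{O}\otimes\mathcal{B}(\R^d)$- and $\mathscr{P}\otimes\mathcal{B}(\R^d)\otimes\mathcal{B}(\R^d)$-measurability in Hypothesis \ref{ftx} deserves a word (the paper uses a Carath\'eodory-function argument, continuity in $y$ plus adaptedness/predictability in $(t,\omega)$), and left limits such as $u^\eps(s-,\cdot)$ should be interpreted through the c\`adl\`ag $C^2$-valued path from (1a), since $u(s-,x)$ itself is not pointwise defined.
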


\begin{proof}[Proof of Lemma \ref{lem-u^eps satisfies assumptions ITWF}] 
 Note that we take $A_t = A_t(x)=\int_{0}^{t}b(\nxi_{r}( x) )\,dr$, $S_t=L_t$ in \eqref{f4} and  $F(t,y)=u^\eps(t,y)$ with
\begin{align*}
f_t(y) &=  G^\eps_t(y),
\\
h_s(y,z) &= \newh^\eps_s(y,z)
= u^{\eps}(s-,y-z)-u^{\eps}(s-,y).
\end{align*}

First, we have to check that (i), (ii), (iii) and (iv) in Hypothesis \ref{ftx} hold.

We will use formula  \eqref{v22}:
$$
u^\eps(t,y)= \int_{\mathbb{R}^d} \vartheta_\eps(y-x) u(t,x)\,dx
=     \int_{\mathbb{ R}^d}  u(t,x)\theta_y(x) \,dx.
$$
By Lemma \ref{lem-transport-weak-2-(i)} and (o) in Definition \ref{def-transport-weak-2}, we know that there exists a $\mathbb{P}$-full set $\Omega^\prime \subset \Omega$  such that  
 for any fixed $y \in \R^d$, 
 \begin{align}\label{eq-1029}
 u^{\eps}(t, \omega,y): [0,T] \times \Omega \to \R\text{ is  c\`{a}dl\`{a}g on }\Omega'\text{ and  adapted,}
 \end{align}
 and
\begin{align}\label{eq Center 2025-10-30}
    \text{for every }\omega\in\Omega', \sup_{t\in[0,T]}\|u(\omega,t,\cdot)\|_{L^{\infty}(\mathbb{R}^d)}\leq M<\infty.
\end{align}
Moreover, by the  dominated convergence theorem, we find that, for any $(t, \omega) \in [0,T] \times \Omega^\prime:$
 \begin{align} \label{s44}
 y \mapsto u^{\eps}(t, \omega,y) \;\; \text{is continuous on $\R^d$.}
 \end{align}
 Hence, we know in particular  that for any $y \in \R^d$,  $u^{\eps}(\cdot , \cdot,y)$ is
 $\mathscr{O}$-measurable and the function $(t,\omega)\mapsto u^{\eps}(t- ,\omega,y)$ is $\mathscr{P}$-measurable.  Furthermore,
 for any $y,z \in \R^d$,
 \begin{equation}\label{c44}
(t, \omega) \mapsto \newh^\eps_t(y,z, \omega) \;\; \text{is predictable on $[0,T]\times \Omega$.}
\end{equation}
{Using \eqref{eqn-p^eps}  and Remark \ref{d88}, we deduce that, for a fixed   $(t, \omega) \in (0,T] \times \Omega^\prime$ and  any
  $z \in \R^d$, we have 
  \begin{equation}\label{c441}
y \mapsto \newh^\eps_t(y,z) \;\; \text{is continuous on $\R^d$}.
\end{equation}
}
{
 Applying Lemma 4.51 in \cite{Aliprantis} we deduce that the function $(t, \omega,y) \mapsto u^{\eps}(t, \omega,y) \in \mathbb{R}$ is $\mathscr{O}\otimes \mathcal{B}(\rE)$-measurable and the function $(t, \omega,y,z) \mapsto \newh^\eps_t(y,z, \omega)$ is $\mathscr{P}\otimes \mathcal{B}(\rE)\otimes \mathcal{B}(\rE)$-measurable. Similar arguments yield that the function $(t, \omega,y) \mapsto G^\eps_t(y, \omega)$ is $\mathscr{O}\otimes \mathcal{B}(\rE)$-measurable. 
 It remains to check (iii), which follows by observing that $\mathbb{P}$-a.s. for every $y,z\in\mathbb{R}^d$, 
 \begin{align*}
     | \newh^\eps_t(y,z) |&=|u^{\eps}(t-,y-z)-u^{\eps}(t-,y)|\\
     &\leq  \lim_{s \uparrow t}\left( \left| \int_{\mathbb{R}^d} \theta_{y-z}(x)u(s,x)dx\right|+\left| \int_{\mathbb{R}^d} \theta_{y}(x)u(s,x)dx\right|\right)
     \leq 2M \Leb(B(2\eps)) \|\vartheta_{\varepsilon}\|_0, \\
      | \newh^\eps_t(y,z) |&=|u^{\eps}(t-,y-z)-u^{\eps}(t-,y)|\\
      & \leq M\int_{\mathbb{R}^d}\int_0^1|D\vartheta_\eps(y-\rho z-x)|d\rho\; dx\,|z| \leq M\Leb(B(2\eps))\|\vartheta_\eps\|_{1}|z|,
 \end{align*}
 where $D \vartheta_\eps(a)$ is the directional Fr{\'e}chet derivative of $ \vartheta_\eps$ at $a$ as before, and we used \eqref{eq Center 2025-10-30} and $\mathds{1}_{B(\frac{1}{4}\eps )} \leq \eps^{d}\vartheta_\eps \leq \mathds{1}_{B({2}{\eps})}$.
 }

Next, we shall prove that the other assumptions in Theorem \ref{thm-Ito+Wentzel-LM} hold. First we will show  that Assumption (1a) of Theorem \ref{thm-Ito+Wentzel-LM} is satisfied. We have to check that
\begin{equation}\label{x3}
 \text{ $\mathbb{P}$-a.s.\  $u^{\eps} \in D([0,T];{C}^{2}(\bR^d; \bR))$.}
\end{equation}
Keep in mind that  the topology on ${C}^{2}(\bR^d; \bR)$ is defined via suitable semi-norms over balls of natural radii.
    Recall \eqref{eq Center 2025-10-30}.
  In the sequel, we set  $\Omega' = \Omega$ to simplify the notation. 
  Note that for any $y_1,y_2\in\mathbb{R}^d$, we have
\begin{align}\label{eq coninuous 01 2025 V1}
&\sup_{(t,\omega)\in[0,T]\times \Omega}|u^\eps(t,y_1,\omega)-u^\eps(t,y_2,\omega)|\nonumber\\
    &=
\sup_{(t,\omega)\in[0,T]\times\Omega}\Big|\int_{\mathbb{R}^d} \vartheta_\eps(y_1-x)  u(t,x,\omega )\; dx-\int_{\mathbb{R}^d} \vartheta_\eps(y_2-x)  u(t,x,\omega )\; dx\Big|\nonumber\\
&\leq
M\int_{\mathbb{R}^d}\int_0^1|D\vartheta_\eps(y_2+\rho(y_1-y_2)-x)|d\rho\; dx\,|y_1-y_2|\\
&\leq
M\,\Leb(B(2\eps))\|\vartheta_\eps\|_{1}|y_1-y_2|
    \leq
 {  M_{\eps}}  \|\vartheta_\eps\|_{1}
 |y_1-y_2|.
\end{align}

If $K\in\mathbb{N}$,  we  consider the process $ {u}^\eps|_\mathbb{K}$, the restriction of the process $ {u}^\eps$ to the set $[0,T]\times \mathbb{K}\times \Omega$, with   {$\mathbb{K}=[-K,K]^d$,} 
defined  by
   \[
   {u}^\eps|_\mathbb{K}:[0,T]\times \mathbb{K}\times \Omega\ni(t,x,\omega)\mapsto {u}^\eps(t,x,\omega)\in\mathbb{R}.
    \]
We claim that\begin{enumerate}
    \item[] Claim 1. $\quad\quad\quad$  for any $\omega\in\Omega,$ ${u}^\eps(\cdot,\cdot,\omega)\in D([0,T];C_{}(\mathbb{R}^d,\mathbb{R}))$.
\end{enumerate}
To prove Claim 1, it is sufficient to prove that for any $\omega\in\Omega$, $ {u}^\eps|_\mathbb{K}(\cdot,\cdot,\omega)\in D([0,T];C(\mathbb{K},\mathbb{R}))$, for an arbitrary $K\in \mathbb{N}$. 
  Let us choose and fix $K\in\mathbb{N}$ and  $n\in\mathbb{N}$. Let $\Pi_n$ be the set of all lattice points in $\mathbb{K}$ of the form
\[
(\frac{i_1K}{2^n}, \dots, \frac{i_dK}{2^n}), \mbox{ where }-2^n\leq i_1, \dots, i_d\leq 2^n \mbox{ are integers}.
\]
 Then by \eqref{eq coninuous 01 2025 V1} and \eqref{eq-1029}, for any $\omega\in\Omega$, $s\geq0$ and $n\in\mathbb{N}$, we have
 \begin{align*}
&\lim_{t\searrow s}\Big| {u}^\eps|_\mathbb{K}(t,\cdot,\omega)- {u}^\eps|_\mathbb{K}(s,\cdot,\omega)\Big|_{C(\mathbb{K},\mathbb{R})}\\
&\leq
\lim_{t\searrow s}\sup_{y\in\Pi_n}| {u}^\eps(t,y,\omega)- {u}^\eps(s,y,\omega)|+2dM\|\vartheta_\eps\|_{1}\Leb(B(2\eps))\frac{K}{2^n}\\
&\leq
2dM\|\vartheta_\eps\|_{1}\Leb(B(2\eps))\frac{K}{2^n}.
 \end{align*}
{As   $n$ is arbitrary,  this implies that
\begin{eqnarray}
\lim_{t\searrow s}\Big| {u}^\eps|_\mathbb{K}(t,\cdot,\omega)- {u}^\eps|_\mathbb{K}(s,\cdot,\omega)\Big|_{C(\mathbb{K},\mathbb{R})}=0.
 \end{eqnarray}}
Hence, for every $\omega\in\Omega$ and $s_0\geq0$,
the function $ [0,\infty) \ni s \mapsto {u}^\eps|_\mathbb{K}(s,\cdot,\omega)\in C(\mathbb{K},\mathbb{R})$ is right continuous at  $s_0\geq0$.

Now for any $\omega\in\Omega$, $s_0>0$,  using similar arguments as above, for any $0<s,t<s_0$ and $n\in\mathbb{N}$,
 \begin{align*}
&\lim_{s,t\nearrow s_0}\Big| {u}^\eps|_\mathbb{K}(t,\cdot,\omega)- {u}^\eps|_\mathbb{K}(s,\cdot,\omega)\Big|_{C(\mathbb{K},\mathbb{R})}\\
&\leq
\lim_{s,t\nearrow s_0}\sup_{y\in\Pi_n}| {u}^\eps(t,y,\omega)- {u}^\eps(s,y,\omega)|+2dM\|\vartheta_\eps\|_{1}\Leb(B(2\eps))\frac{K}{2^n}\\
&\leq
2dM\|\vartheta_\eps\|_{1}\Leb(B(2\eps))\frac{K}{2^n}.
 \end{align*}
As $n$ is  arbitrary, this   implies that
  \begin{eqnarray}
\lim_{s,t\nearrow s_0}\Big| {u}^\eps|_\mathbb{K}(t,\cdot,\omega)- {u}^\eps|_\mathbb{K}(s,\cdot,\omega)\Big|_{C(\mathbb{K},\mathbb{R})}=0.
 \end{eqnarray}
 Hence, for any $\omega\in\Omega$ and $s_0>0$, $ {u}^\eps|_\mathbb{K}(s_0-,\cdot,\omega)$ is well-defined in $C(\mathbb{K},\mathbb{R})$. This proves Claim 1. 
By using the fact that
\begin{align}\label{eq the fact 20251031}
D u^{\eps}(s,y)&= D \int_{\R^{d}}u(s,x)\vartheta_{\eps}(y-x)\,dx =\int_{\R^{d}}u(s,x)D\bigl( \vartheta_{\eps}(y-x) \bigr)\,dx
\nonumber\\
&=-\int_{\R^{d}}u(s,x)D_{x}\bigl(\vartheta_{\eps}(y-x)\bigr)\,dx,\;\; s\in [0,\infty),\;y \in \mathbb{R}^d,
\end{align}
and applying similar arguments as above, we can prove that for any $\omega\in\Omega$, $ {u}^\eps(\cdot,\cdot,\omega)\in D([0,T];C^1_{}(\mathbb{R}^d,\mathbb{R}))$, and then for any $\omega\in\Omega$, $ {u}^\eps(\cdot,\cdot,\omega)\in D([0,T];C^2_{}(\mathbb{R}^d,\mathbb{R}))$.
Then the verification of Assumption (1a) of Theorem \ref{thm-Ito+Wentzel-LM} is complete.

Now we will verify   Assumption (1b) of Theorem \ref{thm-Ito+Wentzel-LM}, that is,
 for $\mathbb{P}\otimes \Leb$-almost-all $(\omega,t)\in \Omega\times [0,T]$,  the function
\[
\mathbb{R}^d \ni y \mapsto   G_t^\eps(y) \in \mathbb{R}
\]
 is continuous  and for any $y_0\in\mathbb{R}^d$, 
\begin{equation}
\lim_{y\rightarrow y_0}\left[\lint_{B}|\newh^\eps_t(y,z)-\newh^\eps_t(y_0,z)|
^{2}\nu (dz) +\int_{B^c}|\newh^\eps_t(y,z)-\newh^\eps_t(y_0,z)|\nu (dz)\right]= 0.
\end{equation}
Let us choose and fix   $y_0\in\mathbb{R}^d$. We choose  $K>0$ such that $y_0\in \mathbb{K}:=[-K,K]^d$. 
 Then, { if  $y\in [-K-1,K+1]^d $,} we have
\begin{align}
    | G_t^\eps(y)- G_t^\eps(y_0)|
    &\leq \Bigl\{
    M\|\vartheta_\eps\|_2\int_{\mathbb{K}_\eps}|b(x)|+|\divv b(x)|dx
    \\
    &\quad +
    M\|\vartheta_\eps\|_3\lint_B|z|^2\nu(dz)\Leb(B(2\eps))\Bigr\} \times |y-y_0|,
\end{align}
where  {$\mathbb{K}_\eps=[-K-1-2\eps,K+1+2\eps]^d$,}  and
\begin{align}
    &\lint_{B}|\newh^\eps_t(y,z)-\newh^\eps_t(y_0,z)|
^{2}\nu (dz) +\int_{B^c}|\newh^\eps_t(y,z)-\newh^\eps_t(y_0,z)|\nu (dz)\\
&\leq\;
\Bigl\{ M^2\lint_B|z|^2\nu(dz)\|\vartheta_\eps\|^2_2\Big(\Leb(B(2\eps))\Big)^2|y-y_0| +
2M\nu(B^c)\|\vartheta_\eps\|_1\Leb(B(2\eps))\Bigr\} \times |y-y_0|.
\end{align}
The above two inequalities imply that  Assumption (1b) of Theorem \ref{thm-Ito+Wentzel-LM} is satisfied.\\
\indent We will now verify condition \eqref{eqn-jumps supp 2025 06}. It is sufficient to   prove that, for every $K,T>0$ and  $\mathbb{K}=[-K,K]^d$, $\mathbb{P}$-a.s.
\begin{equation}\label{eq-Ass-101}
    \int_0^T\sup_{y\in\mathbb{K}} \left(|G_s^\eps(y)| +\lint_{B}|\newH_s^\eps(y,z)|^2\nu(dz)+ \int_{B^c}|\newh_s^\eps(y,z)|\nu(dz)\right)ds<\infty.
\end{equation}
Note that
\begin{align}
\int_0^T\sup_{y\in\mathbb{K}} |G_s^\eps(y)|ds
&\leq
TM\|\vartheta_\eps\|_1\int_{[-K-2\eps,K+2\eps]^d}|b(x)|+|\divv b(x)|dx
\\
&+
TM\|\vartheta_\eps\|_2\lint_{B}|z|^2\nu(dz)\Leb(B(2\eps))
< \infty,
\end{align}
and
\begin{align}
\int_0^T\sup_{y\in\mathbb{K}} \lint_{B}|\newH_s^\eps(y,z)|^2\nu(dz)ds
\leq
TM^2\|\vartheta_\eps\|_1^2\lint_{B}|z|^2\nu(dz)\Big(\Leb(B(2\eps))\Big)^2
< \infty,
\end{align}
and
\begin{align}
\int_0^T\sup_{y\in\mathbb{K}} \int_{B^c}|\newh_s^\eps(y,z)|\nu(dz)ds
\leq
2TM\|\vartheta_\eps\|_0\Leb(B(2\eps))\nu(B^c)
<\infty.
\end{align}
Hence \eqref{eq-Ass-101} follows.\\
\indent Finally we will verify  condition \eqref{eqn-jumps do not overlap}. Let us recall     that  $F={u}^\eps$  and  $\xi_t=\phi_t$ as in the initial statement of the lemma.
For this aim,    it is sufficient to   prove that, for every $K,T>0$, $\mathbb{P}$-a.s.
\begin{align}\label{eq-1029-1}
	\sum_{s\leq T}\;\sup_{y\in \mathbb{K}}|DF(s,y)-DF(s-,y)||\xi_{s}-\xi_{s-}|<\infty,
	\end{align}
where as before $\mathbb{K}=[-K,K]^d$, and $DF(s,y)=D_{{y}}F(s,y)$.
	
 Observe that
\begin{align}
D_{{y}}F(s,y)= D \int_{\R^{d}}u(s,x)\vartheta_{\eps}(y-x)\,dx
=-\int_{\R^{d}}u(s,x)D_{x}\bigl(\vartheta_{\eps}(y-x)\bigr)\,dx,\;\; s\in [0,T],\;y \in \mathbb{R}^d.
\end{align}
We also claim that $\mathbb{P}$-a.s., for any $y\in\mathbb{R}^d$ and $s\in(0,T]$,
\begin{align}\label{eq 20250731 01}
    D_{{y}} F(s,y)-D_{{y}}F(s-,y)&=\tilde{h}^{\varepsilon}_s(y,\Delta L_s)\mathds{1}_{B^c}(z)(\Delta L_s)+
 \tilde{h}^\eps_s(y,\Delta L_s)\1_{B\backslash\{0\}}(\Delta L_s),\\
    \xi_s-\xi_{s-}&=\Delta L_s\mathds{1}_{B^c}(z)(\Delta L_s)+\Delta L_s\1_{B\backslash \{0\}}(\Delta L_s),
\end{align}
where for $s\in (0,T]$,  $y \in \mathbb{R}^d$ and $z \in \mathbb{R}^d$,
\begin{align}\label{eqn-G^eps-2025}
\tilde{\newh}^\eps_s(y,z) &= -\int_{\mathbb{R}^d}u(s-,x)\,[D_x(\vartheta_\eps(y-x-z)) -D_x(\vartheta_\eps(y-x)) ]\, dx.\nonumber
\end{align}
It follows directly from \eqref{eq the fact 20251031} and Lemma \ref{lemma 20250731 01} that this claim holds. 
 
Then we have
	\begin{align}
	    &\sum_{s\leq T}\;\sup_{y\in \mathbb{K}}|DF(s,y)-DF(s-,y)||\xi_{s}-\xi_{s-}|\\
        &=\sum_{s\leq T}\;\sup_{y\in \mathbb{K}}\Big[ | \tilde{\newh}^\eps_s(y,\Delta L_s)| |\Delta L_s| \mathds{1}_{B^c}(z)(\Delta L_s)+ |
 \tilde{\newh}^\eps_s(y,\Delta L_s)||\Delta L_s| \1_{B\backslash\{0\}}(\Delta L_s)\Big]\\
        &=
\int_0^T\int_{B^c}\sup_{y\in \mathbb{K}}\big|\tilde{\newh}^{\varepsilon}_s(y,z)\big||z|\, \prm(ds,dz)
+\int_0^T\lint_{B}\sup_{y\in \mathbb{K}}\big|
 \tilde{\newh}^\eps_s(y,z)\big||z|{\prm}(ds,dz).
	\end{align}
Since $\nu(B^c)<\infty$ and
\[\sup_{\omega\in\Omega}\sup_{s\in[0,T]}\sup_{y\in \mathbb{K}}\big|\tilde{\newh}^{\varepsilon}_s(y,z)\big|\leq 2M\|\vartheta_\eps\|_1 \Leb(B(2\eps))<\infty,\]
 we infer that, $\mathbb{P}$-a.s.,
$$
\int_0^T\int_{B^c}\sup_{y\in \mathbb{K}}\big|\tilde{\newh}^{\varepsilon}_s(y,z)\big||z|\, \prm(ds,dz)<\infty.
$$

Define a function
\[
f_{\eps,y,x}: \mathbb{R}^d \ni v \mapsto D_x(\vartheta_\eps(y-x-v)) \in   \mathbb{R}^d.
\]
Then
\begin{eqnarray}
    \sup_{\omega\in\Omega}\sup_{s\in[0,T]}\sup_{y\in \mathbb{K}}|
 \tilde{\newh}^\eps_s(y,z)||z|
    &\leq&
    \sup_{\omega\in\Omega}\sup_{s\in[0,T]}\sup_{y\in \mathbb{K}}\Big(\int_{\mathbb{R}^d}|u(s,x)|\,\Big|\int_0^1D_vf_{\eps,y,x}(\delta z)\cdot z\,d\delta\Big| \,dx\Big)|z|\\
    &\leq&
    M\|\vartheta_\eps\|_2\, \Leb(B(2\eps))|z|^2.
\end{eqnarray}
Combining with $\lint_{B}|z|^2\nu(dz)<\infty$, we have, $\mathbb{P}$-a.s.,
$$
\int_0^T\lint_{B}\sup_{y\in \mathbb{K}}\big|
 \tilde{\newh}^\eps_s(y,z)\big||z|{\prm}(ds,dz)<\infty.
$$
Hence, \eqref{eq-1029-1} holds, which verifies condition  \eqref{eqn-jumps do not overlap}.  Therefore, all the assumptions of Theorem  \ref{thm-Ito+Wentzel-LM} are fulfilled, and the proof is complete.
\end{proof}

We now prove the following lemma, which implies \eqref{eq 20250731 01} and helps clarify the following term appearing in the It\^o-Wentzell formula
\begin{equation}
\sum_{s\leq t}\Big(\Delta  u^\eps(s,\phi_s(x))-\Delta  u^\eps(s,\phi_{s-}(x))\Big).
\end{equation}
\begin{lemma}\label{lemma 20250731 01}  Let $T>0$.
    There exists a $\mathbb{P}$-full set  $\Omega_1\subset \Omega$ such that for any $\omega\in\Omega_1$, $y\in\mathbb{R}^d$ and $t\in (0,T]$,
    \begin{align}
     u^\eps(t,y)-u^\eps(t-,y)
     =&
   \nonumber   \int_{\{t\}}\int_{B^c}\newh_s^\eps(y,z)\mu(ds,dz) +
\int_{\{t\}}\lint_{B}\newh_s^\eps(y,z)\mu(ds,dz)\\
=&\1_{\{ | \Delta L_t|\neq 0 \}}\,
\newh_t^{\eps}(y, \Delta L_t),
 \end{align}
 where $
 \newh^\eps_s(y,z)
= u^{\eps}(s-,y-z)-u^{\eps}(s-,y)$.

\end{lemma}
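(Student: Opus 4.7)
The plan is to combine the explicit representation \eqref{eqn-u^eps-2} of $u^{\eps}(\cdot,y)$ as the sum of a Lebesgue integral and two Poisson-type stochastic integrals with a density argument in $y$, using that $u^{\eps}$ is continuous in $y$ uniformly in $t\in[0,T]$ on a $\mathbb{P}$-full set (as established in Lemma \ref{lem-u^eps satisfies assumptions ITWF}).

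First, I would fix $y\in\mathbb{R}^d$ and analyse the jumps of the three terms in \eqref{eqn-u^eps-2} separately. The drift term $\int_0^t G^{\eps}_s(y)\,ds$ is absolutely continuous in $t$, hence produces no jump. For the large-jumps integral, the identity
\[
\int_0^t\int_{B^c}\newh^{\eps}_s(y,z)\,\mu(ds,dz) \;=\; \sum_{0<s\le t} \newh^{\eps}_s(y,\Delta L_s)\,\1_{B^c}(\Delta L_s)
\]
holds $\mathbb{P}$-a.s.\ as a finite sum (using that $\mu(\cdot\times B^c)$ is a Poisson process of finite intensity $\nu(B^c)<\infty$), so its jump at time $t$ equals $\newh^{\eps}_t(y,\Delta L_t)\1_{B^c}(\Delta L_t)$. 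For the compensated small-jumps integral one uses the standard fact (cf.\ \cite[Sec.~II.3]{Ikeda+Watanabe_1989}) that the compensator $\int_0^t\lint_B \newh^{\eps}_s(y,z)\,\nu(dz)ds$ has continuous paths (the integrability is supplied by Hypothesis \ref{ftx} which was verified in Lemma \ref{lem-u^eps satisfies assumptions ITWF}), so the jumps of $\int_0^t\lint_B \newh^{\eps}_s(y,z)\,\tilde\mu(ds,dz)$ coincide with the jumps of the uncompensated integral and equal $\newh^{\eps}_t(y,\Delta L_t)\1_{B\setminus\{0\}}(\Delta L_t)$. Summing yields, for this fixed $y$, a $\mathbb{P}$-full event $\Omega_y$ on which the claimed identity holds for every $t\in(0,T]$.

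Secondly, I would remove the $y$-dependence of the exceptional set. Let $D$ be a countable dense subset of $\mathbb{R}^d$ and define
\[
\Omega_1:=\Omega'\cap\bigcap_{y\in D}\Omega_y,
\]
where $\Omega'$ is the $\mathbb{P}$-full set from Lemma \ref{lem-u^eps satisfies assumptions ITWF} on which $u^{\eps}\in D([0,T];C^{2}(\mathbb{R}^d,\mathbb{R}))$. Then $\mathbb{P}(\Omega_1)=1$, and on $\Omega_1$ the identity holds for every $t\in(0,T]$ and every $y\in D$. To extend it to all $y\in\mathbb{R}^d$, fix $\omega\in\Omega_1$ and $t\in(0,T]$, take $y_n\in D$ with $y_n\to y$, and pass to the limit: by the c\`adl\`ag regularity of $s\mapsto u^{\eps}(s,\cdot,\omega)$ in $C(\mathbb{R}^d,\mathbb{R})$ established via \eqref{eq coninuous 01 2025 V1}, the maps $y\mapsto u^{\eps}(t,y,\omega)$ and $y\mapsto u^{\eps}(t-,y,\omega)$ are continuous on $\mathbb{R}^d$, so the left-hand side converges. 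The right-hand side converges because $\newh^{\eps}_t(y,\Delta L_t)=u^{\eps}(t-,y-\Delta L_t)-u^{\eps}(t-,y)$ is continuous in $y$. This yields the identity for every $(t,y)\in(0,T]\times\mathbb{R}^d$ on $\Omega_1$.

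The main technical point, and the one I would treat most carefully, is the assertion that the compensator $\int_0^t\lint_B \newh^{\eps}_s(y,z)\,\nu(dz)ds$ is (path-wise) continuous in $t$ on a $\mathbb{P}$-full set independent of $y$; this requires the uniform bound $\lint_B|\newh^{\eps}_s(y,z)|^2\nu(dz)\le M^2\|\vartheta_{\eps}\|_1^2(\Leb(B(2\eps)))^2\lint_B|z|^2\nu(dz)$ already derived in the proof of Lemma \ref{lem-u^eps satisfies assumptions ITWF}, together with absolute continuity of the Lebesgue integral in its upper limit. Once this is in place, the three-step scheme above closes the argument.
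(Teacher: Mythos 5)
Your overall strategy is the same as the paper's: start from the representation \eqref{eqn-u^eps-2}, compute the jump of each of the three terms separately for fixed $y$, and then upgrade the per-$y$ exceptional sets to a single $\mathbb{P}$-full set via a countable dense set $D\subset\mathbb{R}^d$ together with the continuity in $y$ of $u^\eps(t,\cdot)$, $u^\eps(t-,\cdot)$ and of $y\mapsto \newh^\eps_t(y,\Delta L_t)$. The drift term, the large-jump term, and the density/continuity step are handled exactly as in the paper and are fine.

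The gap is in your treatment of the compensated small-jump integral. You claim that the compensator $\int_0^t\lint_{B}\newh^\eps_s(y,z)\,\nu(dz)\,ds$ is finite with continuous paths and that the uncompensated integral $\int_0^t\lint_{B}\newh^\eps_s(y,z)\,\mu(ds,dz)$ exists, so that the jumps of the compensated integral coincide with those of the uncompensated one. But the only integrability available (from Hypothesis \ref{ftx}(iii), as verified in Lemma \ref{lem-u^eps satisfies assumptions ITWF}) is $\lint_B|\newh^\eps_s(y,z)|^2\,\nu(dz)<\infty$: the bound $|\newh^\eps_s(y,z)|\le C_\eps\,M\,|z|$ is of first order in $|z|$, and $\lint_B|z|\,\nu(dz)=\infty$ whenever the Blumenthal--Getoor index satisfies $\alpha\ge 1$, which is precisely the main regime of the paper. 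In that case neither the compensator nor the raw Poisson integral over all of $B\setminus\{0\}$ is pathwise well defined, so the split "compensated $=$ uncompensated $-$ compensator" on which your argument rests is not licit. The paper circumvents exactly this point: it truncates to $B_\delta=\{\delta<|z|\le 1\}$, where $\nu$ is finite and the split is valid for each $\delta$, and then, combining the construction of the It\^o integral in \cite[p.~63]{Ikeda+Watanabe_1989} with the argument of \cite[Theorem 41]{Protter_2004}, extracts a subsequence $\delta_n\to 0$ along which the truncated compensated integrals converge $\mathbb{P}$-a.s.\ \emph{uniformly in} $t\in[0,T]$ to the full compensated integral; this uniform convergence justifies interchanging $\lim_{\delta\to 0^+}$ with $\lim_{l\uparrow t}$ and yields the jump identity $\1_{\{0<|\Delta L_t|\le 1\}}\newh^\eps_t(y,\Delta L_t)$. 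Alternatively you could invoke the standard fact that the jump process of a stochastic integral against $\tilde\mu$ is the integrand evaluated at the jump of the driving process (up to indistinguishability), but that result must then be cited as such; it does not follow from the compensator argument you give. As written, your proof of the key small-jump identification does not go through in the case $\alpha\ge 1$.
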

\begin{proof}
Recall from \eqref{eqn-u^eps-2} that
\begin{equation}\label{eqn-u^eps(t,y)-new}
 u^\eps(t,y)=\int_0^tG^\eps_s(y) \,ds +\int_0^t\int_{B^c}\newh^\eps_s(y,z)\, \prm(ds,dz)
+\int_0^t\lint_{B}\newh^\eps_s(y,z)\tilde{\prm}(ds,dz).
\end{equation}
Then, for any $y \in \R^d$, $\mathbb{P}$-a.s., for any $t \in (0,T]$, we have
\begin{align}\label{eqn-ZB-doubt-01}
      & u^\eps(t,y)-u^\eps(t-,y)\\
      &=
    \int_0^t\int_{B^c}\newh_s^\eps(z,y)\mu(ds,dz)
    +
    \int_0^t\lint_{B}\newH_s^\eps(z,y)\tilde\mu(ds,dz)\\
    &\quad -
    \lim_{l\nearrow t}\Big(\int_{0}^l \int_{B^c}\newh_s^\eps(z,y)\mu(ds,dz)
    +
    \int_{0}^l\lint_{B}\newH_s^\eps(z,y)\tilde\mu(ds,dz)\Big)\\
    &= \int_{\{t\}}\int_{B^c}\newh_s^\eps(y,z)\mu(ds,dz)+\int_0^t\lint_{B}\newH_s^\eps(z,y)\tilde\mu(ds,dz)-\lim_{l\nearrow t} \int_{0}^l\lint_{B}\newH_s^\eps(z,y)\tilde\mu(ds,dz).
\end{align}
Denote $B_{\delta} = \{x \in \R^d \, :\,  \delta < |x| \le 1\}$, $\delta \in (0,1).$ 
Let us fix $y \in \R^d.$ By combining the argument  in the definition of the It\^o integral for $f \in \mathbf{F}^2_p$ given in \cite[page 63]{Ikeda+Watanabe_1989} 
with the argument  from the proof of    Theorem 41 in \cite[page 31]{Protter_2004}, we deduce that we can find a subsequence $\delta_n \to 0^+$ (possibly depending on $y$) such that $\mathbb{P}$-a.s.
\begin{align}
\label{eqn-convergence 0731}
&\lim_{\delta_n \to 0}  \Big[\int_{0}^t \int_{B_{\delta_n}}\newh_s^\eps(y,z) \mu(ds,dz)
- \int_{0}^t \int_{B_{\delta_n}}\newh_s^\eps(y,z) \nu(dz) ds\Big]
\\ \nonumber
&=\int_{0}^t \lint_{B_{}}\newh_s^\eps(y,z)\tilde \mu(ds,dz), \mbox{ uniformly  w.r.t. }  t \in [0,T].
 \end{align}
  Let us choose and fix such a subsequence. In what follows, instead of writing $\lim_{\delta_n \to 0}$, we will write $\lim_{\delta \to 0^+}$.

By the uniform convergence \eqref{eqn-convergence 0731}, we can change the order of limits $\lim_{\delta \to 0+}\lim_{l \toup t}=\lim_{l \toup t}\lim_{\delta \to 0+} $, and therefore, for any $y \in \R^d$, $\mathbb P$-a.s., for any $t\in[0,T]$,
\begin{align*}
     &u^\eps(t,y)-u^\eps(t-,y)\\
      &=
    \int_{\{t\}}\int_{B^c}\newh_s^\eps(y,z)\mu(ds,dz)
    +
\lim_{\delta \to 0^+}   \Big[ \int_0^{t}\int_{B_{\delta}}\newh_s^\eps(y,z)\mu(ds,dz)
- \lim_{l \toup t}\int_0^{l}\int_{B_{\delta}}\newh_s^\eps(y,z)\mu(ds,dz) \Big]
\\
&=  \int_{\{t\}}\int_{B^c}\newh_s^\eps(y,z)\mu(ds,dz) +
\lim_{\delta \to 0^+}
\int_{\{t\}}\int_{B_{\delta}}\newh_s^\eps(y,z)\mu(ds,dz).
 \end{align*}
 We have, $\mathbb P$-a.s.,  for any $\delta \in (0,1)$ and $t\in[0,T]$,
\begin{align}
    \int_{\{t\}}\int_{B_{\delta}} \newh_s^{\eps}(y,z) \mu(ds,dz)
   & = \sum_{0<s \le t} \1_{ \{t\}}(s) \, \1_{\{ \delta < | \Delta L_s|\le 1 \}}\,
\newh_s^{\eps}(y, \Delta L_s)\\
&=  \1_{\{ \delta < | \Delta L_t|\le 1 \}}\,
\newh_t^{\eps}(y, \Delta L_t).
 \end{align}
  Hence,  for any $y \in \R^d$, $\mathbb P$-a.s., for any $t\in[0,T]$,
 \begin{equation*}
    \lim_{\delta \to 0^+} \int_{\{t\}}\int_{B_{\delta}}\newh_s^{\eps}(y,z) \mu(ds,dz) = \int_{\{t\}}\lint_{B}\newh_s^{\eps}(y,z)\mu(ds,dz)=\1_{\{ 0 < | \Delta L_t|\le 1 \}}\,
\newh_t^{\eps}(y, \Delta L_t).
\end{equation*}
 We arrive at, for any $t\in(0,T]$,  
 \begin{align} \label{d66 0731}
     u^\eps(t,y)-u^\eps(t-,y)
      =&
    \int_{\{t\}}\int_{B^c}\newh_s^\eps(y,z)\mu(ds,dz) +
\int_{\{t\}}\lint_{B}\newh_s^\eps(y,z)\mu(ds,dz) \nonumber\\
=&\1_{\{ | \Delta L_t|\neq 0 \}}\,
\newh_t^{\eps}(y, \Delta L_t).
 \end{align}

{Next, we shall prove that,  $\mathbb{P}$-a.s., \eqref{d66 0731} holds for any $y \in \mathbb{R}^d$.}
Let $D$ be a countable dense set in $\mathbb{R}^d$. From the above argument, we can find a  $\mathbb{P}$-full event $\Omega'\subset \Omega$  such that for any $\omega\in\Omega'$, \eqref{d66 0731} holds for any $y \in D$. Note that $\Omega'$ is  independence of $t\in[0,T]$.

Also, by \eqref{x3}, we  know that,  $\mathbb{P}$-a.s.,
$ {u}^\eps(\cdot,\cdot)\in D([0,T];C_{}(\mathbb{R}^d,\mathbb{R}))$. It follows that,
 $\mathbb{P}$-a.s., for any $t\in(0,T]$,
\begin{equation} \label{u11 0731}
y \mapsto u^\eps(t,y)-u^\eps(t-,y) \;\; \text{is continuous on $\mathbb{R}^d$}.
\end{equation}
On the other hand, in view of \eqref{c441},  $\mathbb{P}$-a.s., for any $t\in(0,T]$,
\begin{equation} \label{u12 0731}
 y \mapsto
 \int_{\{t\}}\int_{B^c}\newh_s^\eps(y,z)\mu(ds,dz) +
\int_{\{t\}}\lint_{B}\newh_s^\eps(y,z)\mu(ds,dz) = \1_{\{ | \Delta L_t|\neq 0 \}}\newh_t^\eps(y,\triangle L_t)
\end{equation}
is continuous on $\mathbb{R}^d$. 
Taking both \eqref{u11 0731} and \eqref{u12 0731} into account, we can conclude that, there exists a $\mathbb{P}$-full set $\Omega_1 \subset \Omega' \subset \Omega$ such that 
 for any 
$\omega \in \Omega_1$, $y \in \mathbb{R}^d$,   \eqref{d66 0731} holds. The lemma has been proved.
\end{proof}

Now we are in a position to  apply  the  It\^o-Wentzell formula to $u^\eps(t,\phi_t(x))$.

\begin{theorem}\label{thm-IW-applied}  Assume that  $x \in \R^d$. Then,
 $\mathbb P$-a.s., for any $t \in [0,T]$,
 \begin{align}
 u^\eps(t,\phi_t(x))
=& \int_0^t Du^{\eps}(s,\phi_{s}(x))\cdot b(\phi_{s}(x))\,ds\label{eqn-u^eps-final}
\\
&+\int_0^t \int_{\mathbb{R}^d}u(s,y)\,\bigl[
b(y)\cdot D_y\bigl(\vartheta_\eps(\phi_{s}(x)-y)\bigr)
+\divv\, b(y) \vartheta_\eps(\phi_{s}(x)-y)
\bigr]\,d y\, ds.
\end{align}
 \end{theorem}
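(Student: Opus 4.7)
The strategy is to apply the It\^o--Wentzell formula from Theorem \ref{thm-Ito+Wentzel-LM} to the random field $F(t,y) = u^\eps(t,y)$ (whose semimartingale decomposition in $t$ is given by \eqref{eqn-u^eps-2}, with Lebesgue drift $f_s = G^\eps_s$ and jump intensity $h_s = \newh^\eps_s$) and the driving semimartingale $\xi_t = \phi_t(x)$ with $A_t = \int_0^t b(\phi_r(x))\,dr$ and $\newS_t = L_t$. All the hypotheses of that formula have been verified in Lemma \ref{lem-u^eps satisfies assumptions ITWF}. In the present context one has $u(0,\cdot) = u_0 = 0$, so $u^\eps(0, x) = 0$ and the initial term drops out.

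I would write the resulting expansion as $u^\eps(t,\phi_t(x)) = I_1 + I_2 + I_3 + I_4 + I_5$ with: $I_1 = \int_0^t G^\eps_s(\phi_s(x))\,ds$; $I_2$ the pair of Poisson integrals of $\newh^\eps_s(\phi_{s-}(x),z)$ against $\tilde\mu$ on $B$ and against $\mu$ on $B^c$; $I_3 = \int_0^t Du^\eps(s-,\phi_{s-}(x))\,d\xi_s$, which via the L\'evy--It\^o decomposition of $L$ splits into the Lebesgue drift $\int_0^t Du^\eps(s,\phi_s(x))\cdot b(\phi_s(x))\,ds$ plus two Poisson integrals of $Du^\eps(s-,\phi_{s-}(x))\cdot z$; $I_4$ the It\^o correction sum; and $I_5$ the simultaneous-jumps sum. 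I would first rewrite $I_4$ as the single Poisson integral $\int_0^t\int_{\R^d}[u^\eps(s-,\phi_{s-}(x)+z) - u^\eps(s-,\phi_{s-}(x)) - Du^\eps(s-,\phi_{s-}(x))\cdot z]\,\mu(ds,dz)$, using $\Delta\xi_s = \Delta L_s$. For $I_5$ I would invoke Lemma \ref{lemma 20250731 01}: from $\Delta u^\eps(s,y) = \mathds{1}_{\{\Delta L_s\ne 0\}}[u^\eps(s-, y-\Delta L_s) - u^\eps(s-, y)]$ and the identity $\phi_s(x) - \Delta L_s = \phi_{s-}(x)$, direct evaluation at $y = \phi_s(x)$ and $y = \phi_{s-}(x)$ yields $I_5 = \int_0^t\int_{\R^d}[2u^\eps(s-,\phi_{s-}(x)) - u^\eps(s-,\phi_{s-}(x)+z) - u^\eps(s-,\phi_{s-}(x)-z)]\mu(ds,dz)$.

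The key cancellation then proceeds as follows. Summing $I_4$ and $I_5$ cancels the $u^\eps(s-,\phi_{s-}(x)+z)$ contributions, and on using $u^\eps(s-,\phi_{s-}(x)) - u^\eps(s-,\phi_{s-}(x)-z) = -\newh^\eps_s(\phi_{s-}(x),z)$ one obtains $I_4 + I_5 = -\int_0^t\int_{\R^d}[\newh^\eps_s(\phi_{s-}(x),z) + Du^\eps(s-,\phi_{s-}(x))\cdot z]\,\mu(ds,dz)$. The large-jump ($B^c$) parts of this expression exactly annihilate the $B^c$ parts of $I_2 + I_3$. Over the small jumps, the remaining $\mu$-integral over $B$ and the $\tilde\mu$-integrals from $I_2$ and $I_3$ combine, via $\mu - \tilde\mu = \nu\otimes\Leb$, into the compensator term $-\int_0^t\lint_B[\newh^\eps_s(\phi_s(x),z) + Du^\eps(s,\phi_s(x))\cdot z]\nu(dz)\,ds$. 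Comparison with the explicit form of $G^\eps_s$ in \eqref{eqn-G^eps-21} shows that this is precisely the opposite of the $\lint_B$-compensator summand inside $I_1$, so the two annihilate. What remains is the Lebesgue drift $\int_0^t Du^\eps(s,\phi_s(x))\cdot b(\phi_s(x))\,ds$ from $I_3$ together with the first summand of $G^\eps_s(\phi_s(x))$ in $I_1$, and renaming the dummy variable recovers exactly the right-hand side of \eqref{eqn-u^eps-final}.

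The main obstacle is the bookkeeping associated with the simultaneous-jumps sum $I_5$. At first sight the formula of Lemma \ref{lemma 20250731 01} produces, when evaluated at both $\phi_s(x)$ and at $\phi_{s-}(x) = \phi_s(x) - \Delta L_s$, a symmetric second-difference structure in $u^\eps$ that is not obviously compatible with the usual compensator picture of It\^o-type formulas with jumps. The crucial observation is that this structure is exactly what is needed to cancel the forward-shift terms $u^\eps(s-,\phi_{s-}(x)+z)$ produced by $I_4$, leaving only the ``transport-type'' remainder $-\newh^\eps_s(\phi_{s-}(x),z) - Du^\eps(s-,\phi_{s-}(x))\cdot z$. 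Once this cancellation is identified, the remaining algebra — the $B/B^c$ splitting, the passage from $\mu$ to $\tilde\mu + \nu\otimes\Leb$ on $B$, and the matching with the second summand of $G^\eps_s$ — is a routine rearrangement.
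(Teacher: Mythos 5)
Your proposal is correct and follows essentially the same route as the paper: the same application of Theorem \ref{thm-Ito+Wentzel-LM} with $F=u^\eps$, $A_t=\int_0^t b(\phi_r(x))\,dr$, $\newS=L$, the same use of Lemma \ref{lemma 20250731 01} to turn the simultaneous-jump sum into the second difference $2u^\eps(s-,\xi_{s-})-u^\eps(s-,\xi_{s-}+z)-u^\eps(s-,\xi_{s-}-z)$, and the same cancellation of the remaining jump terms against the $\lint_{B}$-compensator inside $G^\eps_s$, differing only in the order in which the terms are grouped. The one point the paper makes explicit that you pass over is the absolute convergence of the simultaneous-jump sum (integrand bounded by $C\Vert u^\eps\Vert_2|z|^2$ on $B$ and by $4\Vert u^\eps\Vert_0$ on $B^c$), which is what licenses rewriting it as Poisson integrals before splitting.
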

 \begin{proof}[Prof of Theorem \ref{thm-IW-applied}]
 Since we have verified all the assumptions of Theorem \ref{thm-Ito+Wentzel-LM} in Lemma \ref{lem-u^eps satisfies assumptions ITWF}, we can apply the   It\^o-Wentzell formula \eqref{eqn-Ito+Wentzel-LM} stated in Theorem \ref{thm-Ito+Wentzel-LM} with $F=u^\eps$ given by \eqref{eqn-u^eps-2} and $\xi_t=\phi_t$ given by \eqref{Ito-nxi-psi}. The application of It\^o-Wentzell formula \eqref{eqn-Ito+Wentzel-LM} must be handled carefully, as it is not entirely straightforward.
 
 We now calculate each term explicitly. 
Keep in mind that
$$
\xi_t = \xi_{t-}+  \Delta L_t = \phi(t-, x) + \Delta L_t.
$$
First, note that 
\begin{align}
    F(t,\xi_t)&=u^\eps(t,\xi_t),\\
    F(0,\xi_0)&=u^\eps(0,\xi_0)=0,
\\
    \int_0^tf_s(\oldL_{s})ds&=\int_0^tG^{\eps}_{s}(\xi_{s})\,ds,
\end{align}
where by  \eqref{eqn-G^eps-21},
 \begin{align*}
 G^\eps_s(y)=& \int_{\mathbb{R}^d}u(s,x)\,[b(x)\cdot D_x \bigl(\vartheta_\eps(y-x)\bigr) +\divv\, b(x) \vartheta_\eps(y-x)]\,dx\nonumber
 \\
 &+ \lint_B\Big[ u^{\eps}(s,y-z)-u^{\eps}(s,y)+ Du^{\eps}(s,y) \cdot z  \Big]\nu(dz).
\end{align*}
Since
$
\newh^\eps_s(y,z) =  u^{\eps}(s-,y-z)-u^{\eps}(s-,y)$, see  \eqref{eqn-p^eps-2}, we get
\begin{align} \label{b4}
    &\int_0^t\int_{\R^d}h_s(\oldL_{s-},z)[\1_{B\setminus\{0\}}(z)\newq(ds,dz) +\1_{B^c}(z)\mu(ds,dz)]\\
    &=
     \int_0^t\int_{B^c} \Bigl( u^\eps(s-,\xi_{s-}-z)-u^\eps(s-,\xi_{s-}) \Bigr) \,\mu(ds,dz)\\
    &\quad+
    \int_0^t\lint_{B} \Bigl( u^\eps(s-,\xi_{s-}-z)-u^\eps(s-,\xi_{s-})\Bigr) \,  \tilde\mu(ds,dz).
\end{align}
Recall that  $A_t =\int_{0}^{t}b(\nxi_{r}( x) )\,dr$ and $S_t=L_t$ in \eqref{f4}. It follows that 
\begin{align} \label{b3}
    \int_0^t D F(s-,\oldL_{s-})d\xi_s
   =& \int_0^t D F(s-,\oldL_{s-})dA_s + \int_0^t D F(s-,\oldL_{s-})dS_s\\
    =&\int_0^t b(\xi_s)\cdot D u^\eps(s,\xi_s)ds
    +
    \int_0^t\lint_{B}  D u^\eps(s-,\xi_{s-})\cdot z\, \tilde \prm(ds,dz)
    \\
    &+
    \int_0^t\int_{B^c}  D u^\eps(s-,\xi_{s-})\cdot z\, \prm(ds,dz).
\end{align}
 Moreover,
 \begin{align}\label{b2}
   &\sum_{s\le t}\Big(F(s-,\oldL_{s})-F(s-,\oldL_{s-})-D F(s-,\oldL_{s-})\Delta \oldL_s\Big)\\
   &=
   \int_0^t\int_{B^c}\big(u^\eps(s-,\xi_{s-}+z)-u^\eps(s-,\xi_{s-})- D u^\eps(s-,\xi_{s-})\cdot z\big) \mu(ds,dz)\\
    &\quad+
    \int_0^t\lint_{B} \big( u^\eps(s-,\xi_{s-}+z)-u^\eps(s-,\xi_{s-})- D u^\eps(s-,\xi_{s-})\cdot z\big)\mu(ds,dz),
\end{align}
which is meaningful since, for $z \in B,$ $s \in (0,t]$, $\mathbb{P}$-a.s.,
$$
|u^\eps(s-,\xi_{s-}+z)-u^\eps(s-,\xi_{s-})- D u^\eps(s-,\xi_{s-})\cdot z|
\le C{\|u^\eps\|_2}|z|^2.
$$
The most delicate term to understand in the It\^o-Wentzell formula is
\begin{equation}\label{s33}
\sum_{s\leq t}\Big(\Delta F(s,\oldL_{s})-\Delta F(s,\oldL_{s-})\Big).
\end{equation}
By Lemma \ref{lemma 20250731 01}, we have, $\mathbb{P}$-a.s., for any $y\in\mathbb{R}^d$ and $s\in(0,T]$,  
 \begin{align} \label{d66}
    F(s,y)-F(s-,y)= &u^\eps(s,y)-u^\eps(s-,y)
    \\  =&
   \nonumber   \int_{\{s\}}\int_{B^c}\newh_r^\eps(y,z)\mu(dr,dz) +
\int_{\{s\}}\lint_{B}\newh_r^\eps(y,z)\mu(dr,dz)\\
=&\1_{\{ | \Delta L_s|\neq 0 \}}\,
\newh_s^{\eps}(y, \Delta L_s).
 \end{align}
By substituting $y=\oldL_s$ or $\oldL_{s-}$ in \eqref{d66} and combining
\begin{align*}
 \Delta F(s,\oldL_{s})-\Delta F(s,\oldL_{s-})
        :=\Big( F(s,\oldL_{s})-F(s-,\oldL_{s})\Big)-\Big(F(s,\oldL_{s-})-F(s-,\oldL_{s-})\Big),
       \end{align*}
we infer that
\begin{align}\label{eq-1029-2}
    &\Delta F(s,\oldL_{s})-\Delta F(s,\oldL_{s-})\nonumber\\
&= \1_{\{ | \Delta L_s|\neq 0 \}}\,
\newh_s^{\eps}(\oldL_{s}, \Delta L_s)
-
\1_{\{ | \Delta L_s|\neq 0 \}}\,
\newh_s^{\eps}(\oldL_{s-}, \Delta L_s)\nonumber\\
&=
\1_{\{ | \Delta L_s|\neq 0 \}}\,
\newh_s^{\eps}(\oldL_{s-}+\Delta L_s, \Delta L_s)
-
\1_{\{ | \Delta L_s|\neq 0 \}}\,
\newh_s^{\eps}(\oldL_{s-}, \Delta L_s)\nonumber\\
&=
\1_{\{ 0 < | \Delta L_s|\le 1 \}}\,
[\newh_s^{\eps}(\xi_{s-} + \Delta L_s, \Delta L_s)-\newh_s^\eps( \xi_{s-}, \Delta L_s )]\nonumber\\
&\quad +
\1_{\{ | \Delta L_s|> 1 \}}\,
[\newh_s^{\eps}(\xi_{s-} + \Delta L_s, \Delta L_s)-\newh_s^\eps( \xi_{s-}, \Delta L_s )]\nonumber\\
&=\int_{\{s\}}\lint_{B}[\newh_r^{\eps}(\xi_{r-} + z,z)-\newh_r^\eps(  \xi_{r-},z)]\,\mu(dr,dz)+
\int_{\{s\}}\int_{B^c}[\newh_r^{\eps}(\xi_{r-} + z,z)-\newh_r^\eps(  \xi_{r-},z)]\,\mu(dr,dz)\\
&=
\int_{\{s\}}\lint_{B} ( [u^\eps(r-,\xi_{r-})-u^\eps(r-,\xi_{r-}+z)]
  -
    [u^\eps(r-,\xi_{r-}-z)-u^\eps(r-,\xi_{r-})])\mu(dr,dz)\nonumber\\
    &\quad +
    \int_{\{s\}}\int_{B^c} ( [u^\eps(r-,\xi_{r-})-u^\eps(r-,\xi_{r-}+z)]
  -
    [u^\eps(r-,\xi_{r-}-z)-u^\eps(r-,\xi_{r-})])\mu(dr,dz).
\end{align}
Note that
\begin{align*}
  & \sum_{s\leq t}  \int_{\{s\}}\lint_{B} \Big| [u^\eps(r-,\xi_{r-})-u^\eps(r-,\xi_{r-}+z)]
  -
    [u^\eps(r-,\xi_{r-}-z)-u^\eps(r-,\xi_{r-})]\Big|\mu(dr,dz)\\
&=\int_0^t\lint_{B}\Big|u^\eps(s-,\xi_{s-})-u^\eps(s-,\xi_{s-}+z)
    -
    \Big(u^\eps(s-,\xi_{s-}-z)-u^\eps(s-,\xi_{s-})\Big)\Big|\, \mu(ds,dz)\\
    &\leq
    \|u^\eps\|_2\int_0^t\lint_{B}|z|^2\mu(ds,dz),
\end{align*}
and
\begin{align*}
  & \sum_{s\leq t}  \int_{\{s\}}\int_{B^c} \Big| u^\eps(r-,\xi_{r-})\Big|+\Big|u^\eps(r-,\xi_{r-}+z)\Big|
  +
    \Big|u^\eps(r-,\xi_{r-}-z)\Big|+\Big|u^\eps(r-,\xi_{r-})\Big|\mu(dr,dz)\\
&=\int_0^t\int_{B^c}\Big|u^\eps(s-,\xi_{s-})\Big|+\Big|u^\eps(s-,\xi_{s-}+z)\Big|
    +
    \Big|u^\eps(s-,\xi_{s-}-z)\Big|+\Big|u^\eps(s-,\xi_{s-})\Big|\, \mu(ds,dz)\\
    &\leq
    4\|u^\eps\|_0\int_0^t\int_{B^c}1\mu(ds,dz),
\end{align*}
both of which are finite,  $\mathbb P$-a.s., because
\begin{align}
    \mathbb{E}\int_0^t\lint_{B}|z|^2\mu(ds,dz)
    = t\lint_{B}|z|^2\nu(dz)
    <\infty,
\end{align}
and
\begin{align}
    \mathbb{E}\int_0^t\int_{B^c}1\mu(ds,dz)
    = t\nu(B^c)
    <\infty.
\end{align}
Hence, the term \eqref{s33} is well defined and we have
 \begin{align} \label{b1}
     &\sum_{s\leq t}\Big(\Delta F(s,\oldL_{s})-\Delta F(s,\oldL_{s-})\Big)\\
    &= \int_0^t\int_{B^c}[u^\eps(s-,\xi_{s-})-u^\eps(s-,\xi_{s-}+z)]\,\mu(ds,dz)
     -\int_0^t\int_{B^c}{ [u^\eps(s-,\xi_{s-}-z)-u^\eps(s-,\xi_{s-})]}\, \mu(ds,dz)\\
    &\quad+
    \int_0^t\lint_{B}\{ [u^\eps(s-,\xi_{s-})-u^\eps(s-,\xi_{s-}+z)]
     {-}
    \big[u^\eps(s-,\xi_{s-}-z)-u^\eps(s-,\xi_{s-})\big]\}\mu(ds,dz).
\end{align}
      Applying the It\^o-Wentzell formula and then using \eqref{b4}, \eqref{b3}, \eqref{b2} and \eqref{b1}, we obtain
\begin{align}\label{eqn-IWFapplication}
 u^\eps(t,\xi_t)=& \int_0^tG^{\eps}_{s}(\xi_{s})\,ds+ \int_0^t b(\xi_{s}) \cdot Du^{\eps}(s,\xi_{s})\,ds
 \\&
  +
    \int_0^t\lint_{B}  D u^\eps(s-,\xi_{s-})\cdot z\, \tilde \prm(ds,dz)
    \\
    & +
    \int_0^t\int_{B^c}  D u^\eps(s-,\xi_{s-})\cdot z\, \prm(ds,dz)
 \\
 &+ \int_0^t\int_{B^c}\big(u^\eps(s-,\xi_{s-}+z)-u^\eps(s-,\xi_{s-})\big) \mu(ds,dz)\\
  &- \int_0^t\int_{B^c} D u^\eps(s-,\xi_{s-})\cdot z\mu(ds,dz)\\
    &+
    \int_0^t\lint_{B} \big( u^\eps(s-,\xi_{s-}+z)-u^\eps(s-,\xi_{s-})- D u^\eps(s-,\xi_{s-})\cdot z\big)\mu(ds,dz)\\
 & +\int_0^t \int_{B^c}\Bigl[u^{\eps}(s-,\xi_{s-}-z)-u^{\eps}(s-,\xi_{s-}) \Bigr]\;\prm(ds,dz)\\
 &+\int_0^t \lint_{B}\Bigl[u^{\eps}(s-,\xi_{s-}-z)-u^{\eps}(s-,\xi_{s-})\Bigr]\, \tilde\prm(ds,dz)
\\
 &+ \int_0^t\int_{B^c}{ [u^\eps(s-,\xi_{s-})-u^\eps(s-,\xi_{s-} +z)]}\, \mu(ds,dz)
 \\
 &{ -\int_0^t\int_{B^c}{ [u^\eps(s-,\xi_{s-}-z)-u^\eps(s-,\xi_{s-})]}\, \mu(ds,dz)}
 \\ &+\int_0^t \lint_{B}\Bigl[ [u^{\eps}(s-,\xi_{s-})-u^{\eps}(s-,\xi_{s-}+z)]
 \\
&\hspace{3truecm} -[u^{\eps}(s-,\xi_{s-}-z)-u^{\eps}(s-,\xi_{s-})]\Bigr]\,\prm(ds,dz).
\end{align}
 By combining the third term on the right hand side of \eqref{eqn-IWFapplication} with the ninth term, canceling the fourth term with the sixth term, the eighth term with the eleventh term, and the fifth term with the tenth term, we arrive at
  \begin{align}\label{eqn-IWFapplication-1}
 u^\eps(t,\xi_t)=& \int_0^tG^{\eps}_{s}(\xi_{s})\,ds+ \int_0^t b(\xi_{s}) \cdot Du^{\eps}(s,\xi_{s})\,ds
    \\
 &
 +{\int_0^t \lint_{B}\Bigl[u^{\eps}(s-,\xi_{s-}+z)-u^{\eps}(s-,\xi_{s-})-Du^{\eps}(s-,\xi_{s-})\cdot z\Bigr]\;\prm(ds,dz) }
 \\
 &+{ \int_0^t \lint_{B}\Bigl[u^{\eps}(s-,\xi_{s-}-z)-u^{\eps}(s-,\xi_{s-}) +  D u^\eps(s-,\xi_{s-})\cdot z\Bigr]\, \tilde\prm(ds,dz)}
  \\
 &+ \int_0^t \lint_{B}\Bigl[ u^{\eps}(s-,\xi_{s-})-u^{\eps}(s-,\xi_{s-}+z)
 \\
&\hspace{3truecm} -
[u^{\eps}(s-,\xi_{s-}-z)- u^{\eps}(s-,\xi_{s-})] \Big]\,\prm(ds,dz).
\end{align}
Now we use the expression of $G^{\eps}$ in \eqref{eqn-G^eps-21} to obtain
 \begin{align}\label{eqn-IWFapplication-2}
 u^\eps(t,\xi_t)=&  \int_0^t b(\xi_{s}) \cdot Du^{\eps}(s,\xi_{s})\,ds
   \\ &+\int_0^t\int_{\mathbb{R}^d}u(s,x)\,[b(x)\cdot D_x \bigl(\vartheta_\eps(\xi_s-x)\bigr) +\divv\, b(x) \vartheta_\eps(\xi_s-x)]\,dx\,ds\nonumber\\
 &+ { \int_0^t \lint_B\Big[ u^{\eps}(s,\xi_s-z)-u^{\eps}(s,\xi_s)+ Du^{\eps}(s,\xi_s) \cdot z  \Big]\nu(dz)\,ds}
   \\
 &
 +{\int_0^t \lint_{B}\Bigl[u^{\eps}(s-,\xi_{s-}+z)-u^{\eps}(s-,\xi_{s-})-Du^{\eps}(s-,\xi_{s-})\cdot z\Bigr]\;\prm(ds,dz) }
 \\
 &+{ \int_0^t \lint_{B}\Bigl[u^{\eps}(s-,\xi_{s-}-z)-u^{\eps}(s-,\xi_{s-}) +  D u^\eps(s-,\xi_{s-})\cdot z\Bigr]\, \tilde\prm(ds,dz)}
  \\
 &+ \int_0^t \lint_{B}\Bigl[ u^{\eps}(s-,\xi_{s-})-u^{\eps}(s-,\xi_{s-}+z)
[u^{\eps}(s-,\xi_{s-}-z)- u^{\eps}(s-,\xi_{s-})] \Big]\,\prm(ds,dz).
\end{align}
 Note that the last integral is meaningful because $u^{\eps} \in D([0,T];C^2(\R^d)).$
We find, using  $\tilde \mu = \mu - \nu$, that
 \begin{align}\label{eqn-IWFapplication-3}
 u^\eps(t,\xi_t)=&  \int_0^t b(\xi_{s}) \cdot Du^{\eps}(s,\xi_{s})\,ds
   \\ &+\int_0^t\int_{\mathbb{R}^d}u(s,x)\,[b(x)\cdot D_x \bigl(\vartheta_\eps(\xi_s-x)\bigr) +\divv\, b(x) \vartheta_\eps(\xi_s-x)]\,dx\, ds\nonumber
   \\
 &
 +{ \int_0^t \lint_{B}\Bigl[u^{\eps}(s-,\xi_{s-}+z)-u^{\eps}(s-,\xi_{s-})-Du^{\eps}(s-,\xi_{s-})\cdot z\Bigr]\;\prm(ds,dz) }
 \\
 &+ { \int_0^t \lint_{B}\Bigl[u^{\eps}(s-,\xi_{s-}-z)-u^{\eps}(s-,\xi_{s-}) +  D u^\eps(s-,\xi_{s-})\cdot z\Bigr] \, \prm(ds,dz)}
  \\
 &-  \int_0^t \lint_{B}\Bigl[u^{\eps}(s-,\xi_{s-}+z) - u^{\eps}(s-,\xi_{s-})+
 u^{\eps}(s-,\xi_{s-}-z) - u^{\eps}(s-,\xi_{s-}) \Big]\,\prm(ds,dz).
\end{align}
After performing some cancellations, we obtain
\begin{align}\label{eqn-IWFapplication-4}
 { u^\eps(t,\xi_t)} =    \int_0^t b(\xi_{s}) \cdot Du^{\eps}(s,\xi_{s})\,ds
   +\int_0^t\int_{\mathbb{R}^d}u(s,x)\,[b(x)\cdot D_x \bigl(\vartheta_\eps(\xi_s-x)\bigr) +\divv\, b(x) \vartheta_\eps(\xi_s-x)]\,dx\,ds.
\end{align}
This proves equality \eqref{eqn-u^eps-final}  and thus
the proof of Theorem \ref{thm-IW-applied} is complete.
\end{proof}


\section{No blow-up of $C^1$-class solutions}\label{sec-no blow up}

\begin{definition}\label{def-transport-C1} Assume $b\in L^1_{loc}(\mathbb{R}^d;\mathbb{R}^d)$ and $u_0 \in C_\mathrm{b}^1(\mathbb{R}^d)$. 
A measurable and essentially bounded function  (in the sense of \eqref{eq-esb0}) $u:
\Omega\times[0,\infty)\times\mathbb{R}^d \to \mathbb{R}
 $ is a $C^1$-solution to the problem \eqref{eqn-transport-Markus}  if it
satisfies the following four conditions: for any $T>0$,
\begin{trivlist}
\item[(i)] for almost every $\omega\in\Omega$, $u(\omega, t,\cdot)\in C^1(\mathbb{R}^d)$, 
for all $t\in[0,T]$; 
{\item[(ii)] for  almost every $\omega\in\Omega$ and every $R>0$, the gradient $Du(\omega,t,x)$ satisfies $$\sup_{t\in[0,T]}\|D u(\omega, t,\cdot)\|_{L^\infty(B(R))}<\infty.$$}
 This means that there exists a $\mathbb{P}$-full set $\Omega^\prime \subset \Omega$ such that for any $R>0$, there exists $M_R\in(0,\infty)$ such that 
 for any $t \in [0,T]$, $\omega \in \Omega^\prime,$
\begin{align*}
\|Du(\omega,t , x)\| \le M_R, \;\;  \forall x \in A^R_{t, \omega},
\end{align*}
where $A^R_{t, \omega}$ is a Borel set of $B(R)$ such that $\Leb( (A^R_{t, \omega})^c)=0$. Recall that $\|\cdot\|_{L^\infty(B(R))}$ denotes the essential supremum norm on $B(R)$, the closed ball of radius $R$ centered at $0$ in $\mathbb{R}^d$;
\item[(iii)] for every test function  $\theta\in C_c^{\infty}(\mathbb{R}^d)$, the process $\int_{\mathbb{R}^d}\theta(x)u(t,x)\,dx$, $t\geq 0$ is $\mathbb{F}$-adapted  and it has c\`{a}dl\`{a}g paths $\mathbb{P}$-a.s.;
\item[(iv)] for every test function  $\theta\in C_c^{\infty}(\mathbb{R}^d)$, the process $\int_{\mathbb{R}^d}\theta(x)u(t,x)\,dx$,  $t\geq 0$,  satisfies that, for every $t\geq 0$, $\mathbb{P}$-a.s.
\begin{align}\label{eqn-transport-Marcus-C1}
      \int_{\mathbb{R}^d}u(t,x)\,\theta(x)\,dx
    =&
    \int_{\mathbb{R}^d}u_{0}(x)\,\theta(x)\,dx
       -\int_0^t\Big(\int_{\mathbb{R}^d}b(x)\cdot D u(s,x)\theta(x)\,dx\Big) ds\nonumber\\
       &+\int_0^t\lint_{B} \Bigl(\int_{\mathbb{R}^d}u(s-,x)\,[\theta(x+z)-\theta(x)]\, dx\Bigr)\tilde{\prm}(ds,dz)\nonumber\\
       &+\int_0^t\int_{B^{\mathrm{c}}} \Bigl(\int_{\mathbb{R}^d}u(s-,x)\,[\theta(x+z)-\theta(x)]\, dx\Bigr) \prm(ds,dz)\\
       &+\int_0^t\lint_{B}\Bigl(\int_{\mathbb{R}^d} u(s,x)\,[\theta(x+z)-\theta(x)-\sum_{i=1}^dz_iD_i\theta(x)]\,dx\Bigr)\,\nu(dz)\, ds.\nonumber
\end{align}
The $\mathbb{P}$-a.s. means that our $\mathbb{P}$-full set depends on $t \in [0,\infty)$ and $\theta$.
\end{trivlist}

\end{definition}

\begin{theorem}\label{th-app-C1-reg}	
 {Assume Hypothesis \ref{hyp-nondeg1}
 if $\alpha\in [1,2)$ or assume  Hypothesis \ref{hyp-nondeg3}
 if $\alpha\in (0,1)$. Assume that $\beta\in(0,1)$ satisfies condition \eqref{eqn-beta+alpha half} and
 $b\in C_{\mathrm{b}}^{\beta}(\mathbb{R}^d,\mathbb{R}^d)$.} Then for every  $u_0 \in C_\mathrm{b}^1\left(\mathbb{R}^d\right)$,  there exists a unique classical $C^1$-solution for the transport equation \eqref{eqn-transport-Markus} of the form
 \[u(\omega,t,x)=u_0(\phi_t^{-1}(\omega)x), \;\; t\in [0,T], x \in\mathbb{R}^d,
\]
  with probability one, in the sense of Definition \eqref{def-transport-C1}.
\end{theorem}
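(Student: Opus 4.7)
My plan is to combine the representation formula with the regularity of the inverse flow to obtain existence, and to use an Itô--Wentzell argument along the characteristics for uniqueness.

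\textbf{Step 1 (Candidate solution and $C^1$ regularity).} I would set $u(t,x,\omega) := u_0(\phi_t^{-1}(x,\omega))$. By Theorem~\ref{ww1} there is a $\mathbb{P}$-full event $\Omega''$ (independent of $t$ and $x$) on which, for every $t \in [0,T]$, the map $\phi_t^{-1}$ is a surjective $C^{1+\delta}$-diffeomorphism with $D\phi_t^{-1}(\cdot)$ locally $\delta$-Hölder continuous uniformly in $t$. Since $u_0 \in C^1_{\mathrm{b}}(\mathbb{R}^d)$, on $\Omega''$ the chain rule gives $Du(t,x) = Du_0(\phi_t^{-1}(x))\,D\phi_t^{-1}(x) \in C^0(\mathbb{R}^d,\mathbb{R}^d)$, so (i) of Definition~\ref{def-transport-C1} holds. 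For (ii), fix $R>0$ and use Corollary~\ref{cor-homeomorphism} to bound $\phi_t^{-1}(B(R))$ in a ball depending only on $\omega, T, R$; the local Hölder estimate for $D\phi_t^{-1}$ together with a pathwise supremum bound (finite because $s \mapsto D\phi_s^{-1}(y)$ is c\`adl\`ag on $\Omega''$) yields $\sup_{t \in [0,T]}\|D\phi_t^{-1}\|_{L^\infty(B(R))} < \infty$, whence $\sup_{t \in [0,T]}\|Du(t,\cdot)\|_{L^\infty(B(R))} \le \|Du_0\|_0 \cdot \sup_{t\in[0,T]}\|D\phi_t^{-1}\|_{L^\infty(B(R))} < \infty$. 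Property (iii) follows because $t \mapsto \phi_t^{-1}(x)$ is c\`adl\`ag on $\Omega''$, $u_0$ is bounded continuous, and a dominated convergence argument (as in Remark~\ref{rem-def-transport-weak-2}) transfers this to $t \mapsto \int \theta(x) u(t,x)\,dx$.

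\textbf{Step 2 (Verification of the weak form).} The key step is establishing~\eqref{eqn-transport-Marcus-C1}. I would proceed by mollification: let $b^\eps = b * \vartheta_\eps \in C_{\mathrm{b}}^\infty$ and let $\phi^\eps$ be the associated smooth flow. For the smooth drift $b^\eps$, a direct application of a classical Itô formula (as in Lemma~\ref{lem-Ito formula Tanaka trick} or \cite{Kunita_2004}) to $u^\eps(t,x) := u_0((\phi^\eps_t)^{-1}(x))$ shows that $u^\eps$ is a classical $C^1$-solution, i.e., satisfies~\eqref{eqn-transport-Marcus-C1} with $b$ replaced by $b^\eps$. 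Now I would pass to the limit $\eps \downarrow 0$ in the weak form along a suitable subsequence. The drift term $\int_0^t \int b^\eps \cdot Du^\eps(s,x)\,\theta(x)\,dx\,ds$ converges to $\int_0^t \int b \cdot Du(s,x)\,\theta(x)\,dx\,ds$ using Lemma~\ref{lem-b^eps-to-b}, the uniform bound $\sup_\eps \|Du^\eps\|_{L^\infty([0,T]\times B(R))} < \infty$, and the convergence $D(\phi^\eps_t)^{-1} \to D\phi_t^{-1}$ (pointwise in $t,x$ along a subsequence, with appropriate uniformity, via Theorem~\ref{thm-stability} combined with Remark~\ref{rem Sec 5 000}). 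The three Poisson integral terms and the small-jump compensator are handled exactly as in the proofs of Lemmas~\ref{lem-A5}--\ref{lem-A7}, where uniform integrability and dominated convergence combined with the uniform boundedness of $u^\eps$ and the sharp c\`adl\`ag properties of $(\phi^\eps)^{-1}$ suffice.

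\textbf{Step 3 (Uniqueness).} Suppose $\tilde u$ is another $C^1$-solution with the same $u_0$, and set $w := u - \tilde u$, a $C^1$-solution with zero initial data. The plan is to show $w(t, \phi_t(x)) = 0$ for all $t,x$. For a $C^1$-solution $w$ whose spatial gradient $Dw$ is locally bounded uniformly in time, one can verify the assumptions of the Itô--Wentzell formula (Theorem~\ref{thm-Ito+Wentzel-LM}) directly --- the point-evaluation $F(t,y) := w(t,y)$ satisfies Hypothesis~\ref{ftx} with $f_s(y) = -b(y) \cdot Dw(s,y)$ and $h_s(y,z) = w(s-, y+z) - w(s-,y)$, derived from the weak equation~\eqref{eqn-transport-Marcus-C1} by taking $\theta$ to run through a delta-approximating sequence. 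Applying the formula to $w(t,\phi_t(x))$ and exploiting the cancellation between the transport term $b(\phi_s) \cdot Dw(s,\phi_s)\,ds$ and the $d\phi_s$ term, together with the matching of the small/large jump terms (entirely analogous to the computation in Appendix~\ref{app}, culminating in \eqref{eqn-IWFapplication-4}), yields $w(t,\phi_t(x)) = w(0,x) = 0$. Since $\phi_t$ is a surjective homeomorphism, $w \equiv 0$.

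\textbf{Main obstacle.} The most delicate point will be the rigorous application of the Itô--Wentzell formula in Step~3: verifying the path-regularity Hypothesis (1a) for $F(t,y) = w(t,y)$ requires upgrading the pathwise $C^1$ regularity in $x$ (which comes from condition (i) of Definition~\ref{def-transport-C1}) to a joint $D([0,T];C^2(\mathbb{R}^d))$ statement, together with the summability condition \eqref{eqn-jumps do not overlap}. This mirrors the obstacle encountered in Lemma~\ref{lem-u^eps satisfies assumptions ITWF} and will require analogous mollification-in-space arguments together with the uniform-in-time boundedness of $Dw$ on compact spatial sets given by (ii).
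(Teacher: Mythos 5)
Your Steps 1--2 are essentially the paper's own route (mollify $b$, use stability of the flows and of their spatial derivatives, pass to the limit in the weak identity), the only real difference being that you work in inverse-flow coordinates with the term $\int b^\eps\cdot Du^\eps\,\theta\,dx$, whereas the paper works with the forward flow, writes $\int u_0(y)\theta(\phi^\eps_t(y))J^\eps_t(y)\,dy$ and integrates by parts so the derivative falls on $u_0$; both versions correctly avoid any use of $\divv b$, which is not assumed locally integrable here. One imprecision: you cannot ``directly apply a classical It\^o formula'' to $u_0((\phi^\eps_t)^{-1}(x))$, since by \eqref{de} the process $t\mapsto(\phi^{\eps}_{0,t})^{-1}(x)$ has drift integrand $b^\eps((\phi^{\eps}_{r,t})^{-1}(x))$ depending on the terminal time $t$ and so is not an It\^o process in the usual sense; the clean way (the paper's way) is to apply It\^o to $\theta(\phi^\eps_t(y))J^\eps_t(y)$ for the forward flow (Lemma \ref{lem-A11}) and then change variables.

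The genuine gap is in Step 3. Theorem \ref{thm-Ito+Wentzel-LM} requires $F\in D([0,T];C^2(\mathbb{R}^d))$, while a $C^1$-solution $w$ is only once differentiable in $x$, so you cannot take $F(t,y)=w(t,y)$; moreover, the pointwise-in-$y$ SDE for $w(t,y)$ that you want to extract ``by a delta-approximating sequence'' from \eqref{eqn-transport-Marcus-C1} is exactly what is not available --- the weak form only controls spatial averages, and upgrading it to a pointwise equation is essentially as hard as the uniqueness statement itself. You acknowledge this and propose to mollify $w$ in space, but then the exact cancellation you invoke between $b(\phi_s)\cdot Dw(s,\phi_s)\,ds$ and the drift of $\phi$ is lost: applying Theorem \ref{thm-IW-applied} to $w^\eps=\vartheta_\eps\ast w$ leaves precisely the commutator remainder $w^\eps(t,\phi_t(x))=-\int_0^t\mathcal{R}_\eps[b,w_s](\phi_s(x))\,ds$, and the entire content of the uniqueness proof is to show that this remainder vanishes as $\eps\to0$; your proposal never states or proves this. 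In the $C^1$ setting the needed estimate is in fact elementary and does not require Theorems \ref{Reg-est-Jaco}--\ref{Reg-est-Jaco-2} or Propositions \ref{comu-prop-1}--\ref{commu-pro}: writing
\[
\int_{\mathbb{R}^d} \mathcal{R}_\eps[b,w_s](y)\,\rho(\phi_s^{-1}(y))\,J\phi_s^{-1}(y)\,dy
=\int_{\mathbb{R}^d}\Big(\int_{\mathbb{R}^d} \vartheta_\eps(y-z)\,[b(z)-b(y)]\,\rho(\phi_s^{-1}(y))\,J\phi_s^{-1}(y)\,dy\Big)\cdot Dw_s(z)\,dz,
\]
one bounds $|b(z)-b(y)|\le[b]_\beta(2\eps)^\beta$ on the support of $\vartheta_\eps(y-z)$ and uses condition (ii) of Definition \ref{def-transport-C1} (local boundedness of $Dw_s$ uniformly in $s$) together with the pathwise bounds on $\phi_s^{-1}$ and $J\phi_s^{-1}$ to get a bound of order $\eps^\beta$, hence the limit is zero. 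This commutator estimate is the crux of the uniqueness argument, and without it your Step 3 does not go through.
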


\begin{proof}
Step 1 (Existence). We will prove that $u_0(\phi_t^{-1}(\omega)x),  t\geq 0, x \in\mathbb{R}^d$ is a $C^1$-solution to the problem \eqref{eqn-transport-Markus}, that is, $u_0(\phi_t^{-1}(\omega)x),  t\geq 0, x \in\mathbb{R}^d$ satisfies all conditions in Definition \eqref{def-transport-C1}.

Recall from Theorem \ref{ww1}  both $\phi_t$ and its inverse $\phi^{-1}_t$ are diffeomorphisms of $C^1$ class, and for any $R>0$, $\sup_{t\in[0,T]}\sup_{x\in B(R)}\|D\phi^{-1}_t(x)\|<\infty$ $\mathbb{P}$-a.s.. Since $u_0 \in C_\mathrm{b}^1(\mathbb{R}^d)$,  the composition $u_0\circ \phi^{-1}_t \in C^1(\mathbb{R}^d)$ and for any $x\in\mathbb{R}^d$ and $t\geq0$, $D \Big(u_0(\phi^{-1}_t(x))\Big)=Du_0(\phi^{-1}_t(x))D\phi^{-1}_t(x)$. Hence Conditions (i) and (ii) in Definition \ref{def-transport-C1} is satisfied. For $\theta\in C_c^{\infty}(\mathbb{R}^d)$,  using \eqref{eqn-change of variables} and the properties of the processes  $\phi_{t}(y)$ and $J_{t}(y)$  proved in Theorems \ref{d32} and \ref{ww1}, it follows straightforwardly that Condition (iii) in Definition \ref{def-transport-C1} is satisfied. 

Now let us verify Condition (iv). Recall $b^\eps$ defined in \eqref{def-b^eps} and the set $\Upsilon$  introduced prior to  Lemma \ref{lem-A11}. 
Then $b^\eps$, $\eps \in \Upsilon$, is a $C_{\mathrm{b}}^\infty$ approximation  of the vector field $b$. Let  $\phi^{\eps}_{s,t}$ be the stochastic flow associated with equation  \eqref{eqn-SDE}  
 with the drift  $b$ replaced by $b^{\eps}$. Then following the arguments in the proof of Theorem \ref{thm-transport equation}, for every $\omega \in \Omega^{(1)}$ with $\mathbb{P}(\Omega^{(1)})=1$, and all $t\geq 0$, $y\in\mathbb{R}^d$ and $\eps \in \Upsilon$, we have, see \eqref{eq 20251105 V1},
     \begin{align*}
    &\int_{\R^d} u_0(y)\,\theta (\phi^{\eps}_t(y))J_t^{\eps}(y)\,dy\\
&=\int_{\R^d}u_0(y)\,\theta(y)\,dy 
    +\int_0^t\Bigl(\int_{\R^d}u_0(y)\big[b^{\eps}(\phi^{\eps}_s(y))\cdot D \theta(\phi^{\eps}_s(y))+\theta(\phi_{s}^{\eps}(y)) \divv b^{\eps}(\phi^{\eps}_s(y))]J_s^{\eps}(y)\,dy \Bigr) ds\\
     &\quad +\int_0^t\lint_{B}\int_{\R^d}u_0(y)\big[  \theta(\phi_{s}^{\eps}(y)+z)-\theta(\phi_{s}^{\eps}(y)) -\sum_{i=1}^d z_iD_i\theta(\phi_{s}^{\eps}(y))\big] J_{s}^{\eps}(y)dy\; \nu(dz)\, ds\\
         &\quad +\int_0^t\int_{B^{c}}\int_{\R^d}u_0(y)\big[  \theta(\phi_{s-}^{\eps}(y)+z)-\theta(\phi_{s-}^{\eps}(y)) \big] J_{s-}^{\eps}(y)dy\; \prm(ds,dz)\\
    &\quad +\int_0^t\lint_{B}\int_{\R^d}u_0(y)  \big[ \theta(\phi_{s-}^{\eps}(y)+z)-\theta(\phi_{s-}^{\eps}(y)) \big ]J_{s-}^{\eps}(y)dy\; \tilde{\prm}(ds,dz).
     \end{align*}
Since $u_0 \in C_{\mathrm{b}}^1(\mathbb{R}^d)$, by changing variable $y=(\phi_{t}^{\eps})^{-1}(x)$ or $x=\phi_{t}^{\eps}(y)$,  and  integration by parts, we infer
\begin{align*}
&\int_0^t\Bigl(\int_{\R^d}u_0(y)\big[b^{\eps}(\phi^{\eps}_s(y))\cdot D \theta(\phi^{\eps}_s(y))+\theta(\phi_{s}^{\eps}(y)) \divv b^{\eps}(\phi^{\eps}_s(y))\big]J_s^{\eps}(y)\,dy \Bigr) ds\\
&=\int_0^t\Bigl(\int_{\R^d}u_0(y) b^{\eps}(\phi^{\eps}_s(y))\cdot D \theta(\phi^{\eps}_s(y))J_s^{\eps}(y)\,dy \Bigr) ds +\int_0^t\Bigl(\int_{\R^d}u_0((\phi_{s}^{\eps})^{-1}(x))\theta(x) \divv b^{\eps}(x)\,dx \Bigr) ds\\
&=\int_0^t\Bigl(\int_{\R^d}u_0(y) b^{\eps}(\phi^{\eps}_s(y))\cdot D \theta(\phi^{\eps}_s(y))J_s^{\eps}(y)\,dy \Bigr) ds -\int_0^t\Bigl(\int_{\R^d}u_0((\phi_{s}^{\eps})^{-1}(x)) D\theta(x) \cdot b^{\eps}(x)\,dx \Bigr) ds\\
&\quad -\int_0^t\Bigl(\int_{\R^d} D u_0((\phi_{s}^{\eps})^{-1}(x))D(\phi_{s}^{\eps})^{-1}(x) \cdot b^{\eps}(x)\theta(x) \,dx \Bigr) ds\\
&=-\int_0^t\Bigl(\int_{\R^d} D u_0(y) \cdot b^{\eps}( \phi_{s}^{\eps}(y))\theta(\phi_{s}^{\eps}(y)) J_s^{\eps}(y)\,dy \Bigr) ds.
\end{align*}
Applying similar arguments as in \eqref{lem-A3}, we have 
\begin{equation}\label{eqn-App-e1}
\begin{aligned}
\sup_{t\in [0,T]}
&\Bigl\vert \int_0^t\Bigl(\int_{\R^d} D u_0(y)\cdot b^{\eps}(\phi^{\eps}_s(y))  \theta(\phi^{\eps}_s(y))J_s^{\eps}(y)\,dy \Bigr)\, ds
\\
&\hspace{1truecm}-
\int_0^t\Bigl(\int_{\R^d} D u_0(y)\cdot b(\phi_s(y)) \theta(\phi_s(y))J_s(y)\,dy \Bigr)\, ds \Bigr\vert \rightarrow0,
\end{aligned}
  \end{equation}
  as $\varepsilon\rightarrow 0$, $\mathbb{P}$-a.s. 
By combining this with Lemmata  \ref{lem-A2}, \ref{lem-A5}, \ref{lem-A6} and \ref{lem-A7}, taking the limit, and then by changing variable $y=(\phi_{t})^{-1}(x)$ or $x=\phi_{t}(y)$, we derive the identity \eqref{eqn-transport-Marcus-C1}.\\
\indent Step 2 (Uniqueness). Assume that $u_0=0$. Following similar arguments as in Step 1 and 2 in the proof of Theorem \ref{thm-uniqueness-1}, see \eqref{sec-6-conv-u-R-1}, for $\rho\in C_c^{\infty}(\mathbb{R}^d)$, we have 
 $\mathbb{P}$-a.s.
\begin{align}\label{sec-6-conv-u-R-1-1}
\int_{\mathbb{R}^d} u(t, \phi_{t}(x)) \rho(x) d x&=-\lim_{\eps\to  0}\int_{0}^{t} \Big( \int_{\mathbb{R}^d}  \mathcal{R}_{\eps}\bigl[b, u_{s}\bigr](\phi_{s}(x))\Big) \rho(x) dx\Big)d s\\
&=-\lim _{\varepsilon \rightarrow 0} \int_0^t\Big(\int_{\mathbb{R}^d} \mathcal{R}_{\eps}[ b,u_s](y) \rho\left(\phi_s^{-1}(y)\right) J \phi_s^{-1}(y) d y\Big) d s.
\end{align}

We will prove that the above limit equals 0. First, we provide some preliminary results.

Similar as in the proof of Theorem \ref{thm-uniqueness-1},  assume now $\rho\in C_r^{\infty}(\mathbb{R}^d)$ for some $r>0$.  By (ii) of Theorem \ref{d32}, 
we have
\begin{equation}
R=\sup_{s\in[0,T],\,x\in B(r)}|\phi_s(x)|\leq r+T\|b\|_0+\sup_{s\in[0,T]}|L_s|<\infty,\quad \mathbb{P}\text{-a.s.}
\end{equation}
 In view of Theorem \ref{ww1}, Theorem \ref{thm-stability} and  Remark \ref{rem Sec 5 000}, we get
\begin{align}
&\sup _{s \in[0, T],\, x \in B(R)}\left|J \phi_{s}^{-1}(x)\right|<\infty, \quad \sup _{s \in[0, T],\, x \in B(R)}\Vert \phi_s^{-1}(x)\Vert<\infty,\quad\mathbb{P}\text{-a.s.}
\end{align}
Put  $\theta_s(x)=\rho\left(\phi_s^{-1}(x)\right) J \phi_s^{-1}(x)$. 
It follows that
\begin{align*}
   \|\theta_{\cdot}(\omega,\cdot)\|_{L^{\infty}([0,T]\times B(R))}<\infty,\quad \text{for }\mathbb{P}\text{-a.e.} \;\omega\in\Omega.
\end{align*}
Observe that 
\begin{align*}
\int_0^t\left(\int_{\mathbb{R}^d} \mathcal{R}_{\eps}\left[ b,u_s\right](y) \theta_s(y) d y\right) d s
=\int_0^t\left(\int_{\mathbb{R}^d} \Big( \int_{\mathbb{R}^d} \vartheta_{\varepsilon}(y-z)\left[b(z)-b(y)\right] \theta_s( y) d y\Big)\cdot D u_s( z) dz\right) d s.
\end{align*}
 Recall the definition of $\vartheta_\eps$ in \eqref{def-vartheta-eps} and note that $\rho\left(\phi_s^{-1}(\cdot)\right)\in C_R(\mathbb{R}^d)$ for any $s\in[0,T]$ $\mathbb{P}$-a.s.. Then, $\mathbb{P}$-a.s.,
 \begin{align*}
    &\int_0^T\left(\int_{\mathbb{R}^d} \Big( \int_{\mathbb{R}^d} |\vartheta_{\varepsilon}(y-z)|\|b(z)-b(y)\| |\theta_s( y)| d y\Big)\| D u_s( z) \|dz\right) d s\\
    &\leq 
    \|\theta_{\cdot}(\omega,\cdot)\|_{L^{\infty}([0,T]\times B(R))}\int_0^T\left(\int_{\mathbb{R}^d} \Big( \int_{B(R)} |\vartheta_{\varepsilon}(y-z)|\|b(z)-b(y)\| d y\Big)\| D u_s( z) \|dz\right) d s\\
   & = 
    \|\theta_{\cdot}(\omega,\cdot)\|_{L^{\infty}([0,T]\times B(R))}\int_0^T\left(\int_{B(R+2\eps)} \Big( \int_{B(R)} |\vartheta_{\varepsilon}(y-z)|\|b(z)-b(y)\| d y\Big)\| D u_s( z) \|dz\right) d s\\
    &\leq 
    T\|\theta_{\cdot}(\omega,\cdot)\|_{L^{\infty}([0,T]\times B(R))}
    \sup_{s\in[0,T]}\| D u_s( z) \|_{L^\infty(B(R+2\eps))}
    {[b]_\beta}
    \int_{B(R+2\eps)} \Big( \int_{B(R)} |\vartheta_{\varepsilon}(y-z)||y-z|^\beta d y\Big)dz\\
    &= 
    T\|\theta_{\cdot}(\omega,\cdot)\|_{L^{\infty}([0,T]\times B(R))}
    \sup_{s\in[0,T]}\!\!\| D u_s( z) \|_{L^\infty(B(R+2\eps))}
    {[b]_\beta}\!\!
    \int_{B(R+2\eps)}\!\! \Big( \int_{B(R)\cap B(z,2\eps)}\!\!\! |\vartheta_{\varepsilon}(y-z)||y-z|^\beta d y\Big)dz\\
   & \leq
    (2\eps)^\beta T\|\theta_{\cdot}(\omega,\cdot)\|_{L^{\infty}([0,T]\times B(R))}
    \sup_{s\in[0,T]}\| D u_s( z) \|_{L^\infty(B(R+2\eps))}
    {[b]_\beta}
    \int_{B(R+2\eps)} 1dz\rightarrow0
 \end{align*}
 as $\eps\rightarrow0$.

Hence we obtain
\begin{align}\label{con-201}
\lim _{\varepsilon \rightarrow 0} \int_0^t\Big(\int_{\mathbb{R}^d} \mathcal{R}_{\eps}[ b,u_s](y) \rho\left(\phi_s^{-1}(y)\right) J \phi_s^{-1}(y) d y\Big) d s=0,
\end{align}
which proves $\int_{\mathbb{R}^d} u(t, \phi_{t}(x)) \rho(x) d x=0$. The proof is now complete.
\end{proof}

\section{The perturbative equation} \label{perturb}

 The next result has been proved in Appendix B of  \cite{FGP_2010-Inventiones}
 in the case of the transport equation driven by a Brownian motion. 
 The proof in our jump case is more delicate. We believe that the result is of independent interest. It shows that the definition of weak$^\ast$-$\mathrm{L}^{\infty}$-solution  can be formulated  equivalently without using the Marcus integration form. This result  is in the spirit of rough path theory (for recent developments of rough path theory for SDEs driven by  L\'evy processes see \cite{KrempPerk} and the references therein).

\begin{theorem} \label{pert}
Let $b \in L_{\mathrm{loc}}^1(\mathbb{R}^d ; \mathbb{R}^d), \operatorname{div} b \in L_{\mathrm{loc}}^1(\mathbb{R}^d)$ and $u_0 \in L^{\infty}(\mathbb{R}^d)$. A  measurable process $u \in L^{\infty}([0,T] \times\mathbb{R}^d\times\Omega)$ is a  weak$^\ast$-$\mathrm{L}^{\infty}$-solution of the SPDE \eqref{eqn-transport-Markus} (in the sense of Definition \ref{def-transport-weak-2}) if and only if, 
{there exists $M>0 $ such that 
\begin{align}
    \sup_{t\in[0,T]}\| u(t, \cdot,\omega) \|_{L^\infty(\mathbb{R}^d)} \leq M,\quad\mathbb{P}\text{-a.s.},
\end{align}
}
 for every $\theta \in C_c^{\infty}(\mathbb{R}^d)$, $u_t(\theta):=\int_{\mathbb{R}^d} \theta(x) u(t, x) d x$ is $\mathbb{F}$-adapted and it has  c\`adl\`ag paths $\mathbb{P}$-a.s.  and the perturbative equation
\begin{align}
u_t(\theta)= &\int_{\mathbb{R}^d} \theta(x+L_t) u_0( x) d x\nonumber \\
& +\int_0^t  \int_{\mathbb{R}^d}\Big[b(x) \cdot D \theta(x+L_t-L_r)+\operatorname{div} b(x) \theta(x+L_t-L_r)\Big] u(r,x) d x\, d r\label{per-equ-app}
\end{align}
holds almost surely in $\omega \in \Omega$, for every $t \in[0, T]$.
\end{theorem}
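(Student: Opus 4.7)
The plan is to establish the two implications via a common mollification argument combined with the It\^o--Wentzell formula from Appendix \ref{sec-IWF}. Throughout, set $u^\eps(t,y):=(\vartheta_\eps\ast u(t,\cdot))(y)$ with $\vartheta_\eps$ as in \eqref{def-vartheta-eps}. Recall from Appendix \ref{app} (Lemma \ref{lem-u^eps satisfies assumptions ITWF}) that under Definition \ref{def-transport-weak-2}, $u^\eps$ is a random field satisfying \eqref{eqn-u^eps-2}, with drift $G^\eps$ in \eqref{eqn-G^eps-21} and jump coefficient $h^\eps_s(y,z)=u^\eps(s-,y-z)-u^\eps(s-,y)$, and fulfilling all hypotheses of Theorem \ref{thm-Ito+Wentzel-LM}.

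\emph{The implication $(\Rightarrow)$.} Starting from a weak$^\ast$-$L^\infty$-solution, I would apply the It\^o--Wentzell formula to $u^\eps$ composed with the pure-noise semimartingale $\xi_t:=x+L_t$ (the stochastic characteristic of \eqref{eqn-SDE-0} with $b\equiv 0$). This is the computation of Theorem \ref{thm-IW-applied} with the finite variation part $A_t\equiv 0$, so the chain of cancellations between \eqref{eqn-IWFapplication} and \eqref{eqn-IWFapplication-4} becomes shorter and yields
\[
u^\eps(t,x+L_t)=u_0^\eps(x)+\int_0^t\!\int_{\mathbb{R}^d}u(s,y)\bigl[b(y)\cdot D_y\vartheta_\eps(x+L_s-y)+\operatorname{div} b(y)\,\vartheta_\eps(x+L_s-y)\bigr]dy\,ds.
\]
Multiplying by $\theta(x+L_t)$ and integrating over $x\in\mathbb{R}^d$, the change of variables $z=x+L_t$ on the left gives $\int u^\eps(t,z)\theta(z)\,dz=u_t(\vartheta_\eps\ast\theta)$; on the right, Fubini and the change of variable $w=x+L_s-y$ combined with integration by parts in $x$ (using $D_y\vartheta_\eps(x+L_s-y)=-D_x\vartheta_\eps(x+L_s-y)$) produces the mollified perturbative integrand
\[
\int_0^t\!\int u(s,y)\bigl[b(y)\cdot(\vartheta_\eps\ast D\theta)(y+L_t-L_s)+\operatorname{div} b(y)\,(\vartheta_\eps\ast\theta)(y+L_t-L_s)\bigr]dy\,ds.
\]
Passing $\eps\todown 0$, using the essential boundedness \eqref{eq-esb2} of $u$ together with the local integrability of $b$ and $\operatorname{div} b$ and the dominated convergence theorem, yields \eqref{per-equ-app} for each fixed $t$; since both sides are c\`adl\`ag in $t$, the identity then holds simultaneously for all $t$ on a common full event (depending on $\theta$) by the standard argument on a countable dense set of times.

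\emph{The implication $(\Leftarrow)$.} Given \eqref{per-equ-app}, I would first specialize $\theta=\vartheta_\eps(\cdot-y_0)$ for each deterministic $y_0\in\mathbb{R}^d$. Using the symmetry of $\vartheta_\eps$, this yields the pathwise representation
\[
u^\eps(t,y_0)=u_0^\eps(y_0-L_t)+\int_0^t H^\eps_s(y_0,L_t)\,ds,
\]
with $H^\eps_s(y_0,w):=\int u(s,y)\bigl[b(y)\cdot D\vartheta_\eps(y+w-L_s-y_0)+\operatorname{div} b(y)\vartheta_\eps(y+w-L_s-y_0)\bigr]dy$. I would then apply the It\^o formula for pure-jump L\'evy processes to each piece: $u_0^\eps(y_0-L_t)$ is a smooth function of $L_t$, so its jump expansion is immediate; the term $\int_0^t H^\eps_s(y_0,L_t)\,ds$ requires the stochastic Fubini theorem (Corollary 4.13 of \cite{Leahy+Mik_2016}) to commute the $ds$ integral with the It\^o expansion of $H^\eps_s(y_0,L_t)$ in~$t$. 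Grouping terms, the jump contributions assemble exactly into $\int\int h^\eps_s(y_0,z)\tilde\mu+\int\int h^\eps_s(y_0,z)\mu+\int\lint_B[\cdots]\nu(dz)\,ds$, while the drift assembles into $G^\eps_s(y_0)$ from \eqref{eqn-G^eps-21}. This recovers precisely the $y_0$-wise SDE \eqref{eqn-u^eps-2} for $u^\eps$, and integrating against $\theta\in C_c^\infty(\mathbb{R}^d)$ and letting $\eps\todown 0$ reproduces \eqref{eqn-transport-Marcus} from Definition \ref{def-transport-weak-2}.

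\emph{Main obstacle.} The delicate step is the second direction: the quantity $\int_0^t H^\eps_s(y_0,L_t)\,ds$ involves $L_t$ both as an upper limit of integration and inside the integrand, so it is neither a pure It\^o integral nor a smooth function of $L_t$, and the It\^o expansion in $t$ therefore requires a careful application of stochastic Fubini together with the verification that all integral exchanges are justified by the integrability guaranteed by the mollifier (compact support of $\vartheta_\eps$, boundedness of $u$, and the Blumenthal--Getoor-type integrability of $\nu$). The passage $\eps\todown 0$ then demands $\eps$-uniform bounds on the resulting jump integrals, which follow from the Burkholder inequality \eqref{eqn-Burkholder inequality} and the compact support of $\vartheta_\eps\ast\theta$ and $\vartheta_\eps\ast D\theta$; the two exceptional sets, one depending on $\theta$ (in \eqref{per-equ-app}) and one depending on $y_0$ (in the intermediate formula for $u^\eps(t,y_0)$), are reconciled by the standard continuity-plus-countable-dense argument already used in Lemma \ref{lem-transport-weak-2-(i)}.
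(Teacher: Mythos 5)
Your proposal is correct in outline, but it follows a genuinely different route from the paper, which avoids mollification altogether. The paper proves both implications by applying the It\^o--Wentzell formula (Theorem \ref{thm-Ito+Wentzel-LM}) to random fields built directly from the test function $\theta$: for ``perturbative $\Rightarrow$ Definition \ref{def-transport-weak-2}'' it takes $F(t,y):=\int\theta(x+y)u_0(x)\,dx+\int_0^t\!\int[b(x)\cdot D\theta(x+y-L_r)+\operatorname{div}b(x)\theta(x+y-L_r)]u(r,x)\,dx\,dr$ (a field with no jumps in $t$) and composes with $\xi_t=L_t$, noting $u_t(\theta)=F(t,L_t)$; for the converse it takes $F(t,y):=u_t(\theta(\cdot+y))$, which by Definition \ref{def-transport-weak-2} applied to the translated test function $\theta_y$ satisfies a jump SDE for each $y$, composes with $\xi_t=y-L_t$, and after the cancellations sets $y=L_t$. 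No $\eps$-limits, stochastic Fubini exchanges in the spatial variable, or $\eps$-uniform Burkholder bounds are needed. Your route instead smooths $u$ and reuses the Appendix \ref{app} machinery: in one direction you redo the computation of Theorem \ref{thm-IW-applied} with the drift-free characteristic $x+L_t$ (legitimate, since the verification in Lemma \ref{lem-u^eps satisfies assumptions ITWF} only uses $\Delta\xi_t=\Delta L_t$ and properties of $u^\eps$, though you should state this explicitly), integrate against $\theta(x+L_t)$ and let $\eps\todown0$; in the other you specialize $\theta$ to mollifier translates, recover the $y_0$-wise SDE \eqref{eqn-transport-Marcus-2-1}, integrate in $y_0$ and let $\eps\todown0$. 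This buys you that the regularity and jump-overlap hypotheses for the composed field are already checked, at the price of two extra limiting procedures (a spatial integration requiring stochastic Fubini, and the $\eps$-limit of the jump integrals) plus a pointwise-in-$x$ (resp.\ $y_0$) identity that must first be upgraded to a single full event by the continuity argument you flag.

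Two refinements would tighten the reverse direction. First, your treatment of $\int_0^t H^\eps_s(y_0,L_t)\,ds$ via ``It\^o formula plus stochastic Fubini'' is essentially a hand re-derivation of Theorem \ref{thm-Ito+Wentzel-LM} in the special case $h\equiv 0$, $F$ continuous in $t$; it is cleaner to set $\Phi(t,w):=u_0^\eps(y_0-w)+\int_0^t H^\eps_s(y_0,w)\,ds$, observe $u^\eps(t,y_0)=\Phi(t,L_t)$, and invoke the theorem directly (and, once you do that, you may as well run it with $\theta$ itself instead of $\vartheta_\eps(\cdot-y_0)$, which collapses your argument to the paper's and removes the $\eps\todown0$ passage entirely). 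Second, when identifying the jump increments $\Phi(s,L_{s-}+z)-\Phi(s,L_{s-})$ with $u^\eps(s-,y_0-z)-u^\eps(s-,y_0)$ you use the perturbative identity at the shifted centers $y_0-z$ for all $z$ simultaneously, so the ``continuity-plus-countable-dense'' upgrade of the exceptional set over the center parameter is not optional bookkeeping but an essential step; make sure it is carried out before the jump terms are reassembled.
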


\begin{remark} \label{v33} Arguing as in Lemma \ref{lem-transport-weak-2-(i)}  one can show that   there exists a $\mathbb{P}$-full set $\Omega^\prime \subset \Omega$ such that for every $\omega \in \Omega^\prime$,
 for every 
 $\theta\in L^1(\mathbb{R}^d)$, 
 {the solution of the perturbative equation \eqref{per-equ-app} verifies:}
 \begin{equation}
\label{eqn-cadlag-001-1}
u_t(\theta)(\omega): [0,T]  \ni t \mapsto  \int_{\mathbb{R}^d}\theta(x)u(t,x,\omega)\,dx 
\end{equation}
is c\`{a}dl\`{a}g and the corresponding process is adapted.

 It follows, using also the dominated convergence theorem, that for every $\omega \in \Omega^\prime$,
 for any  $\theta\in C_c^{\infty}(\mathbb{R}^d)$, the mapping:
$$
 (r,y) \mapsto
 \int_{\mathbb{R}^d} \theta(x+ y) u_0( x) d x + \int_{\mathbb{R}^d}\Big[b(x) \cdot D \theta(x+y)+\operatorname{div} b(x) \theta(x+y)\Big] u(r,x) d x
$$
is c\`{a}dl\`{a}g in $r \in [0,T]$ for any fixed $y \in \mathbb{R}^d$ and continuous in $y \in \mathbb{R}^d$ for any fixed $r \in [0,T]$. Hence it is easy to check that formula \eqref{per-equ-app} is meaningful. 
 \end{remark}

\begin{proof}

Definition $\eqref{per-equ-app}\Longrightarrow  $ Definition \ref{def-transport-weak-2}.

Let us fix $\theta \in  C_c^{\infty}(\mathbb{R}^d).$ Taking into account Remark \ref{v33}, 
 define the function $F:\Omega\times [0,T]\times \rE\rightarrow \bR^{},$
      \begin{align} \label{opq}
F(t,y)= & \int_{\mathbb{R}^d} \theta(x+y) u_0( x) d x \\
 &+ \int_0^t \int_{\mathbb{R}^d}\Big[b(x) \cdot D \theta(x+y-L_r)+\operatorname{div} b(x) \theta(x+y-L_r)\Big] u(r,x) d x\,  d r.
\end{align}
Clearly, $\mathbb{P}$-a.s., for any $t \in [0,T]$,  $F(t,y)$ is smooth in $y$ and, for any $y \in \R^d,$  $F(\cdot,y)$ has no jumps.

Assume that $u_t(\theta)$ satisfies the perturbation equation \eqref{per-equ-app}.
Then, $\mathbb{P}\text{-a.s},$ for every $t\in [0,T]$,
\begin{gather}\label{eq 20250801 01}
   u_t(\theta)=   \int_{\mathbb{R}^d} \theta(x) u(t, x) d x=F(t,L_t).
      \end{gather}
Moreover,  for any $t \in (0,T]$,  using also the dominated convergence theorem
\begin{gather} \label{sff}
 F(t,L_{t-})= \lim_{s \to t-}F(t,L_{s}) = \lim_{s \to t-}\int_{\mathbb{R}^d} \theta(x) u(s, x) d x = \int_{\mathbb{R}^d} \theta(x) u(t-, x) d x.
\end{gather}
In order to clarify the term $\int_{\mathbb{R}^d} \theta(x) u(t-, x) d x$, see the discussion around formula \eqref{gt33} in Remark \ref{d88}.

To proceed, we need to apply  the It\^o-Wentzell formula, see  Theorem \ref{thm-Ito+Wentzel-LM}) in a  special case since
\begin{gather*}
F(t,y)=  F_{0}(y)+\int_0^tf_r(y)dr,
\end{gather*}
where $F_0(y) = \int_{\mathbb{R}^d} \theta(x+y) u_0( x) d x$ and $f_r(y) = \int_{\mathbb{R}^d}\Big[b(x) \cdot D \theta(x+y-L_r)+\operatorname{div} b(x) \theta(x+y-L_r)\Big] u(r,x) d x$.

Let us check the assumptions of the It\^o-Wentzell formula.
First it is straightforward to check that $f:\Omega\times [0,T] \times \rE\rightarrow \bR^{}$ and $F$  are  both
 $\mathscr{O}\otimes \mathcal{B}(\rE)$-measurable.
 Moreover, it is clear that $\mathbb{P}$-a.s.\ for all $y\in \rE$,
$
\int_0^T|f_r(y)|dr < \infty.
$

Hence the assumptions of Hypothesis \ref{ftx} hold. Let us check the other assumptions in Theorem \ref{thm-Ito+Wentzel-LM}.
We deduce easily  that
 $\mathbb{P}$-a.s.\  $F\in C([0,T];{C}^{2}(\bR^d;\bR))$.

We also have by the dominated convergence theorem,
 for $\mathbb{P}\otimes \Leb$-almost-all $(\omega,r)\in \Omega\times [0,T]$,  the function
\[
\bR^d \ni y \mapsto   f_r(y) \in \bR
\]
is continuous. It remains to prove that
 for every $T>0$ and  each compact subset $K$ of $\rE$, $\mathbb{P}$-a.s.,
\begin{equation}\label{ma1}
\int_0^T\sup_{y\in K} |f_r(y)| dr<\infty.
\end{equation}
Recall that 
\begin{align}
    \sup_{r\in[0,T]}\| u(r, \omega, \cdot) \|_{L^\infty(\mathbb{R}^d)} \leq M,\quad\mathbb{P}\text{-a.s.}
\end{align}
Let us fix $K$ and consider $Supp(\theta)$. For each $\omega $, $\mathbb{P}$-a.s,
we know that the image of $r\in[0,T] \mapsto L_r(\omega)$ belongs to a compact set $N^T_{\omega}$ of $\R^d$.

Therefore, defining the compact set $H^T_{\omega} = Supp(\theta) - K  + N^T_{\omega}$ we have, for any $y\in K$,
\begin{align*}
&\int_{\mathbb{R}^d}\Big[b(x) \cdot D \theta(x+y-L_r)+\operatorname{div} b(x) \theta(x+y-L_r)\Big] u(r,x) d x
\\ &=  \int_{H^T_{\omega}}\Big[b(x) \cdot D \theta(x+y-L_r)+\operatorname{div} b(x) \theta(x+y-L_r)\Big] u(r,x) d x.
\end{align*}
We find,  $\mathbb{P}$-a.s.,
\begin{gather*}
\int_0^T\sup_{y\in {K}} |f_r(y)|dr
\leq
 T M
\|\theta\|_1 \int_{H^T_{\omega}}(|b(x)|+|\divv b(x)|)dx < \infty.
 \end{gather*}
In view of the It\^o-Wentzell formula (Theorem \ref{thm-Ito+Wentzel-LM}) and $\mathbb{P}$-a.s.\  $F\in C([0,T];{C}^{2}(\bR^d;\bR))$,  we infer,  $\mathbb{P}$-a.s., for any $t\geq0$, 
\begin{align*}
 F(t,L_t)
=& \int_{\mathbb{R}^d} \theta(x) u_0( x) d x+\int_0^t \int_{\mathbb{R}^d}\Big[b(x ) \cdot D \theta(x+L_r-L_r)+\operatorname{div} b(x) \theta(x+L_r-L_r)\Big] u(r,x) d x\,dr
\\ &+\int_0^t DF(s,L_{s-})dL_s + \sum_{s \le t} \Big[  F(s,L_{s})-F(s,L_{s-}) -\sum_{i=1}^d D_i F(s, L_{s-}) \Delta L^i_s       \Big].
\end{align*}
Hence
\begin{align}\label{eq 2025-11-06-06}
 F(t,L_t)=& \int_{\mathbb{R}^d}\!\!\!\! \theta(x) u_0( x) d x+\int_0^t\!\!\!\! \int_{\mathbb{R}^d}\!\!\!\!\Big[b(x ) \cdot D \theta(x)+\operatorname{div} b(x) \theta(x)\Big] u(r,x) d x\,dr
\\
 &+\int_0^t \lint_{B} DF(s,L_{s-})\cdot z \, \tilde  \mu (ds, dz) +\int_0^t \int_{B^c} DF(s,L_{s-})\cdot z  \, \mu (ds, dz)
 \\
 &+ \int_0^t \lint_{B} \Big[  F(s,L_{s-}+z)-F(s,L_{s-}) -  D F(s, L_{s-})\cdot z       \Big] \mu(ds,dz)
 \\ &+
  \int_0^t \int_{B^c} \Big[  F(s,L_{s-}+z)-F(s,L_{s-}) -  D F(s, L_{s-})\cdot z       \Big] \mu(ds,dz).
\end{align}

Now, we will prove that 
\begin{align}\label{eq 2025-11-06-01}
\int_0^t \lint_{B} \Big[  F(s,L_{s-}+z)-F(s,L_{s-}) -  D F(s, L_{s-})\cdot z       \Big] \tilde{\mu}(ds,dz)
\end{align}
and 
\begin{align}\label{eq 2025-11-06-02}
\int_0^t \lint_{B} \Big[  F(s,L_{s}+z)-F(s,L_{s}) -  D F(s, L_{s})\cdot z       \Big]\nu(dz) ds.
\end{align}
are well-defined.

Observe that, differentiating \eqref{opq} and similar to \eqref{eq 20250801 01}, we get
\begin{align}\label{eq 2025-11-06-03}
 D_i F(s, L_s)&=D_i F(s, y)|_{y=L_s}=    \int_{\mathbb{R}^d} D_i\theta(x) u(s, x) d x.
 \end{align}
Moreover (cf. \eqref{sff}) $\mathbb{P}$-a.s.\ for all $z\in \rE$ and $s\geq0$,
 \begin{align}\label{eq 2025-11-06-04}
  F(s,L_{s-}+z)-F(s,L_{s-})&=\int_{\mathbb{R}^d} \theta(x+z) u(s-, x) d x-\int_{\mathbb{R}^d} \theta(x) u(s-, x) d x.
 \end{align}

 {Applying the Taylor expansion formula yields for some $\rho\in(0,1)$, 
\begin{align*}
 \big| F(s,L_{s-}+z)-F(s,L_{s-})-  D F(s, L_{s-})\cdot z \big|& \leq \Big| \sum_{i=1}^d D_{i}F(s,L_{s-}+\rho z)z_i\Big| +\Big| \sum_{i=1}^d D_{i}F(s,L_{s-})z_i\Big|, \\
 \big| F(s,L_{s-}+z)-F(s,L_{s-}) -  D F(s, L_{s-})\cdot z\big|& \leq \frac12 \Big| \sum_{i,j=1}^d D_{i,j}F(s,L_{s-}+\rho z)z_iz_j \Big|,
\end{align*}
where 
\begin{align*}
  D_{i}F(s,y)=& \int_{\mathbb{R}^d} D_{i} \theta(x + y) u_0(x) \, dx \\
  &+ \int_0^s \int_{\mathbb{R}^d} \Big[ \sum_{k=1}^d b_k(x) D_{ik} \theta(x + y - L_r) + \operatorname{div} b(x) D_{i} \theta(x + y - L_r) \Big] u(r, x) \, dx \, dr,\\
  D_{i,j}F(s,y)=& \int_{\mathbb{R}^d} D_{ij} \theta(x + y) u_0(x) \, dx \\
  &+ \int_0^s \int_{\mathbb{R}^d} \Big[ \sum_{k=1}^d b_k(x) D_{ijk} \theta(x + y - L_r) + \operatorname{div} b(x) D_{ij} \theta(x + y - L_r) \Big] u(r, x) \, dx \, dr.
\end{align*}
Notice that the image of $[0,T]\ni s \mapsto L_{s-}(\omega)\in \mathbb{R}^d$ belongs to a compact set $K^T_{\omega}$ of $\R^d$. Define a compact set $J_{\omega}=\supp(\theta)-K^T_{\omega}-B+N^T_{\omega}$. 
It follows that, $\mathbb{P}$-a.s., for every $s\in [0,T]$, $z\in B$,
\begin{align*}
 \Big| D_{i}F(s,L_{s-}+\rho z) \Big|
\leq M\|\theta\|_{1}\Leb(\supp(\theta))+
  MT
\|\theta\|_2 \int_{J_{\omega}}(|b(x)|+|\divv b(x)|)dx < \infty,\\
 \Big| D_{i,j}F(s,L_{s-}+\rho z) \Big|
\leq M\|\theta\|_{2}\Leb(\supp(\theta))+
  MT
\|\theta\|_3 \int_{J_{\omega}}(|b(x)|+|\divv b(x)|)dx < \infty.
\end{align*}
Hence for any $z\in B$ and $s\in[0,T]$,
\begin{align*}
 &\big| F(s,L_{s-}+z)-F(s,L_{s-}) -  D F(s, L_{s-})\cdot z\big| \leq C_1|z|^2, \\
 & \big| F(s,L_{s-}+z)-F(s,L_{s-})  -  D F(s, L_{s-})\cdot z \big| \leq C_2 |z|. 
\end{align*}
where random variables $C_1,C_2<\infty$ $\mathbb{P}$-a.s. depend on  $M$ and $\|\theta\|_3$.  
Then we have, $\mathbb{P}$-a.s.,
\begin{align}
 &\int_0^T\!\lint_B \big| F(s,L_{s-}+z)-F(s,L_{s-}) -  D F(s, L_{s-})\cdot z\big| \nu(dz)ds  \leq C_1 T\; \lint_B |z|^2 \nu(dz)<\infty,\label{eq-2015-701} \\
 & \int_0^T\lint_B \big| F(s,L_{s-}+z)-F(s,L_{s-})  -  D F(s, L_{s-})\cdot z \big|^2  \nu(dz)ds\label{eq-2015-702}\\
 &=\int_0^T\lint_B \big| F(s,L_{s-}+z)-F(s,L_{s-})  -  D F(s, L_{s-})\cdot z \big|^2  \nu(dz)ds  \leq C_2^2T\, \lint_B |z|^2 \nu(dz)<\infty.\nonumber 
\end{align}
Hence, \eqref{eq 2025-11-06-01} and \eqref{eq 2025-11-06-02} are well-defined.
Therefore,
\begin{align}\label{eq 2025-11-06-00}
&\int_0^t \lint_{B} \Big[  F(s,L_{s-}+z)-F(s,L_{s-}) -  D F(s, L_{s-})\cdot z       \Big] \mu(ds,dz) \\
&= \int_0^t \lint_{B} \Big[  F(s,L_{s-}+z)-F(s,L_{s-}) -  D F(s, L_{s-})\cdot z       \Big] \tilde{\mu}(ds,dz)\\
&\quad+\int_0^t \lint_{B} \Big[  F(s,L_{s}+z)-F(s,L_{s}) -  D F(s, L_{s})\cdot z       \Big]\nu(dz) ds.
\end{align}
After inserting \eqref{eq 2025-11-06-00} to \eqref{eq 2025-11-06-06} and performing some cancellations, we get
\begin{align*}
 F(t,L_t)= & \int_{\mathbb{R}^d} \theta(x) u_0( x) d x+\int_0^t dr\int_{\mathbb{R}^d}\Big[b(x ) \cdot D \theta(x)+\operatorname{div} b(x) \theta(x)\Big] u(r,x) d x
\\
 &+\int_0^t \lint_{B}  \Big[  F(s,L_{s-}+z)-F(s,L_{s-})       \Big]  \, \tilde\mu (ds, dz)  \\
 &+
  \int_0^t \int_{B^c} \Big[  F(s,L_{s-}+z)-F(s,L_{s-})       \Big] \mu(ds,dz)\\
  &+\int_0^t \lint_{B} \Big[  F(s,L_{s}+z)-F(s,L_{s}) -  D F(s, L_{s})\cdot z       \Big]\nu(dz) ds.
\end{align*}
Combining \eqref{eq 20250801 01}, \eqref{eq 2025-11-06-03} and \eqref{eq 2025-11-06-04}, the above equality  yields that $u$ satisfies \eqref{eqn-transport-Marcus} in the Definition \ref{def-transport-weak-2}.
}

Definition \ref{def-transport-weak-2} $ \Longrightarrow  $ Definition $\eqref{per-equ-app} $

 On the other hand, assume that $u$ is a   weak$^\ast$-$\mathrm{L}^{\infty}$ solution of the SPDE \eqref{eqn-transport-Markus} in the sense of Definition \ref{def-transport-weak-2}. Given $\theta \in C_c^{\infty}(\mathbb{R}^d)$ and $y\in\mathbb{R}^d$, consider the test function $\theta_y(x)=\theta(x+y)$.
 Then $u$ satisfies
 \begin{align}
      &\int_{\mathbb{R}^d}u(t,x)\,\theta(x+y)\,dx\\
    &=
    \int_{\mathbb{R}^d}u_{0}(x)\,\theta(x+y)\,dx
       +\int_0^t\int_{\mathbb{R}^d}u(s,x)\,[b(x)\cdot D\theta(x+y)+\divv\, b(x)\,\theta(x+y)]\,dx\,ds\\
       &\quad +\int_0^t\lint_{B} \Bigl(\int_{\mathbb{R}^d}u(s-,x)\,[\theta(x+z+y)-\theta(x+y)]\, dx\Bigr)\tilde{\prm}(ds,dz)\\
       &\quad +\int_0^t\int_{B^{\mathrm{c}}} \Bigl(\int_{\mathbb{R}^d}u(s-,x)\,[\theta(x+z+y)-\theta(x+y)]\, dx\Bigr) \prm(ds,dz)\\
       &\quad +\int_0^t\lint_{B}\Bigl(\int_{\mathbb{R}^d} u(s,x)\,[\theta(x+z+y)-\theta(x+y)-\sum_{i=1}^dz_iD_i\theta(x+y)]\,dx\Bigr)\,\nu(dz)\, ds.\end{align}

Next consider the random field $F(t,y):=\int_{\mathbb{R}^d}u(t,x)\,\theta(x+y)\,dx $.  Note that
$$
D F(t,y)=\int_{\mathbb{R}^d}u(t,x)\,D\theta(x+y)\,dx .
$$
Applying similar arguments as in the proof of Lemma \ref{lem-u^eps satisfies assumptions ITWF}, we can verify that $F(t,y)$ satisfies the  assumptions of  It\^o-Wentzell Theorem \ref{thm-Ito+Wentzel-LM}.

Let $t\in(0,T]$. Now we apply It\^o-Wentzell Theorem \ref{thm-Ito+Wentzel-LM} to the function 
$$F(t,y)=F_0(y)+\int_0^t f_s(y)ds+\int_0^t\lint_B h_s(y,z)\tilde\mu(ds,dz)+\int_0^t\int_{B^c} h_s(y,z) \mu(ds,dz)$$ 
and $\xi_t=y-L_t$, $t\geq 0$, 
where 
\begin{align*}
F_0(y)=& \int_{\mathbb{R}^d}u_{0}(x)\,\theta(x+y)\,dx,\\
 f_s(y)=& \int_{\mathbb{R}^d}u(s,x)\,[b(x)\cdot  D\theta(x+y)+\divv\, b(x)\,\theta(x+y)]\,dx\\
 &+\lint_{B}\Bigl(\int_{\mathbb{R}^d} u(s,x)\,[\theta(x+z+y)-\theta(x+y)-\sum_{i=1}^dz_iD_i\theta(x+y)]\,dx\Bigr)\,\nu(dz),\\
 h_s(y,z)=&\int_{\mathbb{R}^d}u(s-,x)\,[\theta(x+z+y)-\theta(x+y)]\, dx.
\end{align*}
We obtain,  $\mathbb{P}$-a.s. for any $t \in [0,T]$, 
\begin{align}
F(t,\oldL_t)&=F_{0}(\oldL_{0})+\int_0^tf_s(\oldL_{s})ds  +\int_0^t\int_{\mathbb{R}^d}h_s(\oldL_{s-},z)[\1_{B\setminus\{0\}}(z)\newq(ds,dz) +\1_{B^c}(z)\newp(ds,dz)]\\
&\quad  +{\int_0^t D F(s-,\oldL_{s-})
d {\xi_s}}
+\sum_{s\le t}\Big(F(s-,\oldL_{s})-F(s-,\oldL_{s-})- D F(s-,\oldL_{s-})\cdot\Delta \oldL_s\Big)\nonumber\\
&\quad +\sum_{s\leq t}\Big(\Delta F(s,\oldL_{s})-\Delta F(s,\oldL_{s-})\Big).\label{IW-per-eq-703}
\end{align}
First observe that
\begin{align*}
&\int_0^tf_s(\oldL_{s})ds\\
&=
 \int_0^t \int_{\mathbb{R}^d}u(s,x)\,[b(x)\cdot D\theta(x+y-L_s)+\divv\, b(x)\,\theta(x+y-L_s)]\,dx\,ds \\
 &\quad+\int_0^t \lint_{B}\Bigl(\int_{\mathbb{R}^d} u(s,x)\,[\theta(x+z+y-L_s)-\theta(x+y-L_s)-\sum_{i=1}^dz_iD_i\theta(x+y-L_s)]\,dx\Bigr)\,\nu(dz)ds\\
&=
 \int_0^t \int_{\mathbb{R}^d}u(s,x)\,[b(x)\cdot D\theta(x+y-L_s)+\divv\, b(x)\,\theta(x+y-L_s)]\,dx\,ds \\
 &\quad+\int_0^t \lint_{B}\Bigl(\int_{\mathbb{R}^d} u(s-,x)\,[\theta(x+z+y-L_{s-})-\theta(x+y-L_{s-})-\sum_{i=1}^dz_iD_i\theta(x+y-L_{s-})]\,dx\Bigr)\,\nu(dz)ds\\
&=  \int_0^t \int_{\mathbb{R}^d}u(s,x)\,[b(x)\cdot D\theta(x+y-L_s)+\divv\, b(x)\,\theta(x+y-L_s)]\,dx\,ds \\
 &\quad-\int_0^t \lint_{B}\Bigl(\int_{\mathbb{R}^d} u(s-,x)\,[\theta(x+z+y-L_{s-})-\theta(x+y-L_{s-})-\sum_{i=1}^dz_iD_i\theta(x+y-L_{s-})]\,dx\Bigr)\,\tilde{\mu}(ds,dz)\\
  &\quad+\int_0^t \lint_{B}\Bigl(\int_{\mathbb{R}^d} u(s-,x)\,[\theta(x+z+y-L_{s-})-\theta(x+y-L_{s-})-\sum_{i=1}^dz_iD_i\theta(x+y-L_{s-})]\,dx\Bigr)\,\mu(ds,dz),
    \end{align*}  
 for the second equality we have used the relation: for any $z\in B$ and almost all $s\in(0,T]$,
 \begin{align}
 &\int_{\mathbb{R}^d}u(s,x)\,[\theta(x+z+y-L_{s})-\theta(x+y-L_{s})-\sum_{i=1}^dz_iD_i\theta(x+y-L_{s})]dx\\
 &=
     \int_{\mathbb{R}^d}u(s,x)\,[\theta(x+z+y-L_{s-})-\theta(x+y-L_{s-})-\sum_{i=1}^dz_iD_i\theta(x+y-L_{s-})]dx\\
       &=
     \int_{\mathbb{R}^d}u(s-,x)\,[\theta(x+z+y-L_{s-})-\theta(x+y-L_{s-})-\sum_{i=1}^dz_iD_i\theta(x+y-L_{s-})]dx,
 \end{align}
  and the stochastic integrals with respect to the compensated Poisson random measure are well defined because, analogous to equations \eqref{eq-2015-701} and \eqref{eq-2015-702}, we have $\mathbb{P}$-a.s.
 \begin{align*}
&\int_0^T \lint_{B} \Big| \int_{\mathbb{R}^d}u(s-,x)\,[\theta(x+z+y-L_{s-})-\theta(x+y-L_{s-})-\sum_{i=1}^dz_iD_i\theta(x+y-L_{s-})]\,dx\Big|\nu(dz)ds\\
& \leq M\|\theta\|_2 \Leb(supp(\theta)) T\lint_{B} |z|^2\nu(dz)<\infty,\\
&\int_0^T \lint_{B} \Big| \int_{\mathbb{R}^d} u(s-,x)\,[\theta(x+z+y-L_{s-})-\theta(x+y-L_{s-})-\sum_{i=1}^dz_iD_i\theta(x+y-L_{s-})]\,dx\Big|^2\nu(dz)ds\\
& \leq 4 M^2\|\theta\|^2_1 \Big(\Leb(supp(\theta))\Big)^2  T\lint_{B} |z|^2\nu(dz)<\infty.
     \end{align*}  
 For the third term on the right-hand side of \eqref{IW-per-eq-703}, by substituting $h_s$ and $\xi_s$ we find 
\begin{align*}
&\int_0^t\int_{\mathbb{R}^d}h_s(\oldL_{s-},z)[\1_{B\setminus\{0\}}(z)\newq(ds,dz) +\1_{B^c}(z)\newp(ds,dz)]\\
 &=\int_0^t \lint_B\Bigl(\int_{\mathbb{R}^d}u(s-,x)\,[\theta(x+z+y-L_{s-})-\theta(x+y-L_{s-})]\, dx \Bigr)\,\tilde{\mu}(ds,dz)\\
   & \quad+\int_0^t \int_{B^c} \Bigl(\int_{\mathbb{R}^d}u(s-,x)\,[\theta(x+z+y-L_{s-})-\theta(x+y-L_{s-})]\, dx\Bigr)\, \mu(ds,z).  
   \end{align*}   
  The fourth term  can be rewritten as
\begin{align*}
  {\int_0^t D F(s-,\oldL_{s-})
d {\xi_s}}=&-\int_0^t \lint_B \int_{\mathbb{R}^d}u(s-,x)\,D\theta(x+y-L_{s-})\cdot z\,dx \tilde{\mu}(ds,dz)\\
   &- \int_0^t \int_{B^c} \int_{\mathbb{R}^d}u(s-,x)\,D\theta(x+y-L_{s-})\cdot z\,dx \mu(ds,dz).
   \end{align*}
 Moreover, similar considerations as in the proof of Lemma \ref{lem-u^eps satisfies assumptions ITWF},  we can deduce
 \begin{align*}
& \sum_{s\le t}\Big(F(s-,\oldL_{s})-F(s-,\oldL_{s-})- D F(s-,\oldL_{s-})\Delta \oldL_s\Big)\\
 &= \int_0^t\lint_B \Big(F(s-,\oldL_{s-}-z)-F(s-,\oldL_{s-})- D F(s-,\oldL_{s-})\cdot (-z) \mu(ds,dz)\\
&\quad+ \int_0^t\int_{B^c} \Big(F(s-,\oldL_{s-}-z)-F(s-,\oldL_{s-})- D F(s-,\oldL_{s-})\cdot (-z) \mu(ds,dz)\\
 &= \int_0^t\lint_B \Bigl( \int_{\mathbb{R}^d}u(s-,x)\,\big(\theta(x-z+y-L_{s-})-\theta(x+y-L_{s-})+D\theta(x+y-L_{s-})\cdot z\big)dx\Bigr) \mu(ds,dz)\\
 &\quad+ \int_0^t\int_{B^c} \Bigl( \int_{\mathbb{R}^d}u(s-,x)\,\big(\theta(x-z+y-L_{s-})-\theta(x+y-L_{s-})+D\theta(x+y-L_{s-})\cdot z\big)dx\Bigr) \mu(ds,dz).
     \end{align*} 
     Furthermore, we have, $\mathbb{P}$-a.s. for any $t \in [0,T]$, 
     \begin{align*} 
     &\sum_{s\leq t}\Big(\Delta F(s,\oldL_{s})-\Delta F(s,\oldL_{s-})\Big)\\
     &=\sum_{s\leq t}
 \1_{\{ 0 < | \Delta L_s|\le 1 \}}\,
[\newh_s (\xi_{s-} - \Delta L_s, \Delta L_s)-\newh_s ( \xi_{s-}, \Delta L_s )]    \\
&\quad+\sum_{s\leq t} \1_{\{ | \Delta L_s|> 1 \}}\,
[\newh_s (\xi_{s-} - \Delta L_s, \Delta L_s)-\newh_s ( \xi_{s-}, \Delta L_s )]\\
    & =
    \int_0^t\lint_{B}[\newh_s( \xi_{s-} - z,z)-\newh_s(\xi_{s-} ,z )]\,\mu(ds,dz)+ \int_0^t\int_{B^c} [\newh_s(\xi_{s-} - z,z) -
\newh_s( \xi_{s-} ,z)]
    \mu(ds,dz)\\
        & =
    \int_0^t\lint_{B} \Big[ \int_{\mathbb{R}^d}u(s-,x)\,[\theta(x+\xi_{s-})-\theta(x+\xi_{s-}-z)-\theta(x+z+\xi_{s-})+\theta(x+\xi_{s-})]\, dx \Big]\,\mu(ds,dz)\\
  &\quad+    \int_0^t\int_{B^c} \Big[ \int_{\mathbb{R}^d}u(s-,x)\,[\theta(x+\xi_{s-})-\theta(x+\xi_{s-}-z)-\theta(x+z+\xi_{s-})+\theta(x+\xi_{s-})]\, dx \Big]\,\mu(ds,dz)\\
          & =
    \int_0^t\lint_{B} \Big[ \int_{\mathbb{R}^d}u(s-,x)\,[\theta(x+y-L_{s-})-\theta(x+y-L_{s-}-z)\\
    &\quad\quad\quad\quad \quad\quad\quad\quad\quad\quad\quad\quad\quad-\theta(x+z+y-L_{s-})+\theta(x+y-L_{s-})]\, dx \Big]\,\mu(ds,dz)\\
  &\quad+    \int_0^t\int_{B^c}  \Big[ \int_{\mathbb{R}^d}u(s-,x)\,[\theta(x+y-L_{s-})-\theta(x+y-L_{s-}-z)\\
      &\quad\quad\quad \quad\quad\quad\quad\quad\quad\quad\quad\quad\quad-\theta(x+z+y-L_{s-})+\theta(x+y-L_{s-})]\, dx \Big]\,\mu(ds,dz).
          \end{align*} 
         Substituting the above calculations into \eqref{IW-per-eq-703} and simplifying, we deduce that
  \begin{align*}
 F(t,y-L_t)
 =&\int_{\mathbb{R}^d}u_0(x)\,\theta(x+y)\,dx\\
 &+\int_0^tds \int_{\mathbb{R}^d}u(s,x)\,[b(x)\cdot D\theta(x+y-L_s)+\divv\, b(x)\,\theta(x+y-L_s)]\,dx.
  \end{align*}
Since both sides are smooth functions of $y$ $\mathbb{P}$-a.s. for any $t \in [0,T]$,   setting $y=L_t$ and substituting the definition of $F$ yields the perturbation equation \eqref{per-equ-app}. The proof has now been completed.

\end{proof}

\section{Proofs of some  lemmata from Section \ref{sec-existence}} \label{sec-proofs of 3 lemmata}

\begin{proof}[Proof of Lemma \ref{eqn-A2}] By possibly employing the Heine diagonal procedure we  can  fix $T\in\mathbb{N}$ and consider $t \in [0,T]$.
By Corollary \ref{cor-homeomorphism} and Theorem  \ref{thm-homeomorphism} applied to vector fields $b$ and $b^{\eps_n}$,  there exists an almost sure event $\Omega^{\prime\prime}$ such that
for every  $\omega \in \Omega^{\prime\prime}$  there exists $C_T(\omega)>0$ such that  for every $n\in \Nb:=\mathbb{N}\cup \{\infty\} $,  the following inequality holds
\begin{equation} \label{ineq-inverse-2}
\vert (\phi^{\eps_n}_s)^{-1}y -  y \vert \leq  s \Vert  b^{\eps_n} \Vert_{0}    + C_T(\omega), \;\; y \in \R^d, \; 0\le s  \le T.
\end{equation}
 Recall that $K:= \supp \theta   $. By the above, for very  $\omega \in \Omega^{\prime\prime}$  there exists a compact set $K_0(\omega) \subset \R^d$ such that
\[
\bigcup_{n\in \Nb, s\in [0,T]} (\phi^{\eps_n}_s)^{-1}(K) \subset K_0(\omega).
\]
Let us now fix $\omega \in \Omega^{\prime\prime}$. Then we have
\begin{align}\label{eq Sec 5 Lema 5.7 1}
&\vert \int_{\R^d} u_0(y)\,\theta (\phi^{\eps_n}_t(y))J_t^{\eps_n}(y)\,dy
    - \int_{\R^d} u_0(y)\,\theta (\phi_t(y))J_t(y)\,dy \vert\nonumber
    \\
     &\leq
     \int_{\R^d} \vert u_0(y) \vert \, \vert  \theta (\phi^{\eps_n}_t(y))J_t^{\eps_n}(y)
    - \,\theta (\phi_t(y))J_t(y) \vert \,dy\nonumber
    \\
    &=
 \int_{K_0(\omega)} \vert u_0(y) \vert \, \vert  \theta (\phi^{\eps_n}_t(y))J_t^{\eps_n}(y)
    - \,\theta (\phi_t(y))J_t(y) \vert \,dy\nonumber
\\
   & \leq \Vert u_0 \Vert_{L^\infty(\R^d)}
 \int_{K_0(\omega)}  \vert  \theta (\phi^{\eps_n}_t(y))J_t^{\eps_n}(y)
    - \,\theta (\phi_t(y))J_t(y) \vert \,dy.
          \end{align}
Moreover,
\begin{align}\label{eq Sec 5 Lema 5.7 2}
& \int_{K_0(\omega)}  \vert  \theta (\phi^{\eps_n}_t(y))J_t^{\eps_n}(y)
    - \theta (\phi_t(y))J_t(y) \vert \,dy\nonumber\\
&\leq  \int_{K_0(\omega)}  \vert  \theta (\phi^{\eps_n}_t(y))\vert\, \vert  J_t^{\eps_n}(y)
    - J_t(y) \vert \,dy
 +
\int_{K_0(\omega)}  \vert  \theta (\phi^{\eps_n}_t(y))
    - \,\theta (\phi_t(y))\vert  |J_t(y)| \,dy\nonumber\\
    &\leq
\Vert  \theta \Vert_{0} \int_{K_0(\omega)}   \vert  J_t^{\eps_n}(y)
    - J_t(y) \vert \,dy
 +
\sup_{y \in K_0(\omega)} |J_t(y)|
\int_{K_0(\omega)}  \vert  \theta( \phi^{\eps_n}_t(y))
    - \theta(  \phi_t(y))\vert   \,dy.
            \end{align}
Now applying Corollary \ref{cor-civuole}, there exists a subsequence of the sequence $\phi^{\eps_n}$,   still denoted by $\phi^{\eps_n}$, and a  $\mathbb{P}$-full set $\Omega^{1} \subset \Omega$    such that for all $\omega \in \Omega^1$, $p\geq 1$ and $M>0$,
\begin{equation}
\label{eq Sec 5 Lema 5.7 4}
\lim_{n\to\infty} \int_{B_M}\sup_{t \in [0,T] } \, \Bigl[ |\phi_{t}^{\eps_n}(x)-\phi_{t}(x)\vert^p +  \Vert D\phi_{t}^{\eps_n}(x)-D\phi_{t}(x) \Vert^p \Bigr]dx=0,
\end{equation}
here $B_M=\{y \in \R^d: \vert y|\leq M\}$.
By Theorem \ref{thm-ww}, there exists $\mathbb{P}$-full set $\Omega^2$ such that, for any $\omega\in\Omega^2$, and $M>0$,
\begin{align}\label{eq Sec 5 Lema 5.7 6}
\sup_{t\in[0,T]}\sup_{x\in\mathbb{R}^d,|x|\leq M}\Vert D\phi_t(x)\Vert<\infty.
\end{align}
Since the map $ \det: \R^{d\times d}\to \R$ is continuous, for any $\omega\in\Omega^2$, and $M>0$, we have
\begin{align}\label{eq Sec 5 Lema 5.7 7}
\sup_{t\in[0,T]}\sup_{x\in\mathbb{R}^d,|x|\leq M}|J_t(x)|<\infty.
\end{align}
Let us take  now an arbitrary $\omega \in  \Omega^{(iv)}:=\Omega^{\prime\prime} \cap\Omega^1\cap\Omega^2$, which is a $\mathbb{P}$-full set. Note that  $\theta\in C_c^{\infty}(\mathbb{R}^d)$  and there exists $M(\omega)<\infty$ such that $K_0(\omega)\subset B_{M(\omega)}$, we infer by \eqref{eq Sec 5 Lema 5.7 4}  and \eqref{eq Sec 5 Lema 5.7 7} that $$\sup_{t\in[0,T]}\sup_{y \in K_0(\omega)} |J_t(y)|<\infty,$$
\begin{align}
\label{eq Lemma 5.7 jia 1}
\hskip-0.6cm\!\lim_{n\to \infty} \!\sup_{t\in[0,T]}\!\int_{K_0(\omega)}  \!\!\!\!\!\!\!\!\vert  \theta( \phi^{\eps_n}_t(y))
    - \theta(  \phi_t(y))\vert   \,dy\!\leq\!\lim_{n\to \infty}\!\int_{B_{M(\omega)}} \!\sup_{t\in[0,T]}\vert  \theta( \phi^{\eps_n}_t(y))
    - \theta(  \phi_t(y))\vert   \,dy= 0,
\end{align}
and
\begin{align}\label{eq App E 20251008}
    \lim_{n\to \infty} \sup_{t\in[0,T]}\int_{K_0(\omega)}  \vert  J^{\eps_n}_t(y)
    - J_t(y)\vert   \,dy
\leq
\lim_{n\to \infty}\int_{B_{M(\omega)}}  \sup_{t\in[0,T]}\vert  J^{\eps_n}_t(y)
    - J_t(y)\vert   \,dy= 0.
\end{align}

To obtain the last inequality, we have used the fact again that the map $ \det: \R^{d\times d}\to \R$ is continuous, and the fact that there exists a constant $C_d$ such that for any $A,B\in \R^{d\times d}$, $|\det A-\det B|\leq C_d(\max\{\|A\|,\|B\|\})^{d-1}\|A-B\|$.
The above three inequalities combined with \eqref{eq Sec 5 Lema 5.7 1} and \eqref{eq Sec 5 Lema 5.7 2} complete the proof of this lemma.
\end{proof}

\begin{proof}[Proof of Lemma \ref{lem-A3}] We encounter here the same difficulty as in the proof of Lemma \ref{lem-A2}.

Recall from the proof of Lemma \ref{eqn-A2}, $\Omega^{(iv)}=\Omega^{\prime\prime} \cap\Omega^1\cap\Omega^2$, and recall $K_0(\omega)\subset B_{M(\omega)}$, for $\omega\in \Omega^{(iv)}$.
 Let us now fix $\omega \in \Omega^{(iv)}$. Then, for any $t\in[0,T]$,
\begin{align}\label{eq Lema 5.8 1}
&\Bigl\vert\int_0^t\Bigl(\int_{\R^d}u_0(y) b^{\eps_n}(\phi^{\eps_n}_s(y))\cdot  D \theta(\phi^{\eps_n}_s(y))J_s^{\eps_n}(y)\,dy \Bigr)ds
-
\int_0^t\Bigl(\int_{\R^d}u_0(y) b(\phi_s(y))\cdot  D \theta(\phi_s(y))J_s(y)\,dy \Bigr) ds\Big|\nonumber\\
&\leq
\Vert u_0 \Vert_{L^\infty(\R^d)}
   \int_0^T\int_{K_0(\omega)}|b^{\eps_n}(\phi^{\eps_n}_s(y))\cdot  D \theta(\phi^{\eps_n}_s(y))||J_s^{\eps_n}(y)-J_s(y)|\,dy \, ds\nonumber\\
    &\ \ \ +
    \Vert u_0 \Vert_{L^\infty(\R^d)}\int_0^T\int_{K_0(\omega)}  |b^{\eps_n}(\phi^{\eps_n}_s(y))|| D \theta(\phi^{\eps_n}_s(y))- D \theta(\phi_s(y))||J_s(y)|\,dy \, ds\nonumber\\
   &\ \ \ +
  \Vert u_0 \Vert_{L^\infty(\R^d)}\int_0^T\int_{K_0(\omega)}  |b^{\eps_n}(\phi^{\eps_n}_s(y))-b(\phi^{\eps_n}_s(y))|| D \theta(\phi_s(y))||J_s(y)|\,dyds\nonumber\\
   &\ \ \ +
   \Vert u_0 \Vert_{L^\infty(\R^d)}\int_0^T\int_{K_0(\omega)}  |b(\phi^{\eps_n}_s(y))-b(\phi_s(y))|| D \theta(\phi_s(y))||J_s(y)|\,dyds \nonumber\\
&\leq
\Vert u_0 \Vert_{L^\infty(\R^d)}\|b^{\eps_n}\|_{0}\| D \theta\|_{0}T
   \int_{B_{M(\omega)}}\sup_{s\in[0,T]}|J_s^{\eps_n}(y)-J_s(y)|\,dy\nonumber\\
    &\ \ \ +
    \Vert u_0 \Vert_{L^\infty(\R^d)}\|b^{\eps_n}\|_{0}\sup_{s\in[0,T]}\sup_{y\in B_{M(\omega)}}|J_s(y)|\int_0^T\int_{K_0(\omega)} | D \theta(\phi^{\eps_n}_s(y))- D \theta(\phi_s(y))|\,dy \, ds\nonumber\\
   &\ \ \ +
   \Vert u_0 \Vert_{L^\infty(\R^d)}\| D \theta\|_{0}\sup_{s\in[0,T]}\sup_{y\in B_{M(\omega)}}|J_s(y)|\int_0^T\int_{K_0(\omega)}  |b^{\eps_n}(\phi^{\eps_n}_s(y))-b(\phi^{\eps_n}_s(y))|\,dyds\nonumber\\
   &\ \ \ +
  \Vert u_0 \Vert_{L^\infty(\R^d)} \| D \theta\|_{0}\sup_{s\in[0,T]}\sup_{y\in B_{M(\omega)}}|J_s(y)|\int_0^T\int_{K_0(\omega)}  |b(\phi^{\eps_n}_s(y))-b(\phi_s(y))|\,dyds.
\end{align}

$\lim_{n\to \infty}\Vert b^{\eps_n}-b\Vert_0=0$ implies that, for any  $\omega \in \Omega^{\prime\prime}$,
\begin{align}\label{eq Lema 5.8 3}
\lim_{n\to \infty}\int_0^T\int_{K_0(\omega)}  |b^{\eps_n}(\phi^{\eps_n}_s(y))-b(\phi^{\eps_n}_s(y))|\,dyds=0.
\end{align}
Here we have used that for any  $\omega \in \Omega^{\prime\prime}$, $K_0(\omega)$ is a compact subset of $\mathbb{R}^d$.

Since $\theta\in C_c^{\infty}(\mathbb{R}^d)$ and $b\in C_{\mathrm{b}}^{\beta}(\R^d,\R^d)$, a
similar argument to that used for the proof of  \eqref{eq Lemma 5.7 jia 1} shows that, for any $\omega \in \Omega^{(iv)}$,
\begin{align}\label{eq Lema 5.8 4}
\lim_{n\to \infty}\int_0^T\int_{K_0(\omega)} | D \theta(\phi^{\eps_n}_s(y))- D \theta(\phi_s(y))|\,dyds=0.
\end{align}
and
\begin{align}\label{eq Lema 5.8 5}
\lim_{n\to \infty}\int_0^T\int_{K_0(\omega)}  |b(\phi^{\eps_n}_s(y))-b(\phi_s(y))|\,dyds=0.
\end{align}
Note that, by \eqref{eq Sec 5 Lema 5.7 7}, for any $\omega\in\Omega^2$,
\begin{align}\label{eq Lema 5.8 6}
\sup_{t\in[0,T]}\sup_{x\in\mathbb{R}^d,|x|\leq M(\omega)}|J_t(x)|<\infty.
\end{align}
Let us observe that assertions \eqref{eq App E 20251008},
\eqref{eq Lema 5.8 1}-\eqref{eq Lema 5.8 3}, \eqref{eq Lema 5.8 4}-\eqref{eq Lema 5.8 6} imply \eqref{eqn-A3}, completing the proof of this lemma.

\end{proof}

\section{Kolmogorov-Totoki-Cencov-type Theorem}
\label{App-sec-Kolm-Th}

The following result is known.

\begin{lemma}\label{lem-KTC-ZB}
 If $T>0$, then the map
\begin{equation}\label{eqn-sup norm}
D_{Skor}([0,T]; \mathbb{R}^d) \ni \omega \mapsto \sup_{t \in [0,T]} \vert \omega(t) \vert \in \mathbb{R},
\end{equation}
where $D_{Skor}([0,T]; \mathbb{R}^d)$ is
the space $D([0,T];\mathbb{R}^d)$ of $\mathbb{R}^d$-valued c\`adl\`ag functions,  endowed with  the Skorokhod metric,
is measurable.
\end{lemma}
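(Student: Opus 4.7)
The plan is to establish the stronger claim that the sup-norm functional is in fact continuous on $D_{Skor}([0,T]; \mathbb{R}^d)$ with respect to the Skorokhod $J_1$-topology; Borel measurability then follows immediately.

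The key step exploits the reparametrization structure of the Skorokhod topology. Recall that $\omega_n \to \omega$ in $D_{Skor}([0,T]; \mathbb{R}^d)$ means there exist continuous strictly increasing bijections $\lambda_n : [0,T] \to [0,T]$ such that $\sup_{t} |\lambda_n(t) - t| \to 0$ and $\sup_{t} |\omega_n(\lambda_n(t)) - \omega(t)| \to 0$. Since each $\lambda_n$ is a bijection of $[0,T]$ onto itself, the supremum of $|\omega_n|$ is invariant under this time change, i.e., $\sup_t |\omega_n(t)| = \sup_t |\omega_n(\lambda_n(t))|$. Combining this identity with the reverse triangle inequality yields
\[
\Bigl| \sup_{t \in [0,T]} |\omega_n(t)| - \sup_{t \in [0,T]} |\omega(t)| \Bigr| \;\leq\; \sup_{t \in [0,T]} \bigl| \omega_n(\lambda_n(t)) - \omega(t)\bigr| \;\longrightarrow\; 0,
\]
which establishes continuity, and hence Borel measurability, of the functional.

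I do not expect any genuine obstacle here, since the bijectivity of the Skorokhod time changes is precisely what makes sup-type functionals well-behaved under $J_1$-convergence. As a backup (and perhaps more elementary) route, one could verify measurability directly by fixing a countable dense set $Q \subset [0,T]$ containing $T$, using the right-continuity of \cadlag{} paths to identify $\sup_{t \in [0,T]} |\omega(t)|$ with the countable supremum $\sup_{t \in Q} |\omega(t)|$, and then invoking the standard fact that each evaluation map $\pi_t : \omega \mapsto \omega(t)$ is Borel measurable on $D_{Skor}$. The continuity argument above is cleaner and is the version I would present.
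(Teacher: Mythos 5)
Your main argument is correct and proves more than the lemma asks: the functional $\omega\mapsto\sup_{t\in[0,T]}|\omega(t)|$ is in fact $J_1$-continuous, because the Skorokhod time changes $\lambda_n$ are bijections of $[0,T]$ fixing the endpoints, so the sup-norm is invariant under the reparametrization and is $1$-Lipschitz with respect to the uniform distance between $\omega_n\circ\lambda_n$ and $\omega$. This is a genuinely different route from the paper, which simply invokes Billingsley, Section 12, Lemma 1 and Theorem 12.5 — i.e., exactly your ``backup'': the Borel $\sigma$-field of $D_{Skor}$ is generated by the coordinate projections $\pi_t$, and by right-continuity (plus boundedness of c\`adl\`ag paths) the supremum over $[0,T]$ coincides with the supremum over a countable dense set containing $T$, hence is a countable supremum of measurable maps. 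Comparing the two: your continuity argument is stronger and self-contained, but it hinges on taking the supremum over the \emph{whole} interval — for a fixed interior time $t_0<T$ the running-supremum functional $\omega\mapsto\sup_{s\le t_0}|\omega(s)|$ fails to be $J_1$-continuous at paths with a jump at $t_0$, and it is precisely the fact that each $\lambda_n$ maps $[0,T]$ onto $[0,T]$ (fixing $T$) that rescues the identity $\sup_t|\omega_n(t)|=\sup_t|\omega_n(\lambda_n(t))|$; the projection/countable-supremum route used in the paper is weaker in conclusion but more robust, since it yields measurability for such sup-type functionals even where continuity breaks down. Either version is a valid proof of the lemma.
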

\begin{proof}It is enough to apply Lemma 1 and Theorem 12.5 from Section 12 in \cite{Billingsley_1999}.
\end{proof}

The following theorem extends the Kolmogorov–Chentsov continuity criterion, see  Lemma A.2.37 in \cite{Bichteler_2002}, to a more general setting.

\begin{theorem}\label{thm-KTC-ZB}
Assume that  $(\Omega, \mathscr{F}, \mathbb{P})$ is a  complete probability space,  $X$ is an infinite dimensional vector space and let $\rho$ and $\Vert \cdot \Vert$  be a metric and norm on $X$. The space $X$ endowed with  the metric $\rho$ will be denoted by $E$ and
the space $X$ endowed with  the  norm $\Vert \cdot \Vert$  will be denoted by $B$. We assume that $E$ is separable, $B$ is complete and that the natural embedding
\[
i: B \embed E
\]
is continuous. We denote by $\mathscr{B}(E)$ the Borel $\sigma$-field on $E$.

Assume that $D \subset \mathbb{R}^d$ is an open subset of $\mathbb{R}^d$.  Assume that for every $x \in D$, $K(x)$ is an $E$-valued random element, i.e., $K(x):\Omega \to E$ is
$\mathscr{F}/ \mathscr{B}(E)$-measurable. Assume that $p\in (0,\infty)$ and $\delta,\gamma \in (0,1)$, $\delta<\gamma$. \\
Assume that  for all $x,y \in D$, the functions
\begin{equation}\label{eqn-measurability-K}
\begin{aligned}
\Omega \ni \omega &\mapsto \Vert K(x,\omega)- K(y,\omega) \Vert \in [0,\infty),
\\
\Omega \ni \omega &\mapsto \Vert K(x,\omega) \Vert \in [0,\infty)
\end{aligned}
\end{equation}
are $\mathscr{F}/ \mathscr{B}(\mathbb{R})$-measurable.
Assume that there exists $C>0$ such that for all $x,y \in D$,
\begin{equation}\label{eqn-KTC-01}
  \mathbb{E} \Vert K(x)- K(y) \Vert^p \leq C \left[ \vert x-y\vert^{d+\delta}+\vert x-y\vert^{d+\gamma}\right].
\end{equation}
Then, there exists a family $K^\prime(x)$,  $x \in D$, of  $E$-valued random elements, such that
\begin{trivlist}
\item[(i)] $K^\prime$ is a modification of $K$, i.e., for every $x \in D$,
\[
\mathbb{P}(K(x)\not=K^\prime(x))=0;
\]
\item[(ii)] for every $\omega \in \Omega$, the map
\[
D \ni x \mapsto K^\prime(x,\omega) \in B
\]
is continuous.
\end{trivlist}
Moreover, the assertion (ii) can be strengthened as follows. For any $\beta< \frac{\delta}{p}$ and  every $R>0$, the map
\[
D \cap B(0,R) \ni x \mapsto K^\prime(x,\omega) \in B
\]
where $B(0,R)$ is the closed ball of radius $R$ in $\mathbb{R}^d$,
is $\beta$-H\"older continuous (and hence bounded).
\end{theorem}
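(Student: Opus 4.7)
The plan is to execute a standard Kolmogorov--Chentsov dyadic construction, with careful attention paid to the fact that the moment bound lives in the norm of $B$ while the random elements take values only in the larger space $E$. I would work locally on bounded regions and then glue. Fix $R\in\mathbb{N}$ and set $D_R=\overline{D\cap B(0,R)}\cap D$ (one can assume $D$ is open, so it suffices to treat an open set compactly contained in $D$; the general $D$ is then handled by an exhaustion). Let $\beta\in(0,\delta/p)$ and let $\mathcal{D}=\bigcup_{n\ge 1}\mathcal{D}_n$ be the set of dyadic rationals in $D_R$, with $\mathcal{D}_n=\{x\in D_R:2^nx\in\mathbb{Z}^d\}$.

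\smallskip
For each fixed $n$, Chebyshev's inequality combined with the assumed bound \eqref{eqn-KTC-01} gives, for any pair $x,y\in\mathcal{D}_n$ with $|x-y|\le 2^{-n+1}$,
\begin{equation*}
\mathbb{P}\bigl(\Vert K(x)-K(y)\Vert>|x-y|^{\beta}\bigr)\le C\bigl(|x-y|^{d+\delta-\beta p}+|x-y|^{d+\gamma-\beta p}\bigr).
\end{equation*}
The number of such pairs in $D_R$ is $O(2^{nd})$, so summing over pairs produces a bound of order $2^{-n(\delta-\beta p)}+2^{-n(\gamma-\beta p)}$, which is summable in $n$ because $\beta p<\delta<\gamma$. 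The Borel--Cantelli lemma then furnishes a full-measure event $\Omega_R$ and an a.s.\ finite random constant $N_R(\omega)$ such that, for $\omega\in\Omega_R$ and all $n\ge N_R(\omega)$,
\begin{equation*}
\Vert K(x,\omega)-K(y,\omega)\Vert\le|x-y|^{\beta}\qquad\forall\, x,y\in\mathcal{D}_n,\ |x-y|\le 2^{-n+1}.
\end{equation*}
The standard chaining argument (summing geometric series) upgrades this to uniform $\beta$-H\"older continuity of $x\mapsto K(x,\omega)$ on $\mathcal{D}$ as a map into $(B,\Vert\cdot\Vert)$, with a random H\"older constant $J_R(\omega)<\infty$ on $\Omega_R$.

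\smallskip
Since $B$ is complete and $\mathcal{D}$ is dense in $D_R$, the H\"older map extends uniquely to a $\beta$-H\"older continuous map $K_R':D_R\to B$ for each $\omega\in\Omega_R$; set $K_R'(\cdot,\omega)\equiv 0$ on the null complement. I would then verify two items: (a) measurability of $K_R'(x)$ as an $E$-valued random element, which follows since on $\Omega_R$, $K_R'(x)$ is the pointwise limit (in $\Vert\cdot\Vert$, hence in $\rho$ via the continuous embedding $i:B\hookrightarrow E$) of the $E$-measurable random elements $K(x_k)$ for any dyadic sequence $x_k\to x$, and the pointwise limit of $\mathscr{F}/\mathscr{B}(E)$-measurable maps into the separable metric space $E$ is $\mathscr{F}/\mathscr{B}(E)$-measurable; (b) the modification property $\mathbb{P}(K_R'(x)=K(x))=1$ for every $x\in D_R$, which follows by applying the moment bound \eqref{eqn-KTC-01} along a dyadic sequence $x_k\to x$: one has $K(x_k)\to K(x)$ in $L^p(\Omega;B)$ hence in probability in $B$, and simultaneously $K_R'(x_k)=K(x_k)$ on $\Omega_R$ with $K_R'(x_k)\to K_R'(x)$ in $B$ by continuity, so the two limits coincide a.s.

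\smallskip
Finally, to remove the dependence on $R$, I would set $\Omega'=\bigcap_{R\in\mathbb{N}}\Omega_R$ (a full-measure event) and define $K'(x,\omega)=K_R'(x,\omega)$ whenever $x\in D_R$ and $\omega\in\Omega'$, and $K'(\cdot,\omega)\equiv 0$ otherwise. The uniqueness of continuous extension from the dense set $\mathcal{D}\cap D_R$ guarantees compatibility of the $K_R'$ on overlaps, so the global $K'(\cdot,\omega):D\to B$ is well defined, continuous, and locally $\beta$-H\"older for every $\omega\in\Omega'$; this yields both (i), (ii), and the strengthening for $\beta<\delta/p$. The main delicate point, which I would spell out carefully, is the interplay between the two topologies on $X$: the hypothesis only guarantees $E$-measurability of $K(x)$ (not $B$-measurability), and the moment bound in $B$-norm is only \emph{assumed} to be measurable via \eqref{eqn-measurability-K}. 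The rest of the argument is essentially classical, but one must verify at every step that the measurability of the auxiliary objects (the sup over pairs of dyadic points, the random H\"older constant $J_R$, the extension $K_R'$) is justified using precisely the two scalar measurability assumptions \eqref{eqn-measurability-K}, rather than the usual (but unavailable) $B$-Borel measurability of $K(x)$.
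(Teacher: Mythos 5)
Your proposal is correct in substance, but it follows a different route from the paper at the key probabilistic step. You run the classical Kolmogorov--Chentsov argument: Chebyshev on dyadic pairs, Borel--Cantelli to get an almost surely finite level $N_R(\omega)$ beyond which all dyadic increments are H\"older-controlled, then chaining, extension by completeness of $B$ from the dense dyadics, and identification with $K(x)$ via continuity in probability. The paper instead (following Kunita) works on a cube $D=[0,M]^d$ and proves two claims: a deterministic interpolation lemma bounding the H\"older seminorm of any map $f:\Pi\to B$ by $2d\sum_n\Delta_n^\beta(f)$, and then a moment bound $\mathbb{E}\bigl[(\sum_n\Delta_n^\beta(K))^p\bigr]<\infty$ obtained from \eqref{eqn-KTC-01} by counting lattice pairs and applying the Minkowski inequality (explicitly restricted there to $p\ge 1$). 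The trade-offs: your Borel--Cantelli version works for every $p\in(0,\infty)$ without modification and yields an a.s.\ finite random H\"older constant, which is all the theorem (and its use in Theorem \ref{ww1}, where only the pathwise constants $J_R(\omega)$, $J_{R,\delta}(\omega)$ are needed) requires; the paper's version yields the stronger quantitative conclusion that the H\"older seminorm lies in $L^p$, at the cost of the Minkowski step. Your handling of the two-topology issue (scalar measurability of $\Vert K(x)-K(y)\Vert$, $E$-measurability of the extension via separability of $E$ and the continuous embedding, identification of limits in $B$-norm along an a.s.\ convergent subsequence) matches what is needed and is spelled out more explicitly than in the paper. The one place where both you and the paper gloss is the passage from the cube to a general open $D$: the paper proves only the cube case and cites Bichteler, while your chaining on $D_R=\overline{D\cap B(0,R)}\cap D$ implicitly assumes the dyadic chains stay inside the domain, which is automatic for cubes (and for $D=\mathbb{R}^d$, the case actually used in the paper) but would need the standard covering-by-cubes refinement for domains with bad geometry; since the paper defers this to a reference, this does not count against you.
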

\begin{remark} 
The idea of the proof draws upon \cite[Theorem 4.1]{Kunita_2004} and \cite[Lemma A.2.37]{Bichteler_2002}, but both results require substantial adjustments to fit current framework. 
\end{remark}

\begin{proof}
We will prove the case that there exists $M\in\mathbb{N}:=\{1,2,3,\cdots\}$ such that $D= [0,M]^d$. For the general case, please see \cite{Bichteler_2002}.

In order to prove the theorem, we will introduce a modulus of continuity of a map from $D$ to $B$. Let $\Pi_n$ be the set of all lattice points in $D$ of the form $(i_1M/2^n, \dots, i_dM/2^n)$, where $0\leq i_1, \dots, i_d\leq 2^n$ are integers. We set $\Pi = \cup_{n\in\mathbb{N}} \Pi_n$. It is a dense subset of $D$. For any $x\in D$ and $n\in\mathbb{N}$, there exists a unique
$x_n \in \Pi_n   \text{ such that } 0 \leq x^i - x_n^i < 2^{-n}M \text{ for all } \, i = 1, \dots, d,
$
and we define $\Pi^D_n(x)=x_n$.

Given a map $f : \Pi \to B$, we define for each $n$ a modulus of continuity and the modulus of $\beta$-$\rm H\ddot{o}lder$ continuity of $f$ by

\[
\Delta_n(f) = \max_{x, y \in \Pi_n, |x-y|=2^{-n}M} \|f(x) - f(y)\|,
\]

\[
\Delta_n^\beta(f) = 2^{\beta n}M^{-\beta} \Delta_n(f).
\]

{\bf{Claim 1}: The inequality
\begin{eqnarray}\label{Revised Kol Lemma 4.2 01}
\|f(x) - f(y)\| \leq 2d \left( \sum_{n=1}^{\infty} \Delta_n^{\beta}(f) \right) |x - y|^\beta, \quad \forall x, y \in \Pi
\end{eqnarray}
holds for any map $f : \Pi \to B$.}

  If $\sum_{n=1}^{\infty} \Delta_n^{\beta}(f)=\infty$, then the claim is obvious.
  In the following, we assume that $\sum_{n=1}^{\infty} \Delta_n^{\beta}(f)<\infty$.

  Define a sequence of simple functions $g_n : D \to B, n = 1, 2, \ldots$ by $g_n(x) = f(x_n)$, where $x_n \in \Pi_n$ is the point such that $0 \leq x^i - x_n^i < 2^{-n}M$ holds for any $i = 1, \ldots, d$, that is, $x_n=\Pi_n^D(x)$. Then it holds

\[
\|g_{n+1}(x) - g_n(x)\| \leq \Delta_{n+1}(f)= 2^{-\beta (n+1)}M^\beta\Delta_{n+1}^\beta(f), \quad \forall x \in D.
\]

Then, for any $x \in D$ and $k\in\mathbb{N}$,

\[
\sum_{n=k}^{\infty} \|g_{n+1}(x) - g_n(x)\| \leq \sum_{n=k}^{\infty} \Delta_{n+1}(f) \leq 2^{-\beta (k+1)}M^\beta\sum_{n=k+1}^{\infty} \Delta_{n}^\beta(f) < \infty.
\]

Then the sequence of simple functions $g_n(x)$ converges on $D$. Let $g(x)$ be the limit function. Then $g(x) = f(x)$ holds valid for $x \in \Pi$, and for any $x\in D$ and $k\in\mathbb{N}$,
\begin{equation}\label{APP 2025 E2 6}
  \|g(x)-g_k(x)\|\leq 2^{-\beta (k+1)}M^\beta\sum_{n=k+1}^{\infty} \Delta_{n}^\beta(f) < \infty.
\end{equation}
 In the sequel, we prove this claim for $g$ instead of $f$.

Now let $x\neq y$ be any points in $D$. Without loss of generality, we assume that $x^1>y^1$ and $x^1-y^1=\sup_{i=1,2,\cdots,d}|x^i-y^i|$. Set $z_1^i=\min\{x^i,y^i\}$ and $z_2^i=\max\{x^i,y^i\}$, $i=1,2,\cdots,d$. Then there exist unique $k\in\mathbb{N}$ and just one $j_1\in\{0, 1,2,\cdots,2^k\}$ such that
\begin{equation}\label{APP 2025 E2 4}
\frac{j_1M}{2^k}\in(y^1,x^1)\text{ and }x^1-y^1\geq \frac{M}{2^k}.
\end{equation}
Note that $x^1$ or $y^1$ may belong to $\{\frac{jM}{2^k}; j=0,1,2,\cdots, 2^k\}$, that is why we consider the open set $(y^1,x^1)$.

Since $x^1-y^1=\sup_{i=1,2,\cdots,d}|x^i-y^i|$, then, for any $i=1, 2,3,\cdots, d$, the number of the elements of the set
$\{j\in \{0,1,2,\cdots, 2^k\}:\frac{jM}{2^k}\in(z_1^i,z_2^i)\}$ is 0 or 1. Set $H_i=\max\{\frac{jM}{2^k}<z_2^i:j\in \{0,1,2,\cdots, 2^k\}\}$, and $H=(H_1,\cdots,H_d)$. Then $H$  belongs to $\Pi_k$, and we have
\begin{equation}\label{APP 2025 E2 5}
  |x-H|\leq \frac{dM}{2^k}\text{ and }|y-H|\leq \frac{dM}{2^k}.
\end{equation}

By \eqref{APP 2025 E2 4} and \eqref{APP 2025 E2 5},
\begin{equation}\label{APP 2025 E2 7}
1\leq 2^kM^{-1}\sum_{i=1}^d|\Pi_k^D(x)^i-\Pi_k^D(y)^i|\leq 2d,
\end{equation}
and
\begin{equation}\label{2025 App E2 3}
  \frac{M}{2^k}\leq |x-y|\leq \frac{2dM}{2^k}.
\end{equation}

Hence, by  \eqref{APP 2025 E2 6} and \eqref{2025 App E2 3},
\begin{eqnarray}
  \|g(x)-g_k(x)\|\leq 2^{-\beta (k+1)}M^\beta\sum_{n=k+1}^{\infty} \Delta_{n}^\beta(f)
  \leq
  2^{-\beta}\sum_{n=k+1}^{\infty} \Delta_{n}^\beta(f)|x - y|^\beta,
\end{eqnarray}
and
\begin{eqnarray}
  \|g(y)-g_k(y)\|\leq 2^{-\beta (k+1)}M^\beta\sum_{n=k+1}^{\infty} \Delta_{n}^\beta(f)
  \leq
  2^{-\beta}\sum_{n=k+1}^{\infty} \Delta_{n}^\beta(f)|x - y|^\beta.
\end{eqnarray}

Further since \eqref{APP 2025 E2 7} and \eqref{2025 App E2 3} we have

\begin{eqnarray}
\|g_{k}(x) - g_{k}(y)\|&=&\|g_{k}(\Pi_k^D(x)) - g_{k}(\Pi_k^D(y))\| \\
&\leq& \sum_{i=1}^d \|g_{k}(\vec{x}^{i-1}) - g_{k}(\vec{x}^i)\|\\
&\leq& 2d \Delta_{k}(f)=2d \Delta_{k}^\beta(f)2^{-\beta k}M^\beta \leq 2d \Delta_{k}^\beta(f)|x - y|^\beta.
\end{eqnarray}

Here
\begin{eqnarray}
  &&\vec{x}^{0}=\Pi_k^D(x),\\
  &&\vec{x}^{1}=(\Pi_k^D(y)^1,\Pi_k^D(x)^2,\cdots,\Pi_k^D(x)^d),\\
  &&\vec{x}^{2}=(\Pi_k^D(y)^1,\Pi_k^D(y)^2,\Pi_k^D(x)^3,\cdots,\Pi_k^D(x)^d),\\
  &&\cdots,\\
  &&\vec{x}^{d}=\Pi_k^D(y).
\end{eqnarray}

Therefore

\begin{eqnarray*}
\|g(x) - g(y)\| &\leq& \|g(x) - g_{k}(x)\| + \|g_{k}(x) - g_{k}(y)\|+ \|g_{k}(y) - g(y)\|\\
&\leq& 2d \left( \sum_{k=1}^{\infty} \Delta_k^\beta(f) \right) |x - y|^\beta.
\end{eqnarray*}

The proof of Claim 1 is complete.

By the above claim, the map $f : \Pi  \to B$ satisfying $\sum_{k=1}^\infty \Delta_k^{\beta}(f) < \infty$ for some $\beta > 0$ is uniformly continuous on $\Pi$ and has a continuous extension $g : D \to B$ i.e., there exists a continuous map $g : D \to B$ such that $g(x) = f(x)$ holds for $x \in \Pi$. The function $g$ is $\beta$-$\rm H\ddot{o}lder$ continuous.

We shall apply the above claim to the random field $K(x)$. Observe that for each $\omega$, $K(\cdot, \omega)$ restricting $x$ to $\Pi$ can be regarded as a map from $\Pi$ to $B$. Then we have

\begin{equation}\label{Revised Kol Lemma 4.2 02}
\|K(x, \omega) - K(y, \omega)\| \leq 2d\left( \sum_{k=1}^\infty \Delta_k^{\beta}(K(\omega)) \right)|x-y|^\beta, \quad \forall x, y \in \Pi.
\end{equation}

{\bf Claim 2 } Let $\beta$ be a positive number satisfying $\beta p < \delta$. Then, there exists
a constant $C_{M,\gamma-\delta,p,\beta,\delta,d}$ such that
\[
\Big[\mathbb{E}(\sum_{k=1}^\infty \Delta_k^{\beta}(K))^p\Big]^{1/p} \leq \left( \sum_{k=1}^{\infty} 2^{-k(\delta/p-\beta)} \right) \cdot (2^d C_{M,\gamma-\delta,p,\beta,\delta,d})^{1/p} < \infty.
\]

{\bf Proof:} We will consider the case $p \geq 1$ only. Observe the inequality

\[
(\Delta_k^{\beta}(K))^p = M^{-\beta p}(\sup_{x,y \in \Pi_k, |x-y|=M/2^k} \|K(x) - K(y)\|{2^{k\beta}})^p \leq M^{-\beta p}\sum (\|K(x') - K(y')\|{2^{k\beta}})^p,
\]
where the summations are taken over all $x', y' \in \Pi_k$ such that $|x' - y'| = M/2^k$. Then the number of summations are at most $d(2^k+1)^d<d2^{(k+1)d}$. Therefore
\begin{align}
\mathbb{E}[\Delta_k^{\beta}(K)^p]
&\leq d2^{(k+1)d+kp\beta}M^{-\beta p} \mathbb{E}[\|K(x') - K(y')\|^p]\\
&\leq d2^{(k+1)d+kp\beta}M^{-\beta p} C_{M,\gamma-\delta}(M/2^k)^{d+\delta}\\
&\leq C_{M,\gamma-\delta,p,\beta,\delta,d}2^{d}2^{k(p\beta-\delta)}.
\end{align}
Here we have used that by \eqref{eqn-KTC-01} for any $x,y\in D=[0,M]^d$
\begin{equation}\label{eqn-KTC-02}
  \mathbb{E} \Vert K(x)- K(y) \Vert^p \leq C_{M,\gamma-\delta} \vert x-y\vert^{d+\delta}.
\end{equation}
 Therefore, by the Minkowski inequality we get

\[
\Big[\mathbb{E}(\sum_{k=1}^\infty \Delta_k^{\beta}(K))^p\Big]^{1/p} \leq \sum_{k=1}^{\infty} [\mathbb{E}\Delta_k^{\beta}(K)^p]^{1/p}
\leq  \sum_{k=1}^{\infty} C^{1/p}_{M,\gamma-\delta,p,\beta,\delta,d}2^{d/p}2^{k(\beta-\delta/p)} < \infty.
\]

The proof of Claim 2 is complete.

Now we are in position to finish the proof of Theorem \ref{thm-KTC-ZB} for the case $D=[0,M]^d$.

 The random field $K(x)$ restricting $x$ on $\Pi$ satisfies the inequality \eqref{Revised Kol Lemma 4.2 02}, where $\sum_{k=1}^\infty \Delta_k^{\beta}(K) < \infty$ holds a.s. Therefore $K(x), x \in \Pi$ is uniformly $\beta$-$\rm H\ddot{o}lder$ continuous in $B$ a.s.. Then there exists a continuous random field $\tilde{K}(x)$ in $E$ defined on $D$ such that $K(x) = \tilde{K}(x)$ holds a.s. for any $x \in \Pi$. Since $K(x)$ is continuous in probability, the equality $K(x) = \tilde{K}(x)$ holds a.s for any $x \in D$. Thus we have proved the theorem.

\end{proof}

\section*{Acknowledgment}
  This work is supported by National Key R\&D Program of China (Nos.  2024YFA1012301, 2022YFA1006001).

The  research activity of E. Priola was carried out as part of
the PRIN 2022 project ``Noise in fluid dynamics and related models'' (he is also  member of GNAMPA, which
is part of the ``Istituto Nazionale di Alta Matematica'' INdAM).

 The research of Jianliang Zhai is supported by National Natural Science Foundation of China (No. 12371151, 12131019),  the Fundamental Research Funds for the Central Universities(USTC) (WK3470000031, WK0010000081).

The research of Jiahui Zhu was supported by National Natural Science Foundation of China (No.12071433).

\section*{Declarations} 
	
\noindent 	\textbf{Ethical Approval:}   Not applicable 
	
	
\noindent  \textbf{Conflict of Interest: } On behalf of all authors, the corresponding author states that there is no conflict of interest.
	
\noindent 	\textbf{Authors' contributions:} All authors have contributed equally.

\noindent \textbf{Data Availability Statement and materials:} Not applicable.


\begin{thebibliography}{999}
{\footnotesize}





\bibitem{Aliprantis} Aliprantis, C. D.,  and  Border, K. C., \textsc{Infinite Dimensional Analysis: A Hitchhiker's Guide}.  Springer-Verlag, 2006. 

\bibitem{AmbrCrippa}  Ambrosio, L., Crippa G.: Existence, uniqueness,
stability and differentiability properties of the flow associated to weakly
differentiable vector fields, in {Transport Equations and Multi-D Hyperbolic
Conservation Laws,} {\ Lecture Notes of the Unione Matematica Italiana}
\textbf{5}, Springer Verlag, 2008. 

\bibitem{Amine}  Amine, O., Mansouri, A.-R., Proske, F.: Well-posedness of the deterministic transport equation with singular velocity field perturbed along fractional Brownian paths. J. Differential Equations 362, 106-172 (2023)  







\bibitem{Applebaum_2009}   Applebaum, D.: \textsc{  L\'evy processes
and stochastic calculus.} 2nd edn. Cambridge
University Press, Cambridge (2009)

\bibitem{Ref_SPDE_BM}  Athreya, S.,   Butkovsky, O.,   L\^e, K.,  Mytnik,  L.:
\textit{Well-posedness of stochastic heat equation with distributional drift and skew stochastic heat equation}.
{Comm. Pure Appl. Math.}  \textbf{77}, 2708-- 2777 (2024)

\bibitem{ABM 2020} Athreya, S.,  Butkovsky, O.,   Mytnik, L.: 
\textit{Strong existence and uniqueness for stable stochastic differential equations with distributional drift}. 
Ann. Probab. \textbf{48}(1), 178--210 (2020)



\bibitem{BGM 2025} Bagnara, M.,  Galeati, L.,   Maurelli, M.: \textit{
Regularization by rough Kraichnan noise for the generalised SQG equations}. 
Math. Ann. \textbf{392}(4), 4773--4830 (2025)



 \bibitem{Bahouri+Chemin+Danchin_2011}  Bahouri, H., Chemin, J.-Y., Danchin, R.: \textsc{Fourier analysis and nonlinear partial differential equations.}
vol. 343 of Grundlehren der Mathematischen Wissenschaften [Fundamental Principles of Mathematical Sciences]. Springer, Heidelberg (2011)

\bibitem{Barndorff-Nielsen_2001}  Barndorff-Nielsen, O.E.,  Mikosch, T.,  Resnick, S. (eds)  \textsc{L{\'e}vy processes, Theory and applications}. Birkhäuser, Boston, Inc., Boston, MA, 2001. xii+415 pp. 


        
\bibitem{BeckFlandoliGubinelliMaurelli} Beck, L., Flandoli, F., Gubinelli, M., Maurelli, M.: \textit{Stochastic ODEs and stochastic linear PDEs with critical drift: regularity, duality and uniqueness}. Electron. J. Probab. \textbf{24}, 1--72 (2019)

 
  
  
		
		
\bibitem{Bichteler_2002}  Bichteler, K.: \textsc{Stochastic integration with jumps.}
Encyclopedia of Mathematics and its Applications, 89. Cambridge University Press, Cambridge,  2002.


\bibitem{Billingsley_1999}   Billingsley, P.:   \textsc{Convergence of probability measures.}
Second edition. Wiley Series in Probability and Statistics: Probability and Statistics. A Wiley-Interscience Publication.
John Wiley \& Sons, Inc., New York,  1999. 

\bibitem{Birnir}
 Birnir, B.: \textsc{The Kolmogorov-Obukhov Theory of Turbulence: A Mathematical Theory of Turbulence},
Springer, 2013. 


\bibitem{Blumenthal}  Blumenthal, R.M.,   Getoor, R.K.:  \textsc{Sample functions of stochastic processes with stationary independent increments}, J. Math. Mech. 10, 493-516 (1961)

\bibitem{Bondi+Priola_2023} Bondi, A.,  Priola, E.: \textit{Regular stochastic flow and Dynamic Programming Principle for jump diffusions}, arXiv:2307.16871v3

\bibitem{[48]}
 Bouchaud, J.P.,   Potters, M.: 
\newblock {\em Theory of Financial Risk}.
\newblock Cambridge University Press, 2000.


\bibitem{Bra+Lin+Par}  Brasco, L.,   Lindgren, E.,   Parini, E.: \textit{The fractional Cheeger problem}. Interfaces Free Bound. \textbf{16}, 419--458 (2014)


 \bibitem{BLZ_2021}
  Brze{\'z}niak, Z.,   Liu, W.,    Zhu, J.: \textit{The stochastic Strichartz estimates and stochastic nonlinear Schr{\"o}dinger equations driven by L{\'e}vy noise},  J. Funct. Anal.  \textbf{281}(4) Paper No. 109021 (2021)





		








\bibitem{Brz+Elw_2000}  Brze\'zniak, Z.,   Elworthy, K.D.: 
\textit{Stochastic differential equations on Banach manifolds.}
 Methods Funct. Anal. Topology  6,  43--84 (2000)






\bibitem{Brz+Millet_2014}  Brze\'zniak, Z.,
 Millet, A.:  \textit{On the stochastic Strichartz estimates and the stochastic nonlinear Schrödinger equation on a compact Riemannian manifold.}
 Potential Anal.  41,  no. 2, 269--315 (2014)

\bibitem{Brz+Peng+Zhai_2023}Brze\'zniak, Z., Peng, X., Zhai, J.:
\textit{Well-posedness and large deviations for 2D stochastic Navier–Stokes equations with jumps}. Journal of the European Mathematical Society. \textbf{25}(8), 3093--3176 (2022)


\bibitem{Brz+Mot+Kosm+Razaf_2025}  Brze\'zniak, Z., Kosmala, T.,  Motyl, E.,  Razafimandimby, P.:  \textit{Weak solutions of Navier-Stokes Equation with purely discontinuous L\'evy Noise}, 
arXiv:2505.24043 



  
\bibitem{ButkovskyDereGer} Butkovsky, O.,  Dareiotis, K., Gerencs\'er, M.: \textit{Strong rate of convergence of the Euler scheme for SDEs with irregular drift driven by L\'evy noise.} Annales de l'Institut Henri Poincare (B) Probabilites et statistiques. \textbf{61}(4), 2624--2660 (2025)

 

\bibitem{Cartan_1971-DC}
  Cartan, H.: \textsc{Differential calculus},
Exercises by C. Buttin, F. Rideau and J. L. Verley. Translated from the French. Hermann, Paris; Houghton Mifflin Co., Boston, MA,  1971. 160 pp.
		
\bibitem{Catellier16}	 Catellier, R.:  \textit{Rough linear transport equation with an irregular drift}. Stoch. Partial Differ. Equ. Anal. Comput. 4,  477--534 (2016)


\bibitem{Catellier-Duboscq2025}  Catellier, R.,  Duboscq, R.:   \textit{Regularization by noise for rough differential equations driven by Gaussian rough paths}. Ann. Probab. \textbf{53}(1), 79--139 (2025)
		


\bibitem{Chaudru+Menozzi_2022}  Chaudru de Raynal, P.-\'E.,  Menozzi, S.:  \textit{On multidimensional stable-driven stochastic differential equations with Besov drift}.
 Electron. J. Probab.  \textbf{27}, 1--52 (2022)



\bibitem{Chaudru+Menozzi+Priola_2020} Chaudru de Raynal, P.-\'E.,    Menozzi,  S., Priola, E.: \textit{Schauder estimates for drifted fractional operators in the supercritical case.}
 J. Funct. Anal.  \textbf{278}(8), 108--425 (2020)

\bibitem{CP 2014} Chechkin, A.,  Pavlyukevich, I.: \textit{Marcus versus Stratonovich for systems with jump noise}. J.
Phys. A \textbf{47}(34): 342001  (2014)

\bibitem{CSZ18}   Chen, Z.Q.,  Song, R.,    Zhang,  X.: \textit{Stochastic flows for L\'evy processes with H\"older drifts}. Rev. Mat. Iberoam. \textbf{34}(4), 1755--1788 (2018)

\bibitem{Chen-Zhang-Zhao 2021} Chen, Z.Q.,  Zhang, X.,  Zhao, G.:  \textit{Supercritical SDEs driven by multiplicative stable-like L\'evy processes}. Trans. Amer. Math. Soc. \textbf{374}(11), 7621--7655 (2021)


 \bibitem{Ciampa+Crippa+Spirito_2020}   Ciampa, G.,  Crippa,  G.,   Spirito, S.: \textit{Smooth approximation is not a selection principle for the transport equation with rough vector field}. Calc. Var. Partial Differential Equations \textbf{59}(1) (2020), Paper No.13.
 
\bibitem{Cont+Tankov_2004}  Cont, R.,   Tankov, P.:
\textsc{Financial modelling with jump processes}. Chapman \& Hall/CRC Financial Mathematics Series. Chapman \& Hall/CRC, Boca Raton, FL, 2004. xvi+535 pp. 

\bibitem{Felix-2020}
 del Teso, F.,   G\'omez-Castro, D.,  V\'azquez, J.L.: \textit{ Estimates on translations and Taylor expansions in fractional Sobolev spaces}. Nonlinear Anal. \textbf{200}(1--12), Article 111995 (2020)


\bibitem{DiPernaLions}  DiPerna, R. J.\ and\ Lions, P. L.: \textit{Ordinary
differential equations, transport theory and Sobolev spaces}, Invent. Math.
\textbf{98}, 511-547 (1989). 


\bibitem{Dong+Kim_2012}  Dong, H.,   Kim, D.:  \textit{On $L_p$-estimates for a class of non-local elliptic equations}, Journal of Functional Analysis.  \textbf{262}(3), 1166-1199 (2012)




		








\bibitem{fedrizzi}   Fedrizzi, E., Flandoli, F.: \textit{Noise prevents singularities in linear transport equations}. J. Funct. Anal. \textbf{264}(6), 1329--1354 (2013)

\bibitem{FedrizziOlivera} Fedrizzi, E., Neves, W., Olivera, C.: \textit{On a class of stochastic transport equations for $L^2_{loc}$ vector fields}. Ann. Sc. Norm. Super. Pisa  \textbf{18}(2),  397--419 (2018)

\bibitem{FGP_2010-Inventiones}
Flandoli, F., Gubinelli, M., Priola, E.: \textit{ Well-posedness  of the transport equation by stochastic perturbation.}  {Invent. Math.}  \textbf{180}(1), 1--53 (2010)





\bibitem{FGP-2008}
Flandoli, F., Gubinelli, M., Priola, E.:  \textit{Well-posedness  of the transport equation by stochastic perturbation,}
preprint (arxiv 0809.1310v1) version of \cite{FGP_2010-Inventiones} from 10 September 2008,

\bibitem{FGP-2012}
Flandoli, F., Gubinelli, M., Priola, E.: \textit{Remarks on the stochastic transport equation with H\"older drift},
Rend. Semin. Mat. Univ. Politec. Torino \textbf{70}(1), 53--73 (2012)

\bibitem{Flandoli-Luo 2021}  Flandoli, F., Luo, D.:  \textit{High mode transport noise improves vorticity blow-up control in 3D Navier-Stokes equations}. Probab. Theory Related Fields \textbf{180}(1), 309--363 (2021)

\bibitem{FPR 2025} Flandoli, F.,  Papini, A.,   Rehmeier, M.: Average dissipation for stochastic transport equations
with L\'evy noise. Stochastic Transport in Upper Ocean Dynamics III, pages 45-59. Springer
Nature Switzerland, Cham, 2025.


\bibitem{Galeati_2020}  Galeati,  L.: \textit{On the convergence of stochastic transport equations to a deterministic parabolic one.}
 Stoch. Partial Differ. Equ. Anal. Comput.  \textbf{8}(4),  833--868 (2020)

\bibitem{skor}    Gikhman, I.I.,   Skorokhod, A.V.: 
 \textsc{The Theory of Stochastic Processes,} Vol. 3, Springer-Verlag,
 Berlin, 1974.

\bibitem{Hao+Zhang_2020}  Hao, Z., Wu, M., Zhang, X.: \textit{Schauder estimates for nonlocal kinetic equations and applications}. J. Math. Pures Appl. \textbf{140}(9), 139--184 (2020)

\bibitem{HartmannPavlyukevich23}  Hartmann, L.-S., Pavlyukevich, I.: \textit{First-order linear Marcus SPDEs}, Stoch. Dyn. \textbf{23}(6):2350054  (2023) 




\bibitem{HLL 202407}  Hong, W.,   Li, S.,   Liu, W.: \textit{Regularization by Nonlinear Noise for PDEs: Well-posedness and Finite Time Extinction}, arXiv:2407.06840


\bibitem{Ikeda+Watanabe_1989}
 Ikeda, N.,  Watanabe, S.: \textsc{
Stochastic differential equations and diffusion processes},
Second edition. North-Holland Mathematical Library, \textbf{24}. North-Holland Publishing Co., Amsterdam; Kodansha, Ltd., Tokyo, 1989. xvi+555 pp.


\bibitem{Robert+Vos} Kass, R.E.,  Vos,  P.W.: \textsc{Geometrical foundations of asymptotic inference}, John Wiley \& Sons: New York, NY, USA, 1997

\bibitem{[44]}
 Kassam, S.A.:
\newblock {\textsc{ Signal Detection in Non-Gaussian Noise}}.
\newblock Springer, New York, 1988.

\bibitem{[34 2025]}
 Klafter, J.,   Shlesinger, M.F., Zumofen,  G.: 
\newblock Beyond brownian motion.
\newblock {\em Phys. Today}, 49(2):33--39, 1996.


\bibitem{Koley25}  Koley, U.,  Yamazaki, K.:
\textit{Non-uniqueness in law of transport-diffusion equation forced by random noise}
J. Differential Equations 416, part 1, 82--142 (2025)  

\bibitem{Kran+Park}Krantz, S.,  Parks, H.: \textsc{The implicit function theorem: history, theory, and applications}. Springer Science \& Business Media; 2002.

\bibitem{KrempPerk} 
Kremp, H., Perkowski, N.: \textit{Rough weak solutions for singular L\'evy SDEs}. Probab. Theory Relat. Fields \textbf{193}, 483–537 (2025).

\bibitem{KrHolder}   Krylov, N.V.:
 Lectures on elliptic and parabolic equations in
 H\"{o}lder spaces, Graduate Studies in Mathematics,
 12, American
 Mathematical Society, Providence, RI, 1996.

\bibitem{Krylov-Rockner-2005} Krylov, N.V., Rockner, M.: \textit{Strong solutions of stochastic equations with singular time dependent drift}, Probab. Theory Related Fields \textbf{131}(2), 154--196 (2005)


\bibitem{Kuhn+Schilling_2019}  Kuhn, F., Schilling, R.: \textit{On the domain of fractional Laplacians and related generators of Feller processes}.
 J. Funct. Anal.  \textbf{276}(8), 2397--2439 (2019)

\bibitem{Kunita_1984}  Kunita, H.:  Stochastic differential equations and stochastic flows
of diffeomorphisms. Ecole d'\'{e}t\'{e} de probabilit\'{e}s de Saint-Flour,
XII---1982, 143--303, Lecture Notes in Math., 1097, Springer, Berlin, 1984.

\bibitem{Kunita_1990} Kunita, H.:  \textsc{Stochastic flows and stochastic differential equations}. Vol. 24. Cambridge university press, 1990.


\bibitem{Kunita_2004}
 Kunita, H.:  \textit{Stochastic differential equations based on L\'evy
processes and stochastic flows of diffeomorphisms},  Real and Stochastic Analysis, pp. 305-–373. Boston, Birkhauser (2004)



\bibitem{Kunita_2019}
	 Kunita, H.:  \emph{Stochastic Flows and Jump--Diffusions} (Vol. 92). Springer, (2019)

\bibitem{KurtzPardouxProtter}  {  Kurtz, T.G.,   Pardoux, \'E., Protter, P.:} \textit{Stratonovich stochastic differential equations driven by general semimartingales}. {Annales de l’Institut Henri Poincar\'e}, section B, \textbf{31}(2), 351-357 (1995)



\bibitem{Leahy+Mik_2016} Leahy, J.-M., Mikulevičius, R.:  \textit{On Classical Solutions of Linear Stochastic Integro-Differential Equations}, 
Stoch. Partial Differ. Equ. Anal. Comput. \textbf{4}(3), 535--591 (2016)

\bibitem{LT 2025} Luo, D.,  Teng, F.:  \textit{Eddy viscosity by L\'evy transport noises}, 2025. arXiv:2510.13463 

\bibitem{[46]}
 Nikias, C.-L.,  Shao, M.:
\newblock {\textsc{ Signal Processing with Alpha-Stable Distributions and Applications}}.
\newblock Wiley-Interscience, United States, 1995.


\bibitem{[49]}
 Mandelbrot, B.: 
\newblock {\textsc{ Fractals and Scaling in Finance}}.
\newblock Springer, New York, 1997.

\bibitem{[47]}
 Mantegna, R.N.,   Stanley, H.E., 
\newblock {\textsc{ An Introduction to Econophysics}}.
\newblock Cambridge University Press, 2000.




\bibitem{Marcus 1978} Marcus, S.: \textit{Modeling and analysis of stochastic differential equations driven by point processes}. IEEE Transactions on Information Theory, \textbf{24}(2), 164--172 (1978)

\bibitem{Metivier_1982}  Métivier, M.: \textsc{Semimartingales. A course on stochastic processes.}
de Gruyter Studies in Mathematics, 2. Walter de Gruyter \& Co., Berlin-New York,  1982.

\bibitem{Mik_1983}  Mikulevičius, R.: \textit{Properties of solutions of stochastic differential equations}, Lithuanian Mathematical Journal, \textbf{23}(4),   367--376 (1983)

\bibitem{Mir+Sic}  Mironescu, P.,  Sickel,  W.: \textit{A Sobolev non embedding}. Atti Accad. Naz. Lincei Rend. Lincei Mat. Appl. \textbf{26}(3), 291--298 (2015)





 
\bibitem{PavlyukevichThipyarat25}  {  Pavlyukevich, I.,   Thipyarat, S. :} \emph{ Strong uniform Wong–Zakai approximations
of L\'evy-driven Marcus SDEs}. 2025. arXiv: 2501.19175 


\bibitem{[41]}
 Peng, C.K.,  Mietus, J. ,  Hausdorff, J.M.,   Havlin, S.,   Stanley, H.E.,    Goldberger, A.L.: 
\newblock Long-range anticorrelations and non-Gaussian behavior of the heartbeat.
\newblock {\em Phys. Rev. Lett.}, \textbf{70}(9), 1343--1346, (1993)

\bibitem{Peszat+Zabczyk_2007}  Peszat, S.,  Zabczyk, J.: \textsc{Stochastic partial differential equations with Lévy noise. An evolution equation approach.}
Encyclopedia of Mathematics and its Applications, 113. Cambridge University Press, Cambridge,  2007.




\bibitem{Pr12}   Priola, E.:  \textit{Pathwise uniqueness for singular SDEs driven by
stable processes},
  {Osaka Journal of Mathematics,} \textbf{49},  421--447 (2012)



\bibitem{Pr15}  Priola, E.:  \textit{Stochastic flow for SDEs with jumps and irregular drift term},    Banach Center Publications, \textbf{105}, 197--214 (2015)

\bibitem{Pr18} Priola, E.:  \textit{Davie's type uniqueness for a class of SDEs with jumps},
 Annales de l'Institut Henri Poincar\'e - Probability and Statistics,  \textbf{54}(2), 694--725  (2018)
 
 \bibitem{Pr20}Priola, E.: \textit{ On Davie's uniqueness for some degenerate SDEs,} Theory Probab. Math. Statist. 103, 41--58 (2020)

\bibitem{Protter_2004}
  Protter, P.E.: \textsc{   Stochastic Integration and Differential Equations}.
 Second Edition,  Stochastic Modelling  and Applied Probability, Springer-Verlag, Berlin, 2004.

  




\bibitem{Rudin_RCA_1987}  Rudin, W.: \textsc{
Real and complex analysis}.
Third edition.
McGraw-Hill Book Co., New York,  1987




\bibitem{Sato_1999}  Sato, K.I.: \textsc{ L\'evy processes and infinite divisible distributions,}
 Cambridge University Press, Cambridge, 1999.

 \bibitem{SSW}
 Schilling, R.L., Sztonyk, P.,   Wang,  J.: \textit{
Coupling property and gradient estimates of L\'evy processes via the symbol},
Bernoulli \textbf{18}, 1128--1149 (2012)

\bibitem{[42]}
 Segev, R.,  Benveniste, M. ,   Hulata, E.,   Cohen, N.,   Palevski, A.,   Kapon, E.,   Shapira, Y.,  Ben-Jacob, E.: 
\newblock Long term behavior of lithographically prepared in vitro neuronal networks.
\newblock {\em Phys. Rev. Lett.} \textbf{88}(11):118102, (2002)


\bibitem{[29-2025]}
 Shlesinger, M.,   Zaslavsky, G.,   Klafter, J.: 
\newblock  Strange kinetics.
\newblock {\em Nature}. 363:31--37, 1993.


 \bibitem{Si} Silvestre, L.:  \textit{On the differentiability of the solution to an equation with drift and fractional diffusion},
 Indiana Univ. Math. J. \textbf{61}, 557-584 (2012)

\bibitem{[43]}
 Sotolongo-Costa, O.,   Antoranz, J.C.,  Posadas, A.,  Vidal, F.,    Vazquez, A.: 
\newblock L\'evy flights and earthquakes.
\newblock {\em Geophys. Rev. Lett.}, \textbf{27}(13), 1965--1968 (2000)

\bibitem{Stein}  Stein, E.M.: \textsc{ Singular Integrals and Differentiability Properties of Functions,} Princeton University Press, Princeton, NJ, 1970.






   

\bibitem{Triebel}
 Triebel, H.: \textsc{ Interpolation Theory, Function Spaces, Differential Operators,} North-Holland Publishing Company, Amsterdam, 1978.


\bibitem{Triebel-10}  Triebel, H.:  \textsc{ Theory of function spaces}, Reprint of 1983 Edition, Basel: Birkh\"auser, 2010. Series: Monongraphs in Mathematics, Vol. 78.


\bibitem{[45]}
 Wegman, E.J.,  Schwartz, S.C., a  Thomas, J.B.:
\newblock {\textsc{ Topics in Non-Gaussian Signal Processing}}.
\newblock Springer, New York, 1989.

\bibitem{Zhang10}  Zhang, X.:
\textit{Stochastic flows of SDEs with irregular coefficients and stochastic transport equations}.
Bull. Sci. Math. \textbf{134}(4), 340--378 (2010) 

\bibitem{Zhang-2011} Zhang, X.: \textit{Stochastic homeomorphism flows of SDEs with singular drifts and Sobolev diffusion coefficients}, Electron. J. Probab. \textbf{16}(38),  1096--1116 (2011)

\bibitem{Zhang-2012} Zhang,  X.: \textit{Stochastic functional differential equations driven by L\'evy processes and quasi-linear partial integro-differential equations}, Ann. Appl. Probab. \textbf{22}(6),  2505--2538 (2012)

\bibitem{Zhang_2013}  Zhang, X.:  \textit{$L^{p}$-maximal regularity of nonlocal parabolic equations and applications},
Ann. Inst. H. Poincar\'e Anal. Non Lin\'eaire \textbf{30}(4), 573--614 (2013)






\bibitem{Zhu_2010}  Zhu, J.: {A Study of SPDEs w.r.t. Compensated Poisson Random Measures and Related Topics}, Ph. D. Thesis,  University of York, 2010.


\bibitem{Zhu+Brz+Liu-2019}  Zhu, J.,   Brze\'zniak, Z.,   Liu, W.: \textit{Maximal inequalities and exponential estimates for stochastic convolutions driven by L\'evy-type processes in Banach spaces with application to stochastic quasi-geostrophic equations}, SIAM J. Math. Anal. \textbf{51}(3),  2121--2167 (2019)



























































\end{thebibliography}
\end{document}